\title{Symplectic Cohomology and Duality for the Wrapped Fukaya Category}
\author{Sheel Ganatra}
\def\bd{\partial}
\def\ra{\rightarrow}
\def\lra{\longrightarrow}
\def\Z{{\mathbb Z}}
\def\R{{\mathbb R}}
\def\C{{\mathbb C}}
\def\K{{\mathbb K}}
\def\w{\mathcal{W}(E)}
\def\A{\mathcal{A}}
\def\B{\mathcal{B}}
\def\M{\mathcal{M}}
\def\N{\mathcal{N}}
\def\e{\epsilon}
\def\cc{\mathcal{C}}
\def\dd{\mathcal{D}}
\def\mod{\mathrm{\!-\! mod}}
\def\rmod{\mathrm{mod\!-\!}}
\def\bimod{\mathrm{\!-\! mod\!-\!}}
\def\e{\epsilon}
\def\a{\alpha}
\def\r#1{\mathrm{#1}}
\def\mc#1{\mathcal{#1}}
\def\ol#1{\overline{#1}}
\def\w{\mathcal{W}}
\def\d{\Delta}
\def\M{\mathcal{M}}
\def\ainf{A_\infty}
\def\f{\c{F}}
\def\sh{SH}
\def\bd{\partial}
\def\z2{\Z / 2\Z}
\def\y{\mc{Y}}
\def\id{\mathrm{id}}
\def\mf#1{\mathfrak{#1}}
\def\p{\partial}
\def\A{\mc{A}}
\def\ob{\mathrm{ob\ }}
\def\chtopmaps{\overline{\mc{P}}^2_{d}(\gamma_-; \gamma_+,\vec{x})}
\def\H{\mc{H}}
\def\ocs{\mc{N}_{h,n,\vec{m}}^{\mathbf{I},\vec{\mathbf{K}}}}
\def\cocs{\overline{\mc{N}}_{h,n,\vec{m}}^{\mathbf{I},\vec{\mathbf{K}}}}
\def\yr{\mathbf{Y}_R}
\def\yl{\mathbf{Y}_L}
\def\yyr#1{\mc{Y}^r_{#1}}
\def\yyl#1{\mc{Y}^l_{#1}}
\def\oc{\mc{OC}}
\def\co{\mc{CO}}
\def\aa{\mathbf{a}}
\def\bb{\mathbf{b}}
\def\bigsp{\ \ \ \ \ \ \ \ \ \ \ \ \ \ \ }
\newcommand*{\longhookrightarrow}{\ensuremath{\lhook\joinrel\relbar\joinrel\rightarrow}}
\newtheorem{lem}{Lemma}[section]
\newtheorem{prop}{Proposition}[section]
\newtheorem{thm}{Theorem}[section]
\newtheorem{cor}{Corollary}[section]
\newtheorem{defn}{Definition}[section]
\newtheorem{claim}{Claim}[section]
\newtheorem{rem}{Remark}[section]
\theoremstyle{remark}
\newtheorem{ex}{Example}[section]
\numberwithin{equation}{section}
\begin{document}
\begin{abstract}
Consider the wrapped Fukaya category $\mathcal{W}$ of a collection of exact
Lagrangians in a Liouville manifold. Under a non-degeneracy condition implying
the existence of enough Lagrangians, we show that natural geometric maps from
the Hochschild homology of $\mathcal{W}$ to symplectic cohomology and from
symplectic cohomology to the Hochschild cohomology of $\mathcal{W}$ are
isomorphisms, in a manner compatible with ring and module structures. This
is a consequence of a more general duality for the wrapped Fukaya category,
which should be thought of as a non-compact version of a Calabi-Yau structure.
The new ingredients are:  (1) Fourier-Mukai theory for $\mathcal{W}$ via a
wrapped version of holomorphic quilts, (2) new geometric operations, coming
from discs with two negative punctures and arbitrary many positive punctures,
(3) a generalization of the Cardy condition, and (4) the use of homotopy units
and A-infinity shuffle products to relate non-degeneracy to a resolution of the
diagonal.
\end{abstract}

\maketitle

\section{Introduction}

It is a conjecture of Kontsevich \cite{Kontsevich:1995uq} (inspired by mirror
symmetry) that the quantum cohomology ring of a compact symplectic manifold $M$
should be isomorphic to the {\it Hochschild cohomology} 
\begin{equation}
    \mathrm{HH}^*(\mc{F}(M))
\end{equation} 
of the Fukaya category $\mc{F}(M)$.  There are at least two strong motivations
for understanding this conjecture.  For one, such an isomorphism would allow
one to algebraically recover quantum cohomology along with its ring structure
from computations of the Fukaya category.  In another direction, Hochschild
cohomology measures deformations of a category, so the conjecture has
implications for the deformation theory of Fukaya categories; see e.g.
\cite{Seidel:2002ys}.

We address a non-compact version of Kontsevich's conjecture, in the setting of
exact (non-compact) symplectic manifolds. The relevant symplectic objects are
{\bf Liouville manifolds}, exact symplectic manifolds with a convexity
condition at infinity. Examples include cotangent bundles, affine complex
varieties, and more general Stein manifolds.  In this setting, there is an
enlargement of the Fukaya category known as the {\bf wrapped Fukaya category}
\begin{equation}
    \w:=\w(M),
\end{equation}
whose objects include properly embedded (potentially non-compact) Lagrangians,
and whose morphism spaces include intersection points as well as {\it Reeb
chords} between Lagrangians at infinity.  The wrapped Fukaya category is
expected to be the correct mirror category to coherent sheaves on non-proper
varieties, see e.g. \cite{Abouzaid:2010ly} \cite{Abouzaid:2011fk}.  Moreover,
it is the {\it open-string}, or Lagrangian, counterpart to a relatively
classical invariant of non-compact symplectic manifolds, {\bf symplectic
cohomology}
\begin{equation} 
    SH^*(M),
\end{equation}
first defined by Cieliebak, Floer, and Hofer \cite{Floer:1994uq}
\cite{Cieliebak:1995fk}.

There are also existing geometric maps from the {\it Hochschild homology}
\begin{equation}
\mathrm{HH}_{*}(\w)
\end{equation}
to symplectic cohomology \cite{Abouzaid:2010kx} and from symplectic cohomology
to the Hochschild cohomology \cite{Seidel:2002ys}
\begin{equation}
    \mathrm{HH}^*(\w).
\end{equation}  
Thus, one can posit that a version of Kontsevich's conjecture holds in this
setting.

In this paper, we prove a non-compact version of Kontsevich's
conjecture for a Liouville manifold $M$ of dimension $2n$, assuming a
non-degeneracy condition for $M$ first introduced by Abouzaid
\cite{Abouzaid:2010kx}.  The reason for the non-degeneracy assumption is
essentially this: to have any hope that symplectic cohomology be recoverable
from the wrapped Fukaya category, it is important that the target manifold
contain ``enough Lagrangians.'' 
\begin{defn} \label{nondegeneracydef}
A finite collection of Lagrangians $\{L_i\}$ is said to be {\bf essential} if
the natural map from Hochschild homology of the wrapped Fukaya category
generated by $\{L_i\}$ to symplectic cohomology hits the {\bf identity}
element. Call $M$ {\bf non-degenerate} if it admits any essential collection of
Lagrangians.
\end{defn}
The non-degeneracy condition is explicitly known for cotangent bundles
\cite{Abouzaid:2010fk} and some punctured Riemann surfaces
\cite{AbAurUnpublished}. In general, we expect that work of Bourgeois, Ekholm,
and Eliashberg \cite{BEE1published}, suitably translated into the setting of
the wrapped Fukaya category, would imply that every Stein manifold is
non-degenerate, with essential Lagrangians given by the ascending co-cores of a
plurisubharmonic Morse function. In a related direction, if $M$ is the total
space of a {\it Lefschetz fibration}, upcoming work of Abouzaid and Seidel
\cite{AbSeInProgress} should also imply that $M$ is non-degenerate, with
essential Lagrangians given by the {\it Lefschetz thimbles} of the fibration.

Our main result can then be stated as follows:
\begin{thm}\label{shhh}
If $M$ is non-degenerate, then the natural geometric maps
\begin{equation}
    \mathrm{HH}_{*-n}(\w) \stackrel{[\oc]}{\lra} SH^*(M) \stackrel{[\co]}{\lra} \mathrm{HH}^*(\w)
\end{equation}
are both isomorphisms, compatible with Hochschild ring and module structures.
\end{thm}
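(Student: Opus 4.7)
The plan is to derive Theorem~\ref{shhh} from a stronger chain-level duality statement for $\w$, and then to certify the hypotheses of that duality using the non-degeneracy assumption. Concretely, I would first construct a morphism of $\w$-bimodules
\begin{equation}
\w_\Delta \longrightarrow \w^\vee[n]
\end{equation}
from the diagonal bimodule into a shift of the appropriate ``Serre'' (dualizing) bimodule $\w^\vee$, and then argue that this morphism is a quasi-isomorphism whenever $M$ is non-degenerate. Granting such a bimodule quasi-isomorphism, standard homological algebra identifies $\hh^*(\w)$ with $\hh_{*-n}(\w)$; combined with Fourier--Mukai style interpretations of $[\oc]$ and $[\co]$ as maps read off from this duality, this promotes the two maps in the statement to isomorphisms.

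The first main ingredient is a wrapped Fourier--Mukai functor, built from holomorphic quilts with wrapping conditions at infinity, which assigns to a pair $(L,L')$ the bimodule $\y^r_{L'} \otimes \y^l_L$ and intertwines Hochschild invariants of $\w$ with morphism spaces in $\w\bimod\w$. The second ingredient is a family of chain-level operations associated to moduli spaces of discs with two negative punctures and arbitrarily many positive punctures; when coherently assembled, these furnish the candidate bimodule morphism displayed above and, through a generalized Cardy-type degeneration, express the compositions $[\oc]\circ[\co]$ and $[\co]\circ[\oc]$ in a form amenable to comparison with the duality map.

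The third and most delicate step is producing a resolution of the diagonal from non-degeneracy. I would work in a homotopy-unital model for $\w$ and use $A_\infty$-shuffle products to lift the fact that the identity in $SH^*$ lies in the image of $[\oc]$ to a chain-level element of $\hh_*(\w)$ whose image under a Fourier--Mukai-type coproduct splits the unit of $\w\bimod\w$ through a perfect bimodule assembled from Yoneda bimodules. This splitting is exactly the resolution of the diagonal needed to upgrade the duality map to a bimodule quasi-isomorphism. The main obstacle, in my view, lies precisely here: one must verify that shuffling the identity-hitting Hochschild cycle against the two-negative-puncture operation realizes the identity bimodule map up to an explicit homotopy. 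This demands a careful combinatorial analysis of the underlying moduli spaces, together with the use of homotopy units to circumvent the well-known failure of strict unitality when wrapping is present.

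Finally, compatibility with ring and module structures is essentially automatic once the constructions have been packaged as $A_\infty$-bimodule morphisms: the Yoneda product on $\hh^*(\w)$ matches the pair-of-pants product on $SH^*$ through $[\co]$, and the natural action of $\hh^*(\w)$ on $\hh_*(\w)$ corresponds under the above identifications to the $SH^*$-module structure on itself, as is seen by comparing the moduli spaces defining each operation at the chain level.
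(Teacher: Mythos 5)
Your proposal follows essentially the same route as the paper: a quilt-based Fourier--Mukai functor into $\w$-bimodules, the two-negative-puncture disc operations assembling into a bimodule morphism out of the diagonal, a generalized Cardy degeneration relating $[\co]\circ[\oc]$ to that morphism, and homotopy units with $A_\infty$-shuffle products converting non-degeneracy into a resolution of the diagonal (hence smoothness and fullness, which make $[\co]$ and then $[\oc]$ isomorphisms). The only points to tighten are that the target of the duality map must be the inverse dualizing bimodule $\w^! = \hom_{\w\!-\!\w}(\w_{\Delta},\w_{\Delta}\otimes_\K \w_{\Delta})$ rather than a linear-dual ``Serre'' bimodule $\w^\vee$ (which is ill-behaved in the wrapped setting, where morphism spaces admit no nondegenerate pairings), and that the ring/module compatibilities are not automatic but are established by explicit auxiliary moduli spaces of discs with two interior punctures.
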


One consequence of Theorem \ref{shhh} is that if $M$ is non-degenerate, the
Hochschild homology and cohomology of $\w$ are isomorphic, up to a shift.
In fact, a key step in proving Theorem \ref{shhh} is first constructing a direct
geometric Poincar\'{e} duality isomorphism
\begin{equation} \label{duality1}
    \mathrm{HH}_{*-n}(\w,\mc{B}) \stackrel{\sim}{\lra} \mathrm{HH}^*(\w,\mc{B}),
\end{equation}
for an arbitrary coefficient bimodule $\mc{B}$, which does not pass through
$\mathrm{SH}^*(M)$. Such dualities have appeared before in the context of the
algebraic geometry of smooth varieties. Van den Bergh \cite{bergh:1998uq}
\cite{bergh:2002fk} was the first to observe a duality between Hochschild
homology and cohomology for the coordinate ring of a {\it smooth Calabi-Yau}
affine variety; see also \cite{Krahmer:2007kx}. The relevant notion for us is a
categorical, or non-commutative, version of {\it smooth and Calabi-Yau},
generalizing perfect complexes on a smooth Calabi-Yau variety.  As in the
algebro-geometric setting, {\it smoothness} is the prerequisite property that
must be defined first.

\begin{defn}[Kontsevich-Soibelman \cite{Kontsevich:2006fk}]
    An $\ainf$ category $\mc{C}$ is {\bf (homologically) smooth} if its
    diagonal bimodule is {\bf perfect}, i.e., lies in the category split
    generated by tensor products of left and right Yoneda modules.
\end{defn}
\noindent As a key technical ingredient in Theorem \ref{shhh}, we relate
smoothness to the non-degeneracy condition.
\begin{thm}\label{smoothness}
    If $M$ is non-degenerate, then $\w$ is smooth.
\end{thm}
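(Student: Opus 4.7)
The plan is to show directly that the diagonal bimodule $\w_\Delta$ is a homotopy retract of a bimodule of the form $\bigoplus_i \mc{Y}^l_{L_i} \otimes_\K \mc{Y}^r_{L_i}$, where $\{L_i\}$ is an essential collection of Lagrangians provided by non-degeneracy. There is always a tautological $\w$-bimodule morphism
\[
    \mu \colon \bigoplus_i \mc{Y}^l_{L_i} \otimes_\K \mc{Y}^r_{L_i} \lra \w_\Delta
\]
induced by the $\ainf$-composition maps, and the task is to construct a one-sided homotopy inverse $\sigma \colon \w_\Delta \to \bigoplus_i \mc{Y}^l_{L_i} \otimes_\K \mc{Y}^r_{L_i}$ with $\mu \circ \sigma \simeq \id_{\w_\Delta}$; this will exhibit $\w_\Delta$ as a summand of a perfect bimodule in the derived category.

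To build $\sigma$, I would use the essentiality hypothesis to pick a Hochschild cycle $\alpha \in CC_*(\w)$ with $[\oc](\alpha) = 1 \in SH^0(M)$, and feed it into a new geometric operation built from moduli spaces of discs with \emph{two negative interior punctures} and arbitrarily many positive interior punctures---ingredient (2) of the introduction. Labelling the positive interior punctures by the tensor factors of $\alpha$ and the two strip-like ends along the boundary by variable objects of $\w$, the counts of rigid such discs assemble into the desired bimodule morphism $\sigma_\alpha$; its closedness as a bimodule map uses that $\alpha$ is a Hochschild cycle together with standard codimension-one analysis.

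The composition $\mu \circ \sigma_\alpha$ is then analyzed via a generalized Cardy relation (ingredient (3)). I would compactify the one-parameter family that pulls the two negative interior punctures together to form a single interior puncture: one boundary stratum of this compactification recovers $\mu \circ \sigma_\alpha$, while the opposite stratum recovers the action on $\w_\Delta$ of the closed-string class $[\co]([\oc](\alpha))$. Since $[\oc](\alpha) = 1$ and $[\co]$ sends the unit of $SH^*(M)$ to the unit of $\hh^*(\w)$, this latter action is the identity natural transformation of $\w_\Delta$. Hence $\mu \circ \sigma_\alpha \simeq \id_{\w_\Delta}$, and $\w_\Delta$ is a retract of the perfect bimodule $\bigoplus_i \mc{Y}^l_{L_i} \otimes_\K \mc{Y}^r_{L_i}$.

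The principal obstacle is matching the output of the Cardy degeneration with $\id_{\w_\Delta}$ on the nose. The naive one-parameter family produces not the strict bimodule identity but rather a decorated expression involving forced unit insertions and additional $\mu^1$-type defects distributed along the boundary marked points, and the two degeneration strata cannot be equated without cancelling these. This is exactly where ingredient (4) enters: passing to a homotopy-unital model of $\w$ and using $\ainf$-shuffle products to reorganize Hochschild-chain inputs at the positive punctures allows the decorating terms to be recognized as a genuine coboundary in the bimodule complex, yielding the required chain-level homotopy. Verifying this identification---together with the transversality, Gromov compactness, and orientation analysis required to define the two-negative-puncture operations in the first place---is the main technical content of the proof.
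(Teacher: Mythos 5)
Your strategy is genuinely different from the paper's. The paper does \emph{not} use the Cardy relation to prove smoothness; instead it constructs an $A_\infty$ functor $\mathbf{M}\colon\mathcal{W}^2\to\mathcal{W}\mathrm{\!-\! mod\!-\!}\mathcal{W}$ from quilted strips, shows $\mathbf{M}$ is full on product Lagrangians and sends $\Delta$ to $\mathcal{W}_\Delta$, and then proves (Theorem \ref{resdiag}) that non-degeneracy forces $\Delta$ to be split-generated by product Lagrangians inside $\mathcal{W}^2$ via an $A_\infty$-shuffle map $\Gamma$ from $\mathrm{CC}_*(\mathcal{W})$ to a bar complex in $\tilde{\mathcal{W}}^2$. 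Smoothness is then transported along $\mathbf{M}$, and the Cardy condition (Theorem \ref{cardythm}) is proved \emph{afterward} and used only in the proof of Theorem \ref{shhh} that $[\mathcal{OC}]$ is an isomorphism. Your proposal instead wants to build the bimodule retraction directly from the two-output operations plus Cardy, bypassing $\mathcal{W}^2$ and the quilt functor entirely; this is logically coherent in outline (the Cardy square is a geometric chain homotopy and does not presuppose smoothness), and it would buy a more compressed argument at the cost of re-deriving by hand several bimodule-level identities that the paper obtains ``for free'' from the functor $\mathbf{M}$ and Abouzaid's split-generation criterion in $\mathcal{W}^2$.

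That said, there are concrete errors and a real gap. First, the two-output operations of Section \ref{wtowshrieksection} are governed by discs with two negative \emph{boundary} punctures and many positive \emph{boundary} punctures; the tensor factors of a Hochschild cycle $\alpha$ are wrapped Floer chords, i.e.\ asymptotics at strip-like ends, and cannot be placed at interior punctures, which carry Reeb orbits of $SH^*$. Your description of $\sigma_\alpha$ as having ``positive interior punctures labelled by the tensor factors of $\alpha$'' confuses the open- and closed-string sectors. Second, the Cardy degeneration is not obtained by colliding two boundary punctures into an interior one on a single disc: the paper's relevant moduli space $\mathcal{A}_{k,l;s,t}$ is a one-parameter family of \emph{annuli} whose two limits produce $(_2\mathcal{CO})\circ(_2\mathcal{OC})$ (thin-cylinder breaking) and $\mu_{LR}\circ\mathcal{CY}_\#$ (thin-strip breaking). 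Third, and most seriously, you need to address the cyclic-vs-linear mismatch: a Hochschild cycle is cyclically composable, while the $(A,B)$ arc of $\mathcal{R}_2^{k,l;s,t}$ is a linear interval cut open by the two outputs. Feeding $\alpha$ in requires a sum over cyclic cut points --- exactly what the functorial pushforward $\mathcal{CY}_\#$ together with the collapse $\bar\mu$ accomplishes in the paper --- and without that sum the closedness of $\sigma_\alpha$ as a bimodule morphism does not follow from $d\alpha=0$, because the codimension-one boundary of $\mathcal{R}_2^{k,l;s,t}$ produces collapses at the ends of the arc that are not Hochschild differential terms of $\alpha$. Finally, the homotopy units/shuffle machinery (ingredient (4)) is used in the paper for the comparison between $\mathrm{CC}_*(\mathcal{W})$ and the bar complex in $\tilde{\mathcal{W}}^2$, not in the Cardy analysis, so you would need to re-derive its relevance to your setting rather than cite it as solving the ``decorated expression'' problem.
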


\begin{defn}
    An  $\ainf$ category $\mc{C}$ is  a {\bf non-compact Calabi-Yau category}
    if it is smooth and there is a Poincar\'{e} duality-type
    natural transformation 
    \begin{equation}
        \mathrm{HH}_{*-n}(\mc{C},\mc{B}) \stackrel{\sim}{\lra} \mathrm{HH}^{*}(\mc{C},\mc{B})
    \end{equation}
    of functors from bimodules to chain complexes, inducing isomorphisms on
    homology. Such a natural transformation should be induced by an equivalence
    of bimodules
    \begin{equation}
        \mc{C} \stackrel{\sim}{\lra} \cc^![n].
    \end{equation}
    Here, $\cc^!$ is a perfect bimodule which represents, via tensoring,
    Hochschild cohomology.
    \begin{equation}
        H^*(\cc^! \otimes_{\cc\!-\!\cc} \mc{B}) \cong \mathrm{HH}^*(\mc{C},\mc{B})
    \end{equation}
\end{defn}
The bimodule $\cc^!$, defined in Section \ref{moduleduality}, is known as the
{\bf inverse dualizing bimodule}.  The {\it non-compact Calabi-Yau} terminology
was introduced by Kontsevich and Soibelman \cite{Kontsevich:2006fk} as a
categorical abstraction of perfect complexes on a smooth, not necessarily
proper Calabi-Yau variety. 

With these definitions in place, we can state our second main result.
\begin{thm}[Duality for the wrapped Fukaya category]  \label{wrapcy}
Suppose $M$ is non-degenerate. Then, $\w$ is smooth and there is
a geometric map 
\begin{equation}\label{cymorphism}
    \mc{CY}:\w \stackrel{\sim}{\lra} \w^![n]
\end{equation}
giving $\w$ the structure of a {\bf non-compact Calabi-Yau category}. 
\end{thm}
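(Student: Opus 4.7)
The theorem asks for two things: smoothness of $\w$ and construction of a quasi-isomorphism $\mc{CY}:\w \to \w^![n]$ of diagonal bimodules. Smoothness is the content of Theorem~\ref{smoothness}, so the plan is to focus on the second half. I would construct $\mc{CY}$ as a chain-level morphism of $(\w,\w)$-bimodules directly from pseudo-holomorphic discs with two negative boundary punctures and arbitrarily many positive boundary punctures (ingredient (2) of the abstract), together with a consistent family of Floer data adapted to a compatible family of domains. Counting rigid such discs produces, for any pair $(L_0,L_1)$, a degree-$(-n)$ chain map whose leading term lands in $\w^!(L_0,L_1)[n]$; the higher components in the number of positive inputs encode the $A_\infty$ bimodule structure maps of $\mc{CY}$ and are controlled by the codimension-one strata of the relevant moduli compactifications.

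To show $\mc{CY}$ is a quasi-isomorphism of bimodules, I would exploit smoothness: the diagonal bimodule $\w$ is then split-generated by tensor products $\yyl{L_0} \otimes \yyr{L_1}$ of Yoneda bimodules. A stronger statement is actually provided by the proof of Theorem~\ref{smoothness}, which uses non-degeneracy together with the wrapped Fourier--Mukai / holomorphic quilts formalism (ingredient (1)) and the homotopy-unital shuffle product construction (ingredient (4)) to produce a concrete resolution of $\w$ by such Yoneda tensor products. Given this, checking that $\mc{CY}$ is an equivalence reduces to verifying that the map induced on a single $\yyl{L_0}\otimes\yyr{L_1}$ is a quasi-isomorphism, i.e.\ that the resulting chain map on wrapped Floer complexes associated to the pair $(L_0, L_1)$ is one. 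Here I would invoke the generalized Cardy relation (ingredient (3)): by degenerating an annulus-like moduli space with two output marked points on opposite boundary circles in two inequivalent ways, one obtains on one side the composition of $\mc{CY}$ with the standard evaluation pairing, and on the other side the classical Floer-theoretic Poincar\'{e}--Lefschetz pairing on wrapped Floer cohomology, which is known to be non-degenerate.

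The main obstacle will be the interplay between the new two-negative-puncture operations, the inner-Hom definition of $\w^!$ from Section~\ref{moduleduality}, and the precise stratification of the relevant moduli compactifications. In particular, codimension-one strata in which the two negative punctures come together, or in which a negative puncture collides with a positive one, force one to work in a homotopy-unital enlargement of $\w$ and to employ an $A_\infty$ shuffle product to re-express certain bar resolutions cleanly. Setting up a coherent system of consistent Floer data on all the moduli spaces so that the intended Cardy degeneration is rigorously realized --- and so that the splitting into $\mc{CY}$ composed with evaluation versus the duality pairing actually holds at the chain level, not merely up to unspecified homotopy --- is the technical heart of the argument. Once that coherence is in place, identifying $\mc{CY}$ with Poincar\'{e}--Lefschetz duality on the split-generating Yoneda pieces, and hence concluding that $\mc{CY}$ is the desired non-compact Calabi-Yau equivalence, becomes a computation of leading orders.
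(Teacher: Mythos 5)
Your construction of $\mc{CY}$ and your appeal to Theorem~\ref{smoothness} for smoothness match the paper, but the way you propose to prove that $\mc{CY}$ is a quasi-isomorphism has a genuine gap. Your final step invokes ``the classical Floer-theoretic Poincar\'{e}--Lefschetz pairing on wrapped Floer cohomology, which is known to be non-degenerate.'' No such pairing exists in this setting: wrapped Floer cohomology groups are typically of infinite rank and, as the paper stresses, one cannot trade inputs for outputs in Floer-theoretic operations --- the absence of such a pairing is precisely the reason the two-negative-puncture operations are introduced at all. So the degeneration argument you sketch cannot terminate in a ``known non-degenerate pairing,'' and the verification on Yoneda pieces collapses. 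Relatedly, your use of the generalized Cardy relation here inverts the paper's logic: the Cardy diagram~(\ref{cardy}) is used \emph{after} Theorem~\ref{wrapcy} to deduce that $[\oc]$ is an isomorphism; to use it in the other direction you would need to already know that $[\co]\circ[\oc]$ is invertible, which non-degeneracy (that $[\oc]$ hits the unit) does not give, and even then it would only control the map $[\mc{CY}_\#]$ induced on Hochschild homology, not the objectwise quasi-isomorphism of the bimodule morphism $\w_{\Delta}\to\w^![n]$. (Also note that a morphism of bimodules is a quasi-isomorphism iff its first-order term is one on each pair of objects; split-generation of the diagonal by Yoneda bimodules does not by itself reduce this check for a fixed geometric morphism to a check on a single $\yyl{L_0}\otimes\yyr{L_1}$.)

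The missing idea, which is the actual mechanism in the paper, is the chain-level identification of the first-order term of $\mc{CY}$ with the quilt functor: Proposition~\ref{mcy} shows $\mc{CY}^{0|1|0}_{K,L}=\mathbf{M}^1_{\Delta,K\times L}$, where $\mathbf{M}^1:\hom_{\w^2}(\Delta,K\times L)\to\hom_{\w\bimod\w}(\w_{\Delta},\yyl{K}\otimes\yyr{L})$ uses the identity $\hom_{\w^2}(\Delta,K\times L)\simeq\hom_{\w}(K,L)[-n]$. Non-degeneracy gives split-generation of $\Delta$ by product Lagrangians (Theorem~\ref{resdiag}, via the homotopy-unit/shuffle argument), and since $\mathbf{M}$ is full on products by the bimodule Yoneda/K\"unneth lemma, Proposition~\ref{fullsplitgen} upgrades this to fullness of $\mathbf{M}$ on $\Delta$; quasi-isomorphy of $\mc{CY}$ is then immediate. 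If you want to salvage your outline, replace the Cardy-plus-pairing step by this identification with $\mathbf{M}^1$ together with fullness of $\mathbf{M}$.
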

The geometric map (\ref{cymorphism}) will be constructed from a new family of
operations on the wrapped Fukaya category, parametrized by discs with multiple
inputs and two (non-adjacent) outputs. The first-order term, for example, is a
map
\begin{equation}
    \hom_{\w}(A,B) \ra \hom_{Vect}(\hom_{\w}(C,D), \hom_{\w}(A,D) \otimes \hom_{\w}(C,B))
\end{equation}
induced by a single disc with four boundary marked points, alternating as
inputs and outputs, with a fixed cross ratio.  A simplified version of these
operations, in which the two outputs are adjacent, is used in a crucial way in
\cite{Abouzaid:2010kx}.
\begin{rem}
In contrast to the compact setting, symplectic cohomology and morphisms in the
wrapped Fukaya category lack non-degenerate pairings, meaning one cannot trade
inputs for outputs in Floer theoretic operations. From the perspective of
topological field theory, the surfaces giving rise to operations on $\w$ must
have at least one output. Thus, the two-output operations considered here and
in \cite{Abouzaid:2010kx} give new operations on $\w$.
\end{rem}


In order to deduce Theorem \ref{shhh} from Theorem \ref{wrapcy},  we invoke a
holomorphic-curve version of the {\it Cardy condition} from topological field
theory:
\begin{thm}[Generalized Cardy Condition]\label{cardythm}
    There is a (homotopy)-commutative diagram
    \begin{equation}\label{cardy}
        \xymatrix{\r{HH}_{*-n}(\w,\w) \ar[r]^{ [\mc{CY}_\#]} \ar[d]^{[\oc]} & \r{HH}_*(\w,\w^!) \ar[d]^{[\bar{\mu}]} \\
        SH^*(M) \ar[r]^{[\co]\ \ \ \ \ }& \r{HH}^*(\w,\w)}
\end{equation}
Here $[\mc{CY}_\#]$ is the map induced from (\ref{cymorphism}) on the level of
Hochschild homology, and $\bar{\mu}$ is a map induced by compositions of
$\ainf$ multiplication. 
\end{thm}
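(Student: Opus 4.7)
The plan is to establish homotopy commutativity of (\ref{cardy}) by constructing a parametrized moduli space of Riemann surfaces whose two codimension-one degenerations account for the two compositions in the diagram. This follows the philosophy of the classical Cardy relation, in which an annulus with one boundary puncture on each component degenerates either to a ``closed-string'' configuration (two discs joined at an interior node) or to an ``open-string'' configuration (two discs joined along a boundary arc). The new ingredient here is that the open-string side is governed not by a compact Calabi-Yau pairing, but by the two-output disc operations defining $\mc{CY}$ together with $\ainf$-compositions.

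Concretely, for each $d\geq 0$ I would introduce a one-parameter moduli space of discs with $d$ cyclically ordered boundary inputs (for a Hochschild chain of length $d$) together with two distinguished ``Cardy'' punctures, plus an extra boundary output. At one end of the parameter the two Cardy punctures collide into a single \emph{interior} puncture, producing a two-component configuration consisting of a disc with one interior output glued through an $SH^*(M)$ orbit to a disc with one interior input. By the very definitions of $\oc$ and $\co$, counting such configurations reproduces $[\co] \circ [\oc]$. At the other end of the parameter the two Cardy punctures separate into two boundary outputs of a single disc whose cross-ratio of marked points is the one used to define $\mc{CY}$; the remaining boundary punctures are then grouped on either side of these outputs and absorbed into $\ainf$-compositions, which is exactly the algebraic content of $[\bar{\mu}]\circ[\mc{CY}_\#]$. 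Compactifying in the Gromov-Floer sense and counting rigid elements of the 1-dimensional strata produces the homotopy.

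The principal obstacle will be setting up coherent Floer perturbation data on this parametrized moduli space and verifying that no spurious codimension-one boundary strata appear. On the technical side, one must choose Hamiltonians with wrapping profiles that simultaneously match those used for $\oc$ and $\co$ at an interior Reeb orbit puncture and those used for $\mc{CY}$ and $\ainf$-composition at the boundary Reeb chord ends, interpolating consistently over the interior of the parameter. Ruling out unwanted breakings requires the usual exactness-based a priori action estimates for the Liouville setting (to exclude sphere and disc bubbling), together with a careful analysis showing that strip-breaking along internal boundary inputs precisely reduces to the Hochschild differentials on both sides of (\ref{cardy}). Once these analytical foundations and the inductive construction of consistent universal perturbation data are in hand, the identification of the two endpoint contributions with the two compositions in the diagram is a direct translation of marked-point combinatorics into Floer-theoretic operations.
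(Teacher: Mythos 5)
Your overall philosophy is the right one and is the same as the paper's: interpolate, via a one-parameter family of domains, between the closed-string factorization $[\co]\circ[\oc]$ and the open-string factorization through the two-output discs defining $\mc{CY}$. But the concrete moduli space you propose cannot do the job. You work with a family of \emph{discs} carrying $d$ boundary inputs, two ``Cardy'' boundary punctures and one boundary output, and you claim that at one end of the parameter the two Cardy punctures ``collide into a single interior puncture.'' No such degeneration exists: two boundary marked points on a disc can only collide into a boundary node (disc bubbling), never into an interior puncture, and a disc has no separating circle whose pinching yields two discs each carrying an interior puncture. The closed-string channel can only appear if the domain is an \emph{annulus}: it is the pinching of the core circle of the annulus (at large modulus) that produces an interior node, i.e.\ a disc with an interior output (the $\oc$-type domain $\mc{R}^1_{k,l}$) glued along an $SH^*(M)$ orbit to a disc with an interior input (the $\co$-type domain $\mc{R}^{1,1}_{s,t}$). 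Relatedly, your disc has no room for the inputs that the target of the diagram requires: $\r{HH}^*(\w,\w)$ is realized as $\hom_{\w\!-\!\w}(\w_{\Delta},\w_{\Delta})$, so the operation must output a Hochschild \emph{cochain}, which geometrically means the domain needs a second boundary circle carrying its own string of positive punctures together with the negative puncture. This is exactly what the spaces $\mc{A}_{k,l;s,t}^-$ of annuli with marked points on both boundary circles provide.

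A second, smaller gap: at the open-string end of the annulus family the degeneration does not directly produce $\bar{\mu}$. What breaks off is a disc with two negative punctures (the $\mc{CY}$ domain) glued along \emph{two} boundary nodes to a single disc with four special boundary points; the latter defines a genuinely geometric operation $\mu_{LR}$ on $\w_{\Delta}\otimes_{\w}\w_{\Delta}\otimes_{\w}\w_{\Delta}$, not the algebraically defined collapse map underlying $\bar{\mu}$. One therefore needs an additional auxiliary one-parameter family of discs exhibiting a chain homotopy $\mu_{LR}\simeq \mc{F}_{\Delta,left,right}$ before the endpoint contribution can be identified with $[\bar{\mu}]\circ[\mc{CY}_{\#}]$; your phrase ``absorbed into $\ainf$-compositions'' elides this step. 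Finally, since the outputs of the square are most naturally compared at the level of the two-pointed complexes ${_2}\r{CC}_*$ and ${_2}\r{CC}^*$, you also need the comparison homotopies identifying ${_2}\oc,{_2}\co$ with $\oc,\co$ to conclude the statement as written. With the domain corrected from discs to annuli and these two comparisons added, your outline becomes essentially the paper's argument.
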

The proof follows from analyzing the degenerations of operations parametrized
by families of annuli with marked points on both boundary components.

The map $[\bar{\mu}]$, a special case of the map appearing in the
identification 
\[
H^*(\w^! \otimes_{\w\!-\!\w} \mc{B}) \simeq \r{HH}^*(\w,\mc{B}), 
\] 
is an isomorphism whenever $\w$ is smooth. So (\ref{cardy}) provides a
commutative diagram in which the top and right maps are isomorphisms. Theorem
\ref{wrapcy}, which concerns the bottom and left maps, then follows from the
observation that
\begin{prop}\label{surjectivity}
    If $M$ is non-degenerate, then $[\oc]$ is surjective and $[\co]$ is injective.
\end{prop}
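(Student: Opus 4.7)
The plan is to derive both claims from a single algebraic identity relating $[\oc]$, $[\co]$, and the cap product. Specifically, I expect $[\oc]$ to be a homomorphism of $SH^*(M)$-modules, where $SH^*(M)$ acts on $\r{HH}_*(\w)$ by first pulling back along $[\co]$ into $\r{HH}^*(\w)$ and then capping, and acts on itself by multiplication. That is, for every $x \in SH^*(M)$ and $\eta \in \r{HH}_*(\w)$ one should have
\begin{equation}
[\oc]\bigl([\co](x) \cap \eta\bigr) \;=\; x \cdot [\oc](\eta).
\end{equation}
By Definition \ref{nondegeneracydef}, non-degeneracy furnishes a class $e \in \r{HH}_{*-n}(\w)$ with $[\oc](e) = 1 \in SH^*(M)$. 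I take these two ingredients as the only non-formal inputs.

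Granted the identity and the distinguished class $e$, both halves of the proposition are immediate. For surjectivity of $[\oc]$, any $x \in SH^*(M)$ satisfies
\[
    x \;=\; x \cdot 1 \;=\; x \cdot [\oc](e) \;=\; [\oc]\bigl([\co](x) \cap e\bigr),
\]
so $x$ lies in the image. For injectivity of $[\co]$, if $[\co](x) = 0$ then $[\co](x) \cap e = 0$ and therefore
\[
    0 \;=\; [\oc]\bigl([\co](x) \cap e\bigr) \;=\; x \cdot [\oc](e) \;=\; x.
\]
Thus the whole content of the proposition is the $SH^*$-module compatibility of $[\oc]$ together with the existence of $e$; the non-degeneracy hypothesis enters only through the latter.

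The main obstacle is therefore establishing the module identity at the chain level, which is why I regard it as the real work rather than the one-line deductions above. I would prove it by constructing a one-parameter family of Riemann surfaces interpolating between the two compositions: on one end of the parameter space the surface degenerates into a cylinder representing multiplication by $x$ in $SH^*(M)$ glued onto a disc representing $\oc(\eta)$; on the other end it degenerates into a disc representing $\oc$ applied to a Hochschild chain that has been capped, through a boundary cylinder, with the class $[\co](x) \in \r{HH}^*(\w)$. Standard compactness, gluing, and codimension-one boundary analysis should then yield a chain-level homotopy inducing the stated identity on cohomology. This is a piece of the open-closed TQFT structure on $\w$ which is strictly simpler than the generalized Cardy identity of Theorem \ref{cardythm}, and it should be carried out in the same moduli-theoretic spirit as the original constructions of $\oc$ and $\co$.
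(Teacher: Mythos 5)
Your proof is correct and is essentially the paper's own argument: the paper derives Proposition \ref{surjectivity} in exactly this way from Proposition \ref{module} (the $SH^*(M)$-module compatibility of $[\oc]$, with $SH^*(M)$ acting on Hochschild homology via $[\co]$ followed by cap product), together with the existence of a preimage of $1$ under $[\oc]$ guaranteed by non-degeneracy. Your sketch of the moduli-theoretic homotopy underlying the module identity also matches the paper's construction, which uses the parameter spaces $\mc{P}^2_d$ of discs with two interior punctures sliding along a diameter.
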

\noindent Proposition \ref{surjectivity} is a simple consequence of the
compatibility of $[\oc]$ and $[\co]$ with algebraic structures: $[\co]$ is a
morphism of rings, and $[\oc]$ is a morphism of $SH^*(M)$ modules, using the
map $[\co]$ and the module structure of Hochschild homology over cohomology. As
an alternative to Proposition \ref{surjectivity}, our proof of Theorem
\ref{wrapcy} also directly establishes that $[\co]$ is an isomorphism,
which suffices.

As one application of this work, whenever $M$ is non-degenerate, we find two
methods for explicitly recovering the ring structure on $SH^*(M)$ from the
wrapped Floer theory of any essential collection of Lagrangians: one directly
from Hochschild cohomology using its product, and one from Hochschild homology
using $\ainf$ structure maps and the new geometric operations $\mc{CY}$. In
the case $M$ is Stein, formulae formally similar to the latter description have
appeared in \cite{BEE2}, phrased in terms of the symplectic field theory
invariants of attaching Legendrians of a suitable handle decomposition.  Taking
our formulae applied to the (conjecturally essential) collection of ascending
Lagrangian co-cores for the same decomposition, it is natural to expect that
the underlying algebraic structures should be related.

\begin{rem}
Going further than the product, one might hope to recover all field-theoretic
operations on symplectic cohomology from the Hochschild homology of the wrapped
Fukaya category, at least in the sense of \cite{Costello:2006vn},
\cite{Lurie:2009fk}, or \cite{Kontsevich:uq}. We have not attempted to do so
here, but have two remarks. First, the work of \cite{Costello:2006vn} is not
immediately applicable, as it is formulated under different hypotheses---see
instead \cite{Lurie:2009fk}*{Remark 4.2.17} for a version relevant to the
wrapped Fukaya category.  Second, in order to cite a theorem from topological
field theory, there is an additional criterion to check: the Hochschild cycle
whose image is $1 \in SH^*(M)$ should be a homotopy fixed point the $S^1$
action on the Hochschild complex (the one inducing the Connes' $B$ operator).
We leave this for future work.
\end{rem}

Theorem \ref{wrapcy} will follow from the construction and analysis of
a natural functor $\mathbf{M}$, coming from operations parametrized by {\it
quilted strips}, from the wrapped Fukaya category of the product $M^- \times M$
to {\it bimodules} over $\w$.  
  Quilted strips have been used in the
compact setting \cite{Wehrheim:2010fk} \cite{Wehrheim:2010uq} to associate a
functor between the (cohomology-level) Fukaya categories of $M$ and $N$ to a
(monotone) Lagrangian correspondence $L \subset M^- \times N$ (an extension to
the chain level is forthcoming \cite{MWW1}). Using a variant of the original
parameter spaces appearing in \cite{Mau:2010lq}, we give a direct chain-level
construction of a version of this functor for the wrapped Fukaya category.  The
fullness of this functor, which contains both open-closed operations and
morphisms with two output punctures in its definition, will have implications
for the isomorphism (\ref{cymorphism}).  
The need for such a construction partially explains the length of this paper.
The idea that moduli spaces of quilts involving the diagonal $\Delta$ should be
related to open-closed string maps is not new---see e.g.,
\cite{Abouzaid:2010vn}. However, to our knowledge, our exploitation of the
relationship between $\mathbf{M}$ and the morphism $\mc{CY}$ is new. 

In more detail, we define an $\ainf$ category 
\[ 
    \w^2
\]
associated to the product $M^- \times M$, with two types of objects: products
$L_i \times L_j$ of objects in $\w$, and the diagonal $\Delta$. For purely
technical reasons, we do not call this the wrapped Fukaya category of the
product, as compositions in this category are defined using split Hamiltonians,
which are not in the usual class of admissible Hamiltonians on the Liouville
manifold $M^- \times M$, see e.g., \cite{Oancea:2006oq}. However, this category
has the property (reflecting an observation initially due to Floer
\cite{Floer:1988kx}) that 
\begin{equation}\label{deltash}
    H^*(\hom_{\w^2}(\Delta,\Delta)) = \sh^*(M).
\end{equation}

Then, we construct an $\ainf$ functor
\begin{equation}
    \mathbf{M}: \w^2 \lra \w\bimod\w
\end{equation}
from $\w^2$ to the category of bimodules over $\w$, satisfying the following
properties: on the level of objects, a product Lagrangian $K \times L$ is
sent to the corresponding tensor product of left and right Yoneda modules $\mc{Y}^l_K
\otimes \mc{Y}^r_L$, and the diagonal $\Delta$ is sent to the diagonal bimodule
$\w_\Delta$. Moreover, on morphism complexes, 
\begin{equation}
    H^*(\mathbf{M}^1): H^*(\hom_{\w^2}(\Delta,\Delta)) \lra H^*(\hom_{\w\bimod\w}(\w_{\Delta},\w_{\Delta}))
\end{equation}
agrees with the map $[\co]$ between symplectic cohomology and Hochschild
homology. Similarly, we show the first order part of the map (\ref{cymorphism})
appears in the functor $\mathbf{M}$ applied to the morphism complex
$\hom_{\w^2}(\Delta,K \times L)$. The functor $\mathbf{M}$ we construct has the
feature (unlike versions that use a more general class of Lagrangian
correspondences and varying width strips) that no really new analysis is used
in its construction beyond that which appears in the construction of
open-closed operations and (\ref{cymorphism}). Also, by using bimodules as a
target instead of functors, we avoid passing to a wrapped version of the {\it
extended Fukaya category} \cite{Wehrheim:2010fk} whose objects consist of
sequences of Lagrangian correspondences (this was suggested to
us by Ma'u \cite{Mau:2010lq}). Thus, it can be thought of as a lightweight
construction of the quilt functor. 

We establish the following crucial fact about the functor $\mathbf{M}$, which
follows from a variant of the Yoneda lemma for bimodules:
\begin{prop}\label{splitfull}
    $\mathbf{M}$ is full on the full sub-category of $\w^2$ consisting of
    product Lagrangians.  
\end{prop}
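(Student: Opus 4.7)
The plan is to identify both sides of the map $\mathbf{M}^1$ with the \emph{same} tensor product of morphism complexes in $\w$ via two independent structural results, and then verify that $\mathbf{M}^1$ intertwines these identifications. Since fullness only requires cohomological surjectivity, it suffices to produce a quasi-isomorphism on each morphism complex.

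First I would invoke the $\ainf$ Yoneda lemma for bimodules. For any bimodule $\mc{B}$ and any objects $K_1, L_1$, the natural map
\[
\hom_{\w\bimod\w}(\y^l_{K_1} \otimes \y^r_{L_1}, \mc{B}) \stackrel{\sim}{\lra} \mc{B}(L_1,K_1)
\]
is a quasi-isomorphism (in the appropriate convention for left/right evaluation). Applied to $\mc{B} = \y^l_{K_2} \otimes \y^r_{L_2}$, this yields a quasi-isomorphism
\[
\hom_{\w\bimod\w}(\y^l_{K_1}\otimes\y^r_{L_1},\ \y^l_{K_2}\otimes\y^r_{L_2}) \stackrel{\sim}{\lra} \hom_\w(K_2,K_1) \otimes \hom_\w(L_1,L_2).
\]

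Second, I would analyze the source side using the split Hamiltonian on $M^- \times M$ built into the definition of $\w^2$. For product Lagrangians, the generators of $\hom_{\w^2}(K_1 \times L_1, K_2 \times L_2)$ are products of time-$1$ chords in each factor, and the standard Floer-type projection argument (reflecting (\ref{deltash})) shows that pseudoholomorphic strips with split boundary conditions decompose as a pair of strips in each factor. This produces a canonical chain-level Künneth isomorphism
\[
\hom_{\w^2}(K_1\times L_1,\ K_2\times L_2)\ \cong\ CW^*_{M^-}(K_1,K_2)\otimes CW^*_M(L_1,L_2)\ \cong\ \hom_\w(K_2,K_1)\otimes \hom_\w(L_1,L_2),
\]
where the last identification uses orientation reversal of $M$ in the first factor.

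Third, I would examine the moduli spaces of quilted strips defining $\mathbf{M}^1$ when both the input in $\w^2$ is a product Lagrangian and the target bimodule is a product of Yoneda modules. In this split situation, the patches of the quilt decompose under projection to each factor, and the quilted operation factors (at the chain level) as a tensor product of two Yoneda-embedding-type operations. Composing with the Yoneda quasi-isomorphism of the first step then tautologically produces the Künneth isomorphism of the second step, proving that $\mathbf{M}^1$ is a quasi-isomorphism on the full subcategory of product Lagrangians, which is stronger than fullness.

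The main obstacle is bookkeeping: one must keep careful track of the orientation reversal on the first factor, the convention distinguishing left from right Yoneda modules, and the signs arising from the higher components of $\mathbf{M}$. These higher components contribute additional chain-level terms but do not affect the cohomological statement, since we have already exhibited surjectivity (in fact a quasi-isomorphism) from the first-order part alone; any higher contribution lies in the image modulo $\ainf$ boundaries.
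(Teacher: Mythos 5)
Your proposal is correct and follows essentially the same route as the paper: identify $[\mathbf{M}^1]$ on product Lagrangians with the tensor product of the first-order Yoneda embeddings (by analyzing the quilt moduli spaces for split data, where the operation reduces to $\mu^{r+2}\otimes\mu^{s+2}$), and then conclude via the K\"unneth quasi-isomorphism for Yoneda bimodules, which the paper derives from hom-tensor adjunction and the one-sided Yoneda lemma. The only cosmetic difference is that your second step (the K\"unneth splitting of $\hom_{\w^2}$ between product Lagrangians) is already built into the definition of $\w^2$ via split Hamiltonians, so it requires no separate argument.
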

Thus, Theorem \ref{smoothness}, the quasi-isomorphism in Theorem
\ref{wrapcy} (and indeed, more directly the statement that $\co$ is an
isomorphism) are consequences of the following:
\begin{thm}[Resolution of the diagonal]\label{resdiag}
    Suppose $M$ is non-degenerate, with essential collection $\mc{L}$. Then in
    the category $\w^2$, $\Delta$ is split-generated by products of objects in
    the collection $\mc{L}$. By Proposition \ref{splitfull}, $\mathbf{M}$ is
    full on $\w^2$.  
\end{thm}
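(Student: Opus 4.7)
The plan is to use the essential Hochschild cycle $\alpha \in CC_*(\w)$ satisfying $[\oc](\alpha) = 1 \in SH^*(M) = H^*(\hom_{\w^2}(\Delta,\Delta))$ to exhibit $\id_\Delta$ as a composition factoring through a twisted complex built from products in $\mc{L}$. The starting observation is that the mixed morphism spaces $\hom_{\w^2}(L \times K, \Delta)$ and $\hom_{\w^2}(\Delta, L \times K)$ are computable via a Floer trick: applied to split Hamiltonians on $M^- \times M$ and the diagonal correspondence, they reduce to wrapped Floer complexes of the factors in $M$. Moreover, $\ainf$-compositions along chains of morphisms $\Delta \to L_0 \times K_0 \to \cdots \to \Delta$ translate, in a controlled way, into Hochschild-type expressions assembled from the $\ainf$ operations of $\w$; this compatibility is essentially built into the quilted construction underlying both the functor $\mathbf{M}$ and the map $[\oc]$.

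Given such an $\alpha$, decomposed into summands of the form $x_0 \otimes x_1 \otimes \cdots \otimes x_k$ with entries $x_j \in CW^*(L_j, L_{j+1})$ (cyclic indices, all $L_j \in \mc{L}$), I would assemble the interior entries $x_1, \ldots, x_{k-1}$ into a twisted complex $T_\alpha$ in $\w^2$ whose underlying summands are the products $L_0 \times L_k$. The twisting differential is built from the $\ainf$ operations of $\w$ applied to the interior entries, and the Hochschild-cycle equation for $\alpha$ is exactly the Maurer-Cartan equation ensuring $T_\alpha$ is a valid twisted complex. The remaining entries $x_0$, together with the identifications of mixed morphism spaces above, produce two $\w^2$-morphisms
\begin{equation}
    \psi_\alpha \colon T_\alpha \lra \Delta, \qquad \phi_\alpha \colon \Delta \lra T_\alpha,
\end{equation}
constructed from moduli of discs with boundary conditions along the $L_j$ and a single $\Delta$-seam marked point.

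The heart of the argument is the Cardy-type identification
\begin{equation}
    [\psi_\alpha \circ \phi_\alpha] = [\oc](\alpha) = 1 \in H^*(\hom_{\w^2}(\Delta,\Delta)) = SH^*(M),
\end{equation}
which I would prove by constructing a $1$-parameter family of quilted annular domains interpolating between two degenerations: the neck-stretching limit along a $\Delta$-seam (yielding the composition $\psi_\alpha \circ \phi_\alpha$), and the collapsing limit where the annulus shrinks to a disc with one interior output (yielding the domain defining $[\oc](\alpha)$). Counting boundary points of this $1$-parameter family gives the chain-level equality. Once established, $\id_\Delta$ factors through $T_\alpha$, exhibiting $\Delta$ as a split summand of a twisted complex on products of objects of $\mc{L}$. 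Fullness of $\mathbf{M}$ on all of $\w^2$ then follows from Proposition \ref{splitfull}: any morphism complex involving $\Delta$ is a retract of a morphism complex between products, and $\mathbf{M}$ preserves retracts.

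The \textbf{main obstacle} is the Cardy-type identification above, analogous in spirit to Theorem \ref{cardythm} but involving a different domain family. Constructing the interpolating moduli of quilted annuli, proving transversality compatible with split Hamiltonians and $\Delta$-seams, and identifying the two boundary strata with the composition and with the open-closed operations respectively, constitute the bulk of the analytic work.
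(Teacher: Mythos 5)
Your high-level plan --- read off a twisted complex $T_\alpha$ from a Hochschild cycle $\alpha$ hitting $1 \in SH^*(M)$, then show $\mathrm{id}_\Delta$ factors through $T_\alpha$ via a Cardy-type annulus degeneration --- is morally the right shape, and is essentially the template underlying Abouzaid's split-generation criterion. But the crucial step, ``the Hochschild-cycle equation for $\alpha$ is exactly the Maurer-Cartan equation ensuring $T_\alpha$ is a valid twisted complex,'' does not hold as stated, and this is where the paper has to work hardest. The interior entries $x_j$ of a Hochschild chain are morphisms in $\w$, i.e., elements of a single $CW^*(L_j,L_{j+1})$; but the twisting differential of a twisted complex built on products must consist of morphisms in $\w^2$ between objects $L \times K$ and $L' \times K'$, and by the K\"unneth decomposition these live in the tensor product $CW^*(L',L) \otimes CW^*(K,K')$. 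To promote an $x_j$ to such a morphism one must tensor it with a chain-level identity on the other factor --- but in an $\ainf$ category the unit exists only on homology, not on chains, and the $\ainf$ operations on $\w^2$ evaluated on such ``one-sided'' inputs do not reduce in any naive way to the $\ainf$ operations on $\w$ that appear in the Hochschild differential. Concretely: the $\ainf$ relations for $\w^2$ on arbitrary split morphisms involve products of $\mu^k_\w$'s on both factors independently, not the single-factor collapses that make up $d_{CC_*}(\alpha)$. So the Maurer-Cartan equation for your proposed $T_\alpha$ is not the Hochschild cycle equation, and without further input $T_\alpha$ is not a well-defined object of $\mathrm{Tw}(\w^2_{split})$.

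The paper's actual proof (Theorem \ref{splitgen} of Section \ref{splitgendeltasection}) resolves exactly this by a different route. Rather than building an explicit twisted complex, it uses the bar-complex form of the split-generation criterion (Corollary \ref{threeequivalent}): $\Delta$ is split-generated iff the collapse map $[\mu]: H^*(\mc{Y}^r_\Delta \otimes_{\w^2_{split}} \mc{Y}^l_\Delta) \to H^*(\hom_{\w^2}(\Delta,\Delta))$ hits the unit. It then constructs a chain map $\Gamma: {_2}\r{CC}_*(\w,\w) \to \mc{Y}^r_\Delta \otimes_{\tilde\w^2_{split}} \mc{Y}^l_\Delta$ via an $\ainf$ shuffle product that inserts the formal one-sided homotopy units $e^+ \otimes x$ and $x \otimes e^+$ --- this is precisely the device that makes ``tensoring with a chain-level identity'' sensible, and Proposition \ref{shuffleidentities} (proven via forgetful maps and damped connect sums, not via annuli) is what guarantees $\Gamma$ is a chain map and that $\mu \circ \Gamma = {_2}\oc$. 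Your annular Cardy argument is the right idea for a different theorem (it is essentially Theorem \ref{cardythm}, used later to compare $\oc$ and $\co$), but for resolving the diagonal it cannot substitute for the algebraic problem of defining the twisted complex in the first place. If you want to salvage the twisted-complex phrasing, you would need to import the entire Section \ref{homotopyunits} machinery --- at which point you would be reproving the shuffle identities, and the bar-complex argument is cleaner.
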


Theorem \ref{resdiag}, the most technical of our results, involves
checking a {\it split-generation} criterion introduced into the symplectic
literature by Abouzaid \cite{Abouzaid:2010kx}: In $\w^2$, $\Delta$ is
split-generated by the subcategory $\w^2_{split}$ consisting of product
Lagrangians if and only if the cohomology-level unit $1 \in
H^*(\hom_{\w^2}(\Delta,\Delta))$ is in the image of a natural map from a
certain bar complex involving $\Delta$ and $\w^2_{split}$: 
\begin{equation}
    \label{hitunitdelta1}
    [\mu]: H^*(\mc{Y}^r_{\Delta} \otimes_{\w^2_{split}} \mc{Y}^l_{\Delta}) \lra H^*(\hom_{\w^2}(\Delta,\Delta)).
\end{equation}
As part of the proof of Theorem \ref{resdiag}, we give a natural chain-level
comparison map \begin{equation} \label{gammamap1}
    [\Gamma]: \rm{HH}_*(\w,\w) \lra H^*(\mc{Y}^r_{\Delta} \otimes_{\w^2_{split}}
    \mc{Y}^l_{\Delta})
\end{equation}
between the Hochschild homology and the bar complex in (\ref{hitunitdelta1}).
We also prove that it intertwines the non-degeneracy condition and the
split-generation condition, so there is a commutative diagram
\begin{equation} 
    \label{shufflediag}
    \xymatrix{\r{HH}_*(\w,\w) \ar[r]^{[\Gamma]\ \ \ }\ar[d]^{[\oc]} & H^*(\y^r_\d \otimes_{\w^2_{split}}\y^l_\d) \ar[d]^{[\mu]} \\
    SH^*(M) \ar[r]^{\sim\ \ \ } & H^*(\hom_{\w^2}(\Delta,\Delta))}
\end{equation}
which translates the non-degeneracy condition into (\ref{hitunitdelta1}).  This
comparison map from morphisms and $\ainf$ structure maps in $\w$ to those in
$\w^2$ involves an $\ainf$ generalization of the {\it Eilenberg-Zilberg shuffle
product} map from algebraic topology (see e.g., \cite{Loday:1992fk}*{\S 4.2}),
which involves summing over all possible ways to insert units. In order for
such a map to be a chain map in the $\ainf$ setting, we are led to
geometrically strictify units in both $\w$ and $\w^2$, using an implementation
of the {\it homotopy units} of \cite{Fukaya:2009ve} \cite{Fukaya:2009qf}. This
construction, which may be of independent interest, is the second reason for
the length of this paper.

\subsection{Contents of paper}

In Section \ref{algebrasection}, we collect necessary facts about $\ainf$
categories, modules and bimodules. These include definitions of these objects,
constructions of differential graded categories associated to modules and
bimodules, and a discussion of various tensor products associated to modules
and bimodules. We recall definitions of the Hochschild co-chain and chain
complexes, along with the ring and module structures on these complexes.  In
fact, we give two different chain-level models for each complex, one that seems
more common in the symplectic literature, and a quasi-isomorphic one
coming from categories of bimodules. We discuss canonical modules and bimodules
coming from the Yoneda embedding, and the notion of {\bf homological
smoothness}. We discuss the notion of {\bf split-generation} by a subcategory
and recall a criterion for split-generation. Finally, we introduce operations
of duality for modules and bimodules, in order to define a {\bf dual bimodule}
\begin{equation}
    \mc{B}^!
\end{equation}
associated to any bimodule $\mc{B}$.  Specializing to the diagonal bimodule
$\cc_{\Delta}$ over an $\ainf$ category
$\cc$, we obtain the so-called {\bf inverse dualizing bimodule}
\begin{equation}
    \cc^! := (\cc_{\Delta})^!.
\end{equation}
We prove that, assuming $\cc$ is smooth, tensoring a bimodule with $\cc^!$
amounts to taking the Hochschild cohomology of the bimodule, via a natural
comparison map
\begin{equation}
    [\mu_{\mc{B}}]: H^*(\cc^! \otimes_{\cc\!-\!\cc} \mc{B}) \stackrel{\sim}{\longrightarrow} \r{HH}^*(\cc,\mc{B}),
\end{equation}
an $\ainf$-categorical generalization of a result due to Van den Bergh
\cite{bergh:1998uq} \cite{bergh:2002fk}.

In Sections \ref{chcwsection}, \ref{ocfloersection}, \ref{openclosedmaps}, and
\ref{unstableoperations} we construct and prove various facts about the main
geometric players in Theorem \ref{shhh}. First, in Section \ref{chcwsection},
we present our geometric setup, recalling the notions of {\bf Liouville
manifolds}, {\bf symplectic cohomology}, and {\bf wrapped Floer cohomology}. In
Section \ref{ocfloersection}, we introduce the moduli space of {\bf genus 0
open-closed strings}, which are genus 0 bordered surfaces with signed boundary
and interior marked points equipped with an additional framing around each
interior point, restricting to at most one interior output or two boundary
outputs. We define Floer data for such moduli spaces, 
and construct Floer-theoretic operations for submanifolds of these moduli
spaces. 
In Section \ref{openclosedmaps}, we consider operations induced by various
families of submanifolds. For spheres with two inputs, we obtain the {\bf
product} on symplectic cohomology and from discs with arbitrarily many inputs
and one output, we construct {\bf $\ainf$ structure maps} for the {\bf wrapped
Fukaya category}.
Then, using families of discs with boundary and interior marked points, we
define the open-closed maps 
\begin{equation}
    [\oc]: \r{HH}_{*-n}(\w,\w) \lra SH^*(M)
\end{equation}
from Hochschild homology to symplectic cohomology and
\begin{equation} 
    [\co]: SH^*(M) \lra \r{HH}^*(\w,\w)
\end{equation}
from symplectic cohomology to Hochschild cohomology. Actually we define two
chain-level versions each of these open-closed maps, one for each of our two
explicit Hochschild chain and co-chain complexes from Section
\ref{algebrasection}, and prove that they are homotopic.
Finally, we prove some basic facts about the open-closed maps: the map $[\co]$ is
a morphism of rings, giving $\r{HH}_{*-n}(\w,\w)$ the structure of a module
over $SH^*(M)$ via the existing module structure of $\r{HH}_{*}(\w,\w)$ over
$\r{HH}^*(\w,\w)$. With respect to this induced module structure, we show that
the map $[\oc]$ is a morphism of $SH^*(M)$ modules. This immediately implies
Proposition \ref{surjectivity}.
\noindent Also, in Section \ref{unstableoperations}, we recall operations
arising from unstable surfaces, the identity morphism and homology unit. 

In Section \ref{pairs}, a core technical section, we introduce Floer-theoretic
operations parametrized by new abstract moduli spaces of {\bf pairs of
partially glued discs} modulo simultaneous automorphisms. We begin with the
moduli space of {\bf pairs of discs}, a moduli space that not identical to the
product of the moduli space of discs with itself, which arises as a further
quotient by relative automorphisms of factors. We construct a compactification
of these spaces, along the way examining natural subspaces where various
boundary points are identified. We define a {\bf partial gluing operation}, on
loci where various boundary components between factors coincide, from pairs of
discs to genus 0 open-closed strings.  Finally, we define Floer data and
operations for families of such partially glued pairs of discs.  This
construction allows us in subsequent sections to rapidly construct models for
Floer theory on the product and the quilt functor, using operations entirely in
$M$.

In Section \ref{productsection}, we move to the product manifold 
\begin{equation}
    M^- \times M,
\end{equation}
a symplectic manifold which contains two natural classes of Lagrangians, {\bf
split Lagrangians} of the form $L_i \times L_j$ and the {\bf diagonal}
$\Delta$. In general, there are technical issues defining wrapped Floer theory
and symplectic cohomology of products, coming from a technical point that in
order to define Floer theoretic invariants for $M$, we have fixed a choice of
contact boundary at infinity, and such a choice does not determine a choice of
contact boundary at infinity for the product (rather, it defines a
``normal-crossings contact manifold''). See for example \cite{Oancea:2006oq}
for a solution in the case of symplectic cohomology.  Instead of solving these
issues, in Section \ref{productsection} we use split Hamiltonians to reduce the
moduli spaces of discs in $M^- \times M$ (for Lagrangians in $\ob \w^2$) to
glued pairs of discs in $M$, producing a model 
\begin{equation}
    \w^2
\end{equation}
for the wrapped Fukaya category of $M^- \times M$. The reason we needed {\it glued}
pairs of discs and not disjoint pairs of discs comes of course from the
presence of the diagonal, beginning with a classical observation that the
self-Lagrangian Floer cohomology of the diagonal $\Delta \subset M^- \times M$
computes the ordinary Floer cohomology of $M$.  

In Section \ref{quiltsection}, we study spaces of {\bf quilted strips}, first
introduced by Ma'u \cite{Mau:2010lq} as a variant of the quilted surfaces of
Wehrheim-Woodward and Ma'u-Wehrheim-Woodward \cite{Wehrheim:2010fk}
\cite{Mau:2010fk}.  Using an embedding from labeled quilted strips with seam
labels in $\ob \w^2$ into glued pairs of discs, we obtain Floer-theoretic
operations, which we show in Proposition
\ref{quiltainffunctor} give an $\ainf$ functor
\begin{equation} \label{mfunctor1}
    \mathbf{M}: \w^2 \lra \w\bimod\w
\end{equation} 
where $\w\bimod\w$ is the category of bimodules over $\w$.  This is a bimodule
version of a functoriality result in the sense of \cite{Wehrheim:2010fk} for
the wrapped Fukaya category (this version formulated by \cite{Mau:2010lq}). The
relation to open-closed operations is this: we see that the diagonal $\Delta$
is sent to the diagonal bimodule $\w_\Delta$, and the first order term
\begin{equation}
    [\mathbf{M}^1]: SH^*(M) = HW^*(\Delta, \Delta) \lra \hom_{\w\bimod\w}(\w_{\Delta},\w_{\Delta}) \simeq \r{HH}^*(\w,\w)
\end{equation}
is exactly the closed-open map $[\co]$ (Proposition \ref{m1equalsco}---in fact,
it agrees on the chain level with the second chain-level closed-open map we
defined). We verify that $\mathbf{M}$ sends split Lagrangians to
tensor products of Yoneda modules, and hence, by a version of the Yoneda lemma
for bimodules, $\mathbf{M}$ is always {\it full} on split Lagrangians
(Proposition \ref{splitfull}).  A related observation was first made by
Abouzaid-Smith \cite{Abouzaid:2010vn}, in the compact setting for an analogous
functor with target category endofunctors of {\it extended Fukaya categories}
(instead of bimodules).

In Section \ref{homotopyunits}, another technical section, we introduce the
tool of {\bf homotopy units}, first constructed geometrically by Fukaya, Oh,
Ohta, and Ono \cite{Fukaya:2009qf} in the compact setting. Homotopy units allow
one to geometrically strictify units in an $\ainf$ category, which otherwise
only exist on the homology level.  A basic consequence of homotopy units is
that one can talk about Floer-theoretic operations on the Fukaya category
induced from families of surfaces by forgetting boundary marked points.  Thus,
we first explore and construct Floer theoretic operations with forgotten
boundary points (Section \ref{forgottensubsection}). Then, we develop
homotopies between such Floer theoretic operations and ones in which we had
glued in geometric units instead of forgetting. In order to obtain a
quasi-isomorphic category, we must, as in \cite{Fukaya:2009qf}, construct
operations corresponding to all possible higher homotopies. When done for glued
pairs of discs, (only allowing ourselves to forget boundary marked points with
asymptotic to morphisms between product Lagrangians), we obtain a category
quasi-isomorphic to $\w^2$. The resulting category, which we call
\begin{equation} \label{tw}
    \tilde{\w}^2
\end{equation}
is identical to $\w^2$, except that its morphism spaces contain additional
formal elements of the form $e^+ \otimes x$, $x \otimes e^+$, which we term
{\bf one-sided homotopy units}, (there are also formal elements $f \otimes x$,
$x\otimes f$, corresponding to the homotopy between the geometric unit and
$e^+$). $\ainf$ operations on $\tilde{\w}^2$ involving $e^+$ elements 
involve forgetful maps applied to boundary marked points, and operations
involving $f$ elements correspond to homotopies between forgetful maps and
geometric units. A crucial observation made in Proposition
\ref{shuffleidentities} asserts that $\ainf$ operations on $\tilde{\w}^2$
reduce to operations on $\w$ when some of the inputs are one-sided homotopy
units.

In Section \ref{splitgendeltasection}, we use the tools developed in the
previous section to prove that if $M$ is non-degenerate, then $\Delta$ is
split-generated by product Lagrangians in $\w^2$ (the contents of Theorem
\ref{resdiag}). This, along with the existence of the functor $\mathbf{M}$
which is full on product Lagrangians (Proposition \ref{splitfull}) implies
$\mathbf{M}$ is full on $\Delta$, and in particular $\w$ is smooth (Theorem
\ref{smoothness}).  As an immediate
consequence, the map from $SH^*(M) \lra \r{HH}^*(\w,\w)$, which we have shown
is a part of the data of $[\mathbf{M}^1]$, is an isomorphism, proving part of
Theorem \ref{shhh}.  To prove split-generation of $\Delta$, we construct the
map $\Gamma$ described in (\ref{gammamap1}) between the Hochschild homology of
of $\w$ and a {\it bar complex} appearing in an algebraic split-generation
criterion for $\Delta$ in $\tilde{\w}^2$ (see (\ref{tw})), discussed in Section
\ref{algebrasection}. The map $\Gamma$, an $\ainf$ version of the {\it shuffle
product}, uses the formal elements $e^+ \otimes x$ and $x \otimes e^+$ in an
essential way, and is a chain map intertwining the natural maps in the {\bf
non-degeneracy} and {\bf split-generation} criteria as in (\ref{shufflediag})
thanks to identities involving operations in $\tilde{\w}^2$ applied to formal
units, Proposition \ref{shuffleidentities}.  As a consequence, we deduce that
product Lagrangians split generate $\Delta$, whenever $M$ is non-degenerate.

Since $\w$ is smooth, by Section \ref{algebrasection}, the
bimodule $\w^!$ is perfect and represents Hochschild cohomology.  In Section
\ref{wtowshrieksection}, we construct a geometric morphism of bimodules
\begin{equation}
    \mc{CY}: \w_{\Delta} \lra \w^![n],
\end{equation}
coming from new operations controlled by discs with two negative punctures and
arbitrary positive punctures.  To first order, we show this map agrees on the
chain level with the first order term of the quilt functor 
\[
\mathbf{M}^1: \hom_{\w^2}(\Delta,K \times L) \lra \hom_{\w\!-\!\w}(\w_{\Delta},\mc{Y}^l_K \otimes \mc{Y}^r_K)
\]
constructed in Section \ref{quiltsection} (Proposition \ref{mcy}).
This identification, which implies Theorem \ref{wrapcy}, relies on the
identities for $\w^2$:
\begin{align}
    \label{fpi} \hom_{\w^2}(\Delta,K\times L) &\simeq \hom_{\w}(K,L)[-n],\\
    \hom_{\w^2}(K\times L,\Delta) &\simeq \hom_{\w}(L,K),
\end{align}
(see Propositions \ref{fundamentalproductidentity} and
\ref{categoricalproductidentity}). The latter identity has a categorical shadow
(Proposition \ref{homintodiagonal}), but the former is very special to
categories expected to be non-compact Calabi-Yau. For example, we do not expect
Fukaya-Seidel categories of products of Lefschetz fibrations to satisfy
(\ref{fpi}).

In Section \ref{cardyconditionsection}, we analyze operations coming from
spaces of annuli with many positive boundary marked points on both boundaries
and one negative boundary marked point on the outer boundary. 
We show that degenerations of a codimension 1 family of these annuli give a
relation between $\mc{CY}$ and standard open-closed maps, proving Theorem
\ref{cardythm}.
In the relevant diagram (\ref{cardy}), we have shown in Section
\ref{algebrasection} that if $\w$ is smooth, then the vertical map $\bar{\mu}$
is always a quasi-isomorphism.  We have also shown in Sections
\ref{wtowshrieksection} and \ref{splitgendeltasection} that $[\co]$ and
$[\mc{CY}_\#]$ are isomorphisms.  This implies that $[\oc]$ is also a
quasi-isomorphism, completing the proof of Theorem \ref{shhh}.

Finally, in Section \ref{consequencessection}, we explore a few basic
consequences of this work. For one, we establish in Section
\ref{converseresult} a converse result that if $\Delta$ is split-generated by
product Lagrangians, then $M$ is non-degenerate, so Theorem \ref{shhh}
continues to apply. Then, furthering the relation between Theorem \ref{wrapcy}
and Poincar\'{e} duality, we demonstrate in Section \ref{fundclass} that the
unique pre-image $\sigma \in \r{HH}_*(\w,\w)$ of $1 \in SH^*(M)$, which we call
a {\bf fundamental class}, satisfies
\begin{equation}
    \cdot \cap \sigma: \r{HH}^*(\w,\w) \stackrel{\sim}{\lra} \r{HH}_{*-n}(\w,\w).
\end{equation}
As an application of this circle of ideas, in Section \ref{ringstructure} we
give an explicit formula for the $SH^*(M)$ pair of pants product on Hochschild
homology, using only the operations given by $\mc{CY}$ and the $\ainf$
structure maps of $\w$.

In Appendix \ref{actionsection}, we prove a compactness result for for
the moduli spaces controlling our Floer-theoretic operations, which may be of
independent interest.  Such a result is necessary as we are considering
spaces of maps into a non-compact target $M$.  In order to apply standard
Gromov compactness results, one first must show that spaces of maps with fixed
asymptotics are {\it a priori bounded} in the target. Such results for the
wrapped Fukaya category have generally used a convexity argument
\cite{Abouzaid:2010ly} \cite{Abouzaid:2010kx} or maximum principles
\cite{Seidel:2010fk}, which rely in a strong way on the Hamiltonian flow used
having a rigid form at infinity.  However, to construct higher operations on
symplectic cohomology, we have found it necessary to introduce small time and
surface dependent perturbations that are not of this rigid form.  Our solution
fuses standard maximum principle and convexity arguments with a
slight strengthening of the maximum principle for cylindrical regions of the
source surface, dating back to work of Floer-Hofer \cite{Floer:1994uq} and
Cieliebak \cite{Cieliebak:1994fk}, and also used successfully by Oancea
\cite{Oancea:2008fk}.

Lastly, in Appendix \ref{orientationsection}, we discuss ingredients necessary
to construct all of our operations with appropriate signs over $\Z$ (or
alternatively a field of characteristic other than $2$). We begin the appendix
with a discussion of orientation lines, recall relevant results that orient
moduli spaces of maps, give orientations for the abstract moduli spaces we use,
and demonstrate the theory with a complete calculation of signs in an example.

\subsection*{Conventions} We will for simplicity work over a field
${\mathbb K}$ of arbitrary characteristic (though our results seem valid more
generally, e.g.,  over $\Z$). If $f$ denotes any chain map, we will use $[f]$
to denote the resulting homology-level map.

\subsection*{Acknowledgements}
I would like to thank Mohammed Abouzaid and Denis Auroux (my Ph.D.
advisor) for all of their help with this work and for suggesting this line of
inquiry. As should be evident, the work here has been greatly influenced by the
paper \cite{Abouzaid:2010kx}.  I am also grateful to Yakov Eliashberg and Paul
Seidel for many enlightening discussions, Sikimeti Ma'u for explaining her work
\cite{Mau:2010lq} on quilted strips, and Jonathan Bloom for comments on an
earlier draft. Lastly, I would like to thank the organizers
and attendees of the 2009 AIM Conference on Cyclic Homology and
Symplectic Topology for a stimulating atmosphere that directly inspired much of
this work.  This work was partly completed with the support of an NSF Graduate
Fellowship.

\section{Algebraic preliminaries} \label{algebrasection}

We give an overview of the algebraic technology appearing in this paper:
$\ainf$ categories, functors, modules, bimodules, and Hochschild homology and
cohomology.  We also recall some useful but slightly more involved algebraic
details: the $\ainf$ Yoneda embedding, the K\"{u}nneth formula for split
bimodules, pullbacks of modules and bimodules along functors, and ring/module
structures on Hochschild groups. We also introduce notions of {\bf module}
and {\bf bimodule duality}, in order to ultimately define a natural bimodule
$\cc^!$ associated to any $\ainf$ category $\cc$. None of the material is
completely new, although some of it does not seem to have appeared in the
$\ainf$ or symplectic context.

\subsection{\texorpdfstring{$\ainf$}{A-infinity} algebras and categories} 

\begin{defn} 
    An {\bf $\ainf$ algebra} $\A$ is a graded vector space $\mc{A}$
    together with maps 
    \begin{equation}
        \mu^s_\A: \mc{A}^{\otimes s} \ra \mc{A},\  s\geq 1
    \end{equation} 
    of degree $2-s$ such that the following quadratic relation holds, for each
    $k$:
\begin{equation} 
    \label{ainfeq} \sum_{i,l} (-1)^{\maltese_i} \mu^{k-l+1}_\A(x_k, \ldots, x_{i+l+1}, \mu^{l}_\A(x_{i+l}, \ldots, x_{i+1}), x_i, \ldots, x_1) = 0.
\end{equation}
where the sign is determined by
\begin{equation}
    \maltese_i := |x_1| + \cdots + |x_i| - i.
\end{equation}
\end{defn}
\begin{rem}
    The parity of $\maltese_i$ is the same as the sum of the {\bf reduced
    degrees} $\sum_{j=1}^i ||x_j||$.  Here $||x_j|| = |x_j| - 1$ is the
    degree of $x_j$ thought of as an element of the shifted vector space
    $\mc{A}[1]$. Thus, $\maltese_i$ can be thought of as a Koszul-type sign
    arising as $\mu^l$ acts from the right.
\end{rem}

The first few $\ainf$ relations are, up to sign:
\begin{align}
\mu^1(\mu^1(x)) &= 0\\
 \mu^1(\mu^2(x_0,x_1)) &= \pm \mu^2(\mu^1(x_0),x_1) \pm \mu^2(x_0,\mu^1(x_1)) \\
 \pm\mu^2(\mu^2(x_0,x_1),x_2) \mp \mu^2(x_0,\mu^2(x_1,x_2)) &= \mu^1(\mu^3(x_0,x_1,x_2)) \nonumber \pm \mu^3(\mu^1(x_0),x_1,x_2) \\
 &\pm \mu^3(x_0,\mu^1(x_1),x_2) \pm \mu^3(x_0,x_1,\mu^1(x_2)) 
\end{align}
In particular, the first few equations above imply that
        $\mu^1$ is a differential, 
        (up to a sign change) $\mu^2$ descends to a product on $H^*(\A,\mu^1)$, and 
        the resulting homology-level product $H^*(\mu^2)$ is associative.
$\mu^3$ can be thought of as the associator, and the other $\mu^k$ are higher
homotopies for associativity.

One can also recast the notion of an $\ainf$ algebra in the following way: Let 
\begin{equation}
    T\A[1] = \bigoplus_{i > 0} \A[1]^{\otimes i}
\end{equation}
be the tensor co-algebra of the shifted $\A[1]$. 

Given any map $\phi: T\A[1] \ra \A[1]$, there is a unique so-called {\bf hat extension}
\begin{equation}
    \hat{\phi}: T\A[1] \ra T \A[1]
\end{equation}
specified as follows:
\begin{equation}
    \label{extendphi}
    \hat{\phi}(x_k\otimes \cdots \otimes x_1) := \sum_{i,j} (-1)^{\maltese_i} x_k
    \otimes \cdots \otimes x_{i+j+1} \otimes \phi^{j}(x_{i+j}, \ldots,
    x_{i+1})\otimes x_{i}\otimes \cdots \otimes x_1.  
\end{equation}
The (shifted) $\ainf$ operations $\mu^i$ fit together to form a map 
\begin{equation}
    \mu: T\A[1] \ra \A[1]
\end{equation}
of total degree 1.  Then the $\ainf$ equations, which can be re-expressed as
one equation
\begin{equation}
    \mu \circ \hat{\mu} = 0,
\end{equation}
are equivalent to the requirement that $\hat{\mu}$ is a differential on $T
\A[1]$ \begin{equation}
    \hat{\mu}^2 = 0.
\end{equation}

\begin{rem}
    Actually, the hat extension $\hat{\phi}$ defined above is the unique
    extension satisfying the graded {\it co-Leibniz} rule with respect to the
    natural co-product $\Delta : T\A[1] \lra T\A[1] \otimes T\A[1]$, given by
    \begin{equation}
        \Delta(x_n \otimes  \cdots \otimes x_1) = \sum_i (x_n \otimes \cdots \otimes x_{i+1}) \otimes (x_{i} \otimes \cdots \otimes x_1).
    \end{equation}
    In this way, the association of $(T\A,\hat{\mu})$ to the $\ainf$ algebra
    $(\A,\mu)$ gives an embedding of $\ainf$ algebra structures on a vector
    space to differential-graded co-algebra structures on the tensor algebra
    over that vector space. The chain complex $(T\A, \hat{\mu})$ is called the
     {\bf bar complex} of $\A$.
\end{rem}

The discussion so far generalizes in a straightforward manner to the
categorical setting.
\begin{defn} 
    An {\bf $\ainf$ category} $\mc{C}$ consists of the following data:
\begin{itemize}
\item a collection of objects $\ob \mc{C}$

\item for each pair of objects $X,X'$, a graded vector space $\hom_\cc (X,X')$

\item for any set of $d+1$ objects $X_0, \ldots, X_d$, higher composition maps
    \begin{equation}\label{compositionmaps}
        \mu^d: \hom_\cc (X_{d-1},X_d) \times \cdots \times \hom_\cc (X_{0},X_1) \ra \hom_\cc (X_0,X_d)
    \end{equation}
    of degree $2-d$, satisfying the same quadratic relations as equation
    (\ref{ainfeq}).
\end{itemize}
\end{defn}

\noindent In this paper, we will work with some $\ainf$ categories $\mc{C}$
with finitely many objects $X_1, \ldots, X_k$. As observed in
\cite{Seidel:2009dq} and \cite{Seidel:2008cr}, any such category $\mc{C}$ is
equivalent to an $\ainf$ algebra over the semi-simple ring \[R = \K e_1 \oplus
\cdots \oplus\K e_k,\] which we also call $\mc{C}$. The correspondence is
as follows: as a graded vector space this algebra is 
\begin{equation} \mc{C}  :=
    \bigoplus_{i,j} \hom (X_i,X_j) \end{equation}
with the idempotents $e_i$ of $R$ acting by 
\begin{equation}
    e_s \cdot \mc{C} \cdot e_t = \hom(X_t,X_s).
\end{equation}
Tensor products are now interpreted as being over $R$ (with respect to
composable morphisms), i.e.  \begin{equation}
    \mc{C}^{\otimes r} := \mc{C}^{\otimes_R r} = \bigoplus_{V_0, \ldots, V_r \in \ob
    \mc{C}} \hom(V_{r-1},V_r) \otimes \cdots \otimes \hom(V_0,V_1).
\end{equation} 
In this picture, the $\ainf$ structure on the category $\mc{C}$ is equivalent
to the data of an $\ainf$ structure over $R$ on the graded vector space
$\mc{C}$. Namely, maps
\begin{equation}
    \mu^d: \mc{C}^{\otimes d} \lra \mc{C}
\end{equation}
are by definition the same data as the higher composition maps
(\ref{compositionmaps}).

\begin{defn}
    Given an $\ainf$ category $\cc$, the {\bf opposite category}
    \begin{equation}
        \cc^{op}
    \end{equation}
    is defined as follows: 
    \begin{itemize}
        \item objects of $\cc^{op}$ are the same as objects of $\cc$,
        \item as graded vector spaces, morphisms of $\cc^{op}$ are morphism of
            $\cc$ with objects reversed: \begin{equation}
                \hom_{\cc^{op}}(X,Y) = \hom_{\cc}(Y,X)
            \end{equation}
        \item $\ainf$ operations are, up to a sign, the reversed $\ainf$ operations of $\cc$:
            \begin{equation}
                \mu^d_{\cc^{op}}(x_1, \ldots, x_d) = (-1)^{\maltese_d}\mu^d_{\cc}(x_d, \ldots, x_1)
            \end{equation}
            where $\maltese_d = \sum_{i=1}^d ||x_i||$ is the usual sign.
    \end{itemize}
\end{defn}
The opposite category $\cc^{op}$ can be thought of as an algebra over the
semi-simple ring $R^{op}$.

\subsection{Morphisms and functors}
\begin{defn}
    A {\bf morphism of $\ainf$ algebras} 
    \begin{equation}
        \mathbf{F}: (\A,\mu_{\A}^{\cdot}) \ra
    (\mc{B},\mu_{B}^{\cdot})
    \end{equation}
    is the data of, for each $d\geq 1$, maps of graded vector spaces
    \begin{equation}
        \mathbf{F}^d : \A^{\otimes d} \ra \mc{B}
    \end{equation}
    of degree $1-d$, satisfying the following equation, for each $k$:
    \begin{equation}\label{functoreqn}
        \begin{split}
        \sum_{j; i_1 + \cdots + i_j = k}
        \mu_{\mc{B}}^j(\mathbf{F}^{i_j}(x_k, \ldots, x_{k-i_j+1}),\ldots,\mathbf{F}^{i_1}(x_{i_1},\ldots, x_{1})) =\\
        \sum_{s \leq k,t} (-1)^{\maltese_t} \mathbf{F}^{k-s+1}(x_k,\ldots, x_{t+s+1}, \mu_{\A}^s(x_{t+s}, \ldots, x_{t+1}), x_t, \ldots, x_1).
    \end{split}
    \end{equation}
    Here,
    \begin{equation}
        \maltese_t = \sum_{i=1}^t ||x_i||
    \end{equation}
    is the same (Koszul) sign as before.
\end{defn}
Suppose $\A$ and $\mc{B}$ are algebras over semi-simple rings, or equivalently
$\ainf$ categories. Then, unwinding the definition above leads to the following
categorical notion of functor:
\begin{defn}
    An {\bf $\ainf$ functor} 
    \begin{equation}
        \mathbf{F}: \mc{C} \lra \mc{C}'\end{equation} 
    consists of the following data:
    \begin{itemize}
        \item For each object $X$ in $\mc{C}$, an object $\mathbf{F}(X)$ in
            $\mc{C}'$, 
        \item for any set of $d+1$ objects $X_0, \ldots, X_d$,
            higher maps
    \begin{equation}\label{functormaps}
        \mathbf{F}^d: \hom_\cc (X_{d-1},X_d) \times \cdots \times \hom_\cc
        (X_{0},X_1) \lra \hom_\cc (\mathbf{F}(X_0),\mathbf{F}(X_d))
    \end{equation}
    of degree $1-d$, satisfying the same relations as equation (\ref{functoreqn}).
    \end{itemize}
\end{defn}
The equations (\ref{functoreqn}) imply that the first-order term of any
morphism or functor descends to a cohomology level functor $[\mathbf{F}^1]$. We
say that a morphism $\mathbf{F}$ is a {\bf quasi-isomorphism} if
$[\mathbf{F}^1]$ is an isomorphism.  Call a functor $\mathbf{F}$ {\bf
quasi-full} if $[\mathbf{F}^1]$ is an isomorphism onto a full subcategory of
the cohomology of the image, and call it a {\bf quasi-equivalence} if
$\mathbf{F}$ is also essentially surjective.

\subsection{Unitality} 
There are three a posteriori equivalent definitions of units in the $\ainf$
setting. We will make definitions for $\ainf$ algebras.
\begin{defn} 
    An $\ainf$ algebra $\A$ is {\bf strictly unital} if there is an element
$e_+\in \A$ such that 
\begin{equation}\label{strictuniteqns}
    \begin{split}
       \mu^1(e_+) &= 0,\\
       \mu^2(e_+,x) &= (-1)^{|x|}\mu^2(x,e_+) = x,\mathrm{\ and}\\
       \mu^k(\ldots, e_+, \ldots) &= 0,\ k \geq 3.
   \end{split}
   \end{equation}
\end{defn}
A weaker version, following \cite{Seidel:2008zr}, is the notion of a
homology-level unit.  
\begin{defn} 
    $\A$ is said to be {\bf homologically unital} if there is an element $e \in
    \A$ of degree 0 that descends to a unit on level of homology; i.e. 
    $\left(H^*(\A),H^*(\mu^2),[e]\right)$ is an associative unital algebra. Any
    such element $e$ is called a {\bf homology unit} of $\A$.  
\end{defn}
Although strict unitality is algebraically a desirable property, most often
one can only geometrically construct a homological unit.  Fukaya-Oh-Ohta-Ono
\cite{Fukaya:2009ve} observed that there is a richer structure which can be
constructed geometrically, that interpolates between these two notions.  
\begin{defn} \label{homotopyunitdef}
    Let $\mc{A}$ be an $\ainf$ algebra with homological unit $e$. A {\bf
    homotopy unit} for $(\mc{A},e)$ is an $\ainf$ structure $\mu_{\A'}$ on the
    graded vector space
    \begin{equation}
        \A' := \A \oplus \K f[1] \oplus {\mathbb K} e^+
    \end{equation}
    restricting to the original $\ainf$ structure on $\A$, satisfying
    \begin{equation}
        \mu^1(f) = e^+ - e,
    \end{equation}
    with $e^+$ is a strict unit for $(\A',\mu_{\A'})$.
\end{defn}
As a sanity check, we note that in the definition above, the inclusion
\begin{equation}
    \A \hookrightarrow \A'
\end{equation}
is a quasi-isomorphism.  Moreover, the condition that $e^+$ be a strict unit
determines all $\ainf$ structure maps $\mu_{\A'}$ involving occurrences of
$e^+$. Thus, additional data involved in constructing the required $\ainf$
structure on $\A'$ is exactly contained in operations with occurrences of
of $f$. Thus, the data of a homotopy unit translates into the data of maps
\begin{equation}
    \mathfrak{h}_k: (T\A)^{\otimes k} \ra \A
\end{equation}
such that the operations
\begin{equation}
    \begin{split}
    \mu^{i_1 + \cdots + i_k + k-1}_{\A'}(x^1_1, \ldots,
x^1_{i_1}, f, x^2_1, &\ldots, x^2_{i_2}, \ldots \ldots, f, x^k_{1}, \ldots,
x^k_{i_k})  := \\
& \mathfrak{h}_k (x^1_1 \otimes \cdots \otimes x^1_{i_1}; \ldots ; x^k_1 \otimes \cdots \otimes x^k_{i_k})
\end{split}
\end{equation} 
satisfy the $\ainf$ relations. The $\ainf$ relations can then also be
translated into equations for the $\mathfrak{h}_k$, which we will omit for the
time being; see \cite{Fukaya:2009ve}*{\S 3.3} for greater detail. 

These definitions all admit fairly straightforward categorical generalizations.
For homological unitality, one mandates that each object $X$ contain a homology
level identity morphism $[e_X] \in H^*(\hom_\cc(X,X))$. For strict unitality,
one requires the existence of morphisms $e_{X}^+ \in \hom_{\cc}(X,X)$
satisfying (\ref{strictuniteqns}). Finally, one defines a homotopy unit structure on
$\cc$ to be the structure of an $\ainf$ category on $\cc'$, defined to be $\cc$
enlarged with additional morphisms generated by formal elements $f_X, e_X^+ \in
\hom_\cc(X,X)$, satisfying the same conditions as Definition
\ref{homotopyunitdef}.

By definition any $\ainf$ algebra with a homotopy unit is quasi-equivalent to a
strictly unital one (namely $\A'$) and is homologically unital (with homological
unit $e$). Conversely, it is shown in \cite{Seidel:2008zr} that any
homologically unital $\ainf$ algebra is quasi-equivalent to a strictly unital
or homotopy unital $\ainf$ algebra. The same holds for $\ainf$ categories.

\subsection{Categories of modules and bimodules}
To an $\ainf$ algebra or category $\cc$ one can associate categories of left
$\ainf$ modules and right $\ainf$ modules over $\cc$. These categories are dg
categories, with explicitly describable morphism spaces and differentials.
Similarly, to a pair of $\ainf$ algebras/categories $(\cc, \mc{D})$, one can
associate a dg category of $\ainf$ $\cc\!-\!\mc{D}$ bimodules. These dg
categories can be thought of as 1-morphisms in a two-category whose objects are
$\ainf$ categories.
\begin{rem}
    The fact that module categories over $\cc$ are dg categories comes from an
    interpretation of left/right module categories over $\cc$ as categories of
    (covariant/contravariant) $\ainf$ functors from $\cc$ into chain complexes.
    The dg structure is then inherited from the dg structure on chain
    complexes.  Similarly, $\cc\!-\mc{D}$ bimodules can be thought of as
    $\ainf$ {\bf bi-functors} from the $\ainf$ {\bf bi-category} $\cc^{op}
    \times \mc{D}$ into chain complexes.  We will not pursue this viewpoint
    further, and instead refer the reader to \cite{Seidel:2008zr}*{\S (1j)}.  
\end{rem}

\begin{defn} \label{leftmoduledef}
    A {\bf left $\cc$-module} $\mc{N}$ consists of the following data:
    \begin{itemize}
        \item For $X \in \ob \cc$, a graded vector space $\mc{N}(X)$.
        \item For $r\geq 0$, and objects $X_0, \ldots, X_r \in \ob \cc$, module structure maps
            \begin{equation}
                \mu^{r|1}_{\mc{N}}: \hom_{\cc}(X_{r-1},X_r) \otimes \cdots \hom_{\cc}(X_0, X_1) \otimes \mc{N}(X_0) \lra \mc{N}(X_r)
            \end{equation}
            of degree $1-r$, satisfying the following analogue of the $\ainf$
            equations, for each $k$: 
            \begin{equation}\label{ainfmodleft}
                \begin{split}
                    \sum (-1)^{\maltese_0^s}\mu^{k-j+1|1}_{\mc{N}}(x_s, \ldots, x_{s+j+1}, \mu^j_{\cc}(x_{s+j}, \ldots, x_{s+1}), x_{s}, \ldots,  x_1, \mathbf{n}) \\
                    + \sum \mu^{s|1}_{\mc{N}}(x_1, \ldots,x_s, \mu^{k-s|1}_{\mc{N}}(x_{s+1}, \ldots,  x_k, \mathbf{n})) = 0.
                \end{split}
            \end{equation}
    \end{itemize}
    Here, the sign
    \begin{equation}
        \maltese_0^s := |\mathbf{n}| + \sum_{i=1}^s ||x_i||
    \end{equation}
    is given by the sum of the degree of $\mathbf{n}$ plus the reduced degrees
    of $x_1, \ldots, x_s$.  
\end{defn}
\noindent The first two equations
\begin{equation}
    \begin{split}
        (\mu^{0|1}_{\mc{N}})^2 &= 0\\
        \mu^{1|1}_{\mc{N}} (a,\mu^{0|1}_{\mc{N}}(\mathbf{m})) 
        \pm \mu^{1|1}_{\mc{N}}(\mu^1_\cc(a),\mathbf{m}) &= 
        \pm \mu^{0|1}_{\mc{N}} (\mu^{1|1}_{\mc{N}}(a,\mathbf{m})) 
    \end{split}
\end{equation}
imply that $\mu^{0|1}_{\mc{N}}$ is a differential and that the first module
multiplication $\mu^{1|1}_{\mc{N}}$ descends to homology.  Right modules have
an essentially identical definition, with a direction reversal and slightly
different signs.
\begin{defn} \label{rightmoduledef}
    A {\bf right $\cc$-module} $\mc{M}$ consists of the following data:
    \begin{itemize}
        \item For $X \in \ob \cc$, a graded vector space $\mc{M}(X)$.
        \item For $r\geq 0$, and objects $X_0, \ldots, X_r \in \ob \cc$, module structure maps
            \begin{equation}
                \mu^{1|r}_{\mc{M}}: \mc{M}(X_0) \otimes \hom_{\cc}(X_{1},X_0) \otimes \cdots \hom_{\cc}(X_{r-1}, X_{r}) \lra \mc{M}(X_r)
            \end{equation}
            of degree $1-r$, satisfying the following analogue of the $\ainf$
            equations, for each $k$: 
            \begin{equation}\label{ainfmodright}
                \begin{split}
                    \sum (-1)^{\maltese_{-k}^{-(s+j+1)}}\mu^{1|k-j+1}_{\mc{M}}(\mathbf{m}, x_1, \ldots, x_{s}, \mu^j_{\cc}(x_{s+1}, \ldots, x_{s+j}), x_{s+j+1}, \ldots,  x_k) \\
                    + \sum (-1)^{\maltese_{-k}^{-(s+1)}}\mu^{1|s}_{\mc{M}}(\mu^{1|k-s}_{\mc{M}}(\mathbf{m}, x_1, \ldots, x_{s}),x_{s+1}, \ldots,  x_k) = 0.
                \end{split}
            \end{equation}
    \end{itemize}
    Here, the signs as usual denote the sum of the reduced degrees of elements to the right:
    \begin{equation}
        \maltese_{-k}^{-a} := \sum_{i=a}^k ||x_i||.
    \end{equation}
\end{defn}
\noindent Again, the first two equations
imply that $\mu^{1|0}_{\mc{M}}$ is a differential and that the first module
multiplication $\mu^{1|1}_{\mc{M}}$ descends to homology. Thus, for right or
left modules, one can talk about unitality.
\begin{defn}[Compare \cite{Seidel:2008zr}*{\S (2f)}] 
    A left (right) module is {\bf homologically-unital} if the underlying
    cohomology left (right) modules are unital; that is, for any $X \in \ob
    \cc$ with homology unit $e_X$, the cohomology level module multiplication
    by $[e_X]$ is the identity.
\end{defn}
\noindent Now, let $\cc$ and $\mc{D}$ be $\ainf$ categories.
\begin{defn}  \label{bimoduledef} An $\ainf$ {\bf
    $\cc\!-\!\dd$  bimodule} $\mc{B}$ consists of the following data:
\begin{itemize}
\item for $V \in \ob \cc$, $V' \in \ob \dd$, a graded vector space
    $\mc{B}(V,V')$

\item for $r,s \geq 0$, and objects $V_0, \ldots, V_r \in \ob \cc$, $W_0,
\ldots, W_s \in \ob \cc'$, bimodule structure maps 
\begin{equation}
\begin{split}
    \mu^{r|1|s}_{\mc{B}}:&\hom_{\cc}(V_{r-1},V_r) \times \cdots
    \hom_{\cc}(V_0,V_1) \times \mc{B}(V_0,W_0) \times \\
    &\times\hom_{\dd}(W_1,W_0) \times \cdots \times \hom_{\dd}(W_{s},W_{s-1})
    \longrightarrow \mc{B}(V_r,W_s) 
\end{split}
\end{equation}
of degree $1-r-s$, 
\end{itemize}
such that the following equations are satisfied, for each $r\geq 0$, $s\geq 0$:
\begin{equation}
    \label{ainfbimod}
    \begin{split}
        &\sum (-1)^{\maltese_{-s}^{-(j+1)}}\mu^{r-i|1|s-j}_{\mc{B}} (v_r, \ldots, v_{i+1}, \mu^{i|1|j}_{\mc{B}}(v_{i}, \ldots, v_{1},\mathbf{b}, w_1, \ldots, w_{j}), w_{j+1} \ldots, w_{s}) \\
        &+\sum (-1)^{\maltese_{-s}^k}\mu^{r-i+1|1|s}_{\mc{B}} (v_r, \ldots, v_{k+i+1}, \mu^{i}_{\cc}(v_{k+i}, \ldots, v_{k+1}), v_k, \ldots, v_1, \mathbf{b}, w_1, \ldots, w_s) \\
        &+\sum (-1)^{\maltese_{-s}^{-(l+j+1)}}\mu^{r|1|s-j+1}_{\mc{B}}(v_r,\ldots, v_1, \mathbf{b},w_1, \ldots, w_l, \mu^{j}_{\dd} (w_{l+1}, \ldots, w_{l+j}), w_{l+j+1} \ldots, w_s) \\
       &\ \ \ \ \ \  = 0.
\end{split} 
\end{equation} 

The signs above are given by the sum of the
degrees of elements to the right of the inner operation, with the convention
that we use {\bf reduced degree} for elements of $\cc$ or $\dd$ and {\bf full
degree} for elements of $\mc{B}$. Thus, 
\begin{align}
    \maltese_{-s}^{-(j+1)} &:= \sum_{i=j+1}^s ||w_i||,\\
    \maltese_{-s}^{k} &:= \sum_{i=1}^s ||w_i|| + |\mathbf{b}| + \sum_{j=1}^k ||v_j||.
\end{align}
\end{defn}
\noindent Once more, the first few equations imply that $\mu^{0|1|0}$ is a
differential, and the left and right multiplications $\mu^{1|1|0}$ and
$\mu^{0|1|1}$ descend to homology.
\begin{defn}
    Let $\cc$ and $\mc{D}$ be homologically-unital $\ainf$ categories, and
    $\mc{B}$ a $\cc\!-\!\mc{D}$ bimodule. $\mc{B}$ is {\bf
    homologically-unital} if the homology level multiplications $[\mu^{1|1|0}]$
    and $[\mu^{0|1|1}]$ are unital, i.e.  homology units in $\cc$ and $\mc{D}$
    act as the identity.
\end{defn}
\noindent We will frequently refer to $\cc\!-\!\cc$ bimodules as simply
$\cc$-bimodules, or bimodules over $\cc$. 

Now, we define the $dg$-category structure on various categories of modules and
bimodules. For the sake of brevity, we assume that $\ainf$ categories $\cc$ and
$\mc{D}$ have finitely many objects, and can thus be thought of as algebras
over semi-simple rings $R$ and $R'$ respectively. In this language, a left
(right) $\cc$-module $\mc{N}$ ($\mc{M}$) is the data of an $R$ ($R^{op}$)
vector space $\mc{N}$ ($\mc{M}$) together with maps
\begin{equation}
    \begin{split}
        \mu^{r|1}_{\mc{N}}: \cc^{\otimes_R r} \otimes_R \mc{N} &\lra \mc{N},\ r \geq 0\\
        \mu^{1|s}_{\mc{M}}:  \mc{M} \otimes_R \cc^{\otimes_R s} &\lra \mc{M},\ s \geq 0
\end{split}
\end{equation}
satisfying equations (\ref{ainfmodleft}) and (\ref{ainfmodright}) respectively. Similarly, a $\cc\!-\!\mc{D}$ bimodule $\B$ is an $R \otimes R'^{op}$ vector space $\B$ together with maps
\begin{equation}
    \mu^{r|1|s}_{\mc{B}}: \cc ^{\otimes_R r} \otimes_R \B \otimes_{R'} \dd^{\otimes_{R'}
s} \lra \mc{B}
\end{equation} 
satisfying (\ref{ainfbimod}). We can combine the structure maps
 $\mu^{r|1|s}_\B$, $\mu^{1|s}_{\mc{M}}$, $\mu^{r|1}_{\mc{N}}$
for all $r,s$ to form total (bi)-module structure maps
\begin{equation} \label{totalmodulestructures}
    \begin{split}
        \mu_\B := \oplus \mu^{r|1|s}_\B: T\cc \otimes \B \otimes T\dd &\lra \B\\
        \mu_{\mc{N}} := \oplus \mu^{r|1}_{\mc{N}}: T\cc \otimes \mc{N} &\lra \mc{N}\\
        \mu_{\mc{M}} := \oplus \mu^{1|s}_{\mc{M}}:  \mc{M} \otimes T\cc &\lra \mc{M}.
    \end{split}
\end{equation}
The {\bf hat extensions} of these maps 
\begin{equation} \label{totalhatmodulestructures}
    \begin{split}
        \hat{\mu}_\B : T\cc \otimes \mc{B} \otimes T\mc{D} &\lra T\cc \otimes \mc{B}\otimes T\mc{D}\\
        \hat{\mu}_{\mc{N}} : T\cc \otimes \mc{N} &\lra T\cc \otimes \mc{N}\\
        \hat{\mu}_{\mc{M}} :  \mc{M} \otimes T\cc &\lra \mc{M} \otimes T\cc.
    \end{split}
\end{equation}
sum over all ways to collapse subsequences with either module/bimodule or
$\ainf$ structure maps, as follows:
\begin{align}
 \hat{\mu}_\B(c_k, &\ldots, c_1, \mathbf{b}, d_1, \ldots, d_l) := \\
 \nonumber &\sum (-1)^{\maltese_{-l}^{-(t+1)}} c_k \otimes \cdots \otimes c_{s+1} \otimes \mu_\B^{s|1|t}(c_{s}, 
\ldots, c_1, \mathbf{b}, d_1, \ldots, d_{t}) \otimes d_{t+1} \otimes \cdots \otimes d_l\\
\nonumber &+ \sum (-1)^{\maltese_{-l}^{s}}c_k\otimes \cdots \otimes c_{s+i+1} \otimes \mu_\cc^i(c_{s+i}, 
\ldots, c_{s+1}) \otimes c_{s} \otimes \cdots \otimes c_1 \otimes \\
\nonumber &\ \ \ \ \ \ \ \ \ \ \ \ \ \mathbf{b} \otimes d_1 \otimes \cdots \otimes d_l\\
\nonumber &+ \sum (-1)^{\maltese_{-l}^{-(j+t+1)}}c_k \otimes \cdots \otimes c_1 \otimes \mathbf{b} \otimes d_1 \otimes \cdots \otimes d_{j} \otimes \\
\nonumber &\ \ \ \ \ \ \ \ \ \ \ \ \ \mu_{\mc{D}}^t(d_{j+1}, \ldots d_{j+t}) \otimes d_{j+t+1} \otimes \cdots \otimes d_l\\
 \hat{\mu}_\N(c_k,& \ldots, c_1, \mathbf{n}) :=\\
 &\nonumber\sum c_k \otimes \cdots \otimes c_{s+1} \otimes \mu_\N^{s|1}(c_{s}, 
\ldots, c_1, \mathbf{n})\\
\nonumber &+ \sum (-1)^{\maltese_0^{s}}c_k\otimes \cdots \otimes c_{s+i+1} \otimes \mu_\cc(c_{s+i}, 
\ldots, c_{s+1}) \otimes c_{s} \otimes \cdots \otimes c_1 \otimes 
\mathbf{n} \\
 \hat{\mu}_\M( \mathbf{m},& d_1, \ldots, d_l) :=\\
 \nonumber &\sum (-1)^{\maltese_{-l}^{-(t+1)}} \mu_\B^{1|t}( \mathbf{m}, d_1, \ldots, d_{t}) \otimes d_{t+1} \otimes \cdots \otimes d_l\\
\nonumber &+ \sum (-1)^{\maltese_{-l}^{-(t+j+1)}}\mathbf{m} \otimes d_l \otimes \cdots d_{t} \otimes \mu_{\cc}^j(d_{t+1}, \ldots,  d_{t+j}) \otimes d_{t+j+1} \otimes \cdots \otimes d_l,
\end{align}
with signs as specified in Definitions \ref{bimoduledef}, \ref{leftmoduledef},
\ref{rightmoduledef}.  Then the $\ainf$ bimodule and module equations, which
can be concisely written as 
\begin{equation}
    \begin{split}
    \mu_\B \circ \hat{\mu}_\B &= 0,\\
    \mu_\N \circ \hat{\mu}_\N &= 0,\\
    \mu_\M \circ \hat{\mu}_\M &= 0,
\end{split}
\end{equation}
are equivalent to requiring that the hat extensions
(\ref{totalhatmodulestructures}) are differentials.  
\begin{rem}
    Actually, the hat extensions of the maps $\mu_{\mc{N}}$, $\mu_{\mc{M}}$,
    $\mu_{\mc{B}}$ are the unique extensions of those maps which are
a bicomodule co-derivation with respect to the structure of $T\cc \otimes \M
\otimes T\dd$ as a bicomodule over differential graded co-algebras
$(T\cc,\hat{\mu}_\cc)$, $(T\dd,\hat{\mu}_{\dd})$. A good reference for this
perspective, which we will not spell out more, is \cite{Tradler:2008fk}.
\end{rem}
\begin{defn}
    A {\bf pre-morphism} of left $\cc$ modules of degree $k$
    \begin{equation}
        \mc{H}: \mc{N} \lra \mc{N}'
    \end{equation}
    is the data of maps
    \begin{equation}
        \mc{H}^{r|1}: \cc^{\otimes r} \otimes \mc{N}  \lra \mc{N}',\ r \geq 0
    \end{equation}
    of degree $k-r$. These can be packaged together into a total pre-morphism map
    \begin{equation}
        \mc{H} = \oplus \mc{H}^{r|1}: T \cc \otimes \mc{N} \lra \mc{N}'.
    \end{equation}
\end{defn}
\begin{defn}
    A {\bf pre-morphism} of right $\cc$ modules of degree $k$
    \begin{equation}
        \mc{G}: \mc{M} \lra \mc{M}'
    \end{equation}
    is the data of maps
    \begin{equation}
        \mc{G}^{1|s}: \mc{M} \otimes \cc^{\otimes s} \lra \mc{M}',\ s \geq 0
    \end{equation}
    of degree $k-s$. 
    These can be packaged together into a total pre-morphism map
    \begin{equation}
        \mc{G} = \oplus \mc{G}^{1|s}: \mc{M} \otimes T \cc \lra \mc{M}'.
    \end{equation}
\end{defn}
\begin{defn}
    A {\bf pre-morphism of $\cc\!-\!\dd$
bimodules} of degree $k$
\begin{equation}
    \mc{F}: \mc{B} \lra \mc{B}'
\end{equation}
is the data of maps 
\begin{equation}
    \mc{F}^{r|1|s}:  \cc^{\otimes r} 
    \otimes \mc{B} \otimes
    \dd^{\otimes s} \longrightarrow \mc{B}',\ r,s \geq 0.  
\end{equation}
of degree $k-r-s$.
These can be packaged together into a total pre-morphism map
    \begin{equation}
        \mc{F} := \oplus \mc{F}^{r|1|s}: T\cc\otimes \mc{B} \otimes T \mc{D} \lra \mc{B}'.
    \end{equation}
\end{defn}
\begin{rem}
    Such morphisms are said to be degree $k$ because the induced map 
    \begin{equation}
        \mc{F}: T \cc[1] \otimes \mc{B} \otimes T \dd[1] \lra \mc{B}'
    \end{equation}
    has graded degree $k$.
\end{rem}
Now, any collapsing maps of the form 
\begin{equation}
    \begin{split}
        \phi: T\cc \otimes \B \otimes T\mc{D} &\lra \B'\\
        \psi: T\cc \otimes \mc{N} &\lra \mc{N}'\\
        \rho: \mc{M} \otimes T\cc &\lra \mc{M}'
    \end{split}
\end{equation}
admit, in the style of (\ref{extendphi}), {\bf hat extensions}
\begin{equation}
    \begin{split}
        \hat{\phi}: T\cc \otimes \B \otimes T\mc{D} &\lra T\cc \otimes \B' \otimes T\mc{D}\\
        \hat{\psi}: T\cc \otimes \mc{N} &\lra T\cc \otimes \mc{N}'\\
        \hat{\rho}: \mc{M} \otimes T\cc &\lra \mc{M}' \otimes T\cc
    \end{split}
\end{equation}
which sum over all ways (with signs) to collapse a subsequence with $\phi$,
$\psi$, and $\rho$ respectively:
\begin{equation}
    \begin{split}
\hat{\mc{\phi}}(c_k,& \ldots, c_1, \mathbf{b}, d_1, \ldots, d_l) := \\
&\sum (-1)^{|\phi|\cdot \maltese_{-l}^{-(t+1)}} c_k \otimes \cdots \otimes
c_{s+1} \otimes \mc{\phi}(c_{s}, \ldots, c_1,\mathbf{b},
d_1, \ldots, d_t) \otimes d_{t+1} \otimes \cdots \otimes d_l.\\
\hat{\mc{\psi}}(c_k,& \ldots, c_1, \mathbf{n}) := \\
&\sum c_k \otimes \cdots \otimes c_{s+1} \otimes \mc{\psi}(c_{s}, \ldots,
c_1,\mathbf{m}).\\
\hat{\mc{\rho}}(\mathbf{m},& c_1, \ldots, c_l) :=\\ 
&\sum (-1)^{|\rho| \cdot \maltese_{-l}^{-(t+1)}}
\mc{\rho}(\mathbf{m}, c_1, \ldots, c_t) \otimes c_{t+1} \otimes \cdots\otimes c_l.
\end{split}
\end{equation} 
\begin{rem}
Once more, the hat extensions are uniquely specified by the requirements that
$\hat{\psi}$ and $\hat{\rho}$ be (left and right) co-module homomorphisms over
the co-algebra $(T\cc, \Delta_{\cc})$, and that $\hat{\phi}$ be a bi-co-module
homomorphism over the co-algebras $(T\cc, \Delta_{\cc})$ and
$(T\mc{D},\Delta_{\mc{D}})$. In this manner, categories of modules and
bimodules over $\ainf$ algebras give categories of dg comodules and dg
bicomodules over the associated dg co-algebras. See \cite{Tradler:2008fk}.
\end{rem}
\noindent It is now easy to define composition of pre-morphisms: 
\begin{defn}If
$\mc{F}_1$ is a pre-morphism of $\ainf$ left modules/right modules/bimodules
from $\mc{M}_0$ to $\mc{M}_1$ and $\mc{F}_2$ is a pre-morphism of $\ainf$ left
modules/right modules/bimodules from $\mc{M}_1$ to $\mc{M}_2$, define the {\bf
composition} $\mc{F}_2 ``\circ" \mc{F}_1$ as:
\begin{equation}
    \mc{F}_2 ``\circ" \mc{F}_1 := \mc{F}_2 \circ \hat{\mc{F}}_1.
\end{equation} 
\end{defn}
\begin{rem}
Observe the hat extension of the composition agrees with the composition of the
hat extensions, e.g. $\widehat{\mc{F}_2 ``\circ" \mc{F}_1} = \hat{\mc{F}}_2
\circ \hat{\mc{F}}_1$.  
i.e. this notion agrees with usual composition of
homomorphisms of comodules/bi-comodules.
\end{rem}
\noindent Similarly, there is a differential on pre-morphisms. 
\begin{defn}
    If $\mc{F}$ is a pre-morphism of left modules/right modules/bimodules from
    $\mc{M}$ to $\mc{N}$ with associated bimodule structure maps $\mu_{\mc{M}}$
    and $\mu_{\mc{N}}$, define the {\bf differential} $\delta \mc{F}$ 
    to be:
    \begin{equation}
        \delta (\mc{F}) := \mu_{\mc{N}} \circ \hat{\mc{F}} -(-1)^{|\mc{F}|}\mc{F} \circ \hat{\mu}_{\mc{M}}.
\end{equation}
\end{defn}
The fact that $\delta^2 = 0$ is a consequence of the $\ainf$ module or bimodule
equations for $\mc{M}$ and $\mc{N}$. 
As one consequence of $\delta(\mc{F}) = 0$, the first order
term $\mc{F}^{0|1}$, $\mc{F}^{1|0}$ or $\mc{F}^{0|1|0}$ descends to a
cohomology level module or bimodule morphism.  Call any pre-morphism $\mc{F}$
of bimodules or modules a {\bf quasi-isomorphism} if $\delta(\mc{F}) = 0$, and
the resulting cohomology level morphism $[\mc{F}]$ is an isomorphism.

\begin{rem}We have developed modules and bimodules in parallel, but note now
    that modules are a special case of bimodules in the following sense: a left
    $\ainf$ module (right $\ainf$ module) over $\cc$ is a $\cc\!-\!\K$
    ($\K\!-\!\cc$) bimodule $\mc{M}$ with structure maps $\mu^{r|1|s}$ trivial for
    $s>0$ ($r>0$). Thus we abbreviate $\mu^{r|1|0}$ by $\mu^{r|1}$ (and
    correspondingly, $\mu^{0|1|s}$
    by $\mu^{1|s}$). 
\end{rem}
Thus, we have seen that $\cc\!-\!\mc{D}$ bimodules, as well as left and right
$\cc$ modules form dg categories which will be denoted 
\begin{equation}
    \begin{split}
        \cc&\bimod \mc{D}\\
        \cc&\mod\\
        &\ \rmod\cc 
    \end{split}
\end{equation} 
respectively.

\subsection{Tensor products}
There are several relevant notions of tensor product for modules and bimodules.
The first notion, that of tensoring two bimodules over a single common side,
can be thought of as composition of 1-morphisms in the 2-category of $\ainf$
categories.  
\begin{defn}
    Given a $\mc{C}\!-\!\mc{D}$ bimodule $\M$ and an $\mc{D}\!-\!\mc{E}$ bimodule $\N$, the
    {\bf (convolution) tensor product over $\mc{D}$} 
    \begin{equation}\M \otimes_{\mc{D}}
        \N\end{equation}
        is the $\mc{C}\!-\!\mc{E}$ bimodule given by
\begin{itemize}
\item underlying graded vector space 
    \begin{equation}
        \M \otimes T\mc{D} \otimes \N;
\end{equation}

\item differential 
    \begin{equation}
        \mu^{0|1|0}_{\M \otimes_{\mc{D}} \N}: \M \otimes T\mc{D} \otimes \N \ra \M \otimes T\mc{D} \otimes \N
\end{equation}
    given by
    \begin{equation}
        \begin{split}
        \mu^{0|1|0}(&\mathbf{m}, d_1, \ldots, d_k,\mathbf{n}) =\\
        &\sum (-1)^{\maltese_{-(k+1)}^{-(t+1)}} \mu_\M^{0|1|t} (\mathbf{m}, d_1, \ldots, d_t) \otimes d_{t+1} 
        \otimes \cdots \otimes d_k \otimes \mathbf{n} \\
        &+ \sum \mathbf{m} \otimes d_1 \otimes \cdots \cdots \otimes d_{k-s} \otimes \mu_\N^{s|1|0}(d_{k-s+1}, \ldots, d_k, \mathbf{n})\\
        &+ \sum (-1)^{\maltese_{-(k+1)}^{-(j+i+1)}}\mathbf{m} \otimes d_1 \otimes \cdots \otimes d_j \otimes \mu^i_{\mc{A}}(d_{j+1}, \ldots, d_{j+i}) \otimes \\
        &\ \ \ \ \ \ \ \ \ \ \ \ \ d_{j+i+1} \otimes \cdots \otimes  d_k \otimes \mathbf{n}.
    \end{split}
    \end{equation}

\item for $r$ or $s$ $>0$, higher bimodule maps 
    \begin{equation}
        \mu^{r|1|s}_{\M\otimes_{\mc{D}}\N} : \cc^{\otimes r} \otimes \M \otimes T\mc{D}
        \otimes \N \otimes \mc{E}^{\otimes s} \lra \M \otimes T\mc{D} \otimes \N
   \end{equation}
    given by: 
    \begin{equation}
        \begin{split}
        \mu^{r|1|0} & (c_1, \ldots, c_r, \mathbf{m}, d_1, \ldots, d_k, 
        \mathbf{n}) = \\
        &\sum_{t} (-1)^{\maltese_{-(k+1)}^{-(t+1)}}\mu^{r|1|t}_\M(c_1, \ldots, c_r,\mathbf{m}, 
        d_1, \ldots, d_t)\otimes d_{t+1} \otimes \cdots \otimes d_k 
        \otimes \mathbf{n}
    \end{split}
    \end{equation}
    \begin{equation}
        \begin{split}
        \mu^{0|1|s}&(\mathbf{m}, d_1, \ldots, d_k, \mathbf{n}, e_1, 
        \ldots, e_s) = \\
        &\sum_j \mathbf{m} \otimes d_1 \otimes \cdots \otimes
        d_{k-j} \otimes \mu^{j|1|s}_\N(d_{k-j+1}, \ldots, d_k, \mathbf{n}, 
        e_1, \ldots, e_s)
    \end{split}
    \end{equation}
    and 
    \begin{equation}
        \mu^{r|1|s} = 0\mathrm{\ if\ } r>0\ \r{and}\ s > 0.
    \end{equation}
\end{itemize} 
In all equations above, the sign is the sum of degrees of all elements to the
right, using reduced degree for elements of $\mc{A}$ and full degree for
elements of $\mc{N}$:  
\begin{equation}
    \maltese_{-(k+1)}^{-t} := |\mathbf{n}| + \sum_{i=t}^{k} ||d_i||.
\end{equation}
\end{defn}
\noindent One can check that these maps indeed give $\M \otimes_{\mc{D}} \N$
the structure of an $\cc\!-\!\mc{E}$ bimodule. As one would expect from a
two-categorical perspective, convolution with $\mc{N}$ gives a dg functor 
\begin{equation}
\cdot \otimes_{\mc{D}} \N: \cc\bimod\mc{D} \longrightarrow \cc\bimod\mc{E}.
\end{equation}
Namely, there is an induced map on morphisms, which we will omit for the time
being.

As a special case, suppose $\M^r$ is a right $\A$ module and $\N^l$ is a left
$\A$ module. Then, thinking of $\M$ and $\N$ as ${\mathbb K}\!-\!\A$ and
$\A\!-\!{\mathbb K}$ modules respectively, there two possible one-sided tensor
products.  The tensor product over $\A$
\begin{equation}\label{tensorbarcomplex}
    \M^r \otimes_\A \N^l
\end{equation}
is by definition the graded vector space
\begin{equation} 
    \M^r \otimes T\A \otimes \N^l
\end{equation}
with differential 
\begin{equation}
    d_{\M \otimes_\A \N}: \M^r \otimes T\A \otimes \N^l \ra \M^r \otimes T\A \otimes \N^l
\end{equation}
given by
    \begin{equation}
        \begin{split}
         d(&\mathbf{m},a_1, \ldots, a_k,\mathbf{n}) =\\
         &\sum (-1)^{\maltese_{-(k+1)}^{-(r+1)}}\mu_\M^{1|r}(\mathbf{m}, a_1, \ldots, a_r) \otimes a_{r+1} 
        \otimes \cdots \otimes a_k \otimes \mathbf{n} \\
         &+ \sum \mathbf{m} \otimes a_1 \otimes \cdots \cdots a_{k-s} \otimes 
         \mu_\N^{s|1}(a_{k-s+1}, \ldots, a_k, \mathbf{n})\\
         &+ \sum (-1)^{\maltese_{-(k+1)}^{-(s+j+1)}}\mathbf{m} \otimes a_1 \otimes \cdots \otimes 
         \mu^j_\A(a_{s+1}, \ldots, a_{s+j})\otimes a_{s+j+1} \otimes \cdots 
         \otimes a_k \otimes \mathbf{n}.
    \end{split}
\end{equation}
In the opposite direction, tensoring over ${\mathbb K}$, we obtain the
{\it product $\A\!-\!\B$ bimodule} 
\begin{equation}
    \N^l \otimes_{\mathbb K} \M^r,
\end{equation}
which equals $\N^l \otimes_{\mathbb K} \M^r$ on the level of graded vector spaces and has 
\begin{equation}
    \begin{split}
        \mu^{r|1|s}_{\N \otimes_{\mathbb K} \M}(a_1, &\ldots, a_r, \mathbf{n} \otimes \mathbf{m}, b_1, \ldots, b_s) := \\
    &\begin{cases}
        (-1)^{|\mathbf{m}|}\mu^{r|1}_\N(a_1, \ldots, a_r, \mathbf{n}) \otimes \mathbf{m} & s = 0, r>0 \\
        \mathbf{n} \otimes \mu^{1|s}_\M(\mathbf{m}, b_1, \ldots, b_s) & r = 0, s> 0 \\
        (-1)^{|\mathbf{m}|}\mu^{1|0}_\N(\mathbf{n}) \otimes \mathbf{m} + \mathbf{n} \otimes \mu^{0|1}_\M (\mathbf{m}) & r = s= 0\\
        0 & \mathrm{otherwise}
    \end{cases}
\end{split}
\end{equation}
The $\ainf$ bimodule equations follow from the $\ainf$ module equations for
$\M$ and $\N$.

Finally, given an $\mc{A}\!-\!\mc{B}$ bimodule $\mc{M}$ and a
$\mc{B}\!-\!\mc{A}$ bimodule $\mc{N}$, we can simultaneously tensor over the
$\mc{A}$ and $\mc{B}$ module structures to obtain a chain complex.
\def\mn{\mc{M} \otimes_{\mc{A}\!-\!\mc{B}} \mc{N}}
\begin{defn}
    The {\bf bimodule tensor product} of $\mc{M}$ and $\mc{N}$ as above, denoted
    \begin{equation}
        \mc{M} \otimes_{\mc{A}\!-\!\mc{B}} \mc{N}
    \end{equation}
    is a chain complex defined as follows: As a vector space,
    \begin{equation}
        \mn := (\mc{M} \otimes T\mc{B} \otimes \mc{N} \otimes T \mc{A})^{diag},
    \end{equation}
    where the $diag$ superscript means to restrict to cyclically composable
    elements. The differential on $\mn$ is 
    \begin{equation}
        \begin{split}
            d_{\mn}:\mathbf{m}&\otimes b_k\otimes \cdots\otimes b_1 \otimes \mathbf{n}\otimes a_1\otimes \cdots \otimes a_l \longmapsto \\
            &\sum_{r,s} (-1)^{\#_{r,s}}\mu^{l-r|1|k-s}_{\mc{M}}(a_{r+1}, \ldots, a_l, \mathbf{m}, b_k, \ldots, b_{s+1}) \otimes b_{s} \otimes \cdots \otimes b_1 \otimes \\
            &\bigsp\mathbf{n} \otimes a_1 \otimes \cdots \otimes a_{r} \\
            &+ \sum_{i,r} (-1)^{\maltese_{-l}^i}\mathbf{m} \otimes b_k \otimes \cdots b_{i+r+1} \otimes \mu^r_{\mc{B}}(b_{i+r}, \ldots, b_{i+1}) \otimes b_i \otimes \cdots \otimes b_1 \otimes \\
            &\bigsp\mathbf{n} \otimes a_1 \otimes \cdots \otimes a_l \\
            &+ \sum_{j,s} (-1)^{\maltese_{-l}^{-(j+s+1)}}\mathbf{m} \otimes b_k
            \otimes \cdots b_1 \otimes \mathbf{n} \otimes a_1\otimes \cdots 
            \otimes a_j \otimes \\
            &\bigsp\mu^s_{\mc{A}}(a_{j+1}, \ldots, a_{j+s}) \otimes a_{j+s+1} 
            \otimes \cdots \otimes a_l \\
            &+ \sum_{r,s} (-1)^{\maltese_{-l}^{-(s+1)}}\mathbf{m} \otimes 
            b_k \otimes \cdots \otimes b_{r+1} \otimes 
            \mu^{r|1|s}_{\mc{N}}(b_{r}, \ldots, b_1, \mathbf{n}, 
            a_1, \ldots, a_s) \otimes\\
            &\bigsp a_{s+1} \otimes \cdots \otimes a_l
    \end{split}
\end{equation}
with signs given by:
\begin{align}
    \maltese_{-l}^{-t} &:= \sum_{n=t}^l ||a_n||\\
    \maltese_{-l}^{i} &:= \sum_{n=1}^l ||a_n|| + |\mathbf{n}| + \sum_{m=1}^i ||b_m||\\
    \label{sharpsign}\#_{i,r} &:= \bigg(\sum_{n=r+1}^l ||a_n|| \bigg) \cdot \bigg(|\mathbf{m}| + \sum_{m=1}^k ||b_m|| + |\mathbf{n}| + \sum_{n=1}^r ||a_n||\bigg) + \maltese_{-r}^s.
\end{align} 
The sign (\ref{sharpsign}) should be thought of as the Koszul sign coming from
moving $a_{r+1}, \ldots, a_l$ past all the other elements, applying
$\mu_{\mc{M}}$ (which acts from the right), and then moving the result to the
left.
\end{defn}
The bimodule tensor product is functorial in the following sense. If 
\begin{equation}
    \mc{F}: \mc{N} \lra \mc{N}'
\end{equation}
is a morphism of $\B\!-\!\A$ bimodules, then there is an induced morphism
\begin{equation}
    \mc{M} \otimes_{\A\!-\!\B} \mc{N} \stackrel{\mc{F}_{\#}}{\lra} \mc{M} \otimes_{\A\!-\!\B} \mc{N}'
\end{equation}
given by summing with signs over all ways to collapse some of the terms around
the element of $\mc{N}$ by the various $\mc{F}^{r|1|s}$, which can be concisely written as
\begin{equation}\label{fsharp}
    \mc{F}_{\#}(\mathbf{m}\otimes b_1\otimes \cdots\otimes b_k \otimes \mathbf{n}\otimes a_1\otimes \cdots \otimes a_l) : = \mathbf{m} \otimes \hat{\mc{F}}(b_1, \ldots, b_k, \mathbf{n}, a_1, \ldots, a_l).
\end{equation}
One can then see that 
\begin{prop}\label{inducedmorphisms}
    Via (\ref{fsharp}), quasi-isomorphisms of bimodules induce quasi-isomorphisms of complexes.
\end{prop}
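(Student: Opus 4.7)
The plan is to first verify that $\mc{F}_\#$ is a chain map, then to upgrade this to a quasi-isomorphism by a length filtration and spectral sequence argument.

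That $\mc{F}_\#$ commutes with the differential up to the sign $(-1)^{|\mc{F}|}$ should follow directly from the closedness condition $\delta(\mc{F}) = 0$, which unpacks into an identity relating the components $\mc{F}^{r|1|s}$ to the bimodule operations on $\mc{N}$ and $\mc{N}'$. Combined with the $\ainf$ bimodule equations for $\mc{M}$, a direct term-by-term matching should establish
\begin{equation*}
d_{\mc{M} \otimes_{\A\!-\!\B} \mc{N}'} \circ \mc{F}_\# = (-1)^{|\mc{F}|}\, \mc{F}_\# \circ d_{\mc{M} \otimes_{\A\!-\!\B} \mc{N}}.
\end{equation*}
This step is essentially formal; I would organize the verification by partitioning the summands of each composition according to where and which of $\mu_{\mc{M}}$, $\mu_{\A}$, $\mu_{\B}$, or $\mc{F}^{r|1|s}$ acts, and use the fact that the hat extension $\widehat{\mc{F}}$ behaves as a bicomodule homomorphism so that cancellations proceed in parallel with the proof that $\widehat{\mu}$ squares to zero.

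Next, I would introduce the increasing length filtration $F_p \subseteq \mc{M} \otimes_{\A\!-\!\B} \mc{N}$ spanned by cyclically composable tensors $\mathbf{m} \otimes b_k \otimes \cdots \otimes b_1 \otimes \mathbf{n} \otimes a_1 \otimes \cdots \otimes a_l$ with $k+l \leq p$, and analogously for $\mc{N}'$. The crucial observation is that every summand of $d_{\mn}$ either preserves the external length, precisely when $\mu^1_{\A}$, $\mu^1_{\B}$, $\mu^{0|1|0}_{\mc{M}}$, or $\mu^{0|1|0}_{\mc{N}}$ is applied, or strictly decreases it, since every higher-arity $\mu^j$ or $\mu^{r|1|s}$ swallows at least two external inputs. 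The map $\mc{F}_\#$ also respects the filtration, because each $\mc{F}^{r|1|s}$ consumes $r+s$ external tensor factors. So $\{F_p\}$ is a filtration by subcomplexes and $\mc{F}_\#$ is a filtered chain map.

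On the associated graded only the length-preserving pieces of the differential survive, and up to Koszul sign reshuffling each graded piece is the ordinary tensor product chain complex
\begin{equation*}
\bigoplus_{k+l = p} (\mc{M},\mu^{0|1|0}_{\mc{M}}) \otimes (\B[1],\mu^1_{\B})^{\otimes k} \otimes (\mc{N}, \mu^{0|1|0}_{\mc{N}}) \otimes (\A[1], \mu^1_{\A})^{\otimes l},
\end{equation*}
on which $\mc{F}_\#$ reduces to $\mathrm{id} \otimes \mathrm{id} \otimes \mc{F}^{0|1|0} \otimes \mathrm{id}$. Since we work over a field $\K$, the K\"{u}nneth theorem together with the assumption that $[\mc{F}^{0|1|0}]$ is an isomorphism implies the associated graded map is a quasi-isomorphism. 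The filtration is bounded below ($F_{-1} = 0$) and exhaustive, so the associated spectral sequences converge strongly, and the comparison theorem then promotes the $E^1$-page isomorphism to an isomorphism on $E^\infty$-pages, and hence on the cohomologies of the total complexes. The principal obstacle is the sign bookkeeping in the chain map verification, but the signs are entirely dictated by the Koszul convention fixed in the preceding definitions and present no conceptual difficulty.
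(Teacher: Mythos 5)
Your proof is correct. The paper actually states Proposition \ref{inducedmorphisms} without supplying a proof, so there is no written argument to compare against; but your length-filtration spectral-sequence argument is precisely the technique the paper deploys in its own proofs of Proposition \ref{tensordiagonal} and of the quasi-isomorphism $\Psi$ in the two-pointed Hochschild cohomology discussion, so it is consistent with the surrounding exposition and is surely the intended argument. The structure is sound: $\mc{F}_\# $ is a chain map by the bimodule closedness of $\mc{F}$ together with the $\ainf$ bimodule equations for $\mc{M}$; the length filtration is exhaustive, bounded below, and preserved by both the differential (since all operations are non-increasing on external length, with equality exactly for the unary/zeroth-order pieces $\mu^1_\A$, $\mu^1_\B$, $\mu^{0|1|0}_{\mc{M}}$, $\mu^{0|1|0}_{\mc{N}}$) and by $\mc{F}_\#$; the $E^0$ page is the tensor product complex; Künneth over $\K$ plus the hypothesis $[\mc{F}^{0|1|0}]$ an isomorphism yields an isomorphism on $E^1$; and the standard comparison theorem for bounded-below exhaustive filtrations finishes the job. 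Two small remarks worth being explicit about if you write this up: (i) the associated graded pieces should be restricted to cyclically composable (diagonal) tuples of objects, which does not affect the Künneth argument since the complex decomposes as a direct sum over tuples; (ii) the reduction of $\mc{F}_\#$ on the associated graded to $\mathrm{id}^{\otimes k} \otimes \mc{F}^{0|1|0} \otimes \mathrm{id}^{\otimes l}$ carries a Koszul sign $(-1)^{|\mc{F}|\cdot \maltese}$ from \eqref{fsharp}, which you should track but which does not affect the conclusion.
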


\begin{rem}
There are identically induced morphisms $\mc{G}_{\#}: \mc{M}
\otimes_{\A\!-\!\B} \mc{N} \ra \mc{M}' \otimes_{\A\!-\!\B}  \mc{N}$ from
morphisms $\mc{G}: \mc{M} \ra \mc{M}'$. One simply needs to add additional
Koszul signs coming from moving elements of $\B$ to the beginning in order to
apply $\mc{G}$. 
\end{rem}

\begin{rem}
    Suppose for a moment that the categories $\mc{A}$ and $\mc{B}$ have one
    object each and no higher products; e.g. $A:= \mc{A}$ and $B:= \mc{B}$ can
    be thought of as ordinary unital associative algebras over $\mathbb{K}$.
    Similarly, let $M$ and $N$ be ordinary $A\!-\! B$ and $B\!-\! A$ bimodules
    respectively. Then, the explicit chain level description we have given
    above for $M \otimes_{A\!-\!B} N$ is a bar-complex model computing the
    derived tensor product or bimodule Tor
    \begin{equation}
        Tor_{A\!-\!B}(M,N):=M \otimes^{\mathbb L}_{A\!-\! B} N.
    \end{equation}
    Now, note that $M$ can be thought of as a right $A^{op} \otimes B$ module and
    $N$ can be thought of as a left $A^{op} \otimes B$ module. Thus, using
    (\ref{tensorbarcomplex}) we can write down an explicit chain complex
    computing the tensor product $M \otimes_{A^{op} \otimes B}^{\mathbb L} N$ as
    \begin{equation}
        M \otimes T(A^{op} \otimes B) \otimes N.
    \end{equation}
    plus a standard differential. This is a second canonical bar complex
    that computes the same Tor group $Tor_{A\!-\!B}(M,N) = Tor_{A^{op} \otimes
    B}(M,N)$; in particular these two complexes have the same homology. There
    are natural intertwining chain maps explicitly realizing this
    quasi-isomorphism, which we would like to emulate in the $\ainf$-category
    setting.  However, we stumble into a substantial initial roadblock: there
    is not a clean notion of the tensor product of $\ainf$ algebras or
    categories $\mc{A}^{op}\otimes \mc{B}$. We defer further discussion of
    these issues to a later point in the paper.
\end{rem}

\subsection{The diagonal bimodule}\label{diagonalbimodulesection}
For any $\ainf$ category $\mc{A}$, there is a natural $\mc{A}\!-\!\mc{A}$
bimodule quasi-representing the identity convolution endofunctor.
\begin{defn}
    The {\bf diagonal bimodule} $\mc{A}_{\Delta}$ is specified by the following
    data:
    \begin{align}
        \mc{A}_{\Delta}(X,Y) & := \hom_{\mc{A}}(Y,X) \\
        \mu_{\mc{A}_{\Delta}}^{r|1|s}(c_r, \ldots, c_1, \mathbf{c}, c_1',
        \ldots, c_s') &:= (-1)^{\maltese_{-s}^{-1} + 1}\mu_{\mc{A}}^{r+1+s}(c_r, \ldots, c_1, \mathbf{c},
        c_1', \ldots, c_s'). 
    \end{align}
    with 
    \begin{equation}
        \maltese_{-s}^{-1} := \sum_{i=1}^s ||c_i'||.
    \end{equation}
\end{defn}
\noindent One of the standard complications in theory of bimodules is that
tensor product with the diagonal is only quasi-isomorphic to the identity.
However, these quasi-isomorphisms are explicit, at least in one direction.
\begin{prop}\label{tensordiagonal}
    Let $\M$ be a homologically unital right $\ainf$ module $\M$ over $\A$.
    Then, there is a quasi-isomorphism of modules
    \begin{equation} 
        \mc{F}_{\Delta,right}: \M \otimes_\A \A_{\Delta} \lra \M 
    \end{equation}
given by the following data:
\begin{equation} \label{tensormapright}
    \begin{split}
        \mc{F}_{\Delta,right}^{1|l}:\M \otimes T \A \otimes \A_{\Delta} \otimes \A^{\otimes l} &\lra \M  \\
    (\mathbf{m}, a_k, \ldots, a_1, \mathbf{a}, a^1_1, \ldots, a^1_l) &\longmapsto
    (-1)^{\circ_{-l}^k}\mu^{1|k+l+1}_{\M} (\mathbf{m}, a_k, \ldots, a_1,\mathbf{a}, a^1_1, \ldots a^1_l),
 \end{split}
 \end{equation}
 where the sign is
 \begin{equation}
     \circ_{-l}^k = \sum_{n=1}^l ||a^1_n|| + | \mathbf{a}| - 1 + \sum_{m=1}^k ||a_m||.
 \end{equation}
There are similar quasi-isomorphisms of homologically unital left-modules
\begin{equation}\label{tensormapleft}
 \begin{split}
     \mc{F}_{\Delta,left}: \A_{\Delta} \otimes_\A \N &\lra \N\\
     \mc{F}^{l|1}: (a_1^1, \ldots, a_l^1, \mathbf{a}, a_k, \ldots a_1, \mathbf{n}) &\longmapsto (-1)^{\bullet_0^k}\mu^{k+l+1|1}(a_1^1, \ldots, a_l^1, \mathbf{a}, a_k, \ldots, a_1, \mathbf{n}).
 \end{split}
\end{equation}
and quasi-isomorphisms of homologically unital bimodules
\begin{equation} \label{tensormapbimodright}
   \begin{split}
       \mc{F}_{\Delta,right}: \B \otimes_\A &\A_{\Delta} \lra \B \\
       \mc{F}^{r|1|s}_{\Delta,right}:\A^{\otimes r} \otimes \B \otimes T\A \otimes &\A_{\Delta} \otimes \A^{\otimes s} \lra \B\\
       (a_1, \ldots, a_r, \mathbf{b}, a_l^1, \ldots, a_1^1, \mathbf{a},& a_1^2, \ldots, a_s^2) \longmapsto \\
       &(-1)^{\circ_{-s}^l}\mu^{r|1|l+s+1}_{\B}(a_1, \ldots, a_r, \mathbf{b}, a_l^1, \ldots, a_1^l, \mathbf{a}, a_1^2, \ldots, a_s^2)
   \end{split}
\end{equation}
\begin{equation} \label{tensormapbimodleft}
    \begin{split}
        \mc{F}_{\Delta,left}:\A_{\Delta} \otimes_\A &\B \lra \B\\
        \mc{F}^{r|1|s}_{\Delta,left}:\A^{\otimes r} \otimes \A_{\Delta} \otimes T\A \otimes &\B \otimes \A^{\otimes s} \lra \B\\
        (a_1, \ldots, a_r, \mathbf{a}, a_l^1, \ldots, a_1^1, \mathbf{b},& a_1^2, \ldots, a_s^2) \longmapsto\\
        &(-1)^{\bigstar_{-s}^l}\mu^{r+l+1|1|s}_\B(a_1, \ldots, a_r, \mathbf{a}, a_l^1, \ldots, a_1^1, \mathbf{b}, a_1^2, \ldots, a_s^2)
   \end{split}
   \end{equation}
   with signs
   \begin{align}
    \bullet_{0}^k &:= |\mathbf{n}|-1 + \sum_{i=1}^k ||a_i||\\
       \circ_{-s}^l & := \sum_{n=1}^s ||a^2_n|| + |\mathbf{a}| - 1 + \sum_{m=1}^l ||a^1_m||\\
       \bigstar_{-s}^l  &:= \sum_{n=1}^s ||a^2_n|| + |\mathbf{b}| - 1 + \sum_{m=1}^l ||a^1_m||.
   \end{align}
\end{prop}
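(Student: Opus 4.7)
The plan is to prove all four assertions by a uniform two-step argument, since only the directionality of tensor factors and the sign conventions vary between the right-module, left-module, and two bimodule cases. I describe the right-module case $\mc{F}_{\Delta,right}:\M\otimes_{\A}\A_{\Delta}\to\M$ in detail; the other three are parallel. The first step is to show $\mc{F}_{\Delta,right}$ is a closed pre-morphism, i.e., $\delta\mc{F}_{\Delta,right}=0$. The second step is to show $\mc{F}_{\Delta,right}$ is a quasi-isomorphism.

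For step one, I would expand the equation $\delta\mc{F}_{\Delta,right}=\mu_{\M}\circ\widehat{\mc{F}_{\Delta,right}} - (-1)^{|\mc{F}|}\mc{F}_{\Delta,right}\circ\hat{\mu}_{\M\otimes_{\A}\A_{\Delta}}$ on a general input $(\mathbf{m}, a_k,\ldots,a_1,\mathbf{a},a_1^1,\ldots,a_l^1)$. Using the identity $\mu^{r|1|s}_{\A_{\Delta}}=(-1)^{\maltese_{-s}^{-1}+1}\mu^{r+1+s}_{\A}$ from Section \ref{diagonalbimodulesection} together with the explicit formula for $\mc{F}_{\Delta,right}^{1|l}$, every term on both sides becomes one of $\mu^{1|i}_{\M}(\ldots,\mu^{j}_{\A}(\ldots),\ldots)$ or $\mu^{1|i}_{\M}(\ldots,\mu^{1|j}_{\M}(\ldots),\ldots)$. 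A careful sign accounting then shows the combined sum is the left-hand side of the $\ainf$ right-module relation (\ref{ainfmodright}) for $\M$ applied to this input (with $\mathbf{a}$ treated as an ordinary argument), which vanishes.

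For step two, I would equip the source $\M\otimes T\A\otimes \A_{\Delta}$ with the increasing length filtration $F_{p}$ consisting of those elements $\mathbf{m}\otimes a_k\otimes\cdots\otimes a_1\otimes\mathbf{a}$ with $k\leq p$, and the target $\M$ with the trivial filtration. Both are exhaustive and bounded below, so the associated spectral sequences converge, and $\mc{F}_{\Delta,right}$ is filtration-preserving. On the $E^0$-page only the internal differentials $\mu^1$ survive, hence $E^1=H^*(\M)\otimes TH^*(\A)\otimes H^*(\A)$. The induced $d_1$ assembles from the length-reducing-by-one parts of the differential, namely $\mu^{1|1}_{\M}$, adjacent $\mu^2_{\A}$-compositions inside $T\A$, and $\mu^{1|1|0}_{\A_{\Delta}}=\pm\mu^2_{\A}$; thus $(E^1,d_1)$ is the bar complex computing $\mathrm{Tor}^{*}_{H^*(\A)}\bigl(H^*(\M), H^*(\A)\bigr)$, with the second factor regarded as a left module over itself. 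Freeness of $H^*(\A)$ over itself kills all higher $\mathrm{Tor}$'s, and the homological unitality of $\M$ identifies $\mathrm{Tor}^{0}$ with $H^*(\M)$ via the module action. Finally, the length-$0$ component of $\mc{F}_{\Delta,right}$ sends $\mathbf{m}\otimes\mathbf{a}\mapsto\pm\mu^{1|1}(\mathbf{m},\mathbf{a})$ and therefore induces on $E^2$ precisely this module action, which is the identity. The three remaining morphisms admit the same argument with filtrations on the appropriate $T\A$ block adjacent to $\A_{\Delta}$.

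The main obstacle is the sign bookkeeping in step one, where four sign conventions interact: the $\maltese_{-s}^{-1}+1$ in $\A_{\Delta}$, the pre-morphism signs $\circ$, $\bullet$, $\bigstar$, the Koszul signs defining the convolution differential, and the signs in the $\ainf$ module and bimodule equations. Matching these requires in particular that the ``$+1$'' shift attached to $\A_{\Delta}$ be precisely absorbed by the degree-shift convention of a pre-morphism. Once verified for the right-module case, the remaining three reduce to symmetric rewrites, with the bimodule versions needing one additional check that the length filtration on the two-sided convolution isolates only the collapsed side.
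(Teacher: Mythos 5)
Your proposal is correct and follows essentially the same route as the paper: the quasi-isomorphism is established via the length filtration and reduction to the acyclicity of the classical bar complex over the (homologically) unital algebra $H^*(\A)$, exactly as in the paper's proof, the only cosmetic difference being that the paper packages this as acyclicity of the cone of the first-order term (with the explicit unit-insertion contracting homotopy) while you compare $E^2$-pages directly and compute the source as a Tor group. Your step one (verifying $\delta\mc{F}_{\Delta,right}=0$ by reducing to the $\ainf$ module relation for $\M$, since the diagonal bimodule structure maps are just the $\mu_{\A}$'s) is the sign check the paper leaves as an exercise, and is a welcome addition.
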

\begin{proof}
We will just establish that (\ref{tensormapbimodright}) is a quasi-isomorphism;
the other bimodule case (\ref{tensormapbimodleft}) is analogous and the module
cases (\ref{tensormapleft}) and (\ref{tensormapright}) are special cases. Also,
we omit signs from the proof, leaving them as an exercise.
First, suppose that $\A$, $\cc$ are ordinary unital associative algebras $A, C$
over semi-simple rings $R$, $R'$ (the case $\mu^1 = 0$, $\mu^k = 0$ for $k>2$).
Similarly,
suppose $\B$ is an ordinary unital $\cc\!-\! \A$-bimodule $B$ ($\mu^{1|0} = 0$,
so $\mu^{0|1|1}$ is multiplication by elements from $A$, $\mu^{1|1|0}$ is
multiplication by $C$, and $\mu^{r|1|s} = 0$ for $r+s \neq 1$). In this special
case, the bimodule
\begin{equation}
    B \otimes_A A_{\Delta}
\end{equation}
has internal differential given by
\begin{equation}
    \begin{split}
        d_{B \otimes_A A_{\Delta}} := \mu^{0|1|0}_{B \otimes_A A_{\Delta}} : \mathbf{b} \otimes a_1 \otimes \cdots &\otimes a_k \otimes \mathbf{a} \longmapsto\\
        &(\mathbf{b} \cdot a_1) \otimes \cdots \otimes a_k \otimes \mathbf{a}   \\
        + &\sum_{i} \mathbf{b} \otimes a_1 \otimes \cdots (a_{i} \cdot a_{i+1}) \otimes \cdots a_k \otimes \mathbf{a}  \\
        + &\mathbf{b} \otimes a_1 \otimes \cdots \otimes a_{k-1} \otimes (a_k \cdot \mathbf{a}),
\end{split}
\end{equation}
where we are not allowed to multiply $\mathbf{b}$ with $\mathbf{a}$, e.g. $d(\mathbf{b} \otimes \mathbf{a}) = 0.$ The morphism of bimodules
\begin{equation}
    \mc{F}_{\Delta,right} = \oplus \mc{F}^{k|1|l}: B \otimes_A  A_{\Delta} \lra M
\end{equation}
has first order term $\mc{F}^{0|1|0}$ given by
\begin{equation}
    \mc{F}^{0|1|0}: \mathbf{b}\otimes a_1\otimes \cdots\otimes a_k\otimes \mathbf{a} \longmapsto \begin{cases}
        \mathbf{b} \cdot \mathbf{a} & k=0\\
        0 & \textrm{otherwise.}
    \end{cases}
\end{equation}
The cone of this morphism is 
\begin{equation} \label{coneassociative}
    \mathrm{Cone}(\mc{F}^{0|1|0}) := (M \otimes T\A \otimes A_{\Delta}) \oplus M[1]
\end{equation}
with differential given by the following matrix
\begin{equation}
    \left(\begin{array}{cc}
        d_{B \otimes_A A_{\Delta}} & 0 \\
        \mc{F}^{0|1|0} & 0,
    \end{array} \right).
\end{equation}
This cone complex is visibly identical to the classical right-sided {\bf bar
complex} for $B$ over $A$, i.e. the chain complex 
\begin{equation}
    B \otimes TA
\end{equation}
with differential
\begin{equation}
    d(\mathbf{b}\otimes a_1 \otimes \cdots \otimes a_k) = (\mathbf{b}\cdot a_1) \otimes a_2 \otimes \cdots \otimes a_k + \sum_i \mathbf{b} \otimes a_1 \otimes \cdots \otimes (a_i\cdot a_{i+1}) \otimes \cdots a_k.
\end{equation}
But for $B$ and $A$ unital, the bar complex is known to be acylic, with
contracting homotopy \begin{equation}
    \mathfrak{h}: \mathbf{b} \otimes a_1 \otimes \cdots \otimes a_k \longmapsto \mathbf{b} \otimes a_1 \otimes \cdots \otimes a_k \otimes e,
\end{equation}
where $e$ is the unit of $A$.

In the general case where $\mc{B}$ and $\mc{A}$ may have differentials and
higher products, the cone of $\mc{F}^{0|1|0}$ is the complex
\begin{equation}
    (\mc{B} \otimes T\A \otimes \A_{\Delta}) \oplus \mc{B}[1]
\end{equation}
with differential
\begin{equation}
    \left(\begin{array}{cc}
    d_{\mc{B} \otimes_\A \A_{\Delta}} & 0 \\
        \mc{F}^{0|1|0} & \mu^{0|1|0}_{\mc{B}},
    \end{array} \right).
\end{equation}
Here $d_{\mc{B} \otimes_\A \A_{\Delta}}$ is the internal bimodule differential.
This differential respects the {\it length filtration} of the complex, and thus
we can look at the associated spectral sequence. The only terms that preserve
length involve the differentials $\mu^{0|1|0}_{\mc{B}}$ and $\mu^1_{\A}$ and thus
the first page of the spectral sequence is the complex
\begin{equation}
    (H^*(\mc{B}) \otimes T (H^*(\A)) \otimes H^*(\A_{\Delta})) \oplus H^*(\mc{B})
\end{equation}
with first page differential given by all of the homology-level terms involving
$\mu^2_{\A}$ and $\mu^{1|1}_{\mc{B}}$. This is exactly the cone complex
considered in (\ref{coneassociative}) for the homology level morphism 
\begin{equation}
    H^*(\mc{F}^{1|1}_{\Delta,right}): H^*(\B)\otimes T (H^*(\A)) \otimes H^*(\A_{\Delta}) \lra H^*(\B);
\end{equation}
hence the first page differential is acylic.
\end{proof}
The case of Proposition \ref{tensordiagonal} in which $\A$ is strictly unital
also appears in \cite{Seidel:2008cr}*{\S 2}. Now, finally suppose we took the
tensor product with respect to the diagonal bimodule on the right and left of a
bimodule $\mc{B}$. Then, as one might expect, one can compose two of the above quasi-isomorphisms 
to obtain a direct 
\begin{equation}\label{fleftright}
    \mc{F}_{\Delta, left, right}:= \mc{F}_{\Delta,left } ``\circ" \mc{F}_{\Delta,right}: \A_{\Delta} \otimes_\A \mc{B} \otimes_\A \A_{\Delta} \stackrel{\sim}{\lra} \mc{B},
\end{equation}
which, explicitly, is given by
\begin{equation}
    \begin{split}
    \mc{F}_{\Delta,left,right}^{r|1|s}& :\\
    a_1\otimes \cdots \otimes & a_r \otimes \mathbf{a}\otimes a_1'\otimes \cdots \otimes a_l'\otimes \mathbf{b}\otimes a_1''\otimes \cdots\otimes a_k''\otimes \mathbf{a}'\otimes a_1'''\otimes \cdots\otimes a_s''' \\
    \longmapsto \sum &\mu_{\mc{B}}^{r+1+i|1|s-j} (a_1, \ldots, a_r, \mathbf{a}, a_1', \ldots, a_i', \mu_{\mc{B}}^{l-i|1|k+1+j}(a_{i+1}', \ldots, a_l',\\
    &\mbox{ }\mbox{ }\mbox{ }\mbox{ }\mbox{ }\mbox{ }\mathbf{b}, a_1'',\ldots, a_k'', \mathbf{a}', a_1''', \ldots, a_j'''), a_{j+1}''', \ldots, a_s''').
\end{split}
\end{equation}
up to signs that have already been discussed.  There is an analogous morphism
$\mc{F}_{\Delta,right,left}$ given by collapsing on the left first before
collapsing to the right.

\subsection{The Yoneda embedding}
Objects in $\cc$ provide a natural source for left and right $\cc$-modules.
\begin{defn} 
    Given an object $X \in \ob \cc$, the {\bf left Yoneda-module} $\mc{Y}^l_X$
    over $\cc$ is defined by the following data: 
    \begin{equation}
    \mc{Y}^l_X(Y) := \hom_\cc(X,Y) \mathrm{\ for\ any\ }Y\in \ob \cc
\end{equation}
\begin{equation} 
    \begin{split}
    \mu^{r|1}: \hom_\cc(Y_{r-1},Y_r) \times \hom_\cc(Y_{r-2},Y_{r-1}) \times &\cdots \times \hom_\cc(Y_0,Y_1) \times \mc{Y}^l_X(Y_0) \lra \mc{Y}^l_X(Y_r) \\
    (y_r,\ldots, y_1,\mathbf{x}) &\longmapsto (-1)^{\maltese_0^r}\mu^{r+1}(y_r, \ldots, y_1, \mathbf{x}),
\end{split}
\end{equation}
with sign
\begin{equation}
    \maltese_0^r = ||\mathbf{x}|| + \sum_{i=1}^r ||y_i||.
\end{equation}
Similarly, the {\bf right Yoneda-module} $\mc{Y}^r_X$ over $\cc$ is defined by the following data:
\begin{equation}
    \mc{Y}^r_X(Y) := \hom_\cc(Y,X) \mathrm{\ for\ any\ }Y\in \ob \cc
\end{equation}
\begin{equation} 
    \begin{split}
    \mu^{1|s}: \mc{Y}^r_X(Y_s)\times \hom_\cc(Y_{s-1},Y_s) \times \hom_\cc(Y_{s-2},Y_{s-1}) \times &\cdots \times \hom_\cc(Y_0,Y_1) \ra \mc{Y}^r_X(Y_0) \\
    (\mathbf{x}, y_s,\ldots, y_1) &\mapsto \mu^{s+1}(\mathbf{x},y_s, \ldots, y_1).
\end{split}
\end{equation}
\end{defn}
\noindent These modules are associated respectively to the {\bf left} and {\bf
right Yoneda embeddings}, $\ainf$ functors which we will now describe.
\begin{defn}
    The {\bf left Yoneda embedding} is a contravariant $\ainf$ functor
    \begin{equation}
        \yl: \cc^{op} \lra \cc\mod
    \end{equation}
    defined as follows: On objects,
    \begin{equation} \yl(X) := \yyl{X}.\end{equation}
        On morphisms
        \begin{equation}\label{leftyonedaphi}
    \begin{split}
    \yl^d: \hom(X_{d-1},X_d) \times \cdots \times \hom(X_0,X_1) &\lra \hom_{\cc\mod}(\yyl{X_d},\yyl{X_0})\\
    (x_d, \ldots, x_1) &\longmapsto \phi_{(x_1, \ldots, x_d)}
\end{split}
\end{equation}
where $\phi_{\vec{x}}:=\phi_{x_1, \ldots, x_d}$ is the morphism given by
\begin{equation}\label{leftyonedaphiexplanation}
    \begin{split}
    \hom(Y_{f-1},Y_f) \times \cdots \times \hom(Y_0,Y_1) \times 
    &\yyl{X_d}(Y_0) \lra \yyl{X_0}(Y_f)\\
    (y_f, \ldots, y_1, \mathbf{m}) &\longmapsto (-1)^{\maltese_{-d}^f}\mu_\cc^{f+d+1}(y_f, \ldots, y_1, \mathbf{m}, x_d, \ldots, x_1)
\end{split}
\end{equation}
with sign
\begin{equation}
    \maltese_{-d}^f = \sum_{i=1}^d ||x_i|| + ||\mathbf{m}|| + \sum_{j=1}^f ||y_j||.
\end{equation}
\end{defn}
\begin{defn}
    The {\bf right Yoneda embedding} is a (covariant) $\ainf$ functor
    \begin{equation}
        \yr: \cc \lra \rmod\cc
    \end{equation}
    defined as follows: On objects,
\begin{equation} \yr(X) := \yyr{X}.\end{equation}
        On morphisms
\begin{equation}
    \begin{split}
    \yr^d: \hom(X_{d-1},X_d) \times \cdots \times \hom(X_0,X_1) &\lra \hom_{\rmod\cc}(\yyr{X_0},\yyr{X_d})\\
    (x_d, \ldots, x_1) &\mapsto \psi_{(x_1, \ldots, x_d)}
\end{split}
\end{equation}
where $\psi_{\vec{x}}:=\psi_{x_1, \ldots, x_d}$ is the morphism given by
\begin{equation}
    \begin{split}
        \yyr{X_0}(Y_f) \times \hom(Y_{f-1},Y_f) \times \cdots \times \hom(Y_0,Y_1) 
     &\lra \yyr{X_d}(Y_0)\\
     (\mathbf{m}, y_f, \ldots, y_1) &\longmapsto \mu_\cc^{f+d+1}(x_d, \ldots, x_1, \mathbf{m}, y_f, \ldots, y_1).
\end{split}
\end{equation}
\end{defn}

An important feature of these modules, justifying the use of module categories,
is that the $\ainf$ Yoneda embedding is full. In fact, a slightly stronger
result is true, which we will need.
\begin{prop}[Seidel
    \cite{Seidel:2008zr}*{Lem. 2.12}] \label{moduleyoneda}
    Let $\cc$ be a homologically unital, and let $\mc{M}$ and $\mc{N}$ be
    homologically unital left and right $\cc$ modules respectively. Then, for
    any object $X$ of $\cc$ there are quasi-isomorphisms of chain complexes
    \begin{align}
        \lambda_{\mc{M},X}: \mc{M}(X) &\stackrel{\sim}{\lra} \hom_{\cc\mod}(\mc{Y}^l_X,\mc{M})\\
        \lambda_{\mc{N},X}: \mc{N}(X) &\stackrel{\sim}{\lra} \hom_{\rmod\cc}(\mc{Y}^r_X,\mc{N}).
    \end{align}
\end{prop}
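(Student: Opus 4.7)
The plan is to write down the maps $\lambda_{\mc{M},X}$ and $\lambda_{\mc{N},X}$ explicitly, verify directly that they are chain maps using the $\ainf$ module equations, and then prove the quasi-isomorphism claim by a length-filtration argument that reduces the statement to the classical (strict) Yoneda lemma for unital associative algebras.

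First I would define $\lambda_{\mc{M},X}(\mathbf{m})$, for $\mathbf{m} \in \mc{M}(X)$, to be the left module pre-morphism $\mc{Y}^l_X \to \mc{M}$ whose $r$-th component sends
\[
    (y_r, \ldots, y_1, \mathbf{x}) \in \hom_{\cc}(Y_{r-1},Y_r) \otimes \cdots \otimes \hom_{\cc}(Y_0,Y_1) \otimes \mc{Y}^l_X(Y_0)
\]
to $(-1)^\star \mu_{\mc{M}}^{r+1|1}(y_r, \ldots, y_1, \mathbf{x}, \mathbf{m})$, interpreting $\mathbf{x} \in \hom_{\cc}(X,Y_0)$ as the innermost morphism acting on $\mathbf{m} \in \mc{M}(X)$; the definition of $\lambda_{\mc{N},X}(\mathbf{n})$ is entirely analogous, with $r$-th component $\mu_{\mc{N}}^{1|s+1}(\mathbf{n}, \mathbf{x}, y_1, \ldots, y_s)$ and the obvious signs. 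The identity $\delta(\lambda_{\mc{M},X}(\mathbf{m})) = \lambda_{\mc{M},X}(\mu_{\mc{M}}^{0|1}(\mathbf{m}))$ then follows by expanding $\delta$ per its definition and matching the resulting terms against the $\ainf$ module equation (\ref{ainfmodleft}) for $\mc{M}$ applied to the extended word $(y_r, \ldots, y_1, \mathbf{x}, \mathbf{m})$: every term other than the one placing $\mu_{\mc{M}}^{0|1}$ on the rightmost slot cancels in pairs.

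For the quasi-isomorphism claim, I would pass to the mapping cone of $\lambda_{\mc{M},X}$ and equip it with the length filtration $F^p$, where $F^p$ consists of the part of $\hom_{\cc\mod}(\mc{Y}^l_X,\mc{M})$ whose components $\mc{F}^{r|1}$ vanish for $r<p$ (and $\mc{M}(X)$ sits in filtration degree $0$). Only the "linear" portion of the differential survives on the associated graded, namely $\mu_{\cc}^1$, $\mu_{\mc{M}}^{0|1}$, and the length-preserving first-order multiplication $\mu_{\mc{M}}^{1|1}$. Thus the $E_1$-page of the resulting spectral sequence computes the cohomology-level comparison
\[
    H^*(\mc{M}(X)) \lra \hom_{H^*(\cc)\mod}\bigl(H^*(\mc{Y}^l_X),\, H^*(\mc{M})\bigr),
\]
with the right-hand side consisting of genuine homomorphisms of modules over the unital graded associative algebra $H^*(\cc)$. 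Since $H^*(\mc{Y}^l_X)$ is the free left $H^*(\cc)$-module on a generator at $X$, the classical unital Yoneda lemma applies: the inverse is evaluation on the homological identity $[e_X] \in H^*(\hom_{\cc}(X,X))$, and homological unitality of $\mc{M}$ is exactly what makes this evaluation a two-sided inverse. The filtration is bounded below and each fixed bar length involves finitely many terms, so the spectral sequence converges and $\lambda_{\mc{M},X}$ is a quasi-isomorphism; the argument for $\lambda_{\mc{N},X}$ is symmetric.

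The main obstacle is two-fold: tracking signs carefully enough in Step~2 to match the chain-map identity, and managing the convergence of the length filtration on $\hom_{\cc\mod}(\mc{Y}^l_X,\mc{M})$, whose elements can have infinitely many non-zero components $\mc{F}^{r|1}$. Once the reduction to the cohomology level is properly set up, everything follows from the classical unital Yoneda lemma together with homological unitality. A more hands-on alternative, which avoids the spectral sequence, is to write down an explicit contracting homotopy on the cone of $\lambda_{\mc{M},X}$ built from a choice of cocycle representative of $e_X$, at the cost of additional sign and homotopy-unit bookkeeping.
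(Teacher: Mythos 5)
The paper does not actually prove this statement: it is quoted directly from Seidel \cite{Seidel:2008zr}*{Lem.\ 2.12}, and your argument is essentially the standard proof behind that citation, in the same style as the length-filtration arguments the paper does spell out (e.g.\ Proposition \ref{tensordiagonal} and the quasi-isomorphism $\Psi$). So the overall route is fine; the chain-map check via the $\ainf$ module equation and the reduction to the cohomological level via the length filtration are exactly right. One imprecision worth fixing: on the associated graded of your filtration, $\mu^{1|1}_{\mc{M}}$ is \emph{not} length-preserving on $\hom_{\cc\mod}(\mc{Y}^l_X,\mc{M})$ — only the internal differentials $\mu^1_{\cc}$ and $\mu^{0|1}_{\mc{M}}$ survive there, together with the length-zero component of $\lambda_{\mc{M},X}$ (which is $\mu^{1|1}_{\mc{M}}$) once you pass to the cone — and the resulting first page is not the complex of genuine $H^*(\cc)$-module homomorphisms but the bar-type complex computing \emph{derived} hom over $H^*(\cc)$, with $d_1$ the bar differential. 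Consequently the classical Yoneda lemma alone does not finish the job: you also need acyclicity of this bar complex in positive bar length, which again uses the homology units (either via the standard unit-insertion contracting homotopy, or by observing that the representable module over the unital graded category $H^*(\cc)$ is projective so the bar resolution computes $\mathrm{Ext}$, which vanishes in positive degree). Your ``hands-on alternative'' of an explicit unit-insertion homotopy on the cone is precisely this missing ingredient, so the proof is complete once that is folded into the spectral-sequence step (together with the routine remark that the filtration is complete and bounded above, so convergence is not an issue once the first page vanishes).
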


When $\mc{M} = \mc{Y}^l_Z$ or $\mc{N} = \mc{Y}^r_Z$, the
quasi-isomorphisms defined above 
\begin{align}
    \hom_{\cc} (X,Z) &\stackrel{\sim}{\lra} \hom_{\cc\mod}(\mc{Y}^l_Z,\mc{Y}^l_X)\\
    \hom_{\cc} (X,Z) &\stackrel{\sim}{\lra} \hom_{\cc\mod}(\mc{Y}^r_X,\mc{Y}^r_Z)
\end{align}
are exactly the first order terms of the Yoneda embeddings $\yl^l$ and
$\yr^1$, implying that 
\begin{cor}[\cite{Seidel:2008zr}*{Cor. 2.13}]
    The Yoneda embeddings $\yl$ and $\yr$ are full.
\end{cor}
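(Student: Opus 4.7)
The plan is to deduce the fullness of $\yl$ and $\yr$ directly from Proposition \ref{moduleyoneda} by specializing the target module to a Yoneda module. Recall that fullness of an $\ainf$ functor $\mathbf{F}$ was defined to mean that $[\mathbf{F}^1]$ is an isomorphism onto a full subcategory of the homology category of the target. So it suffices to show that $[\yl^1]$ and $[\yr^1]$ are cohomology-level isomorphisms on morphism spaces.

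To do this, I would apply Proposition \ref{moduleyoneda} with $\mc{M} = \yyl{X}$, evaluated at the object $Z$, and with $\mc{N} = \yyr{Z}$, evaluated at the object $X$. Since by definition $\yyl{X}(Z) = \hom_{\cc}(X,Z)$ and $\yyr{Z}(X) = \hom_{\cc}(X,Z)$, the two assertions of Proposition \ref{moduleyoneda} specialize to quasi-isomorphisms of chain complexes
\begin{align}
\lambda_{\yyl{X},Z} &: \hom_{\cc}(X,Z) \stackrel{\sim}{\lra} \hom_{\cc\mod}(\yyl{Z}, \yyl{X}),\\
\lambda_{\yyr{Z},X} &: \hom_{\cc}(X,Z) \stackrel{\sim}{\lra} \hom_{\rmod\cc}(\yyr{X}, \yyr{Z}).
\end{align}

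The core of the proof is then the identification of these maps with the first-order Yoneda functor components $\yl^1$ and $\yr^1$ from (\ref{leftyonedaphi}) and its right-module analogue. In Seidel's construction of $\lambda_{\mc{M},X}$, an element $\mathbf{m} \in \mc{M}(X)$ is sent to the module pre-morphism whose component of type $f|1$ is given, up to the same Koszul sign conventions of (\ref{leftyonedaphiexplanation}), by inserting $\mathbf{m}$ into the module structure map $\mu_{\mc{M}}^{f+1|1}$. Substituting $\mc{M} = \yyl{X}$ (whose module structure maps are by definition the $\cc$-products $\mu_{\cc}^{f+2}$ with the last entry in $\hom_\cc(X,Z)$), this recipe reduces verbatim to (\ref{leftyonedaphiexplanation}) with $d=1$, i.e.\ to $\yl^1$. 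The right-module case is symmetric. This part is purely bookkeeping, but it is the only step that requires any work; the main potential obstacle is matching the Koszul signs on both sides, which is handled by tracking the reduced degree conventions set up in Definitions \ref{leftmoduledef} and \ref{rightmoduledef}.

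Once the identifications $\lambda_{\yyl{X},Z} = \yl^1$ and $\lambda_{\yyr{Z},X} = \yr^1$ are established (at least up to sign and chain homotopy, which is sufficient to transfer the property of being a quasi-isomorphism), Proposition \ref{moduleyoneda} immediately gives that $[\yl^1]$ and $[\yr^1]$ are isomorphisms on all morphism spaces between Yoneda modules. Since the essential images of $\yl$ and $\yr$ consist precisely of the Yoneda modules, these essential images are full subcategories on the nose, and the corollary follows from the definition of quasi-fullness.
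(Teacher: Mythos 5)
Your proposal is correct and follows exactly the paper's route: specialize Proposition \ref{moduleyoneda} to the case where the target module is itself a Yoneda module, identify the resulting quasi-isomorphism $\lambda$ with the first-order term $\yl^1$ (resp. $\yr^1$), and conclude fullness. The paper asserts this identification in one line where you spell out the bookkeeping, but the argument is the same.
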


In Section \ref{moduleduality}, we will prove analogous results for bimodules.

\subsection{Pullbacks of modules and bimodules} \label{pullback}
Given an $\ainf$ functor 
\begin{equation}
\mathfrak{F}: \mc{A} \ra \mc{B}
\end{equation} there is an associated pull-back functor on modules
\begin{equation}
\mf{F}^* : \mc{B}\mod \longrightarrow \mc{A}\mod,
\end{equation}
defined as follows: 
\begin{defn}
    Given a right $\mc{B}$ module $\mc{M}$ with structure maps $\mu^{1|r}_\M$,
    and an $\ainf$ functor $\mathfrak{F}: \mc{A} \ra \mc{B}$, define the {\bf
    pullback of $\mc{M}$ along $\mf{F}$} to be the right $\mc{A}$ module
\begin{equation}
    \mf{F}^* \M (Y):= \M(\mf{F}(Y)),\ Y\in \ob \mc{A}
\end{equation}
with module structure maps
\begin{equation}
    \mu^{1|r}_{\mf{F}^*\M}(\mathbf{a},a_1, \ldots, a_r) := 
    \sum_{k,i_1 + \cdots + i_k = r} \mu^{1|k}_\M\left(\underline{\mathbf{a}},
    \mf{F}^{i_1}(a_1, \ldots), \cdots, \mf{F}^{i_k}(\ldots, a_r)\right).
\end{equation}
Here on the right side, $\underline{\mathbf{a}}$ is simply $\mathbf{a}$ thought
of as living in some $\M(\mf{F}(Y))$ instead of $\mf{F}^* \M(Y)$.
\end{defn}
\begin{ex} \label{yonedasubcategory}
    Given an $\ainf$ category $\mc{C}$ and a collection of objects $\{X_i\}$ in
    $\mc{C}$, let $\mc{X}$ be the full subcategory of $\mc{C}$ with objects
    $\{X_i\}$. Then the naive inclusion functor
    \begin{equation}
        \iota: \mc{X} \longhookrightarrow \mc{C},
    \end{equation}
    induces a pullback on modules
    \begin{equation}
        \begin{split}
        \iota^*: \mc{C}\mod &\lra \mc{X}\mod\\
        \iota^*: \rmod\mc{C} &\lra \rmod\mc{X}
    \end{split}
    \end{equation}
    which is the ordinary restriction. Namely, a $\mc{C}$ module such as
    \begin{equation}
        \mc{Y}^r_Z;\ Z \in \ob \mc{C}
    \end{equation}
    induces a module 
    \begin{equation}
        \iota^*\mc{Y}^r_Z
    \end{equation}
    over $\mc{X}$, in which one pairs $Z$ only with objects in $\mc{X}$. We
    will often refer to this module simply as $\mc{Y}^r_Z$ when the category
    $\mc{X}$ is explicit.  
\end{ex}

\def\f{\mf{F}}
\def\bbf{\bar{\mf{F}}}
We can repeat these definitions for contravariant functors, which we will need.
Namely, let
\begin{equation}
\bar{\mf{F}}: \mc{A}^{op} \ra \mc{B}
\end{equation}
be a contravariant $\ainf$ functor, 
which consists of maps
\begin{equation}
\bar{\mf{F}}^1: \mc{A}(X,Y) \ra \mc{B}(\bbf(Y),\bbf(X))
\end{equation}
and higher order maps
\begin{equation}
\bbf^d: \mc{A}(X_{d-1},X_d) \otimes \mc{A}(X_{d-2},X_{d-1}) \otimes \cdots \otimes \mc{A}(X_0,X_1) \longrightarrow \mc{B}(\bbf(X_d),\bbf(X_0))
\end{equation}
satisfying the following equations
\begin{equation}
\sum \mu_\mc{B}\left(\bbf^{i_k}(\dots a_k) \bbf^{i_{k-1}}(\ldots) 
\cdots \bbf^{i_1}(a_1, \ldots, a_{i_1})\right) = 
\sum \bbf\left(a_1\cdots \mu_\A(\cdots)\cdots a_k\right).
\end{equation}
(notice the order reversal in the $a_i$).
In this case, pull-back changes the direction of the module action
\begin{equation}
\bbf^*: \mc{B}\mod \longrightarrow \rmod\mc{A}
\end{equation}
\begin{defn} 
    Given a left $\mc{B}$ module $\mc{M}$, and a contravariant functor $\bbf:
    \mc{A}^{op} \ra \mc{B}$ as above, the {\bf pulled-back right module} $\bbf^*\M$
    is defined by \begin{equation}
    \begin{split}
    \bbf^*\M(X) &= \M(\bbf(X)),\ X \in \ob \mc{A} \\
    \mu^{1|r}_{\bbf^*\M}(\mathbf{m},a_1, \ldots, a_r) &= \sum_{k,i_1 + \cdots +  i_k = r} \mu^{k|1}(\bbf^{i_k}(\ldots, a_r)\cdots \bbf^{i_1}(a_1, \ldots, a_{i_1}),\underline{\mathbf{m}})
\end{split}
\end{equation}
\end{defn}

\noindent This entire process can be repeated for bimodules, with contravariant
or covariant functors. Given $\ainf$ categories $\mc{A}_1$, $\mc{A}_2$,
$\mc{B}_1$, $\mc{B}_2$ and functors 
\begin{equation}
    \begin{split}
    \mf{F} &: \mc{A}_1 \longrightarrow \mc{A}_2 \\
    \mf{G} &: \mc{B}_1 \longrightarrow \mc{B}_2
\end{split}
\end{equation}
there is an associated pull-back functor
\begin{equation}
    (\mf{F} \otimes \mf{G})^*: \mc{A}_2\bimod\mc{B}_2 \longrightarrow \mc{A}_1\bimod\mc{B}_1
\end{equation}
defined as follows: If $\mc{M} \in \ob \mc{A}_2\bimod\mc{B}_2$, then
\begin{equation}
    (\mf{F}\otimes \mf{G})^* \mc{M}(A,B) := \mc{M}(\mf{F}(A),\mf{G}(B)),
    \ A \in \ob \mc{A}_1,\ B \in \ob \mc{B}_1
\end{equation}
with structure maps 
\begin{equation}
    \begin{split}
     \mu^{r|1|s}_{(\mf{F} \otimes \mf{G})^*\M}(a_r, \ldots, a_1,& 
    \mathbf{m}, b_1, \ldots, b_s) := \\
    \sum_{k,i_1 + \cdots + i_k = r} &\sum_{l, j_1 + \cdots j_l = s} 
    \mu^{k|1|l}_\M (\mf{F}^{i_k}(a_r, \ldots, ), \cdots, \mf{F}^{i_1}(\ldots
    a_1),\\ &\underline{\mathbf{m}}, \mf{G}^{j_1}(b_1,
    \ldots), \cdots, \mf{G}^{j_l}(\ldots, b_s)).
\end{split}
\end{equation}
Once more, $\underline{\mathbf{m}}$ is simply the element $\mathbf{m}$ thought
of as living in $\mc{M}(\mf{F}(A),\mf{G}(B))$ for some $A$, $B$. Finally, abbreviate the pull-back $(\mf{F} \otimes \mf{F})^*$ by simply $\mf{F}^*$.

The Yoneda embeddings $\yl$ and $\yr$ behave compatibly with pullback, in the
sense that for any functor $\mathbf{F}: \cc \lra \mc{D}$
there are natural transformation of functors
\begin{equation}
    \begin{split}
        \mf{T}^{\mathbf{F}}_L: (\yl)_{\cc} &\lra \mathbf{F}^* \circ (\yl)_{\mc{D}} \circ \mathbf{F}\\
    \mf{T}^{\mathbf{F}}_R: (\yr)_{\cc} &\lra \mathbf{F}^* \circ (\yr)_{\mc{D}} \circ \mathbf{F}.
\end{split}
\end{equation}
 We will not need the full data of this
natural transformation (the interested reader is referred to
\cite{Seidel:2008zr}*{eq. (1.23)}), but to first order, we obtain morphisms of
modules
\begin{equation} \label{nattransmodulemorphism}
    \begin{split}
    (\mf{T}^{\mathbf{F}}_L)_X: \mc{Y}^l_X &\lra \mathbf{F}^*\mc{Y}^l_{\mathbf{F}(X)}.\\
    (\mf{T}^{\mathbf{F}}_R)_X: \mc{Y}^r_X &\lra \mathbf{F}^*\mc{Y}^r_{\mathbf{F}(X)}.
\end{split}
\end{equation}
given by
\begin{equation}
    \begin{split}
        (\mf{T}_L^\mathbf{F})_X^{r|1}(a_1, \ldots, a_r, \mathbf{x}) &:= \mathbf{F}^{r+1}(a_1, \ldots, a_r \mathbf{x}).\\
        (\mf{T}_R^\mathbf{F})_X^{1|s}(\mathbf{x}, a_1, \ldots, a_s) &:= \mathbf{F}^{s+1}(\mathbf{x}, a_1, \ldots, a_s).
    \end{split}
\end{equation}
Finally, tensoring $(\mf{T}_L^\mathbf{F})_X$ and $(\mf{T}_R^{\mathbf{F}})_Z$,
we obtain associated morphisms of Yoneda bimodules
\begin{equation}\label{nattransbimodulemorphism}
    (\mf{T}_{LR}^{\mathbf{F}})_{X,Z}:= (\mf{T}^{\mathbf{F}}_L)_X \otimes (\mf{T}^{\mathbf{F}}_R)_Z: \mc{Y}^l_X \otimes \mc{Y}^r_Z \lra \mathbf{F}^*\mc{Y}^l_{\mathbf{F}(X)} \otimes \mathbf{F}^*\mc{Y}^r_{\mathbf{F}(Z)}.
\end{equation}
These maps are quasi-isomorphisms if $\mathbf{F}$ is. There are also analogous
versions of these natural transformations for contravariant functors
$\mathbf{G}$, in which naturally, Yoneda lefts and rights get reversed:
\begin{equation}
    \begin{split}
    (\mf{T}^{\mathbf{G}}_L)_X: \mc{Y}^l_X &\lra \mathbf{G}^*\mc{Y}^r_{\mathbf{G}(X)}.\\
    (\mf{T}^{\mathbf{G}}_R)_X: \mc{Y}^r_X &\lra \mathbf{G}^*\mc{Y}^l_{\mathbf{G}(X)}.
\end{split}
\end{equation}

\subsection{Hochschild invariants}
In what follows, let $\A$ be an $\ainf$ algebra or category, and $\B$ an
$\A\!-\!\A$ bimodule, frequently referred to as simply an $\A$-bimodule.
To such a pair ($\A$,$\B$), one can associate invariants known as {\bf
Hochschild cohomology} and {\bf Hochschild homology}. We momentarily bypass the
more conceptual route of defining these as bimodule Ext or Tor groups, and give
explicit co-chain level models, using the $\ainf$ bar complex.
\begin{defn}
    The {\bf (ordinary) Hochschild co-chain complex} of $\A$ with coefficients in $\B$ is
\begin{equation}
    \r{CC}^*(\A,\B) := \hom_{Vect}(T\A,\B),
\end{equation}
with grading
\begin{equation}\label{hochschildcochaingradings}
    \r{CC}^r(\A,\B) :=  \hom_{grVect}(\oplus \A^{\otimes j},\B[r+j]).
\end{equation}
Given a Hochschild co-chain $\phi \in \r{CC}^l(\A,\B)$, one can consider the
extension 
\begin{equation}
    \hat{\phi}: T\A \lra T\A \otimes \B \otimes T\A.
\end{equation}
given by
\begin{equation}
    \begin{split}
        \hat{\phi}(x_k, \ldots, x_1):=\sum_{i,j}(-1)^{l \cdot \maltese_1^i}
        x_k \otimes \cdots \otimes x_{j+1}  &\otimes \phi(x_j, \ldots, x_{i+1}) \otimes x_i \otimes  \cdots \otimes x_1.
    \end{split}
\end{equation}
with sign given by the degree $l$ of $\phi$ times
\begin{equation}
    \maltese_1^i := \sum_{s=1}^i ||x_s||.
\end{equation}
Then the {\bf differential} is given by:
\begin{equation}
    d(\phi): = \mu_{\B} \circ \hat{\phi} - \phi \circ \hat{\mu}_{\A}.
\end{equation}
\end{defn}
\noindent With respect to the grading, the differential clearly has degree 1.

In an analogous fashion, we give an explicit chain-level model for the 
Hochschild homology complex $(\r{CC}_*(\A,\B),d_{\r{CC}_*})$.
\begin{defn}
    Let $\A$ be an $\ainf$ algebra and $\B$ an $\ainf$ bimodule. The {\bf
    (ordinary) Hochschild homology chain complex} $\r{CC}_*(\A,\B)$ is defined to
    be
    \[ \r{CC}_*(\A,\B) := (\B \otimes_R T\A)^{\r{diag}}, \] where the
    $\r{diag}$ superscript means we
restrict to cyclically composable elements of $(\B \otimes_R T\A)$. Explicitly
this complex is the direct sum of, for any $k$ and any $k+1$-tuple of objects
$X_0, \ldots, X_k\in \ob \A$, the vector spaces
\begin{equation}
\B(X_k,X_0) \times
\hom_\A(X_{k-1},X_k) \times \cdots \times \hom_\A(X_0,X_1).
\end{equation}
\end{defn}
\noindent The differential $d_{\r{CC}_*}$ acts on Hochschild chains as follows:
\begin{equation}
    \begin{split}
    d_{\r{CC}_*} (\mathbf{b} \otimes &x_1 \otimes \cdots \otimes x_k) =\\
    &\sum (-1)^{\#_{j}^i}\mu_\B(x_{k-j+1}, \ldots, x_k, \mathbf{b}, x_1, \ldots, x_i) \otimes x_{i+1} \otimes \cdots \otimes  x_{k-j} \\
    &+ \sum (-1)^{\maltese_{-k}^{-(s+j+1)}}\mathbf{b} \otimes x_1 \otimes \cdots \otimes \mu^j_\A(x_{s+1} \otimes \cdots \otimes x_{s+j}) \otimes x_{s+j} \otimes \cdots \otimes x_k
\end{split}
\end{equation}
with signs
\begin{align}
    \maltese_{-k}^{-t} &:= \sum_{i=t}^k ||x_i||\\
    \sharp_j^i &:= \bigg(\sum_{s=k-j+1}^k ||x_s||\bigg) \cdot \bigg(|\mathbf{b}| + \sum_{t=1}^{k-j} ||x_t||\bigg) + \maltese_{-(k-j)}^{-(i+1)}.
\end{align}
In this complex, Hochschild chains are graded as follows:
\begin{equation} \label{hochschildchaingrading}
    \deg(\mathbf{b} \otimes x_1 \otimes \cdots \otimes x_k) := \deg(\mathbf{b}) + \sum_i \deg(x_i) - k+1.
\end{equation}
\begin{ex} \label{barcomplexobservation}
Let $\mc{M}$ be a right $\cc$ module and $\mc{N}$ a left $\cc$ module, and form the product bimodule $\mc{N} \otimes_{\mathbb K} \mc{M}$. Then the Hochschild chain complex
    \begin{equation}
        \r{CC}_*(\cc,\mc{N} \otimes_{\mathbb K} \mc{M})
    \end{equation}
is exactly the bar complex
    \begin{equation}
        \mc{M} \otimes_{\cc} \mc{N}
    \end{equation}
defined in (\ref{tensorbarcomplex}), up to reordering a term (along with the
accompanying Koszul sign change).  
\end{ex}

Hochschild homology and cohomology are both functorial with respect to bimodule
morphisms. For example, for a morphism of $\mc{A}$ bimodules
\begin{equation}
    \mc{F}: \B \lra \B'
\end{equation}
of degree $|\mc{F}|$, the induced map on Hochschild chain complexes sums over all ways
to apply the various $\mc{F}^{r|1|s}$ to the element of $\B$ along with some
nearby terms:
\begin{equation} \label{functorialmapbimodules}
    \begin{split}
    \mc{F}_{\#}: CC_*(\A,\B) &\lra CC_*(\A,\B')\\
    \mathbf{b} \otimes a_1 \otimes \cdots \otimes a_k &\longmapsto \sum_{i,j} (-1)^{\sharp_j^i}\mc{F}^{i|1|j} (a_{k-j+1}, \ldots, a_k, \mathbf{b}, a_1, \ldots, a_i) \otimes a_{i+1} \otimes \cdots a_{k-j}.
\end{split}
\end{equation}
where
\begin{equation}
    \sharp_j^i = \bigg(\sum_{s=k-j+1}^k ||x_s||\bigg) \cdot \bigg( |\mathbf{b}| + \sum_{t=1}^{k-j} ||x_t||\bigg) + |\mc{F}| \cdot \maltese_{-(k-j)}^{-(i+1)}.
\end{equation}
Moreover, (as one might expect) the induced morphisms $\mc{F}_{\#}$ are
quasi-isomorphisms if $\mc{F}$ are.

There is also a form of functoriality with respect to morphisms of underlying
$\ainf$ algebras. If 
\begin{equation}
    \mathbf{F} : \mc{A} \lra \tilde{\mc{A}}
\end{equation}
is an $\ainf$ morphism (functor), then for any $\tilde{\mc{A}}$ bimodule $\mc{B}$,
there is an induced map 
\begin{equation}
    CC_*(\A,\mathbf{F}^*(\mc{B})) \stackrel{\mathbf{F}_\#}{\lra} CC_*(\tilde{A},\mc{B}) 
\end{equation}
defined as follows:
\begin{equation} \label{functorialmap}
    \begin{split}
    \mathbf{b} \otimes &a_1 \otimes \cdots \otimes a_k \longmapsto \\
    &\sum_s \sum_{i_1+ \cdots + i_s = k} \underline{\mathbf{b}} \otimes \mathbf{F}^{i_1}(a_1, \ldots, a_{i_1}) \otimes \cdots \otimes \mathbf{F}^{i_s}(a_{k-i_s+1}, \ldots, a_k).
\end{split}
\end{equation}
Here as usual $\underline{\mathbf{b}}$ is $\mathbf{b}$ thought of as living in
$\mc{B}$ instead of $\mathbf{F}^*\mc{B}$.  This is functorial and
respects quasi-isomorphisms.

\subsection{Module and ring structures}

\noindent The Hochschild co-chain complex of the bimodule $\A$ has a product,
giving it the structure of a dg algebra.
\begin{defn}
The {\bf Yoneda product} on $\r{CC}^*(\A,\A)$ is given by
\begin{equation}
    \begin{split}
    \phi &\star \psi (x_1, \ldots, x_k) := \\
    &\sum (-1)^{\bigstar}
    \mu^k (x_1, \ldots, x_i, \phi(x_{i+1}, \ldots, x_{i+r}), \ldots, \psi(x_{j+1}, \ldots, x_{j+l}),x_{j+l+1}, \ldots, x_k)
\end{split}
\end{equation}
with sign
\begin{equation}
    \bigstar := |\phi| \cdot \bigg(\sum_{s=j+l+1}^k ||x_s|| + \sum_{t=i+r+1}^{j} ||x_t||\bigg) + |\psi| \cdot \bigg(\sum_{s=j+l+1}^k ||x_s|| \bigg).
\end{equation}
\end{defn}
\begin{rem}
    In categorical language, $\r{CC}^*(\A,\A)$ is the direct sum of, for any
    $k$ and objects $X_0, \ldots, X_k$, 
\begin{equation}
    \hom_\A (X_{k-1},X_k) \times \cdots \hom_\A (X_0,X_1) \ra \hom_\A(X_0,X_k).
\end{equation}
\end{rem}
\begin{rem}
One can check using the $\ainf$ relations and the definition of the Hochschild
differential that the Yoneda product is commutative on the level of homology. 
\end{rem}
\begin{rem}
    Actually, one can define two (almost) commuting products on the Hochschild
    co-chain complex $\r{CC}^*(\A,\A)$, part of the structure it possesses as
    an $E_2$ algebra. The other product comes more directly from the
    2-categorical perspective. Namely, thinking of Hochschild cohomology as
    endomorphisms of the diagonal bimodule, the second product structure arises
    from {\bf composition} of endomorphisms. We will not further develop the
    $E_2$ structure here.
\end{rem}
\noindent If $\A$ is (homologically) unital, then $\r{HH}^*(\A,\A)$ is also
(homologically) unital.

There is a {\bf cap-product map}
\begin{equation}
    \cap: \r{CC}^*(\A,\A) \times \r{CC}_*(\A,\B) \ra \r{CC}_*(\A,\B) \end{equation}
given by 
\begin{equation}
    \begin{split}
        \alpha \cap (\mathbf{b} \otimes & x_1 \otimes  \cdots \otimes  x_n) :=
        \\ \sum &(-1)^\diamond \mu_B\big(x_{j+1}, \ldots, x_n, \mathbf{b}, x_1, \ldots, x_i, \\
        &\alpha (x_{i+1},
        \ldots, x_{i+k}),x_{i+k+1}, \ldots, x_{s}\big)\otimes x_{s+1} \otimes \cdots \otimes
        x_{j},
    \end{split}
    \end{equation} 
where the sign is
\begin{equation}
    \diamond := |\alpha| \cdot \bigg( \sum_{s=i+k+1}^n ||x_i|| \bigg) + \bigg(\sum_{s=j+1}^n ||x_i|| \bigg) \cdot \bigg(|\mathbf{b}| + |\alpha| + \sum_{t=1}^j ||x_i|| \bigg) + \maltese_{-j}^{-(s+1)}.
\end{equation}

\begin{prop}
    The cap product gives $\r{HH}_*(\A,\B)$ the structure of a module over
    $\r{HH}^*(\A,\A)$.  
\end{prop}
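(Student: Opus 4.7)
The plan is to verify three chain-level facts, after which the module structure on homology follows formally: (i) the cap product is a chain map, i.e.\
\[
d_{\mathrm{CC}_*}(\alpha \cap x) \;=\; (d\alpha) \cap x \;+\; (-1)^{|\alpha|}\,\alpha \cap d_{\mathrm{CC}_*} x,
\]
(ii) the Yoneda product and cap product satisfy an associativity identity
\[
(\alpha \star \beta) \cap x \;=\; \alpha \cap (\beta \cap x) \;+\; d(\cdots) \;+\; (\cdots)\circ d,
\]
and (iii) a homological unit $e \in \mathrm{CC}^0(\A,\A)$ (the cocycle sending the empty tensor to $e_X \in \hom_{\A}(X,X)$) acts as the identity on $\mathrm{HH}_*(\A,\B)$.

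For (i), I would expand $d_{\mathrm{CC}_*}(\alpha\cap x)$ and $\alpha\cap d_{\mathrm{CC}_*}x$ using the explicit formulas for the Hochschild differential (terms of the form $\mu_{\A}$ acting on a sub-tensor, and terms of the form $\mu_{\B}$ acting cyclically across the bimodule entry), and then organize the resulting sum according to whether the operations $\mu_{\A}$, $\mu_{\B}$, and $\alpha$ appear nested, disjoint, or with $\mu_{\A}$ inside the arguments of $\alpha$. Terms with $\alpha$ and $\mu_{\A}$ nested regroup into $(d\alpha)\cap x$; terms where $\mu_{\A}$ and $\mu_{\B}$ act away from $\alpha$ regroup into $\alpha\cap d_{\mathrm{CC}_*}x$; all other terms cancel in pairs by the $\ainf$ bimodule equation (\ref{ainfbimod}) for $\B$ (the sub-sum where two instances of $\mu_{\B}$ meet, and the sub-sum where $\mu_{\A}$ feeds into $\mu_{\B}$). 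This is a direct adaptation of the standard computation for ordinary algebras; the $\ainf$ version just involves more summation indices.

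For (ii), I would expand $(\alpha \star \beta) \cap x$ and $\alpha \cap (\beta \cap x)$ and compare them term-by-term. In both expressions one has a single outer $\mu_{\B}$ wrapping around the bimodule entry, with $\alpha$ and $\beta$ inserted into its arguments. In the first expression, $\alpha$ and $\beta$ are always fed into the \emph{same} inner $\mu_{\A}$ (the one defining $\star$); in the second they can be in independent positions. The difference is exactly a sum indexed by configurations where $\beta$ is \emph{not} nested inside $\alpha$ relative to the outer $\mu_{\B}$, and this difference is readily seen to be $d(H(\alpha,\beta,x)) + H(d\alpha,\beta,x) + \cdots$ for an explicit homotopy $H$, again using (\ref{ainfbimod}) for $\B$. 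For (iii), strict unitality implies $\mu_{\A}$ with an input equal to $e$ contributes only at $\mu^{2}$, so $e \cap x$ reduces to rewriting each term using $\mu^2_{\A}(e,-)=\mathrm{id}$ and the homological unitality of $\B$, producing $x$ up to a boundary.

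The main obstacle, and the only real work, is keeping track of the Koszul signs $\maltese_{\bullet}^{\bullet}$, $\sharp^i_j$, and $\diamond$ throughout these manipulations; once the combinatorics of how operations nest is isolated, the cancellations follow tautologically from the $\ainf$ relations for $\A$ and the bimodule relations for $\B$. Everything can alternatively be obtained more conceptually by identifying $\mathrm{CC}^*(\A,\A)$ with endomorphisms of $\A_\Delta$ and $\mathrm{CC}_*(\A,\B)$ with $\A_\Delta \otimes_{\A\!-\!\A}\B$, after which cap is induced by composing an endomorphism of $\A_\Delta$ with the tensor factor; the explicit formula above is then just an unwinding of this, and the module axioms are automatic.
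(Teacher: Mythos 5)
Your proposal is correct and follows essentially the same route as the paper: verify that the cap product is a chain map and that $(\alpha\star\beta)\cap b$ agrees with $\alpha\cap(\beta\cap b)$ up to homotopy, both deduced from the $\ainf$ bimodule relations for $\B$ together with the cocycle/cycle conditions, with the real work being the Koszul signs. The extra unitality check and the remark identifying cap with the bimodule-level action on $\A_\Delta\otimes_{\A\!-\!\A}\B$ are harmless additions but do not change the argument.
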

\begin{proof}
We need to check that 
\begin{itemize}
    \item cap product is a {\it chain map}, which follows from verifying
        \begin{equation}
            d(\alpha \cap b) = \delta \alpha \cap b + \alpha \cap db.
        \end{equation}
    \item cap product is natural with respect to the product structure, namely
        \begin{equation} (\alpha \star \beta) \cap b \simeq \alpha \cap
            (\beta \cap b)
        \end{equation}
            for Hochschild cocycles $\alpha$, $\beta$ and a Hochschild cycle
            $b$.  
    \end{itemize} 
This is an exercise up to sign, involving an
application of the $\ainf$ bimodule relations for $\B$ and the definitions of
Hochschild cycle and cocycle.
\end{proof}

\subsection{Hochschild invariants from bimodules}\label{twopointed}
There are alternate chain level descriptions of Hochschild invariants
that align more closely with our viewpoint of using $\ainf$ bimodules. 
\begin{defn}
    The {\bf two-pointed complex for Hochschild homology} 
    \begin{equation}
        _2 \r{CC}_*(\A,\B)
    \end{equation}
    is the chain complex computing the bimodule tensor product with the
    diagonal bimodule: 
    \begin{equation}
        _2 \r{CC}_*(\A,\B) := \A_{\Delta} \otimes_{\A\!-\!\A} \B.
    \end{equation}
\end{defn}
\noindent Observe that the complex ${_2}\r{CC}_*(\A,\B)$ can be alternatively described as the ordinary Hochschild complex 
\begin{equation}
    CC_*(\A,\A_{\Delta}\otimes_\A \B).
\end{equation}
Thus, the quasi-isomorphism of bimodules
\begin{equation}
    \A_{\Delta} \otimes_\A \B \stackrel{\sim}{\lra} \B
\end{equation}
functorially induces a quasi-isomorphism of complexes
\begin{equation}\label{Phidef}
    \Phi:\mbox{ } _2 \r{CC}_*(\A,\B) \stackrel{\sim}{\lra} \r{CC}_*(\A,\B).
\end{equation}
explicitly given by
\begin{equation}\label{twopointedhomologyquasi}
    \begin{split}
        \Phi (\mathbf{a}, &a_1, \ldots, a_k, \mathbf{b}, \bar{a}_1, \ldots, \bar{a}_l) := \\
        &\sum (-1)^{\bowtie}\mu^{i + k+1|1|j}_\B(\bar{a}_{l-i+1}, \ldots, \bar{a}_l, \mathbf{a}, a_1, \ldots, a_k, \mathbf{b}, \bar{a}_1, \ldots, \bar{a}_j) \otimes \bar{a}_{j+1}
        \otimes \cdots \otimes  \bar{a}_{l-i}.
\end{split}
\end{equation}
with sign 
\begin{equation}
    \bowtie: = \bigg( \sum_{s=l-i+1}^l ||\bar{a}_s|| \bigg) \cdot \bigg( |\mathbf{a}| + |\mathbf{b}| + \sum_{t=1}^k ||a_t|| + \sum_{s=1}^{l-i} ||\bar{a}_s||\bigg) + \sum_{m=j+1}^{l-i} || \bar{a}_m||.
\end{equation}

\begin{defn}
    The {\bf two-pointed complex for Hochschild cohomology} 
    \begin{equation}_2 \r{CC}^*(\A,\B)\end{equation} 
        is the chain complex of morphisms in the category of bimodules:
    \begin{equation}
        _2 \r{CC}^*(\A,\B):= \hom_{\A\!-\A}(\A_{\Delta},\B).
    \end{equation}
\end{defn}

Similarly, as one natural interpretation of Hochschild cohomology is as
endomorphisms of the identity functor or the (derived) self-ext of the diagonal
bimodule, one expects a quasi-isomorphism of complexes 
\begin{equation}
    \Psi: \r{CC}^*(\A,\B) \stackrel{\sim}{\ra} \hom_{\A\! -\! \A}(\A_{\Delta},\B)
\end{equation} 
Explicitly, if $\phi \in \r{CC}^*(\A,\B)$ is a Hochschild co-chain
 then one such map is given by:  
 \begin{equation} \label{twopointedcohomologyquasi}
     \begin{split}
    \Psi(\phi)(x_1, &\ldots, x_k, \mathbf{a}, y_1, \ldots, y_l) :=\\
    &\sum (-1)^{\blacktriangle}\mu_{\B}^{k+1+i|1|s} (x_1, \ldots, x_k, \mathbf{a}, y_1, \ldots, y_i, \phi(y_{i+1}, \ldots, y_{l-s}), y_{l-s+1}, \ldots, y_l)
\end{split}
\end{equation}
with sign
\begin{equation}
    \blacktriangle:= |\phi| \cdot (\sum_{j=l-s+1}^l ||y_j||).
\end{equation}
\begin{prop}
    $\Psi$ is a quasi-isomorphism when $\A$ is homologically unital.
\end{prop}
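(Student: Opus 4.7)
The plan is to prove $\Psi$ is a quasi-isomorphism by a length-filtration spectral sequence argument, parallel in spirit to the proof of Proposition \ref{tensordiagonal}. Equip both sides with decreasing filtrations: let $F_p \r{CC}^*(\A,\B)$ consist of cochains vanishing on $\A^{\otimes d}$ for $d < p$, and let $F_p \hom_{\A\!-\!\A}(\A_\Delta,\B)$ consist of pre-morphisms $\mc{F}$ with $\mc{F}^{r|1|s} = 0$ for $r+s < p$. A direct inspection of the differentials confirms that both are filtration-preserving: each term in $d\phi$ or $\delta \mc{F}$ involves at most as many total tensor factors as the input. Formula (\ref{twopointedcohomologyquasi}) shows that $\Psi$ also preserves these filtrations, since $\Psi(\phi)^{r|1|s}$ vanishes for $s$ (and hence $r+s$) less than the input length of $\phi$.

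On the $E_1$ page, only length-preserving components of the differentials survive; these are precisely the terms controlled by the internal differentials $\mu_\A^1$ and $\mu_\B^{0|1|0}$. After taking internal homology, both $E_1$ pages reduce to classical constructions attached to the ordinary graded unital associative algebra $A := H^*(\A)$ and its ordinary $A$-bimodule $M := H^*(\B)$: the source becomes the classical Hochschild cochain complex of $A$ with coefficients in $M$, and the target becomes $\hom_{A\!-\!A}(A_\Delta, M)$. The $d_1$ differentials on both sides are built from the product $\mu^2_\A$ of $A$ and the bimodule actions $\mu_\B^{1|1|0}$, $\mu_\B^{0|1|1}$ of $M$, and are identified with their classical counterparts. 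Under these identifications, the map induced by $\Psi$ at $E_1$ is the standard comparison arising from the two-sided bar resolution $A \otimes T\bar A \otimes A \stackrel{\sim}{\to} A_\Delta$; the key input is that $A$ is strictly unital (automatic from homological unitality of $\A$), which supplies the contracting homotopy of the augmented bar complex and makes this comparison a quasi-isomorphism.

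The main obstacle is twofold. First, one must carefully verify the $E_1$ identification above --- this is mainly sign bookkeeping from formula (\ref{twopointedcohomologyquasi}) and the differentials in \S \ref{twopointed}, reducing $\ainf$ operations to their cohomology-level remnants and matching the induced $E_1$ differentials term-by-term with the classical bar differential. Second, one must establish convergence of both spectral sequences: the complexes are genuine completions (infinite products in the length direction) rather than sums, so convergence is not automatic. However, the filtrations are complete and Hausdorff ($\bigcap_p F_p = 0$ and $F_0$ is everything), so standard results for complete filtrations yield strong convergence of each spectral sequence in every fixed cohomological degree. The $E_1$ isomorphism then lifts to a quasi-isomorphism of total complexes, completing the argument.
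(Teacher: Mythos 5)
Your argument is correct and follows essentially the same route as the paper: a length-filtration spectral sequence reduces the statement to the homology-level unital associative case, where the quasi-isomorphism is supplied by acyclicity of the bar complex coming from the unit of $H^*(\A)$. The paper packages this as a filtration on the cone of $\Psi$ and proves the classical comparison explicitly via a second (right-length) filtration with contracting homotopy $\phi \mapsto \phi(\ldots, e, \ldots)$, whereas you compare the two filtered complexes directly and are more explicit about completeness and convergence while citing the classical bar-resolution comparison as standard --- but the substance is the same.
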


\begin{proof}
    As with previous arguments, we will simply exhibit the result for $\mc{A} =
    A$ without differentials and higher products $\mu^k=0$, and similarly for
    the bimodule structure (also disregarding signs). In this case,
\begin{equation}
    \r{CC}^*(A,A) = \hom_{Vect}(TA,A)
\end{equation}
with differential
\begin{equation}
    \begin{split}
    _1 d\phi(x_1, &\ldots, x_k) = x_1 \cdot \phi(x_2, \ldots, x_k) \\
    &+ \phi(x_1, \ldots, x_{k-1})\cdot x_k \\
    &+ \sum \phi(x_1, \ldots,  x_{i} \cdot x_{i+1}, \ldots, x_k).
\end{split}
\end{equation}
The two pointed complex is
\begin{equation}
    {_2}\r{CC}^*(A,A) := \hom_{Vect}(TA \otimes \underline{A} \otimes TA, A)
\end{equation}
with differential given by
\begin{equation}
    \begin{split}
        _2 d(\psi)(y_1, \ldots, y_r, &\mathbf{a}, x_1, \ldots, x_s) = y_1 \cdot \psi(y_2, \ldots, y_r, \mathbf{a}, x_1, \ldots, x_s) \\
        &+ \psi(y_1, \ldots, y_r, \mathbf{a}, x_1, \ldots, x_{s-1}) \cdot x_s \\
        &+ \sum \psi(y_1, \ldots, y_i \cdot y_{i+1}, \ldots, y_r, \mathbf{a}, x_1, \ldots, x_s)\\
        &+ \sum \psi(y_1, \ldots, y_r, \mathbf{a}, x_1, \ldots, x_j \cdot x_{j+1},\ldots, x_s)\\
        &+  \psi(y_1, \ldots, y_{r-1}, y_r\cdot\mathbf{a}, x_1, \ldots,  x_s)\\
        &+  \psi(y_1, \ldots, y_r, \mathbf{a}\cdot x_1,x_2 \ldots,  x_s).
    \end{split}
\end{equation}
Here we have underlined the bimodule $\underline{A}$ and boldfaced the bimodule
element $\mathbf{a}$ for ease of reading; the main difference from the ordinary
Hochschild complex is that one input, the element $\mathbf{a}$ must always be
specified, and moreover this element cannot come outside the Hochschild cochain
in the differential ${_2}d$. In contrast $A:= \hom(A^{\otimes 0},A)$ is
naturally a
sub-complex of the ordinary Hochschild complex $\r{CC}^*(A,A)$.  

Split the $ _2 \r{CC}(A,A)$ differential above into the sum of two
types of terms. The first only involves terms on the left of $\mathbf{a}$
\begin{equation}
    \begin{split}
        d_{left}(\psi)(y_1, \ldots, y_r, &\mathbf{a}, x_1, \ldots, x_s) = 
y_1 \cdot \psi(y_2, \ldots, y_r, \mathbf{a}, x_1, \ldots, x_s) \\
        &+ \sum \psi(y_1, \ldots, y_i \cdot y_{i+1}, \ldots, y_r, \mathbf{a}, x_1, \ldots, x_s)\\
        &+  \psi(y_1, \ldots, y_{r-1}, y_r\cdot\mathbf{a}, x_1, \ldots,  x_s)
    \end{split}
\end{equation}
and the second only involves terms on the right
\begin{equation}
    \begin{split}
        d_{right}(\psi)(y_1, \ldots, y_r, &\mathbf{a}, x_1, \ldots, x_s) = 
        \psi(y_1, \ldots, y_r, \mathbf{a}, x_1, \ldots, x_{s-1}) \cdot x_s \\
        &+ \sum \psi(y_1, \ldots, y_r, \mathbf{a}, x_1, \ldots, x_j \cdot x_{j+1},\ldots, x_s)\\
        &+  \psi(y_1, \ldots, y_r, \mathbf{a}\cdot x_1,x_2 \ldots,  x_s).
    \end{split}
\end{equation}
The map $\mathbf{\Psi}: \r{CC}_*(A,A) \lra {_2}\r{CC}_*(A,A)$ is given by:
\begin{equation}
    \mathbf{\Psi}(\phi)(y_1, \ldots, y_r, \mathbf{a}, x_1, \ldots, x_s) = \begin{cases}
        0 & r > 0\\
        \mathbf{a} \cdot \phi(x_1, \ldots, x_s) & r = 0
    \end{cases}
    \end{equation}
It is an easy verification that this is a chain map. Let us show explicitly
that it induces a quasi-isomorphism. The Cone of $\Psi$ is the complex
\begin{equation} \label{conecc2complex}
    Cone(\mathbf{\Psi}) := \r{CC}^*(A,A) \oplus {_2}\r{CC}^*(A,A)[1]
\end{equation}
with differential given by
\begin{equation}
    d_{Cone(\mathbf{\Psi})} = \left(\begin{array}{cc} _1 d & 0 \\ \mathbf{\Psi} & _2 d \end{array}\right)
\end{equation}

The second Hochschild complex admits a
filtration by ``length on the right,'' i.e.
\begin{equation}
    F_p {_2}\r{CC}(A,A) = \bigoplus_{l \geq p} \hom(TA \otimes \underline{A} \otimes A^{\otimes l}, A).
\end{equation}
This is compatible under $\mathbf{\Psi}$ with the length filtration on $\r{CC}(A,A)$
\begin{equation}
    F'_p \r{CC}(A,A) = \bigoplus_{l\geq p} \hom(A^{\otimes l},A)
\end{equation}
so we obtain an overall filtration $\mc{G}$ on the complex $Cone(\Psi)$.
The zeroth page of the associated spectral sequence, the associated graded
of the filtration, is 
\begin{equation}
    Gr(\mc{G})_l:= \hom_{Vect}(A^{\otimes l}, A) \oplus \bigoplus_{k\geq 0}\hom_{Vect}(A^{\otimes k} \otimes \underline{A} \otimes A^{\otimes l},A).
\end{equation}
where the first piece comes from $\r{CC}^*(A,A)$ and the remainder comes from
${_2}\r{CC}^*(A,A)$. The differential on this page of the spectral
sequence acts as follows: 
\begin{equation}
        d \phi = \begin{cases} 0 & \phi \in \hom_{Vect}(A^{\otimes l}, A)\\
            \mathbf{\Psi}(\phi) & \phi \in \hom_{Vect} (\underline{A} \otimes A^{\otimes l}, A)\\
            d_{left}(\phi) & \phi \in \hom_{Vect}(A^{\otimes k} \otimes \underline{A} \otimes A^{\otimes l}, A)
        \end{cases}
\end{equation}
We can view this as a single complex, for each $l$, of
\begin{equation}
    \bigoplus_{k' \geq 0}\hom_{Vect}(A^{\otimes k'} \otimes A^{\otimes l}, A)
\end{equation}
with differential
\begin{equation}
    \begin{split}
    d\phi(y_1, \ldots, y_{k'}, x_1, \ldots, x_l) &= y_1 \cdot \phi(y_2, \ldots, y_{k'}, x_1, \ldots, x_l) \\
    &+\sum_i \phi(y_1, \ldots, y_i \cdot y_{i+1}, \ldots, y_{k'}, x_1, \ldots, x_l).
\end{split}
\end{equation}
This is visibly a bar complex, admitting for unital $A$ the contracting homotopy 
\begin{equation}
    \mf{h}(\phi)(y_1, \ldots, y_{k'}, x_1, \ldots, x_l) = \phi(y_1, \ldots, y_k', e, x_1, \ldots, x_l).
\end{equation}
Hence, the first page of the spectral sequence vanishes and we see that
$Cone(\mathbf{\Psi})$ is acyclic.

The general case follows from analyzing the associated length filtration on the
complex computing $Cone(\mathbf{\Psi})$, as in Proposition \ref{tensordiagonal}
below. Namely, the first page of the associated spectral sequence gives exactly
the homology-level complex (\ref{conecc2complex}), which we have just shown to
be acyclic.
\end{proof}

\begin{rem}
    Our emphasis on multiple chain-level models for Hochschild invariants, with
    explicit quasi-isomorphisms between them, may seem contrary to the
    ``derived'' or ``Morita-theoretic'' perspective that the resulting
    invariants are abstractly independent of choices. One reason for such an
    exposition is that the open-closed geometric maps constructed in Section
    \ref{openclosedmaps} depend explicitly on choices of chain complexes. As
    one consequence of the discussion here, we will obtain the (unsurprising)
    result that variants of the geometric open-closed maps that with different
    source (two-pointed) Hochschild chain complexes are quasi-isomorphic in an
    explicit fashion.  This is expected but not a priori obvious from
    definitions.
\end{rem}

\subsection{Split-generation}\label{splitgensection}
Let $X \subset C$ be a full subcategory of a triangulated category. We say
that $X$ {\bf split-generates $C$} if every element of $C$ is isomorphic to a
summand of a finite iterated cone of elements in $X$. We say a triangulated
category is {\bf split-closed} if any idempotent endomorphism of an object $Z$
leads to a splitting of that object as a direct sum $X \oplus Y$.

Now recall that, for an $\ainf$ category $\cc$, the category of modules $\rmod
\cc$ is naturally {\bf pre-triangulated} (meaning the cohomology level category
$H^0(\rmod \cc)$ is triangulated)---we can take sums, shifts, and mapping cones
of modules, and hence complexes of modules. 

There is a notion of an {\bf idempotent up to homotopy}
\cite{Seidel:2008zr}*{(4b)} which would allow us to properly extend discussion
of split-generation to the chain level. However, it is also known that a
cohomology level idempotent endomorphism can always be lifted, essentially
uniquely, to an idempotent up to homotopy \cite{Seidel:2008zr}*{Lemma 4.2}.
Thus, for our purposes, it is sufficient to make the following definition:

\begin{defn}
    Let $\cc$ be an $\ainf$ category, and $\mc{X} \subset \mc{C}$ a full
    subcategory. We say that $\mc{X}$ {\bf split-generates} $\mc{C}$ if any
    Yoneda module in $\rmod\cc$ admits a homologically left-invertible morphism
    into a (finite) complex of Yoneda modules of objects of $\mc{X}$.
\end{defn}
If $i$ is the (homology-level) morphism and $p$ is the (homology) left inverse,
then the reverse composition $i \circ p$ is the idempotent that exhibits the
target module as a homological summand of the larger complex.

\begin{defn}
    Call a right module $\mc{M}$ over $\mc{X}$ {\bf perfect} if it admits a
    homologically left-invertible morphism into a finite complex of Yoneda
    modules of objects of $\mc{X}$.
\end{defn}

Given a collection of objects $\{X_i\}$ in an $\ainf$ category $\mc{C}$, it is
then natural to ask when they split-generate another object $Z$. There is a
criterion for split-generation, known to category theorists and first
introduced in the symplectic/$\ainf$ setting by Abouzaid
\cite{Abouzaid:2010kx}, as follows. Denote by 
\begin{equation}\mc{X}\end{equation}
the full sub-category of $\mc{C}$ with objects $\{X_i\}$. Then, one can form
the chain complex 
\begin{equation}\label{yrlbarcomplex}
    \mc{Y}^r_Z \otimes_{\mc{X}} \mc{Y}^l_Z
\end{equation}
where the above notation indicates that the Yoneda modules $\mc{Y}^r_Z$,
$\mc{Y}^l_Z$ are thought of as modules over $\mc{X}$ via the inclusion $\mc{X}
\subset \mc{C}$ as in Example \ref{yonedasubcategory}. Concretely, this bar
complex is given as
\begin{equation}
    \bigoplus_{k\geq 1} \bigoplus_{X_{i_1}, \ldots, X_{i_k} \in \ob \mc{X}}  \hom_{\mc{C}}(X_{i_k},Z) \otimes \hom_{\mc{X}}(X_{i_{k-1}},X_{i_k}) \otimes \cdots \otimes \hom_{\mc{X}}(X_{i_1},X_{i_2})  \otimes  \hom_{\mc{C}} (Z,X_{i_1})
\end{equation}
with differential given by summing over all ways to collapse some (but not all)
of the terms with a $\mu$:
\begin{equation}\label{yrlbardifferential}
    \begin{split}
        d(\mathbf{a} \otimes x_k \otimes \cdots \otimes x_1 \otimes \mathbf{b}) &= \sum_i (-1)^{\maltese_0^i}\mu^i(\mathbf{a},x_k, \ldots, x_{i+1}) \otimes x_{i} \otimes \cdots \otimes x_k \otimes \mathbf{b} \\
        &+ \sum_{j,r}(-1)^{\maltese_0^r} \mathbf{a}\otimes \cdots \mu^j(x_{r+j}, \ldots, x_{r+1}) \otimes x_r \otimes \cdots \otimes x_1 \otimes \mathbf{b}\\
        &+\sum_s (-1)^{\maltese_0^s}\mathbf{a}\otimes  \cdots \otimes x_{s+1} \otimes \mu^{s+1} (x_{s}, \ldots, x_1, \mathbf{b} ),
\end{split}
\end{equation}
where the sign is the usual
\begin{equation}
    \maltese_0^t := |\mathbf{b}| + \sum_{j=1}^t ||x_j||.
\end{equation}
There is a collapsing morphism
\begin{equation} \label{generatingmap}
    \begin{split}
    \mc{Y}^r_Z \otimes_{\mc{X}} \mc{Y}^l_Z &\stackrel{\mu}{\lra} \hom_{\mc{C}}(Z,Z)\\
    \mathbf{a} \otimes x_k \cdots \otimes x_1 \otimes \mathbf{b} &\longmapsto (-1)^{\maltese_0^k}\mu^{k+2}(\mathbf{a},x_k,\ldots,x_1, \mathbf{b}).
\end{split}
\end{equation}
which is a chain map, inducing a homology level morphism 
\begin{equation}
    [\mu]: H^*(\mc{Y}^r_Z \otimes_{\mc{X}} \mc{Y}^l_Z) \lra H^*(\hom_{\mc{C}}(Z,Z)).
\end{equation}
This map can be thought of as the first piece of information involving $Z$ from
the category $\mc{C}$ that is not already contained in the $\mc{X}$ modules
$\mc{Y}^r_Z$ and $\mc{Y}^l_Z$. The following proposition relates a checkable criterion involving the map $[\mu]$ to the split-generation of $Z$.
\begin{prop}[\cite{Abouzaid:2010kx}*{Lemma 1.4}]
The following two statements are equivalent:
\begin{align}
\label{splitgencriterion}&\textrm{The identity element }[e_Z] \in H^*(\hom_{\mc{C}}(Z,Z))\textrm{ is in
the image of }[\mu].  \\
\label{splitgenstatement}&\textrm{The object }Z\textrm{ is split generated by the }\{X_i\}.  
\end{align}
\end{prop}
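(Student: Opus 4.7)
My plan is to prove the equivalence by setting up an explicit dictionary between the bar complex (\ref{yrlbarcomplex}) and twisted-complex factorizations of $\mc{Y}^r_Z$ through Yoneda modules of the $X_i$'s in $\rmod\mc{X}$, transported via the right Yoneda embedding $\yr$ of Proposition \ref{moduleyoneda}. Under this dictionary, a generator $\mathbf{a} \otimes x_k \otimes \cdots \otimes x_1 \otimes \mathbf{b}$ with $\mathbf{b} \in \hom_{\mc{C}}(Z, X_{i_1})$, $\mathbf{a} \in \hom_{\mc{C}}(X_{i_k}, Z)$ is reinterpreted as a pre-morphism $\yr(\mathbf{b}): \mc{Y}^r_Z \lra \mc{Y}^r_{X_{i_1}}$, a sequence of morphisms $\yr(x_j): \mc{Y}^r_{X_{i_j}} \lra \mc{Y}^r_{X_{i_{j+1}}}$, and a pre-morphism $\yr(\mathbf{a}): \mc{Y}^r_{X_{i_k}} \lra \mc{Y}^r_Z$. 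Sums of such generators package into a single triple $(\mc{F}, i, p)$, where $\mc{F}$ is a finite direct sum of Yoneda modules $\mc{Y}^r_{X_j}$ equipped with a degree-1 self-map $\delta$ (the twisted-complex differential), and $i: \mc{Y}^r_Z \lra \mc{F}$, $p: \mc{F} \lra \mc{Y}^r_Z$ are pre-morphisms of right $\mc{X}$-modules assembled from the $\mathbf{b}_\alpha$ and $\mathbf{a}_\alpha$ respectively.

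The key technical lemma is that the bar-cycle condition on $\xi = \sum_\alpha \mathbf{a}_\alpha \otimes \vec{x}_\alpha \otimes \mathbf{b}_\alpha$, when split according to the three types of terms in the differential (\ref{yrlbardifferential}) (collapsing into $\mathbf{a}$, collapsing into $\mathbf{b}$, and collapsing internally among the $x_j$), translates via the $\ainf$ relations into exactly the Maurer--Cartan equation for $\delta$ together with the closedness of $i$ and $p$ as morphisms in the twisted category. Moreover, under the Yoneda embedding $\mathbf{Y}: \hom_{\mc{C}}(Z,Z) \lra \hom_{\rmod\mc{X}}(\mc{Y}^r_Z, \mc{Y}^r_Z)$ obtained by composing $\yr$ with restriction to $\mc{X}$, the class $[\mu(\xi)]$ corresponds to $[p \circ i]$ and $[e_Z]$ corresponds to $[\mathrm{id}_{\mc{Y}^r_Z}]$.

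Given this dictionary, both implications become essentially formal. For (\ref{splitgencriterion})$\Rightarrow$(\ref{splitgenstatement}): if $[\mu(\xi)] = [e_Z]$, the associated twisted complex $\mc{F}$ and morphisms $i, p$ satisfy $[p \circ i] = [\mathrm{id}_{\mc{Y}^r_Z}]$, so $\mc{Y}^r_Z$ is a homological summand of a finite twisted complex of Yoneda modules of the $X_i$; hence $Z$ is split-generated by $\{X_i\}$. Conversely, given any witness $(\mc{F}, i, p)$ of split-generation, I would reverse-engineer a bar cycle $\xi$ by reading off the Taylor components of $i$, $p$, and $\delta$ as elements of $\hom_{\mc{C}}(Z, X_{i_1})$, $\hom_{\mc{C}}(X_{i_k}, Z)$, and $\hom_{\mc{X}}(X_{i_j}, X_{i_{j+1}})$ respectively---the correspondences here use Proposition \ref{moduleyoneda} to reduce pre-morphisms between Yoneda modules to morphisms in the original category---so $[\mu(\xi)] = [p \circ i] = [e_Z]$.

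The main obstacle will be the combinatorial and sign bookkeeping underlying the key lemma above: matching the three summation types of the bar-differential (\ref{yrlbardifferential}) against the Maurer--Cartan equation and the two closedness conditions, then verifying compatibility with the Yoneda embedding. This is the standard $\ainf$ twisted-complex/bar-complex dictionary, and once assembled, the rest of the argument is purely formal.
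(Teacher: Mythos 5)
There is a genuine gap, and it sits exactly where you locate the work: the ``key technical lemma'' is false as stated. A cycle $\xi=\sum_\alpha \mathbf{a}_\alpha\otimes \vec{x}_\alpha\otimes\mathbf{b}_\alpha$ for the differential (\ref{yrlbardifferential}) does \emph{not} decompose termwise into a Maurer--Cartan element $\delta$ built from the middle letters together with closed morphisms $i,p$ built from the outer letters: the three types of collapse (into $\mathbf{a}_\alpha$, into $\mathbf{b}_\alpha$, and internal) cancel against each other and across different summands $\alpha$, so in general $\mu^1(\mathbf{b}_\alpha)\neq 0$, $\mu^1(\mathbf{a}_\alpha)\neq 0$, and the strings $\vec{x}_\alpha$ satisfy no Maurer--Cartan equation (already a length-one cycle $\mathbf{a}\otimes x\otimes\mathbf{b}+\mathbf{a}'\otimes x'\otimes\mathbf{b}'$ exhibits this). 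Consequently the triple $(\mc{F},i,p)$ you want to read off from $\xi$ need not exist, and the forward implication --- the substantive one --- does not go through. The closedness in this story lives only at the level of a \emph{total} package: the correct move is to associate to $\xi$ a single pre-morphism of right $\mc{X}$-modules $t_\xi\colon \mc{Y}^r_Z\to \mc{Y}^r_Z\otimes_{\mc{X}}\mc{X}_\Delta$ (the bar resolution, an infinite twisted complex of Yoneda modules of the $X_i$ with vector-space multiplicities), for which $\delta(t_\xi)=t_{d\xi}$, so that a bar cycle gives a closed morphism; composing with the collapse quasi-isomorphism of Proposition \ref{tensordiagonal} yields, up to homotopy, the action of $[\mu(\xi)]=[e_Z]$, hence a homotopy retraction of $\mc{Y}^r_Z$ onto this module, and only then does finiteness of $\xi$ let one cut down to a finite perfect piece. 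Note also that the paper does not prove this statement at all; it is quoted from \cite{Abouzaid:2010kx}, whose argument is along the lines just sketched rather than a letter-by-letter dictionary.

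Your converse direction has a separate, smaller gap. Proposition \ref{moduleyoneda} computes morphisms \emph{out of} Yoneda modules of objects of $\mc{X}$, so it identifies the entries of $p$ and of the twisted differential $\delta$ with elements of $\hom_{\mc{C}}(X_j,Z)$ and $\hom_{\mc{X}}(X_j,X_{j'})$; but the entries of $i$ lie in $\hom_{\rmod\mc{X}}(\mc{Y}^r_Z,\mc{Y}^r_{X_j})$, and since $Z\notin\ob\mc{X}$ the Yoneda lemma does not reduce these to $\hom_{\mc{C}}(Z,X_j)$. Even granting such a reduction, your reverse-engineered chain only shows that the image of $[\mu(\xi)]$ in $H^*(\hom_{\rmod\mc{X}}(\mc{Y}^r_Z,\mc{Y}^r_Z))$ agrees with $[\mathrm{id}]$, which does not give $[\mu(\xi)]=[e_Z]$ in $H^*(\hom_{\mc{C}}(Z,Z))$ because the restriction map need not be injective. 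The standard way to close this direction is instead to show $[\mu]$ is an isomorphism for every object in the split-closure of $\mc{X}$, by induction over cones and summands starting from Proposition \ref{barcomplexhom} (compare the cone and filtration arguments in the proof of Proposition \ref{splitgeniso}), after which surjectivity onto $[e_Z]$ is immediate.
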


In fact, the criterion (\ref{splitgencriterion}) in turn implies that we
completely understand the map $[\mu]$.
\begin{prop}\label{splitgeniso}
    If the identity element $[e_Z]$ is in the image of $[\mu]$, then the map
    $[\mu]$ is an isomorphism.
\end{prop}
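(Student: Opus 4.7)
The plan is to exploit the bimodule structure on the bar complex $\mc{Y}^r_Z \otimes_{\mc{X}} \mc{Y}^l_Z$ to obtain surjectivity, and to use a perfect-module interpretation to obtain injectivity.

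First I would observe that both $H^*(\mc{Y}^r_Z \otimes_{\mc{X}} \mc{Y}^l_Z)$ and $H^*(\hom_{\mc{C}}(Z,Z))$ carry natural bimodule structures over the ring $H^*(\hom_{\mc{C}}(Z,Z))$. For the bar complex, the left action of $\beta \in \hom_{\mc{C}}(Z,Z)$ is given by composing $\beta$ into the leftmost factor $\mathbf{a} \in \hom_{\mc{C}}(X_{i_k},Z)$ via $\mu^2_{\mc{C}}$ (together with higher $\ainf$ terms absorbing some of the $x_j$'s), and analogously on the right using the factor $\mathbf{b}$. A chain-level calculation using the $\ainf$ relations for $\mc{C}$ shows that the bar differential (\ref{yrlbardifferential}) is compatible with these actions and that $[\mu]$ is a map of $H^*(\hom_{\mc{C}}(Z,Z))$-bimodules on cohomology.

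Given a cycle $\alpha$ with $[\mu(\alpha)] = [e_Z]$, set $\Phi(\beta) := \beta \cdot \alpha$ using the left action. Bimodule compatibility then yields $[\mu \circ \Phi](\beta) = \beta \cdot [e_Z] = \beta$, so $[\Phi]$ is a right inverse to $[\mu]$; in particular $[\mu]$ is surjective. An analogous argument with the right action gives a second right inverse $\beta \mapsto \alpha \cdot \beta$.

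For injectivity, I would use that the existence of $\alpha$ is equivalent to the right $\mc{X}$-module $\iota^* \mc{Y}^r_Z$ being perfect, i.e., admitting a homologically left-invertible morphism into a finite complex of representable Yoneda modules $\mc{Y}^r_{X_i}$. Using such a resolution and applying the $\ainf$ module Yoneda lemma (Proposition \ref{moduleyoneda}) term-by-term---so that $\mc{M} \otimes^{\mathbb{L}}_{\mc{X}} \mc{Y}^l_{X} \simeq \mc{M}(X)$---one obtains a chain-level quasi-isomorphism identifying the bar complex $\mc{Y}^r_Z \otimes_{\mc{X}} \mc{Y}^l_Z$ with the morphism complex $\hom_{\rmod\mc{X}}(\iota^* \mc{Y}^r_Z, \iota^* \mc{Y}^r_Z)$. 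Under this identification, $\mu$ corresponds to the map induced by the Yoneda functor $\mc{Y}^r$ applied to $\hom_{\mc{C}}(Z,Z)$; since $\mc{Y}^r$ is a full embedding (Corollary following Proposition \ref{moduleyoneda}) and $\iota^* \mc{Y}^r_Z$ is perfect, this Yoneda map is a quasi-isomorphism, producing a two-sided inverse to $[\mu]$.

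The main technical obstacle is rigorously producing the chain-level identification of the bar complex with $\hom_{\rmod\mc{X}}(\iota^* \mc{Y}^r_Z, \iota^* \mc{Y}^r_Z)$: one must construct an explicit quasi-isomorphism using a bar-type resolution of $\iota^* \mc{Y}^r_Z$ by representable Yonedas, and carefully track the higher $\ainf$ module operations to verify that $\mu$ transforms into the Yoneda evaluation. Once this is set up, the fact that perfectness makes the Yoneda map a quasi-isomorphism is formal, and combined with the surjectivity argument above gives that $[\mu]$ is an isomorphism.
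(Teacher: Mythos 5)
Your surjectivity argument is fine in outline and is genuinely different from anything in the paper's proof: the observation that $\mc{Y}^r_Z \otimes_{\mc{X}} \mc{Y}^l_Z$ is an $\ainf$ bimodule over $\hom_{\mc{C}}(Z,Z)$, that $[\mu]$ is a map of $H^*(\hom_{\mc{C}}(Z,Z))$-modules, and that $\beta \mapsto \beta\cdot\alpha$ is then a homology right inverse is the same mechanism the paper uses for Proposition \ref{surjectivity}, transplanted here. The paper itself proceeds quite differently: it writes down an explicit contracting homotopy for $\mathrm{Cone}(\mu)$ in the associative case out of a factorization $\sum \mathbf{a}_k\cdot\mathbf{b}_k = e_Z$, handles differentials by the length-filtration spectral sequence, and reduces the general $\ainf$ case to a dg model through a homotopy-commutative square built from a full and faithful functor (the Yoneda embedding).

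The injectivity half, however, has a real gap. The claimed identification of the bar complex with $\hom_{\rmod\mc{X}}(\iota^*\mc{Y}^r_Z,\iota^*\mc{Y}^r_Z)$ is not a term-by-term application of Proposition \ref{moduleyoneda}: perfectness of $\iota^*\mc{Y}^r_Z$ lets you resolve the \emph{right} module and compute the endomorphism complex out of terms $\hom_{\cc}(X_i,Z)$, but to compute $\mc{Y}^r_Z\otimes_{\mc{X}}\mc{Y}^l_Z$ the same way you must resolve the \emph{left} module $\iota^*\mc{Y}^l_Z$, and matching the two requires knowing that $\iota^*\mc{Y}^l_Z$ is quasi-isomorphic to the $\mc{X}$-module dual $\hom_{\rmod\mc{X}}(\iota^*\mc{Y}^r_Z,\mc{X}_{\Delta})$, equivalently that the two resolutions are dual to one another. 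Proposition \ref{moduledualprop} only applies to Yoneda modules of objects of $\mc{X}$, and $Z\notin\ob\mc{X}$, so this duality is precisely where the hypothesis must enter; it is the content of the argument, not bookkeeping. Similarly, fullness of $\mc{Y}^r$ gives $\hom_{\cc}(Z,Z)\simeq H^*(\hom_{\rmod\cc}(\mc{Y}^r_Z,\mc{Y}^r_Z))$ over $\cc$, not over $\mc{X}$; to conclude that the map into $\hom_{\rmod\mc{X}}(\iota^*\mc{Y}^r_Z,\iota^*\mc{Y}^r_Z)$ is a quasi-isomorphism you need that restriction to $\mc{X}$ is cohomologically fully faithful on the split-closure of the $\mc{Y}^r_{X_i}$ containing $\mc{Y}^r_Z$, which requires lifting and tracking homotopy idempotents --- exactly the kind of statement the paper flags (Proposition \ref{fullsplitgen}) as lacking a clean proof in the literature. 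As written, then, the injectivity step does not close; the paper's explicit homotopy constructs directly the data your approach would eventually have to produce.
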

\begin{proof}
    We will show this up to checking signs. First, assume that there is are no
    differentials $\mu^1 = 0$ or higher products $\mu^k = 0$, $k > 2$. Denote
    the ordinary composition $\mu^2$ by
    $\cdot$. 
    Then, the map (\ref{generatingmap})
    is given by
    \begin{equation}
        \mu: \mathbf{a} \otimes x_1 \cdots \otimes x_k \otimes \mathbf{b} 
        \longmapsto 
        \begin{cases}
            \mathbf{a}\cdot \mathbf{b} & k = 0\\
            0 & \textrm{otherwise}.
        \end{cases}
    \end{equation}
    Thus, we may suppose that there exists elements $\mathbf{a}_k \in
    \hom(X_{i_k},X)$ $\mathbf{b}_k \in \hom(Z,X_{i_k})$ such that 
    \begin{equation} \label{thereexistakbk}
        \sum \mathbf{a}_k \cdot \mathbf{b}_k = e_Z.
    \end{equation}
    Now, the cone of the morphism $\mu$ is a version of the bar complex where
    the outermost elements are both $Z$ and the inner elements range over
    $\mc{X}$:
    \begin{equation}
        Cone(\mu):= \hom(Z,Z)[1] \oplus \mc{Y}^r_Z \otimes_{\mc{X}} \mc{Y}^l_Z
    \end{equation}
    with differential given by the usual bar differential on $\mc{Y}^r_Z
    \otimes_{\mc{X}} \mc{Y}^l_Z$ along with the collapsing multiplication
    $\mathbf{a} \otimes \mathbf{b} \mapsto \mathbf{a} \cdot \mathbf{b}$:
    \begin{equation}
        d (\mathbf{a} \otimes x_1 \cdots x_k \otimes \mathbf{b}) : = 
        \begin{cases}
            \mathbf{a} \cdot \mathbf{b} & k = 0\\
            d_{\mc{Y}^r_Z \otimes_{\mc{X}} \mc{Y}^l_Z}(\mathbf{a} \otimes x_1 \cdots x_k \otimes \mathbf{b}) & \textrm{otherwise}
        \end{cases}
        \end{equation}
        This admits the following contracting homotopy:
        \begin{equation}
            \begin{split}
            \mf{H}: Cone(\mu) &\lra Cone(\mu)\\
            z &\longmapsto \sum (z \cdot \mathbf{a}_k) \otimes \mathbf{b}_k,\ z \in \hom(Z,Z) \\
            \mathbf{a} \otimes x_1 \otimes \cdots \otimes x_k \otimes \mathbf{b} &\longmapsto \sum \mathbf{a}_k \otimes (\mathbf{b}_k \cdot \mathbf{a}) \otimes x_1 \otimes \cdots \otimes x_k \otimes \mathbf{b}.
        \end{split}
        \end{equation}
        One can check that, as (\ref{thereexistakbk}) holds, $\mf{H}$ satisfies 
        \begin{equation}
            d\mf{H} - \mf{H}d = id
        \end{equation}
        as desired.

    Now, suppose there are non-trivial differentials. Then
     the map (\ref{generatingmap})
    is still given by
    \begin{equation}
        \mu: \mathbf{a} \otimes x_1 \cdots \otimes x_k \otimes \mathbf{b} 
        \longmapsto 
        \begin{cases}
            \mathbf{a}\cdot \mathbf{b} & k = 0\\
            0 & \textrm{otherwise}.
        \end{cases}
    \end{equation}
    The condition that $[e_Z]$ is in the image implies that for some
    representative $e_Z$, there exists $\mathbf{a}_k \in
    \hom(Z,X_{i_k})$ $\mathbf{b}_k \in \hom(X_{i_k},Z)$ such that 
    \begin{equation} 
        \begin{split}
        d(\sum \mathbf{a}_k \otimes \mathbf{b}_k) &= 0\\
        \sum \mathbf{a}_k \cdot \mathbf{b}_k &= e_Z.
    \end{split}
    \end{equation}
    But the differential on the length two words in the bar complex $\mc{Y}^r_Z
    \otimes_{\mc{X}} \mc{Y}^l_Z$ only involves $\mu^1$ terms, so the element $\sum
    a_k \otimes b_k$ descends to the homology level bar complex
    $\mc{Y}^r_{H^*(Z)} \otimes_{H^*(\mc{X})} \mc{Y}^l_{H^*(Z)}$. Thus taking
    the Cone of $\mu$, and looking at the first page of the spectral sequence
    associated to the length filtration, we conclude by reduction to the
    previous case.

    Now, we reduce the general case to the dg case described above as follows.
    Let 
    \begin{equation}
        \overline{\mc{X}}
    \end{equation}
    denote the image of the subcategory $\mc{X}$ under a given full and
    faithful functor 
    \begin{equation}
        \mathbf{F}: \cc \stackrel{\sim}{\longhookrightarrow} \mc{D}
    \end{equation}
    into a {\it dg category} $\mc{D}$ (for example, we could take $\mathbf{F}$
    to be the right Yoneda embedding $\yr$).  Denote by
    \begin{equation}
        \bar{Z} := \mathbf{F}(Z)
    \end{equation} 
    the image of the object $Z$ under $\mathbf{F}$.  We claim that $\mathbf{F}$
    functorially induces the following homotopy commutative diagram:
    \begin{equation} \label{yonedacommutativediagram}
        \xymatrix{ \mc{Y}^r_Z \otimes_{\mc{X}} \mc{Y}^l_Z \ar[r]^{(\mathbf{F})_{!}}_{\sim}\ar[d]^{\mu_{\cc}} & \mc{Y}^r_{\bar{Z}} \otimes_{\bar{\mc{X}}} \mc{Y}^l_{\bar{Z}} \ar[d]^{\mu_{\mc{D}}} \\
        \hom_{\cc}(Z,Z) \ar[r]^{(\mathbf{F})^1}_{\sim} & \hom_{\mc{D}}(\bar{Z},\bar{Z})  }
    \end{equation}
    Here $(\mathbf{F})_{!}$ is defined as the sum over all ways to apply terms
    of the functor $\mathbf{F}$ to portions of the bar complex $\mc{Y}^r_Z
    \otimes_{\mc{X}} \mc{Y}^l_Z$ except for collapsing maximally
    \begin{equation}\label{yonedashriek}
    \begin{split}
        (\mathbf{F})_{!}(\mathbf{a} \otimes &x_1 \otimes \cdots \otimes x_k \otimes \mathbf{b}) := \\
        &\sum_{s\geq 2}\sum_{i_1 + \cdots + i_s = k+2} \mathbf{F}^{i_1}(\mathbf{a}, x_1, \ldots, x_{i_1-1}) \otimes \cdots \otimes \mathbf{F}^{i_s}(x_{k-i_s+2},\ldots, x_k, \mathbf{b}).
\end{split}
\end{equation}
If (\ref{yonedacommutativediagram}) held along with the equivalences, then we
would be done by reduction to the dg case, as $\mc{D}$ is a dg category.  Now,
the commutativity of the diagram follows from the $\ainf$ functor
equations for the functor $\mathbf{F}$, which take the following form (as
$\mc{D}$ is dg):
\begin{equation} \label{yonedafunctorequation}
    \begin{split}
        \mu^1_{\mathcal{D}}(\mathbf{F}^{k+2}(\aa, &x_1, \ldots, x_k, \bb)) + \sum_{i_1+i_2=k+2}\mu^2_{\mc{D}}(\mathbf{F}^{i_1}(\aa, x_1, \ldots, x_{i_1}),\mathbf{F}^{i_2}(x_{i_1+1}, \ldots, x_k, \bb)) \\
        &= \mathbf{F}^1(\mu_{\cc}^{k+2}(\aa,x_1, \ldots, x_k, \bb)) +  \mathbf{F} (d_{\mc{Y}^r_Z \otimes_{\mc{X}} \mc{Y}^l_Z}(\aa,x_1, \ldots, x_k, \bb)).
    \end{split}
\end{equation}
where $\mathbf{F}$ above is thought of as the total functor
\begin{equation}
    \mathbf{F} = \oplus \mathbf{F}^d: T\cc \ra \mc{D}
\end{equation}
In other words, (\ref{yonedafunctorequation}) tells us that 
\begin{equation}
    \mathbf{F}^1 \circ \mu_\cc - \mu_{\mc{D}} \circ (\mathbf{F})_{!} = \mu^1_{\mc{D}} \circ \mathbf{F} \pm \mathbf{F} \circ d_{\mc{Y}^r_Z \otimes_{\mc{X}} \mc{Y}^l_Z},
\end{equation} 
so the total functor $\mathbf{F}$ implements the homotopy between the two
composites in the diagram (\ref{yonedacommutativediagram}).

Finally, it remains the check the equivalences in
(\ref{yonedacommutativediagram}). We already know $\mathbf{F}^1$ induces a
quasi-isomorphism by assumption, so we just need to
check $(\mathbf{F})_{!}$.  This could be done by hand, but we will present an
alternate naturality/functoriality argument.  First, note that the bar
complexes in (\ref{yonedacommutativediagram}) can be re-expressed by Example
\ref{barcomplexobservation} as Hochschild homology complexes
\begin{equation}
    \begin{split}
        \r{CC}_*(\mc{X},\mc{Y}^l_Z \otimes_{\mathbb K} \mc{Y}^r_Z)\\
        \r{CC}_*(\overline{\mc{X}},\mc{Y}^l_{\bar{Z}} \otimes_{\mathbb K} \mc{Y}^r_{\bar{Z}}).
    \end{split}
\end{equation}
by reordering the positions of $\mc{Y}^l_Z$ and $\mc{Y}^l_{\bar{Z}}$ (with the
Koszul sign change given by the gradings)
Now, we note by (\ref{nattransbimodulemorphism}) that there is
a natural morphism of bimodules
\begin{equation}
    \mf{T}^{\mathbf{F}}_{LR}: \mc{Y}^l_Z \otimes_{\mathbb K} \mc{Y}^r_Z \lra \mathbf{F}^*\mc{Y}^l_{\bar{Z}} \otimes_{\mc{X}} \mathbf{F}^* \mc{Y}^r_{\bar{Z}}.
\end{equation}
By functoriality with respect to bimodules (\ref{functorialmapbimodules}), we
obtain a map on Hochschild complexes 
\begin{equation}
    (\mf{T}^{\mathbf{F}}_{LR})_\#: \r{CC}_*(\mc{X}, \mc{Y}^l_Z \otimes_{\mathbb K} \mc{Y}^r_Z) \lra \r{CC}_*(\mc{X},\mathbf{F}^*(\mc{Y}^l_{\bar{Z}} \otimes_{ \K} \mc{Y}^r_{\bar{Z}})).
\end{equation}
Now, by (\ref{functorialmap}) the functor $\mathbf{F}$ also gives a functorial
map on Hochschild complexes
\begin{equation}\mathbf{F}_\#: \r{CC}_*(\mc{X},\mathbf{F}^*(\mc{Y}^l_{\bar{Z}} \otimes_{\mathbb K} \mc{Y}^r_{\bar{Z}})) \lra 
        \r{CC}_*(\overline{\mc{X}},\mc{Y}^l_{\bar{Z}} \otimes_{\mathbb K} \mc{Y}^r_{\bar{Z}}).
    \end{equation}
We now observe that the composition
\begin{equation} \label{compositionyoneda}
    \mathbf{F}_\# \circ (\mf{T}^{\mathbf{F}}_{LR})_\#
\end{equation}
is exactly the map $(\mathbf{F})_{!}$. Thus, the fact that $(\mathbf{F})_!$ is
a quasi-isomorphism follows from the fact that $\mathbf{F}$ is full, hence the
individual maps in (\ref{compositionyoneda}) are quasi-isomorphisms.
\end{proof}
\begin{rem}
    Actually, the commutative diagram (\ref{yonedacommutativediagram}) holds
    whether or not the target category $\mc{D}$ was a dg category.  
\end{rem}
\noindent Since the converse of the Proposition \ref{splitgeniso} is trivially
true, we see that
\begin{cor}\label{threeequivalent}
The following three statements are equivalent:
\begin{align}
&\textrm{The identity element }[e_Z] \in H^*(\hom_{\mc{C}}(Z,Z))\textrm{ is in
the image of }[\mu].  \\
&\textrm{The object }Z\textrm{ is split generated by the }\{X_i\}.\\
&\textrm{The map }[\mu]\textrm{ is an isomorphism}.
\end{align}
\end{cor}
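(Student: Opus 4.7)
The plan is to assemble the three implications directly from what has already been established in the excerpt, since the corollary is essentially a packaging statement. The equivalence of (\ref{splitgencriterion}) and (\ref{splitgenstatement}) is exactly the criterion of \cite{Abouzaid:2010kx}*{Lemma 1.4} quoted immediately above, so I would simply invoke that for the (1) $\iff$ (2) direction without further comment.

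For the remaining content, I would first observe that Proposition \ref{splitgeniso} gives precisely the nontrivial implication (1) $\Rightarrow$ (3): assuming $[e_Z]$ lies in the image of $[\mu]$, a contracting homotopy on the cone of $\mu$ is constructed (first in the case with no differentials or higher products, then promoted via a length spectral sequence, and finally reduced from the $\ainf$ case to the dg case using the Yoneda embedding and the homotopy commutative diagram (\ref{yonedacommutativediagram})) to conclude that $[\mu]$ is an isomorphism.

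Finally, (3) $\Rightarrow$ (1) is the trivial direction: if $[\mu]$ is an isomorphism then it is in particular surjective, so every cohomology class in $H^*(\hom_{\mc{C}}(Z,Z))$ — including the identity $[e_Z]$ — lies in its image. Chaining these gives (1) $\iff$ (2), (1) $\Rightarrow$ (3), and (3) $\Rightarrow$ (1), hence the three statements are equivalent. The only real work is contained in Proposition \ref{splitgeniso}, so no additional obstacle is expected at this stage; the corollary is a bookkeeping statement collecting the previous results.
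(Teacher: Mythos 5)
Your proposal is correct and matches the paper's own argument: the paper derives the corollary by combining the quoted criterion of Abouzaid (for (1) $\iff$ (2)) with Proposition \ref{splitgeniso} (for (1) $\Rightarrow$ (3)), noting that the converse (3) $\Rightarrow$ (1) is trivially true. Nothing further is needed.
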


\begin{prop}\label{fullsplitgen}
Let $\mc{F} : \mc{C} \lra \mc{D}$ be an $\ainf$ functor, such that for a full
subcategory $\mc{X} \subseteq \mc{C}$, $\mc{F}$ is quasi-full.  Then, if
$\mc{X}$ split generates $\mc{C}$, $\mc{F}$ is quasi-full on $\mc{C}$.
\end{prop}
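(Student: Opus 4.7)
The plan is to mimic the diagrammatic argument of Proposition \ref{splitgeniso}: build a square whose vertical sides are collapsing quasi-isomorphisms of bar complexes coming from split-generation, and whose horizontal sides compare $\hom_{\mc{C}}(Z,W)$ with $\hom_{\mc{D}}(\mc{F}(Z),\mc{F}(W))$, via $\mc{F}^1$ below and a bar-complex version $(\mc{F})_!$ above. First I would record the following mild generalization of Proposition \ref{splitgeniso}: if $Z$ is split-generated by $\mc{X}$, then for \emph{every} $W \in \mc{C}$ the bar-complex collapsing map
\begin{equation}
\mu_{Z,W}: \mc{Y}^r_W \otimes_{\mc{X}} \mc{Y}^l_Z \lra \hom_{\mc{C}}(Z,W),
\end{equation}
defined by the same formula as (\ref{generatingmap}), is a quasi-isomorphism. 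The contracting homotopy built from split-generation witnesses $\sum \mathbf{a}_k \cdot \mathbf{b}_k = e_Z$ in Proposition \ref{splitgeniso} works verbatim, since its construction only involves insertions at the $Z$-end.

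Applying $[\mc{F}^1]$, which is a cohomology-level ring homomorphism, to the witnesses yields $\sum [\mc{F}^1 \mathbf{a}_k] \cdot [\mc{F}^1 \mathbf{b}_k] = [e_{\mc{F}(Z)}]$, so $\mc{F}(\mc{X})$ split-generates $\mc{F}(Z)$ in $\mc{D}$ and the analogous map $\mu_{\mc{F}(Z),\mc{F}(W)}$ is likewise a quasi-isomorphism. These fit into a homotopy commutative diagram
\begin{equation}
\xymatrix{\mc{Y}^r_W \otimes_{\mc{X}} \mc{Y}^l_Z \ar[r]^{(\mc{F})_!} \ar[d]^{\mu_{Z,W}} & \mc{Y}^r_{\mc{F}(W)} \otimes_{\mc{F}(\mc{X})} \mc{Y}^l_{\mc{F}(Z)} \ar[d]^{\mu_{\mc{F}(Z),\mc{F}(W)}} \\ \hom_{\mc{C}}(Z,W) \ar[r]^{\mc{F}^1} & \hom_{\mc{D}}(\mc{F}(Z),\mc{F}(W))}
\end{equation}
whose vertical arrows are both quasi-isomorphisms and whose top arrow $(\mc{F})_!$ is defined by (\ref{yonedashriek}); the homotopy commutativity follows from the $\ainf$-functor equations, exactly as in the proof of Proposition \ref{splitgeniso}. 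The proposition thus reduces to showing $(\mc{F})_!$ is a quasi-isomorphism.

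This last step is the main obstacle. Filtering both bar complexes by the length of their middle $\mc{X}$-part gives an ascending filtration preserved by the bar differentials and by $(\mc{F})_!$, and on the associated graded $(\mc{F})_!$ reduces to the termwise tensor product of $\mc{F}^1$'s. The interior $\mc{X}$-to-$\mc{X}$ pieces are quasi-isomorphisms by hypothesis; the outer pieces $\hom_{\mc{C}}(X_i,W)$ and $\hom_{\mc{C}}(Z,X_j)$ instead require that $\mc{F}^1$ be a quasi-isomorphism on mixed pairs with one argument in $\mc{X}$, which I would establish by a bootstrap. Consider the natural transformation $\iota^*(\mf{T}_R^{\mc{F}})_W$ of $\mc{X}$-modules coming from (\ref{nattransmodulemorphism}): it is a quasi-isomorphism on the Yoneda generators $\mc{Y}^r_{X_j}|_{\mc{X}}$ for $X_j \in \mc{X}$ (by quasi-fullness on $\mc{X}$ together with the Yoneda lemma of Proposition \ref{moduleyoneda}), and therefore also on the perfect $\mc{X}$-module $\iota^*\mc{Y}^r_W$, which is built from such generators via cones, shifts, and summands because $W$ is split-generated by $\mc{X}$ and quasi-isomorphisms are preserved by these operations. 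Evaluating at $X_i \in \mc{X}$ yields quasi-fullness of $\mc{F}^1$ on $(X_i, W)$; the symmetric argument using $\iota^*(\mf{T}_L^{\mc{F}})_Z$ handles $(Z, X_j)$. The spectral sequence then collapses on the first page and $\mc{F}^1$ is a quasi-isomorphism on all of $\mc{C}$.
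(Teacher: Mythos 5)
The paper itself offers no proof here — it only cites Seidel's book for the non-split case and notes explicitly that the split case does not have a clean proof in the literature — so your argument is new content, and it is essentially correct. Two things should be tightened and one significant economy is available.

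Your assertion that the contracting homotopy of Proposition~\ref{splitgeniso} ``works verbatim, since its construction only involves insertions at the $Z$-end'' misreads that formula. The homotopy in the paper prepends $\mathbf{a}_j$ and rewrites the $\mathbf{a}$-slot of a bar word $\mathbf{a}\otimes x_1\otimes\cdots\otimes\mathbf{b}$; in $\mc{Y}^r_W\otimes_{\mc{X}}\mc{Y}^l_Z$ that slot carries $\mathbf{a}\in\hom(X_{i_k},W)$, so this is the $W$-end, and the formula as written needs split-generation witnesses for $W$. That happens to be harmless here since $\mc{X}$ split-generates all of $\mc{C}$, but the statement you announce (conditioned on split-generation of $Z$ only) requires instead the symmetric homotopy that appends $\mathbf{b}_j$ at the $\mathbf{b}$-end and replaces $\mathbf{b}$ by $\mathbf{b}\cdot\mathbf{a}_j$. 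In either version, as in Proposition~\ref{splitgeniso} itself, the general $\ainf$ case requires the same reduction to a dg model via $\yr$ before one has chain-level witnesses and a chain-level unit; you should say so. Similarly, the step ``therefore also on the perfect $\mc{X}$-module $\iota^*\mc{Y}^r_W$'' quietly invokes the extension of the natural transformation $\mf{T}^{\mc{F}}_R$ of $\ainf$-functors to $\mathrm{Tw}^\pi\mc{C}$, which is what makes the locus where it is a quasi-isomorphism closed under cones, shifts and summands; this deserves a sentence.

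The more substantive remark is that your final bootstrap paragraph, run twice, is already a complete proof and makes the rest of the apparatus superfluous. Once $\iota^*(\mf{T}^{\mc{F}}_R)_W$ is known to be a quasi-isomorphism of $\mc{X}$-modules for every $W$, evaluation at $X_i\in\mc{X}$ gives quasi-fullness of $[\mc{F}^1]$ on pairs $(X_i,W)$. One then runs the identical devissage argument in the contravariant $Z$-variable of $\hom_{\mc{C}}(Z,W)$, with the mixed pairs now the base case, to conclude quasi-fullness on all $(Z,W)$. The bar-complex square, the two-object generalization of Proposition~\ref{splitgeniso}, the transfer of split-generation to $\mc{D}$, and the length spectral sequence then all become dispensable for Proposition~\ref{fullsplitgen} itself.
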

\begin{proof}
    This result does not have a clean proof in the literature. The case that
    $\mc{X}$ (non-split)-generates $\mc{C}$ can be found in
    \cite{Seidel:2008zr}*{Lemma 3.25}.  Parts of the argument for the
    split-closed case are in \cite{Seidel:quartic}*{Lemma 2.5}.
\end{proof}

There are some final definitions we will need in the next subsection.
\begin{defn}
    A $\cc\!-\!\dd$ bimodule $\mc{B}$ is said to be {\bf perfect} if it is
    split-generated by a finite collection of Yoneda bimodules
    \begin{equation}
        \mc{Y}^l_{X_i} \otimes_{\mathbb K} \mc{Y}^r_{Y_j}.
    \end{equation}
\end{defn}

\begin{defn}[{\it (homological) smoothness}, compare \cite{Kontsevich:2006fk}*{\S 8}]
    An $\ainf$ category $\cc$ is said to be {\bf (homologically) smooth} if the
    diagonal bimodule $\cc_{\Delta}$ is a perfect $\cc\!-\!\cc$ bimodule.
\end{defn}
\begin{rem} 
    It is known that the dg category of perfect complexes on a variety
    $X$ is homologically smooth if and only if $X$ itself is smooth in the
    ordinary sense. This provides some justification for the usage of the term
    ``smooth.'' See for example \cite{Krahmer:2007kx}*{Thm. 3.8} for an
    exposition of this result in the case of affine varieties.
\end{rem}

\subsection{Module and bimodule duality}\label{moduleduality}
We have seen that the convolution tensor product with an $\cc\!-\!\dd$ bimodule
$\mc{B}$ induces dg functors of the form
\begin{equation}
    \begin{split}
        \cdot \otimes_{\cc} \mc{B}: \rmod\cc &\lra \rmod \dd \\
        \mc{B} \otimes_{\dd} \cdot: \dd\mod &\lra \cc\mod 
    \end{split}
\end{equation}
We can also define dual, or adjoint functors
\begin{align}
    \hom_{\cc\mod}(\cdot, \mc{B}): \cc\mod &\lra \rmod \dd \\
    \hom_{\rmod\dd}(\cdot,\mc{B}): \rmod\dd &\lra \cc\mod.
\end{align}
\begin{defn}\label{homrightmodule}
    Let $\mc{M}$ be an $\ainf$ left module over a category $\mc{C}$, and
    $\mc{B}$ a $\mc{C}\!-\!\mc{D}$ bimodule. The right $\mc{D}$ module
    \begin{equation}
        \hom_{\mc{C}\mod}(\mc{M},\mc{B})
    \end{equation}
    is specified by the following data:
    \begin{itemize}
\item For each object $Y$ of $\mc{D}$, a graded vector space
    \begin{equation}
        \hom_{\mc{C}\mod}(\mc{M},\mc{B})(Y) := \hom_{\mc{C}\mod}(\mc{M},\mc{B}(\cdot,Y))
    \end{equation}
    whose elements $\mc{F}$ are collections of maps
    \begin{equation}
        \mc{F} = \oplus_r
        \mc{F}^{r|1}: \oplus_{X_0, \ldots, X_r}\hom_{\mc{C}} (X_{r-1},X_r) \times \cdots \hom_{\mc{C}} (X_0,X_1) \times \mc{M}(X_0) \lra \mc{B}(X_r,Y).
    \end{equation}

\item A differential 
    \begin{equation}
        \mu^{1|0}_{\hom_{\mc{C}\mod}(\mc{M},\mc{B})}: \hom_{\mc{C}\mod}(\mc{M},\mc{B})(Y) \lra \hom_{\mc{C}\mod}(\mc{M},\mc{B})(Y)
    \end{equation}
    given by the differential in the $dg$ category of left $\cc$ modules
    \begin{equation}
        d  \mc{F} = \mc{F} \circ \hat{\mu}_{\mc{M}} - \mu_{\mc{B}} \circ \hat{\mc{F}}.
    \end{equation}
    Above, $\mu_{\mc{B}}$ is the total left-sided bimodule structure map $\oplus \mu^{r|1|0}_{\mc{B}}$. 
\item Higher right multiplications:
    \begin{equation}
        \begin{split}
        \mu^{1|s}: \hom_{\mc{C}\mod}(\mc{M},\mc{B})(Y_0)\times &\hom_{\dd}(Y_1,Y_0) \times \cdots \times \hom_\dd(Y_{s},Y_{s-1})  \lra \\
        &\hom_{\mc{C}\mod}(\mc{M},\mc{B})(Y_s)
    \end{split}
    \end{equation}
    given by
    \begin{equation}
        \mu^{1|s}(\mc{F},y_1, \ldots, y_s) := \mc{F}_{y_1, \ldots, y_s} \in \hom_{\cc\mod}(\mc{M},\mc{B}(\cdot,Y_s))
    \end{equation}
    where $\mc{F}_{y_1, \ldots, y_s}$ is the morphism specified by the following data
    \begin{equation}
        \mc{F}_{y_1, \ldots, y_s}^{k|1}(x_1, \ldots, x_k, \mathbf{m}) = \sum_i \mu_{\mc{B}}^{i|1|s}(x_1, \ldots, x_i, \mc{F}^{k-i|1}(x_{i+1}, \ldots, x_k, \mathbf{m}), y_1, \ldots, y_s).
    \end{equation}
    \end{itemize}
\end{defn}

\begin{rem}
    Note that for any $\cc\!-\!\dd$ bimodule $\mc{B}$, $\mc{B}(\cdot, Y)$ is a
    left $\cc$ module with structure maps $\mu^{r|1|0}_{\mc{B}}$, for any
    object $Y \in \ob \dd$. This is implicit in our construction above.
    Similarly, $\mc{B}(X,\cdot)$ is a right $\dd$ module with structure maps
    $\mu^{0|1|s}_{\mc{B}}$.  
\end{rem}

\begin{defn}\label{homleftmodule}
    Let $\mc{N}$ be an $\ainf$ right module over a category $\mc{D}$, and
    $\mc{B}$ a $\mc{C}\!-\!\mc{D}$ bimodule. The left $\cc$ module
    \begin{equation}
        \hom_{\rmod\mc{D}}(\mc{N},\mc{B})
    \end{equation}
    is specified by the following data:
    \begin{itemize}
\item For each object $X$ of $\mc{C}$, a graded vector space
    \begin{equation}
        \hom_{\rmod\mc{D}}(\mc{N},\mc{B})(X) := \hom_{\rmod\dd}(\mc{N},\mc{B}(X,\cdot))
    \end{equation}
    whose elements $\mc{G}$ are collections of maps
    \begin{equation}
        \mc{G} = \oplus_s
        \mc{G}^{1|s}: \oplus_{Y_0, \ldots, Y_s}\mc{N}(Y_0) \times \hom_{\mc{D}} (Y_{1},Y_0) \times \cdots \hom_{\mc{D}} (Y_{s},Y_{s-1})  \lra \mc{B}(X,Y_s).
    \end{equation}

\item A differential 
    \begin{equation}
        \mu^{1|0}_{\hom_{\rmod\dd}(\mc{N},\mc{B})}: \hom_{\rmod\dd}(\mc{N},\mc{B})(Y) \lra \hom_{\rmod\dd}(\mc{N},\mc{B})(Y)
    \end{equation}
    given by the differential in the $dg$ category of right $\dd$ modules
    \begin{equation}
        d  \mc{G} = \mc{G} \circ \hat{\mu}_{\mc{N}} - \mu_{\mc{B}} \circ \hat{\mc{G}}.
    \end{equation}
    Above, $\mu_{\mc{B}}$ is the total right-sided bimodule structure map $\oplus \mu^{0|1|s}_{\mc{B}}$. 
\item Higher left multiplications:
    \begin{equation}
        \begin{split}
        \mu^{r|1}: \hom_{\cc}(X_{r-1},X_r) \times \cdots &\times \hom_\cc(X_{0},X_1) \times \hom_{\rmod\mc{D}}(\mc{N},\mc{B})(X_0)  \lra \\
        &\hom_{\rmod\mc{D}}(\mc{N},\mc{B})(X_r)
    \end{split}
    \end{equation}
    given by
    \begin{equation}
        \mu^{r|1}(x_r, \ldots, x_1, \mc{G}) := \mc{G}_{x_r, \ldots, x_1} \in \hom_{\rmod\dd}(\mc{N},\mc{B}(X_r,\cdot))
    \end{equation}
    where $\mc{G}_{x_r, \ldots, x_1}$ is the morphism specified by the following data
    \begin{equation}
        \begin{split}
        \mc{G}_{x_r, \ldots, x_1}^{1|l}(&\mathbf{n},y_l, \ldots, y_1) = \\
        &\sum_j (-1)^\star \mu_{\mc{B}}^{r|1|l-j}(x_r, \ldots, x_1, \mc{G}^{1|l-j}(\mathbf{n},y_l, \ldots, y_{j+1}), y_{j}, \ldots, y_1)
    \end{split}
    \end{equation}
    with sign
    \begin{equation}
        \star = |\mc{G}| \cdot (\sum_{i=1}^j ||y_i||).
    \end{equation}
    \end{itemize}
\end{defn}

\begin{defn} 
    When the bimodule in question above is the diagonal bimodule
    $\mc{C}_{\Delta}$, we call the resulting left or right module
    $\hom_{\mc{C}\mod}(\mc{M},\mc{C}_{\Delta})$ or
    $\hom_{\rmod\mc{C}}(\mc{N},\cc_{\Delta})$ the {\bf module dual} of
    $\mc{M}$ or $\mc{N}$ respectively.
\end{defn}

\begin{rem} 
    The terminology {\bf module dual} is in contrast to {\it linear dual},
    another operation that can frequently performed on modules and bimodules
    that are finite rank over $\K$ (see e.g. \cite{Seidel:2008cr}).
\end{rem}
Now suppose our target bimodule splits as a tensor product of a left module with a right module
    \begin{equation}
        \mc{B} = \mc{M} \otimes_\K \mc{N}.
    \end{equation}
    Then, given another left module $\mc{P}$, the definitions imply that there is a natural inclusion
    \begin{equation}\label{homleftintosplitmodule}
 \hom_{\cc\mod}(\mc{P},\mc{M}) \otimes_\K \mc{N} \hookrightarrow  \hom_{\cc\mod}(\mc{P},\mc{M} \otimes_\K \mc{N}) 
    \end{equation}
    \begin{lem}\label{homleftintosplitmodulelemma}
        When $\mc{P}$ and $\mc{N}$ are Yoneda modules (or perfect modules), the
        inclusion (\ref{homleftintosplitmodule}) is a quasi-equivalence.
    \end{lem}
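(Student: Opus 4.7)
The plan is to reduce to the case of Yoneda modules and then invoke Proposition \ref{moduleyoneda}. Both sides of the inclusion (\ref{homleftintosplitmodule}) are exact bifunctors of $(\mc{P},\mc{N})$: contravariantly exact in $\mc{P}$ because the dg internal $\hom$ sends mapping cones to shifted mapping cones, and covariantly exact in $\mc{N}$ because tensoring over $\K$ is exact. The inclusion itself is natural in both arguments. Since being a quasi-isomorphism is stable under mapping cones and homotopy summands, and since (by Section \ref{splitgensection}) every perfect module is built from Yoneda modules by iterated mapping cones and passage to homotopy summands, it suffices to treat the base case $\mc{P}=\mc{Y}^l_X$ and $\mc{N}=\mc{Y}^r_Z$ for some $X,Z\in\ob\cc$.

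For that base case, I would fix $Y\in\ob\cc$ and assemble the commutative square
\begin{equation*}
\xymatrix@C=0.3cm{
\mc{M}(X)\otimes_\K \mc{N}(Y) \ar@{=}[r]\ar[d]_{\lambda_{\mc{M},X}\otimes\id}^{\sim} & (\mc{M}\otimes_\K\mc{N})(X,Y) \ar[d]^{\sim}_{\lambda_{\mc{M}\otimes_\K\mc{N}(Y),X}} \\
\hom_{\cc\mod}(\mc{Y}^l_X,\mc{M})\otimes_\K \mc{N}(Y) \ar[r] & \hom_{\cc\mod}(\mc{Y}^l_X,\mc{M}\otimes_\K\mc{N})(Y)
}
\end{equation*}
in which the top arrow is the defining equality $(\mc{M}\otimes_\K\mc{N})(X,Y)=\mc{M}(X)\otimes_\K\mc{N}(Y)$, the bottom arrow is the inclusion (\ref{homleftintosplitmodule}) evaluated at $Y$, and the vertical arrows are Yoneda quasi-isomorphisms from Proposition \ref{moduleyoneda}. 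Chain-level commutativity follows from naturality of $\lambda$ applied to the strict (zeroth-order only) left-module inclusion $\mc{M}\hookrightarrow\mc{M}\otimes_\K\mc{N}(Y)$, $\mathbf{m}\mapsto\mathbf{m}\otimes\mathbf{n}$: both composites send $\mathbf{m}\otimes\mathbf{n}$ to the morphism $(\vec{x},\mathbf{y})\mapsto\mu^{r+1|1}_{\mc{M}}(\vec{x},\mathbf{y},\mathbf{m})\otimes\mathbf{n}$, because the left $\cc$-module structure of the split bimodule at fixed $Y$ acts only through $\mc{M}$. Three-out-of-four in the square then forces the bottom arrow to be a quasi-isomorphism.

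Finally, I would verify that these pointwise quasi-isomorphisms upgrade to an equivalence in $\rmod\cc$. This is immediate from the split bimodule structure: for $r=0$, $s>0$ one has $\mu^{0|1|s}_{\mc{M}\otimes_\K\mc{N}}(\mathbf{m}\otimes\mathbf{n},y_1,\ldots,y_s)=\mathbf{m}\otimes\mu^{1|s}_{\mc{N}}(\mathbf{n},y_1,\ldots,y_s)$, which under Definition \ref{homrightmodule} is exactly the right module structure inherited on $\hom_{\cc\mod}(\mc{P},\mc{M})\otimes_\K\mc{N}$ from $\mc{N}$ alone, so the inclusion is tautologically a map in $\rmod\cc$. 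The main obstacle is merely bookkeeping---unwinding the explicit formula for $\lambda$ to check strict commutativity of the square (and keeping track of signs); all the genuine content reduces to Proposition \ref{moduleyoneda} and the fact that split bimodules have no mixed structure maps.
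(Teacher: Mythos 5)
Your proposal is correct and follows essentially the same route as the paper: reduce to the case $\mc{P}=\mc{Y}^l_X$, $\mc{N}=\mc{Y}^r_Z$, evaluate both sides at an object and identify each with $\mc{M}(X)\otimes_\K\mc{N}(Y)$ via Proposition \ref{moduleyoneda}, check the inclusion intertwines these Yoneda quasi-isomorphisms, and then pass to perfect modules by naturality together with stability of quasi-isomorphisms under finite cones and summands. The only differences are presentational (an explicit commutative square, and performing the perfect-module reduction first rather than last).
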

    \begin{proof}
        We suppose that $\mc{P}$ and $\mc{N}$ are Yoneda modules $\mc{Y}^l_X$, $\mc{Y}^r_Z$, and compute
        the underlying chain complexes, for an object $B$:
        \begin{align}
            \hom_{\cc\mod}(\mc{Y}^l_X,\mc{M} \otimes_\K\mc{Y}^r_Z)(B) &:= \hom_{\cc\mod}(\mc{Y}^l_X,\mc{M} \otimes_\K \hom_\cc (B,Z))\\
            &\simeq \mc{M}(X) \otimes_\K \hom_\cc(B,Z) \textrm{ (by Prop. \ref{moduleyoneda})}.
        \end{align}
        and
        \begin{align}
            \hom_{\cc\mod}(\mc{Y}^l_X,\mc{M}) \otimes_\K \mc{Y}^r_Z(B) \simeq \mc{M}(X) \otimes_\K \hom(B,Z) \textrm{ (by Prop. \ref{moduleyoneda})}.
        \end{align}
        The inclusion (\ref{homleftintosplitmodule}) commutes with the quasi-isomorphisms used in Proposition 2.3.

        We deduce the result for more general perfect modules by noting that as
        we vary $\mc{P}$ and $\mc{N}$ in (\ref{homleftintosplitmodule}) we
        obtain natural transformations that commute with finite colimits, hence
        they remain isomorphisms for perfect objects.
    \end{proof}

    Similarly, given a right module $\mc{Q}$, there are natural inclusions
    \begin{equation} \label{homrightintosplitmodule}
 \mc{M} \otimes \hom_{\rmod\dd}(\mc{Q},\mc{N}) \hookrightarrow  \hom_{\rmod\dd}(\mc{Q},\mc{M} \otimes_\K \mc{N}).
    \end{equation}
    \begin{lem}\label{homrightintosplitmodulelemma}
        When $\mc{Q}$ and $\mc{M}$ are Yoneda modules or (perfect modules), the
        inclusion (\ref{homrightintosplitmodule}) is a quasi-equivalence.
    \end{lem}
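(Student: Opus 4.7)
The plan is to mirror the proof of Lemma \ref{homleftintosplitmodulelemma} essentially verbatim, exploiting the symmetry between left and right modules. First I would verify the quasi-isomorphism directly when $\mc{Q}$ and $\mc{M}$ are both Yoneda modules, by evaluating both sides and applying Proposition \ref{moduleyoneda} twice. Then I would extend to perfect modules by a formal colimit argument.

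For the Yoneda case, take $\mc{Q} = \mc{Y}^r_Z$ with $Z \in \ob\dd$ and $\mc{M} = \mc{Y}^l_X$ with $X \in \ob\cc$. Evaluating both sides of (\ref{homrightintosplitmodule}) at an object $A \in \ob\cc$, the left-hand side becomes
\begin{equation*}
\mc{Y}^l_X(A) \otimes_\K \hom_{\rmod\dd}(\mc{Y}^r_Z, \mc{N}) = \hom_\cc(X,A) \otimes_\K \hom_{\rmod\dd}(\mc{Y}^r_Z, \mc{N}),
\end{equation*}
which by Proposition \ref{moduleyoneda} is quasi-isomorphic to $\hom_\cc(X,A) \otimes_\K \mc{N}(Z)$. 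The right-hand side becomes
\begin{equation*}
\hom_{\rmod\dd}(\mc{Y}^r_Z,(\mc{Y}^l_X \otimes_\K \mc{N})(A,\cdot)) = \hom_{\rmod\dd}(\mc{Y}^r_Z, \hom_\cc(X,A)\otimes_\K \mc{N}),
\end{equation*}
which by the same Proposition is also quasi-isomorphic to $\hom_\cc(X,A) \otimes_\K \mc{N}(Z)$. The inclusion (\ref{homrightintosplitmodule}) intertwines these identifications, since both composites amount to evaluating at $Z$ and pairing with the Yoneda image of the homological unit $[e_Z]$; naturality of $\lambda_{\cdot,Z}$ in the module argument makes this precise. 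Hence the inclusion is a quasi-isomorphism on pairs of Yoneda modules.

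For the extension to perfect modules, I would observe that, with $\mc{N}$ fixed, both sides of (\ref{homrightintosplitmodule}) are contravariant dg functors in $\mc{Q}$ and covariant dg functors in $\mc{M}$, and each commutes with finite direct sums, shifts, mapping cones, and homotopy idempotent splittings (using Seidel's lifting of cohomology-level idempotents to idempotents up to homotopy). Since the inclusion is a natural transformation of such bifunctors, the full subcategory on which it is a quasi-isomorphism is closed under these operations; as it contains every pair of Yoneda modules, it contains the entire thick split-closure of Yoneda pairs, which by definition is the locus of pairs of perfect modules. The main potential obstacle is purely bookkeeping: one must check that the two Yoneda-type quasi-isomorphisms arising from Proposition \ref{moduleyoneda} are genuinely compatible with the inclusion, keeping careful track of Koszul signs in the left $\cc$-module multiplications of Definition \ref{homleftmodule}. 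This is the mirror of the sign check already implicit in Lemma \ref{homleftintosplitmodulelemma}, and no genuinely new difficulty arises.
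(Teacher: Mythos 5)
Your proof is correct and follows the same route the paper takes for the mirror Lemma \ref{homleftintosplitmodulelemma} (the paper in fact omits a proof of this lemma, leaving it as the evident left/right mirror): compute both sides of the inclusion on Yoneda pairs using Proposition \ref{moduleyoneda}, check compatibility with the Yoneda quasi-isomorphisms, then pass to perfect modules via closure under finite colimits and idempotent splittings.
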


\begin{rem}
    There are also analogously defined functors on modules given by Hom from a bimodule:
    \begin{align}
        \hom_{\cc\mod}(\mc{B}, \cdot): \cc\mod &\lra \dd\mod \\
        \hom_{\rmod\dd}(\mc{B}, \cdot): \rmod\dd &\lra \rmod\cc.
    \end{align}
    We will not need them here.
\end{rem}

The following proposition in some sense verifies that module duality is a sane
operation for homologically unital $\ainf$ categories.
\begin{prop}\label{moduledualprop}
    Let $X$ be an object of a homologically unital $\ainf$ category $\cc$ and
    $\mc{Y}^r_X$, $\mc{Y}^l_X$ the corresponding Yoneda modules. Then, there is
    a quasi-isomorphism between the module dual of $\mc{Y}^r_X$ and
    $\mc{Y}^l_X$, and vice versa:
    \begin{align}
        \hom_{\cc\mod}(\mc{Y}^l_X, \cc_{\Delta}) &\simeq \mc{Y}^r_X\\
        \hom_{\rmod\cc}(\mc{Y}^r_X, \cc_{\Delta}) &\simeq \mc{Y}^l_X.
    \end{align}
\end{prop}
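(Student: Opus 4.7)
The plan is to construct an explicit morphism of right $\cc$-modules
\[
\Lambda\colon \mc{Y}^r_X \longrightarrow \hom_{\cc\mod}(\mc{Y}^l_X, \cc_{\Delta})
\]
and to show that, evaluated at each object, it recovers the Yoneda quasi-isomorphism of Proposition \ref{moduleyoneda}. The second statement is handled by a symmetric construction using $\mc{Y}^r$ in place of $\mc{Y}^l$.

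The starting observation is that for any object $Y\in\ob\cc$, the left $\cc$-module $\cc_{\Delta}(\cdot, Y)$ has underlying graded vector space $\hom_\cc(Y, \cdot)$, with structure maps $\mu^{r|1|0}_{\cc_\Delta}$ coinciding (up to the sign convention in Section \ref{diagonalbimodulesection}) with the structure maps of the left Yoneda module $\mc{Y}^l_Y$. Hence we may identify $\hom_{\cc\mod}(\mc{Y}^l_X, \cc_\Delta)(Y) = \hom_{\cc\mod}(\mc{Y}^l_X, \mc{Y}^l_Y)$, and by Proposition \ref{moduleyoneda} we have a pointwise quasi-isomorphism $\lambda_{\mc{Y}^l_Y, X}\colon \mc{Y}^l_Y(X) = \hom_\cc(Y, X) = \mc{Y}^r_X(Y) \to \hom_{\cc\mod}(\mc{Y}^l_X, \mc{Y}^l_Y)$. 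The task is to assemble these pointwise maps into a morphism of right $\cc$-modules.

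I would define $\Lambda^{1|s}$ on $(x, y_1, \ldots, y_s) \in \mc{Y}^r_X(Y_0) \otimes \hom_\cc(Y_1, Y_0) \otimes \cdots \otimes \hom_\cc(Y_s, Y_{s-1})$ to be the left-module pre-morphism $\mc{Y}^l_X \to \mc{Y}^l_{Y_s}$ whose $(r|1)$-component is
\[
(z_r, \ldots, z_1, m) \longmapsto \pm\, \mu^{r+s+2}_\cc(z_r, \ldots, z_1, m, x, y_1, \ldots, y_s),
\]
with signs dictated by the Koszul convention used for $\mc{Y}^l$ in (\ref{leftyonedaphi})--(\ref{leftyonedaphiexplanation}). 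Conceptually $\Lambda$ is just the higher-order data of the contravariant $\ainf$-functor $\mc{Y}^l$ reorganized: a composable sequence in $\cc^{op}$ ending at $X$ is the same as a pair consisting of a single element $x\in \mc{Y}^r_X(Y_0) = \hom_\cc(Y_0, X)$ and a subsequent composable chain $(y_1, \ldots, y_s)$ in $\cc$.

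Two verifications remain. First, that each $\Lambda^{1|s}(x, y_1, \ldots, y_s)$ is a left-module pre-morphism $\mc{Y}^l_X \to \mc{Y}^l_{Y_s}$; this is the same computation that shows $\mc{Y}^l$ is an $\ainf$-functor, using the $\ainf$-relations of $\cc$ applied to composable sequences of the indicated form. Second, that the total collection $\{\Lambda^{1|s}\}$ satisfies the right-module morphism equations $\delta \Lambda = 0$ in $\rmod\cc$; after unpacking the structure maps of $\hom_{\cc\mod}(\mc{Y}^l_X, \cc_\Delta)$ from Definition \ref{homrightmodule}, this reduces to the $\ainf$-relations for $\cc$ in which the input sequence has been partitioned between an outer $\mu^{r+s+2}_\cc$ and an inner composition. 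The hard part will be the sign bookkeeping in this last step; conceptually, however, the identity is just a rearrangement of the quadratic $\ainf$-relation, and it is parallel to (and implied by) the check that $\mc{Y}^l$ is an $\ainf$-functor. Once $\Lambda$ is verified to be a right-module morphism, its $(1|0)$-component is $\lambda_{\mc{Y}^l_Y, X}$ at each $Y$, hence a quasi-isomorphism by Proposition \ref{moduleyoneda}, and so $\Lambda$ is a quasi-isomorphism. The second statement follows symmetrically, using the identification $\cc_\Delta(X, \cdot) = \mc{Y}^r_X$ as right $\cc$-modules and the right Yoneda embedding $\mc{Y}^r$ in place of $\mc{Y}^l$.
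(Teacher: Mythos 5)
Your proposal is correct and is essentially the paper's argument: the paper identifies $\hom_{\cc\mod}(\mc{Y}^l_X,\cc_{\Delta})$ with the pullback $\yl^*\mc{Y}^l_{\yl(X)}$ and takes the morphism to be the natural transformation $(\mf{T}^{\yl}_R)_X$, whose components are exactly the higher terms $\yl^{s+1}$ that your $\Lambda^{1|s}$ writes out explicitly, with the quasi-isomorphism property coming pointwise from Proposition \ref{moduleyoneda} just as you argue. The only difference is presentational — you unpack the natural-transformation formalism into an explicit formula and defer the sign/closedness check to the $\ainf$-functor equations, which is exactly what that formalism encodes.
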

\begin{proof}
    We verify first that the module dual
    \begin{equation} \label{moduledual1}
        \hom_{\cc\mod}(\mc{Y}^l_X,\cc_{\Delta})
    \end{equation}
    is identical in definition to the pulled back right module
    \begin{equation}\label{yonedaleftstartyonedaright}
        \mc{N}:= \yl^* \mc{Y}^l_{\yl(X)},
    \end{equation}
    using the definition of pullback in Section \ref{pullback}.  By the
    definitions in that section, (\ref{yonedaleftstartyonedaright}) is the
    right module is given by the following data: 
    \begin{itemize}
    \item a graded vector space
        \begin{equation} \label{leftmodulehomyoneda}
            \mc{N}(Y) := \hom_{\cc\mod}(\yl(X),\yl(Y)) = \hom_{\cc\mod}(\yl(X), \cc_{\Delta}(\cdot,Y))
        \end{equation}
    \item differential
        \begin{equation}
            \mu^{1|0}_{\mc{N}}
        \end{equation}
        given by the standard differential in the dg category of modules in (\ref{leftmodulehomyoneda}).
    \item bimodule structure maps given by
        \begin{equation} \label{ylpullbackyrstructuremap}
            \begin{split}
                \mu^{1|s}_{\mc{N}}: \mc{N}(Y_0) \times &\hom_{\cc}(Y_1, Y_0) \times \cdots \hom_{\cc}(Y_{s},Y_{s-1}) \lra \mc{N}(Y_s)\\
                \mu^{1|s}_{\mc{N}}(\mathbf{N},y_1, \ldots, y_s) &:= \sum_k\mu^{k|1}_{\mc{Y}^l_{\yl(X)}}(  \yl^{i_k}(y_{s-i_k+1}, \ldots, y_s),\ldots, \yl^{i_1}(y_1, \ldots, y_{i_1}), \underline{\mathbf{N}})\\
            &= \mu^{2}_{\cc\mod}(\yl^{s}(y_1, \ldots, y_s),\underline{\mathbf{N}}).
        \end{split}
    \end{equation}
\end{itemize}
    The last equality in (\ref{ylpullbackyrstructuremap}) used the fact
    that since $\cc\mod$ is a dg category, $\mu^{1|k}_{\mc{Y}^r_{\yl(X)}}$ is
    $\mu^1$ in the category of modules when $k=0$, $\mu^2$ when $k=1$, and 0
    otherwise.
    Now, recall from (\ref{leftyonedaphi})-(\ref{leftyonedaphiexplanation}) that
    \begin{equation}
        \yl^s(y_1, \ldots, y_s) := \phi_{y_1, \ldots, y_s} \in \hom_{\cc\mod}(\yl(Y_0),\yl(Y_s))
    \end{equation}
    is the morphism given by the data
    \begin{equation}
        \phi_{y_1, \ldots, y_s}^{r|1}(x_1, \ldots, x_r, \mathbf{a}) = \mu^{r+1+s}(x_1, \ldots, x_r, \mathbf{a}, y_1, \ldots, y_s)
    \end{equation}
    so by definition the composition
    \begin{equation}
        \mu^{2}_{\cc\mod}(\phi_{y_1, \ldots, y_s},\underline{\mathbf{N}}) \in \mc{N}(Y_s) = \hom_{\cc\mod}(\mc{Y}^l_X,\mc{Y}^l_{Y_s})
    \end{equation}
    is $\phi_{y_1, \ldots, y_s} \circ \hat{\underline{\mathbf{N}}}$, i.e.
    \begin{equation}
        \begin{split}
        \mu^{2}_{\cc\mod}(\phi_{y_1, \ldots, y_s},&\underline{\mathbf{N}})^{r|1}(x_1, \ldots, x_r, \mathbf{n}) =\\
        &\sum_k \mu_{\cc}^{r-k+s+1}\big( x_1, \ldots, x_{r-k}, \underline{\mathbf{N}}^{k|1}(x_{r-k+1}, \ldots, x_r, \mathbf{n}), y_1, \ldots, y_s\big).
    \end{split}
    \end{equation}
    This is evidently the same as the definition of
    $\hom_{\cc\mod}(\mc{Y}^l_X,\cc_{\Delta})$. 
    
    Thus, the first order term of the contravariant natural transformation
    \begin{equation}
    (\mf{T}^{\yl}_R)_X: \mc{Y}^r_X \lra \mathbf{\yl}^*\mc{Y}^l_{\mathbf{\yl}(X)}.
    \end{equation}
    defined by an order reversal of (\ref{nattransmodulemorphism}) provides the
    desired quasi-isomorphism when $\cc$ is homologically unital.

    An analogous check verifies that
    $\hom_{\rmod\cc}(\mc{Y}^r_X,\mc{C}_{\Delta})$ is exactly
    $\mathbf{\yr}^*\mc{Y}^l_{\mathbf{\yr}(X)}$. In a similar fashion, the first
    order term of the natural transformation defined in (\ref{nattransmodulemorphism})
    \begin{equation}
        (\mf{T}^{\yr}_L)_X: \mc{Y}^l_X \lra \mathbf{\yr}^*\mc{Y}^l_{\mathbf{\yr}(X)}
    \end{equation}
    gives the desired quasi-isomorphism.
\end{proof}

\begin{prop}[Hom-tensor adjunction]
    Let $\mc{M}$ and $\mc{N}$ be left and right $\cc$ modules, and $\mc{B}$ a
    $\cc$ bimodule. Then there are natural adjunction isomorphisms, as chain
    complexes 
    \begin{align}
        \label{lefttorightadjunction}\hom_{\cc\!-\!\cc}(\mc{M} \otimes_\K \mc{N},\mc{B}) &=  \hom_{\cc\mod}(\mc{M}, \hom_{\rmod\cc}(\mc{N},\mc{B}))\\
        \label{righttoleftadjunction}\hom_{\cc\!-\!\cc}(\mc{M} \otimes_\K \mc{N},\mc{B}) &=  \hom_{\rmod\cc}(\mc{N}, \hom_{\cc\mod}(\mc{M},\mc{B})).
    \end{align}
\end{prop}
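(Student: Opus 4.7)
The plan is to unwind definitions on both sides and exhibit the adjunction as a tautological identification of pre-morphism data, up to sign. I will argue the first adjunction (\ref{lefttorightadjunction}) in detail; the second (\ref{righttoleftadjunction}) is entirely parallel.

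First, I would unwind the right-hand side. By Definition \ref{homleftmodule}, for each $X \in \ob\cc$, the vector space $\hom_{\rmod\cc}(\mc{N},\mc{B})(X)$ consists of collections of maps $\mc{G}^{1|s}: \mc{N} \otimes \cc^{\otimes s} \lra \mc{B}(X,\cdot)$ for $s\geq 0$. A pre-morphism $\mc{H}:\mc{M} \lra \hom_{\rmod\cc}(\mc{N},\mc{B})$ of left $\cc$-modules is then a collection of maps $\mc{H}^{r|1}: \cc^{\otimes r} \otimes \mc{M} \lra \hom_{\rmod\cc}(\mc{N},\mc{B})$, and evaluating at an element of $\mc{N} \otimes \cc^{\otimes s}$ gives, via currying, precisely a collection of maps
\begin{equation}
\tilde{\mc{H}}^{r|1|1|s}: \cc^{\otimes r} \otimes \mc{M} \otimes \mc{N} \otimes \cc^{\otimes s} \lra \mc{B},\qquad r,s\geq 0.
\end{equation}
Since $(\mc{M}\otimes_\K \mc{N})$ has underlying vector space $\mc{M}\otimes \mc{N}$, this is exactly the data of a bimodule pre-morphism $\mc{F} = \oplus \mc{F}^{r|1|s} \in \hom_{\cc\!-\!\cc}(\mc{M}\otimes_\K \mc{N},\mc{B})$. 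One then sets $\mc{F}^{r|1|s} := \tilde{\mc{H}}^{r|1|1|s}$, giving a bijection at the level of underlying graded vector spaces (with matching total degree, by inspection of the degree shifts in all three pre-morphism spaces).

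Next, I would verify that this bijection intertwines the differentials. The differential on the left-hand side is $\delta \mc{F} = \mu_{\mc{B}} \circ \hat{\mc{F}} - (-1)^{|\mc{F}|}\mc{F}\circ \hat{\mu}_{\mc{M}\otimes_\K \mc{N}}$, and by the construction of the product bimodule $\mc{M}\otimes_\K \mc{N}$, the structure map $\hat{\mu}_{\mc{M}\otimes_\K \mc{N}}$ splits into a sum of a term acting by $\mu_{\mc{M}}$ on the left factor and a term acting by $\mu_{\mc{N}}$ on the right (with the single term combining both differentials when $r=s=0$). Similarly, $\mu_{\mc{B}}$ on the target contains both left and right bimodule multiplications. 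On the right-hand side, the differential of a pre-morphism $\mc{H}:\mc{M}\to \hom_{\rmod\cc}(\mc{N},\mc{B})$ decomposes into an ``outer'' part involving $\mu_{\mc{M}}$ and the left-module structure on $\hom_{\rmod\cc}(\mc{N},\mc{B})$ (which by Definition \ref{homleftmodule} is built from the left bimodule multiplications $\mu^{r|1|0}_{\mc{B}}$), and an ``inner'' part which is the differential on $\hom_{\rmod\cc}(\mc{N},\mc{B})$ pointwise (built from $\mu_{\mc{N}}$ and from the right bimodule multiplications $\mu^{0|1|s}_{\mc{B}}$). Under the bijection above these two decompositions match term-for-term.

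The main obstacle is verifying that the Koszul signs agree on both sides. This is done by tracking the conventions of Definitions \ref{bimoduledef}, \ref{leftmoduledef}, \ref{rightmoduledef}: all signs on the bimodule side are sums of reduced/full degrees of elements lying to the right of an inner operation, and currying in $\mc{N}\otimes \cc^{\otimes s}$ does not reshuffle any of those elements, so the signs $\maltese_{-s}^{-(j+1)}$ and $\maltese_{-s}^k$ appearing in (\ref{ainfbimod}) match precisely the signs in Definitions \ref{leftmoduledef}, \ref{homleftmodule} once the identification $\mc{F}^{r|1|s} = \tilde{\mc{H}}^{r|1|1|s}$ is made. Finally, naturality in $\mc{M}$, $\mc{N}$, and $\mc{B}$ is immediate from the tautological nature of the bijection, giving the asserted isomorphisms of chain complexes.
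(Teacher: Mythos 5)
Your proof is correct and takes essentially the same approach as the paper: both unwind the definitions of the three pre-morphism spaces, exhibit the currying bijection $\mc{F}^{r|1|s} \leftrightarrow (\mc{H}^{r|1}(\cdot))^{1|s}(\cdot)$, and then match the two decompositions of the differential (module structure on $\mc{M}$, inner differential on $\hom_{\rmod\cc}(\mc{N},\mc{B})$, and the higher left multiplications). The paper writes out the individual terms of $\delta\mc{F}$ slightly more explicitly but likewise defers the sign verification, so your argument is a faithful match.
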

\begin{proof}
    Up to a sign check, we will show that the two expressions in
    (\ref{lefttorightadjunction}) contain manifestly the same amount of data as
    chain complexes; the case
    (\ref{righttoleftadjunction}) is the same.
    A premorphism
    \begin{equation}
        \mc{F}: \mc{M} \lra \hom_{\rmod\cc}(\mc{N},\mc{B})
    \end{equation}
    is the data of morphisms
    \begin{equation}
        \mc{F}^{r|1}: \hom_{\cc}(X_{r-1},X_{r}) \times \cdots \times  \mc{M}(X_0) \lra \hom_{\rmod\cc}(\mc{N}, \mc{B})
    \end{equation}
    sending 
    \begin{equation}
        c_1 \otimes \cdots \otimes c_k \otimes \mathbf{x} \longmapsto \mc{F}^{r|1}(c_1, \ldots, c_k, \mathbf{x}) \in \hom_{\rmod\cc}(\mc{N},\mc{B}(X_r,\cdot)).
    \end{equation}
    Associate to this the morphism
    \begin{equation}
        \underline{\mc{F}} \in \hom_{\cc\!-\!\cc}(\mc{M} \otimes_\K \mc{N},\mc{B})
    \end{equation}
    specified by
    \begin{equation}
        \underline{\mc{F}}^{r|1|s}(a_1, \ldots, a_r, (\mathbf{x} \otimes \mathbf{y}), b_1, \ldots, b_s) := \bigg(\mc{F}^{r|1}(a_1,\ldots, a_r, \mathbf{x})\bigg)^{1|s}(\mathbf{y}, b_1, \ldots, b_s).
    \end{equation}
    This identification is clearly reversible, so it will suffice to quickly
    check that the differential agrees. We compute that
    \begin{equation} \label{termstocompute1}
        \delta \mc{F} = \mc{F} \circ \hat{\mu}_{\mc{M}} - \mu_{\hom(\mc{N},\mc{B})} \circ \hat{\mc{F}}.
    \end{equation}
    By the correspondence given above, $\mc{F} \circ \hat{\mu}_{\mc{M}}$ is
    the morphism whose $1|s$ terms correspond to
    \begin{equation}
        \begin{split}
        \sum &\underline{\mc{F}}^{r-r'|1|s}( a_1, \ldots, a_{r'}, \mu^{r-r'|1}_{\mc{M}}(a_{r'+1}, \ldots, a_r, \mathbf{x}) \otimes \mathbf{y}, b_1, \ldots, b_s)\\
        &+ \sum \underline{\mc{F}}^{r-k'+1|1|s}(a_1, \ldots, a_i, \mu^k_\cc(a_{i+1}, \ldots, a_{i+k}), a_{i+k+1}, \ldots, a_r, \mathbf{x} \otimes \mathbf{y}, b_1, \ldots, b_s).
    \end{split}
    \end{equation}
    In the second term of (\ref{termstocompute1}), there are two cases. First, 
    $\mu_{\hom(\mc{N},\mc{B})}^{0|1} (\mc{F}(a_1, \ldots, a_r, \mathbf{x}))$ is the differential
    \begin{equation}
        \mc{F}(a_1, \ldots, a_r,\mathbf{x}) \circ \hat{\mu}_{\mc{N}} - \mu_{\mc{B}} \circ \hat{\mc{F}}(a_1, \ldots, a_r, \mathbf{x}),
    \end{equation}
    a morphism whose $1|s$ terms correspond to
    \begin{equation}
        \begin{split}
        \sum&\underline{\mc{F}}^{r|1|s-s'}( a_1, \ldots, a_r, \mathbf{x} \otimes \mu_{\mc{N}}^{1|s'}(\mathbf{y}, b_1, \ldots, b_{s'}), b_{s'+1} \ldots, b_s)\\
        &+ \sum \underline{\mc{F}}^{r|1|l-l'+1}(a_1, \ldots, a_r, \mathbf{x} \otimes \mathbf{y}, b_1, \ldots, b_j, \mu^l_{\cc}(b_{j+1}, \ldots, b_{j+l}), b_{j+l+1}, \ldots, b_s)
    \end{split}
    \end{equation}
    and 
    \begin{equation}
        \sum \mu_{\mc{B}}^{0|1|s-s'}(\underline{\mc{F}}^{r|1|s'}(a_1, \ldots, a_r, \mathbf{x} \otimes \mathbf{y}, b_1, \ldots, b_{s'}), b_{s'+1}, \ldots, b_s)
    \end{equation}
    respectively.  Finally, there are higher terms
    \begin{equation}
        \sum\mu_{\hom(\mc{N},\mc{B})}^{r'|1}(a_1, \ldots, a_r', \mc{F}^{r-r'|1}(a_{r'+1}, \ldots, a_r, \mathbf{x}))
    \end{equation}
    whose $1|s$ terms correspond exactly to
    \begin{equation}
        \sum \mu_{\mc{B}}^{r'|1|s'}(a_1, \ldots, a_r', \underline{\mc{F}}^{r-r'|1|s'}(a_{r'+1}, \ldots, a_r, \mathbf{x} \otimes \mathbf{y}, b_1, \ldots, b_s'), b_{s'+1}, \ldots, b_s).
    \end{equation}
    Thus, the differentials agree.
\end{proof}

Using adjunction, we can rapidly prove a few facts about bimodules.
\begin{prop}\label{homintodiagonal}
    There is a quasi-isomorphism of chain complexes
    \begin{equation}
        \hom_{\cc\!-\cc}(\mc{Y}^l_X \otimes \mc{Y}^r_Z, \cc_{\Delta}) \simeq \hom_{\cc}(Z,X).
    \end{equation}
\end{prop}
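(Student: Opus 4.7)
The plan is to chain together three quasi-isomorphisms already established in the text: the hom-tensor adjunction, the module-duality identification of Proposition \ref{moduledualprop}, and the Yoneda lemma of Proposition \ref{moduleyoneda}. Concretely, first I apply the adjunction isomorphism (\ref{lefttorightadjunction}) with $\mc{M} = \mc{Y}^l_X$, $\mc{N} = \mc{Y}^r_Z$, and $\mc{B} = \cc_\Delta$ to get an equality of chain complexes
\begin{equation*}
    \hom_{\cc\!-\!\cc}(\mc{Y}^l_X \otimes_\K \mc{Y}^r_Z, \cc_\Delta) \;=\; \hom_{\cc\mod}\bigl(\mc{Y}^l_X,\, \hom_{\rmod\cc}(\mc{Y}^r_Z, \cc_\Delta)\bigr).
\end{equation*}

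Next, I invoke the right-module case of Proposition \ref{moduledualprop} to obtain a quasi-isomorphism of left $\cc$-modules $\hom_{\rmod\cc}(\mc{Y}^r_Z, \cc_\Delta) \simeq \mc{Y}^l_Z$. Since $\hom_{\cc\mod}(\mc{Y}^l_X, -)$ is a dg functor on left modules, it sends this quasi-isomorphism to a quasi-isomorphism of chain complexes
\begin{equation*}
    \hom_{\cc\mod}\bigl(\mc{Y}^l_X, \hom_{\rmod\cc}(\mc{Y}^r_Z, \cc_\Delta)\bigr) \;\simeq\; \hom_{\cc\mod}(\mc{Y}^l_X, \mc{Y}^l_Z).
\end{equation*}
Finally, by the left-module version of the Yoneda lemma (Proposition \ref{moduleyoneda}), applied with $\mc{M} = \mc{Y}^l_Z$, there is a quasi-isomorphism $\mc{Y}^l_Z(X) \stackrel{\sim}{\lra} \hom_{\cc\mod}(\mc{Y}^l_X, \mc{Y}^l_Z)$, and by definition $\mc{Y}^l_Z(X) = \hom_\cc(Z,X)$. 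Composing, we obtain the desired quasi-isomorphism.

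The only genuine work is to verify that the two cited propositions apply: the Yoneda lemma requires $\cc$ and the target module to be homologically unital, and Proposition \ref{moduledualprop} requires $\cc$ to be homologically unital, so a standing (homological) unitality hypothesis on $\cc$ is implicitly in force here. Signs match because each step is either an identification of complexes or an application of a functor to a quasi-isomorphism. The main potential obstacle is really just bookkeeping: one might worry about whether the adjunction (\ref{lefttorightadjunction}) interacts correctly with the explicit module $\hom_{\rmod\cc}(\mc{Y}^r_Z,\cc_\Delta)$ defined via Definition \ref{homleftmodule}, but that is precisely the content of the adjunction, which is a straightforward unpacking of definitions. There is no analytic or homological input beyond the three tools already in hand.
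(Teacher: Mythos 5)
Your proof is correct and matches the paper's own argument: the paper also chains the hom-tensor adjunction, the right-module case of Proposition \ref{moduledualprop}, and the Yoneda lemma (Proposition \ref{moduleyoneda}) in precisely this order. Your remarks on unitality hypotheses and on the dg-functoriality of $\hom_{\cc\mod}(\mc{Y}^l_X,-)$ are accurate but implicit in the paper's terser version.
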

\begin{proof}
    By adjunction and module duality, we have that
    \begin{align}
        \hom_{\cc\!-\cc}(\mc{Y}^l_X \otimes \mc{Y}^r_Z, \cc_{\Delta}) &\simeq \hom_{\cc\mod} (\mc{Y}^l_X, \hom_{\rmod\cc}(\mc{Y}^r_Z,\cc_{\Delta}))\\
        &\simeq \hom_{\cc\mod} (\mc{Y}^l_X, \mc{Y}^l_Z) \\
        &\simeq \hom_{\cc}(Z,X).
    \end{align}
\end{proof}
Strictly speaking, the next fact is not about bimodules, but it will be useful
for what follows.
\begin{prop}\label{barcomplexhom}
    Let $A$ and $B$ be objects of a homologically unital category $\cc$. Then,
    the collapse map 
    \begin{equation}
        \mu: \mc{Y}^r_A \otimes_\cc \mc{Y}^l_B \simeq \hom(B,A)
    \end{equation}
    defined by 
    \begin{equation}
        \mathbf{a} \otimes c_1 \otimes \cdots \otimes c_k \otimes \mathbf{b} \longmapsto \mu^{k+2}_\cc(\mathbf{a}, c_1, \ldots, c_k, \mathbf{b}).
    \end{equation}
    is a quasi-isomorphism.
\end{prop}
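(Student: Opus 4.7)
My plan is to identify this result as a direct specialization of Proposition \ref{tensordiagonal}, applied to the right Yoneda module $\mc{M} = \mc{Y}^r_A$. The key observation is that $\cc_\Delta(X, B) = \hom(B, X) = \mc{Y}^l_B(X)$, so $\cc_\Delta(-, B) \cong \mc{Y}^l_B$ as left $\cc$-modules. Under this identification, I expect that evaluating the right $\cc$-module $\mc{Y}^r_A \otimes_\cc \cc_\Delta$ at the object $B$ recovers precisely the chain complex $\mc{Y}^r_A \otimes_\cc \mc{Y}^l_B$, with bar differentials matching term by term: the interior $\mu^k_\cc$ contributions, and the module-side actions $\mu^{1|r}_{\mc{Y}^r_A} = \mu^{r+1}_\cc$ and $\mu^{s|1}_{\mc{Y}^l_B} = \mu^{s+1}_\cc$ at the two ends. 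Evaluating the quasi-isomorphism $\mc{F}_{\Delta, right}$ from Proposition \ref{tensordiagonal} at $B$ (with $l = 0$ in equation (\ref{tensormapright})) should then reproduce the collapse map $\mu$ up to signs. Since $\cc$ is homologically unital by assumption, $\mc{Y}^r_A$ is homologically unital as well (its $\mu^{1|1}$-action factors through $\mu^2_\cc$), so Proposition \ref{tensordiagonal} applies and yields the result.

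Alternatively, I would directly mimic the argument of Proposition \ref{tensordiagonal}. First I would handle the classical case where $\mu^k_\cc = 0$ for $k \neq 2$: here the cone
\[ \mathrm{Cone}(\mu) = \hom(B, A)[1] \oplus (\mc{Y}^r_A \otimes_\cc \mc{Y}^l_B) \]
admits an explicit contracting homotopy $\mf{h}$ obtained by appending the strict unit $e_B$. On the bar piece, $\mf{h}(\mathbf{a} \otimes c_1 \otimes \cdots \otimes c_k \otimes \mathbf{b}) = \pm\,\mathbf{a} \otimes c_1 \otimes \cdots \otimes c_k \otimes \mathbf{b} \otimes e_B$, reinterpreting $\mathbf{b} \in \hom(B, X_k)$ as an additional $\cc$-factor and $e_B \in \mc{Y}^l_B(B)$ as the new module element; on the augmented piece, $\mf{h}(f) = f \otimes e_B$ for $f \in \hom(B, A)$. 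A direct computation should give $d\mf{h} + \mf{h} d = \id$ up to signs. Then for the general $\ainf$ case, I would apply a length filtration to $\mathrm{Cone}(\mu)$, where length counts the number of middle $\cc$-factors: only $\mu^1_\cc$ preserves length, while all higher products and the collapse map strictly decrease it. The $E^1$-page of the associated spectral sequence is therefore the cone of the analogous collapse for the classical homology-level category $H^*(\cc)$, which is homologically unital and thus acyclic by the previous step. Since the filtration is exhaustive and bounded below, convergence follows by standard arguments.

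The main obstacle in either approach is sign bookkeeping: in the first approach, verifying that the identification of differentials and of $\mc{F}_{\Delta, right}$ with $\mu$ holds on the nose; in the second approach, checking $d\mf{h} + \mf{h} d = \id$ with correct signs in the classical case. Given the sign conventions already established in the paper, neither is conceptually difficult, but the verification must be carried out carefully.
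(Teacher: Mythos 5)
Your proposal is correct, and your primary route is not the one the paper's displayed proof takes, though it is close in spirit to an alternative the paper only sketches. The paper's main argument is the ``conceptual computation'' via module duality: it rewrites $\mc{Y}^r_A$ as $\hom_{\cc\mod}(\mc{Y}^l_A,\cc_\Delta)$ (Proposition \ref{moduledualprop}), brings $\mc{Y}^l_B$ inside the hom as in Lemma \ref{homleftintosplitmodulelemma}, uses $\cc_\Delta\otimes_\cc\mc{Y}^l_B\simeq\mc{Y}^l_B$, and finishes with the Yoneda lemma; it also remarks, without details, that one could instead observe that $\mathrm{Cone}(\mu)$ is the usual $\ainf$ bar complex, or invoke Corollary \ref{threeequivalent} since $\cc$ split-generates itself. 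Your first approach --- recognizing $\mc{Y}^r_A\otimes_\cc\mc{Y}^l_B$ as the value at $B$ of the right module $\mc{Y}^r_A\otimes_\cc\cc_\Delta$ and reading off the collapse map as $\mc{F}^{1|0}_{\Delta,right}$ evaluated at $B$ --- is a clean reduction to Proposition \ref{tensordiagonal} that the paper does not spell out; it has the advantage of identifying the \emph{specific} map $\mu$ with an already-established quasi-isomorphism, whereas the paper's duality chain only shows abstractly that the two complexes have the same cohomology and one must chase the identifications to see that $\mu$ realizes it (the small caveat in your route is that $\cc_\Delta(\cdot,B)$ and $\mc{Y}^l_B$ agree only up to the paper's differing sign conventions, which falls under the sign bookkeeping you already flagged and which the paper itself leaves as an exercise). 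Your second, backup approach (cone of $\mu$ plus the length filtration, with the explicit unit-appending homotopy in the associative case) is essentially the paper's second sketched alternative, i.e.\ a rerun of the proof of Proposition \ref{tensordiagonal}, so nothing is lost there either; the paper's duality route mainly buys coherence with the module-duality formalism it reuses later (e.g.\ in Propositions \ref{homintodiagonal} and \ref{homrepresent}), while yours is more economical and more explicit about the map.
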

\begin{proof} 
    One can see this result as a consequence of Corollary
    \ref{threeequivalent}, as $\cc$ split-generates itself. More, one could
    examine the cone of $\mu$ and note that it is exactly the usual $\ainf$ bar
    complex for $\cc$. Alternatively, here is a conceptual computation
    using module duality that the chain complexes compute the same homology:
    \begin{align}
        \mc{Y}^r_{A} \otimes_\cc \mc{Y}^l_B &\simeq \hom_{\cc\mod}(\mc{Y}^l_A, \cc_{\Delta}) \otimes_\cc \mc{Y}^l_B\\
        \label{bringin}&\simeq \hom_{\cc\mod}(\mc{Y}^l_A, \cc_{\Delta} \otimes_\cc \mc{Y}^l_B)\\
        &\simeq \hom_{\cc\mod}(\mc{Y}^l_A, \mc{Y}^l_B)\\
        &\simeq \hom_\cc(B,A).
    \end{align}
    Here, the justification for our ability to bring in $\mc{Y}^l_B$ in
    (\ref{bringin}) is analogous to Lemma \ref{homleftintosplitmodulelemma}.
\end{proof}

\begin{prop}[K\"{u}nneth Formula for Bimodules] \label{bimodulekunneth}
    There are quasi-isomorphisms
    \begin{equation}
        \begin{split}
        \hom_{\cc}(X',X) \otimes \hom_{\dd}(Z,Z') &\simeq \hom_{\cc\mod}(\mc{Y}^l_X,\mc{Y}^l_{X'}) \otimes_{\mathbb K} \hom_{\rmod \dd} (\mc{Y}^r_Z,\mc{Y}^r_{Z'})\\ 
        &\simeq \hom_{\cc\!-\!\dd}(\mc{Y}^l_X \otimes \mc{Y}^r_Z, \mc{Y}^l_{X'} \otimes \mc{Y}^r_{Z'}).
    \end{split}
    \end{equation}
\end{prop}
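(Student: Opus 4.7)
The plan is to derive both equivalences by stringing together the Yoneda-type identifications already established in the excerpt, namely Proposition \ref{moduleyoneda} (Yoneda for modules), Lemmas \ref{homleftintosplitmodulelemma} and \ref{homrightintosplitmodulelemma} (hom into split bimodules), and the Hom-tensor adjunction. The first quasi-isomorphism
\[
\hom_{\cc}(X',X) \otimes \hom_{\dd}(Z,Z') \simeq \hom_{\cc\mod}(\mc{Y}^l_X,\mc{Y}^l_{X'}) \otimes_{\mathbb K} \hom_{\rmod\dd}(\mc{Y}^r_Z,\mc{Y}^r_{Z'})
\]
is immediate from Proposition \ref{moduleyoneda}: the factor $\mc{Y}^l_{X'}(X) = \hom_{\cc}(X',X)$ is quasi-isomorphic to $\hom_{\cc\mod}(\mc{Y}^l_X,\mc{Y}^l_{X'})$, and similarly for the right-module factor. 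These quasi-isomorphisms are precisely the first order terms of the Yoneda embeddings.

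For the second quasi-isomorphism, I would apply the Hom-tensor adjunction to rewrite
\[
\hom_{\cc\!-\!\dd}(\mc{Y}^l_X \otimes_{\mathbb K} \mc{Y}^r_Z,\; \mc{Y}^l_{X'} \otimes_{\mathbb K} \mc{Y}^r_{Z'}) \;\simeq\; \hom_{\cc\mod}\bigl(\mc{Y}^l_X,\; \hom_{\rmod\dd}(\mc{Y}^r_Z, \mc{Y}^l_{X'} \otimes_{\mathbb K} \mc{Y}^r_{Z'})\bigr).
\]
Then I would invoke Lemma \ref{homrightintosplitmodulelemma} to bring $\mc{Y}^l_{X'}$ outside the inner Hom:
\[
\hom_{\rmod\dd}(\mc{Y}^r_Z, \mc{Y}^l_{X'} \otimes_{\mathbb K} \mc{Y}^r_{Z'}) \;\simeq\; \mc{Y}^l_{X'} \otimes_{\mathbb K} \hom_{\rmod\dd}(\mc{Y}^r_Z,\mc{Y}^r_{Z'}),
\]
which is applicable precisely because $\mc{Y}^r_Z$ and $\mc{Y}^l_{X'}$ are Yoneda modules. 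Finally, I would apply Lemma \ref{homleftintosplitmodulelemma} once more to pull the (now external, $\K$-tensor) factor $\hom_{\rmod\dd}(\mc{Y}^r_Z,\mc{Y}^r_{Z'})$ outside the outer Hom, yielding
\[
\hom_{\cc\mod}(\mc{Y}^l_X,\mc{Y}^l_{X'}) \otimes_{\mathbb K} \hom_{\rmod\dd}(\mc{Y}^r_Z,\mc{Y}^r_{Z'}),
\]
which is exactly the middle expression.

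Each step above is a functorial quasi-isomorphism, so the composite is a quasi-isomorphism. The main conceptual point — not really an obstacle, since it is already handled in the preceding lemmas — is that the inclusions (\ref{homleftintosplitmodule}) and (\ref{homrightintosplitmodule}) are only guaranteed to be quasi-equivalences on Yoneda (or perfect) modules; the argument here uses them only in that setting, so no subtlety arises. A bookkeeping check that the composite quasi-isomorphism agrees (up to Koszul signs) with the evident map sending $(f,g) \in \hom_{\cc}(X',X) \otimes \hom_{\dd}(Z,Z')$ to the bimodule morphism induced by $\mc{Y}^l(f) \otimes \mc{Y}^r(g)$ can be extracted from tracing through Proposition \ref{moduledualprop} and the adjunction, but is not required for the statement as given.
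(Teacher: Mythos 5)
Your proposal is correct and follows essentially the same route as the paper's proof: Hom-tensor adjunction, then Lemma \ref{homrightintosplitmodulelemma} to pull $\mc{Y}^l_{X'}$ out of the inner Hom, then Yoneda-type identifications of the resulting module homs. The only (harmless) cosmetic deviation is your final appeal to Lemma \ref{homleftintosplitmodulelemma} with the bare chain complex $\hom_{\rmod\dd}(\mc{Y}^r_Z,\mc{Y}^r_{Z'})$ standing in for the Yoneda module hypothesized there; the paper instead applies Proposition \ref{moduleyoneda} directly to the left module $\mc{Y}^l_{X'}\otimes_{\mathbb K}\hom_{\rmod\dd}(\mc{Y}^r_Z,\mc{Y}^r_{Z'})$, collapsing everything to $\hom_{\cc}(X',X)\otimes\hom_{\dd}(Z,Z')$ and then checking compatibility with the natural inclusion of the middle term, which amounts to the same mechanism.
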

\begin{proof}
    Using adjunction and the Yoneda lemma, we compute
    \begin{align}
        \hom_{\cc\!-\!\dd}(\mc{Y}^l_X \otimes \mc{Y}^r_Z, \mc{Y}^l_{X'} \otimes \mc{Y}^r_{Z'}) &= 
        \hom_{\cc\mod}(\mc{Y}^l_X \hom_{\rmod\dd}( \mc{Y}^r_Z, \mc{Y}^l_{X'} \otimes \mc{Y}^r_{Z'})) \\
        &\simeq 
        \hom_{\cc\mod}(\mc{Y}^l_X, \mc{Y}^l_{X'} \otimes \hom_{\rmod\dd}( \mc{Y}^r_Z,  \mc{Y}^r_{Z'})) \textrm{ (Lemma \ref{homrightintosplitmodulelemma})}\\
        &\simeq \hom_{\cc} (X',X) \otimes \hom_{\rmod\dd}( \mc{Y}^r_Z, \mc{Y}^r_{Z'})\\
        &\simeq \hom_{\cc} (X',X) \otimes \hom_{\dd}(Z,Z').
    \end{align}
    One can check that the maps in this computation are compatible with the natural inclusions
    \begin{equation}
\hom_{\cc\mod}(\mc{Y}^l_X,\mc{Y}^l_{X'}) \otimes_{\mathbb K} \hom_{\rmod \dd} (\mc{Y}^r_Z,\mc{Y}^r_{Z'})
        \longhookrightarrow \hom_{\cc\!-\!\dd}(\mc{Y}^l_X \otimes \mc{Y}^r_Z, \mc{Y}^l_{X'} \otimes \mc{Y}^r_{Z'}).
    \end{equation}
    given by sending a pair of morphisms $\mc{F}, \mc{G}$ to the morphism
    \begin{equation}
        (\mc{F} \otimes \mc{G})^{r|1|s} := \mc{F}^{r|1} \otimes \mc{G}^{1|s}.
    \end{equation}
\end{proof}

We can now attempt to replicate the duality procedure for bimodules. Suppose
first we were in the setting of an ordinary bimodule $B$ over an associative
algebra $A$. An $A$ bimodule structure on $B$ is equivalent to a right $A^e:=A
\otimes A^{op}$ module structure, meaning that one can emulate the above
process and define the dual of $B$ to arise as Hom into some $A^e$ bimodule
$M$.  The basic example is the case $M  = $ the {\bf diagonal $A^e$
bimodule}, $A \otimes_\K A$ as a graded vector space.

Rephrasing everything in the language of bimodules over $A$, an {\it
$A^e$-bimodule structure} on the graded vector space 
\begin{equation}
    A \otimes_\K A
\end{equation}
is the datum of a left and right $A^e$ module structure, i.e. the data of two
$A$ bimodule structures, an {\it outer} structure and an {\it inner} structure.
We naturally arrive at a definition that seems to have been first studied by
Van Den Bergh \cite{bergh:1998uq}.
\begin{defn}[Van Den Bergh \cite{bergh:1998uq},
    Kontsevich-Soibelman \cite{Kontsevich:2006fk}, Ginzburg
    \cite{Ginzburg:2007fk}] 
    The {\bf inverse dualizing bimodule} of $A$ is, as a graded vector space
    \begin{equation}
        A^! := \bigoplus_i\mathrm{Ext}^i_{A^e} (A, A \otimes_\K A)[-i]
    \end{equation}
    where $\mathrm{Ext}$ is taken with respect to the outer bimodule structure
    on $A \otimes A$. The inner bimodule structure of $A \otimes A$ survives
    and gives the bimodule structure on $A^!$.
\end{defn}
More generally, one can define the {\bf dual bimodule} to $B$ to be 
\begin{equation}
    B^! := \bigoplus_i \r{Ext}^i_{A^e}(B, A \otimes_\K A)[i],
\end{equation}
with the same bimodule structure as in the definition.

We would 
like to emulate the definition of $B^!$ in the $\ainf$ setting and define, for
a bimodule $\mc{B}$ over an $\ainf$ category $\cc$, a {\bf bimodule dual}
\begin{equation}
    \mc{B}^! \textrm{``}=\textrm{''} \hom_{\cc\!-\!\cc}(\mc{B},\cc_{\Delta} \otimes_\K \cc_{\Delta}).
\end{equation}
\begin{rem}
    The reason we have put the above equality in quotes is that
    $\textrm{``}\cc_{\Delta} \otimes_\K \cc_{\Delta}\textrm{''}$ is not a
    bimodule or even a space with commuting
    outer and inner bimodule structures, unlike the associative case. It can,
    however, be thought of as a {\bf 4-module}, a special case of a theory of
    $\ainf$ {\bf $n$-modules} recently introduced by Ma'u \cite{Mau:2010lq}. A
    4-module associates to any four-tuple of objects $(X,Y,Z,W)$ a chain
    complex, and to any four-tuples of composable sequences of objects 
    \begin{equation}\bigg( (X_0, \ldots, X_k), (Y_0,
        \ldots, Y_l), (Z_0, \ldots, Z_s), (W_0, \ldots, W_t) \bigg)
    \end{equation} 
    operations $\mu^{k|s|t|l}$ satisfying a generalization of the $\ainf$
    bimodule equations. In the same way that one can tensor/hom modules with
    bimodules to obtain new modules,  one can tensor/hom bimodules with
    4-modules to obtain new bimodules.  The process we are about to describe is
    a special case of a general such theory, which has not been completely
    described.
\end{rem}
    
\begin{defn}\label{bimoduledual}
    Let $\mc{B}$ be an $\ainf$ bimodule over an $\ainf$ category $\cc$. 
    The {\bf bimodule dual} of $\mc{B}$ is the bimodule 
    \begin{equation}
        \mc{B}^!:= \hom_{\cc\!-\!\cc}(\mc{B},\cc_{\Delta} \otimes_\K \cc_{\Delta})
    \end{equation}
    over $\cc$ defined by the following data: 
    \begin{itemize}
        \item For pairs of objects, $(X,Y)$, $\mc{B}^!(X,Y)$ is the chain complex
            \begin{equation}
                \mc{B}^!(X,Y) := \hom_{\cc\!-\!\cc}(\mc{B},\mc{Y}^l_{X} \otimes_\K
                \mc{Y}^r_{Y})
            \end{equation}
            which we recall is the data of, for $k,l \geq 0$ and objects $A_0,
            \ldots, A_k, B_0, \ldots, B_l$, maps
            \begin{equation}
                \begin{split}
                    \mc{F}^{k|1|s}: \hom_\cc(A_{k-1}, A_k) &\otimes \cdots 
                    \otimes \hom_\cc(A_0, A_1) \otimes \mc{B} (A_0, B_0)
                     \\
                    \otimes \hom_\cc (B_1, B_0) \otimes &\cdots \otimes 
                    \hom_\cc(B_{l},B_{l-1})\\
                     &\lra \mc{Y}^l_{X}(A_k) \otimes_\K \mc{Y}^r_{Y}(B_l).
                \end{split}
            \end{equation}
            As usual, package this into a single map
            \begin{equation}
                \mc{F}: T\cc \otimes \mc{B} \otimes T\cc \lra \mc{Y}^l_X \otimes_k \mc{Y}^r_Y.
            \end{equation}
            Then, the differential is given by the usual bimodule hom
            differential 
            \begin{equation}
                \mu^{0|1|0}_{\mc{B}^!}(\mc{F}) = \mc{F} \circ \hat{\mu}_{\mc{B}} \pm \mu_{\mc{Y}^l_X \otimes_K \mc{Y}^r_Y} \circ \hat{\mc{F}}.
            \end{equation}
                
        \item for collections of objects $(X_0, \ldots, X_r, Y_0, \ldots, Y_s)$, maps
            \begin{equation}
                \begin{split}
                    \mu^{r|1|s}_{\mc{B}^!}:\hom_{\mc{B}}(X_{r-1},X_r) \otimes \cdots \otimes
                    &\hom_{\cc}(X_0,X_1) \otimes \mc{B}^!(X_0,Y_0) \\
                &\otimes \hom_{\cc}(Y_1,
                Y_0) \otimes \cdots \hom_{\cc}(Y_{s},Y_{s-1}) \\ &\mbox{ }\ \ \ \lra
                \mc{B}^!(X_r,Y_s).
                \end{split}
            \end{equation}
            defined as follows: 
            \begin{equation}
                \mu^{r|1|s}_{\mc{B}^!} = 0\textrm{ if both }r, s > 0.
            \end{equation}
            \begin{equation}
                \mu^{r|1|0}_{\mc{B}^!}(x_r, \ldots, x_1, \mathbf{\phi}) = \Phi_{(x_r, \ldots, x_1, \phi)} \in \mc{B}^!(X_r,Y_0)
            \end{equation}
            \def\xvec{(x_r, \ldots, x_1, \phi)}
            where $\Phi_{\xvec}$ is the bimodule map whose $k|1|l$ term is:
            \begin{equation}
                \begin{split}
                    \Phi_{\xvec}^{k|1|l}&(a_k, \ldots, a_1, \mathbf{b}, b_l, \ldots, b_1) := \\
                    \sum_{l'\leq l}&(-1)^{\maltese_1^{l'}}\big(\mu_{\cc_\Delta}^{r|1|l'}(x_r, \ldots, x_1, \cdot, b_{l'}, \ldots, b_{1}) \otimes id \big) \\
                    &\circ \phi^{k|1|l-l'}(a_{k}, \ldots, a_1, \mathbf{b}, b_l, \ldots, b_{l'+1}).
            \end{split}
            \end{equation}
            with sign 
            \begin{equation}
                \maltese_1^{l'} := \sum_{i=1}^{l'} ||b_i||.
            \end{equation}
        Also,
            \begin{equation}
                \mu^{0|1|s}_{\mc{B}^!}(\mathbf{\phi}, y_1, \ldots, y_s) = \Phi_{( \phi, y_1, \ldots, y_s)} \in \mc{B}^!(X_0,Y_s)
            \end{equation}
            \def\yvec{(\phi, y_1, \ldots, y_s)}
            where $\Phi_{\yvec}$ is the bimodule map whose $k|1|l$ term is:
            \begin{equation}
                \begin{split}
                    \Phi_{\yvec}^{k|1|l}(a_k, &\ldots, a_1, \mathbf{b}, b_l, \ldots, b_1) := \\
                    \sum_{k'\leq k}&\big(id \otimes 
                    \mu_{\cc_\Delta}^{k'|1|s}(a_k, \ldots, a_{k'+1}, \cdot, 
                    y_1, \ldots, y_s) \big) \\
                    &\circ \phi^{k-k'|1|l}(a_{k'}, \ldots, a_1, 
                    \mathbf{b}, b_1, \ldots, b_{l}).
            \end{split}
            \end{equation}
    \end{itemize}
\end{defn}

There is a more intrinsic definition of $\mc{B}^!$, analogous to the
relationship described in
(\ref{moduledual1})-(\ref{yonedaleftstartyonedaright}) in terms of Yoneda
pullbacks.
Let $\mc{B} \in \ob \cc\bimod\cc$ be a specified bimodule.
Take the right Yoneda module over this bimodule
\begin{equation}
\mc{Y}^{r}_{\mc{B}} \in \ob \rmod (\cc\bimod\cc).
\end{equation}
This is a right module over bimodules. By restricting via the natural embedding
\begin{equation}
    \cc\mod \otimes \rmod\cc \longhookrightarrow \cc\bimod\cc
\end{equation}
we think of $\mc{Y}^r_{\mc{B}}$ as a right {\it dg} module over the
category of {\it split bimodules}, e.g. the tensor product
\begin{equation}
    \cc\mod \otimes \rmod\cc.
\end{equation}
Since a right dg module $\mc{M}$ over a tensor product of
dg categories $\mc{C} \otimes \mc{D}$ is tautologically a
$\mc{C}^{op}-\mc{D}$ bimodule, 
\begin{equation}\label{bimoduletopullback}
    \mc{Y}^r_{\mc{B}}  \in   \ob (\cc\mod)^{op}\bimod (\rmod\cc).
\end{equation}
Now, recall in Section \ref{pullback} that for functors 
\begin{align}
    \mf{F}: \mc{A} &\lra \mc{C}^{op} \\
    \mf{G}: \mc{B} &\lra \mc{D}
\end{align}
we defined a {\it pullback} functor
\begin{equation}
    (\mf{F} \otimes \mf{G})^*: 
    \mc{C}^{op}\bimod\mc{D} \lra \mc{A}\bimod\mc{B}.
\end{equation}
We can now apply this construction to (\ref{bimoduletopullback}), using the left and right Yoneda embeddings
\begin{equation}
    \begin{split}
        \mathbf{Y}_L: \cc &\lra (\cc\mod)^{op} \\
        \mathbf{Y}_R: \cc &\lra (\rmod\cc) \\
    \end{split}
\end{equation}

\begin{prop}
The {\bf bimodule dual} of $\mc{B}$ is equivalent to the $\cc\!-\!\cc$ bimodule
\begin{equation}
\mc{B}^!: = (\mathbf{Y}_L \otimes \mathbf{Y}_R)^* (\mc{Y}^r_{\mc{B}}).
\end{equation}
\end{prop}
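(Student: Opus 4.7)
The proof is essentially a direct verification that the pullback construction recovers the explicit chain-level description of $\mc{B}^!$ given in Definition \ref{bimoduledual}, in the spirit of Proposition \ref{moduledualprop} for modules.

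First, for a pair of objects $(X,Y)\in\ob\cc\times\ob\cc$, I would unwind the pullback $(\mathbf{Y}_L\otimes\mathbf{Y}_R)^*(\mc{Y}^r_{\mc{B}})$ at $(X,Y)$. Using the definition of pullback in Section \ref{pullback} and the fact that $\mathbf{Y}_L(X)=\mc{Y}^l_X$ and $\mathbf{Y}_R(Y)=\mc{Y}^r_Y$, this equals $\mc{Y}^r_{\mc{B}}$ evaluated on the split bimodule $\mc{Y}^l_X\otimes_\K\mc{Y}^r_Y$. Tracking conventions for the embedding $\cc\mod\otimes\rmod\cc\hookrightarrow\cc\bimod\cc$ and the identification of right modules over a tensor-product category with bimodules, the resulting chain complex is exactly $\hom_{\cc\!-\!\cc}(\mc{B},\mc{Y}^l_X\otimes_\K\mc{Y}^r_Y)$, matching $\mc{B}^!(X,Y)$ on the nose.

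Second, I would verify that the $\cc\!-\!\cc$ bimodule structure maps match. On the pullback side, the nontrivial operations $\mu^{r|1|0}$ and $\mu^{0|1|s}$ come from the right $(\cc\mod)^{op}\otimes(\rmod\cc)$-action on $\mc{Y}^r_{\mc{B}}$---that is, composition in $\cc\bimod\cc$ precomposed with the higher order terms $\mathbf{Y}_L^r$ and $\mathbf{Y}_R^s$ of the Yoneda embeddings. Unwinding these higher Yoneda terms into the concrete multiplication-by-sequences-of-morphisms formulas given in Section \ref{algebrasection}, and composing them (in the dg category of bimodules) with a bimodule morphism into $\mc{Y}^l_X\otimes_\K\mc{Y}^r_Y$, one directly recovers the one-sided formulas for $\mu^{r|1|0}_{\mc{B}^!}$ and $\mu^{0|1|s}_{\mc{B}^!}$ in Definition \ref{bimoduledual}. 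The vanishing of mixed operations $\mu^{r|1|s}_{\mc{B}^!}$ with both $r,s>0$ follows from the factorization of the tensor-product-category action through its two one-sided components: mixed operations are simply not part of the structure of a right $\mc{C}\otimes\mc{D}$-module.

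The main obstacle is purely bookkeeping: reconciling sign conventions, the covariant/contravariant packaging of the Yoneda embeddings (e.g.\ $\mathbf{Y}_L:\cc\to(\cc\mod)^{op}$ versus the contravariant $\cc^{op}\to\cc\mod$ of Section \ref{algebrasection}), and the tautological identification of right $\mc{C}\otimes\mc{D}$-modules with $\mc{C}^{op}\!-\!\mc{D}$ bimodules. Once the pullback framework of Section \ref{pullback} is in hand, no deeper input is needed; the proposition is a formal consequence of the definitions, in the same way that Proposition \ref{moduledualprop} is for the module case.
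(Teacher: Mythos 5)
Your proposal takes essentially the same route as the paper, which omits the argument entirely and says only that the proposition ``essentially follows by comparing definitions as in Proposition~\ref{moduledualprop}'' --- precisely the unwinding you carry out (evaluating the pullback at pairs of objects, matching the one-sided structure maps against the higher Yoneda terms composed in the dg category of bimodules, and observing that mixed operations vanish because the action factors through the two sides of the split subcategory). Your write-up is in fact more detailed than what the paper records, and the covariant/contravariant bookkeeping you flag is indeed the only real content of the verification.
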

\begin{proof}
    We omit a proof of this fact for the time being, which essentially follows
    by comparing definitions as in Proposition \ref{moduledualprop}.
\end{proof}

As a first step, we can take the bimodule dual of a Yoneda bimodule.
\begin{prop}\label{bimoduledualyoneda}
    If $\mc{B}$ is the Yoneda bimodule
    \begin{equation}
        \mc{Y}^l_X \otimes_\K \mc{Y}^r_Z
    \end{equation}
    then $\mc{B}^!$ is quasi-isomorphic to the Yoneda bimodule
    \begin{equation}
        \mc{Y}^l_Z \otimes_\K \mc{Y}^r_X.
    \end{equation}
\end{prop}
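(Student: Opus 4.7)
The plan is to compute $\mc{B}^!$ pointwise using the K\"unneth formula for bimodules (Proposition \ref{bimodulekunneth}), and then upgrade this to a bimodule equivalence by tracing the structure maps through the chain of quasi-isomorphisms. By Definition \ref{bimoduledual},
\begin{equation*}
\mc{B}^!(X',Y') = \hom_{\cc\!-\!\cc}\bigl(\mc{Y}^l_X \otimes_\K \mc{Y}^r_Z,\ \mc{Y}^l_{X'} \otimes_\K \mc{Y}^r_{Y'}\bigr),
\end{equation*}
and Proposition \ref{bimodulekunneth} supplies a natural quasi-isomorphism of this chain complex with $\hom_\cc(X',X) \otimes_\K \hom_\cc(Z,Y') = \mc{Y}^r_X(X') \otimes_\K \mc{Y}^l_Z(Y')$. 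After the natural rearrangement of the two tensor factors, this matches the pointwise values of the Yoneda bimodule $\mc{Y}^l_Z \otimes_\K \mc{Y}^r_X$.

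To promote this pointwise identification to a bimodule equivalence, I would verify that each of the three quasi-isomorphisms used in the proof of Proposition \ref{bimodulekunneth}---hom-tensor adjunction (\ref{lefttorightadjunction}), the splitting isomorphism of Lemma \ref{homrightintosplitmodulelemma}, and the Yoneda lemma (Proposition \ref{moduleyoneda})---is natural with respect to the variables $X'$ and $Y'$. Under these identifications, the structure maps $\mu^{r|1|s}_{\mc{B}^!}$ of Definition \ref{bimoduledual}, which are built from bimodule operations $\mu^{\cdot|1|\cdot}_{\cc_\Delta}$ acting on the Yoneda factors of the Hom target, should transport to the bimodule structure maps of $\mc{Y}^l_Z \otimes_\K \mc{Y}^r_X$ that come directly from $\ainf$-multiplication in $\cc$.

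The main obstacle is this last compatibility check: it is a direct but tedious unpacking of definitions, in the same spirit as the verification of Proposition \ref{moduledualprop} for module duality (which in fact may be invoked at the Yoneda step). The key observation making the check manageable is that $\mu^{r|1|s}_{\cc_\Delta}$ is, on the nose, equal to $\mu^{r+1+s}_\cc$ (up to sign), so tracing the explicit formulas $\Phi_{(x_r,\ldots,x_1,\phi)}$ and $\Phi_{(\phi,y_1,\ldots,y_s)}$ of Definition \ref{bimoduledual} through the K\"unneth quasi-isomorphisms reproduces precisely the $\ainf$-composition formulas governing the Yoneda bimodule $\mc{Y}^l_Z \otimes_\K \mc{Y}^r_X$.
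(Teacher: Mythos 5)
Your proposal is correct and is essentially the paper's argument, just organized slightly differently. The paper streamlines the ``upgrade to a bimodule morphism'' step by identifying a single candidate morphism from the start: the natural inclusion
\begin{equation*}
\hom_{\rmod\cc}(\mc{Y}^r_Z, \cc_{\Delta}) \otimes_\K \hom_{\cc\mod}(\mc{Y}^l_X, \cc_{\Delta}) \longhookrightarrow \hom_{\cc\bimod\cc} (\mc{Y}^l_X\otimes \mc{Y}^r_Z,\ \cc_{\Delta} \otimes_\K \cc_{\Delta}),
\end{equation*}
which it observes extends to a morphism of $\ainf$ bimodules directly from the definition of $\mc{B}^!$, and which is a pointwise quasi-isomorphism by the K\"unneth formula; one then concludes with Proposition \ref{moduledualprop} applied to each factor, exactly as you suggest at the Yoneda step. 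Your formulation, which proposes to trace the bimodule structure maps through the three quasi-isomorphisms in the K\"unneth proof one at a time, is doing the same work but is a bit more cumbersome: a chain of complex-level quasi-isomorphisms that happen to be natural in the object variables does not by itself assemble to a bimodule quasi-isomorphism, so you would still need to exhibit an $\ainf$ bimodule morphism whose first-order term is the composite. Singling out the inclusion above as that morphism is the cleaner way to discharge exactly the compatibility check you identify as the main obstacle. Your key observation---that $\mu^{r|1|s}_{\cc_\Delta}$ is $\mu^{r+1+s}_\cc$ up to sign, so the structure maps of $\mc{B}^!$ in Definition \ref{bimoduledual} transport to those of $\mc{Y}^l_Z \otimes_\K \mc{Y}^r_X$---is indeed the substance of the paper's remark that the inclusion ``follows immediately from inspection'' to extend to a bimodule morphism.
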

\begin{proof}
    By definitions, there is a natural inclusion 
    \begin{equation}
        \hom_{\rmod\cc}(\mc{Y}^r_Z, \cc_{\Delta}) \otimes_\K \hom_{\cc\mod}(\mc{Y}^l_X, \cc_{\Delta}) \hookrightarrow \hom_{\cc\bimod\cc} (\mc{Y}^l_X\otimes \mc{Y}^r_Z,\cc_{\Delta} \otimes_\K \cc_{\Delta}).
    \end{equation}
    inducing a quasi-isomorphism by Proposition \ref{bimodulekunneth}. It
    follows immediately from inspection of Definition \ref{bimoduledual} that
    this inclusion can be extended to a morphism of $\ainf$ bimodules. Thus, by
    Proposition \ref{moduledualprop}, we conclude.
\end{proof}

If a vector space $V$ is finite dimensional, the linear dual $V^\vee$ satisfies
the property that 
\begin{equation}
    V^{\vee} \otimes W \cong \hom_{Vect}(V,W).
\end{equation}
One expects, under suitable finiteness conditions, a similar fact involving the
bimodule dual. The precise statement is
\begin{prop}\label{homrepresent}
    If $\mc{Q}$ is a perfect $\cc\!-\!\cc$ bimodule, then $\mc{Q}^!$ is also
    perfect and for perfect $\mc{B}$ there is a natural quasi-isomorphism
    \begin{equation}
        \mc{Q}^! \otimes_{\cc\!-\!\cc} \mc{B} \simeq \hom_{\cc\!-\!\cc}(\mc{Q},\mc{B}).
    \end{equation}
\end{prop}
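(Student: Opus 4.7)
The strategy is to first construct a natural comparison map
\begin{equation*}
\Theta_{\mc{Q}}: \mc{Q}^! \otimes_{\cc\!-\!\cc} \mc{B} \longrightarrow \hom_{\cc\!-\!\cc}(\mc{Q},\mc{B}),
\end{equation*}
verify it is a quasi-isomorphism when $\mc{Q}$ is a Yoneda bimodule, and then extend to all perfect $\mc{Q}$ by a standard triangulated/retraction argument. The map $\Theta_{\mc{Q}}$ is defined by viewing a generator of $\mc{Q}^! \otimes_{\cc\!-\!\cc} \mc{B}$ as a bimodule pre-morphism $\phi: \mc{Q} \to \mc{Y}^l_X \otimes_{\K} \mc{Y}^r_Y$ cyclically paired with a bar element of $\mc{B}$, and postcomposing $\phi$ with the natural evaluation $\mc{Y}^l_X \otimes_{\K} \mc{Y}^r_Y \otimes_{\cc\!-\!\cc} \mc{B} \to \mc{B}$ coming from the bimodule structure of $\mc{B}$; the bar factors are threaded through in the same way as the induced morphisms $\mc{F}_\#$ of (\ref{fsharp}).

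\textbf{Base case.} For $\mc{Q} = \mc{Y}^l_A \otimes_{\K} \mc{Y}^r_C$, Proposition \ref{bimoduledualyoneda} identifies $\mc{Q}^!$ with $\mc{Y}^l_C \otimes_{\K} \mc{Y}^r_A$. An iterated application of Proposition \ref{tensordiagonal} --- in the form of $\mc{F}_{\Delta, left, right}$ from (\ref{fleftright}), combined with the identification of Yoneda modules with appropriate pullbacks of the diagonal from Proposition \ref{moduledualprop} --- yields
\begin{equation*}
(\mc{Y}^l_C \otimes_{\K} \mc{Y}^r_A) \otimes_{\cc\!-\!\cc} \mc{B} \stackrel{\sim}{\longrightarrow} \mc{B}(A,C).
\end{equation*}
On the hom side, the adjunction (\ref{lefttorightadjunction}) together with two applications of the $\ainf$ Yoneda lemma (Proposition \ref{moduleyoneda}) gives
\begin{equation*}
\hom_{\cc\!-\!\cc}(\mc{Y}^l_A \otimes_{\K} \mc{Y}^r_C, \mc{B}) \simeq \hom_{\cc\mod}\bigl(\mc{Y}^l_A, \hom_{\rmod\cc}(\mc{Y}^r_C, \mc{B})\bigr) \simeq \mc{B}(A,C),
\end{equation*}
where in the last step one first recognizes $\hom_{\rmod\cc}(\mc{Y}^r_C, \mc{B}(X,\cdot)) \simeq \mc{B}(X,C)$ and then applies Yoneda again in the left variable. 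A sign check, analogous to the one implicit in the proof of Proposition \ref{homintodiagonal}, shows $\Theta_{\mc{Q}}$ intertwines these two quasi-isomorphisms, hence is itself a quasi-isomorphism.

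\textbf{Extension and perfectness of $\mc{Q}^!$.} Both sides of the proposed equivalence are contravariant dg functors in $\mc{Q}$ from bimodules to chain complexes: one is a hom in the dg category $\cc\bimod\cc$, and the other is a composition of $(-)^!$ (a contravariant hom into a fixed ``4-module''-like target) with the covariant tensor $(-) \otimes_{\cc\!-\!\cc} \mc{B}$. In particular, by Proposition \ref{inducedmorphisms} both send quasi-isomorphisms of bimodules to quasi-isomorphisms of complexes, and each sends shifts to shifts, mapping cones to shifted mapping cones, and retracts to retracts up to quasi-isomorphism. It follows that the full subcategory of bimodules $\mc{Q}$ for which $\Theta_{\mc{Q}}$ is a quasi-isomorphism is closed under quasi-isomorphism, shifts, mapping cones, and retracts; by the base case it contains every Yoneda bimodule, and hence every perfect bimodule. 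The same formal argument, now applied to $(-)^!$ alone --- contravariant, exact, sending Yoneda bimodules to Yoneda bimodules by Proposition \ref{bimoduledualyoneda} --- shows that $\mc{Q}^!$ is perfect whenever $\mc{Q}$ is.

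\textbf{Main obstacle.} The substantive work lies entirely in the Yoneda base case: one must verify that the ad hoc recipe for $\Theta_{\mc{Q}}$ coincides, up to Koszul signs, with the composition of adjunction, Yoneda, and iterated diagonal-tensor quasi-isomorphisms. Because the source bar complex is cyclically composable around two copies of $\cc$, while the adjunction/Yoneda side contracts the left and right bar separately, lining up the sign conventions of Definitions \ref{homrightmodule}--\ref{bimoduledual} with those of the bimodule tensor product is the most delicate part; once confirmed, the extension to perfect $\mc{Q}$ and the closure of perfectness under $(-)^!$ are essentially formal.
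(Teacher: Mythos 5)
Your proof follows essentially the same route as the paper's: construct a natural comparison map, verify it on Yoneda bimodules via the hom--tensor adjunction, the bimodule Yoneda lemma, and the diagonal-tensor contraction, then bootstrap to perfect objects (and to perfectness of $\mc{Q}^!$) by exactness closure. The only visible divergence is cosmetic: the paper factors its map through the intermediate complex $\hom_{\cc\!-\!\cc}(\mc{Q},\cc_{\Delta}\otimes_{\cc}\mc{B}\otimes_{\cc}\cc_{\Delta})$ and then postcomposes with $\mc{F}_{\Delta,\mathrm{left},\mathrm{right}}$ (which makes naturality in $\mc{Q}$ explicit), and it runs the Yoneda base case with $\mc{B}$ also Yoneda before extending in $\mc{B}$, whereas you keep $\mc{B}$ general throughout; neither difference changes the substance of the argument.
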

\begin{proof}
The perfectness of $\mc{Q}^!$ follows from the fact (Proposition
\ref{bimoduledualyoneda})  that duals of Yoneda bimodules are Yoneda
bimodules; hence we see that if $\mc{Q}$ is a summand
of a complex of Yoneda bimodules, $\mc{Q}^!$ is too. (Implicitly, we are
using the fact that the duality functor commutes with finite cones and
summands.)

   Now, there is a natural transformation of functors
   \begin{equation}
       \mf{C}: \hom_{\cc\!-\!\cc}(\cdot, \cc_{\Delta} \otimes_{\mathbb K} \cc_{\Delta}) \otimes_{\cc\!-\!\cc} \mc{B} \lra \hom_{\cc\!-\!\cc}(\cdot,\cc_{\Delta} \otimes_{\cc} \mc{B} \otimes_{\cc} \cc_{\Delta})
   \end{equation}
   given by, for a bimodule $\mc{Q}$, the natural inclusion of chain
   complexes, 
   \begin{equation}
       \hom_{\cc\!-\!\cc}(\mc{Q}, \cc_{\Delta} \otimes_{\mathbb K}
        \cc_{\Delta}) \otimes_{\cc\!-\!\cc} \mc{B} 
        \longhookrightarrow
        \hom_{\cc\!-\!\cc}(\mc{Q},\cc_{\Delta} \otimes_{\cc} 
        \mc{B} \otimes_{\cc}
       \cc_{\Delta}).
   \end{equation}
   Concretely, this is the map 
   \begin{equation}
       \mf{C}_{\mc{Q}}: \underline{\phi} \otimes x_1 \otimes \cdots \otimes x_k \otimes \mathbf{b} \otimes y_1 \otimes \cdots \otimes y_l \longmapsto \hat{\phi}_{x_1, \ldots, x_k, \mathbf{b}, y_1, \ldots, y_l}
   \end{equation}
   where $\hat{\phi}_{x_1, \ldots, x_k \mathbf{b}, y_1, \ldots, y_l} \in
   \hom_{\cc\!-\!\cc}(\mc{Q},\cc_{\Delta} \otimes_{\cc} \mc{B} \otimes_{\cc}
   \cc_{\Delta})$ is specified by the following data:
   \begin{equation}
       \begin{split}
           \hat{\phi}_{x_1, \ldots, x_k, \mathbf{b}, y_1, \ldots, y_l} (a_1, \ldots, a_r, &\mathbf{q}, b_1, \ldots, b_s):=\\
           &\phi(a_1, \ldots, a_r, \mathbf{q}, b_1, \ldots, b_s) \bar{\otimes} (x_1 \otimes \cdots x_k \otimes \mathbf{b} \otimes y_1 \otimes \cdots \otimes y_l).
   \end{split}
   \end{equation}
   Here the operation $\bar{\otimes}$ is the reversed tensor product
   \begin{equation} \label{reversetensor}
       (a\otimes b) \bar{\otimes} (c_1 \otimes \cdots \otimes c_k) := b \otimes c_1 \otimes \cdots \otimes c_k \otimes a,
   \end{equation}
   extended linearly. For $\mc{Q}$ and $\mc{B}$ both Yoneda bimodules of the
   form $\mc{Y}_{XZ}:=\yyl{X} \otimes \yyr{Z}$ and $\yyl{X'} \otimes \yyr{Z'}$,
   we claim the natural transformation $\mf{C}$ is a quasi-isomorphism. This
   follows from the computations
   \begin{align} 
       \hom_{\cc\!-\!\cc}(\yyl{X} \otimes \yyr{Z}, \cc_{\Delta} &\otimes_{\mathbb K} \cc_{\Delta}) 
       \otimes_{\cc\!-\!\cc} (\yyl{X'} \otimes \yyr{Z'}) \\ 
&\simeq (\yyr{X} \otimes_\K \yyl{Z}) \otimes_{\cc\!-\!\cc} (\yyl{X'} \otimes_\K \yyr{Z'}) \\
&\simeq (\mc{Y}^r_X \otimes_{\cc} \mc{Y}^l_{X'}) \otimes_\K (\mc{Y}^r_{Z'} \otimes_\cc \mc{Y}^l_Z)\\
&\simeq \hom(X',X) \otimes_\K \hom(Z,Z') \textrm{ (Proposition \ref{barcomplexhom})}.
   \end{align}
      and 
   \begin{align}
       \hom_{\cc\!-\!\cc}(\yyl{X} \otimes \yyr{Z},\cc_{\Delta} &\otimes_{\cc} (\yyl{X'} \otimes \yyr{Z'}) \otimes_{\cc} \cc_{\Delta})\\
       &\simeq \hom_{\cc\mod}(\yyl{X}, \cc_{\Delta} \otimes_\cc \yyl{X'}) \otimes_\K \hom_{\rmod\cc}(\yyr{Z},\yyr{Z'} \otimes_\cc \cc_{\Delta})\\
       &\simeq \hom_{\cc\mod}(\yyl{X}, \yyl{X'}) \otimes_\K \hom_{\rmod\cc}(\yyr{Z},\yyr{Z'})\\
       &\simeq \hom(X',X) \otimes_\K \hom(Z,Z'),
    \end{align}
   which are compatible with the morphism $\mf{C}$.

   Since the natural transformation $\mf{C}$ commutes with finite cones and
   summands, we see that for perfect $\mc{Q}$ and $\mc{B}$, there must be a
   quasi-isomorphism 
   \begin{equation}
       \mf{C}_{\mc{Q}}: \hom_{\cc\!-\!\cc}(\mc{Q}, \cc_{\Delta} \otimes_{\mathbb K} \cc_{\Delta}) \otimes_{\cc\!-\!\cc} \mc{B} \stackrel{\sim}{\lra} \hom_{\cc\!-\!\cc}(\mc{Q},\cc_{\Delta} \otimes_{\cc} \mc{B} \otimes_{\cc} \cc_{\Delta}).
   \end{equation}
   Now, postcomposing with, e.g. the quasi-isomorphism 
   \begin{equation}
       \mc{F}_{\Delta, left, right}: \cc_{\Delta} \otimes_\cc \mc{B} \otimes_\cc \cc_\Delta \lra \mc{B}
   \end{equation}
   defined in (\ref{fleftright}) gives the desired quasi-isomorphism.
   
\end{proof}

We now specialize to the case $\mc{B} = \cc_{\Delta}$.
\begin{defn}
    The {\bf inverse dualizing bimodule}
    \begin{equation}
        \cc^!
    \end{equation}
    is by definition the bimodule dual of the diagonal bimodule $\cc_{\Delta}$.
\end{defn}
As an immediate corollary of Proposition \ref{homrepresent},
\begin{cor}[$\cc^!$ represents Hochschild cohomology] \label{representhh}
    If $\cc$ is homologically smooth, then the complex
    \begin{equation}
        \cc^! \otimes_{\cc\!-\!\cc} \mc{B}
    \end{equation}
    computes the Hochschild cohomology $\r{HH}^*(\cc,\mc{B})$.
\end{cor}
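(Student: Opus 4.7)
The plan is to chain together three ingredients already in hand: the definition of smoothness, Proposition \ref{homrepresent}, and the two-pointed chain-level model for Hochschild cohomology from Section \ref{twopointed}.

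First I would observe that by the definition of homological smoothness, the diagonal bimodule $\cc_\Delta$ is perfect. Thus we are in position to apply Proposition \ref{homrepresent} with $\mc{Q} = \cc_\Delta$ and $\mc{Q}^! = \cc^!$. The proposition supplies a natural quasi-isomorphism
\begin{equation}
    \cc^! \otimes_{\cc\!-\!\cc} \mc{B} \stackrel{\sim}{\lra} \hom_{\cc\!-\!\cc}(\cc_\Delta, \mc{B}).
\end{equation}
The right hand side is, by definition, the two-pointed Hochschild cochain complex ${_2}\r{CC}^*(\cc,\mc{B})$. Composing with the inverse of the quasi-isomorphism $\Psi: \r{CC}^*(\cc,\mc{B}) \stackrel{\sim}{\lra} {_2}\r{CC}^*(\cc,\mc{B})$ from (\ref{twopointedcohomologyquasi}), which is available for homologically unital $\cc$, gives the claimed identification.

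The one point I would want to pin down carefully is that Proposition \ref{homrepresent}, although stated under a double perfectness hypothesis, really applies here for arbitrary $\mc{B}$. The natural transformation $\mf{C}_{\mc{Q}}$ constructed in its proof is defined for every pair $(\mc{Q},\mc{B})$; both the source and target functors of $\mc{Q}$ commute with finite cones and summands (the source via tensor products, the target via hom). The explicit Yoneda-and-adjunction computation in the proof of \ref{homrepresent} shows that $\mf{C}_{\mc{Q}}$ is a quasi-isomorphism when $\mc{Q}$ is a Yoneda bimodule $\mc{Y}^l_X \otimes \mc{Y}^r_Z$, for arbitrary $\mc{B}$; indeed, both sides reduce via Lemmas \ref{homleftintosplitmodulelemma}, \ref{homrightintosplitmodulelemma}, and Proposition \ref{barcomplexhom} to $\mc{B}(X,Z)$. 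Propagating through the perfect generation of $\cc_\Delta$ by Yoneda bimodules then yields the quasi-isomorphism for every $\mc{B}$.

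The argument is essentially formal once Proposition \ref{homrepresent} is in place, so there is no real technical obstacle; the only subtle step is the mild strengthening just described, allowing $\mc{B}$ to be arbitrary while $\mc{Q} = \cc_\Delta$ is perfect. Everything else is bookkeeping in the two-pointed model of Section \ref{twopointed}.
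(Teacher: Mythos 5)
Your proof is correct and follows the same route the paper takes: the corollary is stated immediately after Proposition \ref{homrepresent} precisely as an application of it with $\mc{Q} = \cc_\Delta$, and the identification $\hom_{\cc\!-\!\cc}(\cc_\Delta,\mc{B}) = {_2}\r{CC}^*(\cc,\mc{B}) \simeq \r{CC}^*(\cc,\mc{B})$ via $\Psi$ is exactly the chain-level model established in Section \ref{twopointed}. The one addition you make --- pinning down that Proposition \ref{homrepresent} applies with $\mc{B}$ arbitrary even though the proposition is stated under a double-perfectness hypothesis --- is a genuine and correct observation: the corollary as stated does not constrain $\mc{B}$, and the natural transformation $\mf{C}$ in the proof of \ref{homrepresent} only needs to commute with finite cones and summands in the $\mc{Q}$ slot, with the base case (Yoneda $\mc{Q}$, arbitrary homologically unital $\mc{B}$) reducing to $\mc{B}(X,Z)$ on both sides via the bimodule Yoneda lemma and Proposition \ref{tensordiagonal}. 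The paper treats this as immediate without remarking on the hypothesis; your fill-in is the honest version of that step.
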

As described above, an explicit quasi-isomorphism between this complex and the
complex ${_2}\r{CC}_*(\cc,\mc{B})$ is given by
\begin{equation}\begin{split}\label{explicitquasiiso}
    \bar{\mu}: \cc^! \otimes_{\cc\!-\!\cc} \mc{B} &\lra {_2} \r{CC}^*(\cc,\mc{B})\\
    \bar{\mu}: \phi \otimes x_1 \otimes \cdots \otimes x_k \otimes \mathbf{b} \otimes y_1 \otimes \cdots \otimes y_l &\longmapsto \Psi_{\phi, x_1, \ldots, x_k, \mathbf{b} y_1, \ldots, y_l} \in \hom_{\cc\!-\!\cc}(\cc_{\Delta},\mc{B})
\end{split} \end{equation}
where $\Psi:=\Psi_{\phi, x_1, \ldots, x_k, \mathbf{b} y_1, \ldots, y_l}$ is the
morphism given by 
\begin{equation}
    \begin{split}
        \Psi(a_1, \ldots, a_r, \mathbf{c},& b_1, \ldots, b_s) := \sum_{r',s'}\mc{F}_{\Delta,left,right}^{r'|1|s'}(a_1,  \ldots,  a_r', \\
        \big( \phi^{r-r'|1|s-s'}(&a_{r'+1}, \ldots, a_r, \mathbf{c}, b_1, \ldots, b_{s'})
    \bar{\otimes} (x_1 \otimes \cdots \otimes x_k \otimes \mathbf{b} \otimes y_1 \otimes \cdots \otimes y_l)\big),\\
    &b_{s'+1}, \ldots,  b_s).
\end{split}
\end{equation}
Here $\bar{\otimes}$ is the reverse tensor product defined in
(\ref{reversetensor}), and $\mc{F}_{\Delta,left,right}$ is the bimodule
morphism given in (\ref{fleftright}).

\section{Symplectic cohomology and wrapped Floer cohomology} \label{chcwsection}

\subsection{Liouville manifolds}

Our basic object of study will be a {\bf Liouville manifold}, a manifold
$M^{2n}$ equipped with a one form $\theta$ 
called the {\bf Liouville form},
such that
\begin{equation}
    d\theta = \omega\textrm{ is a symplectic form}.
\end{equation}
The {\bf Liouville vector field} $Z$
is defined to be the symplectic dual to $\theta$
\begin{equation}
    i_{Z}\omega = \theta.
\end{equation}
We further require $M$ to have a {\bf cylindrical} (or {\bf conical}) {\bf
end}. That is, away from a compact region $\bar{M}$, $M$ has the structure of
the semi-infinite symplectization of a contact manifold
\begin{equation} \label{contactization}
    M = \bar{M} \cup_{\partial \bar{M}}\partial \bar{M} \times [1,+\infty)_r,
\end{equation}
such that the flow $Z$ is transverse to $\partial \bar{M} \times \{1\}$ and
acts on the cylindrical region by translation proportional to $r$, the
symplectization coordinate:
\begin{equation}
    Z = r \partial_r.
\end{equation}
The flow of the vector field $Z$ is called the {\bf Liouville flow} and denoted
\begin{equation}
    \psi^{\rho},
\end{equation}
where the time flowed is $\log (\rho)$.
We henceforth fix a representation of $M$ of the form (\ref{contactization}).
\begin{rem}
    One could have instead begun with a {\bf Liouville domain}, an exact
    compact symplectic manifold $\bar{M}$ with contact boundary $\partial
    \bar{M}$, such
    that the Liouville vector field $Z$ is outward pointing along $\partial
    \bar{M}$. One then integrates the flow of $Z$ in a small neighborhood of the
    boundary to obtain a collar neighborhood $\partial \bar{M} \times
    (1-\epsilon,1]$ and then attaches the infinite cone (\ref{contactization})
    to get a Liouville manifold. This process is known as {\bf completion}.
\end{rem}
On the boundary of the compact region $\bar{M}$
\begin{equation}
    \partial \bar{M} := \partial \bar{M} \times \{1\},
\end{equation}
\begin{equation} \label{contactform}
    \bar{\theta}:= \theta|_{\partial \bar{M}}\textrm{ is a contact form}.
\end{equation}
On the {\bf conical end}
\begin{equation}
    \partial \bar{M} \times [1,+\infty),
\end{equation} 
the Liouville form is given by rescaling the contact form
\begin{equation}
    \theta = r \bar{\theta}.
\end{equation}
Moreover, there is an associated {\bf Reeb vector field} on $\partial \bar{M}$
\begin{equation}
    R
\end{equation}
defined in the usual fashion by the requirements that
\begin{equation}
    \begin{cases}
        d\bar{\theta}(R,\cdot) = 0.\\
        \bar{\theta}(R) =1.
    \end{cases}
\end{equation}
Via the product identification (\ref{contactization}), we view $R$ as a
vector field defined on the entire conical end.

\begin{figure}
    \caption{A Liouville manifold with cylindrical end.}
    \centering
    \includegraphics[width=0.5\textwidth]{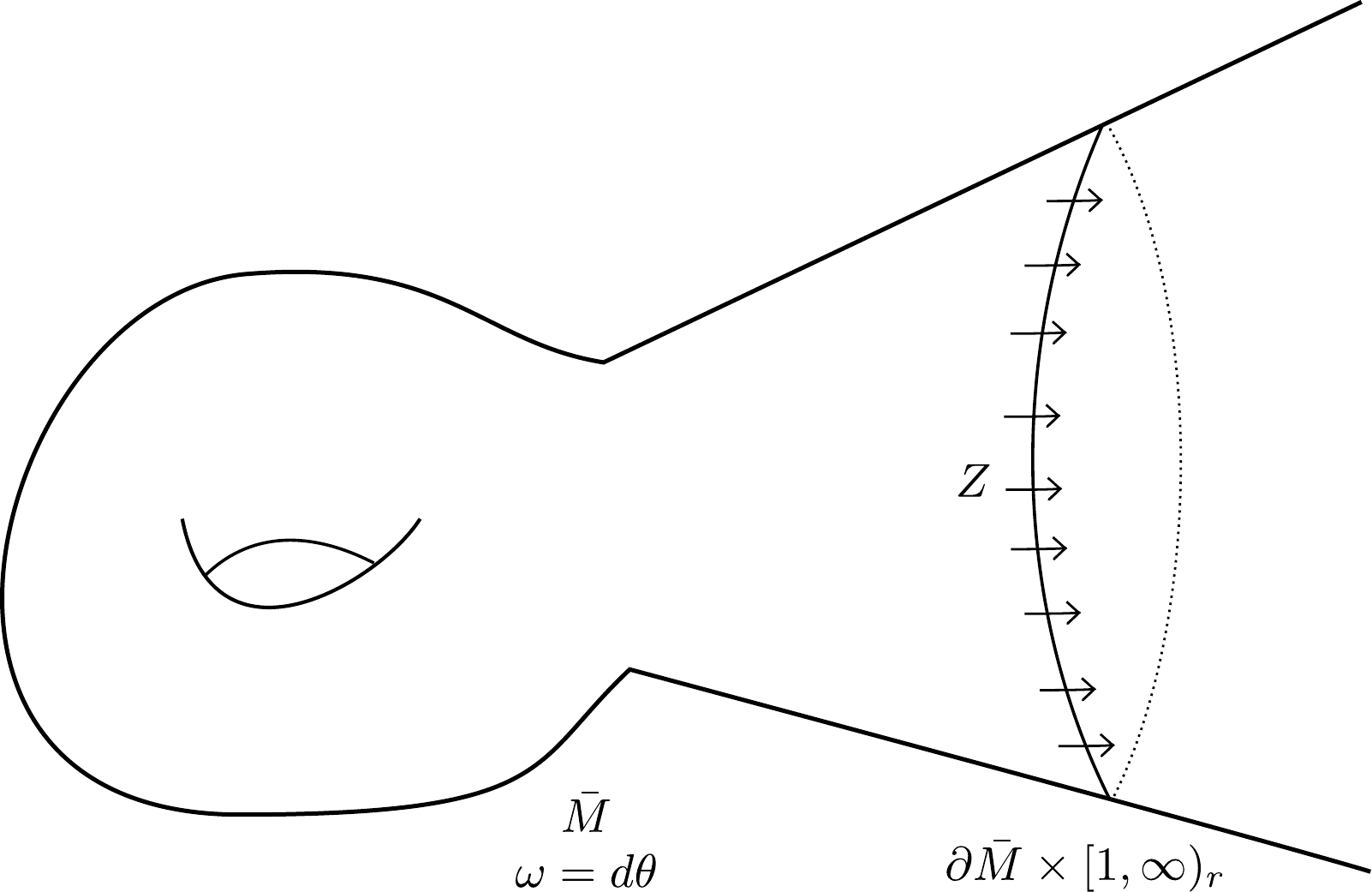}
\end{figure}

Now, we consider a finite collection $\ob \w$ of exact properly embedded
Lagrangian submanifolds in $M$, such that for each $L \in \ob \w$,
\begin{equation}
 \theta \textrm{ vanishes on }L \cap \partial \bar{M} \times [1,+\infty).
\end{equation}
Namely, the intersection $\partial L$ of $L$ with $\partial \bar{M}$ is
Legendrian, and $L$ is obtained by attaching an infinite cylindrical end
$\partial L \times [1,+\infty)$ to $L^{in} = L \cap \bar{M}$.
In addition, for each $L \in \ob \w$,
\begin{equation}
        \textrm{ choose and fix a primitive }f_L: L \ra \R\textrm{ for }\theta|_L. 
\end{equation}
By the above condition, $f_L$ is locally constant on the cylindrical end of
$L$.

To fix an integer grading on symplectic cohomology and $\w$, we require each $L
\in \ob \w$ to be spin, and have vanishing relative first Chern class
$2c_1(M,L) \in H^2(M,L)$. We additionally need to
\begin{align}
    &\textrm{Fix a spin structure (and orientation) on each }L.\\
    &\textrm{Fix a trivialization of }(\Lambda_\C^nT^*M)^{\otimes 2},\textrm{ and a grading on each }L.
\end{align}
We will implicitly fix all of this data whenever referring to a given
Lagrangian.

We restrict to a class of Hamiltonians 
\begin{equation}\label{hamiltonians}
    \mc{H}(M) \subset C^\infty (M,\R), 
\end{equation}
functions $H$ that, away from some compact subset of $M$ satisfy
\begin{equation} 
    H(r,y) = r^2.  
\end{equation}
Consider a class of almost-complex structures $\mc{J}_1(M)$ that are {\bf
rescaled contact type} on the conical end, meaning that 
\begin{equation}
    -\frac{1}{r} \theta \circ J = dr.
\end{equation}
This implies in particular that $J$ intertwines the Reeb and $r$ directions:
\begin{equation}
    \begin{split}
    J(R) &= \partial_r\\
    J(\partial_r) &= -R.
    \end{split}
\end{equation}
\begin{rem} 
    Our class of complex structures differs from those used by Abouzaid
    \cite{Abouzaid:2010kx} and Abouzaid-Seidel \cite{Abouzaid:2010ly}, who
    consider almost complex structures satisfying $\theta \circ J = dr$. 
    The difference will allow us to prove compactness for operations that
    involve a general class of perturbations that differ from functions of
    $r$ by a bounded term.
\end{rem}
We assume that $\theta$ has been chosen generically so that 
\begin{equation}
    \begin{split}
        &\textrm{all Reeb orbits of }\bar{\theta} \textrm{ are
        non-degenerate, and} \\ 
        &\textrm{all Reeb chords between Lagrangians in }\ob \w\textrm{ are
        non-degenerate}.
    \end{split}
\end{equation}

\subsection{Wrapped Floer cohomology}
Fixing a choice of $H \in \mc{H}(M)$ define \[\chi(L_0, L_1)\] to be the set of
time 1 Hamiltonian flows of $H$ between $L_0$ and $L_1$. Given the data
specified in the previous section, the {\bf Maslov index} defines an absolute
grading on $\chi(L_0,L_1)$, which we will denote by 
\begin{equation}
\deg: \chi(L_0, L_1) \ra
\Z. \end{equation}
Then, given a family $J_t \in \mc{J}_1(M)$ parametrized by $t\in [0,1]$, define
the {\bf wrapped Floer co-chain complex} over $\K$ to be, as a graded vector
space, 
\begin{equation}
    CW^i(L_0,L_1,H,J_t) = \bigoplus_{x\in \chi(L_0,L_1), deg(x)=i} |o_x|_{\K}.
\end{equation}
Here $|o_x|_{\K}$, henceforth abbreviated $|o_x|$, is the one-dimensional
$\K$-vector space associated to the one-dimensional real {\bf orientation line}
$o_x$ of $x$. The definition of $o_x$ as the determinant line of a
linearization of Floer's equation is given in Appendix
\ref{orientationsection}.

Now, consider maps
\begin{equation} u: (-\infty, \infty) \times [0,1] \ra M \end{equation}
converging exponentially at each end to time-1 chords of $H$, satisfying boundary conditions 
\begin{align}
\nonumber u(s,0) &\in L_0\\
\nonumber u(s,1) &\in L_1
\end{align}
and satisfying Floer's equation
\begin{equation}\label{floerequation}
(du - X\otimes dt)^{0,1} = 0.
\end{equation}
Above, $X$ is the Hamiltonian vector field of $H$ and we think of the strip 
\begin{equation} 
    Z =
(-\infty,\infty) \times [0,1]
\end{equation}
as equipped with coordinates $s,t$ and the canonical complex structure $j$ ( $j
(\partial_s) = \partial_t$). With this prescription one can rewrite the above
equation in coordinates in the more familiar form 
\begin{equation} \label{floersequation}
\partial_s u = -J_t (\partial_t u - X).
\end{equation}
Given time 1 chords $x_0, x_1 \in \chi(L_0,L_1)$, denote by
\begin{equation}
    \tilde{\mc{R}}^1(x_0;x_1)
\end{equation} 
the set of maps $u$ converging to $x_0$ when $s \ra
-\infty$ and $x_1$ when $s \ra +\infty$. As a component of the zero-locus of an
elliptic operator on the space of smooth functions from $Z$ into $M$, this set
carries a natural topology. Moreover, the natural $\R$ action on
$\tilde{\mc{R}}^1(x_0; x_1)$, coming from translation in the $s$ direction, is
continuous with respect to this topology. Following standard arguments, we
conclude:
\begin{lem} 
    For generic $J_t$, the moduli space $\tilde{\mc{R}}^1(x_0;x_1)$ is a
    compact manifold of dimension $\deg(x_0) - \deg(x_1)$.  The action of
    $\R$ is smooth and free unless $\deg(x_0) = \deg(x_1)$.
\end{lem}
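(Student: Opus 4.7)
My plan is to prove the lemma by combining standard Fredholm theory, a Sard-Smale transversality argument, and compactness estimates adapted to the conical end of $M$. These are largely textbook ingredients in Floer theory, so I will focus on highlighting where the non-compactness of the target requires extra care and on indicating what (Appendix \ref{actionsection} of the paper will strengthen).

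First, I would realize $\tilde{\mc{R}}^1(x_0;x_1)$ as the zero set of a Fredholm section of a Banach bundle: take a weighted Sobolev completion of the space of maps $u: Z \to M$ with the specified Lagrangian boundary conditions and with exponential convergence to $x_0, x_1$ at $s = \mp \infty$, and let the section be the Cauchy-Riemann operator associated to (\ref{floerequation}). The linearization at a solution is a Cauchy-Riemann type operator with totally real boundary conditions; its Fredholm index is computed by a standard spectral-flow/Maslov argument (using the gradings and trivialization of $(\Lambda_\C^n T^*M)^{\otimes 2}$ fixed earlier) to equal $\deg(x_0) - \deg(x_1)$. For transversality, I would apply Sard-Smale to the universal moduli space over a Banach neighborhood of $J_t$ in $\mc{J}_1(M)$: unique continuation for Floer's equation (together with the freedom to perturb $J_t$ at any point where $\partial_s u \neq 0$, which is dense on a non-constant solution) shows that the universal linearization is surjective, so a comeager set of $J_t$ makes the parametric linearization surjective and $\tilde{\mc{R}}^1(x_0;x_1)$ a smooth manifold of the expected dimension.

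Compactness is the genuine obstacle, since $M$ is non-compact. I would handle it in two steps. Exactness of $(M, \theta)$ and of each $L \in \ob \w$, combined with the fixed primitives $f_L$ and the asymptotic conditions, gives an a priori energy bound in terms of the action difference between $x_0$ and $x_1$, and at the same time rules out sphere and disk bubbling. The more serious issue is a $C^0$ bound: one must prevent solutions from escaping into the conical end. For this I would invoke the maximum principle for the function $r \circ u$, using that on the conical end $H = r^2$ and $J_t$ is of rescaled contact type (so that $J_t$ intertwines $R$ and $\partial_r$). The standard integrated maximum principle then forces $r \circ u$ to attain its maximum at $s = \pm \infty$, where it is fixed by $x_0, x_1$, together with the cylindrical structure of the Lagrangians; this yields a uniform $C^0$ bound depending only on the asymptotes. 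Gromov compactness then applies to give compactness up to strip-breaking, which is what the lemma asserts for the moduli space in question. This is the prototypical case of the more involved estimate that Appendix \ref{actionsection} establishes for the parametrized operations considered later in the paper.

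Finally, the $\R$-action by translation in the $s$-variable is manifestly smooth. If a nontrivial translation fixed some solution $u$, then $u$ would be $s$-independent, hence by (\ref{floersequation}) a time-1 Hamiltonian chord, in which case $x_0 = x_1$ and in particular $\deg(x_0) = \deg(x_1)$. Conversely, when $\deg(x_0) \neq \deg(x_1)$ no such fixed solution exists and the action is free. The hard part of the whole argument is therefore the $C^0$ bound via the rescaled contact type maximum principle; the rest is a direct adaptation of the standard Floer-theoretic package to the wrapped setting.
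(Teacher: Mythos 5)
Your proposal is correct and follows essentially the same route as the paper, which proves this lemma simply by citing \cite{Abouzaid:2010kx}*{Lemma 2.3}: the standard Fredholm/spectral-flow index computation, Sard--Smale transversality, exactness-based energy bounds, and a $C^0$ bound on the conical end (which in this paper, for rescaled contact type $J_t$, is exactly what Theorem \ref{c0bounds} supplies) followed by Gromov compactness, plus the translation argument for freeness. The only caveat is that ``compact'' in the statement should be read, as you do, as compactness up to strip breaking (so that the quotient spaces of low index are honestly compact after adding broken strips), rather than literal compactness of $\tilde{\mc{R}}^1(x_0;x_1)$ itself.
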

\begin{proof}
    See \cite{Abouzaid:2010kx}*{Lemma 2.3}.
\end{proof}
\begin{defn}
    Define \begin{equation}\mc{R}(x_0; x_1)\end{equation} to be the quotient of
        $\tilde{\mc{R}}^1(x_0;x_1)$ by the $\R$ action whenever it is free, and
        the empty set when the $\R$ action is not free.  
\end{defn}
Also following now-standard arguments, one may construct a bordification
$\overline{\mc{R}}(x_0; x_1)$ by adding {\bf broken strips} 
\begin{equation}
\overline{\mc{R}}(x_0;x_1) = 
\coprod \mc{R}(x_0;y_1) \times 
\mc{R}(y_1;y_2) \times 
\cdots \times \mc{R}(y_k;x_1)
\end{equation}
\begin{lem}
    For generic $J_t$, the moduli space $\overline{\mc{R}}(x_0, x_1)$ is a
    compact manifold with boundary of dimension $\deg(x_0) - \deg(x_1) -1$.
    The boundary is covered by the closure of the images of natural
    inclusions 
    \begin{equation}
    \mc{R}(x_0;y) \times \mc{R}(y; x_1) \ra \overline{\mc{R}}(x_0;x_1).
\end{equation}
\end{lem}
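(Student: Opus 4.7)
The plan is to follow the standard Floer-theoretic compactness and gluing package, with the main new ingredient being an a priori $C^0$ bound on strips into the non-compact target $M$. First I would establish such an a priori bound: since each Lagrangian $L_i$ has Liouville-flow-invariant cylindrical end and $H$ equals $r^2$ near infinity, with $J_t \in \mc{J}_1(M)$ of rescaled contact type, I would show via an integrated maximum principle applied to $r\circ u$ (as in the Appendix \ref{actionsection} result flagged in the introduction) that strips with fixed asymptotics $x_0, x_1$ cannot escape a compact region determined by $\max(r(x_0), r(x_1))$. The slight subtlety here—due to our choice of rescaled contact type, which is weaker than strict contact type—is the reason the author has already advertised the Appendix compactness result; I would cite it as the main analytic input.

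With a $C^0$ bound in hand, standard Gromov-Floer compactness gives sequential compactness of $\tilde{\mc{R}}^1(x_0;x_1)/\R$ up to adding broken configurations: no sphere or disc bubbles can form because $M$ and the $L_i$ are exact, and the Liouville form provides the necessary energy control. The quotient $\mc{R}(x_0;x_1)$ is then a topological space whose boundary strata are exactly iterated products of lower-dimensional moduli spaces, giving the claimed stratification of $\overline{\mc{R}}(x_0;x_1)$.

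Next I would invoke transversality: for generic $t$-dependent $J_t \in \mc{J}_1(M)$, the linearization $D_u\bar{\partial}$ at every finite-energy strip is surjective. This is a standard Sard-Smale argument on the universal moduli space, using that the Hamiltonian chords are non-degenerate and that $J_t$-dependence can be chosen freely in the interior of $[0,1]$. Surjectivity gives that $\mc{R}(x_0;x_1)$ is a smooth manifold of virtual dimension $\deg(x_0)-\deg(x_1)-1$ (the $-1$ from dividing by $\R$-translation when the action is free), and that the non-principal stratum $\deg(x_0) = \deg(x_1)$ consists only of constant trajectories which are excluded by our convention.

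The hard part—or at least the most technical—is the gluing step showing that every broken strip in the limit arises as an endpoint of a one-parameter family in $\overline{\mc{R}}(x_0;x_1)$, so that the compactification is actually a manifold with boundary and not merely a stratified space with corners of higher codimension appearing unexpectedly. Here I would use the standard pregluing-plus-Newton-iteration construction for Floer strips, which goes through once transversality and the $C^0$ bound are in place; the dimension count ensures that only once-broken configurations appear in codimension one. Assembling these inputs gives the lemma; everything except the a priori bound is by now entirely classical, which is why I expect the author to simply cite the appendix and standard references.
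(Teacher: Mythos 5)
Your outline is correct and is essentially the argument the paper relies on: the paper simply cites \cite{Abouzaid:2010kx}*{Lemma 2.4} for this standard package (a priori $C^0$ bounds, Gromov--Floer compactness with bubbling excluded by exactness, Sard--Smale transversality, and gluing for the once-broken boundary). You also correctly identify the only non-classical input in this setting, namely that the rescaled contact type almost complex structures require the compactness result of Appendix \ref{actionsection} (Theorem \ref{c0bounds}) in place of the convexity argument used for strictly contact type $J$, which is exactly the point the paper makes in the proof of the following finiteness lemma.
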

\begin{proof}
    See \cite{Abouzaid:2010kx}*{Lemma 2.4}.
\end{proof}
\begin{lem}
    Moreover, for each $x_1$, the $\overline{\mc{R}}(x_0;x_1)$ is empty for all but finitely many $x_0$.
\end{lem}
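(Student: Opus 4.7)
The plan is to prove this via a standard action–energy estimate, combined with the explicit quadratic form of $H$ at infinity.

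First I would recall that for any Floer strip $u \in \tilde{\mc{R}}^1(x_0;x_1)$ (and more generally any broken strip representing an element of $\overline{\mc{R}}(x_0;x_1)$, by telescoping) the energy
\[
E(u)=\int_Z |\partial_s u|^2 \, ds\,dt
\]
is non-negative and coincides with the action difference $\mc{A}(x_0)-\mc{A}(x_1)$, where the Hamiltonian action of a chord $x\in\chi(L_0,L_1)$ is
\[
\mc{A}(x)=\int_0^1 H(x(t))\,dt - \int_0^1 x^*\theta + f_{L_1}(x(1)) - f_{L_0}(x(0)).
\]
This is verified by a direct computation substituting Floer's equation $\partial_s u = J(X_H - \partial_t u)$ into $|\partial_s u|^2 = \omega(\partial_s u, J\partial_s u)$, applying Stokes' theorem, and using that $\theta|_{L_i} = df_{L_i}$. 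Consequently, non-emptiness of $\overline{\mc{R}}(x_0;x_1)$ forces $\mc{A}(x_0)\geq \mc{A}(x_1)$.

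Next I would compute the action of chords lying in the cylindrical end $\partial\bar M\times[1,\infty)$. Since $H=r^2$ there, the Hamiltonian vector field is $X_H=2rR$, so time-1 chords at radial coordinate $r$ correspond to Reeb chords of length $2r$; the action evaluates to
\[
\mc{A}(x) = r^2 - 2r^2 + \bigl(f_{L_1}(x(1))-f_{L_0}(x(0))\bigr) = -r^2 + O(1),
\]
where the $O(1)$ term is uniformly bounded because the primitives $f_{L_i}$ are locally constant on the conical ends and take only finitely many values. In particular $\mc{A}(x)\to -\infty$ as $r_x\to\infty$.

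Combining these, the action bound $\mc{A}(x_0)\geq \mc{A}(x_1)$ forces $r_{x_0}^2\leq -\mc{A}(x_1)+C$ for a universal constant $C$. Thus any $x_0$ contributing to a non-empty $\overline{\mc{R}}(x_0;x_1)$ lies in a compact subset of $M$. By the non-degeneracy assumption on $\bar\theta$ and on Reeb chords between elements of $\ob\w$, the set $\chi(L_0,L_1)$ is discrete, and only finitely many of its elements lie in any compact region. This gives the desired finiteness. The only mild subtlety is ensuring the action identity extends across broken configurations, which is handled by adding the individual energies and canceling intermediate terms; this is routine.
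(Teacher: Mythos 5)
Your argument is correct and is essentially the one the paper relies on: the paper's proof defers to Theorem \ref{c0bounds}, whose finiteness part is established by exactly this Stokes/action identity (\ref{stokesenergy}) together with the computation that chords at level $r$ have action $-r^2 + O(1)$ (Lemma \ref{negativeaction}, Proposition \ref{finitecorollary}), which is also the argument of Abouzaid's Lemma 2.5 that the paper cites. You have simply specialized that estimate to strips (where no $S^1$-perturbation term appears) and written it out directly.
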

\begin{proof}
    A proof of this is given in \cite{Abouzaid:2010kx}*{Lemma 2.5} but it is not
    quite applicable as it involves a general compactness result proven for
    complex structures $J$ satisfying $\theta \circ J = dr$, see
    \cite{Abouzaid:2010kx}*{Lemma B.1-2}. In fact, the arguments from this
    general compactness result directly carry over for our $J_t$ but we can
    alternately apply Theorem \ref{c0bounds}.
\end{proof}

Now, for regular $u \in \mc{R}(x_0; x_1)$, if $\deg(x_0) = \deg(x_1) + 1$, the
orientation on $\mc{R}(x_0;x_1)$ gives, by Lemma \ref{orientationlem} and
Remark \ref{trivializingR}, an isomorphism
\begin{equation}
    \mu_u: o_{x_1} \lra o_{x_0}.
\end{equation}

Thus we can define a differential 
\begin{equation}
    \begin{split}
    d: CW^*(L_0,L_1; H,J_t) &\lra CW^*(L_0,L_1; H,J_t)\\
    d([x_1]) &= \sum_{x_0; \deg(x_0) = \deg(x_1) + 1} \sum_{u \in \mc{R}(x_0;x_1)} (-1)^{\deg(x_0)} \mu_u([x_1]).
    \end{split}
\end{equation} 

\begin{lem}
    \[ d^2 = 0. \]
\end{lem}
\noindent Call the resulting group $HW^*(L_0,L_1)$.
\subsection{Symplectic cohomology} 
To define symplectic cohomology, we break the $S^1$ symmetry that
occurs for non-trivial time 1 orbits of our autonomous Hamiltonian $H$.
Choose $F: S^1 \times M \ra \R$ a smooth non-negative function, with
\begin{itemize}
\item $F$ and $\theta(X_F)$ uniformly bounded in absolute value, and 
\item all time-1 periodic orbits of $X_{S^1}$, the (time-dependent) Hamiltonian
    vector field corresponding to $H_{S^1}(t,m) = H(m) + F(t,m)$, are
    non-degenerate. This is possible for generic
    choices of $F$ \cite{Abouzaid:2010kx}. 
\end{itemize} 
Fixing such a choice, define \[\mc{O}\] to be the set of (time-1)
periodic orbits of $H_{S^1}$. Given an element $y \in \mc{O}$, define the {\it
degree} of $y$ to be \begin{equation}
    \deg(y) := n - CZ(y)
\end{equation}
where $CZ$ is the Conley-Zehnder index of $y$.
Now, define the {\bf symplectic co-chain complex} over $\K$ to be 
\begin{equation} 
    CH^i(M;H,F,J_t) = \bigoplus_{y\in \mc{O}, deg(y)=i} |o_y|_{\K},
\end{equation}
where the {\bf orientation line} $o_y$ is again defined using the determinant
line of a linearization of Floer's equation in Appendix
\ref{orientationsection}.

Given an $S^1$ dependent family $J_{t} \in \mc{J}_1(M)$, consider maps 
\begin{equation}
u:
(-\infty,\infty) \times S^1 \ra M
\end{equation}
converging exponentially at each end to a
time-1 periodic orbit of $H_{S^1}$ and satisfying Floer's equation 
\begin{equation}
(du - X_{S^1} \otimes dt)^{0,1} = 0.
\end{equation}
Here, as above the cylinder $A = (-\infty, \infty) \times [0,1]$ is equipped with
coordinates $s,t$ and a complex structure $j$ with $j (\partial s) = \partial_t$.
As before, this means the above equation in coordinates is the usual
\begin{equation}\label{floersequation2}
\partial_s u = -J_t (\partial_t u - X).
\end{equation}
Given time 1 orbits $y_0, y_1 \in \mc{O}$, denote by
$\tilde{\mc{M}}(y_0;y_1)$ the set of maps $u$ converging to $y_0$ when $s \ra -\infty$
and $y_1$ when $s \ra +\infty$. In analogy with the maps defining wrapped Floer
cohomology, this set is equipped with a topology and a natural $\R$ action
coming from translation in the $s$ direction. We can similarly conclude that
for generic $J_t$, the moduli space is smooth of dimension $\deg(y_0) -
\deg(y_1)$ with free $\R$ action unless it is of dimension 0.

\begin{defn}
    Define \begin{equation}\mc{M}(y_0; y_1)\end{equation} to be the quotient of
        $\tilde{\mc{M}}(y_0;y_1)$ by the $\R$ action whenever it is free, and
        the empty set when the $\R$ action is not free.  
\end{defn}

Construct the analogous bordification $\overline{\mc{M}}(y_0; y_1)$ by
 adding {\bf broken cylinders}
 \begin{equation}
\overline{\mc{M}}(y_0;y_1) = \coprod \mc{M}(y_0;x_1) \times \mc{M}(x_1;x_2) \times \cdots \times \mc{M}(x_k;y_1)
\end{equation}
\begin{lem}
    For generic $J_t$, the moduli space $\overline{\mc{M}}(y_0, y_1)$ is a
    compact manifold with boundary of dimension $\deg(y_0) - \deg(y_1) -1$.
    The boundary is covered by the closure of the images of natural
    inclusions 
    \begin{equation}
    \mc{M}(y_0;y) \times \mc{M}(y; y_1) \ra \overline{\mc{M}}(y_0;y_1).
\end{equation}
    Moreover, for each $y_1$, $\overline{\mc{M}}(y_0; y_1)$ is empty for all
    but finitely many choices of $y_0$.
\end{lem}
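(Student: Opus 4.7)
The plan is to follow the standard pattern for Floer moduli spaces, with the one non-routine point being the $C^0$ bound in the non-compact setting. First I would establish transversality. The universal moduli space of pairs $(u,J_t)$ solving the parametrized Floer equation is the zero locus of a Fredholm section, and for a generic $S^1$-dependent family $J_t \in \mc{J}_1(M)$ the linearized operator is surjective at every solution; the Fredholm index at a cylinder asymptotic to $y_0, y_1$ equals $\deg(y_0) - \deg(y_1)$, so $\tilde{\mc{M}}(y_0;y_1)$ is smooth of that dimension. The $\R$-action by $s$-translation is free away from constant cylinders, so $\mc{M}(y_0;y_1)$ has dimension $\deg(y_0) - \deg(y_1) - 1$ whenever it is defined.

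Next I would establish compactness of the bordification. The critical input is an a priori $C^0$ bound confining all solutions with fixed asymptotics $y_0, y_1$ to a compact subset of $M$; this is exactly the output of Theorem \ref{c0bounds} in Appendix \ref{actionsection}, which combines the standard convexity argument with the strengthened maximum principle on cylindrical regions. Once solutions are uniformly bounded in $C^0$, standard Gromov compactness applies; sphere bubbles are excluded by exactness of $\omega$, and there are no Lagrangian boundary conditions here so no disk bubbles appear. The only degenerations are therefore chains of Floer cylinders breaking at intermediate orbits $y$, which together with the gluing theorem identifies the boundary of $\overline{\mc{M}}(y_0;y_1)$ as exactly the union of images of the product moduli spaces $\mc{M}(y_0;y) \times \mc{M}(y;y_1)$. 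Transversality of the gluing parameter completes the manifold-with-boundary structure in the claimed dimension.

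For the finiteness statement, I would use the Floer energy identity: any $u \in \mc{M}(y_0;y_1)$ satisfies $E(u) = \mc{A}_{H_{S^1}}(y_0) - \mc{A}_{H_{S^1}}(y_1) \geq 0$, so $\mc{A}_{H_{S^1}}(y_0)$ is bounded above. Combined with the $C^0$ bound, which forces all such $y_0$ to lie in a fixed compact region of $M$ where the non-degeneracy hypothesis on $H_{S^1}$ makes the set of periodic orbits discrete, this leaves only finitely many possible $y_0$ for fixed $y_1$.

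The main obstacle, and the only place where new analysis is required relative to the standard closed-manifold story, is the $C^0$ bound: our almost complex structures satisfy only the rescaled contact type condition $-\frac{1}{r}\theta \circ J = dr$ rather than the more rigid $\theta \circ J = dr$ used in \cite{Abouzaid:2010kx, Abouzaid:2010ly}, and the Hamiltonian perturbation $F$ is a bounded but nontrivial $S^1$-dependent term added to $r^2$. Neither the classical convexity argument nor the standard maximum principle quite suffices in isolation; the fusion of the two carried out in Appendix \ref{actionsection}, in the lineage of Floer-Hofer, Cieliebak, and Oancea, is what makes the argument go through, and everything else above is then routine.
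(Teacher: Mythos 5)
Your proposal is correct and follows essentially the same route as the paper: the paper's own proof simply notes that the perturbed Hamiltonian $H + F_t$ spoils the naive maximum principle and cites Theorem \ref{c0bounds} (Appendix \ref{actionsection}) for the a priori $C^0$ bound and the finiteness, with transversality, Gromov compactness, breaking, and gluing left as standard. Your finiteness argument via the energy/action identity is exactly the mechanism of Lemma \ref{negativeaction} and Proposition \ref{finitecorollary} in that appendix (modulo the sign convention on action, which in the paper bounds the output action from below rather than above), so there is nothing genuinely different here.
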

\begin{proof}
    Perturbed Hamiltonians of the form $H + F_t$ cease to satisfy a maximum
    principle, by some bounded error term. For {\it rescaled contact-type
    complex structures}, we show in Theorem \ref{c0bounds} that it is still
    possible to ensure that solutions with fixed asymptotics stay within a
    compact set, and a corresponding finiteness result.
\end{proof}
For a regular $u \in \mc{M}(y_0; y_1)$ with $\deg(y_0) = \deg(y_1) +1$, Lemma
\ref{orientationlem} and Remark \ref{trivializingR} give us an isomorphism of
orientation lines
\begin{equation}
    \mu_u: o_{y_1} \lra o_{y_0}.
\end{equation}

Thus we can define a differential 
\begin{equation}
    d: CH^*(M; H,F_t,J_t) \lra CH^*(M; H,F_t,J_t)
\end{equation}
\begin{equation}
    d([y_1]) = \sum_{y_0; \deg(y_0) = \deg(y_1) +1} \sum_{u \in \mc{M}(y_0;y_1)} (-1)^{\deg(y_1)} \mu_u ([y_1]).
\end{equation} 
\begin{lem}
    \[ d^2 = 0. \]
\end{lem}
\noindent Call the resulting group $SH^*(M)$.

\begin{rem}
    Our grading conventions for symplectic cohomology follow Seidel
    \cite{Seidel:2010fk}, Abouzaid \cite{Abouzaid:2010kx}, and Ritter
    \cite{Ritter:2010nx}. These conventions are essentially determined by the
    fact that the identity element lives in degree zero, and the product map is
    also a degree zero operation, making $SH^*(M)$ a graded ring. 
    See the sections that follow for more details.
\end{rem}

\section{Open-closed moduli spaces and Floer data} \label{ocfloersection}
We recall definitions of abstract moduli spaces of genus 0 bordered Riemann
surfaces with interior and boundary marked points, which we will call {\bf
genus-0 open-closed strings}. Then, we define Floer data for such spaces, and
use these Floer data to construct chain-level open-closed operations in the
wrapped setting.  In the next section, we will specialize to examples such as
discs, spheres, and discs with interior and boundary punctures to obtain
$\ainf$ structure maps, the pair of pants product, and various open-closed operations.

\begin{defn} 
A {\bf genus-0 open-closed string} of type $h$ with $n,\vec{m} = (m^1, \ldots,
m^h)$ marked points $\Sigma$ is a sphere with $h$ disjoint discs removed, with $n$
interior marked points and $m^i$ boundary marked points on the $i$th boundary
component $\partial^i \Sigma$.  Fix some subset $\mathbf{I} \subset \{1,
\ldots, n\}$ and a vector of subsets $\vec{\mathbf{K}} = (K^1, \ldots, K^h)$
with $K^i \subset \{1, \ldots, m^i\}$. $\Sigma$ has {\bf sign-type
$(\mathbf{I},\vec{\mathbf{K}})$} if 
    \begin{itemize}
        \item interior marked points $p_i$, with $i \in I$ are negative,
        \item boundary marked points $z_{j,k} \in \partial^j \Sigma$, $k \in K^j$ are negative, and
    \item all other marked points are positive.
\end{itemize}
Also, a genus-0 open-closed string comes equipped with the data of
\begin{itemize}
    \item a choice of {\bf normal vector} or {\bf asymptotic marker} at each
        interior marked point.
\end{itemize}
\end{defn}
For our applications, we explicitly restrict to considering at most one
negative interior marked point or at most two negative boundary marked points,
i.e.  the cases
\begin{equation} \label{fewoutputs}
    \begin{cases}
        |I| =1\mathrm{\ and \ } \sum |K^i| = 0 \\
        |I| = 0\mathrm{\ and\ } \sum |K^i| = 1\mathrm{\ or\ 2}.
    \end{cases}
\end{equation}

\begin{defn}
The {\bf (non-compactified) moduli space of genus-0 open-closed strings} of
type $h$ with $n,\vec{m}$ marked points and sign-type
$(\mathbf{I},\vec{\mathbf{K}})$ is denoted
$\mc{N}_{h,n,\vec{m}}^{\mathbf{I},\vec{\mathbf{K}}}$.  
\end{defn}
Denote by $\overline{\mc{N}}_{h,n,\vec{m}}^{\mathbf{I},\vec{\mathbf{K}}}$ 
the Deligne-Mumford compactification of this space, a real blow-up of the space
as described by Liu \cite{Liu:2004fk}. Note that for $\mathbf{I}$ and
$\vec{\mathbf{K}}$ as in
(\ref{fewoutputs}), the lower-dimensional strata of
$\overline{\mc{N}}_{h,n,\vec{m}}^{\mathbf{I},\vec{\mathbf{K}}}$  consist of
nodal bordered surfaces, with each component genus 0 and also satisfying
(\ref{fewoutputs}).

Fix a collection of strip-like and cylindrical ends near every marked boundary
and interior point of a stable open-closed string, with the cylindrical ends
chosen to have $1 \in S^1$ asymptotic to our chosen marker. Then, at a nodal
surface consisting of $k$ interior nodes and $l$ boundary nodes, there is a
chart 
\begin{equation} \label{standardchart}
[0,1)^{k+l} \ra \overline{\mc{N}}_{h,n,\vec{m}}^{\mathbf{I},\vec{\mathbf{K}}}
\end{equation}
 given by assigning to the coordinate $(\rho_1, \ldots, \rho_k, \eta_1, \ldots,
 \eta_l)$ the glued surface where the $i$th interior node and $j$th boundary
 nodes have been glued with gluing parameters $\rho_i$ and $\eta_j$
 respectively, in a manner so that asymptotic markers line up for interior
 gluings. We refer the reader to Section \ref{homotopyunits} for more details
 on gluing in the case of strip-like ends.
 We see in this way that
 $\overline{\mc{N}}_{h,n,\vec{m}}^{\mathbf{I},\vec{\mathbf{K}}}$ inherits the
 structure of a manifold with corners. Moreover, from the corner charts
 described above every open-string $S \in
 \overline{\mc{N}}_{h,n,\vec{m}}^{\mathbf{I},\vec{\mathbf{K}}}$ inherits a {\bf
 thick-thin decomposition}, where the {\it thin parts} of the surface $S$ are
 by definition the finite cylinders and strips in $S$ that are inherited from
 the gluing parameters if $S$ lies in one of the above such charts. If $S$
 does not lie in such a chart, then $S$ has no thin parts.

\begin{rem} 
    In all the moduli spaces we will actually consider, the marked direction is
    always determined uniquely (and somewhat arbitrarily) by requiring it to
    point towards one particular distinguished boundary point. This works because
    \begin{itemize}
        \item it is consistent with Deligne-Mumford compactifications: when disc components break off and separate the interior puncture from the preferred boundary puncture, the new preferred boundary puncture is the node connecting the components;

        \item it is consistent with the choices of cylindrical ends made at interior punctures created when later we glue pairs of discs across $\Delta$ labels.
    \end{itemize}

    Given this, we will always omit these asymptotic markers from the
    discussion.
\end{rem}
    
\begin{rem}
    By considering moduli spaces where these asymptotic markers vary in $S^1$
    families, one can endow symplectic cohomology and open-closed maps with a
    larger set of operations, e.g. the BV operator. We will not do so here. For
    some additional details on such operations, 
    see \cite{Seidel:2010fk} or \cite{Seidel:2010uq}.
\end{rem}

\subsection{Floer data} \label{openclosedfloer}

First, we note that pullback of solutions to
(\ref{floerequation}) by the Liouville flow for time $\log(\rho)$ defines a
canonical isomorphism
\begin{equation}\label{rescaling}
    CW^*(L_0,L_1;H,J_t) \simeq 
    CW^*\left(\psi^\rho L_0, \psi^\rho L_1; 
    \frac{H}{\rho} \circ \psi^\rho, (\psi^\rho)^* J_t\right)
\end{equation}
We have two main observations which will help us define operations on
the complexes $CW^*(L_0,L_1)$ and $SH^*(M)$: 
\begin{lem}\label{rescalinglemma} 
    The function $\frac{H}{\rho^2}\circ \psi^\rho$ lies in $\mc{H}(M)$.
\end{lem}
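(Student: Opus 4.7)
The plan is to directly unwind the definitions and verify the defining condition of $\mc{H}(M)$ on the cylindrical end. Since $\rho$ is a fixed positive real number, the division by $\rho^2$ commutes with precomposition by $\psi^\rho$, so it suffices to show that $(H\circ \psi^\rho)(r,y) = \rho^2 r^2$ on the conical end $\partial \bar{M} \times [R,\infty)$ for $R$ sufficiently large.

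First I would record the explicit form of the Liouville flow on the end. Since $Z = r\partial_r$ on $\partial \bar{M}\times [1,\infty)$, integrating the ODE $\dot{r} = r$, $\dot{y} = 0$ for time $\log(\rho)$ gives
\begin{equation}
    \psi^\rho(r,y) = (\rho r, y),
\end{equation}
valid wherever both $r$ and $\rho r$ lie in $[1,\infty)$, i.e.\ for $r \geq \max(1, 1/\rho)$.

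Next, by definition of $\mc{H}(M)$, there is a compact set $K \subset M$ outside of which $H(r,y) = r^2$. Choose $R$ large enough that $\partial \bar{M}\times [R/\rho, \infty) \subset M\setminus K$; then for $(r,y)$ with $r \geq R/\rho$ we have $\psi^\rho(r,y) = (\rho r, y)$ lying in the region where $H = r^2$, hence
\begin{equation}
    \frac{1}{\rho^2}(H \circ \psi^\rho)(r,y) = \frac{1}{\rho^2} H(\rho r, y) = \frac{(\rho r)^2}{\rho^2} = r^2.
\end{equation}
Thus $\frac{H}{\rho^2}\circ \psi^\rho$ agrees with $r^2$ outside the compact set $\bar{M}\cup (\partial \bar{M}\times [1, R/\rho])$, and smoothness is automatic from smoothness of $H$ and $\psi^\rho$. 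This is exactly the defining condition of $\mc{H}(M)$.

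There is essentially no obstacle here; the content of the lemma is the elementary observation that the quadratic Hamiltonian $r^2$ is equivariant of weight $2$ for the Liouville dilation, so rescaling by $\rho^{-2}$ precisely cancels the effect of pulling back by $\psi^\rho$ at infinity. This is the reason the normalization factor $\rho^2$ appears in (\ref{rescaling}) in the first place, and why the class $\mc{H}(M)$ is preserved under the rescaling operation.
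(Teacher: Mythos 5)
Your proof is correct and follows the same route as the paper's: on the conical end $\psi^\rho(r,y)=(\rho r, y)$, so $r^2\circ\psi^\rho = \rho^2 r^2$ and the $\rho^{-2}$ factor restores the quadratic form $r^2$ at infinity. You simply spell out the compact-set bookkeeping that the paper leaves implicit, so there is nothing to add.
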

\begin{proof}
    The Liouville flow is given on the collar by 
    \begin{equation}
        \psi^\rho (r,y) = (\rho \cdot r, y)
    \end{equation}
    so $r^2 \circ \psi^\rho = \rho^2 r^2$.
\end{proof}
Note however that $(\psi^\rho)^* J_t \notin \mc{J}_1(M)$. In fact, a
computation shows that 
\begin{equation}
    \frac{\rho}{r}\theta \circ (\psi^\rho)^*J_t = dr.
\end{equation}
Motivated by this, 
\begin{defn}
    Define $\mc{J}_c(M)$ to be the space of almost-complex structures $J$ that
    are {\bf c-rescaled contact type}, i.e.
\begin{equation}
    \frac{c}{r} \theta \circ J = dr.
\end{equation} 
Also, define $\mc{J}(M)$ to be the space of almost-complex structures $J$ that
are $c$-rescaled contact type for some $c$.\end{defn}  
To simplify terminology later, we will introduce some new notation for Floer
data. 

\begin{defn}\label{stripcylinderdata}
    A {\bf collection of strip and cylinder data} for a surface $S$ with some
    boundary and interior marked pointed removed is a choice of 
    \begin{itemize}
        \item {\bf strip-like ends} $\e_{\pm}^k : Z_{\pm} \ra S$,
        \item {\bf finite strips} $\e^l : [a^l,b^l] \times [0,1] \ra S$,
        \item {\bf cylindrical ends} $\delta_{\pm}^j: A_{\pm} \times S^1 \ra S$, and
        \item {\bf finite cylinders} $\delta^r : [a_r, b_r] \times S^1,
            \ra S$ 
    \end{itemize}
    all with disjoint image in $S$. Such a collection is said to be {\bf
    weighted} if each cylinder and strip above comes equipped with a choice
    of positive real number, called a weight. Label these weights as
    follows:
    \begin{itemize}
        \item $w_{S,k}^\pm$ is the weight associated to the strip-like end $\e_{\pm}^k$,
        \item $w_{S,l}$ is associated to the finite strip $\e^l$,
        \item $v_{S,j}^\pm$ is associated to the cylindrical end
            $\delta_{\pm}^j$, and 
        \item $v_{S,r}$ is associated to the finite cylinder $\delta^r$.
    \end{itemize}
    Finally, such a collection is said to be {\bf $\delta$-bounded} if 
    \begin{itemize}
        \item the length of each finite cylinder ($b_r - a_r$) is larger than
            $3\delta$.  
    \end{itemize}
\end{defn}
\begin{defn}\label{deltacollar}
    Let $\mathfrak{S}$ be a $\delta$-bounded collection of strip and cylinder
    data for $S$. The {\bf associated  $\delta$-collar} of $S$ is the following
    collection of finite cylinders:
    \begin{itemize}
        \item the restriction $\tilde{\delta}_+^j$ of each positive cylindrical
            end $\delta_+^j : [0,\infty) \times S^1 \ra S$ to the domain
            $[0,\delta] \times S^1$, 
        
        \item the restriction $\tilde{\delta}_-^j$ of each negative cylindrical
            end $\delta_-^j : (-\infty,0] \times S^1 \ra S$ to the domain
            $[-\delta,0] \times S^1$, and 

        \item the restrictions $\tilde{\delta}^r_{in}$ and
            $\tilde{\delta}^r_{out}$ of each finite cylinder $\delta^r: [a_r,
            b_r] \times S^1 \ra S$ to the domains $[a_r,a_r + \delta] \times
            S^1$ and $[b_r - \delta, b_r] \times S^1$ respectively.
    \end{itemize}
    We will often refer to this as the {\bf associated collar} if $\delta$ is
    implicit.  
\end{defn}
\noindent Let $(S,\mathfrak{S})$ be a surface $S$ with a $\delta$-bounded
collection of weighted strip and cylinder data $\mathfrak{S}$.
\begin{defn}
    A one-form $\a_S$ on $S$ is said to be {\bf compatible with the weighted
    strip and cylinder data} $\mathfrak{S}$ if, for each finite or semi-infinite
    cylinder or strip $\kappa$ of $S$, with associated weight $\nu_\kappa$,
    \begin{equation}
        \kappa^* \a_S = \nu_\kappa dt.
    \end{equation}
    Above, $t$ is the coordinate of the second component of the associated strip
    or cylinder.
\end{defn}
\begin{defn} 
    Fix a Hamiltonian $H \in \mc{H}(M)$. An $S$-dependent Hamiltonian $H_S: S
    \ra \mc{H}(M)$ is said to be {\bf H-compatible with the weighted strip and
    cylinder data}
    $\mathfrak{S}$ if, for each cylinder or strip $\kappa$ with associated
    weight $\nu_\kappa$, 
    \begin{equation}
        \kappa^* H_S = \frac{H \circ \psi^{\nu_\kappa}}{\nu_\kappa^2}.
    \end{equation}
\end{defn}
\begin{defn}
    An {\bf $\mathfrak{S}$-adapted rescaling function} is a map $a_S: S \ra
    [1,\infty)$ that is constant on each cylinder and strip of $\mathfrak{S}$,
    equal to the associated weight of that cylinder or strip.  
\end{defn}
\begin{defn}
    Fix a time-dependent almost-complex structure $J_t: S^1 \ra \in \mc{J}_1(M)$,
    and an adapted rescaling function $a_S$. An {\bf
    $(\mathfrak{S},a_S,J_t)$-adapted complex structure} is a map
    $J_S:S\ra\mc{J}(M)$ such that 
    \begin{itemize}
        \item at each point $p \in S$, $J_p \in \mc{J}_{a_S(p)}(M)$,
        \item at each cylinder or strip $\kappa$ with associated weight $\nu_\kappa$, 
            \begin{equation}
                \kappa^* J_S = (\psi^{\nu_\kappa})^* J_t.
            \end{equation}
            Here, if $\kappa$ is a strip, we mean the $[0,1]$ dependent complex
            structure given by pulling back $J_t$ by the projection map
            $[0,1] \ra S^1 = \R / \Z$.
    \end{itemize}
\end{defn}

We will need to introduce Hamiltonian perturbation terms supported on the
cylinders of $(S,\mathfrak{S})$, in order to break the $S^1$ symmetry of
orbits. Since we will be gluing nodal cylindrical punctures together, these
perturbation terms need to possibly have support on the thin-parts of our
gluing as well. The next definition will give us very explicit control over
these perturbation terms.  Let $F_T : S^1 \ra C^\infty(E)$ be a time-dependent
function that is absolutely bounded, with all derivatives absolutely bounded.
Also, let $\phi_\e(s): [0,1] \ra [0,1]$ be a smooth function that is 0 in an
$\e$-neighborhood of 0, 1 in an $\e$-neighborhood of 1, with all derivatives
bounded.
\begin{defn}\label{adaptedperturbation}
    For $(S,\mathfrak{S})$ as above, an {\bf $S^1$-perturbation adapted to
    $(F_T,\phi_\e)$} is a function $F_S : S \ra C^\infty(E)$ satisfying the
    following properties:     
    \begin{itemize}
        \item $F_S$ is locally constant on the complement of the images of all
            cylinders,
        \item on each cylindrical end
            $\kappa^{\pm}$ with associated weight $\nu_\kappa$, outside the
            associated collar, 
            \begin{equation}
                (\kappa^{\pm})^* F_S = \frac{F_T \circ
                \psi^{\nu_\kappa}}{\nu_\kappa^2} + C_{\kappa}, 
            \end{equation} 
            where $C_\kappa$ is a constant depending on the
            cylinder $\kappa^\pm$.
        \item on each finite cylinder $\kappa^r$, outside the associated collar,
            \begin{equation}
                (\kappa^r)^* F_S = m_\kappa \frac{F_T \circ
                \psi^{\nu_\kappa}}{\nu_\kappa^2}+ C_\kappa,
            \end{equation}
            where $C_\kappa$ and $m_\kappa$ are constants depending on the
            cylinder $\kappa^r$.
        \item On each associated $\delta$-collar, $\kappa: [0,\delta] \times
            S^1 \ra S$,  
    \begin{equation}
        \kappa^*F_S = (\kappa^*F_S)|_{0\times S^1} + \phi_\e( s/\delta) (
        (\kappa^*F_S)|_{\delta\times S^1} - (\kappa^*F_{S})|_{0 \times S^1})
    \end{equation}

        \item $F_S$ is weakly monotonic on each cylinder $\kappa$,  i.e.
            \begin{equation}
                \partial_s \kappa^* F_S \leq 0.
            \end{equation} 
    \end{itemize}
\end{defn}
Putting all of these together, we can make the following definition:
\begin{defn}\label{floeropenclosed}
    A {\bf Floer datum} $\mathbf{F}_S$ on a stable genus zero open-closed string
    string $S$ consists of the following choices on each component:
    \begin{enumerate} 
        \item A collection of {\bf weighted strip and cylinder
            data} $\mathfrak{S}$ that is
            $\delta$-bounded;

        \item {\bf sub-closed 1-form}: a one-form $\a_S$ with 
            \begin{equation*}
                d \a_S \leq 0,
            \end{equation*}
        compatible with the weighted strip and cylinder data;

    \item A {\bf primary Hamiltonian} $H_S : S \ra \mc{H}(M)$ that is
        $H$-compatible with the weighted strip and cylinder data $\mathfrak{S}$
        for some fixed $H$;

    \item An {\bf $\mathfrak{S}$-adapted rescaling function} $a_S$;

    \item An {\bf almost-complex structure $J_S$} that is
        $(\mathfrak{S},a_S,J_t)$-adapted for some $J_t$.

    \item An {\bf $S^1$-perturbation $F_S$ adapted to $(F_T,\phi_\e)$} for some
        $F_T$, $\phi_\e$ as above.
\end{enumerate}
\end{defn}

There is a notion of equivalence of Floer data, weaker than strict equality,
which will imply by the rescaling correspondence (\ref{rescaling}) that the
resulting operations are identical.  
\begin{defn} 
    Say that Floer data $D_S^1$ and $D_S^2$ are {\bf conformally equivalent} if
    there exist constants $C,K,K'$ such that 
    \begin{equation}
        \begin{split} 
            a_S^2 &= C\cdot a_S^1,\\
            \alpha_S^2 &= C \cdot \alpha_S^1,\\  
            J_S^2 &= (\psi^C)^* J_S^1,\\
            H_S^2 &= \frac{H_S^1 \circ \psi^C}{C^2} + K,\textrm{ and}\\
            F_S^2 &= \frac{F_S^1 \circ \psi^C}{C^2} + K'.
        \end{split}
    \end{equation}
In other words, the Floer $D_S^2$ is a rescaling by Liouville flow of the
Floer data $D_S^1$, up to a constant ambiguity in the Hamiltonian terms.
\end{defn}

\begin{defn}
    A {\bf universal and consistent choice of Floer data} for genus 0
    open-closed strings is a choice $\bf{D}_S$ of Floer data for every $h$,
    $n$, $\vec{m}$, $\mathbf{I}$, $\vec{\mathbf{K}}$ and every representative
    $S$ of $\overline{\mc{N}}_{h,n,\vec{m}}^{\mathbf{I},\vec{\mathbf{K}}}$,
    varying smoothly over $\mc{N}_{h,n,\vec{m}}^{\mathbf{I},\vec{\mathbf{K}}}$,
    whose restriction to a boundary stratum is conformally equivalent to the
    product of Floer data coming from lower dimensional moduli spaces.
    Moreover, with regards to the coordinates, Floer data agree to infinite
    order at the boundary stratum with the Floer data obtained by gluing.
\end{defn}
\begin{rem}
    By varying smoothly, we mean that the data of $H_S$, $F_S$, $a_S$, $\a_S$,
    and $J_S$, along with the cylindrical and strip-like ends vary smoothly.
    Over given charts of our moduli space, finite cylinders and strips need to
    vary smoothly as well, but they may be different across charts (for
    example, some charts that stay away from lower-dimensional strata may have
    no finite cylinder or strip-like regions).
\end{rem}
All of the choices involved in the definition of a Floer datum above
are contractible, so one can inductively over strata prove that 
\begin{lem} 
    The restriction map from the space of universal and consistent
    Floer data to the space of Floer data for a fixed surface $S$ is
    surjective.
\end{lem}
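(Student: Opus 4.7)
The plan is to induct on the dimension of the compactified moduli spaces $\overline{\mc{N}}_{h,n,\vec{m}}^{\mathbf{I},\vec{\mathbf{K}}}$, ordering them so that the stratum containing the prescribed $S$ is treated only after all strata of smaller codimension (which must appear in its boundary via node-formation) have been handled. For zero-dimensional moduli spaces there is no consistency condition to verify: each component of a Floer datum in Definition \ref{floeropenclosed} is a nonempty, contractible set of choices, so any prescribed datum on $S$ is already an allowed choice on its stratum. In the inductive step, assume a universal and consistent Floer datum has been chosen on every moduli space of dimension $< d$ (in particular extending the prescribed one on $S$ if $S$ sits in such a stratum). Given a $d$-dimensional stratum $\mc{N}_{h,n,\vec{m}}^{\mathbf{I},\vec{\mathbf{K}}}$, its boundary is covered by products of lower-dimensional moduli spaces, on which Floer data are now fixed. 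The corner charts (\ref{standardchart}) together with the $\delta$-collar structure of Definition \ref{deltacollar} and the profile function $\phi_\e$ of Definition \ref{adaptedperturbation} propagate these boundary data into a collar neighborhood of the boundary stratum in a manner agreeing to infinite order with the glued datum, up to the allowed conformal rescaling.

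To extend this collar-neighborhood datum across the interior of the stratum so that the restriction to $S$ (if $S$ lies in this stratum) equals the prescribed one, I would first cover $\overline{\mc{N}}_{h,n,\vec{m}}^{\mathbf{I},\vec{\mathbf{K}}}$ by open charts: a chart around $S$ on which we take the datum to be (a smooth deformation to) the prescribed one, and charts along the boundary strata in which the datum comes from the inductive extension. Each individual piece of a Floer datum — the sub-closed $1$-form $\alpha_S$ (convex condition $d\alpha_S \le 0$), the primary Hamiltonian $H_S$ (affine compatibility on cylinders), the rescaling function $a_S$ (affine in $\log$, once weights are fixed), the adapted complex structure $J_S$ (space is contractible since each $\mc{J}_c(M)$ is contractible and $\mc{J}(M)$ fibers over $(0,\infty)$ with contractible fibers), and the $S^1$-perturbation $F_S$ (affine plus weak monotonicity, convex) — lies in a convex or contractible space with affine defining conditions on the cylinders and strips. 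A partition of unity subordinate to the cover then interpolates these local choices. The strip-and-cylinder data $\mathfrak{S}$ itself must be fixed first as a smooth section of a fiber bundle whose fiber (equivalence classes of embeddings) is contractible; once $\mathfrak{S}$ is fixed globally, the remaining choices interpolate linearly without leaving the admissible class.

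The main technical obstacle is that the pieces of a Floer datum are coupled: $H_S$, $J_S$, $F_S$ and $\alpha_S$ are all required to satisfy rescaling identities determined by the weights carried by $\mathfrak{S}$, and these weights are themselves constrained by conformal equivalence along boundary strata. The resolution is to fix the weights globally first, using that the conformal equivalence relation is generated by an $\R_{>0}$-action on the whole tuple and hence allows one to reconcile the two collar descriptions around each corner by a unique smooth rescaling function on the moduli space; after this reconciliation, all remaining conditions become affine/convex on each cylinder and open elsewhere, so the partition-of-unity interpolation stays within the class of admissible Floer data. The infinite-order agreement along the boundary is preserved automatically because the partition of unity may be chosen to equal the boundary-collar datum identically in a smaller collar, and the profile $\phi_\e$ has been built into Definition \ref{adaptedperturbation} precisely to allow such flat extensions. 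This completes the inductive step and yields the desired surjectivity.
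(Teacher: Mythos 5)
Your proposal is correct and follows essentially the same route as the paper, which gives no detailed proof at all: it simply notes that all choices entering a Floer datum form contractible spaces and invokes induction over the strata of the compactified moduli spaces. Your argument is a fleshed-out version of exactly that remark (inductive extension from boundary collars plus interpolation using contractibility/convexity of each component), so no comparison of substance is needed.
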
 
\begin{defn} 
    Let $\mathbf{L}$ be a set of Lagrangians. A {\bf Lagrangian labeling from
    $\mathbf{L}$} for a genus-0 open-closed string $S \in
    \mathcal{N}^{\mathbf{I},\vec{\mathbf{K}}}_{h,n,\vec{m}}$ is a choice, for
    each $j=1, \ldots, h$ and for each connected component
    $\partial^j_i S$ of the $j$th boundary disc, of a Lagrangian $L_i^j \in
    \mathbf{L}$. The {\bf space of genus-0 open-closed strings with a fixed
    labeling} $\vec{L} = \{\{L_i^j\}_i\}_j$ is denoted $(\cocs)_{\vec{L}}$. The
    {\bf space of all labeled open-closed strings} is denoted
    $(\cocs)_{\mathbf{L}}$.
\end{defn}
Clearly, $(\cocs)_{\mathbf{L}}$ is a disconnected cover of $\cocs$.  There is
a notion of a {\bf labeled Floer datum}, namely a Floer datum for
the space of open-closed strings equipped with labels
$(\overline{\mathcal{N}}^{\mathbf{I},\vec{\mathbf{K}}}_{h,n,\vec{m}})_{\mathbf{L}}$.
This is simply a choice of Floer data as above in a manner also depending
coherently on the particular Lagrangian labels. We will use this notion in
later sections, along with the following definition. 
\begin{defn} 
    Let $\mathbf{D}_S$ be a Floer datum on a surface $S$. The {\bf induced
    labeled Floer datum} on a labeled surface $S_{\vec{L}}$ is the Floer datum
    $\mathbf{D}_S$ coming from forgetting the labels.  
\end{defn}
\subsection{Floer-theoretic operations} \label{openclosedfloeroperations}
Now, fix a compact oriented submanifold with corners of dimension $d$,
\begin{equation}
    \overline{\mc{Q}}^d\hookrightarrow \cocs.
\end{equation}
Fix a Lagrangian labeling 
\begin{equation}
\{\{L_0^1, \ldots, L_{m_1}^1\}, \{L_0^2, \ldots, L_{m_2}^2\}, \ldots, \{L_0^h,
\ldots, L_{m_h}^h\}\}.
\end{equation}
Also, fix chords 
\begin{equation}
\vec{x} = \{\{x_1^1, \ldots, x_{m_1}^1\}, \ldots,\{ x_1^h, \ldots, x_{m_h}^h\}\}
\end{equation}
and orbits $\vec{y} = \{y_1, \ldots, y_n\}$ with
\begin{equation}
x_i^j \in \begin{cases}
    \chi(L_{i+1}^j,L_i^j) & i \in K^j \\
    \chi(L_i^j, L_{i+1}^j) & \mathrm{otherwise.}
\end{cases}
\end{equation}
Above, the index $i$ in $L_i^j$ is counted mod $m_{j}$. The {\bf
outputs} $\vec{x}_{out}$, $\vec{y}_{out}$ are by definition those $x_i^j$ and
$y_s$ for which $i \in K^j$ and $s \in \mathbf{I}$, corresponding to negative marked points.
The {\bf inputs} $\vec{x}_{in}$, $\vec{y}_{in}$ are the remaining chords and
orbits from $\vec{x}$, $\vec{y}$.  Fixing a chosen universal and consistent
Floer datum, denote $\e^{i,j}_\pm$ and
$\delta^l_\pm$ the strip-like and cylindrical ends corresponding to $x_i^j$ and
$y_l$ respectively. 

Define
\begin{equation}\label{basicopenclosedmodulispace}
\overline{\mc{Q}}^d(\vec{x}_{out},\vec{y}_{out};
\vec{x}_{in},\vec{y}_{in})
\end{equation}
to be the space of maps 
\begin{equation}
    \{ u: S \longrightarrow M:\ S \in \overline{\mc{Q}}^d \}
\end{equation}
satisfying the inhomogenous Cauchy-Riemann equation with respect to the complex
structure $J_S$:
\begin{equation}
    (du - X_S \otimes \alpha_S)^{0,1} = 0
\end{equation}
and asymptotic and boundary conditions:  
\begin{equation}
    \begin{cases}
        \lim_{s \ra \pm\infty} u \circ \e^{i,j}_\pm(s,\cdot) = x_i^j,\\
    \lim_{s \ra \pm\infty} u \circ \delta^l_\pm(s,\cdot) = y_l,\\
    \ \ \ u(z) \in \psi^{a_S(z)} L_i^j, & z\in \partial_i^j S.
\end{cases}
\end{equation}
Above, $X_S$ is the (surface-dependent) Hamiltonian vector field corresponding
to $H_S + F_S$.

\begin{lem} \label{openclosedcompactdimensiontransversality}
    The moduli spaces $\overline{\mc{Q}}^d(\vec{x}_{out},\vec{y}_{out};
    \vec{x}_{in},\vec{y}_{in})$ are compact and there are only finitely many
    collections $\vec{x}_{out}$, $\vec{y}_{out}$ for which they are non-empty
    given input $\vec{x}_{in}$, $\vec{y}_{in}$. For a generic universal and
    conformally consistent Floer data they form manifolds of dimension 
\begin{equation}
    \begin{split}
    \dim
    \mc{Q}^d(\vec{x}_{out},\vec{y}_{out};\vec{x}_{in},\vec{y}_{in}):=\sum_{x_-
    \in \vec{x}_{out}} &\deg (x_-) + \sum_{y_- \in \vec{y}_{out}} \deg (y_-) +\\
    (2-h-|\vec{x}_{out}| - 2|\vec{y}_{out}|) n
    + d 
    &-\sum_{x_+ \in \vec{x}_{in}} \deg(x_+) - \sum_{y_+ \in \vec{y}_{in}}
    \deg(y_+).  
\end{split}
\end{equation}
\end{lem}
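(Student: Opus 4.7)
The plan is to handle the four claims (compactness, finiteness, transversality, dimension) by the by-now standard four-step argument for Floer-theoretic moduli spaces, with one nonstandard input: the $C^0$ bound, which requires the machinery of Appendix \ref{actionsection} because our Floer data differ from the usual autonomous contact-type data by the perturbations $F_S$ and by rescaling.

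First I would establish an a priori $C^0$ bound. The almost-complex structure $J_S$ is $c$-rescaled contact type with $c = a_S$ constant on each strip/cylinder, the one-form $\a_S$ is sub-closed and compatible with the weighted ends, and $F_S$ is weakly monotonic on each cylindrical region with the explicit local form prescribed in Definition \ref{adaptedperturbation}. These are exactly the hypotheses under which Theorem \ref{c0bounds} (proved in Appendix \ref{actionsection}) produces a compact set $K \subset M$ containing the image of any solution with asymptotics $\vec{x}, \vec{y}$, using the maximum principle on non-cylindrical regions and a Floer--Hofer-type strengthened maximum principle on the cylindrical/strip regions to absorb the bounded error from $F_S$. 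Combined with the standard energy identity, using $d\a_S \leq 0$ and the action differences $A(x_+) - A(x_-)$ and $A(y_+) - A(y_-)$, this gives a uniform energy bound in terms of the asymptotics.

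Next I would apply Gromov compactness. Exactness of $(M,\theta)$ and of each Lagrangian rules out sphere and disc bubbling, so the only possible degenerations are strip/cylinder breaking at the asymptotic ends and nodal degenerations of the underlying surface $S \in \overline{\mc{Q}}^d$. These are precisely the strata included in $\overline{\mc{Q}}^d(\vec{x}_{out}, \vec{y}_{out}; \vec{x}_{in}, \vec{y}_{in})$ once we take the closure in $\cocs$, giving compactness. For the finiteness statement, the energy bound combined with non-degeneracy and finiteness-below-any-action-level of Reeb chords of $\bar\theta$ and periodic orbits of $H_{S^1}$ implies that only finitely many tuples $(\vec{x}_{out}, \vec{y}_{out})$ can arise for fixed inputs. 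Transversality is then obtained by a standard Sard--Smale argument, perturbing $J_S$, $F_S$, and $H_S$ in the interior of the parameter space $\overline{\mc{Q}}^d$ (away from strip-like and cylindrical ends where the Floer data are fixed by compatibility), with the universal and consistent structure ensuring that the perturbations can be made inductively over strata of the compactification.

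The dimension formula is a Riemann--Roch computation. For a fixed $S$ with fixed conformal structure, the linearized operator is a real Cauchy--Riemann operator on the pullback bundle with totally real boundary conditions and cylindrical/strip-like end data determined by the linearized Hamiltonian flow at each chord and orbit. Each positive interior puncture contributes $n - \deg(y_+) = CZ(y_+)$, each negative interior puncture contributes $\deg(y_-) + n$, each positive boundary puncture contributes $-\deg(x_+)$, each negative boundary puncture contributes $\deg(x_-)$, and the underlying Euler characteristic contribution is $n\chi(S) = n(2-h)$ minus the marked point count, which combine to give the stated formula after adding the $d$ parameters of $\overline{\mc{Q}}^d$. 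The main subtle point, and the one I would spend the most care on, is the $C^0$ bound in Step 1, since unlike the classical setup the Hamiltonian is not a pure function of $r$ on the cylindrical regions once $F_S$ is turned on; but this is exactly what Theorem \ref{c0bounds} is designed to handle, so the remaining arguments become routine.
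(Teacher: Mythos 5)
Your proposal follows essentially the same route as the paper: the a priori $C^0$ bound of Theorem \ref{c0bounds} feeds into Gromov compactness (with exactness excluding bubbling), transversality is a standard Sard--Smale argument for generic consistent Floer data, and the dimension is an index computation via the gluing theorem for indices, with $n\chi(\hat{S}) = n(2-h)$ on the compactified surface plus the $d$ parameters of $\overline{\mc{Q}}^d$ and spectral-flow contributions at the ends. One small caution: the per-puncture contributions you list do not literally sum to the stated formula --- with the paper's conventions the clean bookkeeping is $n(2-h)$ for the compactified surface, $\deg(x_-)-n$ per boundary output, $\deg(y_-)-2n$ per interior output, and $-\deg(x_+)$, $-\deg(y_+)$ per boundary/interior input, which reproduces the coefficient $(2-h-|\vec{x}_{out}|-2|\vec{y}_{out}|)n$; your phrase ``minus the marked point count'' as written does not recover the missing $-n|\vec{x}_{out}|-2n|\vec{y}_{out}|$ terms, so that normalization should be fixed, though it does not affect the overall strategy.
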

\begin{proof}
The dimension calculation follows from a computation of the index of the
associated linearized Fredholm operator. Via a gluing theorem for indices
\cite{Seidel:2008zr}*{(11c)} \cite{Schwarzthesis}*{Thm. 3.2.12}, there is a
contribution coming from the index of the linearized Cauchy Riemann operator on
the compactified surface $\hat{S}$, equal to $n\chi(\hat{S})$, where
$\chi(\hat{S})= (2-h)$ is the Euler characteristic of a genus-0 open-closed
string of type $h$. The other contributions come from the tangent space of
$\mc{Q}$ (contributing $d$), and spectral-flow type calculations on the
striplike and cylindrical ends. This calculation is essentially a fusion of
\cite{Seidel:2008zr}*{Proposition 11.13} and \cite{Ritter:2010nx}*{Lemma 10}.

The proof of transversality for generic perturbation data is a standard
application of Sard-Smale, following identical arguments in
\cite{Seidel:2008zr}*{(9k)} or alternatively \cite{Floer:1995fk}. The usual
proof Gromov compactness also applies, assuming that solutions to Floer's
equation with given asymptotic boundary conditions are a priori bounded in the
non-compact target $M$. This is the content of Theorem \ref{c0bounds}.
\end{proof}
When $\mc{Q}^d(\vec{x}_{out},\vec{y}_{out};\vec{x}_{in},\vec{y}_{in})$ has
dimension zero, we conclude that its elements are rigid.
For any such element $u \in \mc{Q}^d(\vec{x}_{out},
\vec{y}_{out};\vec{x}_{in},\vec{y}_{in})$, we obtain an isomorphism of
orientation lines, by Lemma \ref{orientationlem} 
\begin{equation} \label{orientationiso}
    \mc{Q}_u: 
    \bigotimes_{x \in \vec{x}_{in}} o_x \otimes \bigotimes_{y \in \vec{y}_{in}} o_y \lra \bigotimes_{x \in \vec{x}_{out}} o_x \otimes \bigotimes_{y \in \vec{y}_{out}} o_y.
\end{equation}

Thus, we can define a map 
\begin{equation}
    \begin{split}
         \mathbf{F}_{\overline{\mc{Q}}^d}: \bigotimes_{(i,j);1\leq i\leq m_j;i
        \notin K^j} &CW^*(L_i^j,L_{i+1}^j) \otimes \bigotimes_{1 \leq k \leq n;
        k \notin \mathbf{I}} CH^*(M) \longrightarrow  \\
         &\bigotimes_{(i,j);1\leq i\leq m_j;i
         \in K^j} CW^*(L_{i+1}^j,L_{i}^j) \otimes \bigotimes_{1 \leq k \leq n;
        k \in \mathbf{I}} CH^*(M) \mbox{   }
    \end{split}
\end{equation}
    given by:
    \begin{equation} \label{operation1}
        \begin{split}
            &\mathbf{F}_{\overline{\mc{Q}}^d}([y_t], \ldots, [y_1], 
            [x_s], \ldots, [x_1]) := \\ 
        & \sum_{\dim \mc{Q}^d(\vec{x}_{out},\vec{y}_{out};
        \{x_1, \ldots, x_s\},\{y_1, \ldots, y_t\}) = 0}  
        \sum_{u \in \mc{Q}^d(\vec{x}_{out},\vec{y}_{out}; \{x_1, \ldots, x_s\}, \{y_1, \ldots, y_t\})}
        \mc{Q}_u([x_s], \ldots, [x_1], [y_t], \ldots, [y_1]).
    \end{split}
\end{equation}

This construction naturally associates, to any submanifold $\mc{Q}^d \in
\ocs$, a chain-level map $\mathbf{F}_{\mc{Q}^d}$, depending on a
sufficiently generic choice of Floer data for open-closed strings.  We need to
modify this construction by signs depending on the relative
positions and degrees of the inputs.
\begin{defn}\label{signtwistingdatum}
    Given such a submanifold $\mc{Q}$, a {\bf sign twisting datum} $\vec{t}$
    for $\mc{Q}$ is a vector of integers, one for each input boundary or
    interior marked point on an element of $\mc{Q}$. 
\end{defn}
\noindent 
To a pair $(\mc{Q}, \vec{t})$ one can associate a twisted operation
\begin{equation} 
    \label{signtwistoperation} (-1)^{\vec{t}}
    \mathbf{F}_{\overline{\mc{Q}}^d}, 
\end{equation}
defined as follows. If $\{\vec{x},\vec{y}\} = \{x_1, \ldots, x_s,
y_1, \ldots, y_t\}$ is a set of asymptotic inputs, the {\bf vector of degrees}
is denoted
\begin{equation}
    \vec{\deg} (\vec{x},\vec{y}): = \{\{\deg(x_1), \ldots, \deg(x_s)\},
    \{\deg(y_1), \ldots, \deg(y_t)\}\}.  
\end{equation}
The corresponding sign twisting datum $\vec{t}$ is of the form
\begin{equation}
    \vec{t} := \{\{v_1, \ldots, v_s\}; \{w_1, \ldots, w_t\}\}.
\end{equation}
Then, the operation (\ref{signtwistoperation}) is defined to be
\begin{equation} \label{twistoperation1}
    \begin{split}
        &(-1)^{\vec{t}}\mathbf{F}_{\overline{\mc{Q}}^d}([y_t], \ldots, [y_1], 
        [x_s], \ldots, [x_1]) := \\ 
    & \sum_{\dim \mc{Q}^d(\vec{x}_{out},\vec{y}_{out};
    \vec{x},\vec{y}) = 0}  
    \sum_{u \in \mc{Q}^d(\vec{x}_{out},\vec{y}_{out}; ,\vec{x}, \vec{y})}
    (-1)^{\vec{t} \cdot \vec{\deg}(\vec{x}, \vec{y})} \mc{Q}_u([x_1], \ldots, [x_s], [y_1], \ldots, [y_t]).
\end{split}
\end{equation}
The zero vector $\vec{t} = (0, \ldots, 0)$ recovers the original operation
$\mathbf{F}_{\overline{\mc{Q}}^d}$.

Now, suppose instead that we are given a submanifold $\mc{Q}^d_{\vec{L}}$
of the labeled space $(\cocs)_{\vec{L}}$.  Then, we obtain a chain-level
operation
     \begin{equation} \label{operationlabeled1}
         (-1)^{\vec{t}}\mathbf{F}_{\mc{Q}^d_{\vec{L}}}
     \end{equation}
 that {\it is only defined for the fixed labeling $\vec{L}$}.  In the
 sections that follow, we will use this definition to construct associated
 chain-level map for specific families $\{\mc{Q}^d_{\vec{L}}\}_d$.

 \begin{rem}
     Strictly speaking, when there are two boundary outputs on the same
     component, one only obtains the isomorphism of orientation lines
     (\ref{orientationiso}) after choosing orientations of Lagrangians along
     that boundary component. Since we are working with oriented Lagrangians,
     we are implicitly making such choices. See Appendix
     \ref{orientationsection} for more details.
 \end{rem}

In a different direction, we will make repeated use of the following standard
{\it codimension 1 boundary principle} for Floer-theoretic operations: suppose
that the boundary $\partial \overline{\mc{Q}}^d$ is covered by the images of
natural inclusions of $(d-1)$-dimensional (potentially nodal) orientable
submanifolds
\begin{equation}
    \mc{T}_i \hookrightarrow \partial \overline{\mc{Q}}^d,\ i = 1,\ldots, k.
\end{equation}
Then, standard results tell us that 
\begin{lem}\label{bordificationlem}
In the situation above, the Gromov bordification of the moduli space of maps
$\overline{\mc{Q}}^d(\vec{x}_{out},\vec{y}_{out};\vec{x}_{in},\vec{y}_{in})$
has codimension 1 boundary covered by the images of natural inclusions of the
following spaces:
\begin{align}
    \mc{T}_i(\vec{x}_{out}, \vec{y}_{out}; \vec{x}_{in}, \vec{y}_{in}),\ i = 1, \ldots, k\\
    \label{inx1}\overline{\mc{R}}(\tilde{x}; x_a) \times \overline{\mc{Q}}^d(\vec{x}_{out},\vec{y}_{out};\tilde{\vec{x}}_{in},\vec{y}_{in}) \\
    \label{iny1}\overline{\mc{M}}(\tilde{y}; y_b)\times \overline{\mc{Q}}^d(\vec{x}_{out},\vec{y}_{out};\vec{x}_{in},\tilde{\vec{y}}_{in}) \\
    \label{outx1}\overline{\mc{Q}}^d(\tilde{\vec{x}}_{out},\vec{y}_{out};\vec{x}_{in},\vec{y}_{in}) \times \overline{\mc{R}}(x_c; \tilde{x}) \\
    \label{outy1}\overline{\mc{Q}}^d(\vec{x}_{out},\tilde{\vec{y}}_{out};\vec{x}_{in},\vec{y}_{in}) \times \overline{\mc{M}}(y_d; \tilde{y}).
\end{align}
Here, \begin{itemize}
    \item in (\ref{inx1}), $x_a \in \vec{x}_{in}$ and $\tilde{\vec{x}}_{in}$ is
$\vec{x}_{out}$ with the element $x_a$ replaced by $\tilde{x}$; 
\item in (\ref{iny1}), $y_b \in \vec{y}_{in}$ and $\tilde{\vec{y}}_{in}$ is
$\vec{y}_{in}$ with the element $y_b$ replaced by $\tilde{y}$; 
\item in (\ref{outx1}), $x_c \in \vec{x}_{out}$ and $\tilde{\vec{x}}_{out}$ is
    $\vec{x}_{out}$ with the element $x_c$ replaced by $\tilde{x}$; and
\item in (\ref{outy1}), $y_d \in \vec{y}_{out}$ and $\tilde{\vec{y}}_{out}$ is
    $\vec{y}_{out}$ with the element $y_d$ replaced by $\tilde{y}$.
    \end{itemize}
    The strata (\ref{inx1}) - (\ref{outy1}) range over all
    $\tilde{x}$, $\tilde{y}$ and all possible choices of $x_a, y_b, x_c, x_d$.
\end{lem}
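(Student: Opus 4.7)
The plan is to verify the claim in the two standard directions: (i) any sequence of solutions in $\mc{Q}^d(\vec{x}_{out}, \vec{y}_{out}; \vec{x}_{in}, \vec{y}_{in})$ has a Gromov-convergent subsequence whose limit is covered by one of the listed strata, and (ii) each such stratum is in fact the image of a natural inclusion corresponding to a true codimension-$1$ boundary piece, realized by gluing. Together these give that the bordification is a topological manifold with boundary whose boundary is precisely the union claimed.

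For direction (i), I would first invoke the $C^0$-bound of Theorem \ref{c0bounds} to confine the images of all solutions---whose Floer data involves the $S^1$-perturbations $F_S$ and $c$-rescaled contact type $J_S$ of Definition \ref{floeropenclosed}---to a fixed compact region of $M$. With a priori compactness in hand, I would then apply standard Gromov-Floer compactness in the family: pass to a subsequence so that the underlying domains $S_n \in \overline{\mc{Q}}^d$ converge to some $S_\infty$. If $S_\infty$ lies in $\partial \overline{\mc{Q}}^d$, it lies in some $\mc{T}_i$ by hypothesis, giving the first class of boundary strata. If $S_\infty$ lies in the interior of $\overline{\mc{Q}}^d$, the only sources of non-compactness are energy concentration along the strip-like or cylindrical ends---producing the broken strips (\ref{inx1}), (\ref{outx1}) or broken cylinders (\ref{iny1}), (\ref{outy1})---or interior/boundary bubbling. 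Sphere and disc bubbles are excluded by the exactness of $(M,\theta)$ and of the Lagrangian labels, together with the fact that our sub-closed one-form $\alpha_S$ satisfies $d\alpha_S \leq 0$, controlling the energy of continuation-type bubbles in the usual way.

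For direction (ii), I would construct gluing maps of the standard pre-glue-and-correct type. For each broken configuration of types (\ref{inx1})--(\ref{outy1}), transversality of both factors (Lemma \ref{openclosedcompactdimensiontransversality} applied to $\mc{Q}^d$ and the standard transversality of the translation-invariant moduli spaces $\mc{R}$, $\mc{M}$) ensures surjectivity of the linearized operators, and a large gluing parameter $R \gg 0$ gives an approximate solution on the glued surface. The implicit function theorem corrects this to an honest solution, yielding a half-open collar of the broken stratum inside $\overline{\mc{Q}}^d(\vec{x}_{out}, \vec{y}_{out}; \vec{x}_{in}, \vec{y}_{in})$. For the parametric strata $\mc{T}_i$, the gluing is essentially trivial: since $\overline{\mc{Q}}^d$ is a manifold with corners embedded in $\ocs$, the $\mc{T}_i$ are collared in $\overline{\mc{Q}}^d$, and the parametric implicit function theorem extends solutions over $\mc{T}_i$ to solutions over a collar neighborhood.

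The main obstacle---and the only genuinely non-routine analytic ingredient---is the $C^0$-bound, which must accommodate both the $S^1$-perturbation $F_S$ (breaking strict monotonicity of the maximum principle) and the rescaled contact-type almost-complex structures indexed by the rescaling function $a_S$. This is precisely why we isolated Theorem \ref{c0bounds} in Appendix \ref{actionsection}; the rest of the argument is a direct adaptation of well-established parametric Gromov-Floer compactness and gluing, in the spirit of \cite{Abouzaid:2010kx}*{Lemmas 2.4--2.5} and \cite{Seidel:2008zr}*{(9k)}, extended to allow interior cylindrical ends as in \cite{Ritter:2010nx}.
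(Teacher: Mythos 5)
Your argument is correct and is exactly the route the paper intends: the paper states this lemma as a consequence of ``standard results,'' namely parametric Gromov--Floer compactness (with bubbling excluded by exactness) whose only non-routine input is the a priori $C^0$-bound of Theorem \ref{c0bounds}, just as you identify. Your additional gluing discussion in direction (ii) is more than the ``covered by'' statement strictly requires, but it is consistent with how the paper uses these strata, so there is nothing to correct.
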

In words, this Lemma says that the boundary of space of maps from $\mc{Q}$ is
covered by maps from the various $\mc{T}_i$ plus all possible
semi-stable strip or cylinder breakings.

The manifolds $\mc{T}_i$, which may live on the boundary strata of $\cocs$,
inherit orientations and Floer data from the choice of $\mc{Q}^d$, via the
convention of orienting relative to the normal vector pointing towards the
boundary. Thus, there are associated signed operations 
\begin{equation}
    (-1)^{\vec{t}} \mathbf{F}_{\mc{T}_i}.
\end{equation}

By looking at the boundary of one-dimensional elements of the moduli space
$\mc{Q}^d(\vec{x}_{out},\vec{y}_{out};\vec{x}_{in},\vec{y}_{in})$, one
concludes that 
\begin{cor}
    In the situation described above, for any $\vec{t}$,
\begin{equation}
    \begin{split}
        \sum_{i=1}^k (-1)^{\vec{t}} &\mathbf{F}_{\mc{T}_i}  + 
        \sum_{i=1}^{s+t} (-1)^{*}(-1)^{\vec{t}}\mathbf{F}_{\overline{\mc{Q}}^d} \circ 
        (id \otimes \cdots \otimes id \otimes \mu^1 \otimes id \otimes 
        \cdots id) \\ 
        &+ \sum_j (-1)^{\dagger}(id \otimes \cdots id \otimes \mu^1 \otimes id
        \otimes \cdots id) \circ \bigg( (-1)^{\vec{t}}\mathbf{F}_{\overline{\mc{Q}}^d} \bigg)= 0,
\end{split}
\end{equation}
where we have used $\mu^1$ to indicate the both the differential on wrapped
Floer homology or symplectic cohomology depending on the input.  The signs
$(-1)^{*}$ and $(-1)^{\dagger}$ will be calculated in Appendix
\ref{orientationsection}.  
\end{cor}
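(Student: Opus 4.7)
The plan is to deduce the identity as the standard consequence of examining the boundary of the 1-dimensional components of the moduli spaces $\overline{\mc{Q}}^d(\vec{x}_{out},\vec{y}_{out};\vec{x}_{in},\vec{y}_{in})$. Specifically, I would fix output asymptotics $\vec{x}_{out}$, $\vec{y}_{out}$ and input asymptotics $\vec{x}_{in}$, $\vec{y}_{in}$ whose virtual dimension formula from Lemma \ref{openclosedcompactdimensiontransversality} forces the parameter space to be one-dimensional, and then read off the signed count of boundary points.

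First, I would invoke Lemma \ref{openclosedcompactdimensiontransversality}, together with the $C^0$-compactness of Theorem \ref{c0bounds} in the appendix, to conclude that for generic universal and consistent Floer data the Gromov bordification is a compact oriented 1-manifold with boundary. Second, I would apply Lemma \ref{bordificationlem} to identify the boundary strata: they are exactly (a) the maps from each $\mc{T}_i \hookrightarrow \partial\overline{\mc{Q}}^d$ of the parameter space (giving the term $\sum_i (-1)^{\vec{t}}\mathbf{F}_{\mc{T}_i}$); (b) strip breakings at an input end (giving the terms $(-1)^{\vec{t}}\mathbf{F}_{\overline{\mc{Q}}^d} \circ (id\otimes\cdots\otimes \mu^1\otimes\cdots\otimes id)$ on input slot $i$); (c) cylinder breakings at an interior input end (similarly); and (d)--(e) strip or cylinder breakings at output ends (giving the $\mu^1 \circ \mathbf{F}_{\overline{\mc{Q}}^d}$ terms). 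Finally, summing signed boundary contributions of a compact oriented 1-manifold yields zero, which is precisely the claimed equation, provided the signs line up.

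The main content lies in the signs $(-1)^{*}$ and $(-1)^{\dagger}$ on the composition terms. These arise from a bookkeeping computation: each degeneration as in (\ref{inx1})--(\ref{outy1}) determines a gluing-theoretic identification of the boundary orientation of $\overline{\mc{Q}}^d(\vec{x}_{out},\vec{y}_{out};\vec{x}_{in},\vec{y}_{in})$ with an outward normal coming from the small gluing parameter, and one needs to compare this to the product orientation on the broken configuration expressed using the lines $o_x$, $o_y$. For the input breakings, the additional Koszul sign $(-1)^*$ comes from commuting the orientation line $o_{\tilde{x}}$ (resp.\ $o_{\tilde{y}}$) past the orientation lines of the inputs to its right; for the output breakings, $(-1)^\dagger$ has a similar origin from the output side, combined with the convention that $\mu^1$ acts on the distinguished output factor. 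Because the sign twisting datum $\vec{t}$ enters multiplicatively on both sides of each factored term, it plays no essential role and the identity is invariant under an arbitrary choice of $\vec{t}$.

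The main obstacle is purely the signs: transversality, compactness, and the cover of the codimension-one boundary are already packaged in Lemmas \ref{openclosedcompactdimensiontransversality} and \ref{bordificationlem} together with Theorem \ref{c0bounds}. The only genuine work is to verify, using the orientation-line framework of Appendix \ref{orientationsection} and the gluing-theoretic comparison of Lemma \ref{orientationlem}, that the induced orientations on the boundary 0-strata agree with the signs prescribed by $(-1)^*$ and $(-1)^\dagger$; this is deferred to the appendix and amounts to a careful but routine Koszul-sign calculation that I would carry out there.
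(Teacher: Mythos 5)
Your proposal is correct and follows the same route as the paper: the corollary is exactly the statement that the signed count of boundary points of the compact oriented 1-dimensional components of $\overline{\mc{Q}}^d(\vec{x}_{out},\vec{y}_{out};\vec{x}_{in},\vec{y}_{in})$ vanishes, with the boundary strata identified by Lemma \ref{bordificationlem} (parameter-space degenerations $\mc{T}_i$ plus strip and cylinder breakings at inputs and outputs), compactness and transversality supplied by Lemma \ref{openclosedcompactdimensiontransversality} and Theorem \ref{c0bounds}, and the Koszul-sign bookkeeping for $(-1)^{*}$ and $(-1)^{\dagger}$ deferred to Appendix \ref{orientationsection}. This matches the paper's treatment, which likewise defers the orientation comparison to the appendix.
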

In order to obtain equations such as the $\ainf$ equations, bimodule equations,
various morphisms are chain homotopies, etc. with the correct signs, one needs
to compare signs between the operators $(-1)^{\vec{t}}\mathbf{F}_{\mc{T}_i}$
and the composition of operators arising from $\mc{T}_i$ viewed of as a
(potentially nodal) surface using the consistency condition imposed on our
Floer data. This, plus appropriate choices of sign twisting data for these
strata, will yield all of the relevant signs.
The relevant calculations are performed in Appendix \ref{orientationsection}.

\section{Open-closed maps}\label{openclosedmaps}

\subsection{The product in symplectic cohomology} 
Symplectic cohomology is known to admit a range of TQFT-like operations,
parametrized by surfaces with $I$ incoming and $J$ outgoing ends, for $J>0$,
see e.g. \cite{Ritter:2010nx}. In this section, we will focus on the case with
one output and two inputs ($I = 2$, $J=1$), but one can imagine the following
construction applies more generally to other surfaces and {\it families} of
surfaces.

Denote by $\mc{S}_2$ the configuration space of spheres with two positive and
one negative punctures, with asymptotic markers pointing in the
tangent direction to the unique great circle containing all three points.

\begin{defn} A {\bf Floer datum} $D_T$ on a stable sphere $T\in \mc{S}_{2,1}$ consists of a Floer datum of $T$ thought of as an open-closed string.
\end{defn}

From the previous section, considering the entire manifold $\mc{S}_2$ defines
an operation of degree zero \begin{equation} \mc{F}_2: CH^*(M)^{\otimes
2} \lra CH^*(M).  \end{equation} This is known as the {\it pair of pants
product}.
\begin{figure}[h] 
    \caption{A representative of the one-point space $\mc{S}_{2,1}$ giving the pair of pants product. \label{pop_product}}
    \centering
    \includegraphics[scale=0.7]{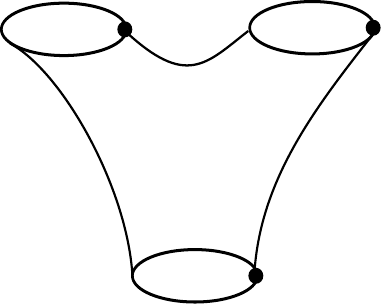}
\end{figure}

\begin{rem}
    For families of spheres with more than two inputs, there ceases to be a
    preferred direction in which to point the asymptotic markers, a situation
    not considered in our work. However, if one mandates that all marked points
    lie on a single great circle, then one recovers the required preferred
    direction. The end result, which gives {\it Massey products} on $SH^*(M)$, will be
    constructed as a special case of the constructions in Sections \ref{pairs}
    and \ref{productsection}.
\end{rem}

\subsection{\texorpdfstring{$\ainf$}{A-infinity} structure maps and the wrapped Fukaya category}

Here we define the higher structure maps $\mu^k$ on $\w$ (including
the product $\mu^2$). We will recall and apply with some detail our
construction of Floer-theoretic operations from Section
\ref{openclosedfloeroperations}, though the reader is warned that subsequent
constructions will be more terse.
Define
\begin{equation}
    \mc{R}^d
\end{equation} 
to be the (Stasheff) moduli space of discs with one negative marked point
$z_0^-$ and $d$ positive marked points $z_1^+, \ldots, z_d^+$ removed from the
boundary, labeled in {\it counterclockwise} order from $z_0^-$.  $\mc{R}^d$ is
a special case of our general construction of open-closed strings.  Denote by
$\overline{\mc{R}}^d$ its natural (Deligne-Mumford) compactification,
consisting of trees of stable discs with a total of $d$ exterior positive
marked points and 1 exterior negative marked point, modulo compatible
reparametrization of each disc in the tree. Recall from the discussion in
Section \ref{ocfloersection} that $\overline{\mc{R}}^d$ inherits the structure
of a manifold with corners, coming from standard gluing charts \begin{equation}
    \label{gluingcoords} (0,+\infty]^k \times \sigma \ra \overline{\mc{R}}^d.
\end{equation}
 near (nodal) strata of codimension $k$.

\begin{figure}[h]
    \caption{Two drawings of a representative of an element of the moduli space
    $\mc{R}^5$. The drawing on the right emphasizes the choices of strip-like
    ends.} 
    \centering
    \includegraphics[width=0.5\textwidth]{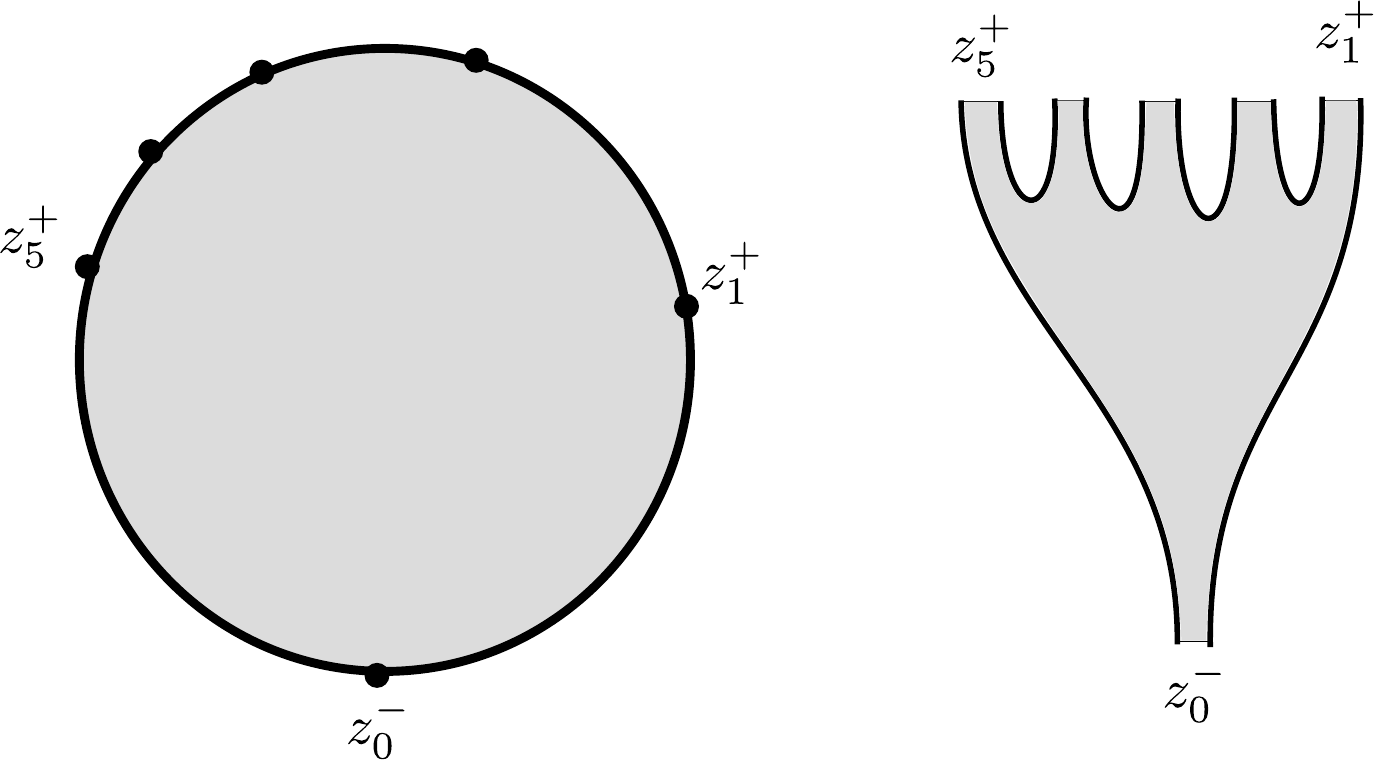}
\end{figure}

Now, in the terminology of Definition \ref{floeropenclosed}, pick a universal
and consistent choice of Floer data $\mathbf{D}_{\mu}$ for the spaces
$\mc{R}^d$, $d > 2$. Also, fix an orientation of the space $\mc{R}^d$,
discussed in Appendix \ref{ainforientation}.

\begin{defn}
    The {\bf $d$th order $\ainf$ operation} is by definition the operation
    \begin{equation}
        \mu^d:=(-1)^{\vec{t}}\mathbf{F}_{\overline{\mc{R}}^d}, 
    \end{equation}
    in the sense of (\ref{signtwistoperation}), where $\vec{t}$ is the sign
    twisting datum given by $(1, 2, \ldots, k)$.
\end{defn}
We step through this construction for clarity.  Let $L_0, \ldots,
L_d$ be objects of $\w$, and consider a sequence of chords $\vec{x} = \{x_k \in
\chi(L_{k-1},L_k)\}$ as well as another chord $x_0 \in \chi(L_0,L_d)$. Given
a fixed universal and consistent Floer data $\mathbf{D}_\mu$, write
$\mc{R}^d(x_0; \vec{x})$ for the space of maps 
\[u: S \ra M \] 
with source an arbitrary element $S \in \mc{R}^d$, with marked points
$(z^0, \ldots, z^d)$ satisfying the boundary asymptotic conditions 
\begin{equation}
    \begin{cases}
        u(z) \in \psi^{a_S(z)} L_k & \mathrm{if\ }z\in \partial S\mathrm{\ lies\ between\ }z^k\ \mathrm{and\ }z^{k+1} \\
        \lim_{s\ra \pm \infty} u \circ \e^k(s,\cdot) = \psi^{a_S(z)}x_k
    \end{cases}
\end{equation}
and differential equation 
\begin{equation}
    (du - X_S \otimes \alpha_S)^{0,1} = 0
\end{equation}
with respect to the complex structure $J_S$ and total Hamiltonian $H_S + F_S$.
Using the consistency of our Floer data and the codimension one boundary of the
abstract moduli spaces $\overline{\mc{R}}^d$, Lemma \ref{bordificationlem}
implies that the Gromov bordification $\overline{\mc{R}}^d(x_0; \vec{x})$ is
obtained by adding the images of the natural inclusions 
\begin{equation}
    \overline{\mc{R}}^{d_1}(x_0; \vec{x}_1) \times \overline{\mc{R}}^{d_2}(y; \vec{x}_2) \ra \overline{\mc{R}}^d(x_0; \vec{x})
\end{equation}
where $y$ agrees with one of the elements of $\vec{x}_1$ and $\vec{x}$ is
obtained by removing $y$ from $\vec{x}_1$ and replacing it with the sequence
$\vec{x}_2$. Here, we let $d_1$ range from 1 to $d$, with $d_2 = d-d_1 + 1$, with the stipulation that $d_1=$ or $d_2 = 1$ is the semistable case:
\begin{equation}
    \overline{\mc{R}}^{1} (x_0; x_1) := \overline{\mc{R}}(x_0; x_1)
\end{equation}
Thanks to Lemma \ref{openclosedcompactdimensiontransversality}, for generically
chosen Floer data $\mathbf{D}_{\mu}$
\begin{cor}
    The moduli spaces $\overline{\mc{R}}^d(x_0;\vec{x})$ are smooth compact
    manifolds of dimension 
    \[
    \deg (x_0) + d - 2 - \sum_{1\leq k \leq d} \deg(x_k).
    \]
\end{cor}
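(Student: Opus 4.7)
The plan is to deduce this corollary as a direct specialization of the general dimension/transversality/compactness result in Lemma \ref{openclosedcompactdimensiontransversality}. The moduli space $\overline{\mc{R}}^d$ is precisely the submanifold of $\cocs$ corresponding to the sign-type in which $h=1$, $n=0$, $\vec{m} = (d+1)$, with $\mathbf{I} = \emptyset$ and $\vec{\mathbf{K}} = \{\{0\}\}$; that is, a single boundary component with $d$ positive and one negative boundary marked point and no interior marked points. It is a compact orientable manifold with corners of dimension $d-2$, with Floer data $\mathbf{D}_\mu$ chosen universally and consistently.

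First, I would invoke Lemma \ref{openclosedcompactdimensiontransversality} directly: for generic $\mathbf{D}_\mu$, the moduli spaces $\overline{\mc{R}}^d(x_0;\vec{x})$ are smooth compact manifolds, and only finitely many $x_0$ yield nonempty spaces for fixed input $\vec{x}$. The smoothness and compactness do not require any argument beyond what is already guaranteed by that lemma, which in turn relies on a standard Sard--Smale transversality argument and the $C^0$ bound established in Theorem \ref{c0bounds}.

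Next, I would specialize the dimension formula from Lemma \ref{openclosedcompactdimensiontransversality}, namely
\[
\sum_{x_-\in\vec{x}_{out}}\deg(x_-)+\sum_{y_-\in\vec{y}_{out}}\deg(y_-) + (2-h-|\vec{x}_{out}|-2|\vec{y}_{out}|)n + d_{\mc{Q}} - \sum_{x_+\in\vec{x}_{in}}\deg(x_+) - \sum_{y_+\in\vec{y}_{in}}\deg(y_+),
\]
to the present setting. Plugging in $\vec{x}_{out}=\{x_0\}$, $\vec{y}_{out}=\vec{y}_{in}=\emptyset$, $\vec{x}_{in}=\{x_1,\ldots,x_d\}$, $h=1$, $n=0$, and $d_{\mc{Q}}=\dim \mc{R}^d = d-2$, the interior-marked-point term drops out, and the formula collapses to
\[
\deg(x_0) + (d-2) - \sum_{k=1}^{d}\deg(x_k),
\]
which is precisely the claimed dimension.

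Since the work is subsumed in Lemma \ref{openclosedcompactdimensiontransversality}, there is no genuine obstacle here — the only point requiring care is bookkeeping: verifying that the topological parameters $(h,n,\vec{m},\mathbf{I},\vec{\mathbf{K}})$ for $\mc{R}^d$ are correctly identified, and that $\dim \mc{R}^d = d-2$ is inserted for the tangent-space contribution $d_{\mc{Q}}$ (not to be confused with the number $d$ of input punctures). With that identification made, the corollary is immediate.
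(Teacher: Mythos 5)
Your proposal is correct and follows the paper's own reasoning: the corollary is explicitly presented there as an immediate consequence of Lemma \ref{openclosedcompactdimensiontransversality}, and your specialization of the dimension formula with $h=1$, $n=0$, $\vec{x}_{out}=\{x_0\}$, $\vec{y}_{in}=\vec{y}_{out}=\emptyset$, and $d_{\mc{Q}}=\dim\mc{R}^d=d-2$ is exactly the intended bookkeeping. (One trivial indexing note: by the paper's convention $K^i\subset\{1,\ldots,m^i\}$, so the lone boundary output is labeled by an index in that range rather than $0$, but this does not affect the computation.)
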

In particular, if $\deg(x_0) = 2 - d + \sum_1^d \deg (x_k)$, then the elements
of $\overline{\mc{R}}^d(x_0; \vec{x})$ are rigid, and for any such rigid $u \in
\overline{\mc{R}}^d(x_0; \vec{x})$, we obtain by Lemma \ref{orientationlem}, an
isomorphism
\begin{equation}
    \mc{R}^d_{u}: o_{x_d} \otimes \cdots \otimes o_{x_1} \lra o_{x_0}.
\end{equation}
Thus, taking into account the sign twisting $\vec{t}$, we define the operation
\begin{equation}
    \mu^d: CW^*(L_{d-1}, L_d) \otimes \cdots \otimes CW^*(L_0,L_1) \lra CW^*(L_0,L_d)
\end{equation}
as a sum
\begin{equation}
    \mu^d([x_d], \ldots, [x_1]) := \sum_{\deg(x_0) = 2- d + \sum \deg(x_k)} \sum_{u \in \overline{\mc{R}}^d(x_0; \vec{x})} (-1)^{\bigstar_d}\mc{R}^d_{u} ([x_d], \ldots, [x_1])
\end{equation}
where 
\begin{equation}
    \bigstar_d = \vec{t} \cdot \vec{\deg}(\vec{x}):=
        \sum_{i=1}^d i \cdot \deg (x_i).
\end{equation}
By looking at the codimension 1 boundary of 1-dimensional families of such
maps, and performing a tedious sign comparison analogous to those Appendix
\ref{orientationsection} (discussed in \cite{Seidel:2008zr}*{Prop. 12.3}), we
conclude that
\begin{lem}
    The maps $\mu^d$ satisfy the $\ainf$ relations.
\end{lem}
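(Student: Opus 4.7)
The plan is to apply the codimension 1 boundary principle (Lemma \ref{bordificationlem}) to the 1-dimensional components of the moduli spaces $\overline{\mc{R}}^d(x_0;\vec{x})$. Concretely, I fix a collection of asymptotics $(x_0;\vec{x})$ with $\deg(x_0) = 1 - d + \sum \deg(x_k)$, so that $\overline{\mc{R}}^d(x_0;\vec{x})$ is a compact oriented 1-manifold with boundary. The algebraic count of its boundary points vanishes, and the $\ainf$ relations will be the translation of this fact into an identity among the $\mu^d$.

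First I would enumerate the boundary components. By Lemma \ref{bordificationlem} (with no interior marked points and $\mathcal{T}_i$ ranging over codimension 1 strata of $\overline{\mc{R}}^d$), the boundary of $\overline{\mc{R}}^d(x_0;\vec{x})$ is covered by the images of inclusions of three types of strata: (i) products $\overline{\mc{R}}(x_0;\tilde x_0)\times \overline{\mc{R}}^d(\tilde x_0;\vec{x})$ corresponding to semi-stable strip breaking at the output, (ii) products $\overline{\mc{R}}^d(x_0;\tilde{\vec{x}})\times \overline{\mc{R}}(x_a;\tilde x)$ corresponding to strip breaking at an input, and (iii) the products $\overline{\mc{R}}^{d_1}(x_0;\vec{x}_1)\times\overline{\mc{R}}^{d_2}(y;\vec{x}_2)$ arising from the genuine codimension 1 boundary strata of the Stasheff polytope $\overline{\mc{R}}^d$, where a sub-disc with $d_2$ consecutive inputs bubbles off. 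The consistency of the universal Floer datum $\mathbf{D}_\mu$ guarantees that the Floer data on each stratum in (iii) agrees (up to conformal equivalence, hence inducing the same counts) with the pair of Floer data from the lower-dimensional moduli spaces, which is what allows counts on the boundary to be expressed as compositions of $\mu^{d_2}$ followed by $\mu^{d_1}$.

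Summing the signed contributions, the strata in (i) and (ii) contribute the terms of the form $\mu^1\circ \mu^d$ and $\mu^d\circ(\mathrm{id}\otimes\cdots\otimes\mu^1\otimes\cdots\otimes\mathrm{id})$, while the strata in (iii) contribute the terms $\mu^{d-l+1}(x_d,\ldots,x_{i+l+1},\mu^l(x_{i+l},\ldots,x_{i+1}),x_i,\ldots,x_1)$ appearing in the $\ainf$ relation (\ref{ainfeq}). The main obstacle is checking that the signs work out correctly: one must compare (a) the boundary orientation on $\partial\overline{\mc{R}}^d(x_0;\vec{x})$ inherited from the fixed orientation of $\overline{\mc{R}}^d$ and the linearization of Floer's equation, (b) the sign twisting datum $\vec{t}=(1,2,\ldots,d)$ built into the definition of $\mu^d$, and (c) the Koszul sign $(-1)^{\maltese_i}$ in (\ref{ainfeq}).

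To carry out the sign comparison, I would combine the orientation computations of Appendix \ref{orientationsection} with the product orientation rule for the boundary strata (iii), as stated in the general framework of Section \ref{openclosedfloeroperations}; the key point is that the sign twisting $\vec{t}=(1,\ldots,d)$ on $\mu^d$ is precisely calibrated so that, when restricted to a codimension 1 stratum $\overline{\mc{R}}^{d_1}\times\overline{\mc{R}}^{d_2}$, the product of twisting data on the two factors differs from the restricted twisting by exactly $\maltese_i$ modulo $2$, matching the Koszul sign in (\ref{ainfeq}). Given these orientation checks, which are essentially the classical verification due to Seidel \cite{Seidel:2008zr}*{Prop.\ 12.3}, the vanishing of the signed boundary count of $\overline{\mc{R}}^d(x_0;\vec{x})$ becomes exactly the $\ainf$ relation evaluated on $(x_1,\ldots,x_d)$ with output paired against $x_0$; since this holds for all $x_0$ and $\vec{x}$, we conclude that $\{\mu^d\}$ satisfies the $\ainf$ relations.
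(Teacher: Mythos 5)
Your proposal is correct and follows essentially the same route as the paper: analyze the codimension 1 boundary of the 1-dimensional moduli spaces $\overline{\mc{R}}^d(x_0;\vec{x})$, use consistency of the Floer data to identify the boundary strata with compositions of the $\mu^k$ and strip breaking, and defer the sign comparison to the orientation appendix and Seidel's Prop.\ 12.3. The paper states this in one sentence; your write-up just spells out the same argument in more detail.
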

\noindent The sign twisting datum used here will reappear with variations
later, so it is convenient to fix notation.
\begin{defn}
    The {\bf incremental sign twisting datum of length $d$}, denoted
    $\vec{t}_d$, is the vector $(1, 2, \ldots, d-1, d)$.  
\end{defn}

\subsection{From the open sector to the closed sector}

As in \cite{Abouzaid:2010kx}, define $\mc{R}^1_d$ to be the abstract moduli
space of discs with $d$ boundary positive punctures $z_1, \ldots, z_d$ labeled in counterclockwise order and 1 interior negative
puncture $z_{out}$, with the last positive puncture $x_d$ marked as
distinguished. Its Deligne-Mumford compactification inherits the structure of a
manifold with corners via the inclusion $\overline{\mc{R}}^1_d \hookrightarrow
\cocs$ where $h=1$, $n=1$, $\vec{m} = (d)$, $\mathbf{I} = \{1\}$,
$\vec{\mathbf{K}} = (\{\})$. 

\begin{figure}[h]
    \caption{Two drawings of representative of an element of the moduli space $\mc{R}^1_4$. The drawing on the left emphasizes the choices of strip-like and cylindrical ends. The distinguished boundary marked point is the one set at $-i$ on the right.}
    \centering
    \includegraphics[scale=0.7]{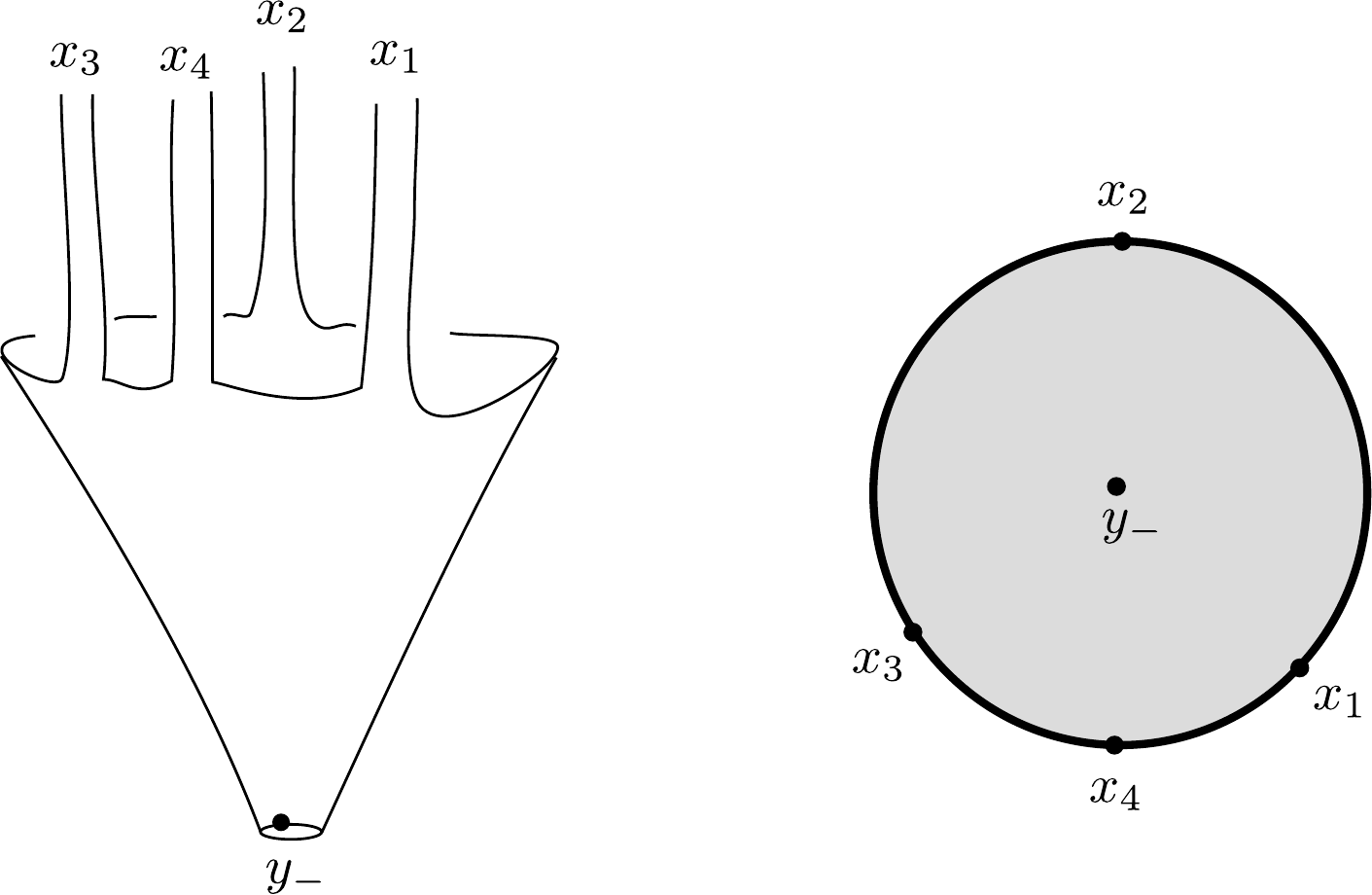}
\end{figure}

In the manner of (\ref{basicopenclosedmodulispace}), using our fixed generic
universal and consistent Floer data and an orientation for $\mc{R}^1_d$ fixed
in Appendix \ref{oc1orientation}, we obtain, for every Lagrangian labeling
$L_1, \ldots,
L_d$, and asymptotic conditions $\{x_1, \ldots, x_d, y_{out}\}$ moduli
spaces
\begin{equation}
    \mc{R}^1_d(y_{out};\{x_i\})
\end{equation}
which are compact smooth manifolds of dimension
\begin{equation}
    \deg(y_{out}) - n + d - 1 - \sum_{k=0}^{d-1} \deg(x_k).
\end{equation}
Then, fixing sign twisting datum
\begin{equation}
    \vec{t}_{\oc,d} := (1, 2, \ldots, d-1, d+1) = \vec{t}_d + (0, \ldots, 0,
    1), 
\end{equation}
we obtain associated operations
\begin{equation}
    \begin{split}
    \oc_d:= &(-1)^{\vec{t}_{\oc,d}}\mathbf{F}_{\mc{R}^1_d}: \\
    &\hom_{\w}(L_d, L_0) \otimes 
    \hom_\w(L_{d-1},L_d) \otimes \cdots \otimes \hom_\w(L_{0},L_1) 
    \lra CH^*(M);
\end{split}
\end{equation}
in other words, operations
\begin{equation}
    \oc_d: (\w_{\Delta} \otimes \w^{\otimes d-1})^{diag} \lra CH^*(M)
\end{equation}
of degree $n-d +1$.  The composite map 
\begin{equation}
    \oc := \sum_d \oc_d
\end{equation} 
therefore gives a map 
\begin{equation} 
    \oc: \r{CC}_*(\w,\w) \lra CH^*(M)
\end{equation} 
of degree $n$ (using the grading conventions for Hochschild homology
(\ref{hochschildchaingrading}).
Recall that the codimension-1 boundary of the Deligne-Mumford compactification
$\overline{\mc{R}}^1_d$
is covered by the following strata: 
\begin{align}
    \label{ocstrata1}\overline{\mc{R}^{m}} &\times_{i} \overline{\mc{R}}^1_{d-m+1} \ \ \ 1 \leq i < d-m+1\\
    \label{ocstrata2}\overline{\mc{R}^m}&\times_{d-m+1} \overline{\mc{R}}^1_{d-m+1} \ \ \ 1 \leq j \leq m
\end{align}
where the notation $\times_j$ means that the output of the first component is
identified with the $j$th boundary input of the second. In the second type of
stratum (\ref{ocstrata2}), the $j$th copy correspond to the stratum in which
the $j$th input point on $\mc{R}^m$ becomes the distinguished boundary marked
point on $\mc{R}_d^1$ after gluing.

The consistency condition imposed on Floer data implies that the Gromov
bordification $\overline{R}^1_d(y_0, \vec{x})$ is obtained by adding the images
of natural inclusions of moduli spaces of maps coming from the boundary strata
(\ref{ocstrata1})-(\ref{ocstrata2}) along with the following semi-stable
breakings
\begin{align} 
     \mc{R}^1_d(y_1,\vec{x}) \times \mc{M}(y_{out};y_1)&\ra
    \partial \overline{\mc{R}}^1_d(y_{out}; \vec{x})\\
    \overline{\mc{R}}^{1}(x_1; x) \times\overline{\mc{R}}^1_{d}(y_{out};
    \tilde{\vec{x}}) &\ra \partial
    \overline{\mc{R}}^1_d(y_{out}; \vec{x}) 
\end{align} 
where in the second type of stratum, $x$ is one of the elements of $\vec{x}$
and $\tilde{\vec{x}}$ is the sequence obtained by replacing $x$ in $\vec{x}$ by
$x_1$.
Thus, modulo a tedious sign verification whose details are discussed in Appendix
\ref{orientationsection}, we obtain that 
\begin{prop}
$\oc$ is a chain map.
\end{prop}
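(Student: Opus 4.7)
The plan is to analyze the codimension-one boundary of the one-dimensional moduli spaces $\overline{\mc{R}}^1_d(y_{out};\vec{x})$ and verify that the resulting identities assemble into the chain-map equation
\begin{equation}
d_{CH^*} \circ \oc = \oc \circ d_{\r{CC}_*}.
\end{equation}
Concretely, for each tuple $(\vec{x},y_{out})$ with $\dim \overline{\mc{R}}^1_d(y_{out};\vec{x})=1$, the sum over all rigid points in the codimension-one boundary, counted with signs coming from the orientation of the moduli space, must vanish. The input is that we have already listed these boundary strata: the two types of disc-bubbling strata (\ref{ocstrata1}) and (\ref{ocstrata2}), plus the two types of semi-stable strip and cylinder breakings at the boundary/interior punctures respectively.

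First I would match each boundary stratum with a term of the putative identity. The semi-stable cylinder breaking at the interior negative puncture contributes $d_{CH^*}(\oc_d([\mathbf{x}_0],[x_{d-1}],\ldots,[x_1]))$ where $\mathbf{x}_0$ denotes the distinguished (bimodule) input. Each semi-stable strip breaking at a positive boundary puncture contributes a term of the form $\oc_d \circ (id \otimes \cdots \otimes \mu^1 \otimes \cdots \otimes id)$, corresponding to the Floer differential applied to a single input. The strata of type (\ref{ocstrata1}), where a disc with $m$ positive boundary points bubbles off at a non-distinguished input, contribute $\oc_{d-m+1} \circ (id \otimes \cdots \otimes \mu^m \otimes \cdots \otimes id)$, and correspond precisely to the non-cyclic pieces of the Hochschild differential acting on a chain $\mathbf{x}_0 \otimes x_{d-1} \otimes \cdots \otimes x_1$. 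Finally, the strata of type (\ref{ocstrata2}), where the $j$th input is the gluing point that becomes the distinguished marked point after gluing, match the cyclic ``wrap-around'' terms of the Hochschild differential that feed into the bimodule multiplication $\mu^{r|1|s}_{\w_\Delta}$ (which for the diagonal bimodule is just $\mu^{r+1+s}$, up to sign, by definition).

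The main step is therefore the sign comparison: for each of the four types of boundary contributions above, one must check that the sign with which the stratum appears in $\partial \overline{\mc{R}}^1_d(y_{out};\vec{x})$, combined with the sign-twisting datum $\vec{t}_{\oc,d} = (1,2,\ldots,d-1,d+1)$, exactly reproduces the signs $(-1)^{\maltese_i}$, $(-1)^{\maltese_{-k}^{-(s+j+1)}}$, and $\sharp_j^i$ appearing in the definitions of $\mu^k$, $\mu_{\w_\Delta}^{r|1|s}$, and $d_{\r{CC}_*}$ respectively. The choice $\vec{t}_{\oc,d} = \vec{t}_d + (0,\ldots,0,1)$ is designed precisely so that the incremental piece $\vec{t}_d$ produces the $\ainf$ signs (as it did for $\mu^d$) while the extra $+1$ in the distinguished position accounts for the extra degree shift coming from the fact that the distinguished input plays the role of a bimodule element (full degree rather than reduced degree) in the Hochschild differential. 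The hard part will be carefully tracking these signs through the gluing isomorphism, especially for the cyclic strata (\ref{ocstrata2}), where the distinguished marked point changes identity under degeneration; this is what is deferred to Appendix \ref{orientationsection}, and the argument there will follow the same template used to verify the $\ainf$ relations for $\mu^d$, with the additional bookkeeping for the interior puncture and the Conley--Zehnder grading on $CH^*(M)$.

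Once the signs are matched, summing over all rigid boundary points of the one-dimensional $\overline{\mc{R}}^1_d(y_{out};\vec{x})$ yields zero by the standard cobordism argument, and the resulting identity, summed over $d\geq 1$ and all asymptotics, is exactly the statement that $\oc = \sum_d \oc_d$ intertwines the Hochschild differential on $\r{CC}_*(\w,\w)$ with the Floer differential on $CH^*(M)$. An a priori compactness input (Theorem \ref{c0bounds}) ensures that the one-dimensional moduli spaces are genuinely compact after adding the listed boundary strata, so no additional contributions can appear.
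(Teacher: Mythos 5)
Your proposal is correct and follows essentially the same route as the paper: identify the codimension-one boundary of the one-dimensional moduli spaces $\overline{\mc{R}}^1_d(y_{out};\vec{x})$ with the strata (\ref{ocstrata1})--(\ref{ocstrata2}) plus strip and cylinder breaking, match these respectively to the non-cyclic and cyclic terms of the Hochschild differential and to $\mu^1$/$d_{CH}$, and defer the sign bookkeeping (via $\vec{t}_{\oc,d}$) to the orientation appendix. No gaps; this is the paper's argument spelled out in slightly more detail.
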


\subsection{From the closed sector to the open sector} 
In a similar fashion,
define $\mc{R}^{1,1}_d$ to be the moduli space of discs with 
\begin{itemize}
    \item $d+1$ boundary marked points removed, $1$ of which is negative and
        labeled $z_0^-$, and $d$ of which are positive and labeled $(z_1,
        \ldots, z_d)$ in counterclockwise order from $z_0^-$; and
    \item one interior positive marked point $y_{in}$ removed.
\end{itemize}
Its Deligne-Mumford compactification inherits the structure of a manifold with
corners via the inclusion $\overline{\mc{R}}^{1,1}_d \hookrightarrow \cocs$,
where $h=1$, $n=1$, $\vec{m} = (d+1)$, $\mathbf{I} = \{\}$, $\vec{\mathbf{K}} =
(\{1\})$.
Thus, let us fix a universal and conformally consistent choice of Floer datum
$\mathbf{D}_{\co}$ on $\mc{R}^{1,1}_d$ for every $d \geq 1$.
Given a Lagrangian labeling and a set of compatible asymptotic conditions,
along with an orientation of $\mc{R}^{1,1}_d$ discussed in Appendix
\ref{co1orientation}, we obtain the moduli space $\mc{R}_d^{1,1}(x_{out};
y_{in},\vec{x})$ as in (\ref{basicopenclosedmodulispace}), which are compact
smooth manifolds of dimension
\begin{equation}
    \deg(x_{out}) + d - \deg(y_{in})- \sum_{k=1}^d \deg(x_k).
\end{equation}
Fix sign twisting datum 
\begin{equation}
    \vec{t}_{\co,d} = (0,1,2, \ldots, d)
\end{equation}
with respect to the ordering of inputs $(y_{in}, x_1, \ldots, x_d)$. Then,
define
\begin{equation}
    \co_d: CH^*(M) \lra \hom_{Vect}(\w^{\otimes d},\w)
\end{equation}
as 
\begin{equation}
    \co_d(y_{in})(x_d, \ldots, x_1) := (-1)^{\vec{t}_{\co,d}} \mathbf{F}_{\overline{\mc{R}}^{1,1}_d}(y_{in},x_d, \ldots, x_1)
\end{equation}
The composite map 
\begin{equation}
    \co = \sum_d \co_d
\end{equation}
gives a map $CH^*(M) \ra \r{CC}^*(\w,\w)$. 

The codimension-1 boundary of the Deligne-Mumford compactification
$\overline{\mc{R}}^{1,1}_d$ is covered by the natural images of the following
products:
\begin{align}
    \overline{\mc{R}}^{m} &\times_{i} \overline{\mc{R}}^{1,1}_{d-m+1}\ \ \ 1 < m < d-1,\ 1 \leq i < d-m+1\\
    \overline{\mc{R}}^{1,1}_{d-(k+l+1)+1} &\times_{k+1} \overline{\mc{R}}^{k+l+1}\ \ 0 < k+l < d-1.
\end{align}
using the notation $\times_j$ as in the last section to indicate gluing the
distinguished output of the first component to the $j$th (boundary) input of
the second component.

\begin{figure}[h]
    \caption{Two drawings of representative of an element of the moduli space $\mc{R}^{1,1}_5$. The drawing on the right emphasizes the choices of strip-like and cylindrical ends.}
    \centering
    \includegraphics[scale=0.7]{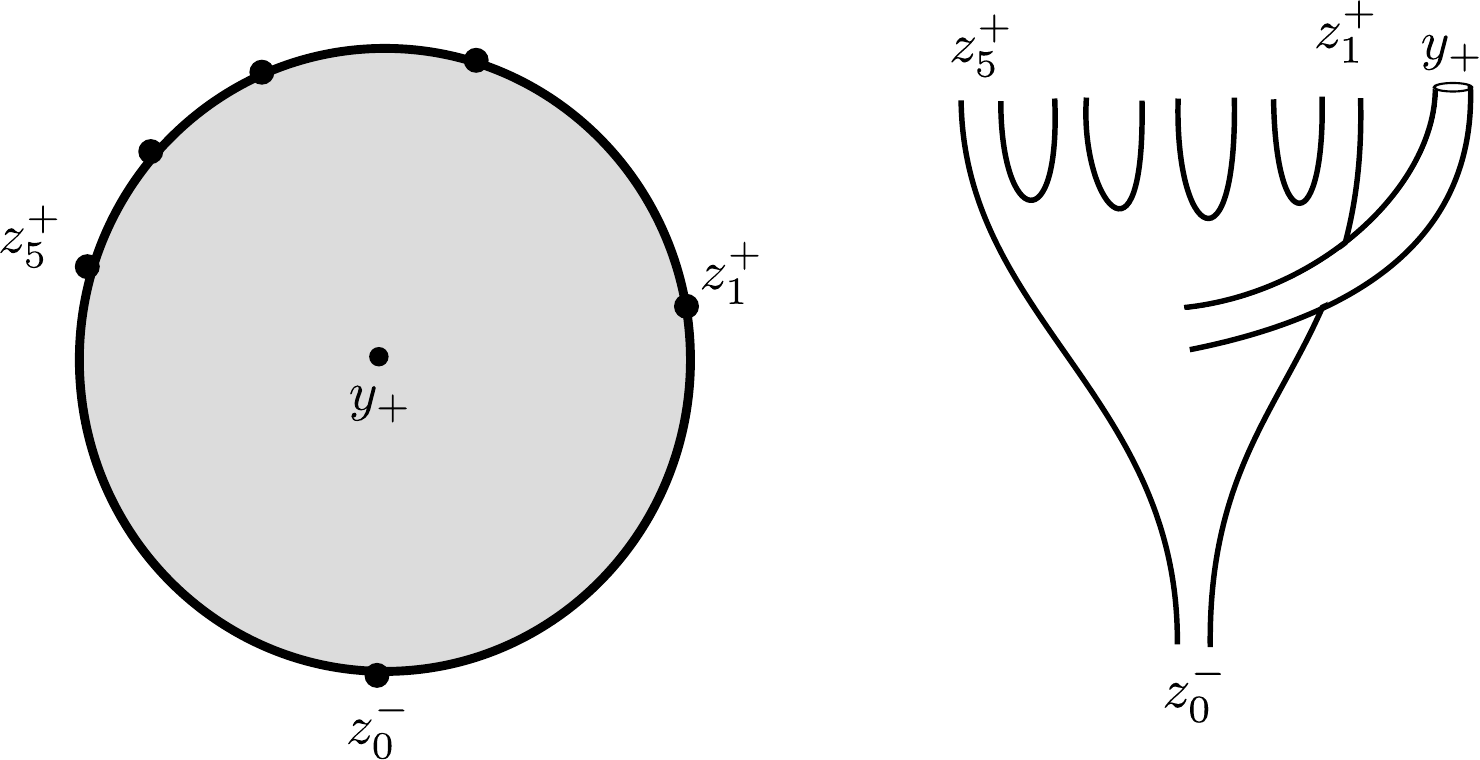}
\end{figure}

The consistency condition implies that the Gromov
bordification $\mc{\ol{R}}^{1,1}_d(x_{out};y_{in},\vec{x})$ is obtained by
adding images of the natural inclusions
\begin{align}
    \mc{M}(y_1;y_{in}) \times \mc{R}^{1,1}_d(x_{out}; y_1,\vec{x})  &\ra
    \bd \ol{\mc{R}}^{1,1}_d(x_{out}; y_{in},\vec{x}) \\
    \mc{R}^{1,1}_{d_2}(x^a;y_{in},\vec{x}^2)\times 
    \mc{R}^{d_1}(x_{out},\vec{x}^1) 
     &\ra \bd
    \ol{\mc{R}}^{1,1}_d(x_{out}; y_{in},\vec{x}) \\
    \mc{R}^{d_2}(x^a;\vec{x}^2) \times 
    \mc{R}^{1,1}_{d_1}(x_{out};y_{in},\vec{x}^1) 
    &\ra 
    \bd \ol{\mc{R}}^{1,1}_d(x_{out};
    y_{in},\vec{x})
\end{align}
where $d_1 + d_2 - 1 = d$, $\vec{x}^2$ is any consecutive sub-vector of size
$d_2$ and $\vec{x}^1$ is obtained by replacing $\vec{x}^2$ in $\vec{x}$ by
$x^a$.
By the above result about Gromov bordification, and a sign verification
discussed in Appendix \ref{orientationsection}, we see that:
\begin{prop}
    $\co$ is a chain map.
\end{prop}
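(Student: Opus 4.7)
The plan is to count, with appropriate signs, the codimension one boundary of the one-dimensional moduli spaces $\overline{\mc{R}}^{1,1}_d(x_{out}; y_{in}, \vec{x})$, and to identify the resulting four families of contributions with the four summands in the equation $d_{\r{CC}^*}\circ \co = \co \circ d_{SH}$, where $d_{\r{CC}^*}$ denotes the Hochschild cochain differential on $\r{CC}^*(\w,\w)$. The input for this is Lemma \ref{bordificationlem} combined with the stratification of $\overline{\mc{R}}^{1,1}_d$ already recorded just above the statement.

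In dimension one, the codimension one boundary of the Gromov bordification $\overline{\mc{R}}^{1,1}_d(x_{out}; y_{in}, \vec{x})$ splits into four classes:
(i) semi-stable cylinder breakings at the interior input, of the form $\mc{M}(y_1; y_{in}) \times \mc{R}^{1,1}_d(x_{out}; y_1, \vec{x})$, which contribute $\co_d \circ d_{SH}$;
(ii) semi-stable strip breakings at the boundary output $x_{out}$ or at one of the $x_i$, which contribute $\mu^1 \circ \co_d$ and $\co_d \circ (\mathrm{id} \otimes \cdots \otimes \mu^1 \otimes \cdots \otimes \mathrm{id})$;
(iii) stable disc bubblings of type $\overline{\mc{R}}^{m} \times_i \overline{\mc{R}}^{1,1}_{d-m+1}$, in which an $\ainf$ disc bubbles off at one of the boundary inputs, producing terms of the form $\co_{d-m+1}(y_{in}, \ldots, \mu^m(\ldots), \ldots)$;
(iv) stable bubblings of type $\overline{\mc{R}}^{1,1}_{d_2} \times_{k+1} \overline{\mc{R}}^{k+d_2+1}$, in which the interior puncture lies on a smaller open-closed disc whose boundary output is glued to a boundary input of an $\ainf$ disc carrying the overall negative boundary puncture, producing terms of the form $\mu^{k+d_2+1}(\ldots, \co_{d_2}(y_{in}, \ldots), \ldots)$.

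Summing over $d$, families (iii) and (iv) together recover precisely the two summands of the Hochschild cochain differential $\delta(\co(y_{in})) = \mu_{\w} \circ \widehat{\co(y_{in})} \mp \co(y_{in}) \circ \hat{\mu}_{\w}$ evaluated on $(x_d, \ldots, x_1)$, while families (i) and (ii) combine to produce $\co \circ d_{SH} + d_{\r{CC}^*}\co - \delta(\co(y_{in}))$ in the $\mu^1$ direction. Since the algebraic count of points in the oriented boundary of a compact one-dimensional manifold with corners vanishes, the sum of all four contributions is zero, which is exactly the chain map property.

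The main obstacle is the sign verification. The correct cancellation between (iii) and the $\mu_{\w} \circ \widehat{\co(y_{in})}$ term, and between (iv) and the $\co(y_{in}) \circ \hat{\mu}_{\w}$ term, requires comparing the boundary orientation on $\overline{\mc{R}}^{1,1}_d$ coming from the chosen orientation in Appendix \ref{co1orientation} against the product orientations on the glued strata, which pick up Koszul signs from the spectral-flow contributions at the glued strip- or cylinder-like ends. This is precisely what the sign twisting datum $\vec{t}_{\co,d} = (0, 1, 2, \ldots, d)$ is designed to absorb: the incremental shifts $1, 2, \ldots, d$ on the boundary inputs match the convention in the Hochschild differential that elements to the right of a given input contribute their reduced degrees, while the shift by $0$ on the interior input reflects the fact that $y_{in}$ appears with its full rather than reduced degree, as in (\ref{hochschildcochaingradings}). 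Granting this sign check, which is carried out in full generality in Appendix \ref{orientationsection}, the boundary count yields $(d_{\r{CC}^*}\circ \co - \co \circ d_{SH})(y_{in}) = 0$, as required.
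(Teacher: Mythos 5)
Your proposal is correct and follows essentially the same route as the paper: one analyzes the codimension-one boundary of the one-dimensional moduli spaces $\overline{\mc{R}}^{1,1}_d(x_{out};y_{in},\vec{x})$, matching cylinder/strip breakings and the two types of stable disc bubblings with the terms of $\co\circ d_{SH}$ and the Hochschild cochain differential, and deferring the orientation comparison to the sign twisting datum $\vec{t}_{\co,d}$ and Appendix~\ref{orientationsection}. The only caveat is a purely expository one: your grouping of the $\mu^1$ (semistable) contributions versus the stable bubblings into ``$\delta$'' and its complement is stated a bit loosely, but the total boundary count you describe is exactly the paper's argument.
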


\subsection{Ring and module structure compatibility} 
We will make two
assertions about the maps $\co$ and $\oc$, both of which follow from an
analysis of similar-looking moduli spaces.
\begin{prop}\label{ring}
    $[\co]$ is a ring homomorphism.
\end{prop}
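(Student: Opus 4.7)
The plan is to construct a chain-level homotopy exhibiting the identity $[\co(a \cdot b)] = [\co(a) \star \co(b)]$ in $\r{HH}^*(\w,\w)$, by means of a family of moduli spaces interpolating between the two operations.

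For each $d \geq 0$, I would consider the moduli space $\mc{R}^{2,1}_d$ of discs carrying $d$ positive boundary punctures, $1$ distinguished negative boundary puncture, and $2$ positive (ordered) interior punctures, viewed as a sub-space of $\mc{N}^{\mathbf{I},\vec{\mathbf{K}}}_{1,2,(d+1)}$ with $\mathbf{I} = \emptyset$ and $\vec{\mathbf{K}} = (\{1\})$. This is a manifold with corners of dimension $d+2$.  Choosing a universal and consistent Floer datum compatible with the restriction to the boundary strata of $\overline{\mc{R}}^{2,1}_d$ (so that it matches the previously chosen data for $\mc{R}^{1,1}_k$, $\mc{R}^k$, $\mc{S}_2$, and Floer continuation pieces), and fixing an incremental-type sign-twisting datum, the construction of Section \ref{openclosedfloeroperations} yields chain-level operations
\begin{equation*}
K_d : CH^*(M) \otimes CH^*(M) \otimes \w^{\otimes d} \lra \w
\end{equation*}
which assemble into a pre-morphism $K : CH^*(M) \otimes CH^*(M) \lra \r{CC}^*(\w,\w)$ of degree $-1$.

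Next, I would apply Lemma \ref{bordificationlem} to enumerate the codimension-$1$ strata of $\overline{\mc{R}}^{2,1}_d$ and identify the corresponding Floer-theoretic contributions. The essential strata are: (i) the two interior punctures collide, yielding a sphere bubble with two positive interior and one negative interior asymptotic attached at an interior node to an element of $\overline{\mc{R}}^{1,1}_d$; by the consistency of the Floer data on the sphere bubble this contributes $\co_d(a \cdot b)(x_1, \ldots, x_d)$; (ii) the disc breaks at an interior boundary node into two discs each carrying one of the interior inputs and some consecutive subrange of the $d$ boundary inputs, which by the compatibility with Floer data on $\mc{R}^{1,1}_\bullet$ contributes precisely the Yoneda product $\co(a) \star \co(b)$ (summed over splittings); (iii) standard Floer breaking on interior or boundary strip-like/cylindrical ends, contributing $K \circ (d \otimes \r{id} \pm \r{id} \otimes d)$ on the source and the Hochschild differential on the target; and (iv) boundary-point collisions inherited from $\overline{\mc{R}}^k$ strata, contributing the $\ainf$ pieces of the Hochschild differential on $\r{CC}^*(\w,\w)$.

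Combining (i)--(iv) via the codimension-$1$ boundary principle yields the chain-level identity
\begin{equation*}
\delta K(a \otimes b) + K(\mu^1 a \otimes b) \pm K(a \otimes \mu^1 b) \;=\; \co(a \cdot b) - \co(a) \star \co(b),
\end{equation*}
where $\delta$ is the Hochschild differential on $\r{CC}^*(\w,\w)$. Passing to cohomology, $[\co(a \cdot b)] = [\co(a) \star \co(b)]$, which together with the unitality already established (via an analogous unstable-moduli argument comparing $\co$ of the symplectic-cohomology unit to a Hochschild cocycle representing the identity) proves the claim. The principal obstacle is the orientation bookkeeping on $\overline{\mc{R}}^{2,1}_d$---in particular ensuring that stratum (i) carries sign $+1$ and stratum (ii) carries sign $-1$ relative to the chosen incremental twisting datum---but this is a direct application of the general framework developed in Appendix \ref{orientationsection}, modeled on the sign comparisons for $\co$ itself.
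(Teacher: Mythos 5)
The essential difficulty with your proposal is a dimension count. You take $\mc{R}^{2,1}_d$ to be the full moduli space of discs with $d$ positive boundary punctures, one negative boundary puncture, and two positive interior punctures, and you correctly compute its dimension as $d+2$. But a chain homotopy $K : CH^*(M)\otimes CH^*(M) \to \r{CC}^*(\w,\w)$ of degree $-1$ must be controlled, for each $d$, by a moduli space of dimension $d+1$ (one more than the $d$-dimensional space $\mc{R}^{1,1}_d$ governing $\co_d$). Using the formula
\[
\dim\mc{Q}^d(\vec{x}_{out},\vec{y}_{out};\vec{x}_{in},\vec{y}_{in}) = \deg(x_{out}) + d_{\mc{Q}} - \textstyle\sum_i \deg(x_i) - \sum_j\deg(y_j)
\]
(here $(2-h-|\vec{x}_{out}|)n = 0$), rigid maps over a $(d+2)$-dimensional domain satisfy $\deg(x_{out}) = -(d+2) + \sum\deg(x_i) + \deg(y_1) + \deg(y_2)$, so your $K$ has degree $-2$, not $-1$, and hence cannot serve as the homotopy you need. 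A related symptom of this overcount is that in the full, unconstrained compactification, strata (i) and (ii) of your list are not genuinely the ends of a one-parameter family in the way the argument requires.

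The fix, which is what the paper actually does, is to cut down to a codimension-$1$ submanifold by imposing a cross-ratio constraint among the two interior punctures and the distinguished output. Concretely, take the version of $\mc{P}^2_d$ in which both interior marked points are positive and $z_d$ is the negative boundary output: for each $r\in(0,1)$ one requires that after automorphism $z_d$, $\kappa_+$, $\kappa_-$ lie at $-i$, $-r$, $r$, so the union over $r$ has dimension $d+1$. The operation $K$ built from this space is of degree $-1$, the $r\to 0$ end is the codimension-$1$ stratum where the two interior inputs collide onto a pair-of-pants sphere (giving $\co(a\cdot b)$), and the $r\to 1$ end is the codimension-$1$ stratum where each interior input bubbles onto its own $\mc{R}^{1,1}_\bullet$ disc attached to a common $\mc{R}^k$ component (giving the Yoneda product $\co(a)\star\co(b)$). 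Once you add this constraint, your argument becomes essentially the one the paper sketches at the end of the section on ring and module compatibility (and used in full detail for Proposition \ref{module}), including the codimension-$1$ boundary principle and the sign-twisting bookkeeping.
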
 

Via its module structure over Hochschild cohomology and the ring homomorphism
$[\co]$, Hochschild homology $\r{HH}_*(\w,\w)$ obtains the structure of a module
over $SH^*(M)$. With respect to this structure, using a similar argument, we
can prove the following:
\begin{prop}\label{module}
    $[\oc]$ is a map of $SH^*(M)$-modules.
\end{prop}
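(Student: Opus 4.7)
The plan is to construct a one-parameter family of moduli spaces interpolating between the two sides of the desired identity. Concretely, for each $d \geq 1$, let $\mc{P}_d$ denote the moduli space of discs carrying $d$ positive boundary punctures (labeled in counterclockwise order starting from a distinguished input $z_1$), one negative interior puncture $z_{out}$, and one positive interior puncture $z_{in}$. The free parameter (beyond the data describing an element of $\mc{R}^1_d$) is the relative position of $z_{in}$ inside the disc, so $\dim \mc{P}_d = \dim \mc{R}^1_d + 2$. I would embed $\mc{P}_d$ into $\ocs$ with $h=1$, $n=2$, $\vec{m}=(d)$, $\mathbf{I}=\{out\}$, $\vec{\mathbf{K}}=(\emptyset)$, and choose a universal, consistent Floer datum compatible with the Floer data already fixed for $\oc$, $\co$, $\mu^d$, and the sphere $\mc{S}_2$ carrying the pair-of-pants product.

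Choosing the sign twisting datum $(0,1,2,\dots,d+1)$ on $(y_{in},x_1,\dots,x_d)$ (analogous to $\vec{t}_{\oc,d}$, shifted to account for the extra closed input), the operation $(-1)^{\vec{t}}\mathbf{F}_{\overline{\mc{P}}_d}$ defines a chain homotopy
\[
H_d : CH^*(M) \otimes (\w_\Delta \otimes \w^{\otimes d-1})^{diag} \lra CH^*(M)
\]
of degree $n - d$. The content of the proposition then reduces to identifying the codimension-one boundary strata of $\overline{\mc{P}}_d$ and matching them with the algebraic operations appearing in $\alpha \cdot [\oc](b) - [\oc](\co(\alpha) \cap b)$. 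The relevant strata are: (i) $z_{in}$ collides with $z_{out}$, bubbling off a three-punctured sphere carrying the pair-of-pants product and leaving a standard $\mc{R}^1_d$ disc; the resulting contribution is $\mathcal{F}_2(\alpha, \oc_d(b))$, which on cohomology is $\alpha \cdot [\oc](b)$; (ii) $z_{in}$ collides with a segment of the boundary containing some consecutive subset of the $x_i$'s (possibly including the distinguished input $z_1$, i.e.\ crossing the cyclic ``seam''), bubbling off a $\mc{R}^{1,1}_k$-disc carrying a $\co_k$-operation glued to a smaller $\mc{R}^1_{d-k+1}$-disc; (iii) the usual semi-stable strip and cylinder breakings, which give the chain-complex differentials applied to $H_d$; (iv) breakings of the $\mc{R}^d$-type in which boundary punctures collide, giving $\mu$-operations composed with $H$.

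The main task, and the primary obstacle, is to verify that summing type (ii) over all positions where $z_{in}$ can approach the boundary (and over all $k$) reproduces \emph{exactly} the cap product $\co(\alpha) \cap b$ followed by $\oc$. Because the distinguished boundary input plays the role of the bimodule slot in the Hochschild chain, the position of the disc bubble relative to $z_1$ precisely tracks the index $i$ at which the cochain $\co(\alpha)$ is inserted in the cap-product formula
\[
\alpha \cap (\mathbf{b} \otimes x_1 \otimes \cdots \otimes x_n) = \sum (-1)^\diamond \mu_\B(x_{j+1},\ldots,\mathbf{b},\ldots,\alpha(\ldots),\ldots,x_s)\otimes x_{s+1}\otimes\cdots\otimes x_j,
\]
including the cases where the $\mu_\B$-operation wraps around $\mathbf{b}$. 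The bubble being on either side of $z_1$ produces the ``wrapped'' terms, while other positions produce the straightforward insertions. I expect this matching will work cleanly but will require careful bookkeeping of the cyclic ordering and the decomposition of boundary arcs, together with the consistency of Floer data under gluing.

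Once the boundary identification is established, the standard codimension-one boundary principle (Lemma \ref{bordificationlem}) applied to one-dimensional components of $\overline{\mc{P}}_d(y_{out}; y_{in}, \vec{x})$, summed over $d$, yields on the chain level a relation of the form
\[
d_{CH^*} \circ H + H \circ d_{Hoch} \;=\; \mathcal{F}_2(\alpha, \oc(b)) \;-\; \oc(\co(\alpha) \cap b) \;+\; (\text{signed correction terms}),
\]
where the correction terms vanish on cohomology. The sign verification, which I defer to the framework of Appendix \ref{orientationsection}, proceeds by the same comparison carried out for the $\ainf$-equations and for Proposition \ref{ring}; the choice of sign twisting datum above is dictated by the requirement that type (i) and (ii) boundaries appear with opposite signs. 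Passing to cohomology then gives $[\oc](\co(\alpha) \cap b) = \alpha \cdot [\oc](b)$, which is the $SH^*(M)$-linearity claimed.
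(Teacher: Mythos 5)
Your overall strategy — an interpolating family of discs with one interior output and one interior input, whose degenerations produce the pair-of-pants term on one side and the $\co$-insertion on the other — is exactly the paper's, but your specific family is one real dimension too large, and this is a genuine gap rather than bookkeeping. With $d$ boundary inputs your unconstrained space has $\dim \mc{P}_d = \dim\mc{R}^1_d + 2 = d+1$, so the associated operation has Hochschild-graded degree $n-2$ (your "$n-d$" is off by one on the bare tensor factors), i.e.\ it is a homotopy-of-homotopies, not a homotopy between two degree-$n$ maps. The same mismatch shows up geometrically: the stratum you need in (i), $\overline{\mc{R}}^1_d \times \overline{\mc{S}}_2$, has dimension $d-1$ and is therefore codimension two in your family — it is not a face of the compactification and never enters the codimension-one boundary analysis (in the real-blow-up compactification it only appears together with an unwanted $S^1$ of interior gluing angles, which clashes with the fixed asymptotic markers). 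So the one-dimensional solution spaces over $\overline{\mc{P}}_d$ would yield an identity containing no $\mc{F}_2(\alpha,\oc(b))$ term at all.

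The paper's fix is to cut your family down by one dimension: in $\mc{P}^2_d$ the distinguished boundary point and the two interior punctures are constrained (after automorphism) to lie at $-i$, $-r$, $r$, so the space fibers over the interval $r\in(0,1)$. Its $r\to 0$ end is precisely $\overline{\mc{R}}^1_{d+1}\times_2\overline{\mc{S}}_2$, giving the pair-of-pants composed with $\oc$, and its $r\to 1$ end is the \emph{three}-component stratum $\overline{\mc{R}}^{1,1}_{d_3}\times\overline{\mc{R}}^{k+1+d_2}\times\overline{\mc{R}}^1_{d_1}$: the middle $\mu$-disc is what assembles $\co(\alpha)(\ldots)$ with $\mathbf{b}$ and neighboring inputs, reproducing the cap-product formula $\oc(\co(\alpha)\cap b)$ on the chain level. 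This also answers the matching worry you flagged in (ii): in your unconstrained family the codimension-one boundary-bubbling faces are two-component, and "$\oc$ with $\co_k(\alpha)$ inserted in a slot" is not chain-level equal to $\oc(\co(\alpha)\cap b)$; the intermediate $\mu$-component forced by the cross-ratio constraint is exactly what makes the identification exact rather than up to further correction. With that constraint imposed (and the sign twisting datum $(-1,0,1,\ldots,d)$ the paper uses), the rest of your outline — consistency of Floer data, the side strata giving the chain-homotopy terms, and the sign verification via the appendix — goes through as you describe.
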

\noindent The chain-level statement is that the following diagram homotopy commutes:
\begin{equation}\label{modulehomotopy}
    \xymatrix{\r{CC}_*(\w) \times CH^*(M) \ar[r]^{(\oc,\id)}\ar[d]^{(\id,\co)} & CH^*(M) \times CH^*(M) \ar[d]^{*} \\
    \r{CC}_*(\w) \times \r{CC}^*(\w) \ar[r]^{\mbox{ }\mbox{ }\mbox{ }\oc \circ \cap} & CH^*(M)},
\end{equation}
\noindent To prove Proposition \ref{modulehomotopy}, for $r \in (0,1)$, define
the auxiliary moduli space \begin{equation}\mc{P}^2_{d}(r)
\end{equation}
to consist of the unit disc in $\C$ with the following data:
\begin{itemize}
    \item $d+1$ positive boundary marked points $(z_0, \ldots, z_d)$, with
       $z_d$ marked as distinguished, and

    \item two interior marked points $(\kappa_+,\kappa_-)$, one positive and
        one negative 
\end{itemize}
such that,
\begin{equation}
    \textrm{after automorphism, the points $z_d$, $\kappa_+$, $\kappa_-$ lie at $-i$, $-r$, and $r$ respectively}.
\end{equation}
These spaces vary smoothly with $r$ and their union 
\begin{equation}
\mc{P}^2_d := \bigcup_{r\in (0,1)}\mc{P}^2_d (r)
\end{equation}
is naturally a codimension 1 submanifold of genus 0 open-closed
strings consisting of a single disc with $d$ positive boundary punctures and
two interior punctures, one positive and one negative.  Compactifying, we
obtain a family that submerses over $r \in [0,1]$ and see that with
codimension-1 boundary of
$\overline{\mc{P}}^2_d$ is covered by the images of the natural inclusions of
the following products (some living over the endpoints $r \in \{0,1\}$ and some
living over the entire interval):
\begin{align}
    \ol{\mc{R}}^{1,1}_{d_3} \times_{n+k+1} \ol{\mc{R}}^{k+1 + d_2} &\times_m \ol{\mc{R}}^{1}_{d +1 - d_3 - d_2 - k }, \ \ \  d_1 + d_2 + d_3 - 2 = d \ \ \ (r=1) \\
     \ol{\mc{R}}^1_{d+1} &\times_{2} \ol{\mc{S}}_2  \ \ \ \ (r=0) \\
     \ol{\mc{R}}^{m} &\times_n \ol{\mc{P}}^2_{d-m+1}  \\
     \ol{\mc{R}}^{m} &\times_{d-m+1} \ol{\mc{P}}^2_{d-m+1}.
\end{align}

\begin{figure}[h] 
    \caption{The space $\mc{P}^2_d$ and its $r\ra \{0,1\}$ degenerations. Not shown in the degeneration: the special input point is constrained to remain on the middle $\mu$ bubble.  \label{modulestructurefig1}}
    \centering
    \includegraphics[scale = 0.7]{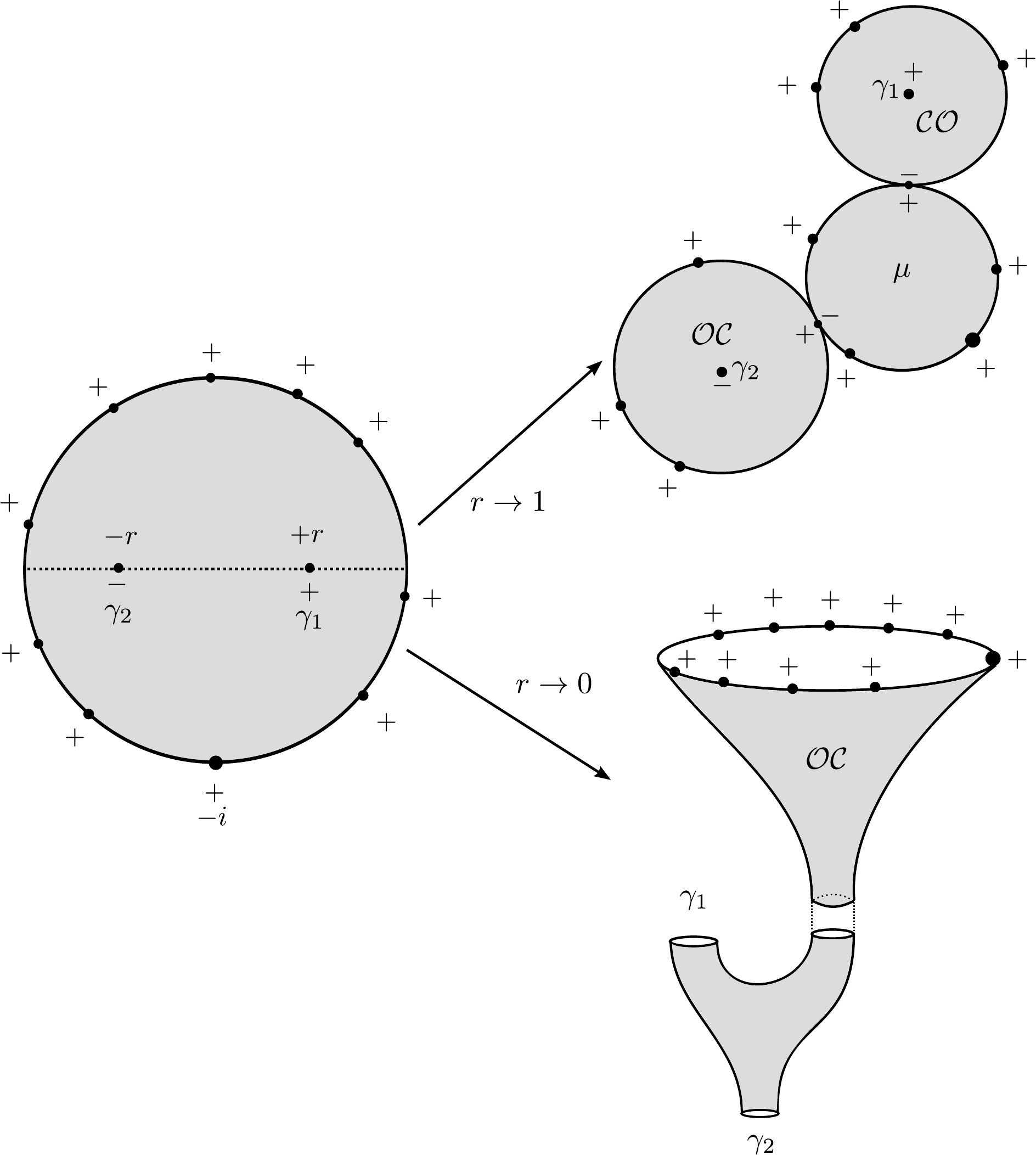}
\end{figure}

Fix a universal and consistent Floer data for all $\overline{\mc{P}}^2_d$.
Given a set of Lagrangian labels \begin{equation}
    L_0, \ldots, L_d, L_{d+1} = L_0\end{equation} and
compatible asymptotic conditions 
$\vec{x} = \{x_k \in
\mc{X}(L_{k},L_{k+1})\}_{k=0}^{d}$ and $\gamma_{-}, \gamma_{+}$,
we obtain a moduli space of maps
\begin{equation}
    \mc{P}^2_{d}(\gamma_-; \gamma_+,\vec{x}).
\end{equation} 
which are smooth compact manifolds of dimension
\begin{equation}
    \deg (\gamma_-) -n + d+1 - \deg (\gamma_+) - \sum_{k=0}^{d} \deg (x_k).
\end{equation}
Consistency of our Floer data implies that the Gromov bordification
$\chtopmaps$ is obtained by adding the images of the natural inclusions
\begin{align}
\mc{M}(\gamma_0;\gamma_+) \times \overline{\mc{P}}^2_{d}(\gamma_-; \gamma_0,\vec{x}) &\ra \bd \chtopmaps\\
\overline{\mc{P}}^2_d(\gamma_0; \gamma_+,\vec{x}) \times \mc{M}(\gamma_-;\gamma_0) &\ra \bd \chtopmaps\\
\label{stratum1}\ol{\mc{R}}^{d_1}(x_a;\vec{x}^2) \times \overline{\mc{P}}^2_{d_2}(\gamma_-;\gamma_+,\vec{x}^1)  &\ra \bd\chtopmaps\\
\label{stratum2}\overline{\mc{R}}^1_d(\gamma_1; \vec{x}) \times \overline{\mc{S}}_2(\gamma_-; \gamma_+,\gamma_1) &\ra \bd\chtopmaps \\
\label{stratum3}\overline{\mc{R}}^{1,1}_{d_3}(x_b; \gamma_+,\vec{x}^3) \times 
\overline{\mc{R}}_{d_2}(x_a; \vec{x}^2)\times 
\overline{\mc{R}}^1_{d_1}(\gamma_-; \vec{x}^1) &\ra\bd\chtopmaps
\end{align}
where 
\begin{itemize}
    \item in (\ref{stratum1}), $\vec{x}^2$ is a subvector of $\vec{x}$, and
        $\vec{x}^1$ is obtained from $\vec{x}$ by replacing $\vec{x}^2$ by
        $x_a$

    \item in (\ref{stratum3}), $\vec{x}^3$ is a subvector of $\vec{x}$, not including the distinguished input $x_0$.
        $\widehat{\vec{x}^2}$ is obtained from $\vec{x}$ by replacing $\vec{x}^3$
        by $x_b$, and then performing a cyclic permutation that brings the
        distinguished output $x_0$ to the right of $x_b$. $\vec{x}^2$ is a
        subvector of $\widehat{\vec{x}}^2$ that includes both $x_b$ and $x_0$, and $\vec{x}^1$ is obtained from
        $\widehat{\vec{x}}^2$ by replacing $\vec{x}^2$
        with $x_a$.
\end{itemize}
Now, define the map
\begin{equation}
    \mc{H}_d: CH^*(M) \otimes (\w_{\Delta} \otimes \w^{\otimes d})^{diag} \ra CH^*(M)
\end{equation}
as
\begin{equation}
    \mc{H}_d := (-1)^{\vec{t}_{\mc{P}^2_d}}\mathbf{F}_{\overline{\mc{P}}^2_d},
\end{equation}
where we use sign twisting datum
\begin{equation}
    \vec{t}_{\mc{P}^2_d} := (-1, 0, 1, \ldots, d)
\end{equation}
corresponding to the ordering of inputs $(\kappa_+,z_0, \ldots, z_d)$.
The composite map $\mc{H} = \sum_d \mc{H}_d$ gives a map 
\begin{equation}\mc{H}: CH^*(M) \times
    \r{CC}_*(\w,\w) \lra CH^*(M).\end{equation} 
By the above result about Gromov bordifications and a sign verification
discussed in Appendix \ref{orientationsection} we conclude that
\begin{prop}
    For any $\a,s \in \r{CC}_*(\w),CH^*(M)$, 
\begin{equation}
d_{SH} \circ \mc{H}(\alpha,s) \pm \mc{H}(\delta(\alpha),s) \pm
\mc{H}(\alpha,d_{SH}(s)) = \oc(\alpha) * s - \oc(\co(s) \cap \alpha).
\end{equation}
\end{prop}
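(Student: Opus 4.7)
The plan is to deduce the stated identity as the boundary relation for the one-dimensional components of the moduli spaces $\chtopmaps$, parametrized by the codimension-one family $\mc{P}^2_d \subset \cocs$ and its closure $\overline{\mc{P}}^2_d$. As explained above, the codimension-one boundary of $\overline{\mc{P}}^2_d$ has four types of strata: two ``endpoint'' strata at $r=1$ and $r=0$, a stratum (\ref{stratum1}) corresponding to bubbling off a disc with only boundary marked points, and semi-stable cylinder/strip breakings which, by the standard arguments already employed for $\oc$ and $\co$, produce the three differential terms $d_{SH}\circ \mc{H}(\alpha,s)$, $\mc{H}(\delta\alpha,s)$ and $\mc{H}(\alpha,d_{SH}s)$.

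The geometric heart of the proof is the identification of the two endpoint strata with the terms on the right-hand side. At $r=1$, the degeneration (\ref{stratum3}), consisting of an $\overline{\mc{R}}^{1,1}_{d_3}$ component glued through an interior node to an $\overline{\mc{R}}^{d_2}$ which then glues through a boundary node to an $\overline{\mc{R}}^1_{d_1}$, is precisely the composition that computes $\oc(\co(s)\cap\alpha)$: the closed-open map turns $\gamma_+=s$ into a Hochschild cochain, this cochain combines with a cyclic segment of $\alpha$ via an $\ainf$ multiplication (yielding the cap product), and the result feeds into the open-closed map producing $\gamma_-$. At $r=0$, the two interior marked points collide and bubble off a pair-of-pants $\overline{\mc{S}}_2$ attached via a single interior node to the disc $\overline{\mc{R}}^1_{d+1}$, which is by definition the composition $\oc(\alpha)\ast s$. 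The consistency condition on universal Floer data guarantees that on each of these boundary strata, the moduli problem factorizes and its contribution to the count equals the composition of the two pieces.

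Having identified all codimension-one contributions, I would then consider the signed count of boundary points of the oriented one-dimensional moduli spaces $\overline{\mc{P}}^2_d(\gamma_-;\gamma_+,\vec{x})$, summed over all $d$ and all compatible asymptotic data, which vanishes. Collecting contributions by type and packaging them into the maps $\oc$, $\co$, the pair-of-pants product $\ast$, the cap product, and the Hochschild and Floer differentials produces the desired identity. The sign twisting datum $\vec{t}_{\mc{P}^2_d}=(-1,0,1,\ldots,d)$ has been chosen precisely so that the incremental shifts agree with those used to define $\oc_d$ and $\co_d$ upon gluing.

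The main obstacle, as is typical for such TQFT-style arguments, is the verification of signs: one must check that the induced orientations on the four families of boundary strata of $\overline{\mc{P}}^2_d$ agree with the product orientations coming from the individual moduli spaces twisted by the appropriate Koszul shifts from $\vec{t}_{\mc{P}^2_d}$, $\vec{t}_{\oc,\bullet}$, $\vec{t}_{\co,\bullet}$, and $\vec{t}_\bullet$. This is a concrete but somewhat tedious computation of the same flavor as the sign checks for the $\ainf$ relations and for $\oc$ and $\co$ being chain maps, and is the type of calculation carried out in Appendix \ref{orientationsection}; once the sign bookkeeping is in place, the geometric content of the proof is essentially the stratum identifications described above.
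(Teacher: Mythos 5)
Your proposal is correct and follows essentially the same route as the paper: the identity is read off from the codimension-one boundary of the one-dimensional moduli spaces over $\overline{\mc{P}}^2_d$, with the $r=0$ stratum $\ol{\mc{R}}^1_{d+1}\times_2\ol{\mc{S}}_2$ giving $\oc(\alpha)\ast s$, the $r=1$ three-level stratum giving $\oc(\co(s)\cap\alpha)$, the disc-bubbling and cylinder/strip breakings giving the three differential terms, and the signs deferred to the orientation analysis of Appendix \ref{orientationsection}. Your identification of the strata, including the role of the sign twisting datum $\vec{t}_{\mc{P}^2_d}$, matches the paper's argument.
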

Thus, $\mc{H}$ is the desired chain homotopy for (\ref{modulehomotopy}),
concluding the proof of Proposition \ref{module}.  We briefly indicate how to
change this argument to prove Proposition \ref{ring}. One considers operation
associated to the same abstract moduli space as $\mc{P}^2_d$, where both
interior punctures are marked as positive points and the distinguished boundary
input is now marked as an output. The associated Floer theoretic operation with
a similar sign twist gives a homotopy between the Yoneda product applied to
elements of $\co$ (the degenerate limit $r \ra 1$) and the pair of pants
product applied before applying $\co$ (the limit $r \ra 0$).

\begin{proof}[Proof of Proposition \ref{surjectivity}]
    Suppose $M$ is non-degenerate, and let $\sigma$ be any homology level
    pre-image of $1 \in SH^*(M)$ via the map $\oc$. Then, if $s$ is another
    element of $SH^*(M)$, we see that by Proposition \ref{module}, on homology
    \begin{equation}
        \oc(\co(s) \cap \sigma) = s \cdot \oc(\sigma) = s \cdot 1 = 1.
    \end{equation}
    In particular this implies that $\co(s) \cap \sigma$ is a preimage of $s$,
    and $\co(s)$ cannot be zero unless $s$ is.
\end{proof}

\subsection{Two-pointed open-closed maps}
Since the two-pointed complexes 
\[{_2}\r{CC}_*(\w,\w),\ {_2}\r{CC}^*(\w,\w)\] 
arise naturally from a bimodule perspective, we will define variants of the
chain-level map $\oc$ and $\co$ between $SH^*(M)$ and the respective
two-pointed complexes: 
\begin{align}
    {_2}\oc: {_2}\r{CC}_*(\w,\w) &\lra SH^*(M) \\
    {_2}\co: SH^*(M) & \lra {_2}\r{CC}^*(\w,\w)
\end{align}
To ensure consistency with existing arguments, we prove that the
resulting maps are in fact quasi-isomorphic to $\oc$ and $\co$.

\begin{defn} \label{twopointedocmod}
    The {\bf two-pointed open-closed moduli space} with $(k,l)$ marked points
    \begin{equation}
        \mc{R}^1_{k,l}
    \end{equation}
    is the space of discs with one interior negative puncture labeled
    $y_{out}$, and $k+l+2$ boundary punctures, labeled in counterclockwise order
    $z_0,z_1,\ldots,z_k, z_0', z_1', \ldots, z_l'$, such that:
    \begin{equation}
        \textrm{up to automorphism, }z_0,\ z_0',\ \textrm{and }y_{out}\ \textrm{are constrained to lie at }-i,\ i\ \textrm{and }0\textrm{ respectively}.
    \end{equation}
    Call $z_0$ and $z_0'$ the {\bf special inputs} of any such disc.
\end{defn}
\begin{rem}
    The moduli space $\mc{R}^1_{k,l}$ is a codimension one submanifold of
    $\mc{R}^1_{k+l+2}$, and thus has dimension $k+l$.  
\end{rem}

\begin{figure}[h]
    \caption{A representative of an element of the moduli space $\mc{R}^1_{3,2}$ with special points at 0 (output), $-i$, and $i$. }
    \centering
    \includegraphics[scale=0.7]{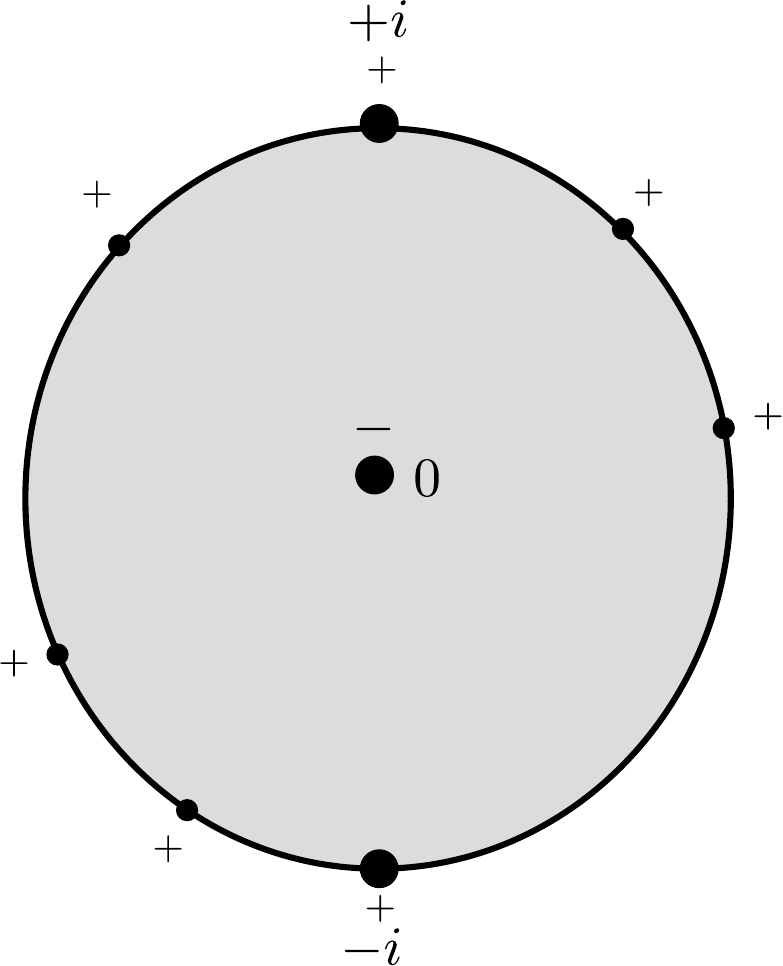}
\end{figure}

The boundary strata of Deligne-Mumford compactification
$\overline{\mc{R}}^1_{k,l}$ is covered by the images of the natural inclusions
of the following products:
\begin{align}
    \label{tocspecialpoint0}    \overline{\mc{R}}^{k'} &\times_{n+1} \overline{\mc{R}}^{1}_{k-k'+1,l},\ \ \ 0 \leq n < k-k'+1 \\
    \label{tocspecialpoint1}\overline{\mc{R}}^{l'} &\times_{(n+1)'} \overline{\mc{R}}^{1}_{k,l-l'+1},\ \ \ 0 \leq n' < l - l'+1 \\
    \label{tocspecialpoint2}\overline{\mc{R}}^{k'+l'+1} &\times_{0} \overline{\mc{R}}^{1}_{k-k',l-l'} \\
    \label{tocspecialpoint3}\overline{\mc{R}}^{l'+k'+1} &\times_{0'} \overline{\mc{R}}^{1}_{k-k',l-l'}.
\end{align}
Here the notation $\times_j$ indicates that one glues the distinguished output
of the first factor to the input $z_j$, and the notation $\times_{j'}$
indicates that one glues the distinguished output of the first factor to the
input $z_j'$.  Moreover, in (\ref{tocspecialpoint2}), after gluing the output
of the first disc to to the first special point $z_0$, the $k'+1$st input
becomes the new special point $z_0$. Similarly in (\ref{tocspecialpoint3}),
after gluing the output of the first stable disc to the second special point
$z_0'$, the $l'+1$st input becomes the new special point $z_0'$.
Thinking of $\mc{R}^{1}_{k,l}$ as a submanifold of open-closed strings, we
obtain, given a compatible Lagrangian labeling $\{L_0, \ldots, L_k, L_0',
\ldots, L_l'\}$ asymptotic input chords $\{x_0, x_1, \ldots, x_k, x_0', x_1',
\ldots,
x_l'\}$ and output orbit $y$, Floer theoretic moduli spaces
\begin{equation}
    \overline{\mc{R}}^{1}_{k,l}(y; x_0, x_1, \ldots, x_k, x_0', x_1', \ldots, x_l')
\end{equation}
of dimension
\begin{equation}
    k+l -n + \deg(y)- \deg(x_0)- \deg(x_0') - \sum_{i=1}^k \deg(x_i) - \sum_{j=1}^l \deg(x_j').
\end{equation}
Here, $L_k, L_0'$ are adjacent to the second special point $z_0'$ and $L_l',
L_0$ are adjacent to $z_0$ (with corresponding inputs $x_0', x_0$).  Using the sign twisting datum
\begin{equation}
    \vec{t}_{_2 \oc_{k,l}} = (1, 2, \ldots, k+1, k+3, k+4, \ldots, k+2+l)
\end{equation}
with respect to the ordering of inputs $(z_0, \ldots, z_k, z_0', \ldots,
z_l')$, define associated Floer-theoretic operations
\begin{equation}
    _2 \oc_{k,l} := (-1)^{\vec{t}_{_2 \oc}} \mathbf{F}_{\overline{\mc{R}}^1_{k,l}} : (\w_{\Delta}
    \otimes \w^{\otimes l} \otimes \w_{\Delta} \otimes \w^{\otimes k})^{diag} \lra
    CH^*(M).
\end{equation}
The two-pointed open-closed map is defined to be the sum of these operations:
\begin{equation}
    _2 \oc = \sum_{k,l} {_2} \oc_{k,l} : (\w_{\Delta} \otimes T\w \otimes \w_{\Delta} \otimes T\w)^{diag} \lra CH^*(M).
\end{equation}
With respect to the grading on the 2-pointed Hochschild complex, $_2 \oc$ is
once more a map of degree $n$. By analyzing the boundary of the one-dimensional
components of $\overline{\mc{R}}^1_{k,l}$, seeing that the relevant
boundary behavior is governed by the codimension-1 boundary of the abstract
moduli space $\overline{\mc{R}}^1_{k,l}$ described from
(\ref{tocspecialpoint0})-(\ref{tocspecialpoint3}) and strip-breaking, and
performing a sign verification in Appendix \ref{orientationsection}, we conclude
that
\begin{cor} \label{2occhain}
    The map $_2 \oc: {_2} \r{CC}_*(\w,\w) \lra CH^*(M)$ is a chain map.
\end{cor}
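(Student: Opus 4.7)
The plan is to prove this by the standard codimension-one boundary analysis: examine one-dimensional components of the moduli spaces $\overline{\mc{R}}^1_{k,l}(y; x_0, x_1, \ldots, x_k, x_0', x_1', \ldots, x_l')$, identify the codimension-one boundary contributions, and match them term-by-term with the two sides of the chain map equation
\begin{equation*}
    d_{SH} \circ {_2}\oc - (-1)^n {_2}\oc \circ d_{{_2}\r{CC}_*} = 0.
\end{equation*}

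First I would invoke Lemma \ref{bordificationlem} applied to the submanifold $\mc{R}^1_{k,l} \subset \cocs$. This identifies the codimension-one boundary of the Gromov bordification as the union of (i) semi-stable strip breakings at each of the $k+l+2$ boundary inputs, (ii) semi-stable cylinder breaking at the interior output $y$, and (iii) the four families of abstract boundary strata listed in (\ref{tocspecialpoint0})--(\ref{tocspecialpoint3}). The strip breakings (i) contribute $\mu^1$ applied to each input, assembling (together with (iii)) into ${_2}\oc \circ d_{{_2}\r{CC}_*}$ applied to the inputs, while (ii) contributes $d_{SH}$ applied to the output, giving $d_{SH} \circ {_2}\oc$.

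Next I would match the four families (\ref{tocspecialpoint0})--(\ref{tocspecialpoint3}) with the four types of terms in the Hochschild bimodule tensor product differential $d_{{_2}\r{CC}_*}$ spelled out in the definition of $\mc{M} \otimes_{\mc{A}\!-\!\mc{B}} \mc{N}$. Specifically: stratum (\ref{tocspecialpoint0}), where a $\mu^{k'}$-disc bubbles off an input on the side containing $z_1, \ldots, z_k$, corresponds to applying $\mu^{k'}_{\w}$ to a consecutive subsequence of these inputs; stratum (\ref{tocspecialpoint1}) corresponds to the analogous operation on the primed side; stratum (\ref{tocspecialpoint2}), where a disc bubble absorbs the special input $z_0$, corresponds to $\mu^{r|1|s}_{\w_\Delta}$ applied to the bimodule element at $z_0$ with some $a_i$'s on both sides; and stratum (\ref{tocspecialpoint3}) is the analogous operation at $z_0'$. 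In each case, after gluing, the remaining (lower-dimensional) operation is again of type $_2\oc$, so each boundary contribution factors through the composite ${_2}\oc \circ (\text{term in }d_{{_2}\r{CC}_*})$.

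The main technical obstacle is the sign verification: one must check that the sign twisting datum $\vec{t}_{_2\oc_{k,l}} = (1, 2, \ldots, k+1, k+3, \ldots, k+2+l)$ together with the gluing signs, the orientations on the abstract moduli spaces $\overline{\mc{R}}^1_{k,l}$, and the Koszul signs implicit in $d_{{_2}\r{CC}_*}$ (in particular the large sign $\sharp_{i,r}$ from the formula for the bimodule tensor product differential, which encodes the Koszul contribution of moving the final block of $a_i$'s past the entire rest of the expression) conspire to produce cancellation. The structure of the verification parallels the one performed for $\oc$ in the previous subsection: the twisting datum $\vec{t}_{_2\oc}$ is designed precisely so that strip-breaking and boundary disc-bubbling contributions assemble into Hochschild-differential terms with their natural signs, while the skip between positions $k+1$ and $k+3$ accommodates the placement of the second bimodule element at $z_0'$. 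The details of this sign comparison are deferred to Appendix \ref{orientationsection}, consistent with how the signs for $\oc$ and $\co$ are handled.
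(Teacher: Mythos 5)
Your proposal is correct and follows essentially the same route as the paper: the paper likewise deduces Corollary \ref{2occhain} by analyzing the codimension-one boundary of the one-dimensional components of the moduli spaces $\overline{\mc{R}}^1_{k,l}(\cdots)$, matching strip/cylinder breaking and the abstract strata (\ref{tocspecialpoint0})--(\ref{tocspecialpoint3}) with the terms of $d_{SH}\circ{_2}\oc$ and ${_2}\oc\circ d_{{_2}\r{CC}_*}$, and deferring the sign check (carried out for the stratum (\ref{tocspecialpoint0}) in Proposition \ref{2ocsigns}) to Appendix \ref{orientationsection}.
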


\begin{defn}
    The {\bf two-pointed closed-open moduli space} with $(r,s)$ marked points
    \begin{equation}
        \mc{R}^{1,1}_{r,s}
    \end{equation}
    is the space of discs with one interior positive puncture labeled $y_{in}$,
    one negative boundary puncture $z_{out}$, and $r+s+1$ positive boundary
    punctures, labeled in clockwise order from $z_{out}$ as $z_1, \ldots, z_r,
    z_{fixed}, z_1', \ldots, z_s'$, subject to the following constraint:
    \begin{equation}
        \textrm{up to automorphism, }z_{out},\ z_{fixed},\ \textrm{and }y_{in}\
        \textrm{lie at }-i,\ i\ \textrm{and }0\textrm{
        respectively}.  
    \end{equation}
\end{defn}

The boundary strata of the Deligne-Mumford compactification
$\overline{\mc{R}}^{1,1}_{r,s}$ is covered by the natural inclusions of the
following products:
\begin{align}
    \label{tcospecialpoint0}\overline{\mc{R}}^{r'} &\times_{n+1} \overline{\mc{R}}^{1,1}_{r-r'+1,s},\ \ \  0 \leq n < r-r'+1\\
    \label{tcospecialpoint1}\overline{\mc{R}}^{s'} &\times_{(m+1)'} \overline{\mc{R}}^{1,1}_{r,s-s'+1},\ \ \ 0 \leq m < s - s' + 1\\
    \label{tcospecialpoint2}\overline{\mc{R}}^{r'+s'+1} &\times_0 \overline{\mc{R}}^{1,1}_{r-r',s-s'}\\
    \label{tcospecialpoint3}\overline{\mc{R}}^{1,1}_{r-a',s-b'} &\times_{a'+1} \overline{\mc{R}}^{a'+b'+1}.
\end{align}
Here in (\ref{tcospecialpoint2}), the output of the stable disc is glued to
the special input $z_{fixed}$ with the $r'+1$st point becoming the new
distinguished $z_{fixed}$. Similarly, in (\ref{tcospecialpoint3}), the output
of the two-pointed closed-open disc $z_{out}$ is glued to the $a+1$st input of
the stable disc.

Thinking of $\mc{R}^{1,1}_{r,s}$ as a submanifold of
open-closed strings, we obtain, given a compatible Lagrangian labeling 
\begin{equation}
    \{L_0,
    \ldots, L_r, L_0', \ldots, L_s'\}
\end{equation}
and input chords $\{x_1, \ldots, x_r, x_{fixed}, 
x_1', \ldots, x_s'\}$ input orbit $y$, and output chord $x_{out}$, a moduli space 
\begin{equation}
    \overline{\mc{R}}^{1,1}_{r,s}(x_{out};y, x_1, \ldots, x_r, x_{fixed}, x_1', \ldots, x_s')
\end{equation}
of dimension
\begin{equation}
    r+s + \deg(x_{out})- \deg(y)- \deg(x_{fixed}) - \sum_{i=1}^r \deg(x_i) - \sum_{j=1}^s \deg(x_j').
\end{equation}
Here, $L_r, L_0'$ are adjacent to the second special output $z_{out}$ and $L_s',
L_0$ are adjacent to $z_{fixed}$ (with corresponding asymptotic conditions $x_{out}, x_{fixed}$).  We also
obtain associated Floer-theoretic operations
\begin{equation}
   \mathbf{F}_{\overline{\mc{R}}^{1,1}_{r,s}} : CH^*(M) \otimes (\w^{\otimes s} \otimes \w_{\Delta} \otimes \w^{\otimes r}) \lra \w_{\Delta}.
\end{equation}
Now, define
\begin{equation}
    _2 \co_{r,s}: CH^*(M) \lra \hom_{Vect}(\w^{\otimes s} \otimes \w_{\Delta} \otimes \w^{\otimes r}, \w_{\Delta})
\end{equation}
as
\begin{equation}
    _2 \co_{r,s}(y)(y_s, \ldots, y_1, \mathbf{b}, x_s, \ldots, x_1) := 
    (-1)^{\vec{t}_{_2 \co_{r,s}}} \mathbf{F}_{\overline{\mc{R}}^{1,1}_{r,s}}(y, y_s, \ldots, x_1, \mathbf{b}, x_r, \ldots, x_1)
\end{equation}
where $\vec{t}_{_2 \co_{r,s}}$ is the sign twisting datum
\begin{equation}
    \vec{t}_{_2 \co_{r,s}} := (-1, 0, \ldots, r-1, r+1, r+2, \ldots, r+s + 1)
\end{equation}
with respect to the input ordering $(y_{in}, z_1, \ldots, z_r, z_{fixed}, z_1', \ldots, z_s')$.
Define the two-pointed closed-open map to be the sum of these operations
\begin{equation}
    {_2}\co = \sum_{r,s} {_2} \co_{r,s} : CH^*(M) \lra \hom_{\w\!-\!\w}(\w_{\Delta},\w_{\Delta})
\end{equation}
With respect to the grading on the 2-pointed Hochschild co-chain complex, $_2 \co$ is
once more a map of degree $0$. An analysis of the boundary of the one-dimensional
components of $\overline{\mc{R}}^{1,1}_{k,l}$ coming from strip-breaking and the
codimension-1 boundary of the abstract
moduli space $\overline{\mc{R}}^{1,1}_{k,l}$ described in
(\ref{tcospecialpoint0})-(\ref{tcospecialpoint3}), along with a sign
verification discussed in Appendix \ref{orientationsection}, we conclude that
\begin{cor}
    The map $_2 \co: CH^*(M) \lra {_2}\r{CC}^*(\w,\w)$ is a chain map.
\end{cor}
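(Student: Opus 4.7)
The plan is to imitate directly the chain-map verification for $\co$, but using the more elaborate boundary strata of $\overline{\mc{R}}^{1,1}_{r,s}$ listed in (\ref{tcospecialpoint0})--(\ref{tcospecialpoint3}). First I would fix the universal and consistent Floer datum $\mathbf{D}_{{_2}\co}$ chosen for $\overline{\mc{R}}^{1,1}_{r,s}$ and apply Lemma \ref{openclosedcompactdimensiontransversality} to ensure that, for generic choices, the parametrized moduli space $\overline{\mc{R}}^{1,1}_{r,s}(x_{out};y,x_1,\ldots,x_r,x_{fixed},x_1',\ldots,x_s')$ is a compact manifold of the dimension indicated in the text. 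We then restrict attention to the components of virtual dimension $1$ and analyze their codimension-$1$ boundary via Lemma \ref{bordificationlem}.

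The codimension-$1$ boundary falls into two types: semi-stable breakings and abstract degenerations. The semi-stable pieces are (i) a strip breaking at the output $z_{out}$, giving a factor $\mu^1_{\w}$ on the element $\phi^{r|1|s}(\ldots)$ produced by ${_2}\co$; (ii) strip breakings at each input $z_i$, $z_{fixed}$, $z_j'$, giving factors $\mu^1_{\w}$ applied to that input; and (iii) a cylinder breaking at the interior puncture, giving the factor $d_{SH}$ on $y$. The abstract strata are those of (\ref{tcospecialpoint0})--(\ref{tcospecialpoint3}) and are interpreted as follows. Stratum (\ref{tcospecialpoint0}) corresponds to a Stasheff tree $\overline{\mc{R}}^{r'}$ bubbling off a block of inputs strictly to the left of $z_{fixed}$, producing a $\mu^{r'}_{\w}$ acting on those inputs; stratum (\ref{tcospecialpoint1}) is the mirror statement on the right. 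Stratum (\ref{tcospecialpoint2}) glues a tree at the distinguished input $z_{fixed}$: its inputs straddle the original special point, so by the definition of the diagonal bimodule, this encodes precisely a $\mu^{r'|1|s'}_{\w_\Delta}$ acting on the $\mathbf{a}$-slot. Stratum (\ref{tcospecialpoint3}) glues a tree at the output $z_{out}$: the two-pointed closed-open disc produces an intermediate chord that fills the $(a'+1)$-st input of the tree, with inputs again straddling that slot, so this stratum encodes $\mu^{a'|1|b'}_{\w_\Delta}$ acting on the output element $\phi^{r-a'|1|s-b'}(\ldots)$.

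After matching signs, the four groups of abstract strata together with the non-output strip breakings assemble into $\pm\,{_2}\co(y)\circ\hat\mu_{\w_\Delta}$, where the $\w$-part of $\hat\mu_{\w_\Delta}$ comes from (\ref{tcospecialpoint0}), (\ref{tcospecialpoint1}) together with input strip breakings, and the bimodule-action part comes from (\ref{tcospecialpoint2}). The output strip breaking and stratum (\ref{tcospecialpoint3}) assemble into $\pm\,\mu_{\w_\Delta}\circ\widehat{{_2}\co(y)}$. The cylinder breaking at $y$ gives $\pm\,{_2}\co(dy)$. Setting the signed sum of all boundary contributions to zero yields
\begin{equation*}
\delta\bigl({_2}\co(y)\bigr) \;=\; \mu_{\w_\Delta}\circ\widehat{{_2}\co(y)} \;-\; {_2}\co(y)\circ\hat\mu_{\w_\Delta} \;=\; {_2}\co(d_{SH}y),
\end{equation*}
which is the required chain-map identity, with $\delta$ the differential on $\hom_{\w\!-\!\w}(\w_\Delta,\w_\Delta)$.

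The main obstacle is the sign bookkeeping. The sign-twisting datum $\vec t_{{_2}\co_{r,s}}=(-1,0,\ldots,r-1,r+1,\ldots,r+s+1)$ has a gap at position $r$ (the slot of $\mathbf{a}$) that is dictated by the $\ainf$ bimodule sign $|\mathbf{b}|$ in Definition \ref{bimoduledef}; verifying that the strata around $z_{fixed}$ and $z_0'$ reproduce the exact Koszul signs $\maltese^{\cdot}_{\cdot}$ appearing in the bimodule hom differential requires exactly the kind of comparison carried out for $\oc$ and $\co$ in Appendix \ref{orientationsection}. I would perform this by identifying each stratum with its corresponding algebraic operation under the gluing theorem for determinant lines (Lemma \ref{orientationlem} and the induction used to orient $\mc{R}^{1,1}_d$ in Appendix \ref{co1orientation}), and then checking that the chosen $\vec t_{{_2}\co_{r,s}}$ converts the geometric sign into the bimodule-morphism sign $|\mc F|\cdot\maltese^{-(t+1)}_{-l}$ for strata on the right of $\mathbf{a}$ and the trivial sign on the left, exactly as in the derivation of $\delta\mc F$ in Section \ref{algebrasection}. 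Once the signs match on each of the four abstract strata and the three kinds of semi-stable breaking, the identity above follows.
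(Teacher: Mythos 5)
Your proposal is correct and follows essentially the same route as the paper: the paper's proof is precisely an analysis of the codimension-1 boundary of the one-dimensional moduli spaces, combining strip/cylinder breaking with the abstract strata (\ref{tcospecialpoint0})--(\ref{tcospecialpoint3}) to assemble the bimodule-morphism differential $\delta({_2}\co(y))$ against ${_2}\co(d_{SH}y)$, with the sign bookkeeping deferred to the orientation conventions and sign-twisting comparisons of Appendix \ref{orientationsection}. Your identifications of the four abstract strata with the $\mu_\w$, $\mu^{r'|1|s'}_{\w_\Delta}$, and $\mu_{\w_\Delta}\circ\widehat{(\cdot)}$ terms match the paper's intended reading.
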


We remark that the quasi-isomorphisms of chain complexes
\begin{equation}\nonumber
    \begin{split}
        \mathbf{\Phi}: {_2}\r{CC}_*(\w,\w) &\stackrel{\sim}{\lra} \r{CC}_*(\w,\w) \\
        \mathbf{\Psi}: \r{CC}^*(\w,\w) &\stackrel{\sim}{\lra} {_2}\r{CC}^*(\w,\w)
    \end{split}
\end{equation}
defined in (\ref{twopointedhomologyquasi}) and
(\ref{twopointedcohomologyquasi}) induce identifications of the two-pointed
open-closed maps with the usual open-closed maps. The precise statement is:
\begin{prop}\label{twopointhomotopy1}
    There are homotopy-commutative diagrams
    \begin{equation}\label{homotopyhomology}
        \xymatrix{ {_2}\r{CC}_*(\w,\w) \ar[dr]^{ {_2}\oc} \ar[d]^{\mathbf{\Phi}} & \\
        \r{CC}_*(\w,\w) \ar[r]^{\oc\ \ }& SH^*(M)}
    \end{equation}
    and
    \begin{equation}\label{homotopycohomology}
        \xymatrix{ SH^*(M) \ar[dr]^{ {_2}\co} \ar[d]^{\co} & \\
        \r{CC}^*(\w,\w) \ar[r]^{\mathbf{\Psi} \ }& {_2}\r{CC}^*(\w,\w)}.
    \end{equation}
\end{prop}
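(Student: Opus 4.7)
The plan is to construct the two chain homotopies geometrically, via Floer-theoretic operations associated to interpolating moduli spaces, in the spirit of the module-compatibility homotopy $\mc{H}_d$ in (\ref{modulestructurefig1}).

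For (\ref{homotopyhomology}), the starting observation is that $\mc{R}^1_{k,l}$ embeds as a codimension-one submanifold of $\overline{\mc{R}}^1_{k+l+2}$, cut out by fixing the position of the second special boundary point at $i$. I would define a codimension-zero region $\mc{H}^{\oc}_{k,l} \subset \overline{\mc{R}}^1_{k+l+2}$ whose codimension-one boundary decomposes as (i) an interior piece coinciding with $\mc{R}^1_{k,l}$, contributing ${_2}\oc$, and (ii) a union of Deligne--Mumford boundary pieces of the form $\overline{\mc{R}}^{i+k+j+2} \times_{l-i-j+1} \overline{\mc{R}}^1_{l-i-j+1}$, in which the bubble component carries the asymptotics $\bar{a}_{l-i+1}, \ldots, \bar{a}_l, \mathbf{a}, a_1, \ldots, a_k, \mathbf{b}, \bar{a}_1, \ldots, \bar{a}_j$ in cyclic order from the node, and the main component carries $\bar{a}_{j+1}, \ldots, \bar{a}_{l-i}$ along with the interior output. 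Summing the resulting Floer operations over all pairs $(i,j)$ with $0 \leq i,j$ and $i+j \leq l$ reproduces exactly $\oc \circ \mathbf{\Phi}$, by (\ref{twopointedhomologyquasi}). Concretely, $\mc{H}^{\oc}_{k,l}$ can be realized by letting the second special point $z_0'$ sweep along a specified arc of $\partial D^2$ passing through $i$ (giving $\mc{R}^1_{k,l}$) and degenerating toward the collisions that produce these bubble strata. Equipping $\overline{\mc{H}}^{\oc}_{k,l}$ with universal and consistent Floer data, compatible at the $\mc{R}^1_{k,l}$ face with the data used to define ${_2}\oc$ and at the DM-bubble strata with the glued data coming from $\mathbf{D}_{\mu}$ and $\mathbf{D}_{\oc}$, and setting $\mathbf{H}^{\oc}_{k,l}$ to be the resulting Floer operation with an appropriate sign twist, the standard codimension-one boundary analysis (Lemma \ref{bordificationlem}) yields the desired identity
\[
d_{SH} \circ \mathbf{H}^{\oc} + \mathbf{H}^{\oc} \circ d_{{_2}\r{CC}_*} = {_2}\oc - \oc \circ \mathbf{\Phi}
\]
summed over $k,l$.

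The second diagram (\ref{homotopycohomology}) follows by an analogous construction, with an interpolating region $\mc{H}^{\co}_{r,s} \subset \overline{\mc{R}}^{1,1}_{r+s+1}$ whose codimension-one boundary contains an interior piece $\mc{R}^{1,1}_{r,s}$ (contributing ${_2}\co$) and DM-bubble pieces realizing $\mathbf{\Psi} \circ \co$: a stable disc bubbles off carrying the boundary output $z_{out}$, the second special input $z_{fixed}$, and appropriate adjacent inputs, with the bubble's output attaching at the position that becomes the distinguished input of the remaining closed-open disc, in the pattern dictated by (\ref{twopointedcohomologyquasi}).

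The main obstacle will be specifying the interpolating regions so that their codimension-one boundaries decompose into exactly the collection of bubble strata appearing in the sums defining $\mathbf{\Phi}$ and $\mathbf{\Psi}$, each with the correct multiplicity and orientation. This is a combinatorial problem about carving out the correct region in the Deligne--Mumford compactification; one natural approach is to express $\mc{H}^{\oc}_{k,l}$ as a union of sub-regions indexed by $(i,j)$, each engineered to degenerate to one bubble stratum. The secondary difficulty is the standard but tedious sign verification, matching the twisting data $\vec{t}_{{_2}\oc_{k,l}}$ and $\vec{t}_{\oc_{l-i-j+1}}$, together with the gluing orientation signs and the Koszul signs in the definitions of $\mathbf{\Phi}$ and $\mathbf{\Psi}$, using the conventions of Appendix \ref{orientationsection}.
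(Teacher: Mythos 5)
Your construction is essentially the paper's own: the interpolating regions you describe are precisely the families $\mc{S}^1_{k,l}$ and $\mc{S}^{1,1}_{r,s}$, in which the second special point travels along the right half of the boundary circle, the fiber at one endpoint giving ${_2}\oc$ (resp.\ ${_2}\co$) and the collision at the other endpoint producing the bubbled strata realizing $\oc \circ \mathbf{\Phi}$ (resp.\ $\mathbf{\Psi} \circ \co$); your identification of the homology-side strata agrees with (\ref{tochomotopyspecialpoint0}) and with the formula (\ref{twopointedhomologyquasi}). Two small points: the codimension-one boundary of the compactified family also contains the ordinary bubbling strata lying over the whole interval together with strip/cylinder breaking, and these (which your list omits) are exactly what produce the $d \circ \mc{H} \pm \mc{H} \circ d$ terms in your displayed identity. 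Also, in the closed-open case your description of the degeneration is reversed: the component carrying the global output $z_{out}$ together with $z_{fixed}$ is the root stable disc, and it is the closed-open disc carrying $y_{in}$ (which has no distinguished boundary input) whose output attaches at a boundary input of that stable disc, exactly in the pattern dictated by (\ref{twopointedcohomologyquasi}).
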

\begin{cor}
    The maps $({_2}\oc, {_2}\co)$ are equal in homology to the
    maps $(\oc, \co)$.
\end{cor}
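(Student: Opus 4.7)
The plan is to construct each homotopy by considering an auxiliary one-parameter family of moduli spaces whose codimension-$1$ boundary decomposes into two ``endpoint'' strata, reproducing the two sides of the diagram, together with standard differential-type strata from strip/cylinder breaking and internal $\ainf$ compositions.

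For the homology diagram (\ref{homotopyhomology}), I would introduce a moduli space $\mc{H}^1_{k,l}$ obtained from $\mc{R}^1_{k,l}$ by relaxing the angular-position constraint on the second special input $z_0'$, letting it vary along a one-parameter family of positions on the boundary while keeping the interior output at $0$ and $z_0$ at $-i$. This gives a codimension-$0$ submanifold of $\mc{R}^1_{k+l+2}$ (or of a suitable open subset) whose closure contains $\mc{R}^1_{k,l}$ as one codimension-$1$ face (at the distinguished angular position $+i$) and, at the other end of the family, degenerations in which a sub-disc carrying $z_0$, $z_0'$, the intervening boundary inputs, and possibly some additional inputs on both flanks bubbles off along a boundary node. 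After fixing a universal and consistent Floer datum extending the previously chosen data on all lower-dimensional boundary pieces, the associated Floer-theoretic operation yields a degree $n-1$ map
\begin{equation*}
    {_2}\mc{G}_{k,l}: \w_\Delta \otimes \w^{\otimes k} \otimes \w_\Delta \otimes \w^{\otimes l} \longrightarrow CH^*(M),
\end{equation*}
whose sum over $k,l$, appropriately sign-twisted, produces the candidate chain homotopy ${_2}\mc{G}$.

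The key content is the identification of the codimension-$1$ strata of $\mc{H}^1_{k,l}$. Two types of strata correspond to the two sides of the diagram: the stratum at the ``standard'' angular position gives ${_2}\oc_{k,l}$, while the bubble stratum at the other extreme produces $\oc \circ \mathbf{\Phi}$. Indeed, on the bubble stratum, the sub-disc has boundary inputs $\bar a_{l-i+1},\ldots,\bar a_l,\mathbf{a},a_1,\ldots,a_k,\mathbf{b},\bar a_1,\ldots,\bar a_j$ in cyclic order and one boundary output, and therefore supports a standard $\ainf$ operation; under the identification of $\mu^{r|1|s}_{\w_\Delta}$ with $\mu_{\w}^{r+1+s}$, this is precisely one summand in the definition of $\mathbf{\Phi}$. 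The residual disc inherits the interior negative puncture and a distinguished boundary input at the node (playing the role formerly played by $z_0$), so it supports the ordinary open-closed map $\oc$. Summing over the combinatorial choices (which subsets of the $\bar a$'s are absorbed into the bubble) exactly reproduces the double sum in the formula for $\oc \circ \mathbf{\Phi}$. The remaining codimension-$1$ strata are the standard semi-stable degenerations (strip- or cylinder-breaking, and bubblings of $\overline{\mc{R}}^{d}$-type trees on the boundary), which contribute precisely the terms needed to match $d\circ {_2}\mc{G} + {_2}\mc{G} \circ d$.

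The cohomology diagram (\ref{homotopycohomology}) is proven in a symmetric manner, using a family obtained from $\mc{R}^{1,1}_{r,s}$ by relaxing the position of the second special output $z_{fixed}$. The bubble stratum where $z_{fixed}$ collides with $z_{out}$ (with some of the surrounding inputs absorbed) produces a sub-disc carrying an $\ainf$ operation--matching a summand in the definition of $\mathbf{\Psi}$--attached to an ordinary closed-open disc supporting $\co$. The main obstacle is not the moduli-space geometry (which follows the template of Sections~\ref{ocfloersection}--\ref{openclosedmaps}) but the careful enumeration of boundary strata together with the sign verification: the combinatorics of which inputs get pulled onto the bubble must reproduce the sums in $\mathbf{\Phi}$ and $\mathbf{\Psi}$, and the signs from the family orientation, the sign-twisting data, and the Koszul signs $(-1)^{\bowtie}$ and $(-1)^{\blacktriangle}$ in (\ref{twopointedhomologyquasi})--(\ref{twopointedcohomologyquasi}) must align. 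These sign calculations fit naturally into the pattern of those relegated to Appendix \ref{orientationsection}.
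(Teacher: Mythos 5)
Your proposal is correct and is essentially the paper's own argument: the paper proves Proposition \ref{twopointhomotopy1} using exactly such one-parameter families, namely $\mc{S}^1_{k,l}$ and $\mc{S}^{1,1}_{r,s}$, in which the second special point ($z_0'$, respectively $z_{fixed}$) sweeps along the boundary from the two-pointed position to a collision/bubbling endpoint whose strata reproduce $\oc\circ\mathbf{\Phi}$ and $\mathbf{\Psi}\circ\co$, with the remaining strata and strip-breaking supplying the homotopy terms. No substantive difference in approach.
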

\noindent The homotopies (\ref{homotopyhomology}) and
(\ref{homotopycohomology}) are controlled by the following moduli spaces.
\begin{defn}
    The moduli space
    \begin{equation}
        \mc{S}^1_{k,l}
    \end{equation}
    is the space of discs with one interior negative puncture labeled $y_{out}$
    and $k+l+2$ positive boundary punctures, labeled in clockwise order
    $z_0,z_1,\ldots,z_k, z_0', z_1', \ldots, z_l'$, such that:
    \begin{equation}
        \begin{split}
            \textrm{up to automorphism, }& z_0,\ z_0',\ \textrm{and }y_{out}\ \textrm{are constrained to lie at }-i,\ e^{-i\frac{\pi}{2}(1 -2  t)} \ \textrm{and }0\\
        &\textrm{ respectively, for\ some\ }t \in (0,1).
    \end{split}
    \end{equation}
\end{defn}
The space $\mc{S}^1_{k,l}$ fibers over the open interval $(0,1)$, by the value
of $t$ above.  Compactifying, we see that $\overline{\mc{S}}^1_{k,l}$ submerses
over $[0,1]$ and its codimension 1 boundary strata are covered by the images of
the natural inclusions of the following products (some corresponding to the
limits $t = 0, 1$ and some occurring over the entire interval):
\begin{align}
    \label{tochomotopyspecialpoint0}\overline{\mc{R}}^{k+2+l'+l''} \times_{l-l'-l''+1} \overline{\mc{R}}^{1}_{l-l'-l'' +1} &\textrm{ }(t = 0) \\
     \label{tochomotopyspecialpoint1}  \overline{\mc{R}}^{1}_{k,l}&\textrm{ }(t = 1) \\
       \label{tochomotopyspecialpoint2} \overline{\mc{R}}^{k'} \times_{n+1} \overline{\mc{S}}^{1}_{k-k'+1,l}&\ \ \ 0 \leq n < k-k'+1 \\
\label{tochomotopyspecialpoint3} \overline{\mc{R}}^{l'} \times_{(m+1)'} \overline{\mc{S}}^{1}_{k,l-l'+1}& \ \ \ 0 \leq m < l-l'+1 \\
    \label{tochomotopyspecialpoint4}\overline{\mc{R}}^{k'+l'+1} \times_0 \overline{\mc{S}}^{1}_{k-k',l-l'}& \\
    \label{tochomotopyspecialpoint5}\overline{\mc{R}}^{l'+k'+1} \times_0' \overline{\mc{S}}^{1}_{k-k',l-l'}&
\end{align}
where \begin{itemize}
    \item in (\ref{tochomotopyspecialpoint0}), the $k+1$st and $k+l'+2$nd marked points of the stable disc become the special points $z_0$ and $z_0'$ after gluing; and
    \item the products
        (\ref{tochomotopyspecialpoint2})-(\ref{tochomotopyspecialpoint2}) are
        as in (\ref{tocspecialpoint0})-(\ref{tocspecialpoint3}).  
\end{itemize}
Fixing sign twisting datum
\begin{equation}
    \vec{t}_{_2 \oc\ra \oc, k, l} := (1, \ldots, k+1, k+3, \ldots, k+l+2),
\end{equation}
we obtain an associated operation
\begin{equation}
    \mc{H} := \bigoplus_{k,l} (-1)^{\vec{t}_{_2\oc\ra \oc,k,l}}\mathbf{F}_{\overline{\mc{S}}^1_{k,l}} : {_2}\r{CC}_*(\w,\w) \lra CH^*(M)
\end{equation}
of degree $n-1$. By analyzing the boundary of the 1-dimensional Floer moduli
spaces associated to $\mc{S}^1_{k,l}$ coming from
(\ref{tochomotopyspecialpoint0})-(\ref{tochomotopyspecialpoint5}) and
strip-breaking, as well as verifying signs (see Appendix \ref{orientationsection}), we see that
\begin{equation}
    d_{CH} \circ \mc{H} \pm \mc{H} \circ d_{{_2}\r{CC}} = \oc \circ \mathbf{\Phi} - {_2}\oc,
\end{equation}
verifying the first homotopy commutative diagram.

\begin{figure}[h]
    \caption{A schematic of $\mc{S}^1_{k,l}$ and its $t = 0,1$ degenerations. }
    \centering
    \includegraphics[scale=0.7]{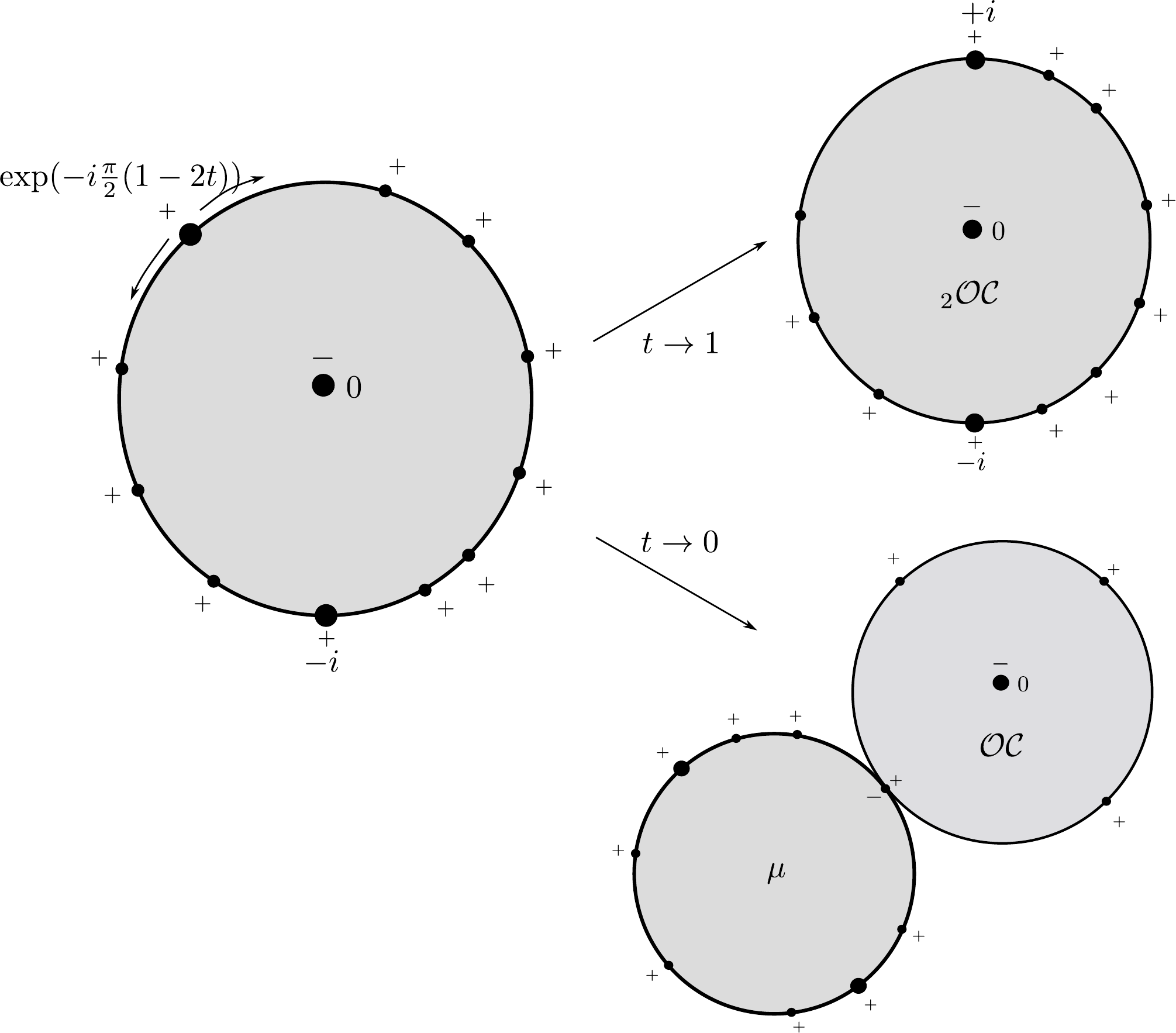}
\end{figure}

\begin{defn}
    The moduli space
    \begin{equation}
        \mc{S}^{1,1}_{r,s}
    \end{equation}
    is the space of discs with one interior positive puncture labeled $y_{in}$,
    one negative boundary puncture $z_{out}$, and $r+s+1$ positive boundary
    punctures, labeled in clockwise order from $z_{out}$ as $z_1, \ldots, z_r,
    z_{fixed}, z_1', \ldots, z_s'$, subject to the following constraint:
    \begin{equation}
        \begin{split}
        \textrm{up to automorphism, }& z_{out},\ z_{fixed},\ \textrm{and }y_{in}\
        \textrm{lie at }-i,\ e^{i(-\pi/2 + \pi\cdot t)}\ \textrm{and }0\textrm{
        respectively},\\
        &\textrm{for\ some\ }t \in (0,1).
    \end{split}
    \end{equation}
\end{defn}
The space $\mc{S}^{1,1}_{r,s}$ again fibers over the open interval $(0,1)$, by
the value of $t$ above.  Compactifying, we see that the codimension 1 boundary
strata of the space $\overline{\mc{S}}^{1,1}_{r,s}$ submerses over $[0,1]$ and
is covered by the images of the natural inclusions of the following products
(some
corresponding to the limits $t = 0, 1$ and some occurring over the entire
interval): 
\begin{align}
    \label{tcohomotopyspecialpoint0} \overline{\mc{R}}^{1,1}_{s-s'-s'' +1} \times_{r+2+s'} \overline{\mc{R}}^{r+2+s'+s''} &\textrm{ }(t = 0) \\
    \label{tcohomotopyspecialpoint1} \overline{\mc{R}}^{1,1}_{r,s}&\textrm{ }(t = 1) \\
    \label{tcohomotopyspecialpoint2}\overline{\mc{R}}^{r'} \times_{n+1} \overline{\mc{S}}^{1,1}_{r-r'+1,s}&\ \ \ 0 \leq n < r-r'+1\\
    \label{tcohomotopyspecialpoint3}\overline{\mc{R}}^{s'} \times_{(m+1)'} \overline{\mc{S}}^{1,1}_{r,s-s'+1}&\ \ \ 0 \leq m < s - s' + 1\\
    \label{tcohomotopyspecialpoint4}\overline{\mc{R}}^{r'+s'+1} \times_0 \overline{\mc{S}}^{1,1}_{r-r',s-s'}&\\
    \label{tcohomotopyspecialpoint5}\overline{\mc{S}}^{1,1}_{r-a',s-b'} \times_{a'+1} \overline{\mc{R}}^{a'+b'+1}&
\end{align}
Using sign twisting datum
\begin{equation}
    \vec{t}_{_2 \co \ra \co, r, s} = (-1, 0,\ldots, r-1, r+1, r+2, \ldots, r+s+1),
\end{equation}
the associated operation
\begin{equation}
    \mc{G} := \bigoplus_{r,s} (-1)^{\vec{t}_{_2 \co \ra \co, r, s}}\mathbf{F}_{\overline{\mc{S}}^{1,1}_{r,s}} : CH^*(M) \lra {_2}\r{CC}^*(\w,\w)
\end{equation}
has degree $-1$. By analyzing the boundary of the 1-dimensional Floer moduli
spaces associated to $\mc{S}^{1,1}_{r,s}$ coming from
(\ref{tochomotopyspecialpoint0})-(\ref{tochomotopyspecialpoint5}) and
strip-breaking, and verifying signs (Appendix \ref{orientationsection}), we see that
\begin{equation}
    \mc{G}\circ d_{CH} \pm d_{{_2}\r{CC}^*} \circ \mc{G} = \mathbf{\Psi} \circ \co - {_2}\co,
\end{equation}
verifying the second homotopy commutative diagram.

\section{Unstable operations} \label{unstableoperations}
Some of the operations we would like to consider are parametrized not by
underlying moduli spaces but instead a single surface.

\subsection{Strips}\label{stripidentity}
Let $\Sigma_1$ denote a disc with two boundary punctures removed, thought of as
a strip $ (\infty,\infty) \times [0,1]$.  We have already defined a
Floer-theoretic operation using $\Sigma_1$, namely the differential $\mu^1$.
Let us recast this operation in terms of Floer data. 
\begin{defn}
    A {\bf Floer datum} for $\Sigma_1$ can be thought of a Floer datum in the sense
    of Definition \ref{floeropenclosed} with the following additional constraints: 
    \begin{itemize}
        \item The strip-like ends $\e_+$ and $\e_-$ are given by inclusion of
            the positive and negative semi-infinite strips respectively.
        \item The incoming and outgoing weights are both equal to a single number $w$.
        \item the one-form $\a$ is $w\cdot dt$ everywhere, as is the rescaling map $a_S$.
        \item the Hamiltonian $H_{\Sigma_1}$ is equal everywhere to $\frac{H \circ \psi^{w}}{w^2}$
        \item the almost complex structure $J_{\Sigma_1}$ is equal everywhere to $(\psi^w)^*J_t$.
    \end{itemize}
\end{defn}
\begin{rem}
    Upon fixing $H$ and $J_t$, the Floer datum above only depends on $w$.
    Moreover, the data defined by any two different weights $w$ and $w'$ are
    conformally equivalent.  
\end{rem}
Fix the Floer datum for $\Sigma_1$ with $w = 1$. This induces, for Lagrangians
$L_0, L_1 \in \ob \w$, and chords $x_0, x_1 \in \chi(L_0, L_1)$, a space of
maps \[\Sigma_1(x_0; x_1)\] satisfying the usual asymptotic and boundary
conditions, and solving the relevant version of Floer's equation for the Floer
datum.  
Instead of dividing by $\R$-translation, we can also consider the
operation induced by the space  $\Sigma_1(x_0; x_1)$ itself, which has dimension
\[
\deg(x_0) - \deg(x_1)
\]
We get a map
\begin{equation} \label{imap}
    I: CW^*(L_0, L_1) \lra CW^*(L_0, L_1)
\end{equation}
defined by 
\begin{equation}
    I([x_0]) := \sum_{x_1: \deg(x_1) = \deg(x_0)} \sum_{u \in \Sigma_1(x_0; x_1)} (-1)^{\deg(x_0)}(\Sigma_1)_u([x_0])
\end{equation}
where $(\Sigma_1)_u: o_{x_0} \ra o_{x_1}$ is the induced map on orientation
lines (using Lemma \ref{orientationlem}).  
\begin{prop}
    \label{identity}
    $I$ is the identity map.
\end{prop}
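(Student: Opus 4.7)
The plan is to exploit the fact that the Floer datum on $\Sigma_1$, chosen with weight $w=1$, is translation-invariant in the $s$-direction: the Hamiltonian $\tfrac{H\circ\psi^1}{1^2} = H$, the almost-complex structure $(\psi^1)^*J_t = J_t$, the rescaling $a_S$, and the one-form $\alpha = dt$ all depend only on $t$. Consequently $\R$ acts on $\Sigma_1(x_0;x_1)$ by translation $(s,t)\mapsto(s+c,t)$, and I will show that the only rigid solutions are $\R$-invariant ones, which correspond exactly to chords $x \in \chi(L_0,L_1)$ with $x_0 = x_1 = x$ and contribute the identity map to $I$.

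First, the $\R$-invariant solutions are easy to identify: $\partial_s u = 0$ forces $\partial_t u = X_H(u)$, and combined with the boundary conditions $u(s,0) \in L_0$, $u(s,1) \in L_1$, this means $u(s,t) = x(t)$ for some time-$1$ Hamiltonian chord $x \in \chi(L_0,L_1)$. The asymptotics then require $x_0 = x_1 = x$, and each such chord gives exactly one trivial strip. Regularity of the trivial strip at a chord $x$ is standard: the linearization is a constant-coefficient Cauchy-Riemann operator on $\Z = \R \times [0,1]$ whose asymptotic operator is the Hessian of the action functional at $x$, which is invertible by non-degeneracy of $x$; combined with Fredholm index $0$, this linearization is an isomorphism. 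The induced map on orientation lines $(\Sigma_1)_u \colon o_x \to o_x$ is the canonical identification, which after the $(-1)^{\deg(x_0)}$ twist in the definition of $I$ contributes $[x]$ to $I([x])$; this is the orientation calculation underlying the usual sign conventions in Appendix \ref{orientationsection}.

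Second, I must rule out non-$\R$-invariant solutions when $\deg(x_0) = \deg(x_1)$. If such a solution $u$ existed, then its $\R$-orbit would be a free $1$-parameter family in $\Sigma_1(x_0;x_1)$, and the infinitesimal generator $\partial_s u \ne 0$ would lie in the kernel of the linearized operator $D_u$; since the Fredholm index of $D_u$ is $\deg(x_0)-\deg(x_1) = 0$, this forces $\coker D_u \ne 0$, so $D_u$ is not surjective and $u$ is not a regular rigid solution. Equivalently, the quotient $\Sigma_1(x_0;x_1)/\R$ would be of virtual dimension $-1$ and hence empty for generic data. The subtle point is that our Floer datum is not generic (by design, we insisted on $s$-invariance). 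I will resolve this by the standard trick of comparing $I$ with a continuation map: homotoping the $s$-invariant datum through a one-parameter family of Floer data to a generic $s$-dependent datum, one obtains a chain-homotopic continuation map whose count is manifestly the identity on the trivial-strip summand, while no new non-trivial solutions enter the zero-dimensional stratum of the parametrized moduli space thanks to the index bound. Alternatively, one can invoke the standard Floer-theoretic argument (compare \cite{Floer:1995fk}) that for autonomous Floer data and fixed non-degenerate asymptotics of equal degree, the only $J$-holomorphic strips of index zero are the $\R$-invariant ones.

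The main obstacle is therefore this transversality step, i.e.\ handling the non-genericity of the chosen Floer datum; everything else (the classification of $\R$-invariants, regularity of trivial strips, and the sign calculation) is essentially bookkeeping within the framework already established. Modulo this point, the sum in the definition of $I$ reduces to a single contribution $(-1)^{\deg(x)}(\Sigma_1)_u([x]) = [x]$ for each $x \in \chi(L_0,L_1)$, so $I = \mathrm{id}$ on the nose.
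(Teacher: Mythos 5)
Your proof takes the same core approach as the paper: exploit the translation invariance of the Floer datum on $\Sigma_1$ to force non-constant solutions to occur in $\R$-families, so that rigid solutions must be constant strips, which contribute the identity. The paper's proof is exactly your two-sentence fallback ("Alternatively, one can invoke the standard Floer-theoretic argument...for autonomous Floer data...the only strips of index zero are the $\R$-invariant ones").

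Where you diverge is in raising a transversality worry and then proposing to resolve it with a continuation map. This detour is both unnecessary and not clearly correct as stated. It is unnecessary because the definition of $I$ counts \emph{rigid} elements of $\Sigma_1(x_0;x_1)$, and a non-constant solution $u$ is never isolated regardless of whether the moduli space is cut out transversally near $u$: translating $u$ gives distinct nearby solutions, so $u$ sits in a continuous family and hence is not in the zero-dimensional stratum. Constant strips, by contrast, are automatically regular, as you correctly observe. Thus only constant strips contribute, and the count is the identity, with no genericity needed for the particular translation-invariant Floer datum we fixed. The continuation-map argument is also dubious as described: a continuation map associated to a homotopy from the $s$-invariant datum to a \emph{generic} $s$-dependent datum is not "manifestly the identity on the trivial-strip summand" -- the trivial strips need not persist as solutions for the perturbed datum, and showing the perturbed count equals the identity would require an argument of roughly the same depth as the one you are trying to avoid. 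The paper's one-paragraph $\R$-invariance argument is the clean route; you should drop the continuation-map plan and promote your "alternative" to the primary argument.
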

\begin{proof}
    If $u$ is any non-constant strip mapping into $M$, composing with the $\R$
    action on $\Sigma_1$ gives other maps into $M$ solving the same equation by
    $\R$-invariance of our Floer data; hence $u$ is not rigid. Therefore,
    dimension 0 strips must all be constant, concluding the proof.  
\end{proof}

\subsection{The unit}\label{theunitelement}
Let $\Sigma_0$ denote a once-punctured disc thought of as the upper half plane 
${\mathbb H} \subset \C$ with puncture at $\infty$, thought of as a negative puncture.
\begin{defn}
    A {\bf Floer datum} for $\Sigma_0$ is a Floer datum in the sense of
    Definition \ref{floeropenclosed}. Concretely, this consists of
    \begin{itemize}
        \item A {\bf strip-like end} $\e: (-\infty,0] \times [0,1] \ra \Sigma_0$ around the puncture
        \item A {\bf choice of weight} $w \in [1,\infty)$
        \item A {\bf rescaling map} $a_{\Sigma_0}: \Sigma_0 \ra [1,+\infty)$ equal to $w$ on the strip-like end
        \item {\bf Hamiltonian perturbation}: A map $H_{\Sigma_0}: \Sigma_0 \ra
            \mc{H}(M)$ such that $\e^* H_{\Sigma_0} = \frac{H \circ \psi^w}{w^2}$.
        \item {\bf basic 1-form}: a sub-closed 1-form $\a_{\Sigma_0}$, whose
            restriction to $\partial \Sigma_0$ vanishes, such that $\e^*
            \a_{\Sigma_0} = w\cdot dt$.
        \item {\bf Almost complex structure}: A map $J_{\Sigma_0}: \Sigma_0 \ra
            \mc{J}(M)$ such that $J_{\Sigma_0} \in \mc{J}_{a_{\Sigma_0}}$ and
            $\e^* J_{\Sigma_0} = (\psi^w)^*J_t$.
    \end{itemize}
\end{defn}
\begin{rem}
    Note by Stokes' theorem that in the definition above, the one form
    $\a_{\Sigma_0}$ cannot be closed everywhere, i.e. there are points with $d
    \a_{\Sigma_0} < 0$.  
\end{rem}
\begin{rem}
    Up to conformal equivalence, it suffices to take a Floer datum for
    $\Sigma_0$ with weight $w = 1$.
\end{rem}
Let $L$ be an object of $\mc{W}$, and consider a chord $x_0 \in \chi(L,L)$.
Fixing a Floer datum for $\Sigma_0$, write 
\begin{equation}\Sigma_0(x_0;)\end{equation} 
for the space of
maps $u: \Sigma_0 \ra E$ satisfying boundary and asymptotic conditions
\begin{equation} 
    \begin{cases}
        u(z) \in \psi^{a_S(z)} L & z \in \bd \Sigma_0 \\
        \lim_{s \ra -\infty} u \circ \e(s, \cdot) = x
    \end{cases}
\end{equation}
and differential equation
\begin{equation}
    (du - X_{\Sigma_0} \otimes \a_{\Sigma_0})^{0,1} = 0
\end{equation}
with respect to $J_{\Sigma_0}$.
\begin{lem}
    The space of maps $\Sigma_0(x_0;)$ is compact and forms a manifold of
    dimension $\deg(x_0)$.  
\end{lem}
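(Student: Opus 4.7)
The plan is to establish the two claims separately: the dimension/manifold structure via a Fredholm index calculation with transversality, and compactness via the standard trinity of $C^0$ bounds, exclusion of bubbling, and Gromov compactness.

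For the dimension, I would apply the index formula of Lemma \ref{openclosedcompactdimensiontransversality} specialized to the parameters appropriate for $\Sigma_0$: one boundary component ($h=1$), no family parameters ($d=0$), no interior marked points, and a single boundary output $x_0$ with no inputs. The term $(2-h-|\vec{x}_{out}|-2|\vec{y}_{out}|)n = (2-1-1-0)n = 0$ drops out, and the formula simplifies to $\deg(x_0)$. Concretely, the Fredholm index of the linearized Cauchy--Riemann operator decomposes (via an index gluing theorem as in \cite{Seidel:2008zr}*{(11c)}) into a contribution $n\chi(\hat{\Sigma}_0) = n$ from the compactified disc, minus $n-\deg(x_0)$ coming from the spectral flow at the single negative end. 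Transversality for generic Floer data follows from a standard Sard--Smale argument in this setting, as in \cite{Seidel:2008zr}*{(9k)}.

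For compactness, the first and most delicate step is the $C^0$ bound: a priori, solutions could escape to infinity in the non-compact target $M$. This is precisely where I would invoke Theorem \ref{c0bounds} from Appendix \ref{actionsection}, which provides the required $C^0$ confinement for Floer maps whose Hamiltonians are bounded perturbations of $r^2$ with respect to rescaled contact-type almost complex structures --- exactly the class used in our Floer datum for $\Sigma_0$. Energy is controlled using the sub-closed condition $d\alpha_{\Sigma_0}\leq 0$ together with the usual action-topological energy inequality; the fact that $\alpha_{\Sigma_0}$ vanishes on $\partial\Sigma_0$ is what allows the boundary term in Stokes' theorem to behave. Sphere bubbling is excluded by exactness of $\omega$, and disc bubbling is excluded by exactness of $L$ together with the condition that $\theta|_L = df_L$, which bounds the symplectic area of any hypothetical bubble by boundary values of $f_L$ and shows no nonconstant exact disc can form.

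With $C^0$, energy, and no bubbling in hand, Gromov compactness produces a bordification by semi-stable breakings at the unique strip-like end, giving inclusions $\overline{\mc{R}}(x_0;x_1) \times \Sigma_0(x_1;) \hookrightarrow \partial\overline{\Sigma_0(x_0;)}$ as in Lemma \ref{bordificationlem}. This is the precise sense in which the moduli space is compact: its Gromov bordification is a compact topological space stratified by manifolds, and when $\deg(x_0) = 0$ the only stratum of the right dimension is the open part $\Sigma_0(x_0;)$ itself, which is therefore a finite set. The main obstacle --- and essentially the only non-routine ingredient --- is the $C^0$ bound, since the Hamiltonian $H_{\Sigma_0}$ and one-form $\alpha_{\Sigma_0}$ are permitted to depart from their cylindrical model on the compact core of $\Sigma_0$, so the classical maximum principle of \cite{Abouzaid:2010ly} does not apply verbatim; this is exactly the situation that Theorem \ref{c0bounds} was designed to handle.
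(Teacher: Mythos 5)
Your argument is correct and is exactly the route the paper takes for all of its analogous statements: the paper in fact states this lemma without proof, but its implicit justification is the same combination of the index/transversality scheme of Lemma \ref{openclosedcompactdimensiontransversality} (which, specialized to $h=1$, $d=0$, one boundary output and no inputs, gives dimension $\deg(x_0)$) and the a priori $C^0$ bound of Theorem \ref{c0bounds} together with exactness excluding bubbling, which is precisely what you invoke. Your reading of ``compact'' as compactness of the Gromov bordification, with the rigid ($\deg(x_0)=0$) stratum genuinely compact since index reasons forbid breaking there, is also the sense in which the paper uses the lemma when defining $e_L$ and proving $d(e_L)=0$.
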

Thus, we can define the element $e_L \in CW^*(L,L)$ to be the sum
\begin{equation}
    e_L := \sum_{\deg(x_0) = 0} \sum_{u \in \Sigma_0(x_0;)} (\Sigma_0)_u(1)
\end{equation}
where $(\Sigma_0)_u: \R \ra o_{x_0}$ is the induced map on orientation lines
(using Lemma \ref{orientationlem})

\begin{prop}
    The resulting elements $e_{L_i} \in CW^*(L_i,L_i)$ give the identity
    element on homology.
\end{prop}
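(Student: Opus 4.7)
The plan is to show that $[e_L]$ is the identity in $HW^*(L,L)$ by exhibiting a chain homotopy between the operation $\mu^2(-,e_L) : CW^*(L,L) \to CW^*(L,L)$ and the strip operation $I$ from Section \ref{stripidentity}, which equals the identity by Proposition \ref{identity}. Left-unitality is symmetric and handled the same way after trivial modifications.

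First I would observe that, by the standard gluing construction, the chain-level composition $x \mapsto \mu^2(x,e_L)$ is computed (up to chain homotopy) by the single count of rigid maps from the partially glued surface $\mc{R}^2 \#_{\rho}\Sigma_0$ at the second input, equipped with the Floer datum $\mathbf{D}_{glued}$ inherited from the chosen universal and consistent data. Topologically this glued surface is a twice-punctured disc, i.e. a strip $\Sigma_1$; however its Floer datum is manifestly distinct from the datum $\mathbf{D}_{id}$ (weight $w=1$, translation-invariant 1-form $dt$, etc.) that defines $I$.

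Next I would choose a smooth 1-parameter family $\mathbf{D}_t$, $t \in [0,1]$, of Floer data on $\Sigma_1$ with $\mathbf{D}_0 = \mathbf{D}_{glued}$ and $\mathbf{D}_1 = \mathbf{D}_{id}$, agreeing with the fixed strip-like end data near the punctures. The resulting parametrized moduli space
\begin{equation}
    \mc{T}(x_0;x_1) := \bigsqcup_{t\in[0,1]} \Sigma_1^{\mathbf{D}_t}(x_0;x_1)
\end{equation}
is, for generic homotopy, a smooth manifold of dimension $\deg(x_0)-\deg(x_1)+1$. Its rigid elements (dimension zero) and the codimension-1 boundary of its 1-dimensional components are controlled by Lemma \ref{bordificationlem}: the boundary decomposes into the $t=0$ stratum (contributing $\mu^2(-,e_L)$ via the gluing principle), the $t=1$ stratum (contributing $I$), and the semistable strip-breaking strata (contributing $dH + Hd$ where $H$ is the signed count of rigid interior elements). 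A sign check of the type performed in Appendix \ref{orientationsection} then gives
\begin{equation}
    dH + Hd = \mu^2(-,e_L) - I,
\end{equation}
so $[\mu^2(-,e_L)] = [I] = \mathrm{id}$ on $HW^*(L,L)$ by Proposition \ref{identity}.

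The main obstacle is not the abstract homotopy argument but rather constructing $\mathbf{D}_t$ within the class of admissible Floer data. The glued datum $\mathbf{D}_{glued}$ carries a 1-form which is $w\,dt$ on the strip-like ends but, in the neck region coming from $\Sigma_0$, is only sub-closed and vanishes near the boundary (as forced by the definition of a Floer datum for $\Sigma_0$); similarly the Hamiltonian and complex structure have genuine surface dependence. One must interpolate to $\mathbf{D}_{id}$ while preserving simultaneously: (i) sub-closedness of $\alpha_{\Sigma_1,t}$, (ii) the $a_S$-rescaled contact-type condition on $J_{\Sigma_1,t}$, and (iii) the hypotheses needed to invoke the $C^0$-bound of Theorem \ref{c0bounds} in the noncompact target. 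Each of these is contractible as a constraint, so the homotopy exists, but verifying uniform compactness along the family—so that no solutions escape to infinity as $t$ varies and so that Gromov compactness applies—is the key technical point. Once this is in hand, generic transversality for the 1-parameter family and the sign analysis proceed by the same arguments used throughout Section \ref{openclosedmaps}.
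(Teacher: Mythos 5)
Your main argument is correct and is essentially the paper's own: both proofs reduce right-unitality to a one-parameter family of Floer data on a twice-punctured disc interpolating between the translation-invariant strip datum (giving $I$, which is the identity by Proposition \ref{identity}) and the configuration with $\Sigma_0$ glued in at the second input (giving $\mu^2(\cdot,e_L)$ at the degenerate end), with strip-breaking supplying the homotopy terms. The paper packages the gluing degeneration and the data interpolation into a single family on a disc with a ``forgotten'' input whose connect-sum length goes to infinity, while you split it into the standard gluing theorem plus an interpolation of Floer data on the fixed strip; this is a difference of bookkeeping, not of substance, and your identification of the compactness along the family (Theorem \ref{c0bounds}) and the contractibility of the admissibility constraints as the only technical points matches the paper's (terse) treatment.

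The one step you omit is the preliminary check that $\mu^1(e_L)=0$, so that $e_L$ defines a class in $HW^*(L,L)$ at all and the homology-level products $\mu^2([x],[e_L])$ and $\mu^2([e_L],[x])$ are defined on representatives. The paper establishes this first by analyzing the boundary of the one-dimensional moduli spaces $\Sigma_0(x_0;)$ with $\deg(x_0)=1$: the codimension-one boundary consists of a rigid element of $\Sigma_0(y;)$ with $\deg(y)=0$ together with a rigid Floer strip in $\mc{R}(x_0;y)$, and the signed count of these is precisely the coefficient of $x_0$ in $d(e_L)$, forcing $d(e_L)=0$. Your homotopy identity $dH+Hd=\mu^2(-,e_L)-I$ does show $\mu^2(x,e_L)$ is homologous to $x$ for closed $x$ without invoking closedness of $e_L$, but without $d(e_L)=0$ the assertion that $[e_L]$ is the identity element on homology does not yet parse, so this (easy, standard) step should be added; everything else proceeds as in the paper, including the sign verification of Appendix \ref{orientationsection}.
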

\begin{proof}
    This is a classical result, but we briefly sketch a proof for completeness;
    see e.g. \cite{Ritter:2010nx} for more details. One first checks via
    analyzing the boundary of the one dimensional moduli space of
    $\Sigma_0(x_0;)$ that $d(e_L) = 0$, so $e_L$ descends to homology.
Then, one needs to check that, up to sign
\begin{align} \label{mu2id}
         \mu^2([x],[e_{L_i}]) &= [x]  \\
         \mu^2([e_{L_i}],[x]) &= [x] \label{mu2idleft},
 \end{align}
where the brackets denote homology classes.
Since the arguments to establish (\ref{mu2id}) and (\ref{mu2idleft}) are
identical, it suffices to construct a geometric chain homotopy between the maps
\begin{equation}
        \mu^2(\cdot,e_{L_i})
    \end{equation}
    and 
    \begin{equation}
        I(\cdot)
    \end{equation}
    where $I$ is as in (\ref{imap}), which can be described as follows. Let
    $\Sigma_2$ be a disc with two incoming boundary marked points $x_1$, $x_2$,
    and one outgoing point $x_{out}$, with
    $x_1$ marked as ``forgotten'' (see Section \ref{homotopyunits} for how to
    do this). Fix a strip-like end around $x_1$ and consider a one parameter
    family of Floer data on $\Sigma_2$ with $x_2$ and
    $x_{out}$ removed, over the interval $[0,1)_t$, such that
    \begin{itemize}
        \item at $t=0$, the Floer data agrees with the translation-invariant one on the $\Sigma_1$ arising by forgetting $x_1$,
        \item for general $t$, the Floer data is modeled on the connect sum of
            the Floer data for $\mu_2$ with the Floer data for $e_{L_i}$ over
            the strip-like ends at the output of $\Sigma_0$ with the one around
            $x_1$, with connect sum length approaching $\infty$ as $t \ra 1$.
    \end{itemize}
    Compactifying and looking at the associated Floer operation, one obtains a
    chain homotopy between the degenerate curve corresponding to
    $\mu(\cdot,e_{L_i})$ and the operation $I(\cdot)$. Finally, one performs a
    sign verification analogous to those in Appendix \ref{orientationsection}.
\end{proof}

\section{Operations from glued pairs of discs}\label{pairs}
In this section, we define a broad class of abstract moduli spaces and their
associated Floer theoretic operations, corresponding to a pair of discs glued
together along some boundary components. This class will arise when defining
operations in the product $M^- \times M$, and in setting up the theory of
quilts. 

\subsection{Connect sums}\label{connectsumsubsection}
We give a short aside on the notation we use for connect sums.  Recall first
the notion of a boundary connect sum between two Riemann surfaces with
boundary, a notion already implicit in our constructions of Deligne-Mumford
compactifications of moduli spaces.
\begin{defn}
    Let $\Sigma_1, \Sigma_2$ be two Riemann surfaces with boundary, with marked
    points $z_1 \in \bd \Sigma_1$, $z_2 \in \bd \Sigma_2$ removed. Let $\e_1:
    Z_+ \ra \Sigma_1$ be a positive strip-like end for $z_1$ and $\e_2: Z_-:
    \ra \Sigma_2$ a negative strip-like end for $z_2$ and let 
$\lambda = \frac{\log \rho}{1 + \log \rho} \in (0,1)$ (correspondingly $\rho
\in [1,\infty)$). The {\bf $\lambda$-connect sum} 
\begin{equation}
    \Sigma_1\#_{(\e_1,z_1),(\e_2,z_2)}^\lambda \Sigma_2
\end{equation}
    is
    \begin{equation}
    \left(\Sigma_1 - \e_1( [\rho,\infty) \times [0,1])\right) 
    \cup_{\varphi_\rho} \left(\Sigma_2 - \e_2( (-\infty,-\rho] \times
    [0,1])\right) \end{equation}
    where 
    \begin{equation}
        \varphi_\rho: \e_1( (0,\rho) \times [0,1]) \ra \e_2( (-\rho,0) \times
        [0,1]).  
    \end{equation}
    is the composition
    \begin{equation}
    \e_1((0,\rho)\times [0,1] ) \stackrel{\e_1^{-1}}{\ra}  (0,\rho) \times [0,1]
    \stackrel{(\cdot - \rho,id)}{\ra} (-\rho,0) \times [0,1] 
    \stackrel{\e_2}{\ra} \e_2( (-\rho,0) \times [0,1]).  
\end{equation}
    We will often write $\Sigma_1 \#^\lambda_{z_1,z_2} \Sigma_2$ when the
    choice of strip-like ends is implicit.
\end{defn}
\begin{defn}
    In the notation of above, the {\bf associated thin part} of a
    $\lambda$-connect sum $\Sigma_1 \#^\lambda_{z_1,z_2} \Sigma_2$ is the
    finite strip parametrization 
    \begin{equation}
    \e_1: [0,\rho] \times [0,1] \ra \Sigma_1 \#^\lambda_{z_1,z_2} \Sigma_2.
\end{equation}
    The {\bf associated thick parts} of this connect sum are the regions
    $\Sigma_1 - \e_1([0,\infty)\times [0,1])$, $\Sigma_2 -
    \e_2([-\infty,0]\times[0,1])$ respectively, thought of as living in the
    connect sum.  
\end{defn}
\noindent The notion of $\lambda$ connect sum extends continuously to the nodal case
$\lambda = 1$.

\subsection{Pairs of discs}
\begin{defn} 
    The {\bf moduli space of pairs of discs with $(k,l)$ marked points}, denoted
\begin{equation}
\mc{R}_{k,l}
\end{equation}
    is the moduli space of pairs of discs with $k$ and $l$ positive marked
    points and one negative marked point each in the same position, modulo {\it
    simultaneous automorphisms}.
\end{defn}
\begin{rem}
    This definition is {\bf not} identical to the product of associahedra
    $\mc{R}^k \times \mc{R}^l$. The latter space is a further quotient of the
    former space by automorphisms of the right or left disc, at least when both
    $k$ and $l$ are in the stable range. Operations at the level of the moduli
    space $\mc{R}_{k,l}$ will arise via quilted strips and homotopy units.  
\end{rem}
\begin{rem} 
    To construct moduli spaces, we require a pair of discs with $(k,l)$ marked
    points to be {\bf stable}: one of $k$ or $l$ must be at least two.
\end{rem}
The Stasheff associahedron embeds in $\mc{R}_{k,l}$ in several ways. 
There is the {\bf diagonal embedding}
\begin{equation}
    \mc{R}^d \stackrel{\Delta_d}{\longhookrightarrow} \mc{R}_{d,d}.
\end{equation}
which is self-explanatory.
When $l = 1$ and $k \geq 2$, there is a one-sided embedding
\begin{equation}\label{onesided1} 
    \mc{R}^k \stackrel{\mc{I}_k}{\longhookrightarrow} \mc{R}_{k,1},
\end{equation} 
where 
$\mc{I}_k =
(id,For_{k-1})$ is the pair of maps corresponding to inclusion and
forgetting the first $k-1$ boundary marked points respectively. Since the
right factor in the image has only one incoming marked
point, we call $\mc{I}_k$ the {\bf right semi-stable embedding}.
Similarly when $k=1$ and $l \geq 2$, forgetting $l-1$
marked points and inclusion gives us the {\bf left semi-stable
embedding} 
\begin{equation} 
    \label{onesided2}
    \mc{R}^k \stackrel{\mc{J}_k}\longhookrightarrow \mc{R}_{1,k}.
\end{equation} 
When $l=0$ or $k=0$, there are also equivalences
    \begin{equation}\label{ghostembeddings}
        \begin{split}
            \mc{R}^k &\stackrel{\sim}{\longhookrightarrow} \mc{R}_{k,0}\\
            \mc{R}^l &\stackrel{\sim}{\longhookrightarrow} \mc{R}_{0,l},
        \end{split}
    \end{equation}
which we call the {\bf right} and {\bf left} {\bf ghost embeddings}
respectively, corresponding to the fact that the right or left
component of $\mc{R}_{k,l}$ is a ghost disc.  In fact, we will never
consider operations with either $k$ or $l$ equal to zero, but these
equivalences help us explain appearances of ordinary associahedra in the
compactification $\overline{\mc{R}}_{k,l}$. Henceforth, let us restrict to $k,l
\geq 1$ and one of $k,l \geq 2$.

The open moduli space $\mc{R}_{k,l}$ admits a stratification by {\it coincident
points} between factors, which we will find useful to explicitly describe. 
\begin{defn}\label{coincidentpointstratification}
    A {\bf $(k,l)$-point identification} $\mf{P}$ is a sequence of tuples
    \begin{equation}
        \{(i_1, j_1), \ldots, (i_s,j_s)\} \subset \{1, \ldots, k\}\times \{1, \ldots, l\}
    \end{equation}
    which are strictly increasing, i.e.
    \begin{equation}
        \begin{split}
        i_{r} &< i_{r+1}\\
        j_r &< j_{r+1}
    \end{split}
\end{equation}
The {\bf number of coincidences} of $\mf{P}$ is the size $|\mf{P}|$.
\end{defn}
\begin{defn} 
    Take a representative $(S_1,S_2)$ of a point in $\mc{R}_{k,l}$. A
    boundary input marked point $p_1$ on $S_1$ is said to {\bf coincide} with a
    boundary marked input marked point $p_2$ on $S_2$ if they are at the same
    position when $S_1$ is superimposed upon $S_2$. This notion is independent
    of the representative $(S_1,S_2)$, as we act by simultaneous automorphism.
\end{defn}

\begin{defn}
    The space of {\bf $\mf{P}$-coincident pairs of discs with $(k,l)$ marked
    points} 
    \begin{equation}
        _\mf{P}\mc{R}_{k,l}
    \end{equation}
    is the subspace of $\mc{R}_{k,l}$ where pairs of input marked points on
    each factor specified by $\mf{P}$ are required to coincide, and no other
    input marked points are allowed to coincide. Here the indices in $\mf{P}$
    coincide with the {\bf counter-clockwise ordering} of input marked points
    on each factor.
\end{defn}
\begin{ex}\label{twocolor}
    When $\mf{P} = \emptyset$, $ _\mf{P} \mc{R}_{k,l}$ is the space of pairs of
    discs where none of the inputs are allowed to coincide. This is a
    disconnected space, with connected components determined by the relative
    ordering of the $k$ inputs on the first disc with the $l$ inputs on the
    second disc. The number of connected components is exactly the number of
    $(k,l)$ {\bf shuffles}, i.e. re-orderings of the sequence
    \begin{equation}
        \{a_1, \ldots, a_k, b_1, \ldots, b_l\}
    \end{equation}
    that preserve relative ordering of the $a_i$, and the relative ordering of
    the $b_j$. Given a fixed $(k,l)$ shuffle, the subspace of pairs of discs
    with appropriate relatively ordered inputs is a copy of the $k+l$
    associahedron $\mc{R}^{k+l}$. Equivalently, there is one copy of
    $\mc{R}^{k+l}$ for each 
    $(k,l)$ {\bf two-coloring} of the combined set of points, that is a
    collection of subsets 
    \begin{equation}
        I,J \subset [k+l],\ |I|= k,\ |J| = l,\ I \cup J = [k+l].
    \end{equation}
    where $[k+l] := \{1, \ldots, k+l\}$.
\end{ex}
\begin{ex}
    When $k =l$ and $|\mf{P}|=k$ is maximal, the associated space $_\mf{P}
    \mc{R}_{k,l}$ is just the diagonal associahedron. $\Delta_{k}(\mc{R}^k)$.
\end{ex}
\noindent We often group these spaces  $ _\mf{P} \mc{R}_{k,l}$ by the number of
coincident points.  
\begin{defn}
    The {\bf space of pairs of discs with $(k,l)$ marked points and $i$
    coincident points} is defined to be 
    \begin{equation}
        _i \mc{R}_{k,l} := \coprod_{|\mf{P}|=i} {_{\mf{P}}} \mc{R}_{k,l}.
    \end{equation}
\end{defn}
The closure of a stratum $ _i \mc{R}_{k,l}$ in $\mc{R}_{k,l}$ is
$\coprod_{j\geq i} {_i}\mc{R}_{k,l}$. Moreover, each stratum $ _i
\mc{R}_{k,l}$ can be explicitly described as a union of associahedra.
\begin{defn}
    Fix disjoint subsets $I$,$J$,$K$ of $[d] = \{1, \ldots, d\}$ such that
    \begin{equation}
        I \cup J \cup K = [d].
    \end{equation}
    The {\bf space of $(I,J,K)$ tricolored discs with $d$ inputs}
    \begin{equation}
        _{I,J,K}\mc{R}^d
    \end{equation}
    is exactly the ordinary associahedron, with inputs labeled by the elements
    $\{L,R,LR\}$ according to whether they are in the set $I$,$J$,or $K$. The
    {\bf space of $(i,j,k)$ tricolored discs with $d$ inputs}, where $i+j+k =
    d$, is the disjoint union over all possible tricolorings of cardinality
    $i,j,k$:
    \begin{equation}
        {_{i,j,k}}\mc{R}^d := \coprod_{|I|=i,|J|=j,|K|=k}{_{I,J,K}}\mc{R}^d.
    \end{equation}
\end{defn}
\noindent There is a canonical identification
\begin{equation}
    _i \mc{R}_{k,l} \simeq {_{k-i,l-i,i}}\mc{R}^{k+l-i}
\end{equation}
given as follows:
To a pair of discs $(S_1,S_2)$ with $i$ coincidences, consider the {\it
overlay} (superimposition) of $S_1$ and $S_2$ along with their marked points.
This is a disc with $k+l-i$ input marked points and one output. Color a
marked point $L$ if the marked point came only from $S_1$, $R$ if the marked
point came only from $S_2$, and $LR$ if the marked point came from both
factors.  Similarly, given a tricolored disc, one can reconstruct a pair of
discs with $i$ coincidences by reversing the above procedure.

\begin{figure}[h] 
    \caption{An example of the correspondence between pairs of discs and tricolored discs. \label{pairstricolor1}}
    \centering
    \includegraphics[scale=0.6]{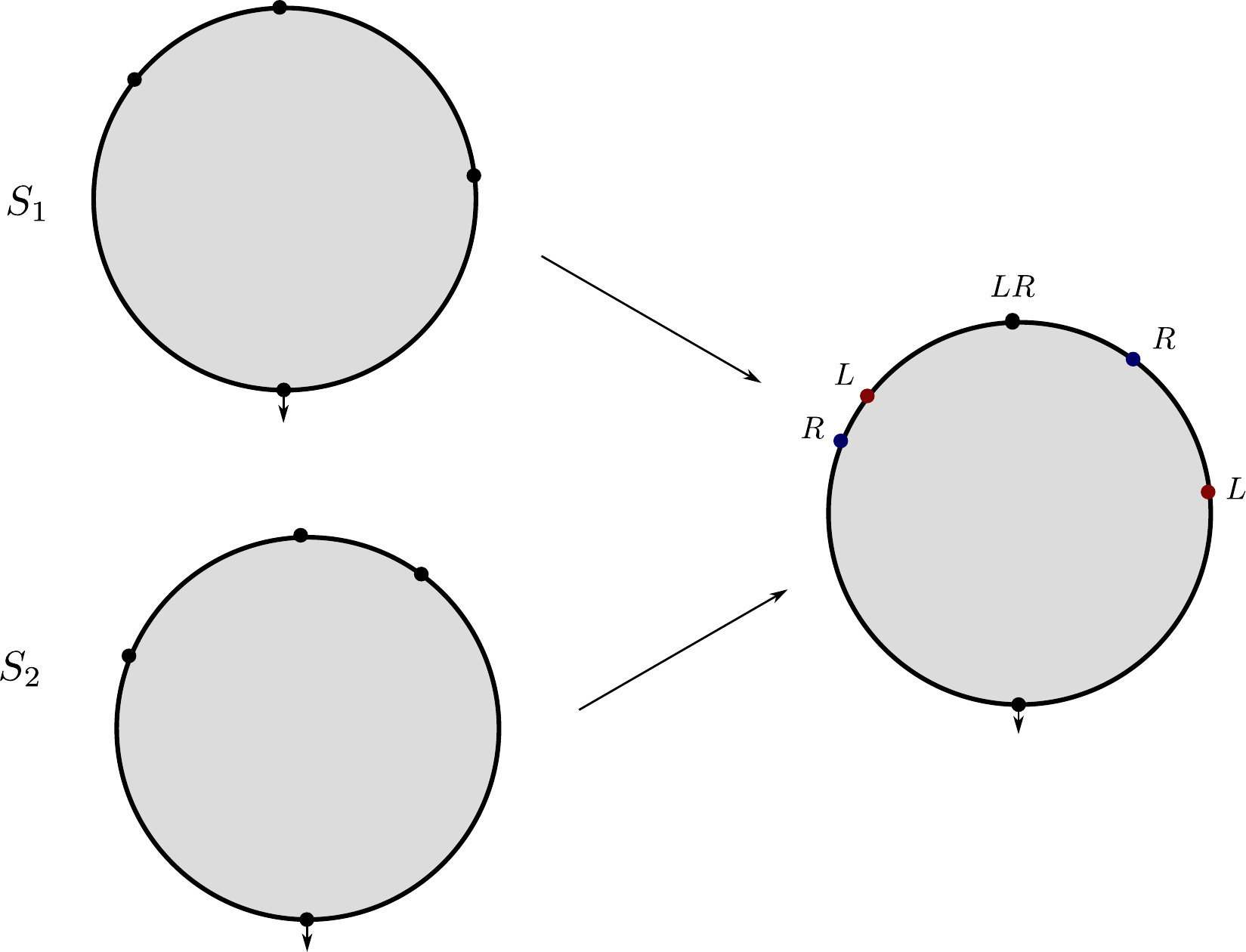}
\end{figure}

The disjoint union of spaces 
\begin{equation}
    \coprod_{i} {_{k-i,l-i,i}}\mc{R}^{k+l-i}
\end{equation}
is set theoretically the same as $\mc{R}_{k,l}$, but has forgotten some of the
topology. Namely, points colored $L$ and $R$ are not allowed to coincide, and
coincident points (those colored $LR$) are not allowed to separate
arbitrarily. 

We now construct a model for the Deligne-Mumford compactification
\begin{equation}\label{compactifiedpairsofdiscs}
    \overline{\mc{R}}_{k,l}.
\end{equation} 
The main idea in our construction is 
to recover this compactification from the Deligne-Mumford compactifications of
the spaces $ _{k-i,l-i,i}
\overline{\mc{R}}^{k+l-i}$ by reconstructing the topology with which points
colored $L$ and $R$ are allowed to coincide, and points colored $LR$ are
allowed to separate.
Note that the compactification of tricolored spaces
\begin{equation}
    _{I,J,K} \overline{\mc{R}}^d
\end{equation}
is exactly the usual Deligne-Mumford compactification, where boundary marked
points on components of nodal discs are colored in a manner induced by the
gluing charts (\ref{gluingcoords}). Internal positive marked points are colored
in the following induced fashion: If the subtree of nodal discs lying above a
given positive marked point is a tree of discs with all $L$ or all $R$ labels,
then color this marked point $L$ or $R$ respectively. If the subtree contains
two out of the three colors ($R$, $L$, $RL$) then color the input $LR$.

Let \begin{equation}D_{LR}^{+1}\end{equation} be a representative of the
    one-point moduli space $ _{\{1\}, \{2\},\emptyset } \mc{R}^2$,
i.e. a disc with inputs labeled $L, R$ in clockwise order.  Similarly, let
\begin{equation}D_{LR}^{-1}\end{equation} be a representative of
$_{\{2\},\{1\},\emptyset} \mc{R}^2$,
i.e. a disc with inputs labeled $R, L$ in clockwise order. Fix a choice of
strip-like ends on $D_{LR}^{\pm 1}$, and let $z_{LR}$ denote the output of each
of these discs. Also, suppose we have fixed a universal and consistent choice
of strip like ends on the various $ _{k-i,l-i,i} \overline{\mc{R}}^{k+l-i}$.

Now, take a (potentially nodal) representative $S$ of a point of $ _{k-i,l-i,i}
\overline{\mc{R}}^{k+l-i}$. Let 
\begin{equation}
    \vec{p} = p_{j_1}, \ldots p_{j_s},\ s\leq i
\end{equation}
be a subset of the points colored $LR$, and $\e_{j_1}, \ldots, \e_{j_s}$ the
associated strip-like ends. Given a
vector
\begin{equation}
    \vec{v} = (v_1, \ldots, v_{j_s}) \in [(-\epsilon,\epsilon)^*]^i,
\end{equation}
where the $*$ means none of the $v_{r}$ are allowed to be zero,
define an element
\begin{equation}
    \mathbf{\Pi}^{\vec{p}}_{\vec{v}} (S) \in 
    {_{k-i+s,l-i+s,i-s}}\overline{\mc{R}}^{k+l-i+s} 
\end{equation}
by the iterated connect sum
\begin{equation}
    \mathbf{\Pi}_{\vec{v}}^{\vec{p}} (S) := S \#_{p_{j_1},z_{LR}}^{1-|v_{1}|} D^{\mathrm{sign}(v_{1})}_{LR} \# \cdots \#_{p_{j_k},z_{LR}}^{1-|v_{j}|} D^{\mathrm{sign}(v_{j})}_{LR}.
\end{equation}
Here $\#^{1-|v_r|}$ is the operation of connect sum with gluing parameter
$1-|v_r|$ in the notation of Section \ref{connectsumsubsection}, which as
$|v_R|$ approaches zero is very close to nodal.  Also, $\mathrm{sign}(v_r)$ is
$+1$ if $v_r$ is positive and $-1$ if $v_r$ is negative. In other words, at
input point $p_{j_r}$ on $S$, we are taking a connect sum with the disc
$D^{\mathrm{sign}(v_r)}_{LR}$ at the point $z_{LR}$, i.e. gluing in two points
labeled $L$ and $R$ in clockwise or counterclockwise order depending on the
sign of $v_r$.

\begin{ex}\label{LRexample}
    It is useful before proceeding to describe the map $\Pi^{\vec{p}}_{\vec{v}}(S)$ in a
simple example. Suppose we are in $ _{1,1,1} \mc{R}^{3}$, the moduli space of
discs with three inputs, one with each color. Pick a non-nodal representative
of an element of this space, without loss of generality one in which the points
are colored $L$, $LR$ and $R$ in clockwise order from the output. Let $p_2$ be
the point colored $LR$ with associated vector $\vec{v} = (v)$, and let us
examine the representative of $\Pi_{(v)}^{p_2}(S)$ for different values of $v \in
(-\epsilon, 0) \cup (0,\epsilon)$. For
$v$ positive, $\Pi^{p_2}_{(v)}(S)$ corresponds to resolving the $LR$ point by
two points, one labeled $L$ and one labeled $R$, with the $L$ point to the left
of the $R$ point, which lives in $ _{\{1,2\},\{3,4\},\emptyset}\mc{R}^4$.
For $v$ negative, we $\Pi^{p_2}_{(v)}(S)$ resolves the $LR$ point in the opposite
direction, giving an element of a different associahedron $ _{\{1,3\},\{2,4\},\emptyset} \mc{R}^4$. 
As $v$ approaches zero from either
direction, the newly created $L$ and $R$ points come together and bubble off,
giving a nodal element in each of these respective associahedra $\Pi^{p_2}_{(0+)}(S)$
and $\Pi^{p_2}_{(0-)}(S)$ with bubble component a $D_{LR}^{+1}$ or $D_{LR}^{-1}$
respectively. To partially recover the topology of $\mc{R}_{2,2}$, we would
like to identify the points $\Pi^{p_2}_{0-}(S)$ and $\Pi^{p_2}_{0+}(S)$, in a
manner preserving the manifold structure near the
identification. See Figure \ref{LRstrataid1}.
\end{ex}

\begin{figure}[h] 
    \caption{The strata we would like to identify. \label{LRstrataid1}}
    \centering
    \includegraphics[scale = 0.5]{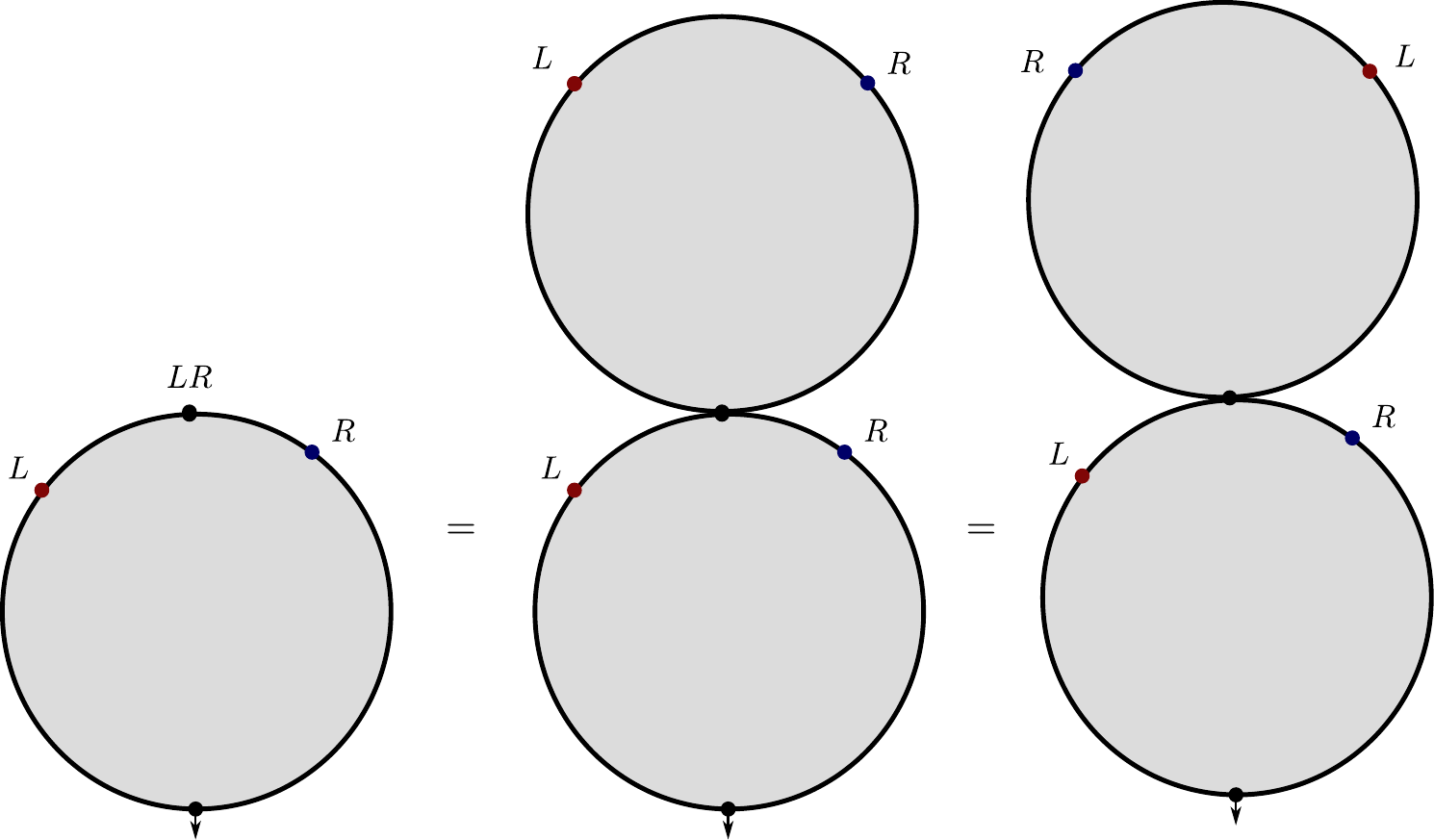}
\end{figure}
The above example illustrates the following properties: $\Pi_{\vec{v}}$ varies
smoothly in $S$ and the parameters $\vec{v} \in [(-\epsilon,\epsilon)^*]^i$,
and there are well defined, but different, nodal limits of the curve
$\Pi_{\vec{v}}(S)$ as components $v_r$ approach 0 from the left or right,
corresponding to gluing on a nodal $D_{LR}^{\pm 1}$ respectively at input
$p_{j_r}$. Indicate these different limits by values $0_+$ and $0_-$
respectively. Then $\Pi_{\vec{v}}(S)$ extends to a map 
\begin{equation}
    \bar{\Pi}_{\vec{v}}
\end{equation}
defined over domain
\begin{equation}
    \vec{v} \in \left[(-\epsilon,0_-] \cup [0_+,\epsilon)\right]^i.
\end{equation}
Define 
\begin{equation}
    _{k-i,l-i,i} (\overline{\mc{R}}^{k+l-i})^*
\end{equation}
to be the locus of the compactifications $ _{I,J,K} \overline{\mc{R}}^{k+l-i}$
with $|I| = k-i$, $|J| = l-i$, $|K| = i$ where there are no leaf bubbles with
one $L$ and one $R$. Put another way, remove the images of
$\bar{\Pi}^{\vec{p}}_{(\cdots, 0_+, \cdots)}$,
$\bar{\Pi}^{\vec{p}}_{(\cdots, 0_-, \cdots)}$ for all relevant domains of
definition of $\Pi$.  Now, define the manifold structure on
$\overline{\mc{R}}_{k,l}$ as follows.  To simplify notation, denote 
\begin{equation}
    {\mathbb I}_{\epsilon}:= (-\epsilon, \epsilon).
\end{equation}
Charts consist of 
\begin{equation} \label{basicpairschart}
    \mc{U} \times {\mathbb I}_\epsilon^i,\ \mc{U} \subset 
    {_{k-i,l-i,i}}(\overline{\mc{R}}^{k+l-i})^*.
\end{equation}
where $\epsilon$ may depend on $\mc{U}$.

For every such $\mc{U}$ above take any subset of indices $\vec{j} = \{j_1,
\ldots, j_s\}$ of the $i$ points colored $LR$, indexed for now from $1$ to $i$.
Let $\vec{p} =\{p_{j_1}, \ldots, p_{j_s}\}$ be the associated points. 
Given any such subset $\vec{j}$ of $\{1, \ldots, i\}$, define
\begin{equation}
    \mc{P}_{\vec{j}}: {\mathbb I}_{\epsilon}^i \lra {\mathbb I}_{\epsilon}^{|\vec{j}|}
\end{equation}
to be the projection onto the coordinates with indices in $\vec{j}$
and
\begin{equation}
    \mc{P}_{\vec{j}^c}: {\mathbb I}_{\epsilon}^i \lra {\mathbb I}_{\epsilon}^{i - |\vec{j}|} 
\end{equation}
to be the projection onto the complementary coordinates.

Then shrinking $\mc{U}$ and $\epsilon$ if necessary, perform a smooth
identification of the restriction of the basic chart (\ref{basicpairschart}),
where the coordinates for indices in $\vec{j}$ are all non-zero
\begin{equation}
    \mc{U} \times {\mathbb I}_{\epsilon}^i|_{\mc{P}_{\vec{j}}^{-1}( ({\mathbb I}_{\epsilon}^*)^{|\vec{j}|} )}
\end{equation}
onto its image under the smooth map
\begin{equation} \label{chartidentification}
    (S, \vec{v}) \longmapsto (\Pi^{\vec{p}}_{\mc{P}_{\vec{j}}(\vec{v})}(S), \mc{P}_{\vec{j}^c}(\vec{v})).
\end{equation} 
All such identifications are manifestly compatible with each
other, and along with the identifications of charts within each $ _{k-i,l-i,i}
\mc{R}^{k+l-i}$, give $\overline{\mc{R}}_{k,l}$ the structure of a smooth
manifold with corners of dimension $k+l-2$. The result is moreover compact, as
topologically it is the quotient of the compact space $
_{k,l,0}\overline{\mc{R}}^{k+l}$ by identifications between otherwise identical
nodal surfaces containing $D_{LR}^{\pm}$ components.
\begin{ex}
    Let us examine the resulting manifold structure on
    $\overline{\mc{R}}_{2,2}$, in a neighborhood of the one point coincidence
    discussed in Example \ref{LRexample}. Consider once more the element $q$ of
    ${_{1,1,1}}\mc{R}^3$ we discussed there, which is represented by
    some disc with inputs colored $L$, $LR$, and $R$ in clockwise order. Then,
    in a neighborhood $U_q$ of $q$, we have a chart
    \begin{equation}\label{distinguishedchart}
        U_{q} \times (-\epsilon, \epsilon)
    \end{equation}
    for some small value of $\epsilon$. 
    There are two distinguished smooth identifications of subsets of
    (\ref{distinguishedchart}). In the first, one resolves the $LR$ point $p_2$
    by an $L$ followed by $R$ 
    \begin{equation}
    \begin{split}
        U_{q} \times (0,\epsilon) &\lra {_{2,2,0}}\mc{R}^4\\
        (S,v) &\longmapsto \Pi^{p_2}_{(v)}(S):= S \#_{p_2}^{1-v} D_{LR}^{+1}
    \end{split}
    \end{equation}
    and in the second, one resolves $p_2$ by an $R$ followed by an $L$
    \begin{equation}
    \begin{split}
        U_{q} \times (-\epsilon,0) &\lra {_{2,2,0}}\mc{R}^4\\
        (S,v) &\longmapsto \Pi^{p_2}_{(v)}(S):= S \#_{p_2}^{1-(-v)} D_{LR}^{-1}.
    \end{split}
\end{equation}
These identifications, along with the existing manifold structure on 
$({_{2,2,0}} \overline{\mc{R}}^4)^*$
determine the manifold structure in a neighborhood of $p_2$. See Figure
\ref{LRstratachart1} for an illustration of the manifold structure near $p_2$.
\end{ex}

\begin{figure}[h] 
    \caption{ The manifold structure near a coincident point on $\mc{R}_{2,2}$. \label{LRstratachart1}}
    \centering
    \includegraphics[scale=0.6]{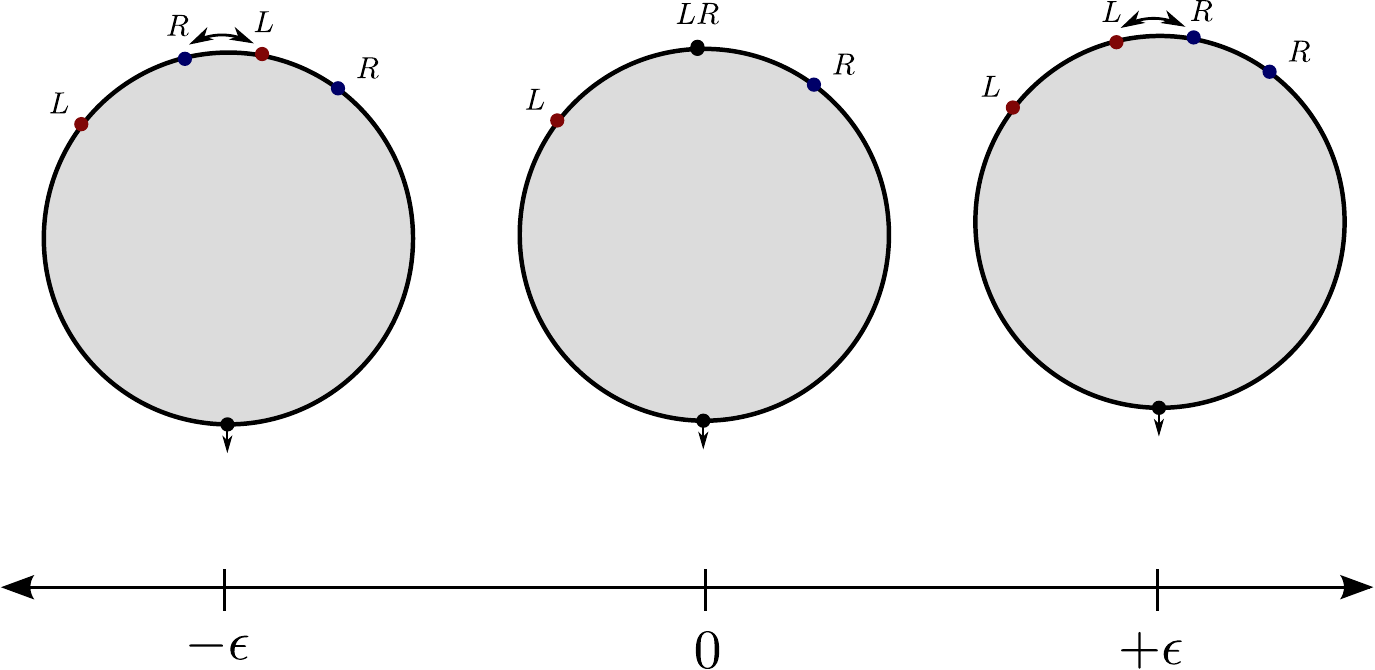}
\end{figure}

This definition of the compactification $\overline{\mc{R}}_{k,l}$ remembers the
structure of the left and right components. Namely, a (potentially nodal)
element $S$ in $\overline{\mc{R}}_{k,l}$ can be thought of as an element of $
_{k-i,l-i,i} \overline{\mc{R}}^{k+l-i}$ for some $i$, with no $D_{LR}^{\pm}$
leaf bubbles. Define the associated {\bf unreduced left disc} $\tilde{S}_1$ to
be obtained from $S$ by deleting all points colored $R$, and forgetting the
$L$, $LR$ colorings. Similarly, define the associated {\bf unreduced right
disc} $\tilde{S}_2$ to be obtained by deleting all points colored $L$ and
forgetting the $R$, $RL$ colorings. The associated {\bf reduced discs} $(S_1,
S_2)$ are given by stabilizing $\tilde{S}_1$ and $\tilde{S}_2$, and there are
well defined inputs on these stabilizations corresponding to the inputs colored
$(L,LR)$ and $(LR,R)$ on $S$ respectively.

In this way, we obtain a {\bf projection map}
\begin{equation}
    \pi_{reduce}: \overline{\mc{R}}_{k,l} \lra \bar{\mc{R}}^k \times \bar{\mc{R}}^l
\end{equation}


Since the manifolds $ _{k-i,l-i,i}\mc{R}^{k+l-i}$ are codimension $i$ in the
space $\mc{R}_{k,l}$, we see that the codimension one boundary of the
compactification $\overline{\mc{R}}_{k,l}$ is contained in the image of the
codimension 1 boundary of the top stratum $ _{k,l,0}\overline{\mc{R}}^{k+l}$.
The chart identifications (\ref{chartidentification}) show that points where an
$L$ and $R$ would have bubbled off in codimension 1 now cease to be boundary
points; thus any codimension 1 bubble must contain at least two $L/LR$ points
or two $R/LR$ points.

The result of this discussion is as follows: we see that the codimension one
boundary of the Deligne-Mumford compactification
(\ref{compactifiedpairsofdiscs}) is covered by the images of the natural
inclusions of the following products:

    \begin{align}
\label{zerocodimonestratum}        \overline{\mc{R}}_{k',l'} &\times {_1}\overline{\mc{R}}_{k-k'+1,l-l'+1},\ k', l', k-k'+1, l-l'+1 \geq 2\\
\label{mbox1}\overline{\mc{R}}_{k',1} &\times {_1}\overline{\mc{R}}_{k-k'+1,l},\ k', k-k'+1, \geq 2\\
\label{mbox2}\overline{\mc{R}}_{1,l'} &\times {_1}\overline{\mc{R}}_{k,l-l'+1},\ l',l-l'+1 \geq 2\\
\label{realcompactification}\overline{\mc{R}}_{k',0} &\times \overline{\mc{R}}_{k-k'+1,l},\ k',k-k'+1 \geq 2\\
\label{realcompactification2}\overline{\mc{R}}_{0,l'} &\times \overline{\mc{R}}_{k,l-l'+1},\ l',l-l'+1 \geq 2
    \end{align}
    The appearance of 1-coincident spaces ${_1}\mc{R}_{k-k'+1,l-l'+1}$
in (\ref{zerocodimonestratum})-(\ref{mbox2}) has a simple explanation.  For
simultaneous bubbling of $L$ and $R$'s to occur, the bubble point must be
coincident, i.e. colored $LR$.

Strictly speaking, the compactification we have described is somewhat larger
than we would ideally like; the strata of most interest to us are
(\ref{realcompactification}) and (\ref{realcompactification2}). However, we
will be able to make arguments showing that operations coming from the other
strata must all be zero.

\begin{rem}
    We could further reduce the compactification of $\mc{R}_{k,l}$ by
    collapsing strata whose subtrees are monochromatic $L$ or $R$ except for a
    single off-color point.  The construction would make points colored $R$
    point view entire subtrees colored $L$ as invisible and vice versa.  Having
    made this construction, one can then check that the resulting operations we
    construct will not change.  We have thus opted for a simpler construction
    at the expense of having a larger compactification.
\end{rem}
\subsection{Sequential point identifications}
We study a particular classes of submanifolds of pairs of discs, and examine
its compactifications carefully. This compactification is the one that will
arise when defining quilts and Floer theoretic operations in the product.
\begin{defn}
    A point identification $\mf{P}$ is said to be {\bf sequential} if it is of
    the form 
    \begin{equation}
        \mf{S} = \{(i_1, j_1), (i_1 +1, j_1+1), \ldots, (i_1 + s, j_1 + s)\}.
    \end{equation}
    It is further said to be {\bf initial} if $(i_1,j_1) = (1,1)$.
\end{defn}

\begin{defn}
    A {\bf cyclic sequential point identification} of type ($r,s$) is one of
    the form
    \begin{equation}
        \mf{S} = \{(1,1), (2,2), \ldots, (r,r), (k-s, l-s), (k-s+1,l-s+1), \ldots, (k,l)\}.
    \end{equation}
    In other words, it is a sequential point identification where we need to
    take indices mod $(k,l)$.
\end{defn}
\begin{prop} \label{sequentialpointidentifications}
    Let $\mf{P}$ be a $(k,l)$ initial sequential point identification of length $s$.
    Then the codimension-1 boundary of the compactification of
    $\mf{P}$-identified pairs of discs 
    \begin{equation}
        _{\mf{P}} \overline{\mc{R}}_{k,l}
    \end{equation}
    is covered by the natural inclusions of the following products: 
    \begin{align}
        \label{bubblinginP} {_{\mf{P}_{max}}} \overline{\mc{R}}_{d,d} &\times {_{\mf{P}'}}\overline{\mc{R}}_{k-d+1,l-d+1}\\
        \label{bubblingoverlapsP} {_{\mf{P}'}}\overline{\mc{R}}_{k',l'} &\times {_{\mf{P}''}} \overline{\mc{R}}_{k-k'+1,l-l'+1}\\
        \label{bubblingoutofP1}\overline{\mc{R}}_{k',l'} &\times {_{\mf{P} \cup (s,t)}}\overline{\mc{R}}_{k-k'+1,l-l'+1}\\
        \overline{\mc{R}}_{k',1} &\times {_{\mf{P} \cup (s,t)}}\overline{\mc{R}}_{k-k'+1,l}\\
        \label{bubblingLR} \overline{\mc{R}}_{1,l'} &\times {_{\mf{P} \cup (s,t)}}\overline{\mc{R}}_{k,l-l'+1}\\
        \label{bubblingoutofPright}\overline{\mc{R}}_{0,l'} &\times {_{\mf{P}}} \overline{\mc{R}}_{k,l-l'+1}\\
        \label{bubblingoutofPend} \overline{\mc{R}}_{k',0} &\times {_{\mf{P}}}\overline{\mc{R}}_{k-k'+1,l}.
    \end{align}
\end{prop}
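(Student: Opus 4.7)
My plan is to deduce this boundary description by intersecting the closure of ${_{\mf{P}}}\mc{R}_{k,l}$ with the stratification of $\overline{\mc{R}}_{k,l}$ already constructed in (\ref{compactifiedpairsofdiscs}). Recall that $\overline{\mc{R}}_{k,l}$ is built from the tricolored compactifications ${_{k-i,l-i,i}}\overline{\mc{R}}^{k+l-i}$ glued together via the maps $\bar{\Pi}^{\vec{p}}_{\vec{v}}$ which resolve $LR$-colored points into nearby $L$-then-$R$ or $R$-then-$L$ pairs. The locus ${_{\mf{P}}}\mc{R}_{k,l}$ corresponds to tricolored discs in which precisely the points indexed by $\mf{P}$ (paired by the counterclockwise order constraint) are colored $LR$, and the others $L$ or $R$. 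Its closure ${_{\mf{P}}}\overline{\mc{R}}_{k,l}$ is obtained by allowing degenerations: either additional points may collide to create new $LR$ points, or subtrees may bubble off provided that the remaining configuration of $LR$ points on the main component continues to include (at least) the indices in $\mf{P}$.

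First I would classify the codimension-$1$ nodal degenerations of a tricolored disc carrying the $\mf{P}$-coincidences. A bubble is a subtree attached by a single node to the main component, and for the configuration to lie in $\overline{\mc{R}}_{k,l}$ the node must be a non-$(LR)$-leaf, which rules out simultaneous $L$,$R$ leaf bubbles (those were identified away in the construction). I then split cases by how the bubble interacts with $\mf{P}$: (i) the bubble contains only $LR$-colored inputs coming from $\mf{P}$; since $\mf{P}$ is initial and sequential, these form a contiguous block $\{(1,1),\dots,(d,d)\}$, and the bubble itself is maximally coincident, yielding (\ref{bubblinginP}); (ii) the bubble contains both $\mf{P}$-points and neighboring $L$- or $R$-points; the coincidence structure on the bubble and on the main component is inherited from $\mf{P}$ restricted to the respective sides, giving (\ref{bubblingoverlapsP}); (iii) the bubble contains only non-$\mf{P}$ points of one color on each side, so that collapsing the bubble still creates a coincident node---this is the origin of the extra $(s,t)$ in (\ref{bubblingoutofP1})--(\ref{bubblingLR}); and (iv) the bubble is monochromatic with a ghost opposite side, giving (\ref{bubblingoutofPright})--(\ref{bubblingoutofPend}), where the opposite factor is a ghost disc (hence the $\overline{\mc{R}}_{k',0}$ or $\overline{\mc{R}}_{0,l'}$).

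Next I would verify that each enumerated product sits in codimension exactly one. Inside ${_{\mf{P}}}\mc{R}_{k,l} \cong {_{k-s,l-s,s}}\mc{R}^{k+l-s}$, which has dimension $k+l-s-2$, each nodal degeneration described above reduces the modulus by one. The key subtle point is case (iii): the boundary $v_r \to 0_{\pm}$ in the chart (\ref{chartidentification}) becomes interior in $\overline{\mc{R}}_{k,l}$ (as $0_\pm$ were glued), but after intersecting with ${_{\mf{P}}}\overline{\mc{R}}_{k,l}$ only the nodal limit where a new $LR$-coincidence appears survives---this is why $\mf{P}$ acquires an extra element $(s,t)$ rather than producing cancellation.

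The main obstacle will be bookkeeping the three-coloring combinatorics so that no codimension-$1$ stratum is double-counted and none is missed, especially in cases (iii) and (iv) where the extra coincidence $(s,t)$ may be inserted at various positions compatible with the sequential ordering of $\mf{P}$. I would handle this by observing that once the two factors $\overline{\mc{R}}_{k',l'}$ and $\overline{\mc{R}}_{k-k'+1,l-l'+1}$ are chosen, the node position $(s,t)$ is determined by requiring compatibility with the cyclic boundary ordering induced from the parent disc and with the retention of $\mf{P}$. A parallel argument then handles the semistable corners where $k' =1$ or $l' = 1$ (producing (\ref{mbox1})--(\ref{mbox2})-type strata) and the ghost corners (\ref{bubblingoutofPright})--(\ref{bubblingoutofPend}), completing the enumeration.
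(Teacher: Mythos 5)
Your proposal is correct and follows essentially the same route as the paper's (very brief) argument: pass to the tricolored-disc description of ${_{\mf{P}}}\mc{R}_{k,l}$, read off codimension-one degenerations from the compactified tricolored associahedra that survive the chart identifications, and classify strata by whether the bubble lies inside, overlaps, or avoids the coincident set, with a new coincidence $(s,t)$ created when $L$ and $R$ points bubble simultaneously. The paper's proof is only a sketch of exactly these case distinctions, so your more detailed bookkeeping (codimension count, placement of the new node) is a faithful elaboration rather than a different method.
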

\begin{proof}
We will only say a few words about the Proposition. Note that under the
point-coincidence stratification, the top stratum of the non-compactified space
$_\mf{P} \mc{R}_{k,l}$, in which there are no other coincidences, correspond to
all tricolorings of discs with $k+l-i$ marked points with $LR$ colorings
specified by $\mf{P}$ and $L$, $R$ colorings arbitrary
\begin{equation}
    _{\mf{P}} \mc{R}_{k,l} = \coprod_{I \cup J \cup \mf{P} = [k+l-i]}{_{I,J,\mf{P}}} \mc{R}^{k+l - |\mf{P}|}.
\end{equation}
Thus, we can determine the codimension-one boundary of the compactification by
looking at the boundary components of the compactifications $_{I,J,\mf{P}}
\overline{\mc{R}}_{k,l}$ which survive our chart maps (\ref{chartidentification}).

The possible strata that arise fall into three different cases: bubbling occurs
entirely within the coincident points (\ref{bubblinginP}), bubbling overlaps
somewhat with the coincident points (\ref{bubblingoverlapsP}), and bubbling
stays entirely away from the coincident points (\ref{bubblingoutofP1}) -
(\ref{bubblingoutofPend}). Note once more that when $L$ and $R$ points
simultaneously bubble, there is an additional coincident point created, hence
the need to add various $(s,t)$ to the coincident set in
(\ref{bubblingoutofP1})-(\ref{bubblingLR}).
\end{proof}

\subsection{Gluing discs}

We now make precise the notion of {\it gluing} pairs of discs along some
identified boundary components, a construction that will arise from
incorporating the diagonal Lagrangian $\Delta$ as an admissible Lagrangian in
the product $M^-\times M$. We begin by discussing the combinatorial type of a
{\bf boundary identifications} of a pair of discs.
\begin{defn}
A {\bf $(k,l)$ boundary identification} is a subset $\mf{S}$ of the set of pairs
$\{0, \ldots, k\} \times \{0, \ldots, l\}$ satisfying the following conditions:
\begin{itemize}
    \item $(0,0)$ and $(k,l)$ are the only admissible pairs in $\mf{S}$
        containing extrema.

    \item ({\bf monotonicity}) 
        $\mf{S}$ can be written as $\{(i_1, j_1), \ldots, (i_s,j_s)\}$ with $i_r < i_{r+1}$ and $j_r < j_{r+1}$.
\end{itemize}
\end{defn}

\begin{defn}\label{scompatible}
Let $S$ and $T$ be unit discs in $\C$ with $k$ and $l$ incoming boundary marked
points respectively, and one outgoing boundary point each. Assume further that
the outgoing boundary points of $S$ and $T$ are in the same position.
Label the boundary components of $S$ 
\begin{equation}
    \{\partial^0 S, \ldots, \partial^k S\}
\end{equation}
in {\bf counterclockwise}
order from the outgoing point, and label the components of $T$ 
\begin{equation}
    \{\partial^0 T,
    \ldots, \partial^l T\}\end{equation} 
in {\bf counterclockwise} order from the outgoing
point. Let $\mf{S}$ be a $(k,l)$ boundary identification. $S$ and $T$ are said
to be {\bf $\mf{S}$-compatible} if 
\begin{itemize}
    \item  the outgoing points of $S$ and $T$ are at the same position.
    \item 
        The identity map induces a one-to-one identification of $\partial^x S$
        with $\partial^y T$ for each $(x,y) \in \mf{S}$.  
\end{itemize}
\end{defn}
\noindent The notion of $\mf{S}$-compatibility is manifestly invariant under
simultaneous automorphism of the pair $(S,T)$. $\mf{S}$-compatibility of a pair
$(S,T)$ also implies certain point coincidences, in the sense of the previous
section.
\begin{defn}
    Let $\mf{S}$ be a $(k,l)$ boundary identification. The {\bf associated
    $(k,l)$ point identification}
    \begin{equation}
        p(\mf{S})
    \end{equation}
    is defined as follows:
    \begin{equation}
        p(\mf{S}) := \{(i,j) | (i,j) \in \mf{S}\textrm{ or } (i-1,j-1) \in \mf{S}\}.
    \end{equation}
\end{defn}
Moreover, if $S$ and $T$ have coincident points consistent with the induced
point-identification $p(\mf{S})$, then $S$ and $T$ are also $\mf{S}$
compatible. Hence, we can make the following definition:
\begin{defn}
    A $(k,l)$ boundary identification $\mf{S}$ is said to be {\bf compatible
    with a $(k,l)$ point identification} $\mf{T}$ if 
    \begin{equation}
        p(\mf{S}) \subseteq \mf{T}
    \end{equation}
    where $p(\mf{S})$ is the associated point identification.
\end{defn}
Now, let $\mf{S}$ be a $(k,l)$ boundary identification with compatible point
identification $\mf{T}$. Given any pair of discs with $\mf{T}$ point
coincidences, there is an associated tricolored disc in the manner described in
the previous section. We see that a boundary identification can be thought of
as a binary $\{$``identified'', ``not identified''$\}$ labeling of the boundary
components between $\mf{T}$-coincident points, which were colored $LR$. Thought
of in this manner, we see that a boundary identification $\mf{S}$ induces a
boundary identification on nodal elements in the compactification
\begin{equation}
    _{\mf{T}} \overline{\mc{R}}_{k,l};
\end{equation}
We can see this as follows. On any nodal component of this space, there are
induced point coincidences coming from gluing maps. Label a boundary component
between two coincident points as ``identified,'' if, after gluing, the boundary
component corresponds to one labeled ``identified.''

Thus, in the same manner that we have already spoken about boundary-labeled
moduli spaces, we can define the {\bf moduli space of $\mf{S}$ identified pairs
of discs with $\mf{T}$ point identifications and $(k,l)$ marked points}
\begin{equation}
    _{\mf{T},\mf{S}}\overline{\mc{R}}_{k,l}
\end{equation}
to be exactly $ _\mf{T} \overline{\mc{R}}_{k,l}$ with the additional boundary
labellings that we described above.

Our reason for defining boundary identification is so that we can speak more
easily about gluings.
\begin{defn}
    Let $S$ and $T$ be compatible with a $(k,l)$ boundary identification datum
    $\mf{S}$. The {\bf $\mf{S}$-gluing}
    \begin{equation}
        \pi_{\mf{S}}:= S \coprod_{\mf{S}} T
    \end{equation}
    is the genus 0 open-closed string defined as follows: view $-S$, i.e. $S$ with the {\bf opposite complex structure} as being the
    south half of a sphere bounding the equator via the complex doubling
    procedure, one of the methods of constructing the moduli of bordered
    surfaces \cite{Liu:2004fk}*{\S 3.1}.  Similarly, view $T$
as the north half of the sphere. Then \begin{equation} (S \coprod_{\mf{S}} T)
    := (-S) \coprod T / \sim \end{equation}
    where $\sim$ identifies $\partial^x (-S)$ to $\partial^y T$ ($\partial^x (-S)$ is the same
    boundary component of $S$ as before, now with the reverse orientation)
    under the identification coming from inclusion into the sphere if and only
    if $(x,y)
    \in \mf{S}$. Boundary marked points are identified as follows: Let $z^i_{-S}$
    be the boundary marked point between $\partial^{i-1} (-S)$ and
    $\partial^{i}(-S)$,
    $z^0_{-S}$ the outgoing marked point, and $z^j_{T}$ similar. Then:
    \begin{itemize}
        \item if $(x-1,y-1), (x,y) \in \mf{S}$, then $z^x_{-S}\sim z^y_{T}$ becomes
            a single interior marked point.  
        \item if $(x-1,y-1) \in \mf{S}$ but $(x,y)$ is not, then $z^x_{-S} \sim
            z^y_{T}$ becomes a single boundary marked point, between $\partial^y
            T$ and $\partial^x (-S)$ 
        \item if $(x,y) \in \mf{S}$ but $(x-1,y-1)$ is not, then $z^x_{-S} \sim
            z^y_{T}$ becomes a single boundary marked point, between $\partial^x
            (-S)$ and $\partial^y T$ 
        \item otherwise, $z^x_{-S}$ and $z^y_T$ are kept distinct, becoming two
            boundary marked points.  
    \end{itemize}
\end{defn}
By $\mf{S}$-compatibility, $S$ and $T$ can be viewed as the south and north
halves of a sphere in a manner preserving the alignment of outgoing marked
points and boundary components specified by $\mf{S}$, so the above definition
is sensible.

One can read off the characteristics of the resulting bordered surface from
$k$, $l$, and $\mf{S}$, which we leave as an exercise. Denote the resulting
number of boundary components of the open-closed string
\begin{equation}
    h(k,l,\mf{S}).
\end{equation}

\begin{figure}[h] 
    \caption{An example of the gluing $\pi_{\mf{S}}$ associated to a $\{(1,1),(k,l)\}$ boundary identification. \label{gluingpairs1}}
    \centering
    \includegraphics[scale=0.6]{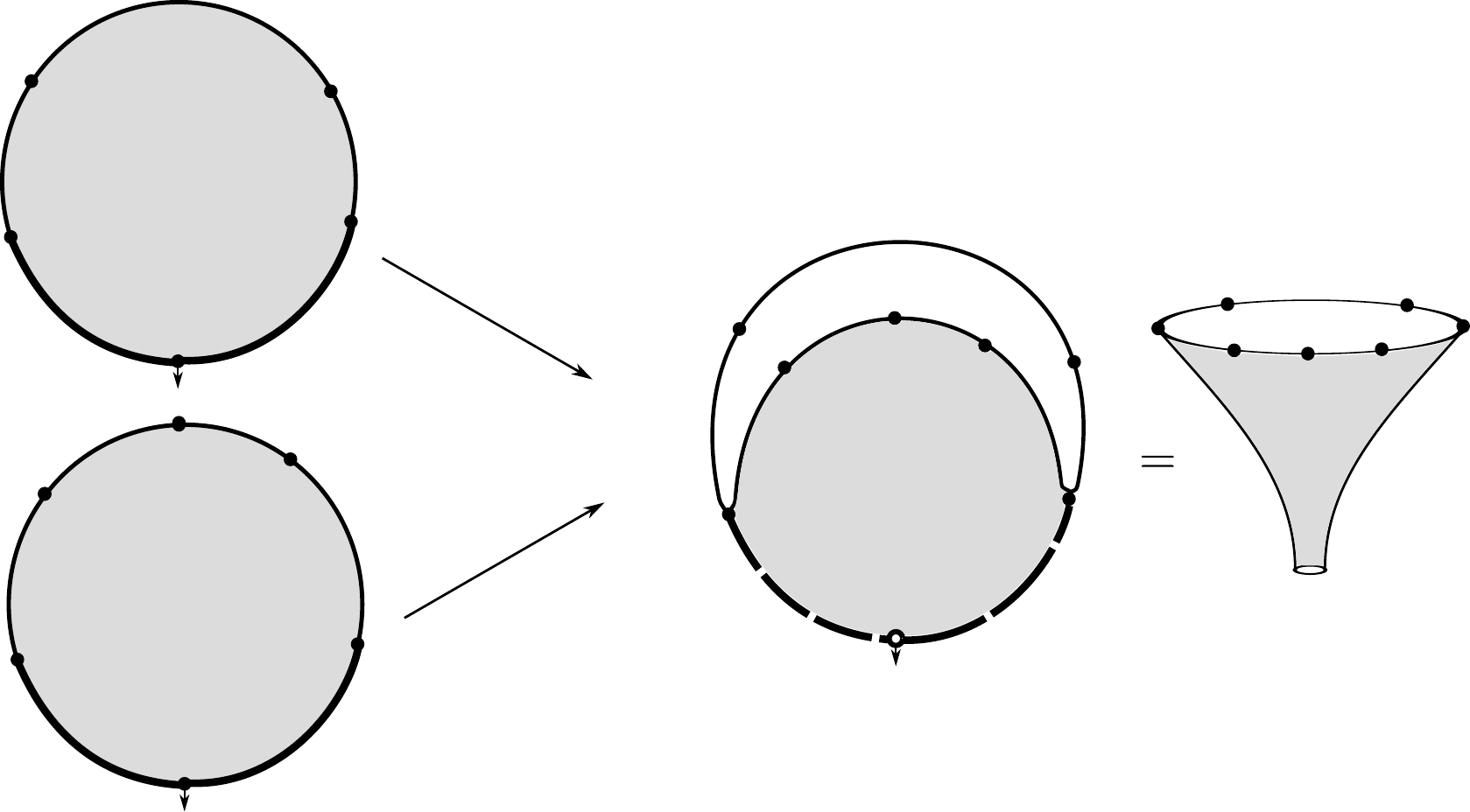}
\end{figure}

\begin{prop}
    Let $\mf{S}$ be a boundary identification, with compatible point identification $\mf{T}$. 
    Then, the gluing operation $S \coprod_{\mf{S}} T$ extends to an operation on the
    Deligne-Mumford compactifications $_{\mf{T}}\overline{\mc{R}}_{k,l}$. 
\end{prop}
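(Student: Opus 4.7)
The plan is to define $\pi_{\mf{S}}$ on a nodal element componentwise and then verify that the resulting map respects both the stratification of $_{\mf{T}}\overline{\mc{R}}_{k,l}$ and the chart identifications (\ref{chartidentification}) used in its construction. First I would analyze how the data $(\mf{S}, \mf{T})$ descends onto each boundary stratum in the list (\ref{bubblinginP})-(\ref{bubblingoutofPend}) of Proposition \ref{sequentialpointidentifications}. For each degeneration, the point identification $\mf{T}$ splits into a point identification on each nodal component (together with at most one new $LR$ coincidence at the node itself), and $\mf{S}$ splits in parallel into a boundary identification on each component; monotonicity and compatibility pass to these restrictions directly. Applying $\pi_{\mf{S}}$ componentwise then produces a tree of genus-0 open-closed strings, nodal at exactly the images of the original nodes, giving a point of the appropriate boundary stratum in $\overline{\mc{N}}_{h(k,l,\mf{S}),n,\vec{m}}^{\mathbf{I},\vec{\mathbf{K}}}$.

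The main step is to check that this extension is consistent with the smooth chart maps (\ref{chartidentification}), which identify nearby-nodal configurations with attached $D_{LR}^{\pm 1}$ leaf bubbles to smooth resolutions. There are two cases. If the $LR$ point being resolved lies outside $\mf{T}$, then its adjacent boundary arcs carry no identification, the two halves of $D_{LR}^{\pm 1}$ are not glued to each other, and the gluing $\pi_{\mf{S}}$ simply adds a topologically trivial pair of ghost discs to the output; the $0^{+}$ and $0^{-}$ limits produce the same open-closed string after stabilization. If the $LR$ point lies in $\mf{T}$, then by sequentiality its two adjacent boundary arcs are labeled compatibly by $\mf{S}$, so doubling $D_{LR}^{\pm 1}$ across these identifications yields a single interior sphere bubble attached by a node to the main glued surface. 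The $\pm$-dichotomy on the pairs-of-discs side matches precisely the two opposing asymptotic markers at this interior node in $\overline{\mc{N}}$, which is exactly how the corner charts at an interior nodal sphere are set up in the Deligne-Mumford compactification.

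The hard part will be the bookkeeping of gluing parameters, namely matching the parameter $1-|v_r|$ controlling the $\Pi^{\vec{p}}_{\vec{v}}$ chart on pairs of discs with the standard interior-node gluing parameter on $\overline{\mc{N}}$. This reduces to the observation that complex doubling of a pair of $\lambda$-connect-summed strips produces a cylinder whose length is a canonical (smooth, invertible) function of $\lambda$, so the two smooth structures agree. Once this is checked in the sequential case, the cyclic sequential case follows by the same argument with indices taken mod $(k,l)$, since all of the relevant bubble analysis is local near individual coincident points and is insensitive to the cyclic wraparound.
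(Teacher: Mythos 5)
Your underlying construction is the same as the paper's, and the paper's proof in fact consists of nothing more than that construction: on a nodal element one applies the $L/R$ forgetful maps to get the unreduced left and right disc trees, observes that $\mf{S}$ descends to them because it is a labelling of boundary arcs between $LR$-coloured points, glues componentwise, and stabilizes the (possibly semistable/unstable) result. No chart-compatibility verification is carried out there, and the extension asserted by the proposition is exactly this componentwise gluing followed by stabilization. So the part of your proposal that actually proves the statement is fine, modulo the small inaccuracy that the glued string is nodal at the \emph{stabilized} images of the original nodes (components carrying only $L$-points, say, collapse on the right side), not ``at exactly the images of the original nodes.''

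What you single out as the ``main step'' and the ``hard part,'' however, rests on a misreading of how $_{\mf{T}}\overline{\mc{R}}_{k,l}$ sits inside $\overline{\mc{R}}_{k,l}$, and as written your second case would fail. Every point of $\mf{T}$ --- in particular every point of $p(\mf{S})\subseteq\mf{T}$ --- is required to coincide on all of $_{\mf{T}}\overline{\mc{R}}_{k,l}$, so the charts (\ref{chartidentification}) that resolve such a coincidence via $D_{LR}^{\pm 1}$ are never part of this space; indeed resolving a point of $p(\mf{S})$ destroys the $\mf{S}$-compatibility of Definition \ref{scompatible} that is needed for the gluing to be defined at all. Hence your second case simply does not occur, and its internal details are also wrong: compatibility only demands $p(\mf{S})\subseteq\mf{T}$, so sequentiality of $\mf{T}$ does not force both arcs adjacent to a $\mf{T}$-point to be identified, and the claimed matching of the $\pm$ dichotomy with ``opposing asymptotic markers'' at an interior node has no counterpart in the construction (markers are pinned by pointing at a distinguished boundary point). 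Relatedly, the ``hard part'' conflates two different parameters: $1-|v_r|$ in (\ref{chartidentification}) governs separating a coincident pair of boundary inputs into distinct $L$ and $R$ inputs --- after gluing it merely moves two boundary marked points apart, and no node is smoothed --- whereas interior nodes of $\pi_{\mf{S}}(P)$ arise from boundary nodes of $P$ lying inside the identified region, as in the stratum (\ref{bubblinginP}). If one wanted the (stronger, unasserted) smooth compatibility of charts, the relevant comparison would be between that nodal gluing parameter and the interior-node parameter under doubling. Only your first case is both relevant and correctly handled; restricting the verification to the coincidences created at the boundary (those not in $\mf{T}$) and otherwise quoting the componentwise-gluing-plus-stabilization construction gives the intended proof.
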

\begin{proof}
    Nodal components of an element $P$ in the Deligne-Mumford compactification
    $_{\mf{T}}\overline{\mc{R}}_{k,l}$ can be thought of as nodal tricolored
    discs, with induced boundary identifications in a manner we have already
    described. As we have earlier indicated, the $L/R$ forgetful maps applied to $P$ give us
    {\bf unreduced left and right disc trees} $(\tilde{S}, \tilde{T})$.
    Boundary identifications descend to these trees, because by definition they
    were labellings between points colored $LR$. We now perform the above
    procedure component-wise on this pair to obtain a nodal open-closed string
    $\tilde{S} \coprod_{\mf{S}} (\tilde{T})$, which potentially has
    semi-stable/unstable components. Finally, define
    \begin{equation} \label{gluing}
        \pi_{\mf{S}}(P) := S \coprod_{\mf{S}}T
    \end{equation}
    to be the nodal open-closed string obtained by stabilizing $\tilde{S}
    \coprod_{\mf{S}} \tilde{T}$.
\end{proof}
In the case when the boundary identification $\mf{S}$ is empty, the gluing
operation $\pi_{\mf{S}}$ reduces to the projection we defined earlier, where we
conjugate the first factor:
\begin{equation}
    \pi_{\emptyset} := \pi_{reduce} \circ (-1 \times id).
\end{equation}

\subsection{Floer data and operations}
\begin{defn} \label{floerglued}
    A {\bf Floer datum} for a glued pair of discs $(P,\mf{S})$ is a Floer
    datum for the resulting open-closed string $\pi_{\mf{S}}(P)$, in the
    sense of Definition \ref{floeropenclosed}.
\end{defn}

Now, let us assume that our point identification $\mf{T}$ was sequential or
cyclic sequential.  
\begin{defn}
    A {\bf universal and consistent choice of Floer data for glued
    pairs of discs} $\mathbf{D}_{glued}$ is a choice
    $D_{P,\mf{S}}$ of Floer data in the sense of Definition \ref{floerglued}
    for every $k$, $l$, $(k,l)$ boundary identification $\mf{S}$ and compatible
    sequential point identification $\mf{T}$, and every representative
    $_{\mf{T},\mf{S}}\overline{\mc{R}}_{k,l}$, varying smoothly over
    $_{\mf{T},\mf{S}}\overline{\mc{R}}_{k,l}$, whose restriction to a boundary stratum
    is conformally equivalent to the product of Floer data coming from lower
    dimensional moduli spaces.  Moreover, with regards to the coordinates,
    Floer data agree to infinite order at the boundary stratum with the Floer
    data obtained by gluing. Finally, we require that this choice of Floer
    datum satisfy the following conditions:
    \begin{align} \label{projection}
        &\textrm{The Floer datum only depends on the open-closed string }
        \pi_{\mf{S}}(P);\textrm{ and}\\
        &\textrm{The Floer datum agrees with our previously chosen Floer datum on }
        \pi_{\mf{S}}(P).
    \end{align}
\end{defn}

\begin{defn}
    A {\bf Lagrangian labeling from $\mathbf{L}$} for a glued pair of discs
    $(P, \mf{S})$ is a Lagrangian labeling from $\mathbf{L}$ for the gluing $\pi_{\mf{S}}(P)= S
    \coprod_{\mf{S}} T$, thought of as a (possibly disconnected) open-closed
    string. Given a fixed labeling $\vec{L}$, denote by
        \begin{equation}
(_{\mf{T},\mf{S}}\overline{\mc{R}}_{k,l})_{\vec{L}}
        \end{equation}
        the space of labeled $\mf{S}$-identified pairs of discs with $\mf{T}$ point coincidences.
\end{defn}

Now, fix a compact oriented submanifold with corners of dimension $d$,
\begin{equation}
    \overline{\mc{L}}^d\longhookrightarrow 
    {_{\mf{T},\mf{S}}}\overline{\mc{R}}_{k,l}
\end{equation}
Fix a Lagrangian labeling 
\begin{equation}
    \vec{L} = \{\{L_0^1, \ldots, L_{m_1}^1\}, \{L_0^2, \ldots, L_{m_2}^2\}, \ldots, \{L_0^h,
\ldots, L_{m_h}^h\}\}.
\end{equation}
Also, fix chords 
\begin{equation}
\vec{x} = \{\{x_1^1, \ldots, x_{m_1}^1\}, \ldots,\{ x_1^h, \ldots, x_{m_h}^h\}\}
\end{equation}
and orbits $\vec{y} = \{y_1, \ldots, y_n\}$ with
\begin{equation}
x_i^j \in \begin{cases}
    \chi(L_{i+1}^j,L_i^j) & i \in K^j \\
    \chi(L_i^j, L_{i+1}^j) & \mathrm{otherwise.}
\end{cases}
\end{equation}
Above, the index $i$ in $L_i^j$ is counted mod $m_{j}$. Collectively, the
$\vec{x}, \vec{y}$ are called a set of {\bf asymptotic conditions} for the
labeled moduli space $\mc{L}^d_{\vec{L}}$. The {\bf outputs} $\vec{x}_{out}$,
$\vec{y}_{out}$ are by definition those $x_i^j$ and $y_s$ for which $i \in K^j$
and $s \in \mathbf{I}$, corresponding to negative marked points.
The {\bf inputs} $\vec{x}_{in}$, $\vec{y}_{in}$ are the remaining chords and
orbits from $\vec{x}$, $\vec{y}$.  Fixing a chosen universal and consistent
Floer datum, denote $\e^{i,j}_\pm$ and
$\delta^l_\pm$ the strip-like and cylindrical ends corresponding to $x_i^j$ and
$y_l$ respectively. 

Finally, define
\begin{equation}
\overline{\mc{L}}^d(\vec{x}_{out},\vec{y}_{out};
\vec{x}_{in},\vec{y}_{in})
\end{equation}
to be the space of maps 
\begin{equation}
    \{ u: \pi_{\mf{S}}(P) \longrightarrow M:\ P \in \overline{\mc{L}}^d \}
\end{equation}
satisfying, at each element $P$, {\it Floer's equation for
$(\mathbf{D}_{\mf{S}})_{P}$} with boundary and asymptotic conditions
\begin{equation}
    \begin{cases}
        \lim_{s \ra \pm\infty} u \circ \e^{i,j}_\pm(s,\cdot) = x_i^j,\\
    \lim_{s \ra \pm\infty} u \circ \delta^l_\pm(s,\cdot) = y_l,\\
    \ \ \ u(z) \in \psi^{a_S(z)} L_i^j, & z\in \partial_i^j S.
\end{cases}
\end{equation}

We have the usual transversality and compactness results:
\begin{lem} 
    The moduli spaces $\overline{\mc{L}}^d(\vec{x}_{out},\vec{y}_{out};
    \vec{x}_{in},\vec{y}_{in})$ are compact and there are only finitely many
    collections $\vec{x}_{out}$, $\vec{y}_{out}$ for which they are non-empty
    given input $\vec{x}_{in}$, $\vec{y}_{in}$. For a generic universal and
    conformally consistent Floer data they form manifolds of dimension 
\begin{equation} 
    \begin{split}
    \dim
    \mc{L}^d(\vec{x}_{out},\vec{y}_{out};\vec{x}_{in},\vec{y}_{in}):=\sum_{x_-
    \in \vec{x}_{out}} &\deg (x_-) + \sum_{y_- \in \vec{y}_{out}} \deg (y_-)\\
    + (2-h(k,l,\mf{S}) - |\vec{x}_{out}| - 2 |\vec{y}_{out}|)n
    &+d -\sum_{x_+ \in \vec{x}_{in}} \deg(x_+) - \sum_{y_+ \in \vec{y}_{in}}
    \deg(y_+).  
\end{split}
\end{equation}
\end{lem}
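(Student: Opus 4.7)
The plan is to reduce this result to Lemma \ref{openclosedcompactdimensiontransversality}, applied to the genus-0 open-closed string $\pi_{\mf{S}}(P)$ obtained by gluing, via the projection property (\ref{projection}) which guarantees that the Floer datum on a glued pair $(P,\mf{S})$ depends only on $\pi_{\mf{S}}(P)$. Concretely, I would observe that a map $u : \pi_{\mf{S}}(P) \to M$ satisfying Floer's equation with the prescribed boundary and asymptotic conditions is literally a solution on an open-closed string of type $h(k,l,\mf{S})$, with marked points that match up with the asymptotic data $(\vec{x}_{in},\vec{y}_{in},\vec{x}_{out},\vec{y}_{out})$. Thus $\overline{\mc{L}}^d(\vec{x}_{out},\vec{y}_{out};\vec{x}_{in},\vec{y}_{in})$ is simply the universal moduli space of such solutions over the $d$-dimensional family $\overline{\mc{L}}^d$ of parameters.

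From this viewpoint the three claims each follow from the corresponding fact for open-closed strings. For compactness, I would apply the a priori $C^0$ bounds of Theorem \ref{c0bounds} (proved in Appendix \ref{actionsection}) — which crucially accommodate the bounded Hamiltonian perturbations $F_S$ we have allowed in Definition \ref{floeropenclosed} — and then invoke standard Gromov compactness, noting that boundary strata of $\overline{\mc{L}}^d$ produce only the expected broken/bubbled configurations compatible with the compactification of $ _{\mf{T},\mf{S}} \overline{\mc{R}}_{k,l}$ in Proposition \ref{sequentialpointidentifications}. The finiteness statement (only finitely many outputs give nonempty moduli spaces for fixed inputs) reduces to a uniform action bound on solutions, again provided by Theorem \ref{c0bounds}, combined with the non-degeneracy of Reeb chords and orbits.

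For transversality, I would run the standard Sard–Smale argument of \cite{Seidel:2008zr}*{(9k)} for universal and consistent Floer data, varying $(H_S, J_S, F_S)$ on the reduced open-closed string space; since we only require Floer data on $(P,\mf{S})$ to descend through $\pi_{\mf{S}}$, this is still an infinite-dimensional space of generic choices, so somewhere-injectivity of the relevant linearizations (inherited from that of ordinary Floer strips) yields regular moduli spaces. The dimension formula then follows from the index calculation already carried out in Lemma \ref{openclosedcompactdimensiontransversality}: gluing of Fredholm indices \cite{Seidel:2008zr}*{(11c)} produces a contribution $n \chi(\pi_{\mf{S}}(P)) = (2 - h(k,l,\mf{S}))n$ from the compactified surface, spectral-flow contributions $\sum_{x_-}\deg(x_-) + \sum_{y_-}\deg(y_-) - \sum_{x_+}\deg(x_+) - \sum_{y_+}\deg(y_+)$ from the strip-like and cylindrical ends, a shift of $-|\vec{x}_{out}|n - 2|\vec{y}_{out}|n$ from our sign conventions for outputs (exactly as in the proof of Lemma \ref{openclosedcompactdimensiontransversality}), and finally an extra $+d$ from the parameter space $\overline{\mc{L}}^d$.

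The main technical subtlety, and essentially the only point requiring genuine work beyond citing previous results, is the $C^0$ compactness in the presence of the surface-dependent perturbation terms $F_S$, which are only bounded rather than of rigid contact form; this is precisely what Theorem \ref{c0bounds} in Appendix \ref{actionsection} is designed to handle, combining standard convexity with a cylindrical maximum principle of Floer–Hofer/Cieliebak type. Everything else is a direct translation of the open-closed case via $\pi_{\mf{S}}$.
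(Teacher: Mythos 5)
Your proposal is correct and follows essentially the same route as the paper: reduce everything to the open-closed case via $\pi_{\mf{S}}$, take the index/dimension count from the earlier open-closed lemma (with the extra $+d$ from the parameter space), get compactness from the a priori bounds of Theorem \ref{c0bounds} together with Gromov compactness, and obtain transversality by Sard--Smale within the class of Floer data constant along the fibers of $\pi_{\mf{S}}$, which the paper likewise observes is large enough because the standard arguments already operate on the level of stabilized (reduced) domains. The only cosmetic difference is your appeal to ``somewhere-injectivity,'' which is not the mechanism used (nor needed) once the perturbation data are domain-dependent, but this does not affect the validity of the argument.
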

\begin{proof}
The index computation follows from the arguments outlined in the proof of Lemma
\ref{openclosedcompactdimensiontransversality}. The proof of transversality for
generic perturbation data is once more an application of Sard-Smale, following
arguments in \cite{Seidel:2008zr}*{(9k)} or alternatively \cite{Floer:1995fk}.
These arguments show that the extended linearized operator for Floer's
equation, in which one allows deformations of the almost complex structure and
one-form, is surjective. As their arguments are on the level of stabilized
moduli spaces, they imply that transversality can be achieved in our situation
by taking perturbations of Floer data that are constant along the fibers of the
projection map $\pi_{\mf{S}}$. In other words, the class of Floer data
satisfying (\ref{projection}) is large enough to achieve transversality.

The usual Gromov compactness applies once Theorem \ref{c0bounds} is applied to
obtain a priori bounds on maps satisfying Floer's equation with fixed
asymptotics.
\end{proof}

When the dimension of $\mc{L}^d(\vec{x}_{out},\vec{y}_{out};\vec{x}_{in},\vec{y}_{in})$ is 0, we conclude that its elements are rigid.
In particular, any such element  $u \in
\mc{L}^{d}(\vec{x}_{out},\vec{y}_{out};\vec{x}_{in},\vec{y}_{in})$ gives (by
Lemma \ref{orientationlem}) an isomorphism of orientation lines
\begin{equation}
    \mc{L}_u: \bigotimes_{x \in \vec{x}_{in}} o_x \otimes \bigotimes_{y \in \vec{y}_{in}} o_y \lra \bigotimes_{x \in \vec{x}_{out}} o_x \otimes \bigotimes_{y \in \vec{y}_{out}} o_y.
\end{equation}

Using this we define a map 
\begin{equation}
    \begin{split}
         \mathbf{G}_{\overline{\mc{L}}^d}: \bigotimes_{(i,j);1\leq i\leq m_j;i
        \notin K^j} &CW^*(L_i^j,L_{i+1}^j) \otimes \bigotimes_{1 \leq k \leq n;
        k \notin \mathbf{I}} CH^*(M) \longrightarrow  \\
         &\bigotimes_{(i,j);1\leq i\leq m_j;i
         \in K^j} CW^*(L_{i+1}^j,L_{i}^j) \otimes \bigotimes_{1 \leq k \leq n;
        k \in \mathbf{I}} CH^*(M) \mbox{   }
    \end{split}
\end{equation}
given by, as usual (abbreviating $\vec{x}_{in} = \{x_1, \ldots, x_s\}$, $\vec{y}_{in} = \{y_1, \ldots, y_t\}$)
    \begin{equation} 
        \begin{split}
            &\mathbf{G}_{\overline{\mc{L}}^d}([y_t], \ldots, [y_1], 
            [x_s], \ldots, [x_1]) := \\ 
            & \sum_{\dim \mc{L}^d(\vec{x}_{out},\vec{y}_{out};
            \vec{x}_{in},\vec{y}_{in}) = 0} 
            \sum_{u \in \mc{L}^d(\vec{x}_{out},\vec{y}_{out}; 
            \vec{x}_{in}, \vec{y}_{in})}
            \mc{L}_u([x_1], \ldots, [x_s], [y_1], \ldots, [y_t]).
    \end{split}
\end{equation}
This construction naturally associates, to any submanifold $\mc{L}^d \in 
{_{\mf{S},\mf{T}}}\mc{R}_{k,l}$, a map $\mathbf{G}_{\mc{L}^d}$, depending on a
sufficiently generic choice of Floer data for glued pairs of discs. In a
similar fashion, this can be done for a submanifold of the labeled space 
\begin{equation}
    \mc{L}^d_{\vec{L}} \subset (_{\mf{S},\mf{T}}\mc{R}_{k,l})_{\vec{L}},
\end{equation} 
in which case the result is an operation
defined only for a specific labeling, 
\begin{equation} \label{operation2}
    \mathbf{G}_{\overline{\mc{L}}^d_{\vec{L}}},
\end{equation}
This operation can also be constructed with a sign twisting datum to create an
operation 
\begin{equation}
    (-1)^{\vec{t}} \mathbf{G}_{\overline{\mc{L}}^d}
\end{equation}
in an identical fashion to (\ref{twistoperation1}).

\subsection{Examples}
As a first example, consider the case $\mf{S} = \emptyset$ and $\mc{L}$
equal to the full $\overline{\mc{R}}_{k,l}$.
\begin{prop} \label{extraauto}
        The operation associated to $\mc{L} = \overline{\mc{R}}_{k,l}$ with
        arbitrary Lagrangian labeling
        is zero if both $k$ and $l$ are $\geq 1$
        and one of $(k,l)$ is $\geq 2$.
    \end{prop}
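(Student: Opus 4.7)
The plan is to exploit the fact that $\mf{S} = \emptyset$ forces the Floer data to descend through the projection $\pi_{\emptyset} = \pi_{reduce} \circ (-1 \times id)$ to $\overline{\mc{R}}^k \times \overline{\mc{R}}^l$, after which a dimension count in the fiber shows that no rigid solutions can exist. First I would observe that for any $P \in \mc{R}_{k,l}$, the glued surface $\pi_{\emptyset}(P)$ is simply the disjoint union $\bar{S}_1 \sqcup S_2$ of the (conjugated) reduced left disc and the reduced right disc, so Floer's equation on $\pi_{\emptyset}(P)$ decouples into a pair of independent Floer equations on $S_1$ and $S_2$. By the consistency axiom (\ref{projection}), our Floer datum at $P$ depends only on $\pi_{\emptyset}(P)$, hence only on the image of $P$ under the projection $\pi_{reduce}: \overline{\mc{R}}_{k,l} \to \overline{\mc{R}}^k \times \overline{\mc{R}}^l$.

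Next, I would carry out the dimension count. Since $\dim \mc{R}_{k,l} = k+l-2$ while $\dim (\mc{R}^k \times \mc{R}^l) = (k-2)+(l-2) = k+l-4$ when both $k,l\geq 2$ (with obvious modifications when one of them equals $1$, where the corresponding semi-stable factor is a point but comes with an extra $\R$-translation), the map $\pi_{reduce}$ has fibers of dimension at least one whenever $k,l \geq 1$ and at least one of $k,l \geq 2$. Concretely, these fibers parametrize the relative position of one disc after acting by the stabilizer of its output point, a positive-dimensional aut-group that cannot be absorbed into the diagonal action of $\mathrm{Aut}(D)$ that we quotient by.

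To conclude, let $\mc{N}(\vec{x}_{out},\vec{y}_{out};\vec{x}_{in},\vec{y}_{in})$ denote the moduli space of solutions over $\overline{\mc{R}}^k\times\overline{\mc{R}}^l$ with the pulled-back Floer data, which we may take to be generic. Every element of $\mc{R}_{k,l}(\vec{x}_{out},\vec{y}_{out};\vec{x}_{in},\vec{y}_{in})$ is obtained by pulling back an element of $\mc{N}$ along $\pi_{reduce}$, so
\begin{equation}
\dim \mc{R}_{k,l}(\vec{x}_{out},\vec{y}_{out};\vec{x}_{in},\vec{y}_{in}) \;=\; \dim \mc{N}(\vec{x}_{out},\vec{y}_{out};\vec{x}_{in},\vec{y}_{in}) + d_{\text{fib}},
\end{equation}
where $d_{\text{fib}} \geq 1$ is the fiber dimension. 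For the former to be zero, the latter would have to be strictly negative, which is impossible for generic data. Hence no rigid solutions exist, every $0$-dimensional stratum is empty, and $\mathbf{G}_{\overline{\mc{R}}_{k,l}} = 0$.

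The main obstacle is essentially bookkeeping: one must verify that the claim remains correct in the semi-stable cases $(k,l) = (k,1)$ or $(1,l)$ (where $\mc{R}^1$ is not really stable), by carefully tracking the $\R$-action on strips. In those cases the formula $d_{\text{fib}} \geq 1$ still holds after interpreting $\mc{R}^1$ with its strip translation, and the standard argument that solutions on an $\R$-translation-invariant strip with our Floer datum come in nontrivial $\R$-families confirms the conclusion.
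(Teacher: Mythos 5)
Your proposal is correct and is essentially the paper's own argument: the Floer datum factors through $\pi_{\emptyset}$ by the consistency condition (\ref{projection}), and the fibers of $\pi_{\emptyset}$ over the interior are positive-dimensional (relative automorphisms of one factor), so solutions sweep out positive-dimensional families and can never be rigid. The paper phrases the endgame as "a rigid $u$ moves in the fiber through its domain, contradiction," while you recast it as a dimension count over $\mc{R}^k \times \mc{R}^l$ with an auxiliary moduli space, but this is only a cosmetic reformulation of the same idea, including the same treatment of the semi-stable case $k=1$ or $l=1$.
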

    \begin{proof}
        Let $u$ be a rigid element in the associated moduli space
        $\overline{\mc{R}}_{k,l}(\vec{x}_{in}; \vec{x}_{out})$; since we are in
        the transverse situation, we can assume the domain of $u$ is a point in
        the
        interior $p \in \mc{R}_{k,l}$. On the interior, the projection map
        \begin{equation}
            \pi_{\emptyset}: \mc{R}_{k,l} \lra \mc{R}^k \times \mc{R}^l
        \end{equation}
        has fibers of dimension at least 1, parametrized by automorphisms of one
        factor relative to the other. (when $k=1$, we implicitly replace $\mc{R}^k$
        by a point, and same for $l$---stabilization in this case completely
        collapses the left or right component).  Since our Floer data was
        chosen to only depend on $\pi_{\emptyset}(p)$, we conclude that any map
        from an element of the fiber $\pi^{-1}_{\emptyset}(\pi_{\emptyset}(p))$ also satisfies
        Floer's equation; hence $u$ cannot be rigid.
    \end{proof}
    \begin{prop} \label{yonedaquilt}
        The operation associated to the compactification of the inclusion
        $\mc{I}_k$ in (\ref{onesided1}) is $(\mu^k_{\w})^{op} \otimes id$. Similarly the
        operation associated to $\mc{J}_l$ as in (\ref{onesided2}) is $id
        \otimes \mu^l_\w$.
    \end{prop}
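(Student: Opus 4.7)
The plan is to exploit the special features of the right semi-stable embedding $\mc{I}_k = (\mathrm{id}, \mathrm{For}_{k-1})$: its image lies in the open stratum $({_{\emptyset,\emptyset}}\mc{R}_{k,1})$, where no boundary identification or point coincidence occurs, and the right factor is forced to be the (stable) disc with one input. In particular, under the projection $\pi_\emptyset = \pi_{reduce} \circ (-1 \times \mathrm{id})$, the compactification $\overline{\mc{I}_k(\mc{R}^k)}$ maps homeomorphically onto $\overline{\mc{R}}^k \times \{\mathrm{pt}\}$, the point factor corresponding to the (unique, stabilized) strip obtained from the right disc.

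Next I would use the universal consistency hypothesis (\ref{projection}): the chosen Floer datum on a glued pair $(P,\mf{S}=\emptyset)$ only depends on $\pi_\emptyset(P)$, and agrees there with the previously chosen datum for the underlying open-closed string. Since $\pi_\emptyset(\mc{I}_k(S))$ is the disjoint union of $-S \in \overline{\mc{R}}^k$ with a strip $\Sigma_1$, the Floer datum on the left component coincides (after conjugation) with $\mathbf{D}_\mu$ and on the right component coincides with the translation-invariant strip datum of Section \ref{stripidentity}. A Lagrangian labeling of the glued surface therefore splits as $(L_0, \ldots, L_k)$ around $-S$ together with $(K_0, K_1)$ across the strip, and the asymptotic data likewise split into chords $(x_0; x_1, \ldots, x_k)$ for the left component and $(y'; y)$ for the right.

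With this splitting in hand I would identify the moduli space of maps:
\begin{equation}
\overline{\mc{I}_k(\mc{R}^k)}_{\vec{L}}(\vec{x}_{out}; \vec{x}_{in}) \cong \overline{\mc{R}}^k(x_0; x_k, \ldots, x_1) \times \Sigma_1(y'; y),
\end{equation}
where the order reversal in the left factor reflects the conjugation $-S$: the counterclockwise order of boundary components on $-S$ is the clockwise order on $S$. By Proposition \ref{identity}, the strip factor contributes the identity (only constant strips with $y'=y$ are rigid). The left factor, by construction of $\mu^k_\w$, contributes precisely the $k$-th order $\ainf$ structure map applied in the reversed order of inputs; using the definition of the opposite category, this is $(\mu^k_\w)^{op}$. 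Tensoring the two contributions yields $(\mu^k_\w)^{op} \otimes \mathrm{id}$. The argument for $\mc{J}_l$ is identical, with the roles of the two factors swapped; no conjugation occurs on the second factor, so the result is $\mathrm{id} \otimes \mu^l_\w$.

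The main obstacle is the sign verification: one must check that the sign twisting datum $\vec{t}$ chosen for $\mc{I}_k(\mc{R}^k) \subset \overline{\mc{R}}_{k,1}$, combined with the orientation conventions from Appendix \ref{orientationsection} for pairs of discs and for the conjugated disc $-S$, reproduces exactly the Koszul sign $(-1)^{\maltese_k}$ appearing in the definition of $(\mu^k_\w)^{op}$. This is a bookkeeping exercise parallel to the sign comparisons underlying the $\ainf$ relations, but it must be carried out with care since the conjugation $-1 \times \mathrm{id}$ affects both the complex orientation of the source and the determinant line of the linearized Cauchy-Riemann operator on the left factor.
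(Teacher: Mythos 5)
Your proposal is correct and follows essentially the same route as the paper's own proof: identify $\pi_\emptyset|_{\mathrm{im}(\mc{I}_k)}$ as an isomorphism onto $\mc{R}^k \times \{*\}$ (up to conjugation of the first factor), use the compatibility constraint on Floer data to split the moduli space of maps as a product, invoke the constant-strip argument of Proposition \ref{identity} for the right factor, and read off $(\mu^k_\w)^{op} \otimes \mathrm{id}$ with the order reversal accounted for by conjugation. The only differences are presentational: you spell out the strip factor $\Sigma_1(y';y)$ and its reduction to the identity explicitly, whereas the paper compresses this into the notation "$\times \{*\}$" with the asymptotic $x_0'$ appearing as both input and output; and you flag the sign bookkeeping (omitted in the paper's proof) as a remaining task without carrying it out, which is honest but not an improvement on what the paper actually does.
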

    \begin{proof}
A rigid element $u$ of the moduli space associated to
$\overline{\r{im}(\mc{I}_k)}$ has, without loss of generality, domain in the
interior $\r{im}(\mc{I}_k)$.  We note that the projection map 
    \begin{equation}
        \pi_{\emptyset}: \r{im}( \mc{I}_k) \ra
        \mc{R}^k \times \{*\}
    \end{equation}
        is an isomorphism up to conjugation. Since we have chosen Floer
        data compatibly, we obtain an isomorphism
        \begin{equation}
            \overline{\mc{I}_k(\mc{R}^k)}( (x_0, x_0'); ( (x_1, \ldots, x_k),
            x_0')) \simeq \mc{R}^k(x_0; x_k, \ldots, x_1) \times \{*\},
        \end{equation}
        implying the result for $\mc{I}_k$. The result for $\mc{J}_l$ is
        analogous.  
    \end{proof}
More generally, one can look at the submanifold $\mc{Q}_2^{k,l} \subseteq
\mc{R}_{k,l}$ of pairs of discs where the negative marked points are required
to coincide and the marked points immediately counterclockwise are required to
coincide.  
\begin{prop}\label{twocoincide}
The associated operation is zero unless $k = 1$ or $l = 1$, in which case the
previous proposition applies.  
\end{prop}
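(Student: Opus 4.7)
\emph{Proof plan.} The strategy is to mimic the vanishing argument of Proposition \ref{extraauto}: show that when both $k, l \geq 2$, the projection $\pi_\emptyset|_{\mc{Q}_2^{k,l}}$ to $\overline{\mc{R}}^k \times \overline{\mc{R}}^l$ retains positive-dimensional generic fibers, and invoke the consistency requirement (\ref{projection}) on Floer data to produce an $\R$-action on Floer solutions obstructing rigidity. The $k=1$ or $l=1$ case will be handled by reduction to Proposition \ref{yonedaquilt}.

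First I would carry out the dimension count. Since $\dim \mc{R}_{k,l} = k+l-2$ and the coincidence of the pair of first inputs (each lying on a real $1$-dimensional boundary) is a single real codimension-$1$ condition, $\dim \mc{Q}_2^{k,l} = k+l-3$. The target $\mc{R}^k \times \mc{R}^l$ has dimension $(k-2)+(l-2)=k+l-4$, and $\pi_\emptyset$ restricted to $\mc{Q}_2^{k,l}$ remains surjective onto it, since any $(\bar{S},T) \in \mc{R}^k \times \mc{R}^l$ admits a lift in which the first inputs are aligned, by using up the remaining free $PSL(2,\R)$ degrees of freedom. Hence the generic fiber has dimension $(k+l-3)-(k+l-4)=1$.

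Second, I would interpret this fiber geometrically: once a representative of the pair $(S,T)$ has been pinned down by fixing the shared output position and the shared first-input position, the residual freedom is exactly the $1$-dimensional stabilizer in $PSL(2,\R)$ of these two boundary points. Acting on one factor while leaving the other fixed (which is genuinely non-trivial because this stabilizer acts freely on the remaining boundary, and when $k,l \geq 2$ there are additional inputs available to be moved) sweeps out the $1$-dimensional fiber, while leaving $\pi_\emptyset(P) = (\bar{S},T)$ unchanged. By the projection condition (\ref{projection}), the Floer datum is pulled back from $\pi_\emptyset(P)$; thus any $u \in \mc{Q}_2^{k,l}(\vec{x}_{out};\vec{x}_{in})$ gives rise to a full $1$-parameter family of solutions with identical asymptotics, contradicting rigidity. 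This forces $\mathbf{G}_{\overline{\mc{Q}}_2^{k,l}} \equiv 0$ for $k,l \geq 2$.

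For the semi-stable case $k=1$ or $l=1$ (say $l=1$), the single right-hand input is forced to coincide with the left disc's first input, so $\mc{Q}_2^{k,1}$ is the image of a semi-stable embedding $\mc{R}^k \hookrightarrow \mc{R}_{k,1}$ entirely analogous to $\mc{I}_k$ in (\ref{onesided1}), differing only by which boundary input of $\mc{R}^k$ survives the forgetful map; the computation from the proof of Proposition \ref{yonedaquilt} then identifies the associated operation with the appropriate $\ainf$ structure map (up to reordering). The main conceptual point where care is needed is the verification that the $1$-parameter residual $PSL(2,\R)$-action is genuinely non-trivial on $\mc{Q}_2^{k,l}$---it is, precisely because both factors still carry at least one further input that the stabilizer does not fix---without which the dimension count would not translate into a non-vanishing symmetry of the Floer equation.
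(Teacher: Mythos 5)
Your proposal is correct and follows essentially the same route as the paper: the paper's proof likewise observes that $\pi_{\emptyset}$ restricted to $\mc{Q}_2^{k,l}$ has one-dimensional fibers parametrized by automorphisms of one factor relative to the other, so the condition (\ref{projection}) that Floer data depend only on the projection rules out rigid solutions when $k,l\geq 2$, with the semi-stable case handled by Proposition \ref{yonedaquilt}. Your dimension count, surjectivity check, and the freeness verification for the residual stabilizer are just a more explicit write-up of the same argument.
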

\begin{proof}
    In this case, the projection
    \begin{equation}\pi_{\emptyset}: \mc{Q}_2^{k,l} \lra \mc{R}^k \times \mc{R}^l
    \end{equation}
    has one dimensional fibers, parametrized by automorphism of one factor
    relative another. We conclude in the manner of the previous two propositions 
    that elements of the associated moduli spaces can never be rigid.
\end{proof}

We can also look at the submanifold 
\begin{equation}
    _{\emptyset, \mf{T}} \mc{R}_{k,l}
\end{equation}
of pairs of discs corresponding to the point identification
\begin{equation}
    \mf{T}= \{(1,1),(2,2)\},
\end{equation}
i.e. discs where the negative point, and first two positive points, are
required to coincide.
This submanifold can be thought of as the image of an open embedding from the pair
of associahedra
\begin{equation}
    \mc{R}^r \times \mc{R}^s \stackrel{Q}{\longrightarrow} \mc{R}_{r,s}
\end{equation}
which can be described as follows: Take the representative of each disc on the
left $(-S_1,S_2)$ for which the negative marked point and the two marked points
immediately counterclockwise of $(S_1, S_2)$ have been mapped to $-i$, $1$, and $i$
respectively. Define
\begin{equation}
    Q([S_1],[S_2]) := [(S_1,S_2)].
\end{equation}
In other words, associate to a pair of discs mod automorphism the pair mod
simultaneous automorphism in which the output and the first two inputs are
required to coincide. The embedding $Q$ not quite extend to an embedding of
$\overline{\mc{R}^r} \times \overline{\mc{R}^s}$ because, among other
phenomena, the three chosen points on each disc will come together
simultaneously in codimension 1. We can still study the operation determined by
the compactification of the embedding.
\begin{prop}\label{threecoincide}
    The operation associated to the compactification 
    $\overline{Q(\mc{R}^r \times \mc{R}^s)}$ is $(\mu^r)^{op} \otimes \mu^s$.  
\end{prop}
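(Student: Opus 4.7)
The approach is to identify the Floer moduli space controlling the operation associated to $\overline{Q(\mc{R}^r \times \mc{R}^s)}$ with a product of the standard moduli spaces computing $\mu^r$ and $\mu^s$, modulo an orientation reversal of the first factor. First, I would verify that $Q$ is a diffeomorphism onto its image, which is precisely the open stratum ${_{\mf{T}}}\mc{R}_{r,s}$ for $\mf{T} = \{(1,1),(2,2)\}$: the simultaneous rigidification placing the output and the first two counterclockwise inputs at $-i$, $1$, $i$ exhausts the full $PSL(2,\R)$ action, leaving no residual automorphism. This matches the dimension count $\dim \mc{R}^r \times \mc{R}^s = r+s-4 = \dim {_{\mf{T}}}\mc{R}_{r,s}$, so on the interior $Q$ provides a canonical identification.

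Next, by condition (\ref{projection}) on our universal and consistent Floer data---that it depends only on $\pi_\emptyset(P) = (-S_1) \sqcup S_2$---the Floer datum on $Q(\mc{R}^r \times \mc{R}^s)$ pulls back along $Q$ to a product-type Floer datum on $\mc{R}^r \times \mc{R}^s$: an independent choice on the conjugated left disc $-S_1$ and on $S_2$. The consistency of our universal choice with the previously fixed data on $\overline{\mc{R}}^r$ and $\overline{\mc{R}}^s$ then forces this product datum to agree (up to complex conjugation of the first factor) with the fixed Floer data defining $\mu^r$ and $\mu^s$. Since $\pi_\emptyset(P)$ is disconnected, Floer's equation decouples, and any map $u : \pi_\emptyset(P) \to M$ is precisely a pair $(u_1,u_2)$ of independent Floer solutions. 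For any compatible Lagrangian labeling and asymptotics we therefore obtain a canonical identification
\[
\overline{Q(\mc{R}^r \times \mc{R}^s)}\big((x_0, x_0'); \, x_1, \ldots, x_r,\, x_1', \ldots, x_s'\big) \;\cong\; (-\mc{R}^r)(x_0; \vec{x}) \, \times \, \mc{R}^s(x_0'; \vec{x}\,').
\]
Complex-conjugating the first factor identifies $(-\mc{R}^r)(x_0;\vec{x})$ with the standard $\mu^r$-moduli $\mc{R}^r(x_0; x_r, \ldots, x_1)$ but with the cyclic order of inputs reversed: this is exactly the moduli counted by the opposite $\ainf$-structure $(\mu^r)^{op}$, in the same way that Proposition \ref{yonedaquilt} identifies the $\mc{I}_k$-operation with $(\mu^k)^{op} \otimes id$.

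Finally, I would verify that the sign twisting datum prescribed for the operation, together with the orientation that $Q(\mc{R}^r \times \mc{R}^s)$ inherits as an open subset of $\mc{R}_{r,s}$, coincides with the product orientation pulled back from $\mc{R}^r \times \mc{R}^s$, and that the total sign matches the one defining $(\mu^r)^{op} \otimes \mu^s$; this is a direct extension of the sign verifications gathered in Appendix \ref{orientationsection}. Rigid contributions from the codimension-1 boundary strata listed in Proposition \ref{sequentialpointidentifications} vanish for generic Floer data, essentially by the same dimension/automorphism arguments used in Propositions \ref{extraauto} and \ref{twocoincide}.

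The main obstacle is the bookkeeping around complex-conjugation of $-S_1$: one must carefully match the reversal of the cyclic order of inputs and the reversal of orientation on the moduli space with the definition of $\cc^{op}$ and its $\ainf$-structure, so that the geometric factorization literally implements $(\mu^r)^{op}$ rather than $\mu^r$ applied to reversed inputs. Once this identification is in place and signs are tracked through, the claim follows.
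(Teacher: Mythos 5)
Your proof is correct and follows essentially the same route as the paper: you observe that $Q$ is inverse (up to conjugating the first factor) to the projection $\pi_\emptyset$ restricted to $_{\emptyset,\{(1,1),(2,2)\}}\mc{R}_{r,s}$, note that the Floer datum is required to depend only on the (disconnected) surface $\pi_\emptyset(P)$, and conclude that the rigid count factors as a product with the order of inputs on the conjugated left factor reversed. The extra detail you supply on sign twisting, orientations, and the vanishing of contributions from the codimension-one strata is consistent with the paper's framework but not part of the paper's (very short) proof, which simply records the identification of zero-dimensional moduli spaces.
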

\begin{proof}
    The map $Q$ is a left and right inverse to the projection map
    \begin{equation}
        \pi_{\emptyset}: _{\emptyset,\{(1,1),(2,2)\}} \mc{R}_{k,l} \lra \mc{R}^k \times \mc{R}^l
    \end{equation}
    so $\pi_{\emptyset}$ is an isomorphism, up to direction reversal of first
    factor. We conclude that there is an identification of dimension zero
    moduli spaces
    \begin{equation}
        \overline{\r{im}(Q)} (x^1_{out},x^2_{out}; \vec{x}^1_{in}, \vec{x}^2_{in}) \simeq \overline{\mc{R}}^k(x^1_{out}; (\vec{x}^1_{in})^{op}) \times \overline{\mc{R}}^l(x^2_{out}; \vec{x}^2_{in}).
    \end{equation}
    where the {\it op} superscript indicates an order reversal.
\end{proof}

Now, consider the case of a single gluing adjacent to the outgoing marked
points, i.e. $\mf{S} = \{(1,1)\}$ or $\mf{S} = \{(k,l)\}$ with the induced
point identification.
\begin{prop}\label{mufromglue}
    The resulting operation in either case is $\mu^{k+l+1}$. 
\end{prop}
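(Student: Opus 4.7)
The plan is to identify the compactified moduli space $_{p(\mf{S}),\mf{S}}\overline{\mc{R}}_{k,l}$ with the Stasheff moduli $\overline{\mc{R}}^{k+l+1}$ via the gluing projection $\pi_{\mf{S}}$, and to verify that under this identification the universal Floer data on the former pulls back to the universal Floer data defining $\mu^{k+l+1}$. I treat the case $\mf{S} = \{(1,1)\}$; the case $\mf{S} = \{(k,l)\}$ is entirely symmetric by cyclic relabeling of boundary components.

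First, I would check the combinatorial identification on open strata. For a pair $P = (S,T) \in {_{p(\mf{S}),\mf{S}}}\mc{R}_{k,l}$, the single gluing adjacent to the coincident outputs produces an open-closed string $\pi_{\mf{S}}(P)$ which is a disc with one boundary output (arising from the merged pair of coincident outputs under the rules of the gluing construction) and $k+l+1$ boundary inputs, obtained by reading the remaining boundary marked points of $-S$ and $T$ cyclically around the new boundary. Modding out by simultaneous automorphism of the pair produces exactly the automorphism group of the resulting disc, giving a diffeomorphism ${_{p(\mf{S}),\mf{S}}}\mc{R}_{k,l} \stackrel{\sim}{\lra} \mc{R}^{k+l+1}$, which I would then check extends to the Deligne-Mumford compactifications.

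Second, the codimension-one boundary strata listed in Proposition \ref{sequentialpointidentifications} must be compared with the standard Stasheff bubbling of $\overline{\mc{R}}^{k+l+1}$. The strata (\ref{bubblingoutofPright})--(\ref{bubblingoutofPend}), corresponding to purely $L$- or $R$-colored bubbling away from the glued region, project under $\pi_{\mf{S}}$ onto the usual Stasheff boundary strata of $\overline{\mc{R}}^{k+l+1}$ and reproduce the standard $\ainf$ boundary behavior. The remaining ``exotic'' strata (\ref{zerocodimonestratum})--(\ref{bubblingLR}), which involve bubbling inside or straddling the coincident/glued region, possess an extra dimension of simultaneous automorphism of the pair of factors. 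Since condition \eqref{projection} forces our Floer data to depend only on $\pi_{\mf{S}}(P)$, this residual automorphism acts freely on the associated moduli of Floer solutions, preventing rigidity. By exactly the mechanism used in Propositions \ref{extraauto}, \ref{twocoincide}, and \ref{threecoincide}, these strata contribute the zero operation.

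Finally, condition \eqref{projection} on the universal and consistent Floer data for glued pairs of discs states that it depends only on $\pi_{\mf{S}}(P)$ and agrees there with our previously chosen universal Floer data on open-closed strings. Under the diffeomorphism of the first step this pulls back exactly to the universal Floer data on $\overline{\mc{R}}^{k+l+1}$ that defines $\mu^{k+l+1}$, so the moduli spaces of Floer solutions, together with their orientations and signed counts, are canonically identified, and the two operations agree. The main obstacle will be the careful combinatorial matching of boundary strata together with the systematic vanishing argument for the exotic strata; this analysis parallels that of the preceding propositions, but the simultaneous presence of a nontrivial point identification and a nontrivial boundary identification makes the bookkeeping of codimension-one strata more intricate than in the purely unglued cases.
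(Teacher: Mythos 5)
Your core argument is the same as the paper's: the gluing map $\pi_{\mf{S}}$ identifies ${_{\mf{S},p(\mf{S})}}\mc{R}_{k,l}$ with the associahedron (the unreduced gluing being automatically stable), and condition (\ref{projection}) on the Floer data then identifies the moduli spaces of maps, so the operation is the $\ainf$ product; the paper's proof is exactly this and nothing more. Your middle step about codimension-one strata is unnecessary for this statement — the operation only counts rigid solutions, whose domains lie in the open stratum, so no bubbled configurations enter — and as stated it is also partly mis-sorted: strata where bubbling occurs entirely within the identified region (e.g.\ the type (\ref{bubblinginP}) strata of Proposition \ref{sequentialpointidentifications}) become ordinary Stasheff disc bubbles after gluing and are not killed by the extra-automorphism mechanism of Proposition \ref{extraauto}; only the strata creating new, unforced coincidences are. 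Since that step plays no role in identifying the operation, the conclusion stands.
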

\begin{proof}
    We will without loss of generality do $\mf{S}= \{(1,1)\}$; the associated
    point identification is also $p(\mf{S}) = \{(1,1)\}$.  The gluing morphism
    is of the form 
    \begin{equation}\label{oneboundarygluing}
        \pi_{\mf{S}}: _{\mf{S},p(\mf{S})}\mc{R}_{k,l} \lra \mc{R}^{k+l+1}.
    \end{equation}
    if $k$ or $l$ is $\geq 1$, the unreduced gluing is automatically stable,
    implying that (\ref{oneboundarygluing}) is an isomorphism. We obtain a
    corresponding identification of moduli spaces.
\end{proof}

Our next example is the case $\mf{S} = \{(1,1),(k,l)\}$ with the induced point
identification.
\begin{prop}\label{ocstringsfromgluing}
    The resulting operation is exactly ${_2}\oc^{k-2,l-2}$.
\end{prop}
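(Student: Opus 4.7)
The plan is to identify the glued surface $\pi_{\mf{S}}(S,T)$ explicitly for $\mf{S} = \{(1,1),(k,l)\}$, exhibit a bijection between $_{p(\mf{S}),\mf{S}}\mc{R}_{k,l}$ and the two-pointed open-closed moduli space $\mc{R}^1_{k-2,l-2}$, and then invoke the consistency requirement on Floer data to conclude that the associated Floer operations coincide.

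First, I would work out what $\pi_{\mf{S}}(S,T)$ looks like. Applying the gluing recipe in the case when both boundary components adjacent to the output on both sides are identified: the two outputs $z^0_{-S}$ and $z^0_T$ merge to a single interior negative puncture; the input pairs $(z^1_{-S}, z^1_T)$ and $(z^k_{-S}, z^l_T)$, which by the induced point identification $p(\mf{S})$ already coincide as boundary marked points, become two distinguished boundary marked points; and the remaining boundary inputs $z^2_{-S}, \ldots, z^{k-1}_{-S}$ and $z^2_T, \ldots, z^{l-1}_T$ remain distinct. An Euler characteristic count (or direct inspection) shows that the glued surface has a single boundary component, so the result is a disc with one interior negative puncture and $(k-2)+(l-2)+2 = k+l-2$ boundary inputs. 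The two distinguished boundary points lie at the endpoints of the two interior arcs emanating from the interior puncture and thus sit on opposite sides of the puncture, precisely in the configuration forced by Definition \ref{twopointedocmod}.

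Second, I would argue that $\pi_{\mf{S}}$ induces an isomorphism of abstract moduli spaces $_{p(\mf{S}),\mf{S}}\mc{R}_{k,l} \xrightarrow{\sim} \mc{R}^1_{k-2,l-2}$. After using the three-dimensional group of simultaneous automorphisms of $(S,T)$ to normalize the output and the two special boundary merges to $-i, i, 0$ on the glued disc, the remaining marked points on $-S$ and $T$ give a point in $\mc{R}^1_{k-2,l-2}$. Conversely, any element of $\mc{R}^1_{k-2,l-2}$ can be cut along the two arcs connecting $y_{out}$ to $z_0$ and $z_0'$ to recover a pair $(S,T)$ with the required point coincidences and boundary identification. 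With the bijection in hand, the universal and consistent choice of Floer data for glued pairs of discs $\mathbf{D}_{glued}$, together with its defining property (\ref{projection}) that the Floer datum on $(P,\mf{S})$ depends only on $\pi_{\mf{S}}(P)$ and agrees with a previously chosen Floer datum for open-closed strings, makes the associated Floer equations literally identical. Hence the moduli spaces of maps, and thus the resulting rigid counts, are canonically matched.

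Finally, the remaining task is to match signs, which I expect to be the main (though technical rather than conceptual) obstacle. The operation $_2\oc_{k-2,l-2}$ is defined with sign twisting datum $\vec{t}_{_2\oc_{k-2,l-2}} = (1,2,\ldots,k-1,k+1,\ldots,k+l-2)$ with respect to the input ordering $(z_0,z_1,\ldots,z_{k-2},z_0',z_1',\ldots,z_{l-2}')$. On the pair-of-discs side one must (i) choose a corresponding twisting datum $\vec{t}$ compatible with the natural ordering on inputs of $(S,T)$, and (ii) compare the orientation of $_{p(\mf{S}),\mf{S}}\mc{R}_{k,l}$, inherited from its presentation as a coincident stratum together with the chart identifications (\ref{chartidentification}), with the orientation of $\mc{R}^1_{k-2,l-2}$ used in Appendix \ref{orientationsection} for defining $_2\oc$. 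Once these are reconciled by the same arguments used to fix the sign twisting data for $\oc$ and $_2\oc$ in Section \ref{openclosedmaps}, the equality of chain-level operations follows. This concludes the identification of the operation associated to $\{(1,1),(k,l)\}$-glued pairs of discs with $_2\oc^{k-2,l-2}$.
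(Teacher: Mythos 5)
Your argument is correct and follows essentially the same route as the paper: identify the glued surface as a disc with one interior output, two special boundary points (the merged pairs adjacent to the output) and $k-2$, $l-2$ remaining inputs in the configuration of Definition \ref{twopointedocmod}, conclude that $\pi_{\mf{S}}$ is an isomorphism onto $\mc{R}^1_{k-2,l-2}$, and invoke the consistency condition (\ref{projection}) on Floer data to identify the operations. The paper's own proof is exactly this (it does not even address the sign/twisting comparison you defer to the appendix conventions), so your extra remarks only add detail rather than diverge.
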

\begin{proof}
    The surface obtained by gluing the $(1,1)$ and $(k,l)$ boundary components
    together in $\mc{R}_{k,l}$ is stable, and has one interior output marked
    point. There are also $k+l$ boundary marked points, two of which are
    special. In cyclic order on the boundary, there is the identified point $p_1$
    coming from the $(1,1)$ boundary points, the $k-2$ non-identified points
    from the left disc, the identified point $p_2$ coming from the $(k,l)$
    boundary points, and the $l-2$ non-identified points from the right disc.
    Moreover, the identified points $p_1$, $p_2$, and the interior boundary
    point are required to, up to equivalence, lie at the points $-i$, $0$, and
    $i$ respectively. We conclude that the projection is an isomorphism onto
    \begin{equation}
        \pi_{\mf{S}}:{_{\mf{S}, p(\mf{S})}} \mc{R}_{k,l} \lra \mc{R}^1_{k-2,l-2},
    \end{equation}
    the moduli space controlling ${_2}\oc^{k-2,l-2}$.

    See also Figure \ref{gluingpairs1} for an image of this situation.
\end{proof}

Our final example is the case $\mf{S} = \{(1,1),(2,2)\}$, with the induced
point identification.
\begin{prop}\label{cofromglue}
    The resulting operation is exactly ${_2}\co^{k-2,l-2}$.
\end{prop}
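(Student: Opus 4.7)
My plan is to establish the claim by exhibiting an isomorphism of moduli spaces
\begin{equation*}
\pi_{\mf{S}}: {_{\mf{S},p(\mf{S})}}\mc{R}_{k,l} \stackrel{\sim}{\longrightarrow} \mc{R}^{1,1}_{k-2,l-2}
\end{equation*}
under which the Floer data, Lagrangian labelings, and sign-twisting data correspond, following the same template as Proposition \ref{ocstringsfromgluing}. The induced point identification is $p(\mf{S}) = \{(1,1),(2,2),(3,3)\}$, and the essential structural observation is that the two identified arcs $\partial^1$ and $\partial^2$ are adjacent (sharing the vertex $z^2$ on each factor)---this is the key difference from Proposition \ref{ocstringsfromgluing}, in which the identified arcs were non-adjacent and their gluing produced an annulus with an interior output.

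First I would identify the topology of the glued surface. Gluing two discs along two adjacent arcs topologically yields a single disc in which the shared vertex $z^2$ is promoted to an interior marked point, and the remaining shared endpoints $z^1$ and $z^3$ become boundary marked points lying at the junction of identified and non-identified boundary arcs. Since $z^2$ is obtained by identifying an input of each factor, the resulting interior asymptotic is a positive (input) puncture, matching the closed input $y_{in}$ of $\mc{R}^{1,1}_{k-2,l-2}$.

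Next I would count marked points and match them with the target moduli. The glued disc carries $z^2$ as its single interior point together with $k+l-2$ boundary marked points: the two shared $z^1, z^3$; the unshared marks $z^0_{-S}, z^4_{-S}, \ldots, z^k_{-S}$ from the left factor (with $z^0_{-S}$ flipped to an input by the complex doubling of $S$); and the unshared marks $z^0_T, z^4_T, \ldots, z^l_T$ from the right factor (with $z^0_T$ remaining a negative boundary output). This matches the structure of $\mc{R}^{1,1}_{k-2,l-2}$ exactly: one interior positive input, one negative boundary output, one distinguished positive boundary input, and $k+l-4$ further boundary inputs. The $PSL(2,\R)$-normalization $z^2 \mapsto 0$, $z^0_T \mapsto -i$, $z^0_{-S} \mapsto i$ identifies the canonical triples $(z^2, z^0_T, z^0_{-S}) \leftrightarrow (y_{in}, z_{out}, z_{fixed})$, and the cyclic order of the remaining inputs matches the ordering $(z_1, \ldots, z_{k-2}, z_{fixed}, z_1', \ldots, z_{l-2}')$ on the boundary of $\mc{R}^{1,1}_{k-2,l-2}$. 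The dimension counts also agree: $\dim {_{\mf{S},p(\mf{S})}} \mc{R}_{k,l} = \dim \mc{R}_{k,l} - 3 = (k+l-1) - 3 = k+l-4 = \dim \mc{R}^{1,1}_{k-2,l-2}$. Compatibility of Floer data, Lagrangian labels, and Floer-theoretic operations is then automatic from the requirement that the universal and consistent choice of Floer data for glued pairs of discs depend only on the glued open-closed string and agree with the previously chosen data on the genus-zero moduli space.

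The main obstacle will be orientational: verifying precisely that $z^2$ contributes a positive interior marked point (matching $y_{in}$) rather than a negative output, and that the sign-twisting datum $\vec{t}_{_2 \co_{k-2,l-2}}$ on the target side corresponds to the naturally inherited sign twist on the pair side. Both verifications reduce to careful bookkeeping of orientations through the complex-doubling gluing procedure, using the conventions of Appendix \ref{orientationsection}; they are analogous to the sign verifications carried out for ${_2}\oc$ and for the open-closed maps in Section \ref{openclosedmaps}.
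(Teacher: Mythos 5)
Your overall strategy is the paper's: exhibit $\pi_{\mf{S}}$ as an isomorphism from the constrained pairs-of-discs space onto $\mc{R}^{1,1}_{k-2,l-2}$, with compatibility of Floer data coming for free from the consistency requirement (\ref{projection}). But the concrete identification you propose is not the right one, and the errors are not just bookkeeping. You place the glued seam at the arcs $\partial^1,\partial^2$ away from the outputs, so that the two negative punctures $z^0_{-S}$, $z^0_T$ survive as distinct boundary points, and you then declare that conjugation ``flips'' $z^0_{-S}$ into a positive puncture playing the role of $z_{fixed}$. The paper's gluing never changes the sign of a puncture under the doubling (compare Proposition \ref{ocstringsfromgluing}, where the coincident outputs become an interior \emph{output}), and, more decisively, with your assignment the cyclic distribution of the remaining inputs around $z_{fixed}$ comes out wrong: tracing the boundary of your glued disc from $z^0_T$ one meets $z^1$, then $z^0_{-S}$, then all $k-2$ remaining left points, then the $l-3$ remaining right points, i.e.\ a pattern of the shape $(1,\,z_{fixed},\,k+l-5)$ rather than $(k-2,\,z_{fixed},\,l-2)$; the resulting operation would not be ${_2}\co^{k-2,l-2}$. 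Your dimension check only balances because of an arithmetic slip: $\dim\mc{R}_{k,l}=k+l-2$ (not $k+l-1$), so cutting by the three input coincidences of $p(\mf{S})=\{(1,1),(2,2),(3,3)\}$ gives $k+l-5$, one less than $\dim\mc{R}^{1,1}_{k-2,l-2}=k+l-4$; the mismatch is a symptom of the wrong configuration, not a sign issue to be deferred to the appendix.

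The intended picture, which is what the paper's proof describes and what must hold for consistency with the quilt unfolding in Proposition \ref{m1equalsco}, has the glued seam \emph{adjacent to the common output}: the two coincident negative punctures are identified to the single boundary output $z_{out}$ (this is the ``output boundary marked point adjacent to the $(1,1)$ gluing''), the interior vertex of the seam becomes the interior positive puncture $y_{in}$, and the far endpoint of the seam --- an identified pair of boundary inputs, corresponding to $+\infty$ in the quilted strip, i.e.\ to the bimodule-element input $x_{fixed}$ --- becomes $z_{fixed}$. Then only two of the forced coincidences are among inputs (the third is the always-coincident pair of outputs), the source has dimension $k+l-4$, the remaining $k-2$ and $l-2$ inputs sit on the two sides of $z_{fixed}$ exactly as in Definition of $\mc{R}^{1,1}_{k-2,l-2}$, and $\pi_{\mf{S}}$ is an isomorphism as claimed. (The paper's indexing conventions for $\mf{S}$ and $p(\mf{S})$ are admittedly loose in this section, so some confusion is understandable, but your proof as written identifies the wrong points with $z_{out}$ and $z_{fixed}$ and would not yield the stated operation.)
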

\begin{proof}
    The surface obtained by gluing the $(1,1)$ and $(2,2)$ boundary components
    together in $\mc{R}_{k,l}$ is stable, has one interior input marked point,
    and in counterclockwise order on the boundary has one output boundary
    marked point (which was adjacent to the $(1,1)$ gluing), $l-2$ additional
    boundary inputs, one special boundary input (which was adjacent to the
    $(2,2)$ gluing), and $k-2$ additional boundary inputs. Moreover, the output
    boundary marked point, the interior input, and the special boundary input
    are required to, up to equivalence, lie at the points $-i$, $0$, and $i$
    respectively.
    We conclude that the projection is an isomorphism onto
    \begin{equation}
        \pi_{\mf{S}}: {_{\mf{S}, p(\mf{S})}} \mc{R}_{k,l} \lra \mc{R}^{1,1}_{k-2,l-2},
    \end{equation}
    the moduli space controlling ${_2}\co^{k-2,l-2}$.
\end{proof}

\section{Floer theory in the product} \label{productsection}

The Liouville manifold $M^2 : = M^-\times M$ carries a natural symplectic form,
$(-\omega_M, \omega_{M})$ for which the diagonal is a Lagrangian submanifold.
Let \[\pi_i: M^2 \ra M,\ i=1,2\] be the projection to the $i$th component.
As observed by Oancea \cite{Oancea:2006oq}, there is a natural cylindrical end
on $M^- \times M$, with coordinate given by $r_1 + r_2$, where $r_i =
\pi_i^* r$ is the coordinate on the $i$-th factor.  Thus one could define
symplectic homology and wrapped Floer theory by considering Hamiltonians of the
form $(r_1 + r_2)^2$ at infinity. To obtain the comparisons that we desire, we
must consider Floer theory for split Hamiltonians of the form $\pi_1^*H +
\pi_2^* H$, for $H \in \H(M)$. There are immediately some technical
difficulties: split Hamiltonians are {\it not} admissible in the above sense,
and in general will admit some additional chords near infinity.  Using methods
similar to \cite{Oancea:2006oq}, one could prove that these orbits and chords
have sufficiently negative action, do not contribute to the homology, and thus
such split Hamiltonians are a posteriori admissible (thereby proving a
K\"{u}nneth theorem for wrapped Floer homology).

We bypass this issue and instead define all Floer-theoretic operations on the
product for split Hamiltonians and almost complex structures. In this case,
with suitably chosen Floer data, compactness and transversality follow, via
{\it unfolding,} from compactness and transversality of certain open-closed
moduli spaces of maps into $M$ constructed from glued pairs of discs. The end
result will be a model 
\begin{equation}
    \w^2
\end{equation} 
for the wrapped Fukaya category of split Lagrangians and the diagonal in $M^2$,
using only maps and morphisms in $M$.  

\begin{rem}
 There is another technical difficulty with considering Hamiltonians of the
 form $(r_1 + r_2)^2$ at infinity: Products of admissible Lagrangians $L_i
 \times L_j$ are no longer a priori admissible in the product. Namely, it is
 not guaranteed (and highly unlikely) that the primitive $\pi_1^* f_{L_i} +
 \pi_2^* f_{L_j}$ is constant as $(r_1 + r_2) \ra \infty$. The usual method of
 proving that the relevant moduli spaces are compact does not work in this
 situation, and a more refined argument is needed.  
\end{rem}

\subsection{Floer homology with split Hamiltonians}\label{floersplit}
First, let us examine Floer homology groups in $M^2$ for a class of split
Hamiltonians $\tilde{\mc{H}}(M\times M)$ of the form 
\begin{equation}
    \pi_1^*H + \pi_2^*H,\  H \in \H(M).
\end{equation}
For Floer homology between split Lagrangians we immediately obtain a
K\"{u}nneth decomposition.
\begin{lem} \label{kunnethsplit}
    For $H$ and $J$ generic, there is an identification of complexes 
    \begin{equation}\label{kspliteqn}
        CW^*(L_1 \times L_2,L_1' \times L_2'; \pi_1^* H + \pi_2^* H,
        (-J,J)) = CW^*(L_1',L_1,H,J) \otimes CW^*(L_2,L_2';H,J)
    \end{equation}
    where the differential on the right hand side is $\delta_{L_1',L_1} \otimes
    1 + 1 \otimes \delta_{L_2,L_2'}$.  
\end{lem}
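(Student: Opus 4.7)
\textbf{Proof proposal for Lemma \ref{kunnethsplit}.} The plan is to establish the identification at three levels in turn: the generating set of chords, the moduli spaces of Floer strips, and the induced differential, so that the equality is not just an isomorphism of graded vector spaces but a strict equality of chain complexes.

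First, I would unpack the chord set. A time-$1$ Hamiltonian chord $\tilde x : [0,1] \to M\times M$ for $\pi_1^*H + \pi_2^*H$ with $\tilde x(0) \in L_1\times L_2$ and $\tilde x(1)\in L_1'\times L_2'$ factors as $\tilde x = (x_1, x_2)$. On the second factor, $x_2$ is a time-$1$ $X_H$-chord from $L_2$ to $L_2'$. On the first factor, the defining equation $i_{X_{M^-, \pi_1^*H}}(-\omega_M) = dH$ gives $X_{M^-,H} = -X_H$, so $x_1$ is a time-$(-1)$ flow of $X_H$ from $L_1$ to $L_1'$, equivalently a time-$1$ $X_H$-chord from $L_1'$ to $L_1$. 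For generic $H$ (and the auxiliary perturbations already chosen on $M$) all such chords are non-degenerate on each side, hence so are the product chords, and we obtain a bijection $\chi_{M^2}(L_1\times L_2,\, L_1'\times L_2') \cong \chi_M(L_1',L_1)\times \chi_M(L_2,L_2')$. The gradings decompose additively under this bijection (Maslov indices add under direct sum of Lagrangians, and the sign change from $\omega \mapsto -\omega$ is absorbed by the endpoint reversal); similarly the orientation lines decompose as tensor products compatibly with the chosen spin structures.

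Next, I would decompose Floer's equation. For a map $u = (u_1, u_2) : Z \to M^-\times M$ with split complex structure $(-J, J)$ and Hamiltonian $\pi_1^*H + \pi_2^*H$, the equation
\begin{equation*}
    (du - X\otimes dt)^{0,1} = 0
\end{equation*}
is equivalent to each component satisfying the analogous equation on its factor. The $u_2$-component is literally Floer's equation on $M$ for $(H,J)$ with Lagrangian label $(L_2,L_2')$. For $u_1$, using $X_{M^-,H} = -X_H$ and $J_{M^-} = -J$, one checks that the substitution $v_1(s,t) := u_1(s, 1-t)$ produces a solution of Floer's equation on $M$ for $(H,J)$ with Lagrangian label $(L_1', L_1)$, with asymptotic ends at $s = \pm\infty$ matching the reversed chord description in the previous step. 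Thus maps into $M^-\times M$ are in bijection with pairs of maps into $M$, and this bijection identifies asymptotics. Since each factor lives in $M$ where we have already established compactness (Theorem \ref{c0bounds}) and transversality, the product moduli space is the product of the individual ones. This gives an identification of oriented zero-dimensional moduli spaces, where the product orientation agrees with the tensor-product orientation on $o_{x_1}\otimes o_{x_2}$ via the standard Lagrangian direct-sum sign.

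Finally, I would read off the differential. Because a rigid strip into the product corresponds to a rigid strip on one factor together with a constant strip on the other (there is no rigid solution with both factors non-trivial by the product dimension formula), the differential acts as $\delta_{L_1',L_1}\otimes 1 + (-1)^{\deg}\, 1\otimes \delta_{L_2,L_2'}$, where the sign is the Koszul sign built into tensoring chain complexes; this matches the Floer-theoretic sign coming from Appendix \ref{orientationsection}. The generic choice of $H$ and $J$ on $M$ simultaneously achieves transversality on both tensor factors, so no further perturbation is required.

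The step requiring the most care is the sign comparison in the middle paragraph: matching the orientation of the product moduli space, built from the linearization of the product Cauchy--Riemann operator, with the tensor product of orientations on the factors, and reconciling it with the involution $(s,t)\mapsto (s,1-t)$ applied on the first factor. This is a direct but somewhat delicate computation with the determinant-line conventions set up in Appendix \ref{orientationsection}; the rest of the argument is essentially formal once the bijection of chords and moduli spaces is in hand.
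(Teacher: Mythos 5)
Your proposal is correct and follows essentially the same route as the paper: a bijection of generators via the factorization of $(-X,X)$-chords, a strip-by-strip factorization of Floer's equation, and the dimension count on the product moduli space forcing one factor of a rigid pair to be a constant strip (the paper cites Proposition \ref{identity} for this last point, which you argue directly from translation-invariance). The only minor divergence is that you include the Koszul sign $(-1)^{\deg}$ in the tensor differential, which the lemma's statement suppresses; your elaboration of the $t\mapsto 1-t$ reversal on the $M^-$ factor and of the orientation-line bookkeeping is more explicit than the paper's, but the substance is the same.
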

\begin{proof}
    If $X$ is the Hamiltonian vector field corresponding to $H$, note that the
    complex on the left-hand side of \ref{kspliteqn} is generated by time 1
    flows of the vector field $(-X,X)$, so there is a one-to-one correspondence
    of generators. By examining equations we see that there is a one-to-one
    correspondence of strips 
    \begin{equation}
        \mc{R}^1( (x_0, x_0'), (x_1, x_1') ) = \mc{R}^1(x_0; x_1) \times
        \mc{R}^1(x_0';x_1') 
    \end{equation}
    In particular, the dimension of $\mc{R}^1( (x_0, x_0'),  (x_1,x_1'))$ is 
    \[
    \deg x_0 - \deg x_1 + \deg x_0' -  \deg x_1' = \dim \mc{R}^1(x_0; x_1) + \dim \mc{R}^1(x_0';x_1')
    \]
    This implies that the one-dimensional component of the moduli space is the
    union of 
    \begin{itemize}
        \item the one-dimensional component of $\mc{R}^1(x_0; x_1)$ times the
            zero-dimensional component of $\mc{R}^1(x_0';x_1')$.
        \item the one-dimensional component of $\mc{R}^1(x_0'; x_1')$ times the
            zero-dimensional component of $\mc{R}^1(x_0;x_1)$.
    \end{itemize}
    But by Proposition \ref{identity}, the zero-dimensional component of the
    above moduli spaces must be constant maps, thereby implying the Lemma.
\end{proof}

Floer trajectories with the diagonal Lagrangian $\Delta$ unfold and can be
compared to symplectic cohomology trajectories, for appropriate Hamiltonians.
\begin{lem}\label{unfolding}
    \label{shdiag}
    For $H_t$ and $J$ generic,
    \begin{equation}
        CW^*(\Delta,\Delta,
        \pi_1^*\frac{1}{2}H_{1-t/2} + \pi_2^*\frac{1}{2}H_{t/2}, 
        (-J,J)) = CH^*(M,H_t,J)
    \end{equation}
    as relatively graded chain complexes.
\end{lem}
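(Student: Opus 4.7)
The plan is to construct an explicit \emph{unfolding} bijection between generators and trajectories on both sides. The underlying picture: a Floer strip with boundary on $\Delta\subset M^-\times M$ can be thought of as two strips in $M$ glued along their boundaries, and this glued object is a Floer cylinder.

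\emph{Step 1 (generators).} A time-$1$ chord of the split Hamiltonian is a path $\gamma(t)=(\gamma_1(t),\gamma_2(t))$ with $\gamma(0),\gamma(1)\in\Delta$. A direct computation with the product symplectic form $(-\omega,\omega)$ shows that the Hamiltonian vector field of $\pi_1^*F_1+\pi_2^*F_2$ is $(-X_{F_1},X_{F_2})$, so that
\begin{equation*}
\gamma_1'(t)=-\tfrac12 X_{1-t/2}(\gamma_1(t)),\qquad \gamma_2'(t)=\tfrac12 X_{t/2}(\gamma_2(t)).
\end{equation*}
Define the \emph{unfolded loop} $y:S^1=\mathbb{R}/\mathbb{Z}\to M$ by
\begin{equation*}
y(\tau):=\begin{cases}\gamma_2(2\tau),&\tau\in[0,1/2],\\\gamma_1(2-2\tau),&\tau\in[1/2,1].\end{cases}
\end{equation*}
The conditions $\gamma(0),\gamma(1)\in\Delta$ make $y$ continuous at $\tau=1/2$ and at $\tau\equiv 0$, and a chain-rule computation shows that $y'(\tau)=X_\tau(y(\tau))$ on each half (using $H_0=H_1$ from the $S^1$-dependence). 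This construction is visibly invertible, giving a bijection between time-$1$ chords from $\Delta$ to $\Delta$ and elements of $\mathcal O$.

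\emph{Step 2 (trajectories).} For a solution $u=(u_1,u_2):\mathbb{R}\times[0,1]\to M^-\times M$ of Floer's equation with boundary on $\Delta$, define $\widetilde u:\mathbb{R}\times S^1\to M$ by the same recipe,
\begin{equation*}
\widetilde u(s,\tau):=\begin{cases}u_2(s,2\tau),&\tau\in[0,1/2],\\u_1(s,2-2\tau),&\tau\in[1/2,1].\end{cases}
\end{equation*}
Continuity at the seams follows from the diagonal boundary conditions of $u$. The split Floer equation for $(u_1,u_2)$ with complex structure $(-J,J)$, expanded factor by factor, becomes on each half
\begin{equation*}
2\partial_s\widetilde u+J(\partial_\tau \widetilde u-X_\tau)=0,
\end{equation*}
so that after the rescaling $\sigma=s/2$, $\widetilde u$ satisfies Floer's equation for $(H_\tau,J)$ on the cylinder. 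The regularity at the seams is promoted from $C^0$ to smoothness by elliptic regularity of Floer's equation (which identifies the two local descriptions). The inverse map cuts a cylinder along $\tau=0$ and $\tau=1/2$ and folds the two halves back.

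\emph{Step 3 (differential and grading).} The bijection above takes asymptotic chords to the corresponding asymptotic orbits and is therefore compatible with the indexing by generators. Signs propagate through the unfolding because the determinant lines of the linearized Floer operators on the two sides are identified by the same reflection/doubling procedure (in particular the K\"unneth-type orientation comparison of Appendix \ref{orientationsection} applies to the product almost-complex structure $(-J,J)$ paired with $\Delta$). For the grading, the Maslov index of $\gamma$ as a chord from $\Delta$ to $\Delta$ in $M^-\times M$ differs from $-CZ(y)$ by a fixed integer shift (dependent only on $n$ and the spin/grading data of $\Delta$); this is the standard computation relating Lagrangian Maslov indices at $\Delta$ to Conley--Zehnder indices of the doubled loop, and it suffices to produce the \emph{relative} grading claimed in the statement.

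The main technical point is Step~2: verifying that the unfolded map is genuinely smooth at the seams and that every Floer cylinder arises this way. Both issues reduce to the classical observation that the Lagrangian boundary condition along the (graph of the) identity forces the $M$-valued reflection of a pseudoholomorphic strip to remain pseudoholomorphic, exactly as in Floer's original identification of $HF^*(\Delta,\Delta)$ with the Hamiltonian Floer homology of $M$.
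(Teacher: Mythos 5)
Your proposal is correct and takes essentially the same approach as the paper: the same unfold/fold correspondence on generators (Step 1 matches the paper's $\hat x \leftrightarrow x$), the same cut-and-glue bijection on Floer trajectories (Step 2 matches the paper's $u \leftrightarrow \hat u$), and the same observation that the diagonal boundary condition supplies continuity at the seams. The only cosmetic differences are that the paper works on a period-$2$ cylinder to avoid the rescaling $\sigma = s/2$ you introduce, and that it establishes smoothness across the seams by explicitly bootstrapping partial derivatives of Floer's equation where you invoke elliptic regularity — both are valid ways to close the same gap.
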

\begin{proof} 

Denote 
$\hat{H}_t = \pi_1^*\frac{1}{2}H_{1-t/2} + \pi_2^*\frac{1}{2}H_{t/2}.$
The correspondence between generators is as follows:
Given a time 1 orbit $x$ of $H_t$, we construct a time 1 chord from
$\Delta$ to $\Delta$ of $\hat{H}_t$ 

\begin{equation}
    \hat{x}(t) = (x(1-t/2),x(t/2)) .
\end{equation}

Conversely, given a time 1 chord $\hat{x} = (x_1,x_2)$ of $\hat{H}_t$ , the
corresponding orbit of $H_t$ is given by: 

\begin{equation} 
    x(t) = 
    \begin{cases}
        x_2(2t) & t \leq 1/2 \\
        x_1(2(1-t)) & 1/2 \leq t \leq 1 
    \end{cases}
\end{equation} 

 Let us now identify the moduli spaces counted by either differential. First,
 suppose we have a map $u: \R \times \R/2\Z \ra M$ satisfying:
\begin{align*}
    \partial_s u + J_t (\partial_t u - X_t) &= 0\\
    \lim_{s \ra -\infty} u(s,\cdot) &= x_-\\
    \lim_{s \ra \infty} u(s,\cdot) &= x_+
\end{align*}
where $X_t$ is the Hamiltonian vector field corresponding to $H_t$. Then, note
that the map 

\begin{equation} 
    \hat{u}(s,t) := (u_1(s,1-t/2),u_2(s,t/2))
\end{equation} 
\noindent satisfies the equation 

\begin{equation} 
\partial_s \hat{u} = - (-J_{1-t/2},J_{1-t/2}) \left(\partial_t
    \hat{u} - (-X_{1-t/2},X_{t/2})\right)
\end{equation} 
\noindent with limits 
\begin{align}
    \lim_{s \ra - \infty} \hat{u}(s,\cdot)&= \hat{x}_-,\\ 
    \lim_{s \ra \infty} \hat{u}(s,\cdot)&= \hat{x}_+.
\end{align}
Conversely, suppose we have a map $\hat{u}: \R \times [0,1] \ra M^-\times M$ satisfying
\begin{align}
    \partial_s u + (-J_{1-t/2},J_{1-t/2}) (\partial_t u - (-X_{1-t/2},X_{t/2})) &= 0\\
    \lim_{s\ra -\infty} u(s,\cdot) &= \hat{x}_-\\
    \lim_{s \ra \infty} u(s,\cdot) &= \hat{x}_+.
\end{align}

\noindent Let $u_i = \pi_i \circ \hat{u}$, and define
\begin{equation} 
    u(s,t) = 
    \begin{cases} 
        u_2(s,2t) & 0\leq t \leq 1/2 \\ 
        u_1(s,2(1-t)) & 1/2 \leq t \leq 1.
    \end{cases}
\end{equation} 
Because $\hat{u}(s,0)$ and $\hat{u}(s,1)$ lie on  $\Delta$, $u(s,t)$ is
continuous across the {\it seams} $t = 0,\ 1/2$.  It is clear that $\partial_s
u$ is continuous along $t=0,\ 1/2$.  Thus, as $u$ solves $\partial_s u + J_t
(\partial_t u - X_t) = 0$ on both sides of $t=0$ and $t=1$, we see that
$\partial_t u$ = $J_t(\partial_s u) + X_t$ is continuous, so $u$ is at least
$C^1$ across the seams. Now inductively use the fact that
$\partial_s^k$ is continuous for all $k$ along with applications of
$\partial_s$ and $\partial_t$ to Floer's equation, to conclude that all other
mixed partials are continuous. Therefore $u$ is $C^\infty$ across the seams.
\end{proof}

The cases of $HW^*(\Delta, L_i \times L_j)$ and $HW^*(L_i \times L_j, \Delta)$
are analogous, so we simply state them.
\begin{prop}\label{categoricalproductidentity} As relatively graded chain complexes, 
    \begin{equation}
        CW^*(L_i \times L_j,
        \Delta,\frac{1}{2}(\pi_1^* H + \pi_2^* H),(-J,J) ) = CW^*(L_j, L_i, H, J)
\end{equation}
\end{prop}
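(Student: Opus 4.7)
The plan is to adapt the unfolding argument of Lemma \ref{unfolding} to the mixed boundary condition $(L_i \times L_j, \Delta)$. The argument splits into two parts: a bijection between generators, and a bijection between Floer strips.

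\textbf{Generators.} Given a time-1 chord $x \in \chi(L_j, L_i)$ for $(H, J)$, set $\hat{x}(t) = (x(1 - t/2),\, x(t/2))$ for $t \in [0, 1]$. Since the Hamiltonian vector field of $\hat{H} := \tfrac{1}{2}(\pi_1^* H + \pi_2^* H)$ on $(M^-\times M,\, (-\omega, \omega))$ is $X_{\hat H} = (-\tfrac{1}{2}X_H,\, \tfrac{1}{2}X_H)$ (the sign reflects the $M^-$ factor), a direct computation gives $\dot{\hat x}(t) = X_{\hat H}(\hat x(t))$. The boundary values $\hat x(0) = (x(1), x(0)) \in L_i \times L_j$ and $\hat x(1) = (x(1/2), x(1/2)) \in \Delta$ are as required. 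Conversely, any such $\hat x = (x_1, x_2)$ unfolds to $x \in \chi(L_j, L_i)$ by setting $x(t) = x_2(2t)$ on $[0, 1/2]$ and $x(t) = x_1(2(1-t))$ on $[1/2, 1]$, with continuity at $t = 1/2$ forced by $\hat x(1) \in \Delta$. The relative grading statement follows by constructing a trivialization of $x^* TM$ from one of $\hat x^* T(M^-\times M)$ and tracking the resulting Maslov index shift, exactly parallel to the orbit case.

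\textbf{Floer strips.} Given a solution $u = (u_1, u_2): \R \times [0, 1] \to M^- \times M$ of Floer's equation for $(\hat H,\, (-J, J))$ with boundary $L_i \times L_j$ at $t=0$ and $\Delta$ at $t=1$, define $v: \R \times [0, 2] \to M$ by
\begin{equation*}
v(s, t) = \begin{cases} u_2(s, t) & t \in [0, 1], \\ u_1(s, 2 - t) & t \in [1, 2]. \end{cases}
\end{equation*}
The condition $u(s, 1) \in \Delta$ ensures continuity across $t=1$; smoothness across the seam is obtained, as in Lemma \ref{unfolding}, by bootstrapping through Floer's equation to match all mixed partials. A direct component-wise computation shows $v$ satisfies $\partial_s v + J(\partial_t v - \tfrac{1}{2} X_H) = 0$ on $\R \times [0, 2]$; applying the biholomorphism $(s, t) \mapsto (s/2, t/2)$ rescales this to Floer's equation for $(H, J)$ on $\R \times [0, 1]$ with boundary $L_j$ at $0$ and $L_i$ at $1$. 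The folding procedure is its own inverse, giving a bijection between the respective spaces of Floer trajectories. This bijection is manifestly $\R$-equivariant for the translation action in $s$, so it descends to an isomorphism of the moduli spaces that define the two differentials, and orientation lines match up (via the same trivialization-of-linearization argument used for the generators).

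\textbf{Main obstacle and conclusion.} The subtle point is $C^0$-boundedness in the product for the split Hamiltonian $\hat H$, which is not admissible in the usual sense on $M^- \times M$. Rather than prove this directly, the bijection above allows us to \emph{transport} the compactness result Theorem \ref{c0bounds} from $M$ to the unfolded moduli spaces; equivalently, the category $\w^2$ constructed in Section \ref{productsection} is defined so that Floer theory on the product with split data \emph{is} its unfolded $M$-counterpart. Transversality is inherited from generic $(H, J)$ on $M$ because the linearization of Floer's operator unfolds in the same way. Assembling these pieces yields the desired chain-level identification of complexes, completing the proof.
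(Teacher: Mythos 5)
Your proposal is correct and is essentially the paper's own argument: the paper simply states Proposition \ref{categoricalproductidentity} as "analogous" to the unfolding proof of Lemma \ref{unfolding}, and your folding of chords via $\hat{x}(t) = (x(1-t/2), x(t/2))$ and of strips via concatenation across the $\Delta$-seam, together with the seam-smoothness bootstrap and the appeal to unfolded compactness in $M$, is exactly that analogous argument carried out in detail.
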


\begin{prop} \label{fundamentalproductidentity}
    As relatively graded chain complexes, 
    \begin{equation}
        CW^*(\Delta, L_i \times L_j,\frac{1}{2}(\pi_1^* H + \pi_2^* H),(-J,J))
        = CW^*(L_i, L_j, H, J).
    \end{equation}
\end{prop}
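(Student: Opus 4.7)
The plan is to mimic the unfolding/folding argument used in Lemma~\ref{unfolding} and Proposition~\ref{categoricalproductidentity}, adapted to the asymmetric boundary conditions here. A time-$1$ chord of $X_{\frac{1}{2}(\pi_1^*H+\pi_2^*H)}=\frac{1}{2}(-X_H,X_H)$ from $\Delta$ to $L_i\times L_j$ is a pair $\hat{x}(t)=(x_1(t),x_2(t))$ with $x_1(0)=x_2(0)$, where $x_1$ flows along $-\tfrac12 X_H$ ending on $L_i$ and $x_2$ flows along $\tfrac12 X_H$ ending on $L_j$. Reversing $x_1$ and concatenating with $x_2$ at the common point on $\Delta$ produces a time-$1$ chord $y$ of $X_H$ from $L_i$ to $L_j$; explicitly, the correspondence is $y\longleftrightarrow \hat{x}(t)=\bigl(y(\tfrac{1-t}{2}),\,y(\tfrac{1+t}{2})\bigr)$, which I would verify is a bijection between $\chi(\Delta, L_i\times L_j)$ and $\chi(L_i,L_j)$.

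Next, I would implement the analogous unfolding on strips. Given $\hat{u}=(u_1,u_2):\R\times[0,1]\to M^-\times M$ solving Floer's equation for $\hat{J}=(-J,J)$ and $\hat{X}=\tfrac12(-X_H,X_H)$ with $\hat{u}(s,0)\in\Delta$ and $\hat{u}(s,1)\in L_i\times L_j$, define
\[
 u(s,t)=\begin{cases} u_1(2s,\,1-2t), & t\in[0,\tfrac12],\\ u_2(2s,\,2t-1), & t\in[\tfrac12,1].\end{cases}
\]
A direct computation — rescaling both $s$ and $t$ by a factor of $2$ — shows that the $\tfrac12$ in the split Hamiltonian is exactly absorbed so that $u$ satisfies the standard Floer equation $\partial_s u+J(\partial_t u-X_H)=0$ on each half-strip. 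The boundary conditions $u(s,0)=u_1(2s,1)\in L_i$ and $u(s,1)=u_2(2s,1)\in L_j$ are immediate, and continuity at the seam $t=\tfrac12$ follows from the $\Delta$ condition $u_1(2s,0)=u_2(2s,0)$. The asymptotic limits $\lim_{s\to\pm\infty}u(s,\cdot)$ recover precisely the chords $y_\pm$ unfolded from the limits $\hat{x}_\pm$ of $\hat{u}$.

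The main technical step is smoothness across the seam $t=\tfrac12$, which I would handle by the bootstrap argument from Lemma~\ref{unfolding}: differentiating $u_1(2s,0)=u_2(2s,0)$ gives matching $s$-derivatives of all orders, and applying Floer's equation iteratively at the seam trades each $\partial_t$-derivative for a combination of $\partial_s$-derivatives, Hamiltonian terms, and the already-matching lower-order data, forcing all mixed partials to agree from both sides. Thus $u$ is $C^\infty$, and lies in the space of Floer strips computing $CW^*(L_i,L_j,H,J)$.

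Finally, I would observe that this construction is invertible: any Floer strip $u$ from $y_-$ to $y_+$ refolds via $u_1(s,t)=u(s/2,(1-t)/2)$, $u_2(s,t)=u(s/2,(1+t)/2)$ into a strip $\hat{u}$ for the split Floer data with the required boundary conditions. This identification is a bijection of moduli spaces preserving virtual dimensions, hence identifies the chain complexes generator-by-generator and differential-by-differential; the grading is preserved up to an overall shift since the Fredholm indices on corresponding moduli spaces agree. The only nontrivial step is the smoothness-at-the-seam bootstrap; everything else is a direct — if careful — verification of factors of $2$ between the time/space rescaling and the $\tfrac12$ in the Hamiltonian.
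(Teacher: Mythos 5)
Your proposal is correct and is essentially the paper's own argument: the paper proves this proposition by declaring it analogous to the folding/unfolding bootstrap of Lemma \ref{unfolding}, which is exactly what you carry out, including the seam-smoothness bootstrap via Floer's equation. The only cosmetic difference is that you rescale $s$ by $2$ to absorb the factor $\tfrac{1}{2}$ exactly (rather than reparametrizing only $t$ as in Lemma \ref{unfolding}), which is harmless since the identification is equivariant for the $\R$-translation actions and so descends to the moduli spaces defining the differentials.
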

In the setting of the ordinary Fukaya category, the analogue of Lemma
\ref{unfolding} is the well-known correspondence between $HF^*(\Delta,\Delta)$
with the ordinary Hamiltonian Floer homology, or quantum cohomology, of the
target manifold. 
Instead of continuing this correspondence for higher operations and, e.g. {\it
unfolding} Floer data for discs mapping into $M^2$, we will take the above
correspondence as a starting point for a definition of the category $\w^2$
using operations and Floer data in $M$. Define the {\bf objects} of $\w^2$ as
\begin{equation}
    \ob \w^2 := \{L_i \times L_j| L_i, L_j \in \ob \w\} \cup \{\Delta\}.
\end{equation}
For objects $X_k$, $X_l$, define the {\bf generators of the morphism complexes} 
\begin{equation}\label{homgenerators}
    \chi_{M^2}(X_k,X_l) := 
    \begin{cases}
        \chi(L_j,L_i,H) \times \chi(L_i',L_j',H) & X_k = L_i \times L_i',\ X_l = L_j \times L_j' \\
        \chi(L_j,L_i) & X_k  = L_i \times L_j, \  X_l = \Delta \\
        \chi(L_i,L_j) & X_k  = \Delta, \  X_l = L_i \times L_j \\
        \mc{O} & X_k = X_l = \Delta
    \end{cases}
\end{equation}
Also, define the {\bf differential} $\mu^1_{\w^2}$ to be the differentials
coming from the correspondences in the Lemmas above. It remains to define
gradings and construct $\ainf$ operations, which we do in reverse order.

\subsection{The \texorpdfstring{$\ainf$}{A-infinity} category} \label{unfoldingproduct}
To complete the construction of $\w^2$,  we construct higher $\ainf$ operations
$\mu^d_{\w^2}$, $d \geq 2$. First, suppose we have {\it fixed a
universal and conformally consistent Floer datum for pairs of glued discs and
genus-0 open closed strings.} Now, consider the space of labeled associahedra 
\begin{equation}
    \mc{R}^d_{\mathbf{L}^2}
\end{equation}
with label set the relevant Lagrangians in $M^2$:
\begin{equation} 
    \mathbf{L}^2 = \{\Delta\} \cup \{L_i \times L_j | L_i, L_j \in
    \ob \mc{W}\}.
\end{equation}

Consider first the case where all Lagrangians are split.  Discs in $M^- \times
M$ solving the inhomogenous Cauchy-Riemann equation with respect to a split
Hamiltonian in $\tilde{\mc{H}}(M^-\times M)$ split almost complex structure
$(-J,J)$ and split Lagrangian boundary conditions are exactly pairs of discs
$u_1$, $u_2$ with the {\it same conformal structure} (up to conjugation)
solving the inhomogenous Cauchy-Riemann equation with respect to $\omega$, $J$
and respective Lagrangian boundary conditions. The relevant moduli space of
abstract discs is the {\it diagonal associahedron} 
\begin{equation}
    \overline{\mc{R}}^d \stackrel{\Delta_d}{\longhookrightarrow} _{\emptyset,\mf{T}_{max}}\overline{\mc{R}}_{d,d}.
\end{equation}
For labeling sequences $\vec{L}^2$ from $\mathbf{L}^2$
not containing $\Delta$, we can think of $\Delta_d$ as an embedding of {\it
labeled moduli spaces} 
\begin{equation}
    (\Delta_d)_{\vec{L}^2}: (\mc{R}^d)_{\vec{L}^2} \lra
    (\mc{R}_{d,d})_{\mathbf{L}} 
\end{equation}
in the obvious manner: if a boundary component of $S \in
\overline{\mc{R}}^d_{\mathbf{L}^2}$ was labeled $L_i \times L_j$, applying
$\Delta_d$, label the respective component of the first factor $L_i$ and the
second component $L_j$.  

\begin{defn}
    Define the operation $\mu^d_{\w^2}$, for sequences of Lagrangians
    $\vec{L}^2$ in $\mathbf{L}^2$ not containing $\Delta$, to be the operation
    controlled by the image of $(\Delta_d)_{\vec{L}^2}$ as in 
    Equation (\ref{operation2}), with sign twisting datum given by the image of
    the sequential sign twisting datum 
    \begin{equation}
        \vec{t}_d = (1, \ldots, d)
    \end{equation}
    in the following sense: twist inputs in the image of
    $(\Delta_d)_{\vec{L}^2}$ of a boundary point $z_j$ by weight $j$.
\end{defn}
Now, let us give a more general construction of the operations, for cases
including $\Delta$.  Let $S$ be a disc in $\mc{R}^d$ with labels $\vec{L}^2$
from $\mathbf{L}^2$, with at least one label equal to $\Delta$. Let
\begin{equation}
    D(\vec{L}^2)
\end{equation} 
be the set of indices of boundary components of $S$ labeled
$\Delta$. 
Then, let 
\begin{equation}
    \mf{T}_{max} = \{ (1,1), (2,2), \ldots, (d,d)\} \end{equation}
be the maximal boundary identification data and let 
\begin{equation}
    \mf{S}(\mathbf{L}^2) = \{(i,i)| i \in D(\vec{L}^2)\}
\end{equation}
be the set of boundary components determined by the positions of $\Delta$.
Finally, define 
\begin{equation}
    \mathbf{\Phi}_{\vec{L}^2}(\mc{R}^d) := _{\mf{S}(\mathbf{L}^2),\mf{T}_{max}} \mc{R}_{d,d}
\end{equation}
Label the boundary components of the resulting pair of discs as follows: if $\partial_k S$ was
labeled $L_i \times L_j$, then in $\mathbf{\Phi}_{\vec{L}}(S)$, the left image
of $\partial_k S$ will be labeled $L_i$ and the right of 
$\partial_k S$
will
be labeled $L_j$. If $\partial_k S$ was labeled $\Delta$, then it will become
part of a boundary identification and disappear under gluing so there is
nothing to label. 

\begin{defn}
Define the operation \begin{equation}\mu^d_{\w^2},\end{equation} for
sequences of Lagrangians $\vec{L}^2$ in $\mathbf{L}^2$,  to be the
operation controlled by the image of $\mathbf{\Phi}_{\vec{L}^2}$ as in 
Equation (\ref{operation2}).
\end{defn}

\begin{figure}[h] 
    \caption{An example of the labeled gluing $\mathbf{\Phi}_{\vec{L}^2}$. \label{prodgluing1}}
    \centering
    \includegraphics[scale=0.7]{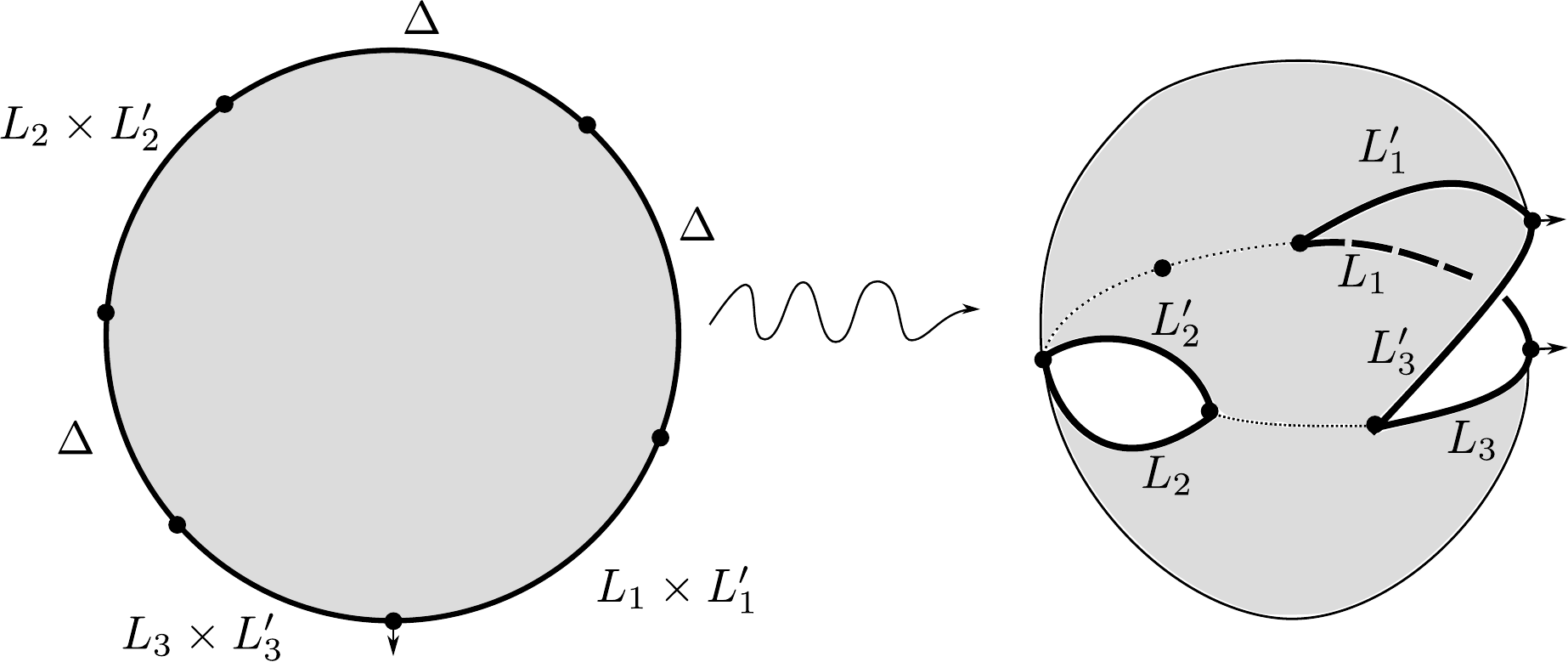}
\end{figure}

Because the unfolding maps $\mathbf{\Phi}_{\vec{L}^2}$ are embeddings of
associahedra,
\begin{prop}
The operations $\mu^d_{\w^2}$ as constructed satisfy the $\ainf$ equations.
\end{prop}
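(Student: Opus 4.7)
The plan is to prove the $\ainf$ relations by analyzing the codimension-1 boundary of the one-dimensional components of the moduli spaces of maps controlling the operations $\mu^d_{\w^2}$, following the framework developed in Section \ref{pairs}. Since $\mu^d_{\w^2}$ is defined as the Floer-theoretic operation associated to the image of the unfolding embedding $\mathbf{\Phi}_{\vec{L}^2}:\overline{\mc{R}}^d \hookrightarrow {_{\mf{S}(\vec{L}^2),\mf{T}_{max}}}\overline{\mc{R}}_{d,d}$, the relevant compactified parameter space is the image $\overline{\mathbf{\Phi}_{\vec{L}^2}(\mc{R}^d)}$, whose codimension-1 boundary is $\mathbf{\Phi}_{\vec{L}^2}\bigl(\partial\overline{\mc{R}}^d\bigr)$, that is, the image of the usual Stasheff stratification $\bigcup_{d_1+d_2=d+1}\overline{\mc{R}}^{d_1}\times_i \overline{\mc{R}}^{d_2}$.

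First, I would invoke Lemma \ref{bordificationlem} in the glued-pairs setting to conclude that the codimension-1 boundary of the Gromov bordification of the relevant one-dimensional moduli spaces of maps decomposes into two classes of strata: (a) semi-stable strip- or cylinder-breakings at the asymptotic ends, and (b) degenerations of the domain coming from $\mathbf{\Phi}_{\vec{L}^2}(\partial\overline{\mc{R}}^d)$. For type (a), I would use that the Floer data were taken to be conformally equivalent to split data on $M^-\times M$ and that the morphism complexes were constructed in Section \ref{floersplit} via the K\"unneth/unfolding identifications (Lemma \ref{kunnethsplit}, Lemma \ref{unfolding}, Propositions \ref{categoricalproductidentity}–\ref{fundamentalproductidentity}); it follows that strip-breaking in $M$ at a boundary end (or cylinder-breaking at a $\Delta$–$\Delta$ interior puncture produced by the gluing $\pi_{\mf{S}(\vec{L}^2)}$) corresponds exactly to insertion of $\mu^1_{\w^2}$ at an input or output of $\mu^d_{\w^2}$.

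For type (b), the main task is to identify the restriction of $\mathbf{\Phi}_{\vec{L}^2}$ to a Stasheff facet $\overline{\mc{R}}^{d_1}\times_i \overline{\mc{R}}^{d_2}$ with a fibered product of two unfolding maps, together with a one-boundary gluing. More precisely, if the $i$th input of the $d_1$-disc carries a label $X\in\{L_a\times L_b,\Delta\}$, then the composite label set $\vec{L}^2$ splits into induced labelings $\vec{L}^2_1,\vec{L}^2_2$ at the newly created internal edge, and I would verify that
$$
\overline{\mathbf{\Phi}_{\vec{L}^2}\bigl(\overline{\mc{R}}^{d_1}\times_i \overline{\mc{R}}^{d_2}\bigr)}
\;=\;\overline{\mathbf{\Phi}_{\vec{L}^2_1}(\overline{\mc{R}}^{d_1})}\times_i \overline{\mathbf{\Phi}_{\vec{L}^2_2}(\overline{\mc{R}}^{d_2})},
$$
where the fiber product is taken along the gluing boundary component, and that the induced Floer data on the two sides agree with those of $\mu^{d_1}_{\w^2}$ and $\mu^{d_2}_{\w^2}$ by the consistency condition on $\mathbf{D}_{glued}$. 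When $X=\Delta$, the corresponding boundary component lies in $\mf{S}(\vec{L}^2)$ and is glued through by $\pi_{\mf{S}(\vec{L}^2)}$, matching the single-gluing case of Proposition \ref{mufromglue}; when $X=L_a\times L_b$, the gluing boundary gives instead an ordinary boundary strip-breaking between the two $\mathbf{\Phi}$-images, and one confirms compatibility with the module structure on $\hom_{\w^2}$ using the K\"unneth identification of the chord complex.

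With the identifications above, the count of rigid elements in the codimension-1 boundary of the one-dimensional moduli space yields precisely the terms of the $\ainf$ equation for $\mu_{\w^2}$. The main obstacle is bookkeeping: one must carefully track how the $\Delta$–label stratification interacts with Stasheff boundary strata to rule out spurious contributions such as those in Propositions \ref{extraauto}, \ref{twocoincide}, and to check that no extraneous boundary strata of the ambient space ${_{\mf{S}(\vec{L}^2),\mf{T}_{max}}}\overline{\mc{R}}_{d,d}$ from Proposition \ref{sequentialpointidentifications} meet $\overline{\mathbf{\Phi}_{\vec{L}^2}(\mc{R}^d)}$—the latter holding because $\mathbf{\Phi}_{\vec{L}^2}$ is an embedding of a Stasheff associahedron and its closure only meets the balanced breakings. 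Signs, as usual, follow by the incremental twisting datum $\vec{t}_d$ and are verified exactly as in Appendix \ref{orientationsection} for the ordinary $\ainf$ operations $\mu^d_\w$, since the unfolding map $\mathbf{\Phi}_{\vec{L}^2}$ preserves the relevant orientation data.
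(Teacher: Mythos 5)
Your proposal is correct and follows essentially the same route as the paper, which disposes of this proposition by observing that the unfolding maps $\mathbf{\Phi}_{\vec{L}^2}$ are embeddings of associahedra, so the codimension-1 boundary analysis (Stasheff facets plus strip/cylinder breaking, with Floer data consistency for glued pairs of discs) reproduces the $\ainf$ relations exactly as for $\mu^d_{\w}$. Your write-up simply makes explicit the boundary bookkeeping that the paper leaves implicit.
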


\subsection{Gradings}
The identifications of chain complexes in Section \ref{floersplit} give us
an identification of {\it complexes with relative grading}. In this section, we
would like to make the subtle observation that naively attempting to inherit
the absolute grading from $M$ under these identifications will result in the
$\ainf$ operations having the wrong degree.
\begin{prop}\label{gradings}
    Fix a choice of gradings in $M$. Then, choose gradings for $M^2$ in
    the following manner: given correspondences of generators for morphism complexes
    \def\llr{\longleftrightarrow}
    \begin{equation}
        \begin{split}
        \chi_{M^2}(L_0 \times L_1, L_0' \times L_1') &\longleftrightarrow \chi(L_0' , L_0) \times \chi(L_1,L_1') \\
        x &\llr \hat{x} = (x_1,x_2)
    \end{split}
    \end{equation}
    \begin{equation}
        \begin{split}
            \chi_{M^2}(L_0 \times L_1, \Delta) &\llr \chi(L_1, L_0) \\
            z &\llr \hat{z}
        \end{split}
    \end{equation}
    \begin{equation}
        \begin{split}
            \chi_{M^2}(\Delta, L_0 \times L_1) &\llr \chi(L_0, L_1) \\
            w &\llr \hat{w}
        \end{split}
    \end{equation}
    \begin{equation}
        \begin{split}
            \chi_{M^2}(\Delta, \Delta) &\llr \mc{O} \\
            y &\llr \hat{y}
        \end{split}
    \end{equation}
    assign gradings as follows:
    \begin{align}
        \deg x &= \deg \hat{x} = \deg x_1 + \deg x_2 \\
        \deg z &= \deg \hat{z}\\
        \deg w &= \deg \hat{w} + n\\
        \deg y &= \deg \hat{y}.
    \end{align}
    For this choice, the operations $\mu^d_{\w^2}$ constructed in the previous
    section are of degree $2-d$, thus forming an $\ainf$ structure.
\end{prop}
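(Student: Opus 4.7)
The plan is a direct index calculation on the Floer moduli space defining $\mu^d_{\w^2}$. By construction, this moduli space parametrizes maps from the glued surface $\pi_{\mf{S}(\vec{L}^2)}(P)$ into $M$, with $P \in \mathbf{\Phi}_{\vec{L}^2}(\mc{R}^d) \subset {}_{\mf{S}(\vec{L}^2), \mf{T}_{max}}\mc{R}_{d,d}$ of abstract dimension $d-2$. Its dimension is computed by the formula of Section~\ref{pairs}, with one subtlety: since the glued surface may be disconnected (e.g.\ two disjoint discs when $\vec{L}^2$ contains no $\Delta$), the Euler characteristic $\chi(\hat{S})$ of the compactified surface must be used in place of the connected expression $2 - h$.

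The first step is a combinatorial computation of $\chi(\hat{S})$. Let $s = |D(\vec{L}^2)|$ be the number of $\Delta$ labels and $\gamma$ the number of cyclically adjacent pairs $(L^2_{j-1}, L^2_j) = (\Delta, \Delta)$ (equivalently, the number of marked points that become interior orbit markings after gluing). A CW-count, starting from $\chi(-S \sqcup T) = 2$ and carefully tracking which vertex pairs get identified by simultaneous arc identifications, yields
\[
\chi(\hat{S}) \;=\; 2 - s + \gamma,
\]
consistent with the two extremes ($\chi = 2$ when $\vec{L}^2$ has no $\Delta$ and the surface is two discs, and again $\chi = 2$ when $\vec{L}^2$ is all $\Delta$ and the surface is a sphere). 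Letting $\alpha_0, \beta_0, \gamma_0 \in \{0,1\}$ record whether the output marked point is pure-split, mixed $\Delta$/split, or of $\Delta$-orbit type, the identities $|\vec{x}_{out}| = 2\alpha_0 + \beta_0$, $|\vec{y}_{out}| = \gamma_0$, and $\alpha_0 + \beta_0 + \gamma_0 = 1$ simplify the rigidity condition to
\[
\textstyle\sum_{M} \deg_{out} \,-\, \sum_{M} \deg_{in} \;=\; n\,(s - \gamma - \beta_0) \,+\, (2 - d).
\]

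The second step is to account for the degree shifts prescribed by the proposition: only morphisms in $\hom_{\w^2}(\Delta, L \times L')$ acquire a shift of $+n$, while the other three cases contribute no shift. Each $\Delta$ label at position $j$ places $\Delta$ on the source side of the marked point $z^{j+1}$, which contributes an $n$-shift iff $L^2_{j+1}$ is split (so $z^{j+1}$ is not an orbit) and $z^{j+1}$ is an input. A cyclic counting argument establishes
\[
\mu - \tau_R \;=\; (s - \gamma) - \beta_0,
\]
where $\mu$ counts shifted inputs and $\tau_R \in \{0,1\}$ records a shifted output; substituting back gives $\deg_{\w^2}(x_0) - \sum_i \deg_{\w^2}(x_i) = 2 - d$, as claimed. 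The main obstacle lies in the asymmetry of the shift convention: inputs shift when $\Delta$ is the cyclic predecessor of the marked point, while the output shifts when $\Delta$ is its cyclic successor, reflecting the standard TQFT orientation convention distinguishing source from target. Once this cyclic identity is established, the proposition follows immediately, and the operations $\{\mu^d_{\w^2}\}$ then form an $\ainf$ structure in the stipulated grading.
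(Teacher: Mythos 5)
Your proof is correct and takes essentially the same approach as the paper's primary argument: a direct dimension/index count for the glued open-closed string, tracking how $\Delta$-labels change the number of boundary components, boundary outputs, and interior outputs, and matching this against the $n$-shift in the grading of $\hom_{\w^2}(\Delta, L\times L')$. The only difference is organizational — you package the count in closed form via $\chi(\hat{S}) = 2 - s + \gamma$ and the cyclic identity $\mu - \tau_R = s - \gamma - \beta_0$ (both of which check out), where the paper argues inductively on runs of consecutive $\Delta$-labels and adds a Poincar\'e-duality sanity check in the compact case.
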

\begin{proof}
    There are two proofs of this fact. In the first, we can treat the numbers
    $\deg(x)$, $\deg(y)$ as black boxes and verify that the degree assignment
    given above makes the $\ainf$ operations $\w^2$ have correct degree $2-d$
    for any sequence of labeled Lagrangians. This could be done as follows:
    Take an arbitrary labeling $\vec{L}^2$ of Lagrangians, some of which are
    $\Delta$ and some of which are $L_i \times L_j$'s, and calculate the number
    of boundary components, number of boundary outputs, and number of interior
    outputs of the resulting open-closed string
    $\pi_{\mf{S}}(\mathbf{\Phi}_{\vec{L}^2}(S))$, thus arriving at the
    dimension (and therefore degree) of the operation controlled by
    $\mathbf{\Phi}_{\vec{L}^2}$. The main observation here is that,
    inductively, any sequence of consecutive labels of $\Delta$ that do not
    appear at the end of the sequence shift the index by $n$, either 
    by gluing a pair of discs together for the first time,
    by adding an additional boundary component or 
    or by turning a boundary output into an interior output (note that interior
    outputs are only formed if there are $\Delta$ labels on both ends, an edge
    case). Correspondingly, any such sequence contributes a
    term of the form $\hom(\Delta, L_i \times L_j)$.
    Thus in terms of the grading given above, the operation continues to
    have degree $2-d$.

Alternatively, we give a conceptual argument, assuming that $M$ is a compact
manifold. Suppose we had chosen a grading for $\Delta$ such that
$\hom(L_1\times L_2, \Delta) \simeq \hom(L_1,L_2)$ as graded complexes. Then,
by Poincar\'{e} duality on $M^2$ then on $M$, we must have that
\begin{equation}
    \hom(\Delta,L_1\times L_2) \simeq \hom(L_1 \times L_2, \Delta)^{\vee}[2n] \simeq \hom(L_2,L_1)^{\vee}[2n] \simeq \hom(L_1,L_2)[n].
\end{equation}
Of course, Poincar\'{e} duality fails in our situation, but this argument gives
a reasonable sanity check regarding gradings.
\end{proof}

\section{From the product to bimodules} \label{quiltsection}
\subsection{Moduli spaces of quilted strips}
The next three definitions are due to Ma'u \cite{Mau:2010lq}:
\begin{defn} Fix $-\infty < x_1 < x_2 < x_3 < \infty$. A {\bf 3-quilted line}
    consists of the three parallel lines $l_1, l_2, l_3$, each of which is a
    vertical line $\{x_j + i\R\}$ considered as a
    subset of $[x_1,x_3] \times (-\infty,\infty) \subset \C$.
\end{defn}
\begin{defn}
    Let $\mathbf{r} = (r_1, r_2, r_3) \in \Z^{3}_{\geq 0}$. A {\bf 3-quilted line
    with $\mathbf{r}$ markings} consists of the data $(Q, \mathbf{z}_1,
    \mathbf{z}_2, \mathbf{z}_3)$, where $Q$ is a 3-quilted line, and each
    vector $\mathbf{z}_i = (z^1_i, \ldots, z^{r_i}_i)$ is an upwardly ordered
    configuration of points in $l_i$, i.e. $\mathrm{Re}(z_i^k) = l_i$ and
    $-\infty < \mathrm{Im}(z_i^1) < \mathrm{Im}(z_i^2) < \cdots <
    \mathrm{Im}(z_i^{r_i}) < \infty$. 
\end{defn}
\noindent 
There is a free and proper $\R$ action on such quilted lines with markings,
given by simultaneous translation in the $\R$ direction.  
\begin{defn}
    The {\bf moduli space of 3-quilted, $\mathbf{r}$-marked strips}
    $Q(3,\mathbf{r})$ is the set of such 3-quilted lines with $\mathbf{r}$
    markings, modulo translation. 
\end{defn}

\begin{figure}[h] 
    \caption{A quilted strip with $(3,5,2)$ markings. \label{quilt1}}
    \centering
    \includegraphics[scale=0.7]{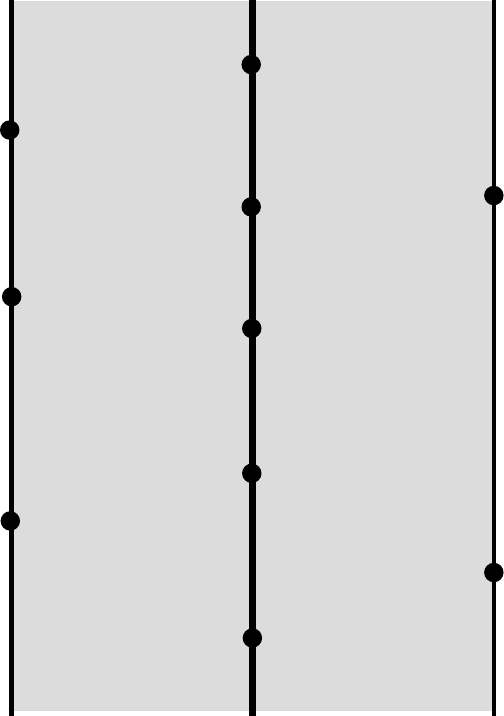}
\end{figure}

 Ma'u also gives a description of  the Deligne-Mumford
 compactification  \[\overline{Q(3,\mathbf{r})}\] of the above moduli space, the
 {\bf moduli space of stable, nodal 3-quilted lines with $\mathbf{r}$
 markings}. Strata consist of multi-level broken 3-quilted lines with stable
 discs glued to marked points on each of the three lines at any level. The
 manifold with corners structure near these strata comes from gluing charts,
 which are similar to ones we have already written down.  We refer the reader
 to \cite{Mau:2010lq}*{\S 2} for more details on this moduli space, but in
 codimension 1
 \begin{prop}\label{boundaryquilt}
     The boundary $\partial \overline{Q(3,\mathbf{r} = (r_1, r_2, r_3))}$ is covered by the
     images of the codimension 1 inclusions
     \begin{equation} \label{boundaryquiltinclusions}
         \begin{split}
             \overline{Q(3,(a,b,c))} \times \overline{Q(3,(r_1-a,r_2-b,r_3-c))} &\ra \partial \overline{Q(3,\mathbf{r})} \\
             \overline{Q(3,(a+1,r_2,r_3))} \times \overline{\mc{R}^{r_1 - a}} &\ra \partial \overline{Q(3,\mathbf{r})} \\
             \overline{Q(3,(r_1,b+1,r_3))} \times \overline{\mc{R}^{r_2 - b}} &\ra \partial \overline{Q(3,\mathbf{r})} \\
             \overline{Q(3,(r_1,r_2,c+1))} \times \overline{\mc{R}^{r_3 - c}} &\ra \partial \overline{Q(3,\mathbf{r})}.
         \end{split}
     \end{equation}
 \end{prop}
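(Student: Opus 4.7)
The plan is to verify the codimension-one boundary description by a standard degeneration analysis, patterned on the compactification of Stasheff associahedra but adapted to the three-line quilted setting, and then to check that Ma'u's gluing charts near each stratum match the inclusions listed. Since $Q(3,\mathbf{r})$ has dimension $r_1+r_2+r_3-1$ (three ordered configurations on three lines, modulo simultaneous vertical translation), it suffices to enumerate the possible limits in a one-parameter family of configurations, show each such limit lies in one of the four types, and check that each type really sits in codimension one.

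First I would fix an unbounded sequence in $Q(3,\mathbf{r})$ and, using translation invariance, normalize some fixed marked point (say $z_1^1$) to lie at the origin. Then there are exactly two mechanisms by which a sequence can escape the interior: either (a) two consecutive markings $z_i^k, z_i^{k+1}$ on the same line $l_i$ collide in finite position, or (b) the imaginary coordinates of two groups of markings are separated by an unbounded distance. Case (a) is resolved by a disc bubble: rescaling near the colliding pair produces a stable disc $S \in \overline{\mc{R}^{a+1}}$ attached at a single new marked point on $l_i$. This accounts for the last three families in \eqref{boundaryquiltinclusions}, with the exponent $r_i - a$ recording the number of markings remaining on $l_i$ after collapsing the $a+1$ colliding ones into a single attachment point (the $+1$ is the node). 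Iterating this for further collisions on a single line reaches deeper strata but in codimension one at most a single disc bubbles off. Case (b) is strip breaking: rescaling the two pieces by their respective widths produces two 3-quilted lines, one containing the markings with bounded imaginary coordinate relative to the normalization, and the other containing the markings that escape to $+\infty$ (after re-normalizing). Because the breaking happens uniformly across all three lines (the lines $l_1, l_2, l_3$ remain at fixed horizontal coordinates while vertical breaking occurs), the markings are partitioned into lower ones and upper ones, yielding the first family of \eqref{boundaryquiltinclusions} with $(a,b,c)$ the numbers of markings on each line in the lower level.

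The second step is the codimension count. Each disc bubble of type $\overline{\mc{R}^{r_i - a}}$ attached along $l_i$ contributes $r_i - a - 2$ interior dimensions plus one parameter recording the height of attachment; the remaining 3-quilted line has $a+1$ markings on $l_i$ and the original counts on the other two lines, so total dimension equals $(a+1+r_j+r_k - 1) + (r_i - a - 2) = r_1+r_2+r_3 - 2$, confirming codimension one. For strip breaking, the two factors have total dimension $(a+b+c-1) + (r_1-a+r_2-b+r_3-c-1) = r_1+r_2+r_3 - 2$, again codimension one. No other combinatorial type has the right codimension, so the list is exhaustive.

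The main obstacle is the gluing step: showing that near each stratum of \eqref{boundaryquiltinclusions} the compactification really has the structure of a manifold with corners whose local charts are products of the factors times a half-open gluing parameter. For the disc-bubbling strata this is a direct translation of Stasheff-style gluing of strip-like ends (for which standard references suffice). For strip-breaking strata one uses a slightly different ``long strip'' gluing parameter $\rho \in [0,\infty)$ attaching the two 3-quilted components along an infinite vertical strip; one must check that the combined conformal structure and marking configuration varies smoothly in $\rho$, which follows from Ma'u's construction in \cite{Mau:2010lq}. Once these gluing charts are in place, the proposition follows by patching them together with the interior moduli space.
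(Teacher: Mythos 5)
Your argument is correct in substance, but it takes a different route from the paper: the paper offers no proof at all for this proposition, deferring entirely to Ma'u's construction of the compactified moduli of quilted strips (\cite{Mau:2010lq}, \S 2), whereas you reconstruct the statement directly by enumerating the two degeneration mechanisms (same-line collisions producing disc bubbles, and unbounded vertical separation producing two-level breaking), checking via the dimension formula $\dim Q(3,\mathbf{r}) = r_1+r_2+r_3-1$ that each listed stratum has codimension one, and only citing Ma'u for the gluing charts. What your version buys is a self-contained justification that the four families in \eqref{boundaryquiltinclusions} exhaust the codimension-one boundary, which the paper leaves implicit; what the citation buys is that all the analytic/chart content is already established once and for all in \cite{Mau:2010lq}. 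Two small points to tighten: (i) your bubbling paragraph labels the bubble as $\overline{\mc{R}^{a+1}}$ with $r_i-a$ markings left on the line, while the displayed formula (and your own codimension count) put the bubble as $\overline{\mc{R}^{r_i-a}}$ and leave $a+1$ markings (the $a$ survivors plus the node) on $l_i$; these describe the same family of strata after reindexing $a \mapsto r_i-a-1$, but you should fix one convention, and drop the phrase ``plus one parameter recording the height of attachment,'' since that height is already counted among the $a+1$ markings of the quilted-line factor. (ii) You should record the stability constraints that make the listed factors genuine moduli spaces: $r_i - a \geq 2$ for a disc bubble, and $a+b+c \geq 1$ together with $(r_1-a)+(r_2-b)+(r_3-c) \geq 1$ for level breaking, so that neither level is an unstable (empty) quilted line.
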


We will use the open space $Q(3,\mathbf{r})$ to construct operations controlled
by various glued pairs of discs. The codimension 1 compactification of the
resulting moduli spaces we consider will not quite be
(\ref{boundaryquiltinclusions}), but will only differ by some strata whose
associated operations are zero.
\begin{defn} 
    Let $\mathbf{L}_1$, $\mathbf{L}_2$, $\mathbf{L}_3$ be sets
    of Lagrangians in $M$, $M^2$, and $M$ respectively. A {\bf
    Lagrangian labeling from} $(\mathbf{L}_1, \mathbf{L}_2, \mathbf{L}_3)$ for
    a 3-quilted line with $\mathbf{r}$ markings
    $(Q,\mathbf{z}_1,\mathbf{z}_2,\mathbf{z}_3)$ consists of, for each $i$, an
    assignment of a label in $\mathbf{L}_i$ to each of the $r_i+1$ components
    of the punctured line $l_i - \mathbf{z}_i$. The {\bf space of 3-quilted
    lines} with $\mathbf{r}$-{\bf markings} and $(\mathbf{L}_1,
    \mathbf{L}_2,\mathbf{L}_3)$ {\bf labels} is denoted
    $Q(3,\mathbf{r})_{(\mathbf{L}_1,\mathbf{L}_2, \mathbf{L}_3)}$.
\end{defn}

\subsection{Unfolding labeled quilted strips} 
Fix the label set $\hat{\mathbf{L}} = (\mathbf{L}, \mathbf{L}^2, \mathbf{L})$.
Let $S$ be a stable labeled, 3-quilted strip with $\mathbf{r}$-markings, $S \in
Q(3,\mathbf{r})_{\hat{\mathbf{L}}}$, labeled by $\hat{\vec{L}} = (\vec{L}_0,
\vec{L}^2, \vec{L}_1)$. We associate to $S$ a pair of glued discs
\begin{equation}
    \mathbf{\Psi}_{\hat{\vec{L}}}(S).
\end{equation}
in a manner analogous to the construction of $\mathbf{\Psi}$ in Section
\ref{unfoldingproduct}.
From a 3-quilted line with marked points $S$, consider the substrips $-S_1$ and
$S_2$, where $S_i$ is $(i = 1,2)$ given by the regions in between and
including lines $l_i$ and $l_{i+1}$ ($-S_1$ denotes the reflection of $S_1$
across a vertical axis). 

$-S_1$ and $S_2$ are conformally discs with boundary marked points
$\mathbf{z}_i \cup \mathbf{z}_{i+1} \cup \{a_i^\pm\}$, where $a_i^\pm$ are the
marked points corresponding in the strip-picture to $\pm  \infty$. Denote the
connected components of the line $l_2 - \mathbf{z}_2$ by $\partial_j^2 S$,
$j=1, \ldots, r_2 + 1$, and the images of these boundary components in $S_i$ by
$\partial_j^2 S_i$. Pick some conformal map $\phi$ from the strip to a disc
with marked points at $\pm \infty$ sent to $\pm 1$. Apply this same conformal
map to $-S_1$ and $S_2$ and call the results $-\tilde{S}_1$, $\tilde{S}_2$.
By construction $\tilde{S}_1$ and $\tilde{S}_2$ have $r_2 + 1$ coincident points
\begin{equation}
    \mf{T} = \{(1,1), \ldots, (r_2+1, r_2+1)\}
\end{equation}
coming from the marked points on the strip $l_2$ and the point at $+\infty$.

Now, define the boundary identification
\begin{equation}
    \mf{S}(\hat{\vec{L}}):= \{(i,i) | 1 \leq i \leq r_2+1,\ \partial_i^2
    S\mathrm{\ is\ labeled\ }\Delta\}.  
\end{equation}

Thus, we can define
\begin{equation} \label{unfoldingquiltdefinition}
    \mathbf{\Psi}_{\hat{\vec{L}}} (Q(3,(r_1,r_2,r_3))) := 
{_{\mf{S}(\hat{\vec{L}}),\mf{T}}} \overline{\mc{R}}_{r_2+r_1+1,r_2+r_3+1}.
\end{equation}

The resulting space is labeled as follows: The connected components of $l_i -
z_i$ for $i=1,3$ in the image of $\mathbf{\Psi}$ retain the same
labeling. If $\partial^2_j S$ was labeled by some $L_s \times L_t$, then label
the image of $\partial^2_j S_1$ by $L_s$ and the image of $\partial^2_j S_2$ by
$L_t$.

\begin{figure}[h] 
    \caption{An example of the quilt unfolding $\mathbf{\Psi}_{\hat{\vec{L}}}$. \label{quiltunfoldingimg}}
    \centering
    \includegraphics[scale=0.7]{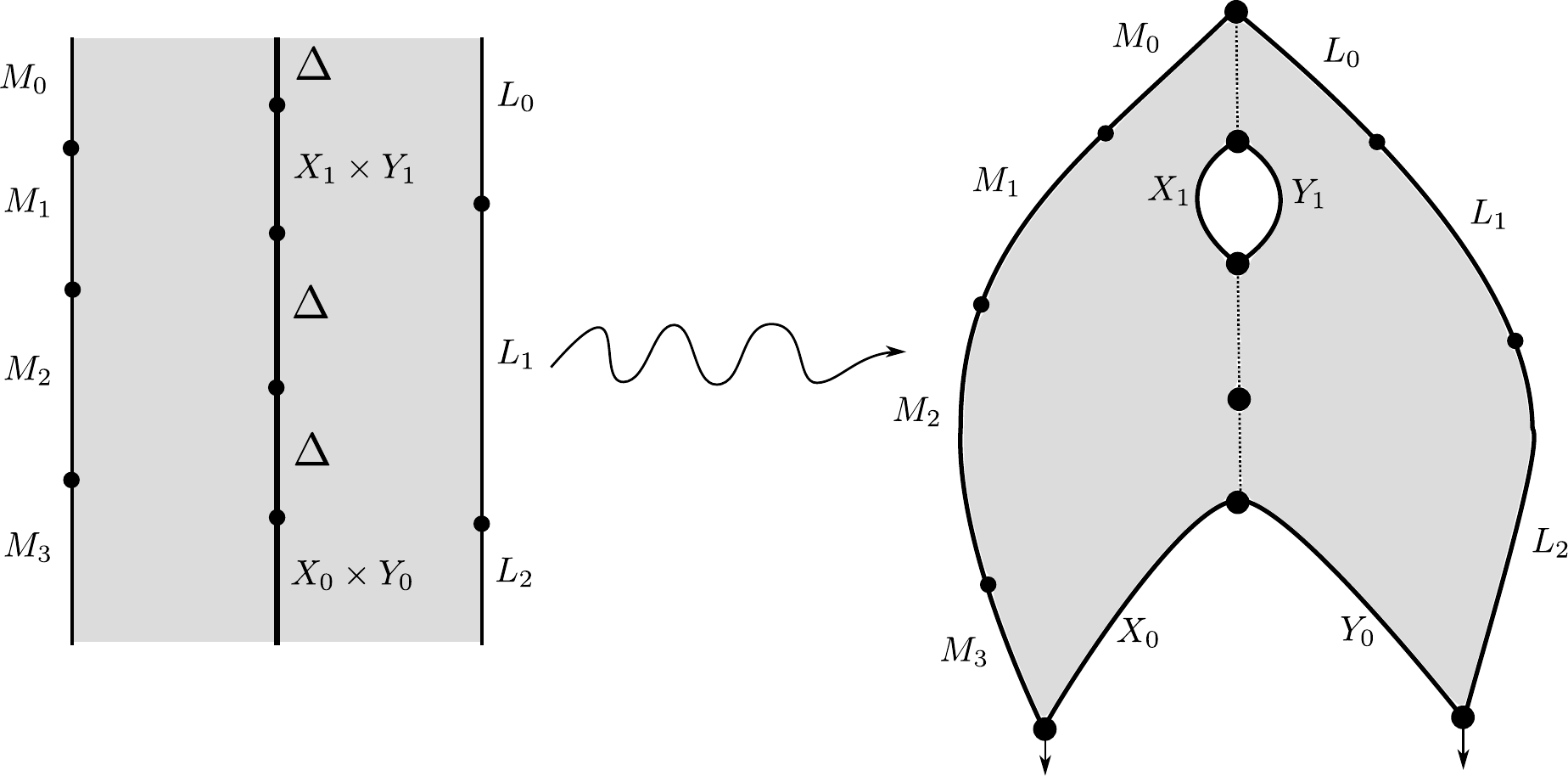}
\end{figure}

\subsection{The \texorpdfstring{$\ainf$}{A-infinity} functor}
Using the above embeddings of labeled moduli spaces, we construct an
$\ainf$ functor 
\begin{equation}
    \mathbf{M} : \mc{W}^2 \longrightarrow \w\bimod\w.
\end{equation}
On an object $X \in \mc{W}^2$, the bimodule $\mathbf{M}(X)$ is specified by the following data:
\begin{itemize}
    \item for pairs of objects $A, B \in \ob \w$,
        $\mathbf{M}(X)(A,B)$ is generated as a graded vector space by
        $\chi_{M^2}(A\times B,X)$, which we recall to be: 
        \begin{equation}
            \begin{cases}
                \chi(L_1,A) \times \chi(B,L_2) & X = L_1 \times L_2 \\
                \chi(B,A) & X = \Delta
            \end{cases}
        \end{equation}
    \item differential \begin{equation}
            \mu^{0|1|0}_{\mathbf{M}}: \mathbf{M}(X)(L,L') \lra   \mathbf{M}(X)(L,L')
        \end{equation}
        which is exactly the differential $\mu^1_{\w^2}$ on
        $\hom_{\w^2}(L\times L',X)$, counting pairs of strips modulo
        simultaneous automorphisms.  
    \item for objects $(A_0, \ldots, A_r, B_0,
        \ldots, B_s)$, higher bimodule
        structure maps 
        \begin{equation} \label{bimodstructurequilt}
        \begin{split}
            \mu^{r|1|s}_{\mathbf{M}}:&\hom_{\w}(A_{r-1},A_r) \times \cdots
            \hom_{\w}(A_0,A_1) \times \mathbf{M}(X)(A_0,B_0) \times \\
            &\times\hom_{\w}(B_1,B_0) \times \cdots \times \hom_{\w}(B_{s},B_{s-1})
            \longrightarrow \mathbf{M}(X)(A_r,B_s).
        \end{split}
        \end{equation}
        These maps are the ones determined by the moduli space 
        \begin{equation}
            \overline{\mathbf{\Psi}_{\hat{\vec{L}}}(Q(3,(r,0,s))_{\hat{\vec{L}}})}
        \end{equation}
        in the sense of equations (\ref{operationlabeled1}) and
        (\ref{operation2}), where 
        \begin{equation}
            \hat{\vec{L}} = ( (A_0, \ldots, A_r), (X), (B_0, \ldots, B_s) ),
        \end{equation}
        using existing choices of Floer data and sign twisting datum
        \begin{equation}
            \vec{t} = (1, 2, \ldots, s, s, s+1, \ldots, s+r)
        \end{equation}
        with respect to the reverse ordering of inputs in
        (\ref{bimodstructurequilt}) (considering $\mathbf{M}(X)(A_0,B_0)$ as a
        single input).
\end{itemize}
The consistency condition imposed on the choice of Floer data pairs of glued
discs and the codimension-1 strata (\ref{boundaryquilt}) imply that
\begin{prop}
    $\mathbf{M}(X)$ is an $\ainf$ bimodule.
\end{prop}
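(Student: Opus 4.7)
The plan is to verify the $\ainf$ bimodule equations (\ref{ainfbimod}) for $\mathbf{M}(X)$ by the standard codimension-one boundary analysis of the one-dimensional components of the Floer moduli spaces controlling $\mu^{r|1|s}_{\mathbf{M}}$. These are moduli spaces of maps from the labeled pair-of-discs family
\[ \overline{\mathbf{\Psi}_{\hat{\vec{L}}}(Q(3,(r,0,s))_{\hat{\vec{L}}})} \subset {_{\mf{S}(\hat{\vec{L}}),\mf{T}_0}}\overline{\mc{R}}_{r+1,s+1}, \]
where $\mf{T}_0=\{(1,1)\}$ reflects the common output, and $\mf{S}(\hat{\vec{L}})$ is empty when $X=L_1\times L_2$ and equals $\{(1,1)\}$ when $X=\Delta$. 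The transversality and $C^0$-compactness of these moduli spaces were already established in Section \ref{pairs} for glued pairs of discs; the bimodule equation will follow from identifying the codimension-one boundary terms with the various terms appearing in (\ref{ainfbimod}).

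First I would enumerate the codimension-one strata of $\overline{Q(3,(r,0,s))}$ using Proposition \ref{boundaryquilt}: with $r_2=0$, only the quilt-breaking strata $\overline{Q(3,(a,0,b))}\times\overline{Q(3,(r-a,0,s-b))}$ and the outer disc-bubbling strata $\overline{Q(3,(a+1,0,s))}\times\overline{\mc{R}^{r-a}}$ and $\overline{Q(3,(r,0,b+1))}\times\overline{\mc{R}^{s-b}}$ appear (no bubbling is possible on $l_2$ since it carries no marked points). Under the unfolding $\mathbf{\Psi}_{\hat{\vec{L}}}$, quilt-breaking strata translate into the nested composition terms $\mu^{r-a|1|s-b}_{\mathbf{M}}\circ(\mathrm{id}\otimes\mu^{a|1|b}_{\mathbf{M}}\otimes\mathrm{id})$ of the first line of (\ref{ainfbimod}), while outer disc-bubbling strata translate into the $\mu^{r-i+1|1|s}_{\mathbf{M}}\circ(\mathrm{id}\otimes\mu^{i}_{\w}\otimes\mathrm{id})$ and $\mu^{r|1|s-j+1}_{\mathbf{M}}\circ(\mathrm{id}\otimes\mu^{j}_{\w}\otimes\mathrm{id})$ terms of the second and third lines. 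Combined with the standard strip-breaking at the $\pm\infty$ asymptotics (which produces the usual differential terms), these exhaust the terms of (\ref{ainfbimod}).

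The main obstacle will be controlling all of the codimension-one boundary of $\overline{\mathbf{\Psi}_{\hat{\vec{L}}}(Q(3,(r,0,s)))}$ viewed as a subset of $_{\mf{S},\mf{T}_0}\overline{\mc{R}}_{r+1,s+1}$, since a priori Proposition \ref{sequentialpointidentifications} produces additional strata beyond the image of $\partial\overline{Q(3,(r,0,s))}$: there are extra point-coincidence strata, strata in which $L/R$-bubbles develop between formerly separated marked points, and strata in which a disc bubble absorbs marked points on both the left and right factors simultaneously. The key claim is that each of these extra strata yields a vanishing operation by the earlier vanishing results. Specifically, Proposition \ref{extraauto} kills any stratum whose projection under $\pi_\emptyset$ has positive-dimensional fiber (relative automorphism of one factor), Proposition \ref{twocoincide} kills strata with additional coincident-point constraints that force extra automorphisms, and the generalized form in Proposition \ref{sequentialpointidentifications} handles the remaining mixed bubbling cases. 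The consistency of the universal Floer data for glued pairs of discs then ensures that on strata coming from genuine quilt degenerations the Floer data restricts to a product, so the associated signed counts factor as iterated compositions.

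Finally, I would conclude with a sign verification in the style of Appendix \ref{orientationsection}, using the sign twisting datum $\vec{t}=(1,2,\ldots,s,s,s+1,\ldots,s+r)$ together with the Koszul conventions in Definition \ref{bimoduledef}. The outcome is that the codimension-one boundary contributions assemble with the signs $(-1)^{\maltese_{-s}^{-(j+1)}}$, $(-1)^{\maltese_{-s}^{k}}$, $(-1)^{\maltese_{-s}^{-(l+j+1)}}$ dictated by (\ref{ainfbimod}), yielding $\mu_{\mathbf{M}}\circ\hat{\mu}_{\mathbf{M}}=0$, which is precisely the $\ainf$ bimodule relation.
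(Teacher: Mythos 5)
Your proposal is correct and follows essentially the same route as the paper: a codimension-one boundary analysis of the moduli space of maps over the unfolded pairs-of-discs space, with the strata enumerated via Proposition \ref{sequentialpointidentifications}, the extraneous mixed $L/R$-bubbling strata killed by Proposition \ref{extraauto}, the surviving strata together with strip-breaking matching the terms of (\ref{ainfbimod}), and signs deferred to Appendix \ref{orientationsection}. The only cosmetic difference is that you begin from the quilt boundary (Proposition \ref{boundaryquilt}) and then pass to the pairs-of-discs compactification, whereas the paper works directly with the latter.
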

\begin{proof}
    We look at the boundary of the associated one-dimensional moduli spaces.
    The resulting pair of glued discs has a sequential point identification
    $\mf{T} = \{(1,1),(2,2)\}$. We have already examined the boundary strata of
    $_{\mf{S}(\hat{\vec{L}}),\mf{T}} \mc{R}_{r+1+1,r_3+1}$ in Proposition
    \ref{sequentialpointidentifications}. The composed operations corresponding
    to stratas (\ref{bubblingoutofP1}) - (\ref{bubblingLR}) vanish by
    Proposition \ref{extraauto}. The strata (\ref{bubblingoverlapsP}),
    (\ref{bubblingoutofPright}), and (\ref{bubblingoutofPend}) correspond
    exactly to equations of the form 
    \begin{equation}
        \begin{split}
            \mu_{\mathbf{M}(X)}( \cdots \mu_{\mathbf{M}(X)}(\cdots, \mathbf{b}, \cdots), \cdots),\\
        \mu_{\mathbf{M}(X)}(\cdots, \mathbf{b}, \cdots \mu_{\w}(\cdots), \cdots),\\
        \mu_{\mathbf{M}(X)}(\cdots, \mu_{\w}(\cdots), \cdots, \mathbf{b}, \cdots, \cdots)
    \end{split}
    \end{equation}
    respectively, which together comprise the terms of the $\ainf$ bimodule
    relations. There are final terms coming from strip-breaking, corresponding
    to allowing ourselves to apply $\mu^1$ or $\mu^{0|1|0}$ before or after
    applying $\mu_{\mathbf{M}(X)}$. Verification of signs is as in Appendix
    \ref{orientationsection}.
\end{proof}
Given objects $X_0, \ldots, X_d \in \w^2$, the higher terms of the functor are maps
\begin{equation}
    \mathbf{M}^d: \hom_{\w}(X_{d-1},X_d) \otimes \cdots \hom_{\w}(X_0, X_1)
    \lra \hom_{\w\bimod\w}(\mathbf{M}(X_0),\mathbf{M}(X_d)) 
\end{equation}
sending 
\begin{equation}
    x_d \otimes \cdots \otimes x_1 \longmapsto \mathbf{m}_{(x_d, \ldots, x_1)} \in \hom_{\w\bimod\w}(\mathbf{M}(X_0),\mathbf{M}(X_d)).
\end{equation}
The bimodule homomorphism $\mathbf{m}_{(x_d, \ldots, x_1)}$ consists of, for
objects $(A_0, \ldots, A_r, B_0, \ldots, B_s)$ in $\w$, maps:
\begin{equation} \label{quilthigherorderbimod}
    \begin{split}
    \mathbf{m}_{(x_d, \ldots, x_0)}^{r|1|s} : &\hom_{\w}(A_{r-1},A_r) \times \cdots
            \hom_{\w}(A_0,A_1) \times \mathbf{M}(X_0)(A_0,B_0) \times \\
            &\times\hom_{\w}(B_1,B_0) \times \cdots \times \hom_{\w}(B_{s},B_{s-1})
            \longrightarrow \mathbf{M}(X_d)(A_r,B_s)
\end{split}
\end{equation}
Letting $\hat{\vec{L}} = ( (A_0, \ldots, A_r), (X_0, \ldots, X_d), (B_0,
\ldots, B_s))$, we define the above operation to be the one controlled in the
sense of Equations (\ref{operationlabeled1}) and (\ref{operation2}) by the
unfolded image 
\begin{equation}
    \overline{\mathbf{\Psi}_{\hat{\vec{L}}}(Q(3,(r,d,s))}_{\hat{\vec{L}}})
\end{equation}
with sign twisting datum
\begin{equation}
    \vec{t} = (1, \ldots, d, 1, \ldots, r, r, r+1, \ldots, r+s)
\end{equation}
with respect to the ordering of inputs given by $x_1, \ldots, x_d$ followed by
the reverse of the order of inputs in (\ref{quilthigherorderbimod}) (as before,
this means that we twist the image of the inputs after unfolding by these
quantities).  The consistency condition for Floer data for open-closed strings
and pairs of discs, along with the codimension 1 boundary of quilted strips
(\ref{boundaryquilt}) imply
\begin{prop}\label{quiltainffunctor}
    The data $\mathbf{M}^d$ as defined above gives an $\ainf$ functor
    \begin{equation}
        \mathbf{M}: \mc{W}^2 \longrightarrow \w\bimod\w.
    \end{equation}
\end{prop}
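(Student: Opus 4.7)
The plan is to verify the $\ainf$ functor equations for $\mathbf{M}$ by analyzing the codimension-1 boundary of the 1-dimensional components of the moduli spaces of maps associated to $\overline{\mathbf{\Psi}_{\hat{\vec{L}}}(Q(3,(r,d,s))_{\hat{\vec{L}}})}$. The key virtue of this setup is that, by construction, the unfolding map $\mathbf{\Psi}$ is an embedding of labeled moduli spaces into glued pairs of discs, and the Floer data is inherited consistently from our universal choice; so the compactness, transversality, and gluing theorems already established in Section \ref{pairs} apply verbatim, and the only remaining task is combinatorial identification of boundary strata plus a sign check.

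First I would enumerate the boundary contributions. There are two sources: (a) boundary components of the abstract moduli space $\overline{Q(3,(r,d,s))}$, described by Proposition \ref{boundaryquilt}; and (b) additional boundary that arises from the target glued-pair moduli space $_{\mf{S}(\hat{\vec{L}}),\mf{T}}\overline{\mc{R}}_{r_2+r+1,r_2+s+1}$ not already accounted for in (a), which by Proposition \ref{sequentialpointidentifications} consists of the strata (\ref{bubblingoverlapsP})--(\ref{bubblingoutofPend}). The strata of type (a) split into four families: a quilted strip breaking into two quilted strips, and a disc bubbling off on one of the three lines $l_1,l_2,l_3$. The first family manifestly yields $\mu^2_{\w\bimod\w}(\mathbf{M}^{i_2},\mathbf{M}^{i_1})$ (dg composition of bimodule morphisms), since the Floer data on each piece agrees with the data defining the corresponding $\mathbf{M}^{i_j}$. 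Bubbling on $l_1$ or $l_3$ yields, respectively, $\mathbf{M}^{d-s+1}(\ldots,\mu^s_{\w},\ldots)$ acting on inputs to the left or right, by Proposition \ref{yonedaquilt} applied to the bubbled-off associahedron (the bubble has no coincidence with the other line, so it unfolds to a one-sided semi-stable embedding). Bubbling on $l_2$ yields $\mathbf{M}^{d-s+1}(\ldots,\mu^s_{\w^2},\ldots)$; here one must check that the unfolding $\mathbf{\Psi}$ commutes with the unfolding $\mathbf{\Phi}_{\vec{L}^2}$ used in defining $\mu^s_{\w^2}$ in Section \ref{unfoldingproduct}, which is a direct inspection of the two gluing prescriptions on the pair of discs. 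Finally, strip-breaking at any asymptotic end contributes the terms needed to assemble the bimodule differential $\mu^1_{\w\bimod\w}(\mathbf{M}^d(\vec{x}))$ and the $\mu^1_{\w^2}$ terms on the right-hand side.

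The crucial point is that the extraneous strata (b) from Proposition \ref{sequentialpointidentifications} all contribute zero. The strata (\ref{bubblingoutofP1})--(\ref{bubblingLR}) involve pairs of discs with a strictly smaller set of point identifications than is forced by the $\Delta$-labelings on $l_2$; the resulting operation factors through a moduli space where the left and right factors admit independent automorphisms compatible with the Floer data, so Proposition \ref{extraauto} (together with Propositions \ref{twocoincide} and \ref{threecoincide} in edge cases) shows that these operations vanish identically. Similarly, the overlap stratum (\ref{bubblingoverlapsP}) either reduces to a contribution already counted among the splitting strata of (a), after the unfolding $\mathbf{\Psi}$ is inverted, or vanishes by the same automorphism argument; and (\ref{bubblingoutofPright})--(\ref{bubblingoutofPend}), when they survive, correspond precisely to the left/right disc-bubbling on $l_1$ and $l_3$ already accounted for. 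Collecting all surviving contributions produces exactly the two sides of the $\ainf$ functor equation (\ref{functoreqn}).

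The main obstacle will be the sign verification. The twisting datum $\vec{t}=(1,\ldots,d,1,\ldots,r,r,r+1,\ldots,r+s)$ is tuned so that the orientation-induced signs from decomposing $\partial\overline{\mathbf{\Psi}_{\hat{\vec{L}}}(Q(3,(r,d,s)))}$ match the Koszul signs in (\ref{functoreqn}); this requires adapting the computation in Appendix \ref{orientationsection} (which handles $\mu^d$, $\oc$, $\co$, and the bimodule equations for $\mathbf{M}(X)$) to the present situation, accounting for the interaction between the incremental twists on the three lines and the shifted grading convention of Proposition \ref{gradings} when a middle-line input is $\Delta$ (contributing the degree-$n$ shifts between $\hom_{\w^2}(\Delta, L\times L')$ and $\hom_{\w}(L,L')$). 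Granting this sign check, the identification of boundary strata above completes the proof.
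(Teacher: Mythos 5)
Your approach is essentially the paper's: verify the functor equation by matching the codimension-one boundary of the one-dimensional moduli spaces with the terms of (\ref{functoreqn}), kill the leftover strata via Proposition \ref{extraauto}, and defer signs to Appendix \ref{orientationsection}. Two bookkeeping points need tightening, however. By (\ref{unfoldingquiltdefinition}) the compactified source space is \emph{by definition} the glued-pairs space ${_{\mf{S}(\hat{\vec{L}}),\mf{T}}}\overline{\mc{R}}_{r+d+1,d+s+1}$, so the paper reads the boundary directly off Proposition \ref{sequentialpointidentifications} rather than off Proposition \ref{boundaryquilt}: stratum (\ref{bubblinginP}) gives $\mathbf{M}\circ\hat{\mu}_{\w^2}$; the non-maximal cases of (\ref{bubblingoverlapsP}) give the compositions $\mathbf{M}^{i_1}\circ\mathbf{M}^{i_2}$; the maximal cases of (\ref{bubblingoverlapsP}) together with (\ref{bubblingoutofPright}), (\ref{bubblingoutofPend}) and strip breaking assemble $\mu^1_{\w\bimod\w}(\mathbf{M}^d)$; and only (\ref{bubblingoutofP1})--(\ref{bubblingLR}) vanish by Proposition \ref{extraauto}. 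Your blanket claim that the ``extraneous strata (b) all contribute zero'' is therefore false as stated --- (\ref{bubblingoverlapsP}) and (\ref{bubblingoutofPright})--(\ref{bubblingoutofPend}) are exactly where the bimodule differential and the composition terms live --- although your subsequent sentences effectively retract it by re-identifying those strata with quilted-strip degenerations; if you insist on organizing the count via $\overline{Q(3,\mathbf{r})}$ you must say explicitly that these are the \emph{same} strata, so they are neither discarded nor double-counted. Relatedly, bubbling on $l_1$ or $l_3$ does not reduce the number of functor inputs: it collapses module inputs of the pre-morphism, contributing to $\mathbf{M}^d\circ\hat{\mu}_{\mathbf{M}(X_0)}$ and $\mu_{\mathbf{M}(X_d)}\circ\hat{\mathbf{M}}^d$ rather than to a term of the form $\mathbf{M}^{d-s+1}(\ldots,\mu^s_{\w},\ldots)$, and in the unfolded picture these are the ghost-embedding strata (\ref{bubblingoutofPright}), (\ref{bubblingoutofPend}) (operations $\mu^{k'}_\w$ on one factor), not the semi-stable embeddings of Proposition \ref{yonedaquilt}. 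With these corrections your argument coincides with the paper's proof.
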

\begin{proof}
We need to verify the $\ainf$ functor equation, which (as $\w\bimod\w$ is a dg category),
takes the form: 
\begin{equation}
    \mu^1_{\w\!-\!\w} \circ \mathbf{M}^d + \sum_{i_1+i_2=d}  (\mathbf{M}^{i_1} ``\circ" \mathbf{M}^{i_2}) = \mathbf{M} \circ \hat{\mu}_{\w}.
\end{equation}
We examine the boundary strata $_{\mf{S}(\hat{\vec{L}}),\mf{T}}
\mc{R}_{r+d+1,d + r_3+1}$ computed in Proposition
\ref{sequentialpointidentifications}, for $\mf{T}= \{(1,1), \ldots, (d+1,
d+1)\}$.  The first term, $\mu^1_{\w\bimod\w} (\mathbf{M}^d) = \mu_{\mathbf{M}(X_d)} \circ \hat{\mathbf{M}}^d \mp \mathbf{M}^d \circ \hat{\mu}_{\mathbf{M}(X_0)}$, matches up exactly with the strata 
(\ref{bubblingoutofPright}), (\ref{bubblingoutofPend}), and
(\ref{bubblingoverlapsP}) (in the case that one of $\mc{P}$' or $\mc{P}''$ has
size $d+1$). The cases of (\ref{bubblingoverlapsP}) in which neither $\mc{P}'$
or $\mc{P}''$ are maximal give exactly $(\mathbf{M}^{i_1} ``\circ"
\mathbf{M}^{i_2})$, and $\mathbf{M} \circ \hat{\mu_{\w}}$ is given by
(\ref{bubblinginP}). Finally, there is strip-breaking of the geometric moduli
spaces, giving the $\mu^1_{\w}$ portions of the equations, and the remaining
boundary strata (\ref{bubblingoutofP1}) - (\ref{bubblingLR}) vanish by
Proposition \ref{extraauto}. Once more, details on how to fill in the sign
verification are discussed in Appendix \ref{orientationsection}.  
\end{proof}

\subsection{Relation to existing maps}
We observe that the functor $\mathbf{M}$ geometrically packages together a
number of existing algebraic and geometric maps that have been discussed. This
is a bimodule variant of observations made by Abouzaid-Smith
\cite{Abouzaid:2010vn}.

First, examine the functor $\mathbf{M}$ on split Lagrangians.  
\begin{prop}\label{quiltyonedabimodule}
    The bimodule $\mathbf{M}(L_i \times L_j)$ is exactly the tensor product of
    Yoneda modules 
    \begin{equation}
        \mc{Y}^l_{L_i} \otimes_k \mc{Y}^r_{L_j}.
    \end{equation}
\end{prop}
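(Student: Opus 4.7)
My plan is to prove this by direct comparison, checking that the underlying graded vector spaces, the differentials, and each higher bimodule structure map agree on both sides. The first two checks are essentially bookkeeping, while the verification of higher structure maps reduces to applying propositions already established about operations on pairs of discs.

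First I would unwind the definitions on underlying graded vector spaces. By the prescription for morphism complexes in $\w^2$ given in (\ref{homgenerators}), we have $\mathbf{M}(L_i \times L_j)(A,B) = \chi_{M^2}(A \times B, L_i \times L_j) = \chi(L_i,A) \times \chi(B,L_j)$, which agrees on the nose with $(\mc{Y}^l_{L_i} \otimes_\K \mc{Y}^r_{L_j})(A,B) = \hom_\w(L_i,A) \otimes \hom_\w(B,L_j)$. The gradings match by Proposition \ref{gradings}. The differential $\mu^{0|1|0}_\mathbf{M}$ is by construction the Floer differential $\mu^1_{\w^2}$ on $\hom_{\w^2}(A\times B, L_i \times L_j)$, which by the K\"{u}nneth-type identification of Lemma \ref{kunnethsplit} splits as $\delta_1 \otimes 1 + 1 \otimes \delta_2$, precisely the differential on the Yoneda tensor product.

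Next I would identify the relevant moduli space controlling the higher bimodule structure maps. By construction, $\mu^{r|1|s}_\mathbf{M}$ for the label sequence $\hat{\vec{L}} = ((A_0,\ldots,A_r), (L_i \times L_j), (B_0,\ldots,B_s))$ is controlled by $\mathbf{\Psi}_{\hat{\vec{L}}}(Q(3,(r,0,s)))$. Since the middle label is $L_i \times L_j$ and not $\Delta$, the boundary identification $\mf{S}(\hat{\vec{L}})$ from (\ref{unfoldingquiltdefinition}) is empty, and the point identification $\mf{T} = \{(1,1),\ldots,(r_2+1,r_2+1)\}$ reduces to the single coincidence $\mf{T}=\{(1,1)\}$. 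Thus the moduli space is simply $_{\emptyset,\{(1,1)\}}\mc{R}_{r+1,s+1}$, i.e.\ pairs of discs whose outputs and adjacent first inputs are forced to coincide.

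This is precisely the submanifold $\mc{Q}_2^{r+1,s+1}$ of Proposition \ref{twocoincide}, so the case analysis becomes immediate. When both $r \geq 1$ and $s \geq 1$, both factors are stable with $k,l \geq 2$, and Proposition \ref{twocoincide} gives $\mu^{r|1|s}_\mathbf{M}=0$, matching the vanishing of $\mu^{r|1|s}_{\mc{Y}^l \otimes_\K \mc{Y}^r}$ when $r,s>0$. When $s=0$ and $r \geq 1$, the right factor is semi-stable and Proposition \ref{twocoincide} reduces us to Proposition \ref{yonedaquilt}, giving $(\mu^{r+1})^{op}\otimes \mathrm{id}$, which acts nontrivially only on the left tensor factor as the left Yoneda module structure. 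The symmetric case $r=0$, $s \geq 1$ gives $\mathrm{id}\otimes \mu^{s+1}$, the right Yoneda module structure. The differential case $r=s=0$ was handled above.

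The main obstacle, I expect, is not the structural argument but keeping track of sign conventions: the sign twisting datum $\vec{t}=(1,\ldots,s,s,s+1,\ldots,s+r)$ used in defining $\mu^{r|1|s}_\mathbf{M}$, together with the orientation reversal on the first factor of $M^-\times M$, must align with the Koszul signs in the definition of the product bimodule $\mc{Y}^l_{L_i} \otimes_\K \mc{Y}^r_{L_j}$ (cf.\ the definition of $\mu^{r|1|s}_{\mc{N}\otimes_\K \mc{M}}$ in Section 2.5). Verifying this amounts to a tedious but routine sign calculation in the spirit of Appendix \ref{orientationsection}; a careful accounting of which boundary of the unfolded disc becomes the ``left'' and which becomes the ``right'' of the Yoneda tensor product, combined with the reversed counterclockwise ordering of inputs on $-S_1$, yields the required identification on the nose rather than merely up to quasi-isomorphism.
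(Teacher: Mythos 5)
Your proof is correct and follows essentially the same route as the paper: identify the chain complexes directly, kill the $\mu^{r|1|s}$ with $r,s>0$ via the relative-automorphism/fiber-dimension argument for pairs of discs, and reduce the one-sided maps to Proposition \ref{yonedaquilt}. The only cosmetic difference is that you route the vanishing through Proposition \ref{twocoincide} (after identifying the controlling space as the one-coincidence locus $\mc{Q}_2^{r+1,s+1}$), while the paper cites Proposition \ref{extraauto} directly; both rest on the same observation that the Floer data is pulled back along a projection with positive-dimensional fibers.
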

\begin{proof}
    For objects $(A,B)$ in $\w$, $\mathbf{M}(L_i \times L_j)(A,B)$ and
    $\mc{Y}^l_{L_i}(A) \otimes \mc{Y}^r_{L_j}(B)$ are identical as chain
    complexes. The bimodule maps
    $\mu^{r|1|s}_{\mathbf{M}(L_i \times L_j)}$ are zero if $r,s > 0$, by
    Proposition \ref{extraauto}. If $r = 0$ or $s=0$, by Proposition
    \ref{yonedaquilt}, the operations are: \begin{equation}
        \begin{split}
        \mu^{0|1|s}_{\mathbf{M}(L_i \times L_j)} &= id \otimes \mu^s \\
        \mu^{r|1|0}_{\mathbf{M}(L_i \times L_j)} &= \mu^r \otimes id, 
    \end{split}
    \end{equation}
    concluding the proof.
\end{proof}

\begin{prop}\label{quiltkunneth}
    $\mathbf{M}$ is full and faithful on the subcategory generated by objects
    of the form $L_i\times L_j$.  
\end{prop}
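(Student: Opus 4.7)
The plan is to show that for split Lagrangians $L_i\times L_j, L_i'\times L_j' \in \mathrm{ob}\,\w^2$, the first-order term
\[
\mathbf{M}^1 : \hom_{\w^2}(L_i\times L_j, L_i'\times L_j') \longrightarrow \hom_{\w\!-\!\w}\bigl(\mc{Y}^l_{L_i}\otimes_\K \mc{Y}^r_{L_j},\ \mc{Y}^l_{L_i'}\otimes_\K \mc{Y}^r_{L_j'}\bigr)
\]
is a quasi-isomorphism; here I have used Proposition \ref{quiltyonedabimodule} to identify the target bimodule structure. Both sides split via K\"unneth: by Lemma \ref{kunnethsplit}, the source decomposes as $CW^*(L_i',L_i)\otimes CW^*(L_j,L_j')$, and by Proposition \ref{bimodulekunneth} the target is quasi-isomorphic to $\hom_\w(L_i',L_i)\otimes \hom_\w(L_j,L_j')$. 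Thus both sides have the same cohomology, and the content of the proposition is to verify that $\mathbf{M}^1$ realizes this matching.

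The key step is to compute $\mathbf{M}^1$ on a pure tensor $x_1\otimes x_2$ and identify it geometrically with the tensor product of first-order Yoneda embeddings. Given objects $(A_0,\ldots,A_r,B_0,\ldots,B_s)$ in $\w$, the bimodule morphism $\mathbf{M}^1(x_1\otimes x_2)^{r|1|s}$ is controlled by the unfolded moduli space $\mathbf{\Psi}_{\hat{\vec{L}}}(Q(3,(r,1,s)))$. Since both labels on the middle seam are split, the induced boundary identification is $\mf{S}(\hat{\vec{L}}) = \emptyset$, so by (\ref{unfoldingquiltdefinition}) the unfolding is ${_{\emptyset,\mf{T}}}\mc{R}_{r+2,s+2}$ with $\mf{T}=\{(1,1),(2,2)\}$---precisely the submanifold studied in Proposition \ref{threecoincide}. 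Applying that proposition, the resulting operation factors (up to the sign twisting datum) as a pure tensor
\[
\pm\,\mu^{r+2}_\w(a_r,\ldots,a_1,m_1,x_1)\ \otimes\ \mu^{s+2}_\w(x_2,m_2,b_1,\ldots,b_s),
\]
where $\mathbf{m}=(m_1,m_2)\in CW^*(L_i,A_0)\otimes CW^*(B_0,L_j)$. Comparing with (\ref{leftyonedaphiexplanation}) and the analogous formula for the right Yoneda embedding, these two factors are respectively $(\mathbf{Y}_L^1(x_1))^{r|1}(a_r,\ldots,a_1,m_1)$ and $(\mathbf{Y}_R^1(x_2))^{1|s}(m_2,b_1,\ldots,b_s)$.

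Consequently, $\mathbf{M}^1$ agrees up to signs with the composition
\[
CW^*(L_i',L_i)\otimes CW^*(L_j,L_j') \xrightarrow{\mathbf{Y}_L^1\otimes \mathbf{Y}_R^1}
\hom_{\w\mod}(\mc{Y}^l_{L_i},\mc{Y}^l_{L_i'})\otimes \hom_{\rmod\w}(\mc{Y}^r_{L_j},\mc{Y}^r_{L_j'}) \hookrightarrow \hom_{\w\!-\!\w}\bigl(\mc{Y}^l_{L_i}\otimes \mc{Y}^r_{L_j},\,\mc{Y}^l_{L_i'}\otimes \mc{Y}^r_{L_j'}\bigr).
\]
The first arrow is the tensor product of two quasi-isomorphisms by Proposition \ref{moduleyoneda} (Yoneda) and the second is a quasi-isomorphism by the K\"unneth formula for bimodules (Proposition \ref{bimodulekunneth}), so the composition is a quasi-isomorphism. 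This will establish that $\mathbf{M}$ is quasi-full and faithful on the full subcategory spanned by split Lagrangians. The main obstacle is the sign verification, requiring careful bookkeeping of the twisting datum through the unfolding embedding $\mathbf{\Psi}$ and the shift conventions of Proposition \ref{gradings}; a secondary care-point is to ensure that when $r=0$ or $s=0$ the boundary contributions from the compactification are accounted for (via Propositions \ref{yonedaquilt} and \ref{extraauto}), so no extraneous operations pollute the pure-tensor formula.
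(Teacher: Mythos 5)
Your proposal is correct and follows essentially the same route as the paper: unfold the $Q(3,(r,1,s))$ quilted strips to the pair-of-discs locus ${_{\emptyset,\{(1,1),(2,2)\}}}\mc{R}_{r+2,s+2}$, invoke Proposition \ref{threecoincide} to identify the operation with $\mu^{r+2}\otimes\mu^{s+2}$, recognize this as $(\mathbf{Y}_L)^1\otimes(\mathbf{Y}_R)^1$ followed by the inclusion of Proposition \ref{bimodulekunneth}, and conclude fullness from the Yoneda lemma and the bimodule K\"unneth formula. The additional remarks about Lemma \ref{kunnethsplit} and the degenerate $r=0$ or $s=0$ cases are consistent with, though not emphasized in, the paper's argument.
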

\begin{proof}
    The first-order map
    \begin{equation}
        \mathbf{M}^1: \hom_{\w^2}(L_i \times L_j,L_i'\times L_j') \ra
        \hom_{\w\bimod\w}(\mathbf{M}(L_i \times L_j),\mathbf{M}(L_i' \times
        L_j')).
    \end{equation}
    is the operation $(\mathbf{M}^1(\alpha \otimes \beta))^{r|1|s}$ controlled
    by the embeddings
    \begin{equation}
        \mathbf{\Psi}_{(\vec{L}^1,\vec{L}^2,\vec{L}^3)}(Q(3,(r,1,s))) \subset (\mc{R}_{r+2,s+2})_{\hat{L}'}
    \end{equation}
    where $\vec{L}^2 = (L_i \times L_j, L_i' \times L_j')$. On the level of
    unlabeled surfaces, this map takes a 3-quilted line with one marked point
    on the interior line and associates a pair of discs $S_1$, $S_2$, with $r +
    2$ and $s+2$ positive marked points respectively, such that 3 of the marked
    points of $S_1$ (corresponding to $\pm \infty$ and the marked point on the
    interior line) are coincident with 3 of the marked points of $S_2$. 
    By Proposition \ref{threecoincide}, the corresponding operation is
    $\mu^{r+2} \otimes \mu^{s+2}$.
    
    This implies that $\mathbf{M}^1$
    is exactly the first order Yoneda map, followed by the inclusion in
    Proposition \ref{bimodulekunneth}: 
    \begin{equation}
        \begin{split}
        CW^*(L_i, L_i') \otimes CW^*(L_j',L_j) &\stackrel{(\mathbf{Y}^l)^1 \otimes 
        (\mathbf{Y}^r)^1}{\longrightarrow} \hom_{\w\mod}(\mc{Y}^l_{L_i},\mc{Y}^l_{L_i'}) \otimes 
        \hom_{\rmod\w}(\mc{Y}^r_{L_j},\mc{Y}^r_{L_j'}) \\
        &\longhookrightarrow \hom_{\w\bimod\w}(\mc{Y}^l_{L_i} \otimes 
        \mc{Y}^r_{L_j},\mc{Y}^l_{L_i'} \otimes \mc{Y}^r_{L_j'})
    \end{split}
    \end{equation}
     Fullness follows immediately from the fullness of the Yoneda embedding and
     Proposition \ref{bimodulekunneth}.
\end{proof}
Proposition \ref{quiltkunneth} may be regarded as an $\ainf$ version of the
K\"{u}nneth decomposition for Floer homology.  Now, we examine $\mathbf{M}$ and
$\mathbf{M}^1$ for the remaining
object of $\w^2$: $\Delta$.  
\begin{prop}\label{quiltdelta}
    $\mathbf{M}(\Delta)$ is the diagonal bimodule $\w_{\Delta}$.
\end{prop}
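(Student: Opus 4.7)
The plan is to show that the bimodule $\mathbf{M}(\Delta)$ and the diagonal bimodule $\w_\Delta$ agree on the nose as $\ainf$ bimodules, by tracing through the unfolding construction $\mathbf{\Psi}$ and exhibiting the resulting moduli spaces as associahedra in $M$.

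First I will identify the underlying chain complexes. For objects $A, B \in \ob\w$, we have by definition and by (\ref{homgenerators}) that
\[
\mathbf{M}(\Delta)(A,B) = \chi_{M^2}(A\times B,\Delta),
\]
and by Proposition \ref{categoricalproductidentity} this is canonically identified as a graded vector space with $CW^*(B,A) = \hom_\w(B,A) = \w_\Delta(A,B)$. The differential $\mu^{0|1|0}_{\mathbf{M}(\Delta)}$ is by definition $\mu^1_{\w^2}$ on $\hom_{\w^2}(A\times B,\Delta)$, which under the same proposition coincides with the wrapped Floer differential $\mu^1_\w$ used in $\w_\Delta$.

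Next, I will analyze the higher bimodule structure maps $\mu^{r|1|s}_{\mathbf{M}(\Delta)}$, which are controlled by the unfolded moduli spaces $\mathbf{\Psi}_{\hat{\vec{L}}}(Q(3,(r,0,s)))$ with $\hat{\vec{L}} = ((A_0,\dots,A_r),(\Delta),(B_0,\dots,B_s))$. Since $r_2 = 0$ and the unique middle boundary component is labeled $\Delta$, we have $\mf{S}(\hat{\vec{L}}) = \{(1,1)\}$ and $\mf{T} = \{(1,1)\}$, so the image lies in $_{\mf{S},\mf{T}}\overline{\mc{R}}_{r+1,s+1}$. A dimension count gives $\dim Q(3,(r,0,s)) = r+s-1 = \dim{_{\mf{S},\mf{T}}}\overline{\mc{R}}_{r+1,s+1}$, so $\mathbf{\Psi}$ embeds $Q(3,(r,0,s))$ onto a full-dimensional component of the unfolded space. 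Now applying the gluing $\pi_\mf{S}$ and invoking Proposition \ref{mufromglue} (the case of a single boundary identification adjacent to the outgoing marked points), the composition $\pi_\mf{S} \circ \mathbf{\Psi}$ gives an isomorphism onto the Stasheff associahedron $\mc{R}^{r+s+1}$.

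The main obstacle will be to verify that under this identification of moduli spaces, and the consistent choice of Floer data we have made for glued pairs of discs (which by (\ref{projection}) descends from Floer data on the glued open-closed string), the counted pseudoholomorphic curves match up. Since $\pi_\mf{S}$ is an isomorphism of moduli spaces and our Floer data on $_{\mf{S},\mf{T}}\overline{\mc{R}}_{r+1,s+1}$ was pulled back from Floer data on $\pi_\mf{S}({_{\mf{S},\mf{T}}}\overline{\mc{R}}_{r+1,s+1}) \cong \mc{R}^{r+s+1}$, the Floer moduli spaces, and thus their counts, are in canonical bijection with those defining $\mu^{r+s+1}_\w$. This yields $\mu^{r|1|s}_{\mathbf{M}(\Delta)} = \pm\,\mu^{r+s+1}_\w$ on generators, which is precisely the bimodule structure map $\mu^{r|1|s}_{\w_\Delta}$.

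Finally, I will match signs. The sign twisting datum used in defining $\mu^{r|1|s}_{\mathbf{M}(\Delta)}$ is $(1,\dots,s,s,s+1,\dots,s+r)$ on the reverse-ordered inputs; after tracing this through the orientation of $\mc{R}^{r+s+1}$ (cf.\ Appendix \ref{orientationsection}), it reproduces the $(-1)^{\maltese_{-s}^{-1}+1}$ sign appearing in the definition of $\mu^{r|1|s}_{\w_\Delta}$. Combining the chain-level identification of complexes, the matching of operations, and the sign verification gives the desired equality $\mathbf{M}(\Delta) = \w_\Delta$ of $\ainf$ bimodules.
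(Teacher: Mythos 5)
Your proof is correct and follows essentially the same route as the paper's: unfolding $Q(3,(r,0,s))$ with middle label $\Delta$, gluing along the $\Delta$-labeled boundary to land in the associahedron $\mc{R}^{r+1+s}$ with the distinguished input playing the role of the bimodule element, and recognizing the resulting operations as $\mu^{r+1+s}_{\w}$, i.e.\ the diagonal bimodule structure maps. The paper's proof is just a terser version of this; your added details (the chain-level identification via Proposition \ref{categoricalproductidentity}, the dimension count, and the appeal to the single-boundary-gluing computation of Proposition \ref{mufromglue}) are consistent with it.
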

\begin{proof}
    Consider the unfolding map $\mathbf{\Psi}$ when the middle strip is
    labeled $\Delta$. The space of quilted strips $Q(3,(r_1,0,r_2))$ is sent to
    the associahedron $\mc{R}^{r_1 + 1 + r_2}$ with a distinguished input
    marked point corresponding to the intersection point at $+\infty$ in the
    quilt. These are exactly the structure maps corresponding to the diagonal
    bimodule.
\end{proof}
\begin{prop}\label{m1equalsco}
    There is a chain-level identification between
    \begin{equation}
        \mathbf{M}^1: \hom_{\w^2}(\Delta, \Delta) \lra \hom_{\w\bimod\w}(\mathbf{M}(\Delta),\mathbf{M}(\Delta))
    \end{equation}
    and 
    \begin{equation}
        {_2}\co : SH^*(M) \lra \hom_{\w\bimod\w}(\w_{\Delta},\w_{\Delta}).
    \end{equation}
\end{prop}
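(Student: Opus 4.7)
The plan is to show that the two maps are literally the same chain-level operation once one makes the canonical identifications of their source and target complexes. For the source, Lemma \ref{shdiag} provides the identification of chain complexes $\hom_{\w^2}(\Delta,\Delta) = CH^*(M)$, which is exactly the source of ${_2}\co$. For the target, Proposition \ref{quiltdelta} identifies $\mathbf{M}(\Delta) = \w_\Delta$, so both maps land in $\hom_{\w\bimod\w}(\w_\Delta,\w_\Delta)$. Thus one only needs to compare, for each $r,s \geq 0$, the $(r|1|s)$-component of each map.

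Now I would unravel the definitions. The $(r|1|s)$-component of $\mathbf{M}^1(y)$ is controlled by the labeled moduli space $\mathbf{\Psi}_{\hat{\vec{L}}}(Q(3,(r,1,s)))$, where the middle line of the quilted strip is labeled $(\Delta,\Delta)$ separated by the marked point asymptotic to $y$. Tracing through the unfolding construction in (\ref{unfoldingquiltdefinition}), with $r_2 = 1$ and both middle labels equal to $\Delta$, the associated boundary identification is $\mf{S}(\hat{\vec{L}}) = \{(1,1),(2,2)\}$ (the two boundary components adjacent to $\Delta$ get identified) and the point identification is $\mf{T} = \{(1,1),(2,2)\}$. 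Hence the unfolded space is precisely ${_{\mf{S},\mf{T}}}\mc{R}_{r+2,s+2}$. But by Proposition \ref{cofromglue}, the gluing map $\pi_{\mf{S}}$ provides a canonical isomorphism of this space with $\mc{R}^{1,1}_{r,s}$, the moduli space controlling ${_2}\co_{r,s}$. Moreover, by the consistency condition (\ref{projection}) imposed on our universal Floer data for glued pairs of discs — namely that the Floer datum depends only on the open-closed string $\pi_{\mf{S}}(P)$ and agrees with the pre-chosen Floer datum on that open-closed string — the Floer-theoretic operations associated to these two moduli spaces coincide before signs.

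It remains to reconcile the two sign-twisting data. The datum for $\mathbf{M}^1$ specializes to a twist of the form $(1,1,2,\ldots,s,s,s+1,\ldots,s+r)$ on the ordered inputs $(y,b_s,\ldots,b_1,\mathbf{b},a_1,\ldots,a_r)$, while the datum $\vec{t}_{_2\co_{r,s}} = (-1,0,1,\ldots,r-1,r+1,\ldots,r+s+1)$ is placed on the reverse ordering $(y,x_1,\ldots,x_r,x_{fixed},x_1',\ldots,x_s')$. After the variable relabeling induced by the gluing identification of marked points, I expect the two twists to differ by a sum of reduced degrees that is absorbed into the overall grading convention of Proposition \ref{gradings} (where the identification of generators on the diagonal carries a shift by $n$). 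The hardest and most tedious part of the argument will be this sign verification: one must carry out the explicit matching of ordered inputs under the gluing $\pi_{\mf{S}}$, keep track of how the sign twisting datum on $\mathbf{M}^d$ was imposed on the reverse ordering of the bimodule inputs versus how ${_2}\co$ numbers inputs in a clockwise fashion around a disc, and confirm that the residual discrepancy is the one built into the Hochschild cochain grading convention and the orientation conventions of Appendix \ref{orientationsection}. Everything other than this bookkeeping is a direct consequence of the identifications of moduli spaces and Floer data above.
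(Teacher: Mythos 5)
Your argument is the same as the paper's: both proceed by unfolding the $Q(3,(r,1,s))$ quilted strip with middle labels $(\Delta,\Delta)$ to obtain the boundary identification $\mf{S}=\{(1,1),(2,2)\}$ on $\mc{R}_{r+2,s+2}$, and then invoke Proposition~\ref{cofromglue} to identify this with the moduli space $\mc{R}^{1,1}_{r,s}$ controlling ${_2}\co_{r,s}$, together with the Floer-data compatibility condition. You flesh out the source/target identifications (Lemma~\ref{unfolding}, Proposition~\ref{quiltdelta}) and the sign-twisting bookkeeping more explicitly than the paper, which omits the sign verification, but the substance is identical.
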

\begin{proof}
    In this case, the relevant space of quilted strips is $Q(3,(r_1,1,r_2))$
    with middle strip Lagrangian labels both $\Delta$.  The relevant boundary
    identification datum is $\mf{S} = \{(1,1),(2,2)\}$. Proposition
    \ref{cofromglue} shows that the operation corresponding to the moduli space
    $_{\mf{S},p(\mf{S})} \overline{\mc{R}}_{k,l}$ is exactly ${_2}\co^{r_1,r_2}$.
    See also Figure \ref{shhhquilts}.
\end{proof}

\begin{figure}[h] 
\caption{The unfolding of $\mathbf{M}^1: \hom_{\w^2}(\Delta, \Delta) \ra
\hom_{\w\bimod\w}(\mathbf{M}(\Delta),\mathbf{M}(\Delta))$ to give the glued pair
of discs corresponding to ${_2}\co$. \label{shhhquilts}} \centering
    \includegraphics[scale=0.7]{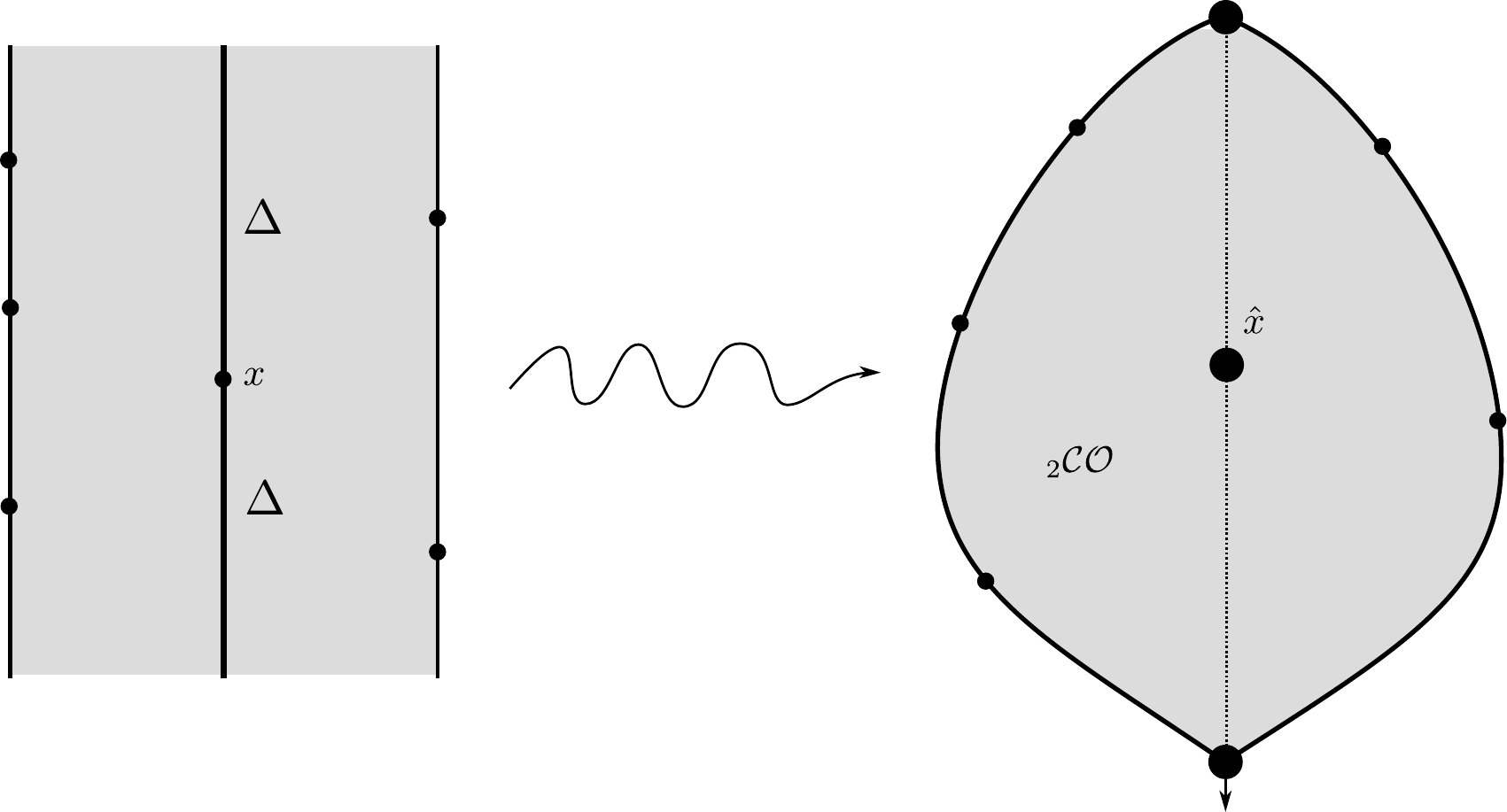}
\end{figure}
Thus, as Proposition \ref{twopointhomotopy1} implies that ${_2\co}$ is homotopic
to $\co$, we see that an isomorphism 
\begin{equation}
    SH^*(M) \stackrel{\co}{\lra} \r{HH}^*(\w,\w)
\end{equation}
is implied by the statement that $\mathbf{M}$ is full on $\Delta$. In turn,
this statement may be reduced by the following proposition to showing that in
the category $\w^2$, $\Delta$ is split-generated by objects of the form $L_i
\times L_j$.

\section{Forgotten points and homotopy units}\label{homotopyunits}
In this section, we introduce an important technical tool used in our result:
homotopy units for glued pairs of discs. We can motivate the need and/or
application of such a tool as follows:

Suppose for a moment that we are in an idealized setting of Lagrangian Floer
theory for a single Lagrangian $L \subset M$, in which we may ignore all issues
of perturbations, transversality of moduli spaces, and obstructedness of Floer
groups.  Let us also for a moment reason using the conceptually intuitive
singular chain variant of Floer theory as developed by \cite{Fukaya:2009ve}. In
this framework, generators of the Floer chain complex $CF(L,L)$ are given by
{\it equivalence classes} of geometric (singular) cycles in $L$. Given cycles $\mf{b}_1,
\ldots, \mf{b}_k$, we 
define the $\ainf$ structure map $\mu^k$ to be:
\begin{equation} \label{evaluation}
    \mu^k(\mf{b}_k, \ldots, \mf{b}_1) := (ev_0)_* [\mathcal{M}^k (\mf{b}_k, \ldots, \mf{b}_1)]
\end{equation}
Here $[\mathcal{M}^k(\mf{b}_k, \ldots, \mf{b}_1)]$ is a ``virtual fundamental chain'' for
the moduli space of holomorphic maps 
\[
u: (D, \bd D, z_1^+, \ldots, z_k^+,z_0^-) \ra (M,L,\mf{b}_1, \ldots, \mf{b}_k,\cdot)
\]
with positive boundary marked points $z_i^+$ constrained to lie on the cycles
$\mf{b}_i$, and negative boundary marked point $z_0$ unconstrained. The
notation from (\ref{evaluation}) simply means that we take as result the cycle
``swept out'' by the marked point $z_0^-$ in this moduli space. 

In this (unfortunately imaginary) setting, there is a canonical choice of {\it
strict unit} for the $\ainf$ algebra $CF(L,L)$: the fundamental class $[L]$.
This cycle satisfies the fundamental property that for $u: (D, \partial D) \ra (M,L)$, 
{\it the condition that $z_i \in \partial D$ lies on the cycle $[L]$ is an empty constraint}.

Let us very informally show that this property gives $[L]$ the structure of a
strict unit. First, work in the stable range $d \geq 3$. There is a projection
map
\begin{equation}
    \pi_j: \mc{R}^d \ra \mc{R}^{d-1},
\end{equation}
forgetting the $j$th marked point. In the above setting, $\pi_j$ extends to a
map between moduli spaces of stable maps:
\begin{equation}
    (\pi_j)_*: \mathcal{M}^d(\mf{b}_1, \ldots, \mf{b}_k) \ra
    \mathcal{M}^{d-1}(\mf{b}_1, \ldots, \mf{b}_{j-1}, \mf{b}_{j+1}, \ldots,
    \mf{b}_k) \end{equation}
Suppose $\mf{b}_j = [L]$, an empty constraint on the marked point $z_j$. This
implies that $(\pi_j)_*$ is a submersion with one-dimensional fibers,
corresponding to the location of the $j$th marked point. In particular, 
\[
\dim \mathcal{M}^d(\mf{b}_1, \ldots, \mf{b}_k) = \dim
\mathcal{M}^{d-1}(\mf{b}_1, \ldots, \mf{b}_{j-1}, \ldots, \mf{b}_{j+1}, \ldots,
\mf{b}_k) + 1,
\] 
which implies that $(ev_0)_* \mathcal{M}^d(\mf{b}_1, \ldots, \mf{b}_k)$ is a
degenerate chain and thus zero on homology. Hence
\begin{equation}\label{fooounit}
\mu^d(\ldots, [L], \ldots) = 0.
\end{equation}
When $d=2$, we leave it as a pictorial exercise to the interested reader to
``prove'' that 
\[
\mu^2([L],x) = \pm \mu^2(x,[L]) = \pm x.
\]
Even in this setting, there are a number of issues:
\begin{itemize}
    \item in order to obtain transversality, one needs to coherently perturb
        the holomorphic curve equations in a domain-dependent manner and there
        is no known way to make the forgetful map compatible with these
        perturbations. These perturbations occur in the setting of {\it
        Kuranishi structures}, making them even less likely to be compatible
        with the forgetful map.
    \item strictly speaking, this moral argument only
        proved the equality
        (\ref{fooounit}) modulo degenerate chains. To move to an $\ainf$
        structure on $H^*(L)$, a host of additional arguments are required,
        including homological perturbation theory. The pay-off is that after
        some additional work one obtains a strictly unital structure on
        $H^*(L)$.
\end{itemize}
The reader is referred to \cite{Fukaya:2009qf}*{Ch. 7, \S 31} for details
on Fukaya-Oh-Ohta-Ono's approach to these problems.  

In our setting, generators are time-1 Hamiltonian chords, so we have an
additional issue:
\begin{itemize}
    \item even if transversality were not an issue, we have no time-1 chord(s)
        $x$ with the property that imposing an asymptotic condition to $x$ is a
        forgetful map.  
\end{itemize}
The remedy that seems to have been used in the literature most is this:
construct a homology level unit geometrically, and then apply algebraic results
of Seidel to obtain a quasi-isomorphic $\ainf$ algebra that is strictly unital. 

However, we are in a setting where we do not just care about algebraic
properties of strictly unital $\ainf$ categories. We would like to
carefully analyze certain operations on $\w^2$ controlled by forgetful maps
applied to submanifolds of moduli spaces of open-closed surfaces and pairs of
discs. To be able to use such operations in $\w^2$, we will need them to be
homotopic to existing operations.

If the reader wished to skip most of this section, the eventual
punchline is this: {\it Given some operations controlled by a submanifold
$\mc{Q}$ of open-closed strings, the construction of homotopy units gives us a
quasi-isomorphic category with additional elements $e^+_j \in \chi(L_j,L_j)$
such that the operation $\mc{Q}(\cdots e^+_j \cdots)$ is controlled by the
submanifold $\pi_j(\mc{Q})$.}

\subsection{Forgotten marked points}\label{forgottensubsection}
We begin with a notion of what it means to have forgotten a boundary marked
point in Floer-theoretic operations. Since the construction is identical for
discs and pairs of glued discs, we initiate them in parallel. Strictly
speaking, we do not need the single-disc construction in our paper, but it is
no additional work and may be foundationally useful. Also, we only consider
forgotten points on pairs of {\it identical discs modulo simultaneous
automorphism}, the only case that arises for us.
\begin{defn}
    The {\bf moduli space of discs with $d$ marked points and $F
    \subseteq \{1, \ldots, d\}$ forgotten marked points}, denoted 
    \begin{equation}
        \mc{R}^{d,F}
    \end{equation}
    is exactly the moduli space of discs $\mc{R}^{d}$ with 
    marked points labeled as belonging to $F$. 
\end{defn}
\begin{defn}
    The {\bf moduli space of $\mf{S}$-glued pairs of discs} with {\bf $(k,l)$ marked points}, {\bf $\mf{T}$ point identifications,} and
    \begin{equation} 
        (F_1, F_2) \subseteq (\{1, \ldots, k\}, \{1, \ldots, l\}),
    \end{equation}
    {\bf forgotten points}, denoted 
    \begin{equation}
        _{\mf{S},\mf{T}}\mc{R}_{k,l}^{(F_1,F_2)}
    \end{equation} 
    is the image of the diagonal associahedron in the moduli space of
    glued pairs of discs $\mc{R}_{k,k,\mf{S}}$ with positive
    marked points on each disc corresponding to $F_1$ and $F_2$ labeled as {\bf
    forgotten points}. Crucially, $F_1$ and $F_2$ must satisfy the following
    conditions: 
    \begin{itemize}
        \item $F_1$ and $F_2$ are subsets of the left and right identified
            points respectively. Namely, 
            \begin{equation}
                F_i \subset \pi_i(\mf{T}), 
            \end{equation} 
            where $\pi_i$ is projection onto the $i$th component. 
        \item $F_1$ and $F_2$ are not associated to a boundary identification, i.e.
            \begin{equation}
                F_i \cap \pi_i(p(\mf{S})) = \emptyset
            \end{equation}
        \item $F_1$ and $F_2$ do not contain both the left and right points of
            any identification, i.e.  \begin{equation}
                (F_1 \times F_2) \cap \mf{T} = \emptyset
            \end{equation}
    \end{itemize}
\end{defn}
\begin{rem}
    Put another way, the conditions $F_1$ and $F_2$ must satisfy correspond to
    the following from the viewpoint of tricolored discs developed in Section
    \ref{pairs}: $F_1$ and $F_2$ correspond to disjoint subsets of the points
    colored $LR$, such that neither $F_1$ or $F_2$ is adjacent to a boundary
    component labeled as identified.
\end{rem}
For the purpose of solving Floer's equations, we will be putting the marked
points labeled by $F$, $F_i$ back in. Such points should be thought of as {\it
markers} rather than punctures.
\begin{defn}
    Let $I \subseteq F$. The {\bf $I$-forgetful map}
    \begin{equation}
        \mc{F}_I: \mc{R}^{d,F} \lra \mc{R}^{d-|I|,F'}
    \end{equation}
    associates to any $S$ the surface obtained by putting the points of $I$
    back in and forgetting them. $F'$ in the equation above is the set of
    forgetful points $F-I$, re-indexed appropriately.  
\end{defn}
There is a similar forgetful map for pairs of glued discs, 
\begin{equation}
    \mc{F}_{I_1,I_2} : _{\mf{S},\mf{T}}\mc{R}_{k,l}^{F_1,F_2} \lra 
    _{\mf{S}',\mf{T}'}\mc{R}_{k-|I_1|,l-|I_2|}^{F_1',F_2'}
\end{equation}

We need a notion that corresponds to stability of the underlying disc once we
have forgotten points.  
\begin{defn}
    A disc with $d$ marked points and $F$ forgotten points is {\bf f-stable} or
    {\bf f-semistable} if $d-|F| \geq 2$ or $d - |F| = 1$ respectively.
A pair of discs with $(k,l)$ marked points and $F_1, F_2$ forgotten points is
{\bf f-stable} if $k-|F_1|$, $l-|F_2|$ are both greater than or equal to 1 and one is
greater than or equal to 2. It is {\bf f-semistable} if both of these quantities are
equal to 1.
\end{defn}

In the f-stable range, there are maximally forgetful maps, collectively denoted
$\mc{F}_{max}$:
\begin{align}
        \mc{F}_{max} = \mc{F}_F : \mc{R}^{d,F} &\lra \mc{R}^{d-|F|} \\
        \mc{F}_{max} = \mc{F}_{F_1,F_2} : {_{\mf{S},\mf{T}}}\mc{R}_{k,l}^{F_1,F_2} &\lra
        _{\mf{S}',\mf{T}'}\mc{R}_{k-|F_1|,l-|F_2|}\
\end{align}

The (Deligne-Mumford) compactifications
\begin{align} 
    \overline{\mc{R}}^{d,F}\\
    _{\mf{S},\mf{T}}\overline{\mc{R}}_{k,l}^{F_1,F_2} 
\end{align}
are exactly the usual Deligne-Mumford compactifications, along with the data of
{\it forgotten} labels for the relevant boundary marked points.
Interior positive nodes inherit the label of {\it forgotten} in the following
fashion: 
\begin{defn}
    An interior positive node of a stable representative $S$ of a disc
     or pair of glued discs 
    is said to be a {\bf forgotten node} if and only
    if every boundary marked point in every component above $p$ is a forgotten
    marked point and there are no interior marked points in any component above
    $p$.  
\end{defn}

In the $f$-stable range, stable discs with forgotten marked points have
underlying stable representatives with forgotten points removed.  
\begin{defn}
    A component of a stable representative $S$ of a disc or a pair of glued
    discs is said to be {\bf forgettable} if all of its positive boundary
    marked points (including nodal ones) are forgotten points and it has no
    interior marked points.
\end{defn}
Using the above definitions, one can extend the maximally forgetful map to
compactifications.
\begin{defn}
    Let $S$ be a nodal bordered f-stable surface with forgotten marked
    points. The {\bf associated reduced surface} $\hat{S}$ is the nodal surface
    obtained by
    \begin{itemize}
        \item eliminating all forgettable components
        \item putting back in all forgotten boundary points and forgetting them
        \item if in the $f$-stable range, eliminating any {\it non-main}
            component with only one non-forgotten marked point $p$,  and
            labeling the positive marked point below this component by $p$.
    \end{itemize}
    Define the {\bf induced marked points} of $\hat{S}$ to be the boundary
    marked points that survive this procedure.
\end{defn}
In other words, the nodal surface $\hat{S}$ is obtained from the nodal surface
$S$ by forgetting the points with an $F$ label and then stabilizing the
resulting bubble tree.  
\begin{defn}
    The {\bf maximally forgetful map} $\mc{F}_{max}$, defined for any nodal
    $f$-semistable disc or pair of glued discs is defined to be the map that
    associate to a nodal surface with forgotten marked points $S$ the
    associated reduced surface $\hat{S}$. 
\end{defn}

\begin{defn} \label{floerforgotten}
A {\bf Floer datum} for a stable, $f$-semistable disc or pair of glued discs
with forgotten marked points consists of 
a Floer datum for the associated reduced surface $\hat{S} = \mc{F}_{max}(S)$,
in the sense of Definition \ref{floeropenclosed} or Definition
\ref{floerglued}, satisfying the following conditions:
\begin{itemize}
    \item in the {\bf f-stable range}, it is identical to our previously
chosen Floer datum for $\hat{S}$ thought of as an open-closed string. 
\item in the {\bf f-semistable range}, it is given by the unique
    translation-invariant Floer datum on the strip $\hat{S}$.
\end{itemize}
This implies in particular that the Floer datum only depends on the point
$\mf{F}_{max}(S)$.
\end{defn}

\begin{figure}[h] 
    \caption{Two drawings of a disc with forgotten points (denoted by hollow points). The drawing on the right emphasizes the choice of strip-like ends.\label{forgottenpointex}}
    \centering
    \includegraphics[scale=0.7]{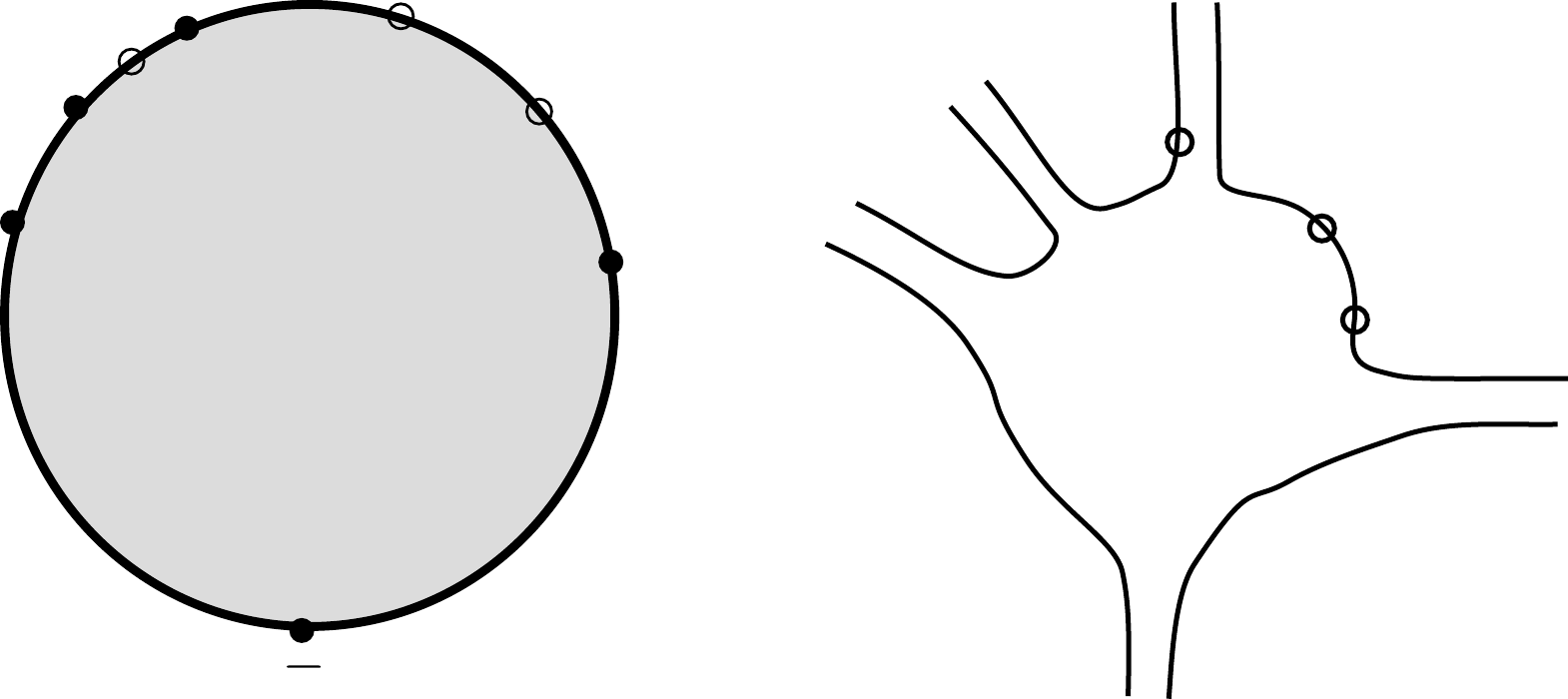}
\end{figure}

\begin{rem} 
    By the above definition, a Floer datum for a pair of discs $P$ with
    $\mf{S}$ boundary identifications, $\mf{T}$ point identifications, and
    $F_1, F_2$ forgotten points is a Floer datum for the open-closed string
    obtained by forgetting the marked points corresponding to $F_1$ and $F_2$,
    stabilizing, and gluing the resulting pairs of discs via $\pi_{\mf{S}}$.
\end{rem}

Because we have chosen our Floer data to be the one we have already chosen for
the underlying reduced open-closed string, we immediately obtain: 
\begin{prop}
    There exists a universal and consistent choice of Floer data for discs or
    pairs of glued discs with forgotten marked points.
\end{prop}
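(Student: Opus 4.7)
The plan is to construct the universal Floer data by pullback along the maximally forgetful map, leveraging the previously fixed universal and consistent choice of Floer data for open-closed strings and glued pairs of discs. Since Definition \ref{floerforgotten} stipulates that the Floer datum on a surface $S$ with forgotten points must be identical to the chosen datum on $\hat{S} := \mc{F}_{max}(S)$, there is in fact no choice to make away from degenerate loci; the content of the proposition is that this prescription is smooth on each moduli space and consistent across strata.

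First I would verify that the map $\mc{F}_{max}$, a priori defined stratum-by-stratum, is continuous and indeed smooth on the interior of each open stratum of $\overline{\mc{R}}^{d,F}$ and $_{\mf{S},\mf{T}}\overline{\mc{R}}_{k,l}^{F_1,F_2}$. In the f-stable range this is immediate: near the interior of the top stratum, $\mc{F}_{max}$ is an iterated version of the standard forgetful map between moduli of pointed discs, which is smooth. For the pair-of-discs case one further uses that $\mc{F}_{max}$ factors through the projection $\pi_{\mf{S}}$ and that forgetting commutes with the chart identifications (\ref{chartidentification}) built into the compactification $\overline{\mc{R}}_{k,l}$. Smoothness of the pulled-back Floer datum then follows from smoothness of our previously fixed universal choice for the reduced surface, since all data (strip/cylinder ends, one-form, Hamiltonian, rescaling function, complex structure, perturbation) are pulled back through a smooth map.

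Next I would verify consistency at the codimension-one boundary, which is the key check. Boundary strata fall into two types: (a) bubbling off of a component containing at least one non-forgotten marked point or interior marked point, and (b) bubbling off of a forgettable component. For type (a), the reduced surface $\hat{S}$ undergoes the corresponding standard nodal degeneration in $\overline{\mc{R}}^{d-|F|}$ or $_{\mf{S}',\mf{T}'}\overline{\mc{R}}_{k-|F_1|,l-|F_2|}$, and the required conformal equivalence and infinite-order agreement with glued Floer data follows directly from the consistency of our previously fixed universal choice on the reduced moduli spaces, together with compatibility of $\mc{F}_{max}$ with the gluing charts. For type (b), the reduced surface $\hat{S}$ is unchanged as we approach the stratum (the forgettable bubble is invisible to $\mc{F}_{max}$), so the pulled-back Floer datum extends continuously and indeed agrees to infinite order with its limit; there is no gluing relation to verify beyond this tautology.

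The main obstacle I anticipate is the f-semistable case together with strata where the combinatorial type of $\hat{S}$ jumps because a non-forgettable component becomes semistable in the limit (e.g.\ two forgotten points collide on a bubble containing a single non-forgotten point, causing a secondary stabilization). One must show that the prescription of Definition \ref{floerforgotten}---namely, use the unique translation-invariant Floer datum on a strip in the f-semistable range---matches the limit of the Floer data pulled back from nearby higher-dimensional strata to infinite order in the gluing parameter. This matching is precisely the statement that the chosen universal data on the lower-dimensional moduli space $\overline{\mc{R}}^{d-|F|}$ or $_{\mf{S}',\mf{T}'}\overline{\mc{R}}_{k-|F_1|,l-|F_2|}$ restricts compatibly to its own semistable breaking strata, which is already incorporated into the consistency condition for that universal choice. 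The remaining bookkeeping is combinatorial, handled by induction on dimension of the moduli space as is standard for constructing consistent Floer data, exactly parallel to the arguments following Definition \ref{floeropenclosed}.
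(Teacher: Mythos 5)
Your proposal is correct and follows essentially the same route as the paper: since Definition \ref{floerforgotten} declares the Floer datum on a surface with forgotten points to be the previously fixed datum on its reduced surface, the paper deduces the proposition immediately from the already-established universal and consistent choice for open-closed strings. Your extra verifications (smoothness of $\mc{F}_{max}$, boundary compatibility, the f-semistable matching) are the details the paper leaves implicit in its one-line justification.
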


\begin{defn}
    An {\bf admissible Lagrangian labeling} for a surface $S$ with forgotten
    marked points is a choice of Lagrangian labeling that descends to a
    well-defined labeling on the associated reduced surface $\mc{F}_{max}(S)$.
    Namely, if $p$ is any forgotten boundary marked point of $S$, then the
    labels before and after $p$ must coincide. The {\bf reduced labeling} is
    the corresponding labeling on the underlying reduced surface.
\end{defn}
Now, suppose we have fixed a universal and consistent choice of Floer data for
discs. Consider a compact submanifold with corners of dimension $d$ 
\begin{equation}
    \overline{\mc{Z}}^d \longhookrightarrow {_{\mf{S},\mf{T}}} \mc{R}_{k,l}^{F_1,F_2}.
\end{equation}
with an admissible Lagrangian labeling $\vec{L}$. In the usual fashion, fix
input and output chords $\vec{x}_{in}, \vec{x}_{out}$ and orbits $\vec{y}_{in}, \vec{y}_{out}$
for the induced marked points of the gluing $\pi_{\mf{S}}(\mc{F}_{max}
(\overline{\mc{Z}}^d))$, which forgets all points labeled as forgotten and glues
along the boundary components $\mf{S}$. Define
\begin{equation}
    \overline{\mc{Z}}^d(\vec{x}_{out},\vec{y}_{out};\vec{x}_{in},\vec{y}_{in})
\end{equation}
to be the space of maps
\begin{equation}
    \{u: \pi_{\mf{S}}(\mc{F}_{max}(S)) \lra M: S \in \overline{\mc{Z}}^d\}
\end{equation}
satisfying Floer's equation with respect to the Floer datum and asymptotic and
boundary conditions specified by the Lagrangian labeling $\vec{L}$ and
asymptotic conditions $(\vec{x}_{out},\vec{y}_{out}, \vec{x}_{in},
\vec{y}_{in})$.

As before, $h(\mf{S},k,l)$ denote the number of boundary components of any
resulting surface obtained from the gluing.
\begin{lem}
    The moduli spaces
    $\overline{\mc{Z}}^d(\vec{x}_{out},\vec{y}_{out};\vec{x}_{in},\vec{y}_{in})$
    are compact, and empty for all but finitely many
    $(\vec{x}_{out},\vec{y}_{out})$ given fixed inputs
    $(\vec{x}_{in},\vec{y}_{in})$. For generically chosen Floer data, they form
    smooth manifolds of dimension
    \begin{equation}
        \begin{split}
    \dim \overline{\mc{Z}}^d(\vec{x}_{out},\vec{y}_{out};\vec{x}_{in},\vec{y}_{in}):=
            \sum_{x_-\in \vec{x}_{out}} \deg(x_-) + &\sum_{y_- \in \vec{y}_{out}}\deg(y_-) \\
            + (2 - h(\mf{S},k,l) - |\vec{x}_{out}|- 2 |\vec{y}_{out}|)n
        &+d -\sum_{x_+\in \vec{x}_{int}} \deg(x_+) - \sum_{y_+ \in \vec{y}_{in}} \deg(y_+).
    \end{split}
\end{equation}
\end{lem}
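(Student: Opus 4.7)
The strategy is to reduce the analysis of $\overline{\mc{Z}}^d(\vec{x}_{out},\vec{y}_{out};\vec{x}_{in},\vec{y}_{in})$ to the already-established analysis for moduli spaces of maps from open-closed strings (Lemma \ref{openclosedcompactdimensiontransversality}) and for glued pairs of discs, exploiting the fact that by Definition \ref{floerforgotten} the Floer datum on $S \in \overline{\mc{Z}}^d$ depends only on the reduced surface $\mc{F}_{max}(S)$, and that any map $u$ from $S$ is by construction a map from $\pi_{\mf{S}}(\mc{F}_{max}(S))$. In particular, solutions to Floer's equation do not see the forgotten marked points at all: they are purely labels on the domain, and the only role they play is contributing additional dimensions to the parameter space $\overline{\mc{Z}}^d$.

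First I would establish compactness and the finiteness of admissible asymptotics. As in Lemma \ref{openclosedcompactdimensiontransversality}, these are consequences of the $a\ priori$ $C^0$-bound of Theorem \ref{c0bounds} (which is needed because we have introduced non-rigid Hamiltonian perturbations on cylinders and the target is non-compact), followed by standard Gromov compactness. The $C^0$-bound applies on each cylinder and strip of the underlying reduced surface exactly as before, since the Floer datum was pulled back from the reduced surface; the forgotten points create no new strip-like or cylindrical regions where the bound would need to be checked. Action/energy bounds on solutions with fixed inputs then bound the finitely many possible outputs.

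Next I would compute the Fredholm index. The key point is that the linearization of Floer's equation at $u\colon \pi_{\mf{S}}(\mc{F}_{max}(S)) \to M$ is independent of the forgotten-point labels; its index is thus that computed for the underlying (reduced) open-closed string glued from $\pi_{\mf{S}}$, as analyzed in Lemma \ref{openclosedcompactdimensiontransversality} in terms of $h(\mf{S},k,l)$ and the degrees of the asymptotic conditions. To this one adds $\dim \overline{\mc{Z}}^d = d$ from the parameter family, producing the stated formula. Note that no correction for $|F_1|+|F_2|$ appears, since the reduced surface only records the non-forgotten marked points, and these are precisely the ones carrying asymptotic conditions in the index formula.

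The main (and indeed only) subtlety is transversality: we need surjectivity of the linearized operator for generic universal and consistent Floer data satisfying the constraint (\ref{projection}) that the datum depends only on $\pi_{\mf{S}} \circ \mc{F}_{max}$. The worry is that this constraint shrinks the space of admissible perturbations. However, the same argument as in Lemma for glued pairs of discs applies: we invoke the Sard--Smale scheme of \cite{Seidel:2008zr}*{\S(9k)} or \cite{Floer:1995fk}, noting that varying the Floer datum on $\pi_{\mf{S}}(\mc{F}_{max}(S))$ is sufficient to achieve surjectivity of the extended linearized operator, since a solution $u$ factors through this reduced surface and the standard argument produces a perturbation supported away from all strip-like and cylindrical ends of the reduced surface. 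Gluing-compatibility across the boundary strata of $\overline{\mc{Z}}^d$ (including strata where forgettable components bubble off or merge) follows from consistency of our chosen universal Floer data, since the assignment $\mc{F}_{max}$ commutes with the relevant degenerations.
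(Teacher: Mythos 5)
Your proposal is correct and follows essentially the same route as the paper, which states this lemma without a separate argument and defers to exactly the ingredients you assemble: the index computation and Sard--Smale transversality of Lemma \ref{openclosedcompactdimensiontransversality}, the observation (as in the glued-pairs-of-discs case) that perturbations constant along the fibers of the reduction/gluing map suffice for transversality, and Theorem \ref{c0bounds} plus Gromov compactness and action bounds for compactness and finiteness of outputs. Your added remarks — that forgotten points only enlarge the parameter space by $d$ and contribute no index correction — are precisely the intended reading.
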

In the usual fashion, when the dimension of the spaces
$\overline{\mc{Z}}^d(\vec{x}_{out},\vec{y}_{out};\vec{x}_{in},\vec{y}_{in})$
are zero, we can use natural isomorphisms of orientation lines to count 
(with signs) the number of points in such spaces, and associate operations
\begin{equation}
    (-1)^{\vec{t}}\mathbf{H}_{\overline{\mc{Z}}^d} 
\end{equation} 
from the tensor product of wrapped Floer complexes and symplectic cochain
complexes where $\vec{x}_{in}$, $\vec{y}_{in}$ reside to the tensor product of
the complexes where $\vec{x}_{out}$, $\vec{y}_{out}$ reside. 
We can specify certain submanifolds of the space of forgotten marked points by
applying forgotten labels to various boundary points on spaces of open-closed
strings.
\begin{defn}
    The {\bf forget map}
    \begin{align}
        \mf{f}_{F}: \mc{R}^d &\lra \mc{R}^{d,F}\\
        \mf{f}_{F_1,F_2}: {_{\mf{S},\mf{T}}} \mc{R}_{k,l} &\lra {_{\mf{S},\mf{T}}} \mc{R}_{k,l}^{F_1,F_2}
    \end{align}
    simply marks boundary points with indices in $F$ (or $(F_1,F_2)$) as
    forgotten.
\end{defn}

\subsection{Operations with forgotten points}
Our main application is of course to think of forgotten points as formal units,
either for a disc or pair of discs. It is thus illustrative to see how
operations with forgotten marked points either vanish or reduce to other known
operations.
\begin{prop}
    Let $F \subset \{1,\ldots, d\}$ be a non-empty subset of size $0 < |F| <d$.
    Then the operation associated to $\overline{\mc{R}}^{d,F}$ is zero if $d >
    2$ and the identity operation $I(\cdot)$ (up to a sign) when $d=2$.
\end{prop}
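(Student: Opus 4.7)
The plan is to exploit Definition \ref{floerforgotten}: the Floer datum assigned to $S \in \mc{R}^{d,F}$ depends only on the reduced surface $\mc{F}_{\max}(S)$, so that any two surfaces in the same fiber of $\mc{F}_{\max}$ carry literally the same Floer datum (after the natural identification of their reduced surfaces). Hence any Floer solution over $S$ yields a Floer solution over every $S'$ in its fiber with identical asymptotics, and the parameterized moduli space $\overline{\mc{R}}^{d,F}(x_0;\vec{x})$ is the pullback along $\mc{F}_{\max}$ of the corresponding moduli space for the reduced surface. Here $\vec{x}=(x_1,\ldots,x_{d-|F|})$ indexes only the non-forgotten inputs, since Definition \ref{floerforgotten} assigns no strip-like ends at forgotten marked points.

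For $d > 2$ in the f-stable range $0 < |F| \leq d-2$, the forgetful map $\mc{F}_{\max}\colon \overline{\mc{R}}^{d,F} \to \overline{\mc{R}}^{d-|F|}$ is a submersion with fibers of positive dimension $|F|$ (parameterizing the positions of the forgotten points on the reduced disc), so
\[
\dim \overline{\mc{R}}^{d,F}(x_0;\vec{x}) \;=\; |F| \;+\; \dim \overline{\mc{R}}^{d-|F|}(x_0;\vec{x}).
\]
A rigid (zero-dimensional) component would force the reduced moduli space to have expected dimension $-|F| < 0$, which is empty for generic Floer data by transversality. In the f-semistable range $|F|=d-1$, the reduced surface is the parametrized strip $\Sigma_1$ with translation-invariant Floer datum; by the same pullback principle the total moduli space sits in a bundle over $\Sigma_1(x_0;x_1)$ with $\overline{\mc{R}}^{d,F}$ contributing a base of dimension $d-2$. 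For $d>2$ this extra positive-dimensional factor again prevents any isolated solutions, so $\mathbf{H}_{\overline{\mc{R}}^{d,F}} = 0$ in every subcase of the first assertion.

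The remaining case $d=2$, $|F|=1$ is precisely where the base $\mc{R}^{2,\{1\}}$ reduces to a single point, so the moduli problem coincides verbatim with the one defining the operation $I$ of Section \ref{stripidentity}: Floer solutions on $\Sigma_1$ with its translation-invariant Floer datum. Proposition \ref{identity} then identifies the resulting count with the identity map. The ``up to a sign'' ambiguity in the statement is accounted for by the overall sign determined by the chosen orientation on $\mc{R}^{2,\{1\}}$ relative to the one used for $\Sigma_1$ in Section \ref{stripidentity}, together with the prescribed sign twisting datum. This orientation comparison will be the only nonformal step of the proof, and is routine in the framework of Appendix \ref{orientationsection}.
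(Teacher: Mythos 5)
Your proposal is correct and follows essentially the same route as the paper: both arguments rest on the fact that the Floer datum factors through the forgetful map, so that for $d>2$ solutions sweep out positive-dimensional families (the paper phrases this as non-rigidity along the one-dimensional fibers of forgetting a single point, you phrase it as the reduced moduli space having negative expected dimension), while for $d=2$ the reduced surface is the translation-invariant strip and Proposition \ref{identity} gives the identity.
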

\begin{proof}
    Suppose first that $d>2$, and let $u$ be any solution to Floer's equation
    over the space $\mc{R}^{d,F}$ with domain $S$. Let $p\in F$ be the last element of $F$. Since the Floer data on $S$
    only depends on $\mc{F}_{p}(S)$, we see that maps from $S'$ with $S' \in
    \mc{F}_p^{-1}(\mc{F}_{p}(S))$ also give solutions to Floer's equation with
    the same asymptotics. Moreover, the fibers of the map $\mc{F}_p$ are
    one-dimensional, implying that $u$ cannot be rigid, and thus the associated
    operation is zero.

    Now suppose that $d = 2$, and without loss of generality $F = \{1\}$. Then
    the forgetful map associates to the single point $[S] \in \mc{R}^{2,F}$ the
    unstable strip with its translation invariant Floer datum. We conclude,
    based on Section \ref{stripidentity}, that the resulting operation is the
    identity.
\end{proof}
\begin{rem}
    Actually, one would like this operation to be zero when $|F| = d$ as well.
    However, we have not defined an operation with $|F|=d$, due to the
    instability of the underlying reduced surface. Our solution will be to
    declare this operation to be zero, and check that our declaration is
    compatible with the behavior of boundaries of one dimensional moduli
    spaces.
\end{rem}

\begin{prop}\label{shuffleid1}
    Let $\Delta_{d} \subset \mc{R}_{d,d}$ be the diagonal associahedron.  Let
    $[d]$ denote the set $\{1, \ldots, d\}$, and $(k,l)$ such that pairs of
    discs with $k$,$l$ marked points are stable. Then, the operation given by
    the disjoint union
\begin{equation}
    \coprod_{I \subset [k+l]| |I| = k} (\mf{f}_{I,[k+l]-I})
    (\Delta_{k+l})
\end{equation}
with appropriate orientations is identical to the operation given by
$\mc{R}_{k,l}$. In other words, it is equal to zero when $k$, $l$ $\geq 1$ and
one is $\geq 2$.  
\end{prop}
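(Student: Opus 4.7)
The strategy is to show that the two moduli configurations give rise to the \emph{same} Floer-theoretic operation on the chain level, so that Proposition \ref{extraauto} immediately yields the vanishing conclusion. The essential point is that forgotten-point configurations on the diagonal $\Delta_{k+l}$, when passed through the maximally forgetful map $\mc{F}_{\max}$, cover exactly the top open stratum ${_0}\mc{R}_{k,l}$, and carry precisely the Floer data that were used to define $\mc{R}_{k,l}$.

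First, I would observe that $\Delta_{k+l} \subset \mc{R}_{k+l,k+l}$ is canonically a copy of $\mc{R}^{k+l}$, parametrized by the common underlying disc with $k+l$ marked points. Given a subset $I \subset [k+l]$, the forget map $\mf{f}_{I,[k+l]-I}$ only relabels points on the two factors as forgotten; it does not change the underlying space. Applying the maximally forgetful map $\mc{F}_{\max}$ then yields a pair of discs in $\mc{R}_{k,l}$ whose positive marked points on the two factors interleave according to a specific $(k,l)$-shuffle determined by $I$. By Example \ref{twocolor}, the top open stratum ${_0}\mc{R}_{k,l}$ decomposes as a disjoint union of copies of $\mc{R}^{k+l}$, one for each $(k,l)$-shuffle, and the induced map
\[
    \mc{F}_{\max} : \coprod_{|I|=k}\mf{f}_{I,[k+l]-I}(\Delta_{k+l}) \stackrel{\sim}{\lra} {_0}\mc{R}_{k,l}
\]
is a diffeomorphism onto the top stratum, with each summand on the left mapping diffeomorphically onto one shuffle component.

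The next step is to match the Floer data. By Definition \ref{floerforgotten}, the Floer datum on each $\mf{f}_{I,[k+l]-I}(\Delta_{k+l})$ is by construction pulled back from its reduced surface, namely from its image under $\mc{F}_{\max}$ inside ${_0}\mc{R}_{k,l}\subset \mc{R}_{k,l}$; and the latter data are exactly those fixed universally and consistently for $\mc{R}_{k,l}$. Therefore, for any Lagrangian labeling $\vec{L}$ (which must be admissible, i.e.\ constant across each forgotten point, a condition automatic on $\Delta_{k+l}$ once labels descend compatibly) and any asymptotic data $(\vec{x}_{out};\vec{x}_{in})$, the Floer moduli spaces on the two sides are literally identified: an element of $\mc{R}_{k,l}(\vec{x}_{out};\vec{x}_{in})$ lying in the top stratum ${_0}\mc{R}_{k,l}$ corresponds to a unique element of exactly one $\mf{f}_{I,[k+l]-I}(\Delta_{k+l})(\vec{x}_{out};\vec{x}_{in})$, solving the same equation with the same boundary/asymptotic conditions. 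Rigid (zero-dimensional) solutions of $\mc{R}_{k,l}$ lie generically in the top stratum, so this identification accounts for all contributions to the two chain-level operations.

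What remains is to choose the ``appropriate orientations'' so that the signed counts agree. I would orient each summand $\mf{f}_{I,[k+l]-I}(\Delta_{k+l})\cong \mc{R}^{k+l}$ by the standard orientation on $\mc{R}^{k+l}$ multiplied by the sign of the $(k,l)$-shuffle $\sigma_I$ (compared with a fixed reference shuffle used to orient ${_0}\mc{R}_{k,l}$), and introduce the corresponding sign twisting datum of Definition \ref{signtwistingdatum} to absorb the Koszul reshuffling of the inputs between the two factors. Once this is done, the isomorphisms of orientation lines (\ref{orientationiso}) determined by corresponding solutions on the two sides agree, so the operations coincide chain-level. Applying Proposition \ref{extraauto}, both sides vanish in the stated range. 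The main obstacle is bookkeeping: carefully choosing the shuffle-sign conventions on each component and verifying that, combined with the sign twisting datum, they reproduce exactly the sign twisting datum used in the definition of the $\mc{R}_{k,l}$ operation; this is a purely combinatorial Koszul sign check, of the type carried out in Appendix \ref{orientationsection}.
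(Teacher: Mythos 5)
Your proof is correct and is essentially the same argument as the paper's: the paper phrases the identification via an ``overlay map'' from ${_0}\mc{R}_{k,l}$ to $\coprod_{I}\mf{f}_{I,[k+l]-I}(\Delta_{k+l})$, which is exactly the inverse of the $\mc{F}_{\max}$ correspondence you use, and then likewise appeals to compatibility of Floer data, genericity of rigid solutions lying over the top open stratum, and Proposition~\ref{extraauto}. You spell out the orientation bookkeeping in somewhat more detail than the paper (which just asserts agreement ``modulo sign''), but the substance is identical.
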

\begin{proof}
    On the open locus $ _0 \mc{R}_{k,l}$ where none of the $k$ points on the
    first disc and the $l$ points on the second disc are in identical
    positions, we can consider the {\bf overlay} map:
    \begin{equation}
        _0 \mc{R}_{k,l} \lra \coprod_{I \subset [k+l]| |I| = k} (\mf{f}_{I,[k+l] - I}) (\Delta_{k+l}(\mc{R}^{k+1}))
    \end{equation}
    given by marking the $l$ positive marked points on $S_2$ as extra forgotten
    points on $S_1$ and vice versa. On the level of tricolored discs, the
    overlay makes $L$ points $LR$ points with the $R$ component marked as
    forgotten, and makes $R$ points $LR$ points with the $L$ component marked
    as forgotten.  By construction, this map is compatible with Floer data, and
    covers the entire interior of the target.  Since after a perturbation
    zero-dimensional solutions to Floer's equation come from a representative
    on the interior of any source abstract moduli space, we conclude that the
    two operations in the Proposition are identical, modulo sign. See Figure
    \ref{overlay_forget1} for an example of this overlay map.
\end{proof}

\begin{figure}[h] 
    \caption{An example of the overlay map from $_0\mc{R}_{3,2}$ to $\mc{R}_{5,5}^{\{2,4\}, \{1,3,5\}}$. \label{overlay_forget1}}
    \centering
    \includegraphics[scale=0.7]{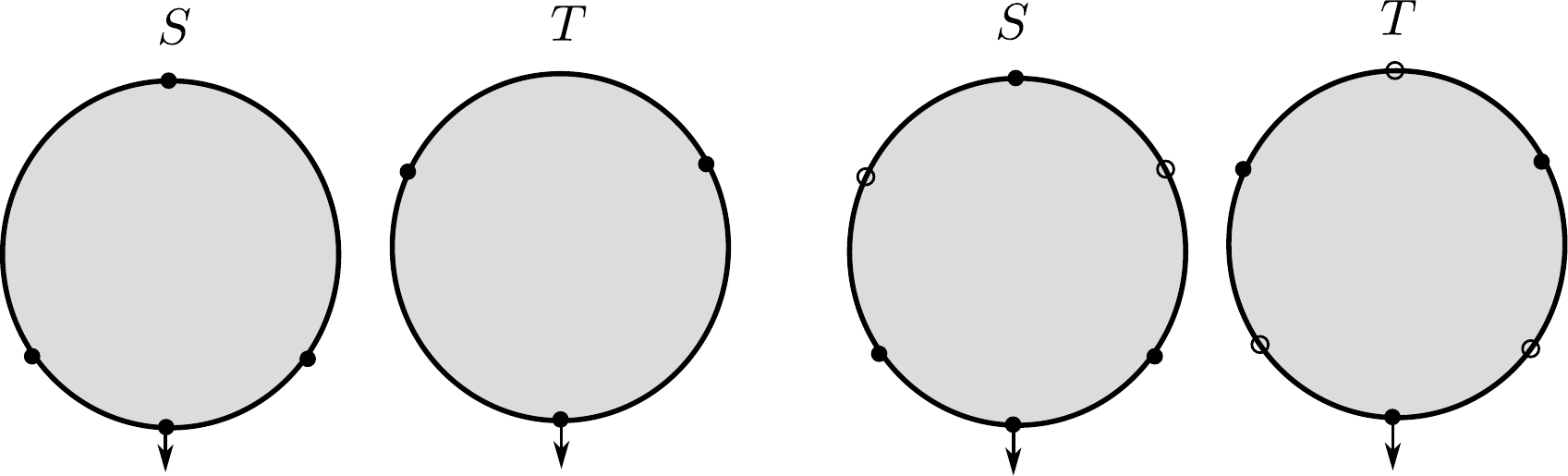}
\end{figure}


\begin{prop} \label{shuffleid2}
    Take boundary identification $\mf{S} = \{(1,1)\}$ and maximal point
    identification $\mf{T}_{max} = \{(1,1), \ldots, (k+l,k+l)\}$. Then, letting $S = \{2, \ldots, k+l-2\}$, the
    operation corresponding to 
    \begin{equation}
        \coprod_{I \subset S| |I|=k-1} \mf{f}_{I,S-I}
({_{\mf{S},\mf{T}_{max}}} \mc{R}_{k,l})
\end{equation}
is $\mu^{k+l-1}$ (with suitable sign twisting datum).
\end{prop}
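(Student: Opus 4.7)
The plan is to mimic the strategy used in the proof of Proposition \ref{shuffleid1}, with the key additional ingredient being the effect of the $\{(1,1)\}$-gluing. By Definition \ref{floerforgotten}, the Floer datum on any representative of a pair of glued discs with forgotten marked points depends only on the image under $\pi_{\mf{S}}\circ\mc{F}_{\max}$; so the Floer-theoretic operation attached to $\coprod_{I}\mf{f}_{I,S-I}({_{\mf{S},\mf{T}_{max}}}\mc{R}_{k,l})$ is the same as the operation attached to its image under $\pi_{\mf{S}}\circ\mc{F}_{\max}$, with Floer data pulled back from a universal and consistent choice on $\mc{R}^{k+l-1}$.

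First I would compute, for a fixed subset $I\subset S$ of size $k-1$, the image of $\mf{f}_{I,S-I}({_{\mf{S},\mf{T}_{max}}}\mc{R}_{k,l})$ under $\pi_{\mf{S}}\circ\mc{F}_{\max}$. The map $\mc{F}_{\max}$ first forgets the $k-1$ marked points indexed by $I$ on the left disc and the $l-2$ marked points indexed by $S-I$ on the right disc; applying $\pi_{\mf{S}}$ then glues along the boundary component between $z^0$ and $z^1$. Combining Proposition \ref{mufromglue} (which identifies gluing at $\{(1,1)\}$ with a single stable disc) with the $\mf{T}_{max}$-coincidence constraints, one checks that the reduced glued surface is a single disc carrying exactly $k+l-1$ positive boundary inputs. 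The choice of $I$ determines the relative cyclic ordering of the $k-1$ ``right-only'' inputs (indexed by $I$) with the $l-2$ ``left-only'' inputs (indexed by $S-I$) between the fixed points indexed by $\{1,k+l-1,k+l\}$.

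Second I would observe that as $I$ varies over all $\binom{k+l-3}{k-1}$ subsets of $S$ of size $k-1$, the resulting images partition an open dense subset of $\mc{R}^{k+l-1}$ into pieces indexed by $(k-1,l-2)$-shuffles of the middle inputs (exactly as in the combinatorial counting in Example \ref{twocolor} and the proof of Proposition \ref{shuffleid1}). Together these pieces cover $\mc{R}^{k+l-1}$ once, and since the Floer data are pulled back consistently, summing the associated operations yields the operation controlled by $\mc{R}^{k+l-1}$ with its universal Floer datum, namely $\mu^{k+l-1}$ up to sign.

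The main obstacle is the identification in Step 1, i.e.\ verifying that the combined effect of the $\mf{T}_{max}$ coincidences, the $\{(1,1)\}$-gluing, and the forgetful map yields precisely $\mc{R}^{k+l-1}$ for each $I$; together with a careful choice of sign twisting datum matching the standard incremental twist $\vec{t}_{k+l-1}$ used to define $\mu^{k+l-1}$. The sign analysis is of the type systematically handled in Appendix \ref{orientationsection}, and once this bookkeeping is in place the overlay/shuffle argument yields the proposition.
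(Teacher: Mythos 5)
Your proposal is correct and takes essentially the same route as the paper: the paper phrases the comparison as an overlay map from the open locus of the glued pair-of-discs space ${_{\mf{S},p(\mf{S})}}\mc{R}_{k,l}$ onto the disjoint union of forgotten-point spaces and then invokes Proposition \ref{mufromglue}, which is just the inverse description of your pushforward of each $\mf{f}_{I,S-I}$-space to $\mc{R}^{k+l-1}$ and the shuffle-indexed tiling of its open dense locus. Both arguments rest on the same points you identify: consistency of the Floer data under $\pi_{\mf{S}}\circ\mc{F}_{\max}$, the shuffle count as in Proposition \ref{shuffleid1}, and the fact that rigid solutions generically have domains in the interior of the abstract moduli space.
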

\begin{proof}
    On the open locus of $_{(1,1),(1,1)} \mc{R}_{k,l}$ where the only
    coincident points are $(1,1)$ and no other points coincide, denoted
    \begin{equation}
    _{(1,1),(1,1)} \mc{R}_{k,l}^0
    \end{equation}
    there is once more an {\it overlay map}
\begin{equation}
    _{(1,1),(1,1)} \mc{R}_{k,l}^0 \lra 
    \coprod_{I \subset S| |I|=k-1} \mf{f}_{I,S-I}
({_{\mf{S},\mf{T}_{max}}} \mc{R}_{k,l})
\end{equation}
given by superimposing left and right discs, and marking points from the right
as forgotten points on the left and vice versa. This gives an isomorphism of
spaces with Floer data on the open locus, so we conclude by applying
Proposition \ref{mufromglue} to calculate the operation associated to $_{(1,1),(1,1)}\mc{R}_{k,l}$.
\end{proof}

\begin{prop} \label{shuffleid3}
Take boundary identification $\mf{S} = \{(k+l,k+l)\}$ and maximal point
identification $\mf{T}_{max} = \{(1,1), \ldots, (k+l,k+l)\}$. Then, letting $S
= \{1, \ldots, k+l-1\}$, the
    operation corresponding to 
    \begin{equation}
        \coprod_{I \subset S| |I|=k-1} \mf{f}_{I,S-I}
({_{\mf{S},\mf{T}_{max}}} \mc{R}_{k,l})
\end{equation}
is $\mu^{k+l-1}$ (with suitable sign twisting datum).
\end{prop}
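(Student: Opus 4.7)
The plan is to mirror the proof of Proposition \ref{shuffleid2}, with the boundary identification $\{(1,1)\}$ near one end of the pair replaced by the symmetric $\{(k+l,k+l)\}$ near the opposite end. The argument has three ingredients: an overlay map defined on the open stratum of the pair-of-discs moduli space, a compatibility of Floer data between the two sides of the overlay, and an application of Proposition \ref{mufromglue} to identify the resulting operation with $\mu^{k+l-1}$.

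First, I would restrict attention to the open locus ${_{\mf{S},\mf{T}_{max}}}\mc{R}_{k,l}^0$ on which only the position $(k+l,k+l)$ (forced by $p(\mf{S})$) is coincident between the two discs and all other positions are distinct. On this locus one has a natural overlay map
\begin{equation}
{_{\mf{S},\mf{T}_{max}}}\mc{R}_{k,l}^0 \lra \coprod_{I\subset S,\ |I|=k-1} \mf{f}_{I,S-I}\bigl({_{\mf{S},\mf{T}_{max}}}\mc{R}_{k,l}\bigr),
\end{equation}
defined by superimposing the two discs and marking each input of the right disc as a forgotten $R$-type point on the overlay, and symmetrically marking each input of the left disc as a forgotten $L$-type point. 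The choice of partition $I\sqcup(S-I)=S$ corresponds to the cyclic order in which $L$- and $R$-colored inputs alternate along the overlaid boundary.

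Next, I would verify that this overlay defines an isomorphism of moduli spaces equipped with Floer data. This uses two properties of our conventions: Floer data on glued pairs only depends on the underlying open-closed string obtained via $\pi_{\mf{S}}$ (condition (\ref{projection})), and Floer data on surfaces with forgotten marked points only depends on the maximally reduced surface $\mc{F}_{max}$ (Definition \ref{floerforgotten}). Since gluing and maximal reduction commute, both sides of the overlay carry identical Floer data, so the associated zero-dimensional moduli spaces of solutions coincide up to sign. The operation on the pair-of-discs side is then identified by Proposition \ref{mufromglue} in its $\mf{S}=\{(k,l)\}$ form: the gluing $\pi_{\mf{S}}$ sends ${_{\mf{S},p(\mf{S})}}\mc{R}_{k,l}$ isomorphically onto an associahedron, and counting the remaining non-forgotten inputs on the reduced surface shows the associated operation is $\mu^{k+l-1}$.

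The main (and essentially only) obstacle is the sign verification: each operation involved carries a sign twisting datum $\vec{t}$ (Definition \ref{signtwistingdatum}), and one must check that the twist induced on the overlay side matches the standard incremental datum $\vec{t}_{k+l-1}$ used in defining $\mu^{k+l-1}$. The required comparison follows the same pattern as the analogous calculation for Proposition \ref{shuffleid2} in Appendix \ref{orientationsection}, with the only modification being a reindexing to reflect that the gluing now occurs at the opposite end of the pair.
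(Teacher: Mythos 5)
Your proof is correct and takes exactly the approach the paper uses: the paper's own proof of this proposition is the single sentence ``The proof is identical to the above case, using an overlay map and reducing to Proposition \ref{mufromglue}.'' One small notational slip to flag: the source of your overlay map should carry the minimal point identification $p(\mf{S})$ (so that only the position forced by $\mf{S}=\{(k+l,k+l)\}$ is coincident), not $\mf{T}_{max}$, which would force all positions to coincide and collapse the source onto the diagonal; your verbal description (``only the position $(k+l,k+l)$ is coincident and all other positions are distinct'') is correct, so this does not affect the argument, but the subscript in your displayed overlay map is inconsistent with it.
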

\begin{proof}
    The proof is identical to the above case, using an overlay map and reducing to Proposition \ref{mufromglue}.
\end{proof}

\begin{prop} \label{shuffleid4}
Take boundary identification $\mf{S} = \{(1,1),(k+l,k+l)\}$ and maximal point
identification $\mf{T}_{max} = \{(1,1), \ldots, (k+l,k+l)\}$. Then, letting $S
= \{2, \ldots, k+l-1\}$, the operation corresponding to 
    \begin{equation}
        \coprod_{I \subset S| |I|=k-2} \mf{f}_{I,S-I}
({_{\mf{S},\mf{T}_{max}}}\mc{R}_{k,l})
\end{equation}
is $_2 \oc^{k-2,l-2}$ (with suitable sign twisting datum).
\end{prop}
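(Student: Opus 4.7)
The plan is to proceed in exactly the same manner as the proofs of Propositions \ref{shuffleid2} and \ref{shuffleid3}, via an overlay map reducing to Proposition \ref{ocstringsfromgluing}. Let me sketch the argument.

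First, I would restrict attention to the open locus
\begin{equation*}
    {_{\mf{S},\mf{T}_{max}}}\mc{R}_{k,l}^{0} \subset {_{\mf{S},\mf{T}_{max}}}\mc{R}_{k,l}
\end{equation*}
consisting of those pairs of discs where the only coincident points are the two forced ones $(1,1)$ and $(k+l, k+l)$ corresponding to the boundary identifications $\mf{S}$; that is, none of the interior inputs (those indexed by $S = \{2, \ldots, k+l-1\}$) are allowed to coincide between the two factors. On this locus I would define an overlay map
\begin{equation*}
    \mathbf{O}: {_{\mf{S},\mf{T}_{max}}}\mc{R}_{k,l}^{0} \lra
    \coprod_{I \subset S,\ |I|=k-2} \mf{f}_{I,S-I}\left({_{\mf{S},\mf{T}_{max}}}\mc{R}_{k+l-2,k+l-2}\right),
\end{equation*}
sending a pair of discs $(S_1,S_2)$ to the superimposed pair where each non-identified input of $S_1$ is added to $S_2$ as a forgotten marked point, and vice-versa. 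The two-coloring on the output inputs (which go in the set $I$ versus $S-I$) records from which factor each non-identified input originally came, exactly in the manner of the overlay maps from the proofs of Propositions \ref{shuffleid1}--\ref{shuffleid3}. By construction $\mathbf{O}$ is a diffeomorphism onto the interior of its target.

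Next, I would verify that $\mathbf{O}$ intertwines the Floer data on source and target. The Floer data on both sides depends only on the reduced open-closed string obtained after gluing along $\mf{S}$ and forgetting/stabilizing: on the left, this comes from Definition \ref{floerglued} applied to the glued pair of discs; on the right, this is the content of Definition \ref{floerforgotten}, since forgotten marked points are by fiat ignored when defining Floer data. Under the overlay, both sides glue to precisely the same open-closed string $\pi_{\mf{S}}(S_1, S_2)$, so the universal and consistent choices agree. Consequently, the moduli spaces of Floer solutions and their orientations match up.

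At this point I would invoke Proposition \ref{ocstringsfromgluing}, which identifies the operation controlled by the entire space ${_{\mf{S},p(\mf{S})}}\overline{\mc{R}}_{k,l}$ (in the case without forgotten points) with the two-pointed open-closed map $_2\oc^{k-2,l-2}$, via the gluing $\pi_{\mf{S}}$ landing in the abstract moduli space $\mc{R}^1_{k-2, l-2}$ of Definition \ref{twopointedocmod}. Since rigid solutions to Floer's equation come from representatives in the top open stratum of any source moduli space after a generic perturbation, the operations computed from the top stratum and from the full compactification agree, yielding the proposition. The main thing to be careful with is the sign twisting datum: one must check that the shuffle-sum of sign twistings on the right-hand pieces $\mf{f}_{I,S-I}({_{\mf{S}, \mf{T}_{max}}}\mc{R}_{k+l-2, k+l-2})$ recombines to the single sign twisting datum $\vec{t}_{_2\oc,k-2,l-2}$ used in Section \ref{openclosedmaps}; this is a Koszul-type bookkeeping exercise, parallel to (and no harder than) the corresponding sign check in Proposition \ref{shuffleid2}, so I expect no genuine obstacle beyond careful accounting deferred to Appendix \ref{orientationsection}.
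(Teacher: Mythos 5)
Your proposal takes essentially the same route as the paper: restrict to the open locus where only the two forced coincidences hold, exhibit the overlay map as a Floer-data-preserving diffeomorphism onto the union of forgotten-point strata, and then conclude by Proposition~\ref{ocstringsfromgluing}. One small notational slip: the open locus you restrict to lives in $_{\mf{S},p(\mf{S})}\mc{R}_{k,l}$, not $_{\mf{S},\mf{T}_{max}}\mc{R}_{k,l}$ (the point identification is only the two pairs forced by the boundary gluing, not the maximal one); your verbal description gets this right, so the argument is unaffected.
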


\begin{figure}[h] 
    \caption{The overlay map from $_{\mf{S},p(\mf{S})}\mc{R}_{5,4}$ to 
    $_{\mf{S}',p(\mf{S})'} \mc{R}_{7,7}^{\{3,5\},\{2,4,6\}}$, 
    where $\mf{S} = \{ (1,1), (5,4) \} $ 
    and 
    $\mf{S'} = \{(1,1),(7,7)\}$. 
    Forgotten points are marked with rings. \label{ocoverlay}
    }
    \centering
    \includegraphics[scale=0.8]{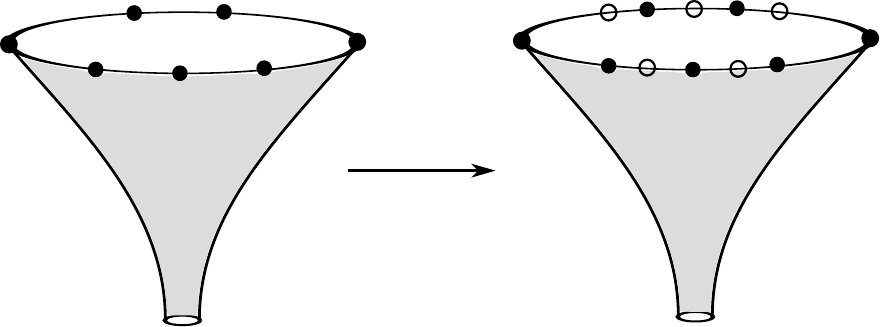}
\end{figure}

\begin{proof}
    The same arguments using overlay maps as before apply, only now we compare to $_{\{(1,1),(k,l)\},\{(1,1),(k,l)\} }\mc{R}_{k,l}$. 
The associated operation is, by Proposition \ref{ocstringsfromgluing}, $_2
\oc^{k-2,l-2}$.  See Figure \ref{ocoverlay} for an example of this particular
overlay map.  \end{proof}

\def\H{ {\mathbb H}}
\subsection{A local model}
Our definition of forgetful operations and homotopy units is based
upon the following local model. Let $\H$ denote the upper half plane and $\H^o$
the upper half plane with the origin removed. Viewing $\H$ as a disc with a
point removed, and $\H^o$ as a
disc with two points removed, there is the natural ``forgetful'' map
\begin{equation}
    F: \H^o \longhookrightarrow \H
\end{equation}
which forgets the special point 0.  Consider the
following negative strip-like end around $\infty$:
\begin{align}\label{eh}
    \e_\H: (-\infty,0] \times [0,1]  &\longrightarrow \H \\
    (s,t) &\longmapsto \exp(-\pi (s + it))
\end{align}
which has image $\{(r,\theta) | r \geq 1\} \subset \H$.
For $\H^o$, define the following basic positive strip-like end around 0:
\begin{align}
    \e_{\H^o}: [0,\infty) \times [0,1] &\longrightarrow \H^0 \\
    (s,t) &\longmapsto  2 \cdot \exp(-\pi (s+it)).
\end{align}
This end has image $\{(r,\theta)| 0 < r \leq 2\} \subset \H^o.$ 
With these special choices of strip-like ends, we observe that 0-connect sum
is exactly the forgetful map $F$. The precise statement is this:
\def\T{ {\mathbb T}}
\begin{prop} \label{forget}
Let $\T$ be the associated thick part in $\H^0$ of the 0 connect sum
\[
C: = \H^o \#^0_{(\e_{\H^o},0),(\e_\H,\infty)} \H
\]
and let $C^0$ denote the complement of $0 \in \H$ in the connect sum $C$. Then,
there is a commutative diagram 
\begin{equation}
\xymatrix{
\T \ar@{^{(}->}[r] \ar@{^{(}->}[rd] & C^0 \ar[d] \ar@{^{(}->}[r] & C\\
& \H^0 \ar@{^{(}->}[r] & \H
}
\end{equation}
\end{prop}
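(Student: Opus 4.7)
The plan is to unwind the definition of the $\lambda$-connect sum at $\lambda=0$, which by the formula $\lambda = \log\rho/(1+\log\rho)$ corresponds to $\rho = 1$, and then exhibit an explicit biholomorphism $C \cong \H$ under which the distinguished point of $\H$ inside $C$ is sent to $0 \in \H$. The diagram will then commute essentially tautologically once the right normalization is chosen.

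First I will identify the two glued regions and the gluing map. With $\rho = 1$, we excise from $\H^o$ the neighborhood $\e_{\H^o}([1,\infty)\times[0,1]) = \{0 < |z| \leq 2e^{-\pi}\}$ of the puncture $0$, and excise from $\H$ the neighborhood $\e_\H((-\infty,-1]\times[0,1]) = \{|z| \geq e^\pi\}$ of the puncture $\infty$. The overlap annuli are $\{2e^{-\pi} < |z_1| < 2\} \subset \H^o$ and $\{1 < |z_2| < e^\pi\} \subset \H$. A direct computation using $\e_{\H^o}(s,t) = 2\exp(-\pi(s+it))$ and $\e_\H(s-1,t) = \exp(-\pi((s-1)+it))$ shows that the identification $\varphi_1$ is simply $z_2 = (e^\pi/2)\, z_1$.

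Next I will define a map $\Phi \colon C \to \H$ by setting $\Phi(z_1) = z_1$ on the $\H^o$-side and $\Phi(z_2) = (2/e^\pi)\, z_2$ on the $\H$-side. The gluing relation $z_2 = (e^\pi/2)\, z_1$ forces these two rules to agree on the overlap, so $\Phi$ is a well-defined holomorphic map. I will verify bijectivity by observing that the image of the $\H^o$-side is $\{|z| > 2e^{-\pi}\} \cap \H$, the image of the $\H$-side is $\{|z| < 2\} \cap \H$, and their union is all of $\H$ while their intersection is exactly the annulus $\{2e^{-\pi} < |z| < 2\} \cap \H$ coming from the overlap. Hence $\Phi$ is a biholomorphism.

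Under $\Phi$, the distinguished interior point $0$ coming from the $\H$-side of $C$ is mapped to $(2/e^\pi)\cdot 0 = 0 \in \H$, so $\Phi$ restricts to a biholomorphism $C^0 \xrightarrow{\sim} \H^o$; this is the unlabeled vertical arrow in the diagram, while the horizontal $\H^o \hookrightarrow \H$ is the forgetful inclusion $F$. The thick part $\T = \{|z_1| > 2\} \cap \H$ sits inside the $\H^o$-side of $C$, and $\Phi$ restricts on it to the identity $z_1 \mapsto z_1$. Therefore the composite $\T \hookrightarrow C^0 \to \H^o$ agrees with the direct inclusion $\T \hookrightarrow \H^o$, and likewise after composing with $\H^o \hookrightarrow \H$. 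Commutativity of both the inner triangle and the outer square follows.

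There is no substantive obstacle; the only subtle point is choosing the normalization so that $\Phi$ restricts to the identity on the $\H^o$-side (and hence on $\T$), rather than to the identity on the $\H$-side, since the two choices differ by the scaling factor $e^\pi/2$ forced by $\varphi_1$. With this convention the diagram commutes on the nose. Once established, this identification of the $0$-connect sum with the forgetful inclusion $F \colon \H^o \hookrightarrow \H$ is what allows operations whose asymptotic input is the formal element $e^+$ to be realized, via gluing, as operations on surfaces with that input marked point forgotten.
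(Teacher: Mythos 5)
Your proof is correct and is essentially the paper's argument: the paper disposes of this proposition in one line by viewing all the regions and the connect sum itself as subsets of $\H$, which is precisely the identification your explicit biholomorphism $\Phi$ realizes. Your computation of the gluing map $z_2 = (e^\pi/2)z_1$ and the choice of normalization making $\Phi$ the identity on the $\H^o$ side simply make explicit what the paper declares obvious.
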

\begin{proof}
This is obvious via viewing each of these regions and the connect sum itself as
subsets of $\H$.  
\end{proof}
\subsection{Revisiting the unit}
We revisit our choice of Floer datum for the explicit geometric unit map,
defined in Section \ref{theunitelement}. Let $\Sigma_0 = \H$ once more denote the
upper half plane, and fix an outgoing striplike end at $\infty$ given by
$\e_{\H}$, defined in (\ref{eh}).

Let $\psi: [0,\infty) \lra [0,1]$ be a smooth function equaling 0 in a
neighborhood of 0 and 1 in a neighborhood of $[1,\infty)$.

Given a weight $w=1$, and a Hamiltonian $H$, fix the following Floer
datum on $\H$:
\begin{itemize}
    \item one form $\a_{\H}$ given by $-\frac{1}{\pi} \cdot \psi(r) d \theta$
    \item rescaling function $a_{\H}$ equal to 1. 
    \item any primary Hamiltonian, $H_\H$ that is compatible with the strip-like end $\e_\H$
    \item any almost complex structure that is compatible with $\e_\H$.
    \item some constant perturbation term $F_\H$.
\end{itemize}
Define the Floer datum for arbitrary weight $w$ to be the $w$ conformal
rescaling of the above Floer datum. It has the following properties:
\begin{itemize}
    \item one form $\a_{\H}^w$ given by $-\frac{1}{\pi} \cdot w \psi(r) d \theta$
    \item rescaling function $a_{\H}^w$ equal to $w$. 
    \item primary Hamiltonian $H_{\H}^w$ given by $\frac{H_\H \circ \psi^{w}}{w^2}$
    \item almost complex structure $J_{\H}^w$ given by $(\psi^{w})^* J_\H$
    \item perturbation term $F_\H^w$ given by $w \cdot F_\H$, another constant.
\end{itemize}
Call the above datum a {\bf standard unit datum of type $w$}.  By design,
$\e_{\H}^*(\a_{\H}^w) = w dt$.

\subsection{Damped connect sums}
We describe a local model, depending on a time parameter
\begin{equation}\tau\in[0,1],\end{equation} that gives a
homotopy relating a ``formal unit,'' or forgotten marked point, to the
geometric unit described in Section \ref{unstableoperations} and again above.
We would like such a homotopy, which we call a {\bf $\tau$-damped connect sum},
to have the following properties in a neighborhood of a given forgotten point
$p$ on a surface $S$.
\begin{itemize}
    \item at time $\tau = 0$, the Floer datum is essentially unconstrained in a
        neighborhood of $p$, agreeing with whatever Floer datum we obtained by
        forgetting $p$ and compactifying.  
    \item at intermediate time $\tau$, the Floer datum is
        modeled on a growing connect
        sum of a neighborhood of $p$ with a disc with one output, thought of as
        $\H$ with output at $\infty$. 
    \item the $\tau = 1$ limit is the nodal connect sum $\H \#^{1}_{p} S$. The
        Floer datum on the $\H$ component should agree with the Floer datum on
        the geometric unit, and the Floer data on the $p$ side should agree
        with a standard, previously chosen Floer datum.  
\end{itemize}
Readers who wish to skip this section should treat the {\bf $\tau$-damped
connect sum} along a boundary point $p$ as a formal operation on surfaces with
Floer data, satisfying the property that at $\tau=0$, one has the forgetful
map, and at $\tau=1$, one has nodally glued on an $\H$.  

In reality, we will need to construct such an operation in two steps:
\begin{itemize}
    \item for $\tau \in (0,\frac{1}{2})$, the Floer datum on $S$ goes from
        arbitrary with respect to $p$ to (partially) compatible with respect to a
        strip-like end around $p$.
    \item for $\tau \in (\frac{1}{2},1)$, the datum is modeled as a growing
        connect sum as before.  
\end{itemize}

\begin{figure}[h] 
    \caption{A schematic of a damped connect sum (though in reality, the conformal structure of $S$ will stay the same). \label{dampedconnectfig}}
    \centering
    \includegraphics[scale=0.7]{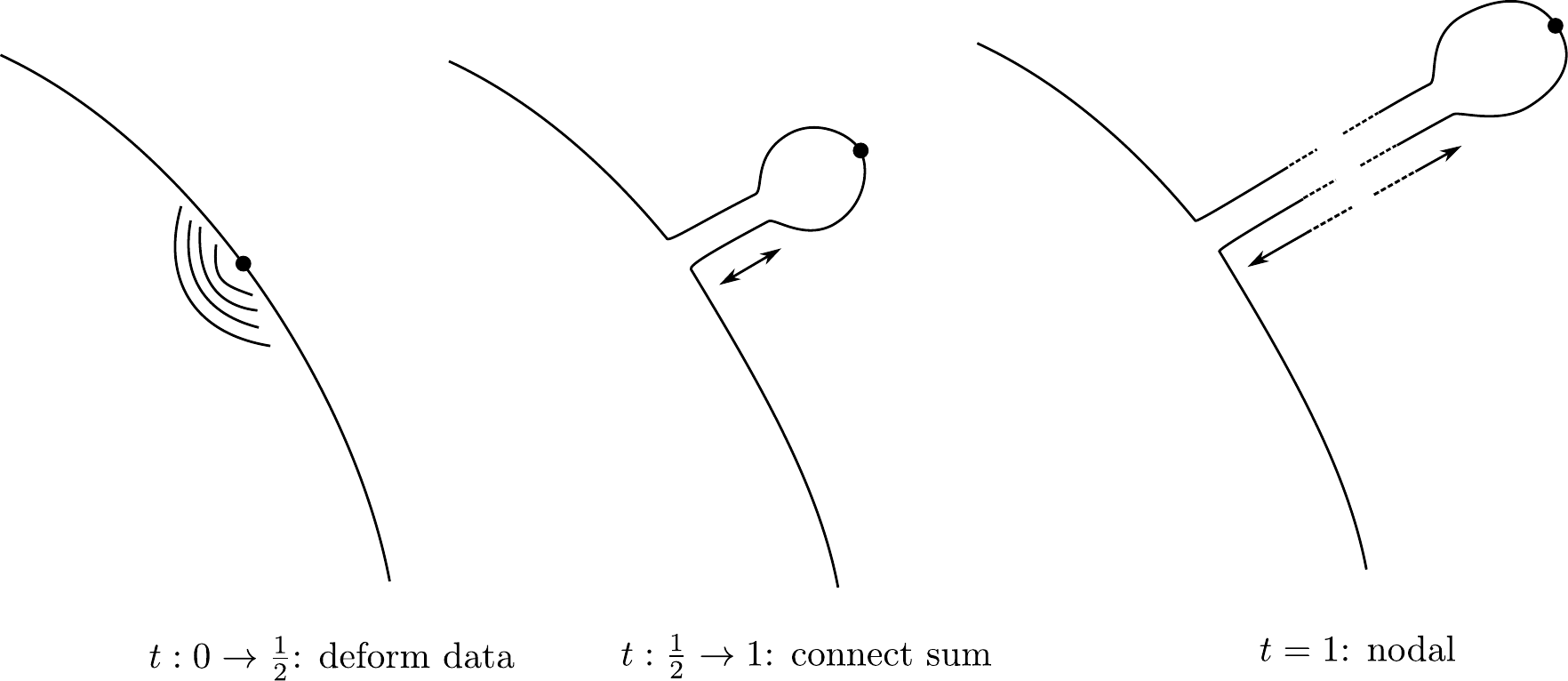}
\end{figure}

The basic setup is as follows: Let $S$ be a Riemann surface with boundary, with
some boundary marked points removed.  Fix one such positive boundary marked
point $z$, with strip-like end around $z$
\begin{equation}
    \e_z: [0,\infty) \times [0,1] \lra S.
\end{equation}
\begin{defn}
    Let $\hat{S}_z$ be $S$ with the point $z$ filled back in. Call the
    strip-like end $\e_z$ {\bf rational} 
    if it extends to a holomorphic map
$\bar{\e}_z: ([0,1] \times [0,\infty)) \cup \{\infty\} \lra \hat{S}_z$. 
\end{defn}
\begin{rem}
    Working with rational strip-like ends does not impose any additional
    trouble in choosing Floer data. We can implicitly choose all of our
    strip-like ends $\e: Z_+ \ra S$ to be rational.
    See e.g. \cite{Abouzaid:2010ly}*{Addendum 2.3}.
\end{rem}
Now, let $\e_z$ be any rational striplike end. Let $D_2^0$ denote the punctured
upper half radius two disc $\{0 < |z| \leq 2\} \subset \C$ and $D_2$ the domain
arising from $D_2^0$ by filling in the origin, i.e. $D_2 =  \{0 \leq |z| \leq
2\}$. By precomposing with the standard map
\begin{equation}
    \begin{split}
    \e_{\H_0}^{-1}: D_2^0 &\lra [0,\infty) \times [0,1]\\
z &\longmapsto (\ln(\frac{2}{|z|}), 1 - \frac{\arg(z)}{\pi})
\end{split}
\end{equation}
we may equivalently suppose $\e_z$ is a map $\tilde{\e}_z$ from $D_1^0$ to $S$
that extends to a map from $D_2$ to $\hat{S}_z$. Call $\tilde{\e}_z$ the
associated {\bf disc-like end} of $\e_z$, and let $\bar{\tilde{\e}}_z$ be the
associated map from $D_2$ to $\hat{S}_z$.

Now, fix a time-parameter $\tau \in [0,1]$. In a manner depending on $\tau$, we
weaken the notion of compatibility with respect to the strip-like end $\e_z$.
\begin{defn}\label{taucompatible}
    A Floer datum ($\a_S$, $a_S$, $J_S$, $H_S$, $F_S$) is said to be {\bf 
    $\tau$-partially compatible} with a strip-like end $(z,\epsilon)$, for
    $\tau \in [0,1]$, if the datum extends to one on the compactification
    $\hat{S}_z$ when $\tau \in [0,\frac{1}{2}]$ and the conditions
\begin{equation}
    \begin{split}
        \e^* a_S &= w_1(\tau)\\
        \e^* \a_S &= w_2(\tau) dt \\
\e^* H_S &= \frac{H \circ \psi^{w_2(\tau)}}{w_2(\tau)^2}
    \end{split}
\end{equation}
    hold when $\tau \geq \frac{1}{2}$. Furthermore, we require that at
    $\tau=1$, $\tau$-compatibility is genuine compatibility; in other words, 
    \begin{equation}
        w_1(1) = w_2(1).\end{equation}
\end{defn}
\begin{rem}
    In the limit $\tau = 0$, $\tau$-partial compatibility is an empty condition
    for the Floer data on $S$. Note that in contrast to normal Floer data, we
    are using two potentially different functions $w_1(\tau)$ and $w_2(\tau)$.
    In other words, we are not requiring the value of the one-form $\a_S$ or
    the amount flowed by the Hamiltonian to always be the same as the amount of
    rescaling or time-shifting performed by the almost complex structure or
    Lagrangian boundary. This is sensible---on a boundary point that is not a
    priori a striplike end, the one form $\a_S$ is asymptotically 0, but $a_S$
    is always non-zero.
\end{rem}

\begin{defn} \label{taustructure}
    Let $S$ have rational strip-like end $\e_z$ around $z$ with associated
    disc-like end $\tilde{\e}_z$, and suppose we have chosen a Floer datum
    $\mathbf{D}^\tau$ that is $\tau$-compatible with $\tilde{\e}_z$. An
    associated {\bf $\tau$-structure} on $\H$, denoted
    $\mathbf{D}_{\H}^\tau(z)$ consists of the following Floer datum on
    $\H$, depending on $\tau$:
    \begin{itemize}
        \item for $\tau \in [0,\frac{1}{2}]$, the Floer datum $\mathbf{D}$
            extends to the compactification $\hat{S}_z$. The pullback
            $(\bar{\tilde{e}}_z)^*D$ gives some Floer datum on $D_2$. Define
            the Floer datum on $\H$ to be any datum extending this one to all
            of $\H$.
        \item For $\tau \in [\frac{1}{2},1]$, the Floer datum is defined as
            follows: 
            \begin{itemize}
\item one-form $\a_\H^\tau(\mathbf{D})$ given by $-\frac{1}{\pi} w_2(\tau) \cdot \psi(r) d\theta$.
                \item any primary Hamiltonian $H^\tau(\mathbf{D})$ equal to $\frac{H \circ \psi^{w_2(\tau)}}{w_2(\tau)^2}$ on the striplike end $\e_\H$.
                \item any rescaling function $a_S^\tau(\mathbf{D})$ equal to $w_1(\tau)$ when restricted to $\e_\H$.
                \item any complex structure equal to $(\psi^{a_S^\tau(\mathbf{D})})^* J_t$ on $\e_\H$.
            \end{itemize}
    \end{itemize}
    Furthermore, we mandate that when $\tau = 1$, the Floer datum on $\H$ must
    be the {\bf standard unit datum of type} $w_1(1) = w_2(1)$.
\end{defn}
Pick a smooth non-decreasing function $\kappa: [0,1] \lra [0,1]$ that is 0 in a
neighborhood of $[0,\frac{1}{2}]$ and 1 exactly at 1.
\begin{defn}
    Let $S$ have rational strip-like end $\e_z$ around $z$ with associated
    disc-like end $\tilde{\e}_z$, and suppose we have chosen a Floer datum
    $\mathbf{D}^\tau$ that is $\tau$-compatible with $\tilde{\e}_z$, and an
    associated {\bf $\tau$-structure} on $\H$,
    $\mathbf{D}_\H^\tau(z)$.  Define the {\bf $\tau$-damped
    connect sum}
    \begin{equation}
        S \sharp^\tau_z \H,
    \end{equation}
    to be the surface
    \begin{equation}
        S \#^{\kappa(\tau)}_{z,\infty} \H
    \end{equation}
    equipped with the Floer datum $\mathbf{D}^{\tau}$ on $S - \e_z([0,\infty) \times [0,1])$ and
    the Floer datum $\mathbf{D}_\H^\tau(z)$ on $\H$ elsewhere. 
\end{defn}
By construction, this is a smooth Floer datum on $S \sharp^\tau_z \H$, and
satisfies the following properties:
\begin{itemize}
    \item For $\tau \in [0,\frac{1}{2}]$ it agrees with the compactified Floer
        datum $\hat{\mathbf{D}}^\tau$ on $\hat{S}_z$.
    \item For $\tau = 1$, it is the nodal connect sum 
        \begin{equation}
            S \#^1 \H
        \end{equation}
        where $\H$ is equipped with the {\bf standard unit datum} of type
        $w_1(\tau) = w_2(\tau)$, and $S$ has some Floer datum $\mathbf{D}^1$
        that is genuinely compatible with $S,z,\e_z$ in the usual sense.
\end{itemize}

\begin{rem}
    We should note that any intermediate damped connect sum with a copy of $\H$
    for our choices of standard strip-like ends (\ref{eh}) is conformally
    equivalent to the forgetful map. All that changes as the damped connect sum
    parameter approaches 1 is that the standard unit Floer datum is rescaled
    and shrunk into a smaller and smaller neighborhood of the marked point,
    until eventually at time 1 it is forced to break off. Despite this, it is
    useful sometimes to visualize the process as a topological connect sum.
\end{rem}

\subsection{Abstract moduli spaces and operations}

\begin{defn}
    The {\bf moduli space of discs} with $d$ {\bf marked points}, $F\subset
    [d]$ {\bf forgotten points}, and $H  \subset [d] - F$ {\bf homotopy units} 
    \begin{equation}
        \mf{H}^{d,F,H}
    \end{equation}
    is exactly the moduli space of discs $\mc{R}^d$, with points in $F$ or $H$
    labeled as belonging to $F$ or $H$ times a copy of $[0,1]$ for each element
    of $H$:
    \begin{equation}
        \mf{H}^{d,F,H} \simeq \mc{R}^d \times [0,1]^{|H|}.
    \end{equation}
    When $H = \emptyset$, we define $\mf{H}^{d,F,\emptyset} = \mc{R}^{d,F}$.
\end{defn}
We think of a point in this moduli space as a pair $(S,\vec{v} = (v_1, \ldots,
v_{|H|}))$.  We associate the $i$th copy of the interval to the $i$th ordered
point in $H$ in the following sense: Suppose $H$ is ordered $\{p_{n_1}, \ldots,
p_{n_{|H|}}\}$ Then, for each element of $H$, there are {\bf endpoint maps}
\begin{align}
    \pi_{p_{n_i}}^1: \mf{H}^{d,F,H}|_{v_i=1} &\lra \mf{H}^{d,F,H - \{p_{n_i}\}}\\
    \pi_{p_{n_i}}^0: \mf{H}^{d,F,H}|_{v_i=0} &\lra \mf{H}^{d,F + \{p_{n_i}\}, H - \{p_{n_i}\}}
\end{align}
defined as follows: given an element $(S,\vec{v})$ $\pi^1_{p_{n_i}}$
removes the label $H$ from the point $p_{n_i}$ in $S$, and projects $\vec{v}$
away from the $i$th component (which is 1). $\pi^0_{p_{n_i}}$ removes the label
of $H$ but assigns the label of $F$ to $p_{n_i}$, and projects $\vec{v}$ away
from the $i$th component (which is 0).

\begin{defn} The {\bf moduli space of $\mf{S}$-glued pairs of discs} with
    $(k,l)$ {\bf marked points,} $\mf{T}$ {\bf point identifications}, $F_1,
    F_2 \subset ([k],[l])$ {\bf forgotten points}, and $H_1, H_2 \subset
    ([k],[l])$ {\bf homotopy units}, denoted
    \begin{equation}
        _{\mf{S},\mf{T}} \mf{H}_{k,l}^{F_1,F_2,H_1,H_2}
    \end{equation}
    is exactly the moduli space $_{\mf{S},\mf{T}} \mc{R}_{k,l}$ with points in
    $F_1, F_2, H_1, H_2$ labeled accordingly, times a copy of $[0,1]$ for each
    element of $H_1$ or $H_2$:
    \begin{equation}
        _{\mf{S},\mf{T}} \mf{H}_{k,l}^{F_1,F_2,H_1,H_2} \simeq _{\mf{S},\mf{T}} \mc{R}_{k,l} \times [0,1]^{|H_1|} \times [0,1]^{|H_2|}.
    \end{equation} As with forgotten marked points, we have the following constraints:
    \begin{itemize}
        \item $F_1, H_1$ and $F_2, H_2$ are disjoint subsets of the left and right identified
            points respectively. Namely, 
            \begin{equation}
                F_i, H_i \subset \pi_i(\mf{T}), F_i \cap H_i = \emptyset
            \end{equation} 
            where $\pi_i$ is projection onto the $i$th component. 
        \item $F_1,H_1$ and $F_2,H_2$ are not associated to a boundary identification, i.e.
            \begin{equation}
                (F_i\cup H_i) \cap \pi_i(p(\mf{S})) = \emptyset
            \end{equation}
        \item $F_1,H_1$ and $F_2,H_2$ do not contain both the left and right points of
            any identification, i.e.  \begin{equation}
                ((F_1\cup H_1) \times (F_2\cup H_2)) \cap \mf{T} = \emptyset
            \end{equation}
    \end{itemize}
\end{defn}
We think of a point of $_{\mf{S},\mf{T}} \mf{H}_{k,l}^{F_1,F_2,H_1,H_2}$ as a
tuple \begin{equation}(P,\vec{v},\vec{w}).\end{equation} 
Suppose $H_1, H_2= \{p_{n_1}, \ldots,
p_{n_{|H_1|}}\}, \{p_{m_1}, \ldots, p_{m_{|H_2|}}\}$.
For any point $p_{n_i} \in H_1$ or $p_{m_i} \in H_2$, there are analogously
defined {\bf endpoint maps} 
\begin{align}
    \pi_{L,p_{n_i}}^1: _{\mf{S},\mf{T}} \mf{H}_{k,l}^{F_1,F_2,H_1,H_2} &\lra _{\mf{S},\mf{T}} \mf{H}_{k,l}^{F_1,F_2,H_1 - \{p_{n_i}\},H_2}\\
    \pi_{L,p_{n_i}}^0: _{\mf{S},\mf{T}} \mf{H}_{k,l}^{F_1,F_2,H_1,H_2} &\lra _{\mf{S},\mf{T}} \mf{H}_{k,l}^{F_1 + \{p_{n_i}\},F_2,H_1 - \{p_{n_i}\},H_2}\\
    \pi_{R,p_{m_i}}^1:  _{\mf{S},\mf{T}} \mf{H}_{k,l}^{F_1,F_2,H_1,H_2} &\lra _{\mf{S},\mf{T}} \mf{H}_{k,l}^{F_1 ,F_2+ \{p_{m_i}\},H_1,H_2 - \{p_{m_i}\}}\\
    \pi_{R,p_{m_i}}^0: _{\mf{S},\mf{T}} \mf{H}_{k,l}^{F_1,F_2,H_1,H_2} &\lra _{\mf{S},\mf{T}} \mf{H}_{k,l}^{F_1,F_2,H_1,H_2 - \{p_{m_i}\}},
\end{align}
which change the labellings of $P$, and apply a projection map to $(\vec{v},
\vec{w})$ in the following way: for $\pi_{L,p_{n_i}}^b$: given a point
$(P,\vec{v},\vec{w})$, remove the point $p_{n_i}$ from the set $H_1$, and add
it to $F_1$ if $b=0$. Also, project $\vec{v}$ away from the $i$th factor and do
nothing to $\vec{w}$. For $\pi_{R,p_{m_j}}^b$: given a point
$(P,\vec{v},\vec{w})$, remove the point $p_{m_j}$ from the set $H_2$, and add
it to $F_2$ if $b=0$. Also, project $\vec{w}$ away from the $j$th factor and do
nothing to $\vec{v}$.

As before, there are {\bf forgetful maps}
\begin{align}
    \mf{F}_{I}: \mf{H}^{d,F,H} &\lra \mf{H}^{d-|I|,F',H'}\\
    \mc{F}_{I_1,I_2} : _{\mf{S},\mf{T}}\mf{H}_{k,l}^{F_1,F_2,H_1,H_2} &\lra 
    _{\mf{S}',\mf{T}'}\mf{H}_{k-|I_1|,l-|I_2|}^{F_1',F_2',H_1',H_2'}
\end{align}
for $I \subset F$ or $I_1, I_2 \subset F_1, F_2$. $F_1'$ and $F_2'$ are $F_1$
and $F_2$ sans $I_1$ and $I_2$, reindexed appropriately, and $H_1'$ and $H_2'$
are just $H_1$ and $H_2$ reindexed. On the $[0,1]^{|H_i|}$ components, the
forgetful maps are the identity.
\begin{defn}
    Fix some very small number $\e \ll 1$.
    let $(S,\vec{v})$ denote an element of the moduli space $\mf{H}^{d,F,H}$. This element is said to be {\bf h($\e$)-semistable} if 
    \begin{equation} \label{hsemistable}
        d - |F| - |H| + \#\{j | v_j > \e\} = 1.
    \end{equation}
    It is said to be {\bf h($\e$)-stable} if the equality above is replaced by the strict inequality $>$.

    Similarly, let $(P, \vec{v}, \vec{w})$ denote an element of the moduli space $_{\mf{S},\mf{T}} \mf{H}_{k,l}^{F_1,F_2,H_1,H_2}$. This element is said to be {\bf h($\e$)-semistable} if 
    \begin{equation}
        \begin{split}
            k- |F_1| - |H_1| + \#\{j | v_j > \e\} &= 1\\
            l- |F_2| - |H_2| + \#\{k | w_k > \e\} &= 1,
        \end{split}
    \end{equation}
    and {\bf h($\e$)-stable} if the equalities above are replaced by
    inequalities $\geq$, with one of the inequalities being strict.
\end{defn}

The Deligne-Mumford compactifications
\begin{align}
    \overline{\mf{H}}^{d,F,H}\\
    _{\mf{S},\mf{T}} \overline{\mf{H}}_{k,l}^{F_1,F_2,H_1,H_2}
\end{align}
exist, equal as abstract spaces to the product of the compactifications
\begin{align}
    \overline{\mc{R}}^{d,F} \times [0,1]^{|H|}\\
    _{\mf{S},\mf{T}}\overline{\mc{R}}_{k,l}^{F_1,F_2} \times [0,1]^{|H_1|+|H_2|}
\end{align}
respectively.  The codimension 1 boundaries of these spaces are given by the
codimension 1 boundary of the various underlying spaces of discs, along with
restrictions to various endpoints.
\begin{align}
    \partial^1 \overline{\mf{H}}^{d,F,H} &= (\partial^1 \overline{\mc{R}}^{d,F}
    ) \times [0,1]^{H} \cup \coprod \overline{\mc{R}}^{d,F} \times [0,1]^i
    \times \{0,1\} \times [0,1]^{|H|-i-1}\\
    \partial^1{_{\mf{S},\mf{T}}}
    \overline{\mf{H}}_{k,l}^{F_1,F_2,H_1,H_2} &=
    \partial^1(_{\mf{S},\mf{T}}\overline{\mc{R}}_{k,l}^{F_1,F_2} ) \times
    [0,1]^{|H_1|+|H_2|}  \\
    \nonumber &\cup \coprod
    (_{\mf{S},\mf{T}}\overline{\mc{R}}_{k,l}^{F_1,F_2}) \times [0,1]^i \times
    \{0,1\} \times [0,1]^{|H_1|+|H_2|-i-1} 
\end{align}

In a manner identical to the previous section, in the {\bf f-stable} range
(which is independent of $H$ or $H_1, H_2$), the maximal forgetful map extends
to a map on compactifications:
\begin{align}
    \mf{F}_{max}: \overline{\mf{H}}^{d,F,H} &\lra \overline{\mf{H}}^{d-|F|,\emptyset,H'}\\
    \mc{F}_{max} : _{\mf{S},\mf{T}}\overline{\mf{H}}_{k,l}^{F_1,F_2,H_1,H_2} &\lra 
    _{\mf{S}',\mf{T}'}\overline{\mf{H}}_{k-|F_1|,l-|F_2|}^{\emptyset,\emptyset,H_1',H_2'}
\end{align}

In what follows, we will only construct Floer data for glued pairs of
discs---though the case for a single disc is identical (and in fact simpler).  
\begin{defn}
    A {\bf Floer datum} for a pair of glued discs with homotopy units and
    forgotten points $(P,\vec{v},\vec{w})$ is a Floer datum for the reduced
    gluing $\pi_{\mf{S}}(\mc{F}_{max}(P,\vec{v},\vec{w}))$ in the usual sense,
    with the following exceptions:
    \begin{itemize}
        \item For boundary point $p_{n_i} \in H_1,$ thought of as a point in
            $\pi_{\mf{S}}(P)$, the Floer datum only needs to be {\bf
            $v_i$-partially compatible} with the associated strip-like end
            $\e_{p_{n_i}}$, in the sense of Definition \ref{taucompatible}.
        \item Similarly, for boundary point $p_{m_j} \in H_2$, thought of as a
            point in the gluing $\pi_{\mf{S}}(P)$, the Floer datum only needs
            to be {\bf $w_j$-partially compatible} with the striplike end
            $\e_{p_{m_j}}$.
    \end{itemize}
    We additionally fix, for each element of $H_1$ and $H_2$, a copy
    $(\H,\e_{\H})$. Call $\H_{p_{n_i}}$ and $\H_{p_{m_j}}$ the copies of $\H$
    corresponding to points $p_{n_i} \in H_1$ and $p_{m_j} \in H_2$
    respectively.  Then, a Floer datum also consists of a choice of {\bf
    associated $v_i$ and $w_j$ structures} on $\H_{p_{n_i}}$ and $\H_{p_{m_j}}$
    for $p_{n_i}$ and $p_{m_j}$ respectively, in the sense of Definition
    \ref{taustructure}.
\end{defn}

\begin{figure}[h] 
    \caption{A single disc with forgotten points (marked with hollow circles) and homotopy units (marked with stars and dotted connect sums). The connect sums should be thought of simply as a schematic picture; really the conformal structure on the disc stays the same. \label{homotopyunitpic1}}
    \centering
    \includegraphics[scale=0.7]{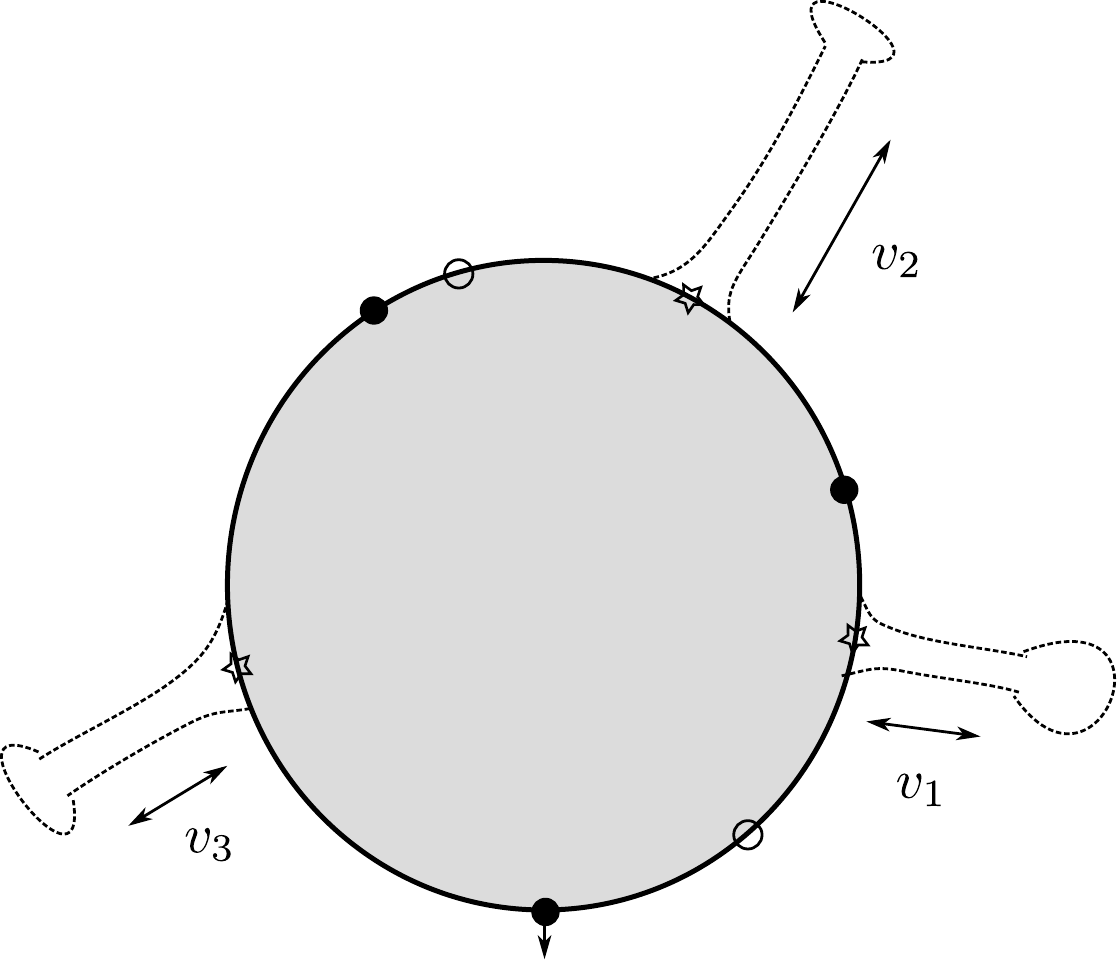}
\end{figure}

\begin{defn}
    A {\bf universal and conformally consistent choice of Floer data for glued pairs
    of discs with homotopy units} is a choice
    $\mathbf{D}_{(P,\vec{v},\vec{w})}$, for every boundary identification
    $\mf{S}$ and compatible sequential point identification $\mf{T}$, and every
    representative $(P,\vec{v},\vec{w})$, $_{\mf{S},\mf{T}}
    \overline{\mf{H}}_{k,l}^{F_1,F_2,H_1,H_2}$, varying smoothly over this
    space, whose restriction to a boundary stratum is conformally equivalent
    to a Floer datum coming from lower dimensional moduli spaces. Moreover,
    Floer data agree to infinite order at the boundary stratum with the Floer
    datum obtained by gluing. Finally, we require that 
    \begin{itemize}
        \item ({\bf forgotten points are forgettable}) 
            In the {\bf h($\e$)-stable} range, the choice of Floer datum only
            depends on the reduced surface $\mc{F}_{max}(P,\vec{v},\vec{w})$. 
            In the {\bf h($\e$)-semistable} range, the Floer datum agrees
            with the translation-invariant Floer datum on the strip.
        \item ({\bf 0 endpoint is
            forgetting}) In the {\bf h($\e$)-stable range}, if $v_i=0$ or
            $w_j=0$, then after forgetting the copy of $\H$
            corresponding to $p_{n_i}$ or $p_{m_j}$ respectively, the Floer
            datum should be isomorphic to the Floer datum on
            $\pi_{L,p_{n_i}}^0(S,\vec{v},\vec{w})$ or
            $\pi_{R,p_{m_j}}^0(S,\vec{v},\vec{w})$ respectively. 
            In the h($\e$)-semistable case, the Floer datum should be
            isomorphic to the translation invariant Floer datum on the
            respective surface.
        \item ({\bf 1 endpoint is gluing in a unit}) if $v_i=1$ or $w_j=1$,
            then $\H_{p_{n_i}}$ or $\H_{p_{m_j}}$ should have the standard unit
            datum Floer data, and the Floer datum on the main component should
            be isomorphic to a Floer datum on $\pi_{L,p_{n_i}}^1(P,\vec{v},\vec{w})$
            or $\pi_{R,p_{m_j}}^1(P,\vec{v},\vec{w})$ respectively.
    \end{itemize}
\end{defn}
\begin{prop}
    There exists a universal and conformally consistent choice of Floer data for glued pairs of discs with homotopy units.
\end{prop}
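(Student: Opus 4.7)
The plan is to proceed by induction on a complexity measure such as
\[
c(k,l,F_1,F_2,H_1,H_2) := k+l + |H_1| + |H_2|,
\]
combined with dimension of the compactified moduli space. The base cases are the h($\e$)-semistable surfaces and the lowest-complexity f-semistable cases, where the Floer datum is forced to be the translation-invariant strip datum (or the standard unit datum of some weight on an $\H$ component), so there is nothing to choose. These base choices are tautologically compatible with all the listed requirements.

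For the inductive step, assume that universal and conformally consistent Floer data have been chosen on all boundary and lower-complexity strata of $_{\mf{S},\mf{T}}\overline{\mf{H}}_{k,l}^{F_1,F_2,H_1,H_2}$, satisfying all compatibility axioms. I would first extend these data to a collar of the codimension-one boundary using the standard gluing charts (\ref{gluingcoords}), combined with the endpoint-compatibility recipes at $v_i=0,1$ and $w_j=0,1$. The key observation is that for each fixed representative $(P,\vec{v},\vec{w})$, the space of Floer data satisfying all required asymptotic/strip-and-cylinder compatibility conditions, together with $v_i$-partial compatibility along each homotopy-unit end and the prescribed associated $v_i$/$w_j$-structures on the $\H_{p_{n_i}}$/$\H_{p_{m_j}}$ components, is a non-empty convex (hence contractible) space. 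Therefore, once a consistent choice has been made on the entire boundary of the moduli space, any smooth extension into the interior produces a valid Floer datum, and such extensions exist by a partition of unity argument; infinite-order agreement with the glued data at the boundary is achieved by the standard device of fixing data in a collar and then extending smoothly.

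The main obstacle is ensuring that the three families of conditions, namely (i) the consistency with gluing at nodal boundary strata, (ii) the ``$0$ endpoint is forgetting'' condition, and (iii) the ``$1$ endpoint is gluing in a unit'' condition, are all mutually consistent at the corners where several of these occur simultaneously. Concretely, at a corner where some $v_i=0$ (or $1$) while also sitting on a nodal degeneration, the Floer datum produced by the endpoint projection $\pi^{0}_{L,p_{n_i}}$ (respectively $\pi^{1}_{L,p_{n_i}}$) must coincide, up to conformal equivalence, with the one obtained by first going to the nodal boundary and then applying the endpoint map; similarly for corners where both an $H_1$-homotopy-unit and an $H_2$-homotopy-unit coordinate are at extreme values. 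These compatibilities are automatic once we have inductively arranged that the Floer data on lower-complexity strata themselves satisfy the endpoint and forgetfulness conditions, because both compositions descend to the same Floer datum on the reduced gluing $\pi_{\mf{S}}(\mc{F}_{max}(P,\vec{v},\vec{w}))$ with a standard-unit $\H$ attached in the appropriate slots.

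Finally, the ``forgotten points are forgettable'' axiom is imposed by fiat: in the h($\e$)-stable range we simply pull back the previously constructed Floer datum on $\mc{F}_{max}(P,\vec{v},\vec{w})$, which is well-defined by the earlier universal and consistent choice for glued pairs of discs (without homotopy units) constructed in Section \ref{pairs}; this pullback automatically agrees with the endpoint prescriptions in the $v_i=0$ and $w_j=0$ cases. In the h($\e$)-semistable range the datum is fixed. The resulting data vary smoothly in all parameters because both the gluing charts and the maps $\mf{F}_{max}$, $\pi^{0,1}_{L/R,\cdot}$ are smooth, and the damped connect-sum construction already provides a smooth interpolation in the $v_i, w_j$ parameters between the forgetful regime ($v_i < 1/2$) and the genuine-unit regime ($v_i = 1$). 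Induction then delivers the universal and conformally consistent choice.
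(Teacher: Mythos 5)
Your proposal is correct and follows essentially the same route as the paper: an induction on the number of homotopy units (and marked points), with the endpoint and codimension-one boundary constraints prescribing the data on the boundary of each stratum, and the extension to the interior justified by contractibility of the space of admissible Floer data. The extra detail you supply on corner compatibilities and on pulling back the previously chosen data via $\mc{F}_{max}$ only fleshes out what the paper's brief argument leaves implicit.
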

\begin{proof}
One proceeds inductively on the number of homotopy units. Suppose that we have
universally and conformally consistently chosen Floer data for $|H_1| + |H_2|
\leq k$ and Floer data for glued pairs of discs with at least $r+s$ marked
points, with homotopy units such that $|H_1|+|H_2| = k+1$. Using the endpoint
constraints described above, we have already described constraints on our Floer
data on the endpoints, and codimension-1 boundary strata, so we pick some Floer
datum extending these cases. Recall that this is possible because all of the
spaces of choices are contractible.
\end{proof}

\begin{rem} 
    Notice that the notion of {\bf h($\e$)-stability} depends in some cases on the
    chosen point in the moduli space. For example, an element $(S,\vec{v})$ of
    $\mf{H}^{d,\{1, \ldots, d\}, \emptyset}$ is only h($\e$)-stable if at least
    two of the components of $\vec{v}$ are greater than $\e$. It will not be
    possible to consistently inherit Floer data at zero-endpoints from the
    forgetful map when all (or all but one) $\vec{v}$ equal to zero. Thus we
    are forced to turn off stability in a neighborhood of this case.
\end{rem}

\begin{defn}
    Let $(P,\vec{v},\vec{w})$ be a pair of glued discs with $H_1, H_2$ homotopy
    units, and suppose we have fixed a Floer datum $\mathbf{D}$ for
    $(P,\vec{v},\vec{w})$.  Then the {\bf associated homotopy-unit surface},
    denoted \begin{equation}
        \mf{h}(P),\end{equation} 
    is the {\bf iterated damped connect sum}
    \begin{equation}
        \mf{h}(P):= \pi_{\mf{S}}(\mf{F}_{max}(P)) \sharp^{v_1}_{p_{n_1}} \H_{p_{n_1}}\cdots \sharp^{v_{|H_1|}}_{p_{n_{|H_1|}}} \H_{p_{n_{|H_1|}}} \sharp^{w_1}_{p_{m_1}} \H_{p_{m_1}} \cdots \sharp^{w_{|H_2}}_{p_{m_{|H_2|}}} \H_{p_{m_{|H_2|}}}.
    \end{equation}
    This is a (potentially nodal) surface with associated Floer data.
\end{defn}

\begin{defn}
    An {\bf admissible Lagrangian labeling} for pairs of glued discs with
    homotopy units and forgotten point is a labeling in the usual sense,
    satisfying the conditions that labellings before and after H points and F
    points must coincide.  
\end{defn}
The admissibility condition implies that there is an {\bf induced labeling}
on the {\bf associated homotopy-unit surface}.

Now, suppose we have fixed a universal and consistent choice of Floer data for
homotopy units. Consider a compact submanifold with corners of dimension $d$ 
\begin{equation}
    \overline{\mc{E}}^d \longhookrightarrow {_{\mf{S},\mf{T}}} \overline{\mc{H}}_{k,l}^{F_1,F_2,H_1,H_2}.
\end{equation}
with an admissible Lagrangian labeling $\vec{L}$. In the usual fashion, fix
input and output chords $\vec{x}_{in}, \vec{x}_{out}$ and orbits
$\vec{y}_{in}$, $\vec{y}_{out}$ for the induced marked points of the associated
homotopy-unit surface $\mf{h}(P,\vec{v},\vec{w})$.
Define
\begin{equation}
    \overline{\mc{E}}^d(\vec{x}_{out},\vec{y}_{out};\vec{x}_{in},\vec{y}_{in})
\end{equation}
to be the space of maps
\begin{equation}
    \{u: \mf{h}(P,\vec{v},\vec{w})) \lra M: S \in \overline{\mc{E}}^d\}
\end{equation}
satisfying Floer's equation with respect to the Floer datum and asymptotic and
boundary conditions specified by the Lagrangian labeling $\vec{L}$ and
asymptotic conditions $(\vec{x}_{out},\vec{y}_{out}, \vec{x}_{in},
\vec{y}_{in})$.

As before, $h(\mf{S},k,l)$ denote the number of boundary components of any
resulting surface $\mf{h}(P)$.
\begin{lem}
    The moduli spaces
    $\overline{\mc{E}}^d(\vec{x}_{out},\vec{y}_{out};\vec{x}_{in},\vec{y}_{in})$
    are compact, and empty for all but finitely many
    $(\vec{x}_{out},\vec{y}_{out})$ given fixed inputs
    $(\vec{x}_{in},\vec{y}_{in})$. For generically chosen Floer data, they form
    smooth manifolds of dimension
    \begin{equation}
        \begin{split}
    \dim \overline{\mc{E}}^d(\vec{x}_{out},\vec{y}_{out};\vec{x}_{in},\vec{y}_{in}):=
            \sum_{x_-\in \vec{x}_{out}} \deg(x_-) + &\sum_{y_- \in \vec{y}_{out}}\deg(y_-) \\
            + (2 - h(\mf{S},k,l) - |\vec{x}_{out}|- 2 |\vec{y}_{out}|)n 
        &+d -\sum_{x_+\in \vec{x}_{in}} \deg(x_+) - \sum_{y_+ \in \vec{y}_{in}} \deg(y_+).
    \end{split}
\end{equation}
\end{lem}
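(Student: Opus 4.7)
The plan is to adapt the arguments for the analogous Lemmas proven earlier for open-closed strings (Lemma \ref{openclosedcompactdimensiontransversality}) and for glued pairs of discs, incorporating the additional parameters $\vec{v}, \vec{w}$ coming from homotopy units. The three assertions (compactness and finiteness, transversality, dimension) will be established in turn.

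First, for the \emph{dimension formula}, I would compute the index of the linearized Cauchy--Riemann operator associated to the total problem. The associated homotopy-unit surface $\mf{h}(P,\vec{v},\vec{w})$ is, topologically, an open-closed string obtained from $\pi_{\mf{S}}(\mf{F}_{max}(P))$ by gluing on $|H_1|+|H_2|$ copies of $\H$ along boundary strip-like ends; such gluings do not change the Euler characteristic because each copy of $\H$ is a half-plane. Thus the gluing theorem for indices (as in \cite{Seidel:2008zr}*{(11c)}) gives the standard contribution $n\chi(\hat{S}) = (2 - h(\mf{S},k,l))n$ from the compactified surface, to which we add spectral-flow contributions at each strip-like/cylindrical end (yielding the degree sums), $d$ from the tangent space of $\overline{\mc{E}}^d$, and an additional $|H_1|+|H_2|$ from the one-dimensional $[0,1]$-parameters attached to each homotopy unit --- but these latter parameters are already accounted for in the quoted $d$, since by construction $d = \dim \overline{\mc{E}}^d$ includes the interval directions. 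This yields the stated formula.

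For \emph{transversality}, I would apply the Sard--Smale argument of \cite{Seidel:2008zr}*{(9k)}, noting that the constraints imposed on Floer data at endpoints $v_i, w_j \in \{0,1\}$ and at forgotten points only affect the data on a codimension-one (in the parameter) locus, while the consistency requirement that the Floer datum depends only on $\mf{F}_{max}$ in the h($\e$)-stable range leaves an abundance of freedom on the main component of $\mf{h}(P,\vec{v},\vec{w})$. Since generic perturbations supported away from the thin parts of the associated homotopy-unit surface already suffice to make the universal moduli space cut out transversely, one obtains transversality for generic choices within the class of admissible Floer data. The boundary strata at $v_i, w_j \in \{0,1\}$ inherit transversality inductively from lower-dimensional cases by the consistency condition.

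The \emph{compactness} argument is the main subtlety, and it is where I expect the most care. As with the analogous lemma for glued pairs of discs, the core issue in the non-compact target $M$ is establishing a priori $C^0$ bounds on maps with fixed asymptotics; this is precisely the content of Theorem \ref{c0bounds}, which applies here because the Floer data on $\mf{h}(P,\vec{v},\vec{w})$ is built out of $H$-compatible primary Hamiltonians, rescaled contact-type almost complex structures, and bounded $S^1$-perturbations --- exactly the class covered by that theorem. Once the $C^0$ bound is in hand, standard Gromov compactness produces a compact Gromov bordification. The only new phenomenon is the behavior as the damping parameters $v_i, w_j$ degenerate: at $v_i \to 1$, the standard unit datum rescales and concentrates, producing a nodal limit in which an $\H$-component breaks off, and at $v_i \to 0$, the local model is conformally equivalent to the forgetful Floer datum, so no bubbling occurs beyond the forgotten-point case. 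In both cases, the resulting limits already lie in the Deligne--Mumford compactification $_{\mf{S},\mf{T}}\overline{\mf{H}}_{k,l}^{F_1,F_2,H_1,H_2}$. Finiteness of asymptotic outputs for fixed inputs then follows from the energy/action inequalities implicit in the $C^0$ bound, just as in \cite{Abouzaid:2010kx}*{Lemma 2.5}.
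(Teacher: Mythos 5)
Your proposal is correct and follows essentially the same route as the paper, whose proof is simply the one-line remark that "the usual transversality arguments, dimension calculation, and compactness results apply" (i.e., the index/gluing computation, Sard--Smale argument, and $C^0$-bound-plus-Gromov-compactness already set up in Lemma \ref{openclosedcompactdimensiontransversality}, the glued-pairs lemma, and Theorem \ref{c0bounds}). Your expanded treatment of the damping-parameter endpoints and of why $d$ already absorbs the $[0,1]$-factors is consistent with how the paper's consistency conditions on Floer data are designed, so there is no gap to flag.
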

\begin{proof}
    The usual transversality arguments, dimension calculation, and compactness results apply.
\end{proof}
In the usual fashion, when the dimension of the spaces
$\overline{\mc{E}}^d(\vec{x}_{out},\vec{y}_{out};\vec{x}_{in},\vec{y}_{in})$
are zero, we use orientation lines to count (with signs) the number of points
in such spaces, and associate operations
\begin{equation}
    (-1)^{\vec{t}}\mathbf{I}_{\overline{\mc{E}}^d} 
\end{equation} 
from the tensor product of wrapped Floer complexes and symplectic cochain
complexes where $\vec{x}_{in}$, $\vec{y}_{in}$ reside to the tensor product of
the complexes where $\vec{x}_{out}$, $\vec{y}_{out}$ reside, where $\vec{t}$ is
a chosen sign twisting datum.

An interesting source of submanifolds for operations comes from the entire
moduli spaces
\begin{equation}
    _{\mf{S},\mf{T}} \mf{H}_{k,l}^{F_1,F_2,H_1,H_2}
\end{equation}
for an initial sequential point identification $\mf{T}$.

\subsection{New operations}

Up until now, we have been somewhat imprecise when specifying correspondences
between inputs and asymptotic boundary conditions on moduli spaces associated
with operations. Let us fix some notation for a specific class of moduli
spaces.

Let $\mf{S}$ be a boundary identification, and let $\mf{T}$ be an initial
sequential boundary identification that is compatible with $\mf{S}$; say it is
\begin{equation}
    \mf{T} = \{(1,1), (2,2), \ldots, (r,r)\}
\end{equation}
We previously defined an operation $\mathbf{G}_{\mf{S},\mf{T}}$ corresponding
to the entire moduli space 
\begin{equation}
    _{\mf{S},\mf{T}} \overline{\mc{R}}_{k,l}.
\end{equation}

Let us be precise about inputs. Given {\it boundary marked points} $z_1,
\ldots, z_k$, $z_1', \ldots, z_l'$ on each factor of our pair of discs, 
if $i \leq r$, define
\begin{equation}
    g_{\mf{S}}(z_i,z_i')
\end{equation}
to be the image of the pair of identified points under the gluing
$\pi_{\mf{S}}$. The possibilities are
\begin{itemize}
    \item a {\bf pair of boundary input points} ($\tilde{z}_i, \tilde{z}'_i)$ if
        $z_i$, $z_i'$ were not adjacent to a boundary identification;
    \item a {\bf single boundary input point} $\tilde{z}_{i,i'}$ if $z_i$, $z_i'$
        were adjacent to a single boundary identification; or
    \item a {\bf single interior input point} $\tilde{y}_{i,i'}$ if $z_i$, $z_i'$
        were adjacent to two boundary identifications.
\end{itemize}
Denote by 
\begin{equation}
    g_{\mf{S}}(z_j), g_{\mf{S}}(z_j')
\end{equation}
the images of non-identified points under the gluing $\pi_{\mf{S}}$.
Then, the associated operation takes the form
\begin{equation}
    \mathbf{G}_{\mf{S},\mf{T}}( (\bar{x}_1, \ldots, \bar{x}_r), (x_{r+1}, \ldots, x_k), (x_{r+1}', \ldots, x_l')),
\end{equation}
where $\bar{x}_i$ is an asymptotic condition of the same basic type as
$g_{\mf{S}}(z_i,z_i')$, $x_j$ is a boundary asymptotic
condition corresponding to $g_{\mf{S}}(z_j)$, and $x_j'$ is a boundary
asymptotic condition associated  to $g_{\mf{S}}(z_j')$. This operation returns
a sum of boundary asymptotic condition of the same type as
$g_{\mf{S}}(z_{out},z_{out}')$, the gluing of the outputs.

To be even a bit more precise, let us move now to the operations of the above
form arising in $\w^2$. Given a tuple of Lagrangians $\vec{X} = X_1, \ldots,
X_d$ in $\ob \w^2$,  and morphisms 
\begin{equation}
    x_i \in \hom(X_i,X_{i+1}),
\end{equation}
identified via the correspondence
\begin{equation}
    x_i \leftrightarrow \hat{x}_i
\end{equation}
with a boundary asymptotic condition, pair of boundary asymptotic conditions,
or interior asymptotic condition respectively, discussed in Proposition
\ref{gradings}, $\mu^d(x_d, \ldots, x_1)$ is by definition the labeled operation
\begin{equation}
    \mathbf{G}_{\mf{S}(\vec{X}),\mf{T}}(\hat{x}_1, \ldots, \hat{x}_d)
\end{equation}
in the sense of above, where we are implicitly composing with the reverse
identification 
\begin{equation}
    \hat{x} \leftrightarrow x
\end{equation}
to obtain the correct output, and using the usual sequential sign twisting
datum $\vec{t}_d = (1, \ldots, d)$. This is sensible because the boundary
asymptotic type of the input $\hat{x}_i$ is compatible with the type of the
glued marked point $g_{\mf{S}(\vec{X})}(z_i,z_i')$ by construction.

With this notation in place, let us now incorporate homotopy units and
forgotten points.  Define the $\ainf$ category 
\begin{equation}
    \tilde{\w}^2
\end{equation} 
to the have the same objects as $\w^2$. Its morphisms will be identical to
$\w^2$ as graded vector spaces, except for each $L$, it also contains the
following formal generators:
\begin{align}\label{factorunits}
    f_{L} \otimes x,\  e^+_{L} \otimes x \in \hom_{\tilde{\w}^2}(L \times L_j, L \times L_k) \mathrm{\ for\ all\ }x \in CW^*(L_k,L_j)\\
    x \otimes f_{L},\ x \otimes e^+_{L} \in \hom_{\tilde{\w}^2}(L_j \times L, L_k \times L) \mathrm{\ for\ all\ }x \in CW^*(L_j,L_k).
\end{align}
The degrees of these generators are
\begin{align}
    \label{formaldegree1}    \deg(f_L \otimes x) = \deg(x \otimes f_L) &= \deg(x) - 1.\\
    \label{formaldegree2} \deg(e_L^+ \otimes x) = \deg (x \otimes e_L^+) &= \deg(x),
\end{align}
i.e. $f_L$ and $e_L^+$ should be thought of as having degrees -1 and 0
respectively. Denote the generators of the morphism space between $X$ and $X'$
in $\tilde{\w}^2$ by $\tilde{\chi}(X,X')$.  The operations on $\tilde{\w}^2$
are as follows: Fix a label-set $\vec{X} = X_0, \ldots, X_d$. As in Section
\ref{unfoldingproduct}, there is an associated
boundary identification
\begin{equation}
    \mf{S}(\vec{X}) = \{(i,i) | X_i = \Delta. \}
\end{equation}
Now, let $x_1, \ldots, x_d$ be a sequence of asymptotic boundary conditions,
i.e.  $x_i \in \tilde{\chi}(X_{i-1},X_i)$. Let 
\begin{equation}
    F_1, F_2, H_1, H_2 \subset \{1, \ldots, d\}
\end{equation}
denote the subset of these of the form $e^+_L \otimes x$, 
\begin{equation}
    \begin{split}
    (F_1 \cup H_1)\cap (F_2 \cup H_2) &= \emptyset\\
    F_i \cup H_i &= \emptyset.
\end{split}
\end{equation}

Then, define
\begin{equation}
    \mu^d(x_d, \ldots, x_1)
\end{equation}
to be the operation controlled by the moduli space
\begin{equation}
    _{\mf{S}(\vec{X}),\mf{T}} \overline{\mf{H}}_{d,d}^{F_1, F_2, H_1, H_2},
\end{equation}
with labeling induced by the labeling of $\vec{X}$ as in Section
\ref{unfoldingproduct} as follows: If the $k$th Lagrangian $X_k$  was
labeled $L_i \times L_j$, then in the gluing $\pi_{\mf{S}}(P)$, the left image
of
$\partial^k S$ will be labeled $L_i$ and the right image $\bar{\partial}^k S$
will be labeled $L_j$. If $\partial_k S$ was labeled $\Delta$, then it
disappears under gluing so there is nothing to label. This induces a labeling
for the associated homotopy-unit surface $\mf{h}(P,\vec{v},\vec{w})$: since our
labeling was by choice {\it admissible}, any boundary point which we forget or
take damped connect sum is adjacent to boundary components with the same label.

The asymptotic conditions in the gluing
\begin{equation}
    \mf{h}(P)
\end{equation}
are as follows: in the glued surface $\pi_{\mf{S}}(P)$, let
$g_{\mf{S}}(z_i,z_i')$ be the resulting inputs (or pair of inputs) obtained by
the gluing. Then if $x_i$ is not a formal element, one requires these inputs to
be asymptotic to the associated $\hat{x}_i$ as before. If $x_i$ is a formal
element of any form, then 
$g_{\mf{S}}(z_i, z_i')$ is a pair of boundary marked points $(\tilde{z}_i, \tilde{z}_i')$.
\begin{itemize}
    \item if $x$ is of the form $f_L \otimes x$, then $\tilde{z}_i$ is marked as one of the $H_1$ points, and disappears under the damped connect sum operation. We require the other point $\tilde{z}_i'$ to be asymptotic to $x$.
    \item if $x$ is of the form $x\otimes f_L$, then $\tilde{z}_i'$ is marked as one of the $H_2$ points, and disappears under the damped connect sum operation. We require the other point $\tilde{z}_i'$ to be asymptotic to $x$.
    \item if $x$ is of the form $e_L^+ \otimes x$, then $\tilde{z}_i$ is marked as one of the $F_1$ points, and disappears under the forgetful map. We require the other point $\tilde{z}_i'$ to be asymptotic to $x$.
    \item if $x$ is of the form $x \otimes e_L^+ $, then $\tilde{z}_i'$ is marked as one of the $F_2$ points, and disappears under the forgetful map. We require the other point $\tilde{z}_i$ to be asymptotic to $x$.
\end{itemize}

This gives rise to a well-defined operation
\begin{equation}
    \mu^d(x_d, \ldots, x_1)
\end{equation}
where $x_1, \ldots, x_d$ are allowed to be formal elements---implicitly again,
we are taking the output of this operation, and composing under the reverse
association 
\begin{equation}
    \hat{x} \leftrightarrow x.
\end{equation}
In this case, we use sign twisting datum $\vec{t}_d = (1, \ldots, d)$,
including the degrees and presence of formal elements.

One can check that degree of the associated operation is $2-d$, under the
choice of gradings of the formal elements (\ref{formaldegree1}) and
(\ref{formaldegree2}), for the following reason: there are no Maslov type
contributions of the form $\deg(f_L)$, but this is compensated for by any
additional factor of the interval $[0,1]$ in the source abstract moduli space.

As we have constructed it, this operation is only well defined for $(d,d,F_1,
F_2, H_1, H_2)$ in the {\bf f-semistable range}. Hand-declare the following
operations, corresponding to the {\bf f-unstable range}:
\begin{align}
\mu^k_{\w^2}(x_1 \otimes e^+_L, \cdots, x_k \otimes e^+_L)  &:= \mu^k_{\w^{op}}(x_1, \ldots, x_k) \otimes e^+_L \\
\nonumber &= (-1)^* \mu^k_{\w}(x_k, \ldots, x_1) \otimes e^+_L \\
    \mu^k_{\w^2}(e^+_L \otimes x_k, \cdots, e^+_L \otimes x_1 )  &:= e^+_L \otimes \mu^k_{\w}(x_k, \ldots, x_1) \\
    \mu^1_{\w^2}(x \otimes e^+_L) &:= \mu^1_\w (x) \otimes e^+_L\\
    \mu^1 (e^+_L \otimes x) &:=  e^+_L \otimes \mu^1_{\w}(x)\\
    \mu^1_{\w^2}(f_L\otimes x) &:= (e^+_L - e_L) \otimes x \pm f_L \otimes \mu^1_{\w}(x)\\
    \mu^1_{\w^2}(x \otimes f_L) &:= x \otimes (e^+_L - e_L) \pm \mu^1_{\w}(x) \otimes f_L.
\end{align}

\begin{prop} 
    The resulting category $\tilde{\mc{W}}^2$ is an $\ainf$ category.
\end{prop}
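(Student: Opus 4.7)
The plan is to verify the $\ainf$ relations for $\tilde{\w}^2$ by analyzing, in the usual way, the codimension-one boundary of the one-dimensional components of the moduli spaces $_{\mf{S}(\vec{X}),\mf{T}} \overline{\mf{H}}_{d,d}^{F_1, F_2, H_1, H_2}$ together with the hand-declared operations in the f-unstable range. Fix a sequence of inputs $x_1, \ldots, x_d$ where each $x_i$ is either an ordinary element of $\tilde{\chi}(X_{i-1},X_i)$ or one of the formal elements $e^+_L\otimes y$, $y\otimes e^+_L$, $f_L\otimes y$, $y\otimes f_L$; the subsets $F_1,F_2$ collect the positions of $e^+$-type inputs and $H_1,H_2$ the positions of $f$-type inputs. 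Since the relevant moduli space factors as $_{\mf{S},\mf{T}}\overline{\mc{R}}_{d,d}^{F_1,F_2}\times[0,1]^{|H_1|+|H_2|}$, its codimension-one boundary splits into three pieces: (i) the Deligne--Mumford strata of the underlying $_{\mf{S},\mf{T}}\overline{\mc{R}}_{d,d}^{F_1,F_2}$, (ii) the $v_i=0$ or $w_j=0$ endpoints, and (iii) the $v_i=1$ or $w_j=1$ endpoints.

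I would next identify the operations contributed by each piece. Applying Proposition \ref{sequentialpointidentifications} and the construction of $\mu^d_{\w^2}$ in Section \ref{unfoldingproduct}, stratum (i) yields the standard nested compositions $\mu^{d-l+1}(\ldots,\mu^l(\ldots),\ldots)$, the strata with $L/R$ bubbling away from coincident points vanishing by Proposition \ref{extraauto}. The consistency requirement that the Floer datum at $v_i=0$ reduce to the one obtained by relabeling $p_{n_i}$ as forgotten implies that stratum (ii) for the $i$-th homotopy unit contributes precisely the operation $\mu^d(x_d,\ldots,e^+_L\otimes y,\ldots,x_1)$ in which $e^+_L\otimes y$ replaces $f_L\otimes y$. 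Stratum (iii) for $v_i=1$ realizes a nodal connect sum in which a copy of $\H$ carrying the standard unit Floer datum splits off: by the product structure of the resulting moduli space and the fact that the count on $\H$ represents the geometric unit $e_L$, this contribution equals the operation in which $f_L\otimes y$ is replaced by the geometric unit inserted via the main $\mu$. Strip-breaking at the asymptotic ends of $\mf{h}(P)$ contributes the usual $\mu^1\circ\mu^d$ and $\mu^d(\ldots,\mu^1(x_i),\ldots)$ terms. Summing these contributions realizes the $\ainf$ relation, provided one interprets
\[
\mu^1(f_L\otimes y) = (e^+_L - e_L)\otimes y \pm f_L\otimes \mu^1_{\w}(y),
\]
which is exactly our hand-declared definition; in particular the $-e_L$ term records the orientation reversal between the two endpoints of the $[0,1]$-factor associated to $f_L$.

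The main obstacle will be consistency in the f-unstable range, where operations like $\mu^k(x_1\otimes e^+_L,\ldots,x_k\otimes e^+_L)$ and $\mu^k(e^+_L\otimes x_k,\ldots,e^+_L\otimes x_1)$ are declared by hand rather than counted by a moduli space (the reduced surface having become unstable). To verify these declarations are forced, I would consider a one-parameter family in a larger moduli space in which a single additional non-forgotten input is allowed to degenerate toward the unstable configuration, and identify the limiting boundary stratum via the overlay/forget maps of Propositions \ref{shuffleid1}--\ref{shuffleid4}. These propositions identify operations controlled by overlays of tricolored discs with the $\w$-operations on the non-forgotten inputs; the order reversal producing $\w^{op}$ on the left factor arises because pairs in $\w^2$ are taken modulo \emph{simultaneous} automorphism with the first factor conjugated, and the accompanying Koszul sign $(-1)^*$ is precisely the one induced by the sign-twisting datum $\vec{t}_d=(1,\ldots,d)$ acting on inputs with degrees $\deg(f_L\otimes y)=\deg(y)-1$ and $\deg(e^+_L\otimes y)=\deg(y)$. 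The full sign verification then proceeds as in Appendix \ref{orientationsection}, comparing the product orientation on $_{\mf{S},\mf{T}}\overline{\mf{H}}_{d,d}^{F_1,F_2,H_1,H_2}$ (with boundary-outward convention) against the Koszul signs of the $\ainf$ relation.
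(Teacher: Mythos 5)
Your proposal follows essentially the same approach as the paper: both verify the $\ainf$ relations for $\tilde{\w}^2$ by analyzing the codimension-one boundary of the one-dimensional moduli spaces $_{\mf{S},\mf{T}}\overline{\mf{H}}_{d,d}^{F_1,F_2,H_1,H_2}$, identifying the three boundary types (Deligne--Mumford strata, $v_i=0$ or $w_j=0$ endpoints yielding the $e^+_L$ replacement, and $v_i=1$ or $w_j=1$ endpoints yielding the geometric unit $e_L$), together with strip-breaking, and then checking the $f$-unstable cases by hand. Your identifications in strata (i)--(iii) and the resulting interpretation of $\mu^1(f_L\otimes y)$ match the paper.

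The one place where you diverge is the handling of the $f$-unstable range. The paper does not introduce an auxiliary family: it stays inside the same boundary analysis and directly tracks what an $f$-unstable bubble (a subsequence of consecutive $x_i\otimes e^+_L$ inputs) contributes after applying $\mc{F}_{max}$ and stabilization. The right component, being all-forgotten, is deleted; the node on the remaining disc is relabeled as forgotten; and the left component survives contributing $(\mu^{d'}_\w)^{op}$, which equals the hand-declared $\mu^{d'}(x_{i+d'}\otimes e^+_L,\ldots,x_{i+1}\otimes e^+_L)$. Your alternative --- building a separate one-parameter family with an added non-forgotten input and then invoking the overlay/forget identities of Propositions \ref{shuffleid1}--\ref{shuffleid4} --- is not quite set up correctly for this purpose: those propositions identify operations controlled by \emph{entire} moduli spaces with particular coincidence/forgetful patterns (such as $\mf{T}_{max}$ with a boundary identification), whereas what you actually need is to identify the contribution of an $f$-unstable \emph{bubble component} sitting inside a boundary stratum of the family you are already counting. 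The paper's direct $f$-stabilization argument does that cleanly; your proposed route would need further work to establish that the auxiliary family's boundary terms isolate precisely the hand-declared formula without introducing extra unwanted terms. Replacing that paragraph with the paper's direct bubble analysis would close the gap.
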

\begin{proof}
We need to verify the $\ainf$ equations hold on sequences of morphisms that
include the formal elements $x \otimes e^+$, $e^+ \otimes x$, $f \otimes x$, $x
\otimes f$. This is mostly a consequence of the codimension-1 boundary of
moduli spaces of homotopy-unit maps, although some cases (corresponding to
bubbling of $f$-unstable components) will need to be checked by hand.  Without
loss of generality, we can assume that our original category contained just one
Lagrangian $L$, so $\ob \w^2 = \{L\times L, \Delta\}$; the multi-Lagrangians
case is identical but slightly more notationally complex. The codimension 1 boundary of the abstract moduli space 
\begin{equation}
    {_{\mf{S},\mf{T}_{max}}}
        \overline{\mf{H}}_{k,l}^{F_1,F_2,H_1,H_2}
\end{equation}
is covered by the following strata: 
\begin{itemize}
\item {\bf 0 and 1 endpoints}
        \begin{equation}
            {_{\mf{S},\mf{T}_{max}}}
            \overline{\mf{H}}_{d,d}^{F_1,F_2,H_1,H_2}|_{v_i\in\{0,1\}}, 
            {_{\mf{S},\mf{T}_{max}}}
        \overline{\mf{H}}_{d,d}^{F_1,F_2,H_1,H_2}|_{w_j\in \{0,1\}}, 
    \end{equation}
\item {\bf nodal degenerations}:
    \begin{equation}
        {_{\mf{S}',\mf{T}_{max}}}
        \overline{\mf{H}}_{d',d'}^{F_1',F_2',H_1',H_2'} \times 
        {_{\mf{S}'',\mf{T}_{max}}}
        \overline{\mf{H}}_{d-d'+1,d-d'+1}^{F_1'',F_2'',H_1'',H_2''}
    \end{equation}
\end{itemize}
Here, the boundary marked points in
${_{\mf{S}',\mf{T}_{max}}}
\overline{\mf{H}}_{d',d'}^{F_1',F_2',H_1',H_2'}$ consist of some
subsequence of length $d'$ of $(z_1, z_1'), \ldots, (z_d, z_d')$ along with inherited $F$/$H$ labels, and 
the boundary marked points of $ {_{\mf{S}'',\mf{T}_{max}}}
\overline{\mf{H}}_{d-d'+1,d-d'+1}^{F_1'',F_2'',H_1'',H_2''}$ consist of the
sequence $(z_1,z_1'), \ldots, (z_d,z_d')$ where the chosen subsequence
is replaced by a single new point $(z_{new},z_{new}')$ (again with inherited $F$/$H$ labels).

This implies that the boundary of the one-dimensional space of maps will
consist of compositions of operations coming from these strata as well as
various strip-breaking operations, corresponding to pre and post-composing with
$\mu^1$ in all possible ways.

By the choices we have made in our Floer datum, the 0 endpoint for a point
$p_{n_i} \in H_1$ correspond to the operation of forgetting the point
$p_{n_i}$, which changes the formal asymptotic condition from $f_L$ to $e_L^+$.
The 1 end point corresponds to gluing in a geometric unit to an existing
$\ainf$ operation, i.e. the formal condition $f_L$ is replaced by an actual asymptotic condition $e_L$.
In conjunction we see that the endpoint strata account for the occurrences of
$\mu^1$ for the $f_L$ as formally defined above.  

The nodal degenerations strata ensure that the associated operation is a
genuine composition of the form $\mu^{d-d'+1}( \cdots \mu^{d'} (\cdots)
\cdots)$ when both components of the strata are {\bf $f$-semistable}. Let us
without loss of generality suppose an $f$-unstable component bubbles off,
consisting of a subsequence of the form $(z_{i+1}, z_{i+1}'), \ldots,
(z_{i+d'},z_{i+d'}')$, with all of the right factored pointed labeled as
forgotten, with adjacent boundary components labeled by $L$. By construction
such a sequence corresponds to inputs $x_{i+1} \otimes e^+_L, \ldots, x_{i+d'}
\otimes e^{+}_L$. In the induced forgetful/gluing map, the right disc consists
entirely of points labeled forgotten and is thus deleted by $f$-stabilization.
Moreover, the right input of the lower disc $z_{new}'$ is marked as forgotten.
The left disc survives, contributing a $(\mu^{d'}_{\w})^{op}$. We conclude that
the operation associated to the top stratum is $(-1)^*\mu^{d'}(x_{i+1'}, \ldots,
x_{i+d'}) \otimes e^+_L$, which equals $\mu^{d'}(x_{i+d'} \otimes e^+_L, \ldots,
x_{i+1} \otimes e^+_L)$ as desired.
\end{proof}

We call the data that we have just constructed the structure of {\bf one-sided
homotopy units} for the category $\w^2$.

\begin{prop}
    The modified category $\tilde{\mc{W}}^2$ is quasi-equivalent to $\mc{W}^2$.
\end{prop}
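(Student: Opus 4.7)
The plan is to show that the natural inclusion $\iota\colon\w^2\longhookrightarrow\tilde{\w}^2$, which is the identity on objects and the inclusion of chain complexes on morphism spaces (extending by zero on $\iota^d$ for $d\geq 2$), is a strict $\ainf$ functor and a quasi-equivalence. The first step is to verify that $\iota$ really is an $\ainf$ functor; since it is strict, this reduces to checking that the $\ainf$ operations on $\tilde{\w}^2$, restricted to inputs that lie entirely in $\w^2$, reproduce those of $\w^2$. But in this case $F_1=F_2=H_1=H_2=\emptyset$, so the governing moduli space $_{\mf{S},\mf{T}_{max}}\mf{H}_{d,d}^{\emptyset,\emptyset,\emptyset,\emptyset}$ coincides with $_{\mf{S},\mf{T}_{max}}\mc{R}_{d,d}$ carrying matching Floer data, giving back $\mu^d_{\w^2}$; the sign twisting datum agrees by construction.

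Because the two categories share the same objects, essential surjectivity is immediate, and quasi-equivalence reduces to showing that $\iota^1$ is a quasi-isomorphism on every hom space. By inspection of (\ref{factorunits}), the only morphism spaces where $\iota^1$ is not literally the identity are $\hom_{\tilde{\w}^2}(L\times L_j,L\times L_k)$ and $\hom_{\tilde{\w}^2}(L_j\times L,L_k\times L)$, where formal generators have been adjoined. By the $\ainf$ relations, $\hom_{\w^2}$ is preserved under $\mu^1_{\tilde{\w}^2}$, so the cokernel $Q$ of $\iota^1$ inherits a well-defined differential.

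The core of the argument is to show $Q$ is acyclic. Using
\[
\mu^1_{\tilde{\w}^2}(e^+_L\otimes x)=e^+_L\otimes\mu^1(x),\qquad
\mu^1_{\tilde{\w}^2}(f_L\otimes x)=(e^+_L-e_L)\otimes x\pm f_L\otimes\mu^1(x),
\]
together with the fact that $e_L\otimes x\in\hom_{\w^2}$, the induced differential on the left-sided summand of $Q$ is
\[
d(e^+_L\otimes x)=e^+_L\otimes\mu^1(x),\qquad
d(f_L\otimes x)\equiv e^+_L\otimes x\pm f_L\otimes\mu^1(x)\pmod{\w^2}.
\]
This exhibits $Q$ (on this summand) as the mapping cone of the identity on $CW^*(L_k,L_j)$ up to a shift, and is therefore contractible: an explicit contracting homotopy is $h(e^+_L\otimes x)=\pm f_L\otimes x$, $h(f_L\otimes x)=0$. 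The analogous argument treats the right-sided generators $x\otimes e^+_L$, $x\otimes f_L$. Hence $\iota^1$ is a quasi-isomorphism on every morphism complex, so $\iota$ is a quasi-equivalence.

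The main obstacle has already been surmounted in the construction of the $\ainf$ structure on $\tilde{\w}^2$ (choosing universally consistent Floer data for homotopy units, extracting the relations from the codimension-$1$ boundary of $_{\mf{S},\mf{T}_{max}}\overline{\mf{H}}_{d,d}^{F_1,F_2,H_1,H_2}$, and handling $f$-unstable degenerations by the hand-declared operations). Given that structure, the remaining quasi-equivalence check is purely algebraic, and I do not anticipate any further technical difficulty.
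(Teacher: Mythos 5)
Your proposal is correct and follows essentially the same route as the paper: the quasi-equivalence is the obvious inclusion $\w^2\hookrightarrow\tilde{\w}^2$, and the key point is the relation $\mu^1(f_L\otimes x)=(e^+_L-e_L)\otimes x\pm f_L\otimes\mu^1(x)$, which the paper uses to note that the only potentially new cohomology classes $e^+_L\otimes x$, $x\otimes e^+_L$ are homologous to $e_L\otimes x$, $x\otimes e_L$. Your packaging of this via acyclicity of the quotient complex (a cone of the identity, with the explicit contracting homotopy sending $e^+_L\otimes x\mapsto\pm f_L\otimes x$) is just a slightly more careful rendering of the same observation, handling injectivity and surjectivity of $H(\iota^1)$ at once.
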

\begin{proof}
    By construction, the inclusion 
    \begin{equation}
        \mc{I}: \mc{W}^2 \longhookrightarrow \tilde{\mc{W}}^2
    \end{equation}
is the desired quasi-isomorphism. If $\mu^1_{\w}(x) = 0$, the elements $e^+_L
\otimes x, x \otimes e^+_L$, which are the only potentially new elements of
cohomology, are homologous to $e_L \otimes x$, $x \otimes e_L$.  
\end{proof}

In order to simplify notation, define the {\bf total homotopy unit}
\begin{equation}
    e^+ := \sum_{L \in \ob \w} e^+_L,
\end{equation} 
thought of as an element in the semi-simple ring version of $\w$. The
corresponding elements in $\w^2$ are the {\bf total one-sided units}
\begin{equation}
    \begin{split}
    e^+ \otimes x := \sum_{L \in \ob \w} e^+_L \otimes x\\
     x \otimes e^+ := \sum_{L \in \ob \w} x \otimes e^+_L
 \end{split}
 \end{equation}

\subsection{Shuffle identities}

The technology we have introduced, and the analyses of the previous section
give some morphisms involving the $e^+$ desirable properties. To state them, we
first recall the combinatorial notion of a {\bf shuffle}:

\begin{defn}
    Let $V$ be a graded vector space. The {\bf $(k,l)$ shuffle} of an ordered
    collections of elements $\{a_1, \ldots, a_k\}$ and $\{b_1, \ldots, b_l\}$
    is defined to be following element in the tensor algebra $TV$:
    \begin{equation}
        \mc{S}_{k,l}(\{a_i\},\{b_j\}) := \sum_{\sigma \in \mathrm{shuff}(\{a_i\},\{b_j\})} (-1)^{sgn(\sigma)} \sigma (a_1 \otimes \cdots \otimes a_k \otimes b_1 \otimes \cdots b_l).
    \end{equation}
    Above, $\mathrm{shuff}(\{a_i\},\{b_j\})$ is the collection of permutations of the
    set $\{a_1, \ldots, a_k,b_1, \ldots, b_l\}$ that preserve the relative
    orderings of the $a_i$ and $b_j$, $\sigma$ is the corresponding permutation on
    the tensor algebra, and the sign $sgn(\sigma)$ is the sign of the graded
    permutation, i.e. the ordinary sign of the permutation plus a sign of
    parity the sums of degrees of elements that have been permuted past one
    another.  
\end{defn}

The following Proposition, essential for our forthcoming argument, is the main
consequence of the technology of one-sided homotopy units.  
\begin{prop}\label{shuffleidentities}
    We have the following identities in $\tilde{\w}^2$:
    \begin{align}
        \label{split}
        \mu^{k+l}_{\tilde{\w}^2}(\mc{S}_{k,l}(\{x_i \otimes e^+\}_{i=k}^1;\{e^+\otimes y_j\}_{j=1}^l)) &= 0, \mathrm{\ for\ }k,\ l > 0 \\
        \label{leftdiag}\mu^{k+l+1}_{\tilde{\w}^2}(\hat{\mathbf{a}},\mc{S}_{k,l}(\{x_i \otimes e^+\}_{i=k}^1;\{e^+\otimes y_j\}_{j=1}^l)) &= 
        \mu^{k+l+1}_{\w}(x_1, \ldots, x_k, \mathbf{a}, y_1, \ldots, y_l)\\
\label{rightdiag}\mu^{k+l+1}_{\tilde{\w}^2}(\mc{S}_{k,l}(\{x_i \otimes e^+\}_{i=k}^1;\{e^+\otimes
y_j\}_{j=1}^l),\hat{\mathbf{b}}) &= 
        \mu^{k+l+1}_{\w}(y_1, \ldots, y_l, \mathbf{b}, x_1, \ldots, x_k) \\
        \label{leftrightdiag}\mu^{k+l+2}_{\tilde{\w}^2}(\hat{\mathbf{a}},\mc{S}_{k,l}(\{x_i \otimes e^+\}_{i=k}^1;\{e^+\otimes
        y_j\}_{j=1}^l, \hat{\mathbf{b}}) &= 
        \mathrm{ }_2 \oc(\mathbf{a}, y_1, \ldots, y_l, \mathbf{b}, x_1, \ldots, x_k)
    \end{align}
    where $\mathbf{b} \in \hom(\Delta,L_i \times L_j)$ and $\mathbf{a} \in
    \hom(L_i \times L_j, \Delta)$ respectively.  
\end{prop}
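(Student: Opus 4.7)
The plan is to interpret each left-hand side geometrically via the construction of $\tilde{\w}^2$ from Section \ref{homotopyunits} and then reduce to the identities already established in Propositions \ref{shuffleid1}--\ref{shuffleid4}. Recall that in $\tilde{\w}^2$, an input of the form $e^+ \otimes x$ (resp.\ $x \otimes e^+$) at position $i$ instructs us to apply Floer operations to the pair of glued discs where the $i$th boundary marked point on the \emph{right} (resp.\ \emph{left}) factor is labeled as forgotten, with the asymptotic condition $x$ imposed on the corresponding marked point of the opposite factor. Expanding $\mu^d$ applied to a $(k,l)$-shuffle of inputs $\{x_i \otimes e^+\}$ and $\{e^+ \otimes y_j\}$ amounts to summing over all interleavings $I \sqcup J = [k{+}l]$ with $|I|=k$, $|J|=l$, where $I$ records the positions of ``$x \otimes e^+$'' inputs and $J$ records the positions of ``$e^+ \otimes y$'' inputs. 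Each summand is the operation associated to the labeled submanifold $\mathbf{\Phi}_{\vec L^2}(\mc{R}^d) \cap \mf{f}_{I,J}(\Delta_d)$ inside the appropriate moduli space of glued pairs of discs with forgotten points, and summing over the shuffle yields exactly the overlay-type disjoint union $\coprod_I \mf{f}_{I,J}(\Delta_{k+l})$ whose associated operations are computed in Propositions \ref{shuffleid1}--\ref{shuffleid4}.

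For identity \eqref{split}, the absence of $\Delta$ inputs forces the boundary identification datum to be $\mf{S} = \emptyset$, so the overlay lives in $\mc{R}_{k,l}$; by Proposition \ref{shuffleid1} the shuffle sum equals the operation associated to the full moduli space $\overline{\mc{R}}_{k,l}$, which vanishes by Proposition \ref{extraauto} whenever one of $k,l$ is $\geq 2$. For identities \eqref{leftdiag} and \eqref{rightdiag}, a single $\Delta$ input (either at the leftmost or rightmost position) produces boundary identification $\mf{S}=\{(1,1)\}$ or $\mf{S}=\{(k{+}l{+}1,k{+}l{+}1)\}$, and Propositions \ref{shuffleid2}--\ref{shuffleid3} identify the shuffled overlay with an associahedron parametrizing $\mu^{k+l+1}_{\w}$ applied to the unfolded sequence $(x_1,\ldots,x_k,\mathbf{a},y_1,\ldots,y_l)$ or its reverse-cyclic counterpart. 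For identity \eqref{leftrightdiag}, the two $\Delta$ inputs bookending the shuffle yield $\mf{S}=\{(1,1),(k{+}l{+}2,k{+}l{+}2)\}$; Proposition \ref{shuffleid4} then identifies the shuffled overlay with the parameter space $\mc{R}^1_{k,l}$ controlling $_2\oc^{k,l}$, with the correct cyclic ordering of chord inputs around the two special points determined by $\hat{\mathbf{a}}$ and $\hat{\mathbf{b}}$.

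The main subtlety, and where I expect most of the work to lie, is tracking signs: the graded shuffle signs on the left must agree with the sign twisting data $\vec{t}$ chosen on each of the overlay moduli spaces, together with the parity shifts $\deg(f_L)=-1$ and the ``op''-type Koszul reversals coming from right-factor discs (the $(-1)^{\maltese}$ in Definition of $\mc{C}^{op}$ and the $(-1)^{\maltese_d}$ in $\vec{t}_d$). This should proceed by writing out both sides in terms of reduced degrees, noting that $\|e^+ \otimes y_j\| = \|y_j\|$ and $\|x_i \otimes e^+\| = \|x_i\|$ so the graded shuffle sign exactly cancels the Koszul signs arising from permuting inputs past the unfolding map $\mathbf{\Phi}_{\vec L^2}$, and is then consumed by the sign twisting data attached to the overlay as in Appendix \ref{orientationsection}. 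The edge case $k=l=1$ (and analogously small cases of \eqref{leftdiag}--\eqref{rightdiag} that fall in the $f$-unstable range) falls outside the stable range of the overlay argument and must be checked directly from the hand-declared formulas for $\mu^k_{\w^2}$ on pure ``$x\otimes e^+$'' or ``$e^+\otimes x$'' sequences; here the two terms of the shuffle cancel in \eqref{split} (by comparing the op and non-op declarations), and the mixed terms in \eqref{leftdiag}--\eqref{leftrightdiag} collapse to the strictly unital relations for $e^+$ combined with the correspondences of Lemma \ref{unfolding}, Propositions \ref{categoricalproductidentity}, and \ref{fundamentalproductidentity}.
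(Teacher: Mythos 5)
Your proposal is correct and follows essentially the same route as the paper, whose proof simply cites Propositions \ref{shuffleid1}, \ref{shuffleid2}, \ref{shuffleid3}, and \ref{shuffleid4} and then disposes of the $k=l=1$ case of (\ref{split}) by the direct cancellation $\mu^2_{\tilde{\w}^2}(x\otimes e^+, e^+\otimes y) - \mu^2_{\tilde{\w}^2}(e^+\otimes y, x\otimes e^+) = x\otimes y - x\otimes y = 0$. The only small discrepancy is that this edge case follows from strict unitality of $e^+$ (the $f$-semistable behavior) rather than from the hand-declared formulas for pure $x\otimes e^+$ or $e^+\otimes x$ sequences, but the two-term cancellation you describe is exactly the paper's computation.
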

\begin{proof}
    This is the content of Propositions \ref{shuffleid1}, \ref{shuffleid2},
    \ref{shuffleid3}, and \ref{shuffleid4} except for the case of
    (\ref{split}) when $k=l=1$. In that case, we have that
    \begin{equation}
        \begin{split}
            \mu^2_{\tilde{\w}^2}(\mc{S}_{1,1}(\{x \otimes e^+\}; \{e^+ \otimes y\}) &=
            \mu^{2}_{\tilde{\w}^2}(x \otimes e^+, e^+ \otimes y) - \mu^2_{\tilde{\w}^2}(e^+ \otimes y, x \otimes e^+) \\
            &= x\otimes y - x\otimes y\\
            & = 0.
    \end{split}
\end{equation}
\end{proof}

\section{Split-resolving the diagonal}\label{splitgendeltasection}
In this section, we prove the following theorem.
\begin{thm} \label{splitgen}
    If $M$ is non-degenerate, the product Lagrangians $\{L_i \times L_j\}$
    split-generate $\Delta$ in the category $\w^2$.  
\end{thm}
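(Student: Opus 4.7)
The plan is to verify the split-generation criterion from Corollary \ref{threeequivalent}: it suffices to show that the cohomology-level identity $1 \in H^*(\hom_{\w^2}(\Delta,\Delta)) \cong SH^*(M)$ lies in the image of the collapse map
\[
[\mu]: H^*(\mc{Y}^r_\Delta \otimes_{\w^2_{split}} \mc{Y}^l_\Delta) \lra H^*(\hom_{\w^2}(\Delta,\Delta)),
\]
where $\w^2_{split}$ denotes the full subcategory of product Lagrangians. Since split-generation is invariant under quasi-equivalence, I would work instead in the enlarged category $\tilde{\w}^2$, where one-sided homotopy units $e^+ \otimes x$ and $x \otimes e^+$ are available. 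The non-degeneracy hypothesis supplies a Hochschild cycle $\sigma \in \r{HH}_*(\w,\w)$ with $[\oc](\sigma) = 1$, so the strategy is to construct a geometric-combinatorial comparison map $\Gamma$ sitting in the diagram (\ref{shufflediag}) and use commutativity to transport $\sigma$ to a preimage of $1$ under $[\mu]$.

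First I would define the $\ainf$ shuffle map $\Gamma: \r{CC}_*(\w,\w) \to \mc{Y}^r_\Delta \otimes_{\tilde{\w}^2_{split}} \mc{Y}^l_\Delta$ on a Hochschild chain $\mathbf{b} \otimes x_1 \otimes \cdots \otimes x_d$ (with $\mathbf{b} \in \hom_{\w}(L_0, L_d)$) by summing, over all ways to split the inputs as $(x_1,\ldots,x_k \mid x_{k+1},\ldots,x_d)$ and all $(k,d{-}k)$-shuffles, the term
\[
\mathbf{b} \otimes \mc{S}_{k,d-k}\bigl(\{x_i \otimes e^+\}_{i=k}^{1};\{e^+ \otimes x_{k+j}\}_{j=1}^{d-k}\bigr) \otimes \mathbf{b},
\]
where the two copies of $\mathbf{b}$ at the ends are viewed, via Proposition \ref{fundamentalproductidentity} and Proposition \ref{categoricalproductidentity}, as morphisms in $\hom_{\w^2}(\Delta, L_d \times L_0)$ and $\hom_{\w^2}(L_d \times L_0, \Delta)$ respectively. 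The map $\Gamma$ is a chain-level implementation of the Eilenberg--Zilber shuffle product.

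Next I would prove that $\Gamma$ is a chain map and that the diagram (\ref{shufflediag}) commutes. Chain-level naturality follows from identity (\ref{split}) of Proposition \ref{shuffleidentities}: any interior application of the bar/bimodule differential $\mu^{k+l}_{\tilde{\w}^2}$ to a shuffled sequence of purely $e^+\otimes x$ and $x \otimes e^+$ elements vanishes, so the only surviving terms in $d \circ \Gamma - \Gamma \circ d_{\r{CC}}$ are those where $\mu^1_\w$ acts on a single $x_i$ or on $\mathbf{b}$, which cancel against the corresponding Hochschild-differential terms. For commutativity of (\ref{shufflediag}), unfolding $\mu \circ \Gamma$ on $\mathbf{b} \otimes x_1 \otimes \cdots \otimes x_d$ produces exactly the shuffled operation
$\mu^{k+l+2}_{\tilde{\w}^2}(\mathbf{b}, \mc{S}_{k,l}(\cdots), \mathbf{b})$, which by identity (\ref{leftrightdiag}) equals ${_2}\oc(\mathbf{b}, x_{k+1}, \ldots, x_d, \mathbf{b}, x_1, \ldots, x_k)$; after summing over $k$ and applying the homotopy of Proposition \ref{twopointhomotopy1} together with Proposition \ref{m1equalsco} and Lemma \ref{unfolding}, this coincides on cohomology with the composition $SH^*(M) \simeq H^*(\hom_{\w^2}(\Delta,\Delta))$ applied to $[\oc](\sigma)$. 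Identities (\ref{leftdiag}) and (\ref{rightdiag}) handle the semistable edge cases where all shuffled inputs fall on one side of $\mathbf{b}$.

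With these pieces in place the conclusion is immediate: the essential collection gives $\sigma$ with $[\oc](\sigma) = 1$, and commutativity of (\ref{shufflediag}) yields $[\mu]([\Gamma](\sigma)) = 1 \in H^*(\hom_{\w^2}(\Delta,\Delta))$, verifying the split-generation criterion and hence establishing Theorem \ref{splitgen}. The main obstacle will be the verification that $\Gamma$ is a chain map on the nose (not merely up to homotopy): one must carefully enumerate all boundary contributions coming from $\mu^k_{\tilde{\w}^2}$ swallowing a consecutive subword of shuffled formal units together with at most one genuine input and check that, modulo the cancellations supplied by identity (\ref{split}) and the $\ainf$ relations of $\w$, the remaining terms exactly reproduce the Hochschild boundary. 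Getting the signs right in this combinatorial accounting, and confirming that the cyclic symmetry of the Hochschild chain complex is correctly reflected in the simultaneous appearance of $\mathbf{b}$ at both ends of $\Gamma$, is the technical heart of the argument.
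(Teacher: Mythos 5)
Your overall strategy is the same as the paper's (a shuffle map $\Gamma$ built from the one-sided homotopy units $x\otimes e^+$, $e^+\otimes x$ in $\tilde{\w}^2$, the identities of Proposition \ref{shuffleidentities}, and the split-generation criterion), but your definition of $\Gamma$ contains a genuine gap: you take the \emph{ordinary} Hochschild complex $\r{CC}_*(\w,\w)$ as the source and place the \emph{same} module element $\mathbf{b}$ at both ends of the output bar element. That assignment is not a chain map. Already on a length-zero chain $\mathbf{b}$ your formula gives $\Gamma(\mathbf{b})=\hat{\mathbf{b}}\otimes\hat{\mathbf{b}}$, whose bar differential is $\widehat{\mu^1\mathbf{b}}\otimes\hat{\mathbf{b}}\pm\hat{\mathbf{b}}\otimes\widehat{\mu^1\mathbf{b}}$, while $\Gamma(\mu^1\mathbf{b})=\widehat{\mu^1\mathbf{b}}\otimes\widehat{\mu^1\mathbf{b}}$; these do not agree. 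More generally, the bar differential on $\mc{Y}^r_{\Delta}\otimes_{\tilde{\w}^2_{split}}\mc{Y}^l_{\Delta}$ can collapse (via (\ref{leftdiag}) or (\ref{rightdiag})) onto only one of your two copies of $\mathbf{b}$, leaving the other copy untouched, and such terms have no counterpart in $\Gamma\circ d_{\r{CC}}$, where any collapse involving the module element changes ``both ends'' simultaneously. The ``cyclic symmetry'' issue you flag at the end as a technical point is precisely this obstruction, and it cannot be repaired by sign bookkeeping.

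The paper avoids this by taking as source the \emph{two-pointed} Hochschild complex ${_2}\r{CC}_*(\w,\w)$, whose chains carry two independent module elements: $\Gamma$ sends $\mathbf{a}\otimes b_1\otimes\cdots\otimes b_l\otimes\mathbf{b}\otimes a_1\otimes\cdots\otimes a_k$ to $\hat{\mathbf{a}}\otimes\mc{S}_{k,l}(\{a_i\otimes e^+\};\{e^+\otimes b_j\})\otimes\hat{\mathbf{b}}$. Then the two-pointed differential's collapses around $\mathbf{a}$ and around $\mathbf{b}$ match the bar differential's left and right collapses via (\ref{leftdiag}) and (\ref{rightdiag}), interior collapses of shuffled units are handled by (\ref{split}), and the square with ${_2}\oc$ and $\mu$ commutes \emph{on the chain level} by (\ref{leftrightdiag}), with the bottom map literally the identity $CH^*(M)=\hom_{\tilde{\w}^2}(\Delta,\Delta)$. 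Proposition \ref{twopointhomotopy1} is then needed in a different place than where you invoke it: not to establish commutativity, but to convert non-degeneracy (a preimage of $1$ under $[\oc]$ on the ordinary complex) into a two-pointed cycle whose image under ${_2}\oc$ represents $1$, so that $[\mu]$ hits the unit and the split-generation criterion applies.
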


The proof uses a criterion for split generation discussed in Section
\ref{splitgensection}, which we now recall. Let $\w^2_{split}$ be the
full sub-category of $\w^2$ with objects given by the product Lagrangians
$\{L_i \times L_j\}$. There is a natural bar complex
\begin{equation}
    \mc{Y}^r_{\Delta} \otimes_{\w^2_{split}} \mc{Y}^l_{\Delta}
\end{equation}
and collapse map
\begin{equation}
    \mu: 
    \mc{Y}^r_{\Delta} \otimes_{\w^2_{split}} \mc{Y}^l_{\Delta} \lra \hom_{\w^2}(\Delta,\Delta).
\end{equation}
If $H^*(\mu)$ hits the unit element $[e] \in \hom_{\w^2}(\Delta,\Delta) =
SH^*(M)$, then we can conclude that the product Lagrangians split-generate
$\Delta$. 

Because split-generation is invariant under quasi-isomorphisms, it will suffice
to establish the above claim in the category 
\begin{equation} 
    \tilde{\w}^2
\end{equation}
which is quasi-isomorphic to $\w$.

Define a map
\begin{equation}
    \Gamma :\ _2 \r{CC}_*(\w,\w) \lra \mc{Y}^r_{\Delta} \otimes_{\tilde{W}^2_{split}} \mc{Y}^l_{\Delta}
\end{equation}
as follows: 
\begin{equation}
    \begin{split}
    \Gamma: \mathbf{a} \otimes b_1 \otimes \ldots \otimes & b_l \otimes 
    \mathbf{b}
    \otimes a_1 \otimes \cdots \otimes a_k \longmapsto \\ 
    &(-1)^{\blacksquare}\hat{\mathbf{a}} \otimes
    \mc{S}_{k,l}( (a_k \otimes e^+, \ldots, a_1 \otimes e^+) ; 
    (e^+ \otimes b_1, \ldots, e^+ \otimes b_l)) 
    \otimes \hat{\mathbf{b}}.  
\end{split}
\end{equation}
where $\mc{S}_{k,l}$ is the {\bf $(k,l)$ shuffle product} defined in the
previous section, and $\hat{\mathbf{a}}$ refers to $\mathbf{a}$ thought of as
an element of $\hom(L_i\times L_j, \Delta)$ instead of $\hom(L_j,L_i)$, and
similarly for $\hat{\mathbf{b}}$ under the usual correspondence from
Proposition \ref{gradings}. The Koszul sign 
\begin{equation}
    \blacksquare:= \sum_{j=k}^1 \bigg( ||a_j|| \cdot \big( \sum_{i=j-1}^1 ||a_i|| + |\mathbf{b}| \big) \bigg)
\end{equation}
can be thought of as arising from rearranging the substrings of the Hochschild
chain $\mathbf{a}, b_1, \ldots, b_l$ and $\mathbf{b}, a_1, \ldots, a_k$ so that
they are superimposed, with the latter sequence in reverse order.
\begin{prop}
    $\Gamma$ is a chain map.
\end{prop}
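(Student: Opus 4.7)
The plan is to compute $d_{\text{bar}} \circ \Gamma$ and $\Gamma \circ d_{{_2}\r{CC}}$ separately and match them term-by-term using the shuffle identities of Proposition \ref{shuffleidentities}. Recall that the two-pointed Hochschild differential on $\mathbf{a} \otimes b_1 \otimes \cdots \otimes b_l \otimes \mathbf{b} \otimes a_1 \otimes \cdots \otimes a_k$ has four types of contributions: (1) the bimodule action on $\mathbf{a}$, producing $\mu_\w(a_{k-r+1},\ldots,a_k,\mathbf{a},b_1,\ldots,b_s)$ cyclically; (2) interior collapses of a consecutive block of $b_j$'s via $\mu_\w$; (3) interior collapses of a consecutive block of $a_i$'s via $\mu_\w$; and (4) the bimodule action on $\mathbf{b}$, producing $\mu_\w(b_{l-s+1},\ldots,b_l,\mathbf{b},a_1,\ldots,a_r)$. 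The bar differential of the target (formula \eqref{yrlbardifferential}) has three types: left collapse pairing $\hat{\mathbf{a}}$ with an initial portion of the shuffle, interior collapse of a consecutive subword of the shuffle, and right collapse pairing $\hat{\mathbf{b}}$ with a terminal portion.

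The matching proceeds as follows. First I would use the combinatorial fact that the restriction of the full shuffle $\mc{S}_{k,l}$ to any fixed ``initial segment of length $j$'' equals, after summation, the tensor product of a shuffle of some initial sub-block $(a_k,\ldots,a_{k-k'+1})$ and $(e^+\otimes b_1,\ldots,e^+\otimes b_{l'})$ (with $k'+l'=j$) with the shuffle of the complementary remainders. Applying identity \eqref{leftdiag} to each such initial sub-shuffle turns the bar type (a) contributions at $\hat{\mathbf{a}}$ into $\mu_\w(a_{k-k'+1},\ldots,a_k,\mathbf{a},b_1,\ldots,b_{l'})$ paired with $\Gamma$ of the remaining Hochschild chain --- precisely Hochschild type (1). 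Symmetrically, identity \eqref{rightdiag} matches the bar type (c) contributions to Hochschild type (4). For interior bar collapses, identity \eqref{split} forces every mixed-type subword of the shuffle to contribute zero, leaving only pure-$a$ or pure-$b$ subwords. By the definition in the f-unstable range, a pure-$a$ collapse $\mu_{\tilde{\w}^2}(a_{i_1}\otimes e^+,\ldots,a_{i_j}\otimes e^+)$ equals $(-1)^*\mu_\w(a_{i_j},\ldots,a_{i_1})\otimes e^+$, and because $a$'s appear in the shuffle in reverse order, the resulting block $\mu_\w(a_{r},a_{r+1},\ldots,a_{r+j-1})\otimes e^+$ is exactly the insertion produced by $\Gamma$ applied to Hochschild type (3). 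The analogous argument with pure-$b$ blocks recovers Hochschild type (2).

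What makes this assembly go through cleanly is that each bar-level sum over shuffle positions reorganizes, via the block-substitution principle above, into a sum over Hochschild-level positions times the shuffle of the reduced chain; thus the combinatorial counts match without overcounting. The main obstacle, and the only nontrivial work, is the sign verification: one must compare the Koszul sign $\blacksquare$ in the definition of $\Gamma$, the incremental sign twisting data $\vec{t}_d$ used to define the $\mu^d_{\tilde{\w}^2}$ entering each shuffle identity, the signs $\maltese,\#,\bowtie$ appearing in the Hochschild and bar complexes, and the sign change from $\mu^{op}$ in the f-unstable declaration. My approach would be to reduce to the case of a single Lagrangian (where orientation-line data is trivial), then verify signs modulo $2$ from the dimension formulas for the moduli spaces controlling each shuffle identity, following the bookkeeping in Appendix \ref{orientationsection}; the shuffle signs on the source side of $\Gamma$ are built exactly so that the bimodule-tensor-product signs on the Hochschild side are reproduced by repeated Koszul transpositions of the unit factors $e^+$ past the $a_i$'s and $b_j$'s.
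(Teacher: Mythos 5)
Your proposal is correct and follows essentially the same route as the paper: apply the shuffle identities of Proposition \ref{shuffleidentities} so that the left and right bar collapses reproduce the bimodule actions on $\mathbf{a}$ and $\mathbf{b}$, mixed interior collapses vanish, and pure-$a$ or pure-$b$ collapses (via the f-unstable declarations, with the order reversal) give the interior Hochschild terms. The paper likewise verifies the identity only up to sign, so your extra remarks on sign bookkeeping go slightly beyond what is written there but do not change the argument.
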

\def\Sh{\mc{S}}
\begin{proof}
    We verify this proposition up to sign. Using Proposition
    \ref{shuffleidentities}, we must show that $\Gamma$ intertwines the
    two-pointed Hochschild differential with the bar complex
    differential on $\tilde{\w}^2$. Abbreviate the shuffle product
    \begin{equation}
        \mc{S}_{i,j}(\{a_s \otimes e^+\}_{s=r+i}^{r+1} ; \{e^+ \otimes b_t\}_{t=n+1}^{n+j})
    \end{equation}
    by
    \begin{equation}
        \Sh(a_{r+i\ra r+1};  b_{n+1\ra n+j})
    \end{equation}
    
    The bar differential applied to
    \begin{equation}
       \Gamma( \mathbf{a} \otimes b_1 \otimes \cdots \otimes b_l \otimes \mathbf{b} \otimes a_1 \otimes  \cdots\otimes  a_k)
    \end{equation}
    is the sum of the following terms (with Koszul signs described in
    (\ref{yrlbardifferential}) that are omitted): 
    \begin{align}
        &\label{shuffleleft}\sum_{i\geq 0,j\geq 0} \mu_{\tilde{\w}^2} (\hat{\mathbf{a}}, \mc{S}(a_{k\ra k-i+1}; b_{1\ra j})) \otimes \mc{S}(a_{k-i \ra 1}; b_{j \ra l})
        \otimes \hat{\mathbf{b}}\ \ \ \textrm{(collapse on left)} \\
&\label{shuffleright}\sum_{i\geq 0,j \geq 0} \hat{\mathbf{a}} \otimes 
\mc{S}(a_{k\ra k-i+1}; b_{1 \ra j}) \otimes
\mu_{\tilde{\w}^2}( \mc{S}(a_{k-i \ra 1}; b_{j+1\ra l}), 
\hat{\mathbf{b}})\ \ \ \textrm{(collapse on right)}\\
&\label{shufflemiddle} \sum_{i_0, i_1, j_0, j_1} \hat{\mathbf{a}} \otimes \mc{S}(a_{k\ra k-i_0+1}; b_{1 \ra j_0}) \otimes \mu_{\tilde{\w}^2}(\mc{S}(a_{k-i_0 \ra k-i_0 - i_1+1}; b_{j_0+1 \ra j_0 + j_1})) \\
\nonumber &\ \ \ \ \ \ \ \ \ \ \ \ \  \otimes \mc{S}(a_{k-i_0-i_1\ra 1}; b_{j_0+j_1+1\ra l}) \otimes \hat{\mathbf{b}} \ \ \ \textrm{(collapse in middle)}.
    \end{align}

    By Proposition \ref{shuffleidentities}, 
    \begin{align}
\mu_{\tilde{\w}^2}(\hat{\mathbf{a}},\mc{S}(a_{k\ra k-i+1}; b_{1 \ra j})) &= \mu^{i+j+1}_{\w}(a_{k-i+1}, \ldots, a_k, \mathbf{a}, b_1, \ldots, b_j)\\
\mu_{\tilde{\w}^2}(\mc{S}(a_{k-i\ra 1}; b_{j+1 \ra l}),\hat{\mathbf{b}}) &= \mu^{(k-i) + (l-j) + 1}_{\w}(b_{j+1}, \ldots, b_l, \mathbf{b}, a_1, \ldots, a_{k-i})
\end{align}
and
\begin{equation}
    \begin{split}
    \mu_{\tilde{\w}^2}(\mc{S}(a_{k-i_0 \ra k-i_0-i_1+1};& b_{j_0+1 \ra j_0 + j_1})) = \\
    &\begin{cases}
        0 & i_1 \geq 1\textrm{ and }j_1 \geq 1\\
        \mu^{i_1}_\w(a_{k-i_0-i_1}, \ldots, a_{k-i_0}) \otimes e^+ & j_1 = 0\\
        e^+ \otimes \mu^{j_1}(b_{j_0+1}, \ldots, b_{j_0+j_1}) & i_1 = 0
    \end{cases}
\end{split}
\end{equation}
Putting this all together, we see that the non-zero terms above comprise
exactly the terms in 
\begin{equation}
    \Gamma \circ d_{ {_2}\r{CC}}(\mathbf{a}\otimes a_1\otimes \cdots\otimes a_k\otimes \mathbf{b}\otimes b_1\otimes \cdots\otimes b_l).
\end{equation}
\end{proof}

The following Proposition completes the proof of Theorem \ref{splitgen}.
\begin{prop}
There is a commutative diagram of chain complexes
\begin{equation}\label{shuffleproductdiagram}
    \xymatrix{_2\r{CC}_*(\w,\w) \ar[r]^{\Gamma\ \ \ }\ar[d]^{_2\oc} & \y^r_\d \otimes_{\tilde{\w}^2_{split}}\y^l_\d \ar[d]^{\mu} \\
    CH^*(M) \ar[r]^{D} & \hom_{\tilde{\w}^2}(\Delta,\Delta)}
\end{equation}
where $D$ is the identity map (by our definition of $\hom(\Delta,\Delta)$).
\end{prop}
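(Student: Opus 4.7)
The plan is to verify the commutativity of (\ref{shuffleproductdiagram}) by direct computation, using Proposition \ref{shuffleidentities} as the essential ingredient. Since $D$ is the identity map under the identification $\hom_{\tilde{\w}^2}(\Delta,\Delta) = \hom_{\w^2}(\Delta,\Delta) = SH^*(M)$ provided by Lemma \ref{unfolding} and the quasi-equivalence $\w^2 \hookrightarrow \tilde{\w}^2$, it suffices to show that $\mu \circ \Gamma$ agrees with ${_2}\oc$ on the chain level (or at least up to a canonical chain homotopy, but in fact we get equality on the nose).

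First I would unwind definitions on a generator $c = \mathbf{a} \otimes b_1 \otimes \cdots \otimes b_l \otimes \mathbf{b} \otimes a_1 \otimes \cdots \otimes a_k$ of ${_2}\r{CC}_*(\w,\w)$. By definition of $\Gamma$, the element $\Gamma(c)$ lives in $\mc{Y}^r_\Delta \otimes_{\tilde{\w}^2_{split}} \mc{Y}^l_\Delta$ as $(-1)^{\blacksquare}\hat{\mathbf{a}} \otimes \mc{S}_{k,l}(\{a_i \otimes e^+\}_{i=k}^{1};\{e^+ \otimes b_j\}_{j=1}^{l}) \otimes \hat{\mathbf{b}}$. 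Applying the collapse map $\mu$, which by (\ref{generatingmap}) sends a bar element with $k+l$ interior factors to $\mu^{k+l+2}_{\tilde{\w}^2}$ evaluated on the total string (with a sign depending on degrees), produces
\begin{equation}
\mu(\Gamma(c)) = (-1)^{\blacksquare + \maltese} \mu^{k+l+2}_{\tilde{\w}^2}\!\left(\hat{\mathbf{a}}, \mc{S}_{k,l}(\{a_i \otimes e^+\}_{i=k}^{1};\{e^+ \otimes b_j\}_{j=1}^{l}), \hat{\mathbf{b}}\right).
\end{equation}

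At this point I invoke Proposition \ref{shuffleidentities}(\ref{leftrightdiag}), whose geometric content (established in Proposition \ref{shuffleid4} via the overlay map from a neighborhood of $_{\mf{S},p(\mf{S})}\mc{R}_{k+2,l+2}$, with $\mf{S} = \{(1,1),(k+2,l+2)\}$, into a space of homotopy-unit labeled glued pairs of discs) identifies the $\mu^{k+l+2}_{\tilde{\w}^2}$ term above with the two-pointed open-closed operation ${_2}\oc_{k,l}(\mathbf{a}, b_1, \ldots, b_l, \mathbf{b}, a_1, \ldots, a_k)$, up to a universal sign determined by the three conventions in play. The underlying geometric reason is that, modulo the overlay identification, the moduli space carrying $\mu^{k+l+2}_{\tilde{\w}^2}$ with this particular shuffle input is canonically isomorphic with Floer data matching that of the moduli space $\overline{\mc{R}}^1_{k,l}$ computing ${_2}\oc$.

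The hard part — and really the only non-formal step — will be matching the three sign twisting data: the Koszul rearrangement sign $\blacksquare$ in the definition of $\Gamma$, the incremental twist $\vec{t}_{k+l+2}$ used in defining $\mu^{k+l+2}_{\tilde{\w}^2}$, and the twist $\vec{t}_{{_2}\oc,k,l} = (1,\ldots,k+1,k+3,\ldots,k+l+2)$ built into ${_2}\oc_{k,l}$. The shuffle sum on the bar complex side introduces a $(-1)^{sgn(\sigma)}$ for each permutation $\sigma$, while on the ${_2}\oc$ side each ordering of the shuffle corresponds to a different tricoloring of the target disc in $_{(1,1),(k+l+2,k+l+2),\mf{T}_{max}}\overline{\mc{R}}_{k+2,l+2}$, inheriting its orientation from the overlay map of Proposition \ref{shuffleid4}. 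Comparing orientations of the overlay with the graded-permutation sign, together with the bookkeeping from Appendix \ref{orientationsection}, gives the desired equality of signs; thereafter commutativity of (\ref{shuffleproductdiagram}) holds on the nose on each generator and hence as a diagram of chain complexes.
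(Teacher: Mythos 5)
Your proof takes essentially the same approach as the paper: the paper's argument is precisely to observe that the commutativity of the square on a generator $\mathbf{a}\otimes b_1\otimes\cdots\otimes b_l\otimes\mathbf{b}\otimes a_1\otimes\cdots\otimes a_k$ is, after unwinding the definitions of $\Gamma$ and the collapse map $\mu$, a direct restatement of the shuffle identity (\ref{leftrightdiag}) of Proposition \ref{shuffleidentities}. Your additional discussion of how the three sign-twisting conventions must be reconciled is a correct description of the bookkeeping, which the paper (here as elsewhere) defers to Appendix \ref{orientationsection}.
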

\begin{proof}
    This is also a corollary of Proposition \ref{shuffleidentities}. Namely, we
    showed there that 
    \begin{equation}
        \begin{split}
            \mu(\hat{\mathbf{a}}, \mc{S}_{k,l} (a_k \otimes e^+, \ldots, a_1 \otimes e^+); (e^+ \otimes b_1, &\ldots, e^+ \otimes b_l),\hat{\mathbf{b}}) =\\
            &{_2}\oc (\hat{\mathbf{a}}, b_1, \ldots, b_l, \hat{\mathbf{b}}, a_1, \ldots, a_k),
\end{split}
\end{equation}
a restatement of (\ref{shuffleproductdiagram}).
\end{proof}
\begin{proof}[Proof of Theorem \ref{splitgen}]
    If the map $[\co]: \r{HH}_*(\w,\w) \lra SH^*(M)$ hits $[e]$, we conclude
    first that the representing chain-map ${_2}\co: {_2}\r{CC}_*(\w,\w)\lra
    CH^*(M)$ hits a representative $[e]$.  Thus by the existence of the diagram
    (\ref{shuffleproductdiagram}), $H^*(\mu)$ hits $[e] \in
    HW^*(\Delta,\Delta)$, verifying the split-generation criterion
    (\ref{splitgencriterion}) for $\Delta$.  
\end{proof}
\begin{cor}[$\mathbf{M}$ is full on $\w^2$] \label{fulldelta}
    Assuming non-degeneracy, $\mathbf{M}$ is full on $\w^2$.
\end{cor}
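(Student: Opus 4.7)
The plan is to derive this corollary as a direct consequence of combining three results already established: Proposition \ref{splitfull} (fullness of $\mathbf{M}$ on split Lagrangians), Theorem \ref{splitgen} (split-generation of $\Delta$), and the categorical Proposition \ref{fullsplitgen} (fullness propagates along split-generation).

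First, I would recall Proposition \ref{splitfull} (equivalently Proposition \ref{quiltkunneth}), which gives that the first-order map
\begin{equation}
    \mathbf{M}^1 : \hom_{\w^2}(L_i \times L_j, L_i' \times L_j') \lra \hom_{\w\bimod\w}(\mc{Y}^l_{L_i} \otimes \mc{Y}^r_{L_j},\, \mc{Y}^l_{L_i'} \otimes \mc{Y}^r_{L_j'})
\end{equation}
is a quasi-isomorphism. Concretely, it factors as the tensor product of the first-order Yoneda embeddings followed by the K\"unneth inclusion of Proposition \ref{bimodulekunneth}, each of which is a quasi-isomorphism. Thus $\mathbf{M}$ is quasi-full when restricted to the full subcategory $\w^2_{split}$ of product Lagrangians.

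Next, I would observe that under the non-degeneracy hypothesis, $\w^2_{split}$ split-generates all of $\w^2$. Every product Lagrangian is trivially split-generated by itself, so the only remaining object is the diagonal $\Delta$. This is precisely the content of Theorem \ref{splitgen}, which was proved just above by constructing the shuffle-product chain map $\Gamma$ intertwining the non-degeneracy condition (the fact that $[{_2}\oc]$ hits $1 \in SH^*(M)$) with the split-generation criterion (the fact that $[\mu] : H^*(\mc{Y}^r_\Delta \otimes_{\tilde{\w}^2_{split}} \mc{Y}^l_\Delta) \to HW^*(\Delta,\Delta)$ hits $[e_\Delta]$). Here one passes between $\w^2$ and the quasi-equivalent category $\tilde{\w}^2$, which is legitimate since split-generation is invariant under quasi-equivalence.

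With these two ingredients in hand, the conclusion is immediate from Proposition \ref{fullsplitgen}: an $\ainf$ functor that is quasi-full on a split-generating subcategory is quasi-full on the whole category. Hence $\mathbf{M}$ is quasi-full on $\w^2$. The main obstacle has in fact already been overcome in the proof of Theorem \ref{splitgen}, where the technology of homotopy units and the shuffle identities of Proposition \ref{shuffleidentities} were used to construct $\Gamma$; the present corollary is a categorical postscript which simply bundles the fullness on $\w^2_{split}$ with the split-generation of $\Delta$.
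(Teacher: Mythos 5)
Your proposal is correct and follows exactly the paper's own argument: fullness of $\mathbf{M}$ on product Lagrangians (Propositions \ref{quiltyonedabimodule} and \ref{quiltkunneth}), split-generation of $\Delta$ by products under non-degeneracy (Theorem \ref{splitgen}), and the categorical fact that quasi-fullness on a split-generating subcategory implies quasi-fullness everywhere (Proposition \ref{fullsplitgen}). No differences worth noting.
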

\begin{proof}
    We have shown that if $M$ is non-degenerate, then $\Delta$ is
    split-generated by the product Lagrangians $\{L_i \times L_j\}$. We have
    also constructed an $\ainf$ functor,
    \begin{equation}
        \mathbf{M}: \w^2 \lra \w\bimod\w
    \end{equation}
    and we have shown that
    $\mathbf{M}$ takes product Lagrangians $\{L_i \times L_j\}$ to
    Yoneda bimodules $\mc{Y}^l_{L_i} \otimes \mc{Y}^r_{L_j}$, and is
    full on these objects.
    This is the content of Propositions \ref{quiltainffunctor},
    \ref{quiltyonedabimodule}, and \ref{quiltkunneth}.  Thus, by Proposition
    \ref{fullsplitgen}, we conclude $\mathbf{M}$ is full on $\{\Delta,
    \{L_i\times L_j\}\}$.
\end{proof}
\begin{proof}[Proof of Theorem \ref{smoothness}]
    We showed in Proposition \ref{quiltdelta} that $\mathbf{M}$ sends $\Delta$
    to the diagonal bimodule $\w_{\Delta}$. Thus, by Corollary \ref{fulldelta},
    we conclude that $\w_{\Delta}$ is split-generated by Yoneda bimodules, the
    definition of homological smoothness.
\end{proof}
\begin{cor}\label{shiso}
    The maps 
    \begin{equation}
        \begin{split}
            \co: SH^*(M) &\lra \r{HH}^*(\w,\w)\\
            _2 \co: SH^*(M) &\lra {_2}\r{HH}^*(\w,\w)\\
        \end{split}
    \end{equation}
    are isomorphisms.
\end{cor}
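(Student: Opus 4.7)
The plan is to assemble the ingredients already established in the excerpt, using the full-and-faithful functor $\mathbf{M}: \w^2 \to \w\bimod\w$ as the central tool. First I would specialize Corollary \ref{fulldelta} to the single object $\Delta \in \w^2$: since $\mathbf{M}$ is full on all of $\w^2$ under the non-degeneracy hypothesis, the first-order term
\begin{equation}
\mathbf{M}^1: \hom_{\w^2}(\Delta,\Delta) \longrightarrow \hom_{\w\bimod\w}(\mathbf{M}(\Delta),\mathbf{M}(\Delta))
\end{equation}
is a quasi-isomorphism. Via (\ref{deltash}), the left-hand side has cohomology $SH^*(M)$, and by Proposition \ref{quiltdelta}, $\mathbf{M}(\Delta) = \w_\Delta$, so the right-hand side is by definition ${_2}\r{CC}^*(\w,\w)$, whose cohomology is ${_2}\r{HH}^*(\w,\w)$.

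Next I would invoke Proposition \ref{m1equalsco}, which identifies $\mathbf{M}^1$ on $\hom_{\w^2}(\Delta,\Delta)$ with the chain-level map ${_2}\co$. Combining this with the previous paragraph, ${_2}\co: SH^*(M) \to {_2}\r{HH}^*(\w,\w)$ is an isomorphism on cohomology. This handles the second of the two statements in the Corollary.

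To deduce that the ordinary closed-open map $\co: SH^*(M) \to \r{HH}^*(\w,\w)$ is also an isomorphism, I would invoke the homotopy-commutative diagram (\ref{homotopycohomology}) from Proposition \ref{twopointhomotopy1}, namely ${_2}\co \simeq \mathbf{\Psi} \circ \co$, together with the fact (established in the construction of $\mathbf{\Psi}$ in Section \ref{twopointed}) that $\mathbf{\Psi}$ is itself a quasi-isomorphism whenever $\w$ is homologically unital. Since we have just shown $[{_2}\co]$ is an isomorphism and $[\mathbf{\Psi}]$ is an isomorphism, two-out-of-three forces $[\co]$ to be an isomorphism as well.

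Essentially none of this is genuinely new work: the hard part is already behind us, in the form of Theorem \ref{resdiag}/Theorem \ref{splitgen} (split-generation of $\Delta$ by product Lagrangians under non-degeneracy) together with the fullness of $\mathbf{M}$ on $\w^2_{split}$ from Proposition \ref{splitfull} and the propagation of fullness along split-generation from Proposition \ref{fullsplitgen}. The only subtle point to double-check during the write-up is that the identification of $\mathbf{M}^1|_{\hom(\Delta,\Delta)}$ with ${_2}\co$ from Proposition \ref{m1equalsco} is genuinely a chain-level equality (not merely a homology-level one), so that invoking fullness of $\mathbf{M}$ on $\Delta$ directly yields that $[{_2}\co]$ is an isomorphism without passing through a separate homotopy argument.
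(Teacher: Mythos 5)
Your proposal is correct and follows essentially the same route as the paper: identify $_2\co$ with $\mathbf{M}^1$ on $\hom_{\w^2}(\Delta,\Delta)$ via Proposition \ref{m1equalsco}, invoke fullness of $\mathbf{M}$ on $\Delta$ (Corollary \ref{fulldelta}), and then transfer to $\co$ through the homotopy $_2\co \simeq \mathbf{\Psi}\circ\co$ of Proposition \ref{twopointhomotopy1} together with the quasi-isomorphism $\mathbf{\Psi}$. The paper's stated proof is actually terser — it handles only the $_2\co$ claim explicitly and leaves the two-out-of-three step for $\co$ implicit — so your write-up, if anything, fills in a detail the paper omits.
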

\begin{proof}
    By Proposition \ref{m1equalsco}, the map $_2 \co$ is exactly the first order map 
    \begin{equation}
        \mathbf{M}^1: HW^*(\Delta,\Delta) \lra \hom_{\w\!-\!\w}(\w_{\Delta}, \w_{\Delta}),
    \end{equation}
    and $\mathbf{M}$ is full on $\Delta$.
\end{proof}

\begin{rem}
In fact, there is also a commutative diagram of the form
\begin{equation}
    \xymatrix{\r{CC}_*(\w^{op},\w^{op}) \otimes_{\K} \r{CC}_*(\w,\w) \ar[r]^{\ \ \ \ \ \ \ \Omega}\ar[d]^{\oc \otimes \oc} & \r{CC}_*(\tilde{\w}^2_{split},\tilde{\w}^2_{split}) \ar[d]^{\oc^2} \\
CH^*(M^-) \otimes_\K CH^*(M) \ar[r]^{\ \ \ =} & CH^*(M^- \times M)},
\end{equation}
where $\oc^2$ is the open-closed map on the product. The map $\Omega$ is given
by sending a pair of Hochschild chains, the first in reverse order, to the
shuffle of the chains:
\begin{equation}
    (a_0 \otimes \cdots \otimes a_k) \otimes (b_l \otimes \cdots \otimes b_0) \longmapsto \mc{S}_{k+1,l+1}(a_k \otimes e^+, \ldots, a_0 \otimes e^+; e^+ \otimes b_l, \ldots, e^+ \otimes b_0).
\end{equation}
This diagram exists on $M\times M$ with the symplectic form $(\omega, \omega)$
as well---one simply stops reversing the order of the left sequence.  The
conjectural implication is that if $M$ is non-degenerate, then any product $M^k
\times (M^-)^l$ is also non-degenerate, with essential Lagrangians given by
products of the essential Lagrangians in $M$.
The reason we have not adopted such a more general approach is that our current
construction of $\w^2$ is somewhat ad hoc, only allowing the use of split
Hamiltonians. As a result, most Lagrangians we might like to consider are
inadmissible. Modulo this technical detail, which has already been resolved for
symplectic cohomology \cite{Oancea:2006oq}, our argument applies.
\end{rem}
\begin{rem}
    The map $\Omega$ and $\Gamma$ that we have described are generalizations of
    a natural product structure on the Hochschild homology of associative
    algebras  \cite{Loday:1992fk}*{\S 4.2}. In the setting of unital
    associative algebras, a version of the Eilenberg-Zilberg theorem says that
    shuffle product induces an isomorphism
    \begin{equation}
        sh_*: \r{HH}_*(A) \otimes \r{HH}_*(A') \lra \r{HH}_*(A \otimes A').
    \end{equation} 
    We have described an $\ainf$ version of the above morphism, which requires
    the strictification of units to carry through.  There is a well defined
    quasi-inverse that always exists in the associative unital setting, and we
    conjecture that such quasi-inverses exist in the $\ainf$ setting as well.
    Constructing them may involve deforming the diagonal associahedron
    $\Delta_d \subset \mc{R}^{d}\times \mc{R}^{d}$ onto various copies of
    products of strata in order to obtain formulas for the tensor product of
    $\ainf$ algebras---see e.g., \cite{Saneblidze:2004fk}.
\end{rem}

\section{The non-compact Calabi-Yau structure}\label{wtowshrieksection}

Assuming $\mathbf{M}$ is non-degenerate, we have shown that $\w$ is
homologically smooth.  Thus from the results in Section \ref{moduleduality},
the {\bf inverse dualizing bimodule},
\begin{equation}
    \w^!:= \hom_{\w\!-\!\w}(\w_{\Delta},\w_{\Delta} \otimes_{\mathbb K} \w_{\Delta})
\end{equation}
is a perfect bimodule that represents Hochschild cohomology
\begin{equation}
    \w^! \otimes_{\w\!-\!\w} \mc{B} \simeq \r{HH}^*(\w,\mc{B}).
\end{equation}
        
In this section, we describe a geometric morphism of bimodules.
\begin{equation}
    \mc{CY}: \w_{\Delta} \lra \w^![n].
\end{equation}
The construction involves operations arising from discs with two negative
punctures and arbitrary numbers of positive punctures. We require that there be
a {\it distinguished} positive puncture on each component of the boundary of
the disc minus negative punctures; namely, we require there to be at least two
inputs.
Then, we interpret one of the distinguished positive punctures as belonging to
$\w$ and the remaining distinguished input and two outputs as belonging to
$\w^!$.
\begin{defn}
    The {\bf moduli space of discs with two negative punctures, two positive
    punctures, and $(k,l; s,t)$ positive marked points} 
    \begin{equation}
        \mc{R}_2^{k,l;s,t} 
    \end{equation}
    is the abstract moduli space of discs with \begin{itemize}
        \item two distinguished negative
    marked points $z_-^1$, $z_-^2$, 
    \item two distinguished positive marked points $z_+^1$,
        $z_+^2$, one removed from each boundary component cut out by $z_-^1$ and $z_-^2$, 
        \item $k$ positive marked points $a_1, \ldots, a_k$ between $z_-^1$ and $z_+^1$
        \item $l$ positive marked points $b_1, \ldots, b_l$ between $z_+^1$ and $z_-^2$
        \item $s$ positive marked points $c_1, \ldots, c_s$ between $z_-^2$ and $z_+^2$; and 
        \item $t$ positive marked points $d_1, \ldots, d_t$ between $z_+^2$ and $z_-^1$.
\end{itemize}
    Moreover, the distinguished points $z_-^1$, $z_-^2$, $z_+^1$, and $z_+^2$
    are constrained to lie (after automorphism) at $1$, $-1$, $i$ and $-i$
    respectively. Namely, we fix the cross-ratios of these 4 points.
\end{defn}
The boundary strata of the Deligne-Mumford compactification 
\begin{equation}
    \overline{\mc{R}}_2^{k,l;s,t}
\end{equation}
is covered by the images of natural inclusions of the following products:
\begin{align}
    \label{cycodim1start}\overline{\mc{R}}^{k' + 1 + l'} &\times_{1+} \overline{\mc{R}}_2^{k-k',l-l';s,t} \\
    \label{cycodim12}\overline{\mc{R}}^{k'} &\times^a_{n+1} \overline{\mc{R}}_2^{k-k'+1,l;s,t},\ \ \ 0 \leq n < k-k'+1\\
    \label{cybimodfinish}\overline{\mc{R}}^{l'} &\times^b_{n+1} \overline{\mc{R}}_2^{k,l-l'+1;s,t},\ \ \ 0 \leq n < l-l'+1\\
    \label{wshriek010}\overline{\mc{R}}^{s' + 1 + t'} &\times_{2+} \overline{\mc{R}}_2^{k,l,s-s',t-t'}\\
    \overline{\mc{R}}^{s'} &\times^c_{n+1} \overline{\mc{R}}_2^{k,l;s-s'+1,t},\ \ \ 0 \leq n < s-s'+1\\
    \label{wshriek010end}\overline{\mc{R}}^{t'} &\times^d_{n+1} \overline{\mc{R}}_2^{k,l;s,t-t'+1},\ \ \ 0 \leq n < t-t'+1\\
    \label{wshriekhigher}\overline{\mc{R}}_2^{k-k',l;s',t-t'} &\times_{(1,k'+1)} \overline{\mc{R}}^{k' + 1 + t'}\\
    \label{cycodim1end}\overline{\mc{R}}_2^{k,l-l';s-s',t} &\times_{(2,s'+1)} \overline{\mc{R}}^{l' + 1 + s'}.
\end{align}
Above, the notation $\times_{j+}$ in (\ref{cycodim1start}) and
(\ref{wshriek010}) indicates that the output of the first component is glued to
the special input $z_j^+$ of the second component, $\times^a_j$, $\times^b_j$,
$\times^c_j$ and $\times^d_j$ indicate gluing to the input $a_j$, $b_j$, $c_j$,
and $d_j$ respectively, and $\times_{(i,j)}$ in (\ref{wshriekhigher}) and
(\ref{cycodim1end}) indicate gluing the $i$th output of the first component to
the $j$th input of the second. Also, in (\ref{cycodim1start}) and
(\ref{wshriek010}), the $k'+1$st and $s'+1$st input points of the first
component become the special points $z^1_+$ and $z^2_+$ after gluing
respectively.

\begin{figure}[h] 
    \caption{A schematic of the moduli space $\mc{R}^{2,3;3,2}_2$. All non-signed marked points are inputs.\label{wwshriek}}
    \centering
    \includegraphics[scale=0.6]{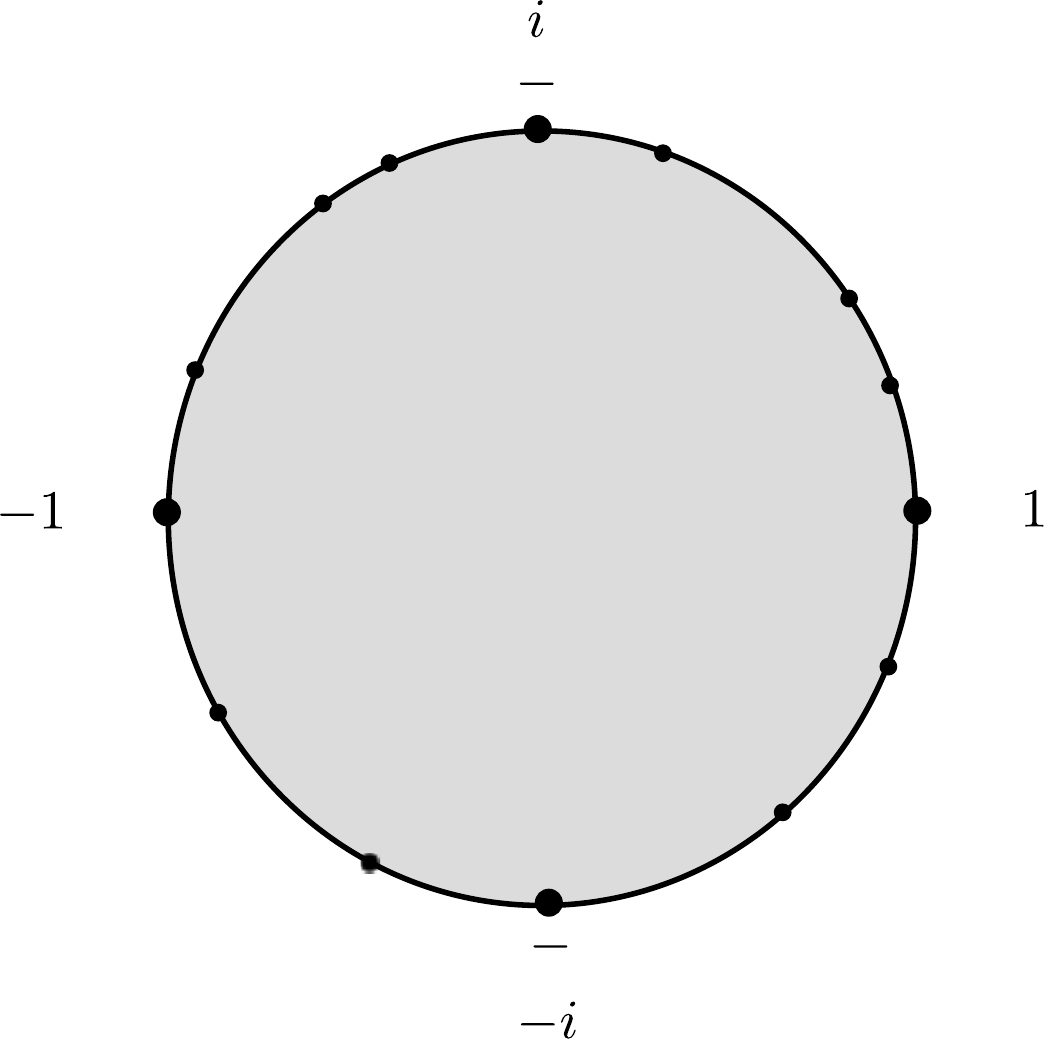}
\end{figure}
\begin{defn}
    A {\bf Floer datum} for a disc $S$ with two positive, two negative, and
    $(k,l; s,t)$ positive boundary marked points is a Floer datum of $S$
    thought of as an open-closed string.
\end{defn}

Fix a sequence of Lagrangians
\begin{equation} \label{twooutputlabels}
    A_0, \ldots, A_k, B_0, \ldots, B_l, C_0, \ldots, C_s, D_0, \ldots, D_t,
\end{equation}
corresponding to a labeling of the boundary of an element of
$\mc{R}_2^{k,l;s,t}$ by specifying that $a_i$ be the intersection point between
$A_{i-1}$ and $A_i$, and so on for $b_i$, $c_i$, and $d_i$. 
In the manner described in (\ref{signtwistoperation}), 
the space $\overline{\mc{R}}_2^{k,l;s,t}$, along with the sign twisting datum
\begin{equation}
    \vec{t}_{\mc{CY}_{k,l,s,t}} := (1,2, \ldots, k, k, k+1, \ldots, k+l, 1, 2, \ldots, s, s, s+1, \ldots, s + t )
\end{equation}
corresponding to inputs $(a_1, \ldots, a_k, z_1^+, b_1, \ldots, b_l, c_1, \ldots, c_s, z_2^+, d_1, \ldots, d_t)$,
determines an operation
\begin{equation} \label{cyoperationklst}
    \begin{split}
        \mathbf{CY}_{l,k,t,s}: 
        \big( \hom(B_{l-1}, B_l) &\otimes \cdots  \otimes \hom(B_{0},B_{1}) \\
&\otimes  \mathbf{hom}(A_k,B_0) \otimes \hom(A_{k-1}, A_k) \otimes \cdots 
\otimes \hom(A_{0},A_{1}) \big) \\
\otimes \big(\hom(D_{t-1}, D_t) &\otimes \cdots \otimes \hom(D_{0},D_{1})\\
        &\otimes \mathbf{hom}(C_s, D_0) \otimes 
        \hom(C_{s-1}, C_s) \otimes \cdots \otimes \hom(C_{0},C_{1}) \big) \\
            &\longrightarrow \mathbf{hom}(A_0,D_t) \otimes \mathbf{hom}(C_0,B_l).
\end{split}
    \end{equation}

\begin{defn}
    The {\bf Calabi-Yau morphism}
    \begin{equation}
        \mc{CY}: \w_{\Delta} \lra \w^![n]
    \end{equation}
    is given by the following data:
    \begin{itemize}
        \item For objects $(X,Y)$, a map 
            \begin{equation}
                \begin{split}
                    \mc{CY}^{0|1|0}: \w_{\Delta}(X,Y) &\lra \hom_{\w\bimod\w}(\w_{\Delta}, \mc{Y}^l_X
                \otimes \mc{Y}^r_Y) \\
                \mathbf{a} &\longmapsto \phi_{\mathbf{a}}
            \end{split}
            \end{equation}
            where $\phi_{\mathbf{a}}$ is the morphism whose $t|1|s$ term is
            \begin{equation}
                \begin{split}
                \phi_{\mathbf{a}}^{t|1|s}(&d_t, \ldots, d_1, \mathbf{b}, c_s, \ldots, c_1):= \\
                &\mc{CY}_{0,0,t,s}(\mathbf{a}, d_t, \ldots, d_1, \mathbf{b}, c_s, \ldots, c_1)
            \end{split}
            \end{equation}

        \item Higher morphisms
            \begin{equation}
                \begin{split}
                    \mc{CY}^{l|1|k}: \hom_{\w}(Y_{k-1},Y_k) \otimes \cdots \otimes &\hom_\w(Y_0,Y_1) \otimes \w_{\Delta}(X_0,Y_0) \\
                    \otimes \hom_\w(X_1, X_0) \otimes \cdots \otimes \hom_\w(X_{l},X_{l-1}) &\lra \hom_{\w\bimod\w}(\w_{\Delta}, \mc{Y}^l_{X_k} \otimes \mc{Y}^r_{Y_l})\\
            (b_l, \ldots, b_1, \mathbf{a}, a_k, \ldots, a_1) &\longmapsto \psi_{b_l, \ldots, b_1, \mathbf{a}, a_k, \ldots, a_1}
        \end{split}
    \end{equation}
    where 
    $\psi = \psi_{b_l, \ldots, b_1, \mathbf{a}, a_k, \ldots, a_1}$ 
    is the morphism whose $t|1|s$ term is
            \begin{equation}
                \begin{split}
            \phi^{t|1|s}(d_t,& \ldots, d_1, \mathbf{b}, c_s, \ldots, c_1) := \\
                &\mc{CY}_{l,k,t,s}(b_l, \ldots, b_1, \mathbf{a}, a_k, \ldots, a_1; d_t, \ldots, d_1, \mathbf{b}, c_s, \ldots, c_1).
            \end{split}
            \end{equation}
        \end{itemize}
        Put another way, we can in a single breath say that
        \begin{equation}
            \begin{split}
            (\mc{CY}^{l|1|k}(b_l, \ldots, b_1, \mathbf{a}, a_k, \ldots, &a_1))^{t|1|s}(d_t, \ldots, d_1, \mathbf{b}, c_s, \ldots, c_1) \\
            &:= \mc{CY}_{l,k,t,s}(b_l, \ldots, b_1, \mathbf{a}, a_k, \ldots, a_1; d_t, \ldots, d_1, \mathbf{b}, c_s, \ldots, c_1).
        \end{split}
    \end{equation}
\end{defn}
The Gromov bordification
$\overline{\mc{R}}_2^{k,l;s,t}(\vec{x}_{in},\vec{x}_{out})$ has boundary
covered by the images of the Gromov bordifications of spaces of maps
from the nodal domains (\ref{cycodim1start}) - (\ref{cycodim1end}), along with
standard strip breaking, which put together implies that: 
\begin{prop} \label{cycloseddegn}
    $\mc{CY}$ is a closed morphism of $\ainf$ bimodules of degree $n$.
\end{prop}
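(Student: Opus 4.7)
The plan is to deduce the claim from an analysis of the codimension-one boundary of the one-dimensional components of the Floer moduli spaces associated to $\overline{\mc{R}}_2^{k,l;s,t}$. Expanding the bimodule pre-morphism differential
\begin{equation*}
\delta(\mc{CY}) = \mu_{\w^![n]} \circ \hat{\mc{CY}} - (-1)^{n}\,\mc{CY} \circ \hat{\mu}_{\w_\Delta},
\end{equation*}
the equation $\delta(\mc{CY})=0$ unpacks, after testing on a pair of composable sequences $(b_l, \ldots, b_1, \mathbf{a}, a_k, \ldots, a_1)$ on the input side and $(d_t, \ldots, d_1, \mathbf{b}, c_s, \ldots, c_1)$ on the output side, into a sum of compositions of $\ainf$ operations in $\w$, diagonal bimodule operations on $\w_\Delta$ (as in Section~\ref{diagonalbimodulesection}), and the bimodule operations on $\w^!$ dictated by Definition~\ref{bimoduledual}. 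Each summand should be produced by exactly one codimension-one boundary stratum of $\overline{\mc{R}}_2^{k,l;s,t}$ together with strip-breaking contributions, and the count of rigid elements of the boundary then yields the desired identity.

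First I would match strata on the input side to the piece $\mc{CY} \circ \hat{\mu}_{\w_\Delta}$. The strata (\ref{cycodim12}) and (\ref{cybimodfinish}), in which a stable disc bubbles off and collapses inputs among the $a_i$ or $b_j$, correspond to the terms where $\mu^m_\w$ acts on a consecutive subsequence of input chords on the input boundary; stratum (\ref{cycodim1start}), in which the disc carrying $z_+^1$ bubbles off, corresponds to the term where $\mu^{k'+1+l'}_{\w_\Delta}$ absorbs $\mathbf{a}$ together with surrounding $a_i$'s and $b_j$'s. Together with the analogous contributions from the output side via (\ref{wshriek010})--(\ref{wshriek010end}) and (\ref{wshriekhigher})--(\ref{cycodim1end}), but now reinterpreted through the bimodule structure of $\w^!$ (which precisely packages $\mu_{\cc_\Delta}^{r|1|s'}$ acting on outer outputs by post-composition), these boundary components realize $\mu_{\w^![n]} \circ \hat{\mc{CY}}$. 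Strip-breaking of the eight possible asymptotic ends contributes the remaining $\mu^1$ terms on both sides. Crucially, boundary configurations where two points on the same side could bubble off without touching the distinguished point are ruled out by the two-disc axiom (one distinguished input per boundary arc), consistent with the unstable strata we did not list.

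The main obstacle is the sign bookkeeping. One must verify that under the sign twisting datum $\vec{t}_{\mc{CY}_{k,l,s,t}}$, together with the Koszul signs built into Definitions~\ref{bimoduledef} and~\ref{bimoduledual}, each boundary stratum contributes to $\delta(\mc{CY})$ with exactly the sign demanded by the corresponding algebraic term. This amounts to comparing the sign twisting datum inherited by each nodal stratum (via the consistency axiom on Floer data and the conventions of Lemma~\ref{bordificationlem}) with the signs defined in Definition~\ref{bimoduledual} for the operations $\mu^{r|1|0}_{\w^!}$ and $\mu^{0|1|s}_{\w^!}$, and with the signs in the bimodule operations on $\w_\Delta$. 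Since the ``$z_+^2$ output $\otimes$ stuff'' and ``stuff $\otimes z_+^1$ output'' arise from non-adjacent boundary outputs, the orientation-line analysis of Appendix~\ref{orientationsection} (specifically the two-output conventions involving chosen orientations of the Lagrangian boundary components adjacent to both outputs) gives the required identifications. The degree statement $|\mc{CY}|=n$ is the dimension computation for $\mc{R}_2^{k,l;s,t}$, combined with the normalizations in (\ref{hochschildcochaingradings}) and the grading on the morphism complex $\hom_{\w\!-\!\w}(\w_\Delta,\mc{Y}^l_X\otimes\mc{Y}^r_Y)$, and follows from the characteristic computation $\chi = 2 - h$ for a disc ($h=1$) together with the presence of two boundary outputs.
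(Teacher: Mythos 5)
Your proposal is correct and follows essentially the same route as the paper's proof: read off the $\ainf$ bimodule pre-morphism equation $\delta\mc{CY}=0$ from the codimension-one boundary of the one-dimensional moduli spaces over $\overline{\mc{R}}_2^{k,l;s,t}$, matching (\ref{cycodim1start})--(\ref{cybimodfinish}) to $\mc{CY}\circ\hat{\mu}_{\w_\Delta}$, (\ref{wshriek010})--(\ref{cycodim1end}) to $\mu_{\w^!}\circ\hat{\mc{CY}}$ via Definition \ref{bimoduledual}, adding strip-breaking for the $\mu^1$ terms, and deferring signs to the conventions of Appendix \ref{orientationsection}. One small caveat: your parenthetical claim that ``configurations where two points on the same side bubble off without touching the distinguished point are ruled out'' is misstated---such bubblings are precisely the strata (\ref{cycodim12}), (\ref{cybimodfinish}) and their output-side analogues, which you yourself match to terms of the equation---but since all needed strata are accounted for this does not affect the argument.
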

\begin{proof}
    We will briefly indicate how to convert the strata
    (\ref{cycodim1start})-(\ref{cycodim1end}) to the equation 
    \begin{equation}
        \delta \mc{CY} = \mc{CY} \circ \hat{\mu}_{\w_{\Delta}} - \mu_{\w^!}
        \circ \hat{\mc{CY}} = 0.  \end{equation} 
The strata (\ref{cycodim1start}) - (\ref{cybimodfinish}) 
correspond to $\mc{CY}$ composed with various $\ainf$ bimodule
differentials for $\w_{\Delta}$. The strata (\ref{wshriek010}) -
(\ref{wshriek010end}) all correspond to the internal differential
$\mu^{0|1|0}_{\w^!}$, which itself involves various pieces of the $\w_{\Delta}$ $\ainf$
bimodule differentials for the second string of inputs. Finally, the strata
(\ref{wshriekhigher}) - (\ref{cycodim1end} ) for fixed $k'$, $l'$, and varying
over all $s'$, $t'$
correspond to the terms of the form $\mu^{k'|1|0}_{\w^!} \circ \mc{CY}$ and
$\mu^{0|1|l'} \circ \mc{CY}$. The ingredients to verify signs are discussed in Section \ref{orientationsection}.
\end{proof}

We now observe that to first order, the operation $\mc{CY}$ is controlled by a
moduli space identical to one appearing in our definition of quilts:
\begin{prop}\label{mcy}
     For any $A,B \in \ob \w$, there is an equality
     \begin{equation}
         \mc{CY}^{0|1|0}_{A,B} = \mathbf{M}^1_{\Delta,A \times B}.
    \end{equation}
\end{prop}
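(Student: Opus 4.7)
The approach is to exhibit an isomorphism between the moduli spaces (with their Floer data, Lagrangian labellings, orientations, and sign twisting data) controlling each side, whence the two operations coincide term by term. The plan is to carry this out for each component $(t|1|s)$ and verify that the resulting Floer-theoretic operations agree on the nose.

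First I would unfold the right-hand side. By the construction of the quilt functor in Section \ref{quiltsection}, the $(t|1|s)$-component of $\mathbf{M}^1_{\Delta, A\times B}(\mathbf{a})$ is the operation controlled by the image of $\mathbf{\Psi}_{\hat{\vec{L}}}(Q(3,(t,1,s)))$ inside $_{\mf{S},\mf{T}}\overline{\mc{R}}_{t+3,s+3}$, with $\hat{\vec{L}} = ((A_0,\ldots,A_t),(\Delta, A\times B),(B_0,\ldots,B_s))$, point identification $\mf{T} = \{(t+1,s+1),(t+2,s+2)\}$ recording the coincidences at the $-\infty$ marker and the middle $l_2$ marker, and boundary identification $\mf{S}$ equal to the single $l_2$ segment labeled $\Delta$. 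After the gluing $\pi_{\mf{S}}$, this $\Delta$-segment becomes an internal seam, and the rules of $\pi_{\mf{S}}$-gluing imply: the $+\infty$ coincidence splits into two distinct negative boundary marked points (since neither adjacent segment is identified); the $-\infty$ and middle-$l_2$ coincidences each collapse to a single positive boundary marked point (each adjacent to one identified segment and one unglued segment); and the $t$, $s$ marked points from $l_1$, $l_3$ persist. The resulting disc has two negative and $t+s+2$ positive boundary marked points in precisely the arrangement defining $\mc{R}_2^{0,0;s,t}$, with the middle-$l_2$ marker playing the role of $z_+^1$ and the $-\infty$ coincidence that of $z_+^2$.

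Next I would verify the conformal normalization. The $\Z/2$ reflection of the quilted strip swapping $-S_1$ with $S_2$ fixes $l_2$ pointwise and is preserved by $\mf{S}$-gluing, hence descends to an anti-holomorphic involution of the glued disc exchanging the two outputs and fixing the two distinguished inputs. Any such involution on a disc is conjugate to complex conjugation, so its fixed locus is a real line; after PSL$(2,\R)$-normalization, we may set $(z_-^1, z_-^2, z_+^1, z_+^2) = (1, -1, i, -i)$ as in the definition of $\mc{R}_2^{k,l;s,t}$. This produces the desired isomorphism of labeled, compactified moduli spaces. Finally, the Floer data, orientations and signs all match: the consistency condition (\ref{projection}) in our choice of Floer data for glued pairs of discs requires the datum to depend only on $\pi_{\mf{S}}(P)$, so it may be arranged to coincide under the above isomorphism with the Floer datum chosen for $\mc{R}_2^{0,0;s,t}$, and the comparison of orientations and sign twisting data is routine in the style of Appendix \ref{orientationsection}. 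The main obstacle will be the conformal normalization argument together with the sign bookkeeping, but neither requires genuinely new input beyond the apparatus already developed in Sections \ref{pairs} and \ref{quiltsection}.
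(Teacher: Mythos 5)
Your proposal is essentially the paper's own proof: Proposition \ref{mcy} is established there precisely by applying the unfolding map $\mathbf{\Psi}_{\hat{\vec{L}}}$ of (\ref{unfoldingquiltdefinition}) to quilted strips with $(t,1,s)$ markings and middle label sequence $(\Delta, A\times B)$, observing that after $\pi_{\mf{S}}$-gluing along the single $\Delta$-labeled seam the two outputs stay distinct while the strip-end and $l_2$-marker coincidences each merge to a single input, so that the resulting configuration is exactly the one defining $\mc{CY}_{0,0;t,s}$, with Floer data matching because the datum for a glued pair of discs is required by (\ref{projection}) to depend only on $\pi_{\mf{S}}(P)$ and to agree with the previously chosen open-closed datum. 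Your combinatorial analysis of which coincident pairs merge and which remain separate is the same as the paper's (up to the convention of which strip end carries the output), and that is the substance of the argument.

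Two details in your write-up are off, though neither touches this core. First, bookkeeping: each unfolded disc has $r_i + r_2 + 1$ inputs, so the ambient space is ${_{\mf{S},\mf{T}}}\overline{\mc{R}}_{t+2,s+2}$ rather than $\overline{\mc{R}}_{t+3,s+3}$, and in the paper's indexing the point identification is the initial sequential one $\mf{T}=\{(1,1),(2,2)\}$. Second, and more substantively, the reflection argument does not deliver the normalization you claim: an anti-holomorphic involution of the disc fixing $z_+^1,z_+^2$ and exchanging $z_-^1,z_-^2$ is conjugate to $z\mapsto\bar z$, hence places the two fixed points at $\pm 1$ and the swapped pair at conjugate positions $e^{\pm i\theta}$, but it leaves $\theta$ free, so by itself it cannot force the square configuration $(1,-1,i,-i)$. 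What is actually true, and all that is needed, is that the four special points of the unfolded quilt sit in a single fixed conformal configuration (each half-strip carries exactly three special boundary points and is therefore rigid modulo translation, and the gluing arc is determined), so the image of the unfolding is cut out by a fixed cross-ratio constraint of precisely the type imposed in the definition of $\mc{R}_2^{0,0;s,t}$; the identification of the two operations then rests on the consistency conditions tying the Floer data to the glued surface, which is the level at which the paper's proof operates as well—it does not attempt to pin the special points to $\pm 1, \pm i$.
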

\begin{proof}
The maps
\begin{equation}
    \mc{CY}^{0|1|0}: \hom_\w (A,B) \lra \w^!(A,B) := \hom_{\w\!-\!\w}(\w_{\Delta},\mc{Y}^l_A \otimes \mc{Y}^r_B)[n]
\end{equation}
and
\begin{equation}
    \mathbf{M}^1: \hom_{\w^2}(\Delta,A \times B) := \hom_\w(A,B)[-n] \lra \hom_{\w\!-\!\w}(\w_{\Delta},\mc{Y}^l_A \otimes \mc{Y}^r_B)
\end{equation}
have the same source and targets, so we need to verify that the spaces
controlling the Floer equations are the same. The unfolding map,
$\mathbf{\Psi}_{\hat{\vec{L}}}$, defined in (\ref{unfoldingquiltdefinition}),
when applied to a quilted strip with $(r,1,s)$ marked points, with middle label
sequence $(\Delta, A \times B)$, produces a surface with two output marked
points, two distinguished input marked points, a distinguished input marked
point between them corresponding to $\hom(A,B)$, and then $r+1+s$ input marked
points (the $r+1$st of which is distinguished) around the two outputs. This is
exactly the definition of the operation given by $\mc{CY}_{0,0; r,s}$. See also
Figure \ref{wrapcyimg} for a picture of this unfolding.
\end{proof}

\begin{figure}[h] 
    \caption{The equality between the quilted strip controlling $\mathbf{M}^1_{\Delta, L_i \times L_j}$ and the first-order map $\mc{CY}^{0|1|0}_{L_i, L_j}$. \label{wrapcyimg}}
    \includegraphics[scale=0.7]{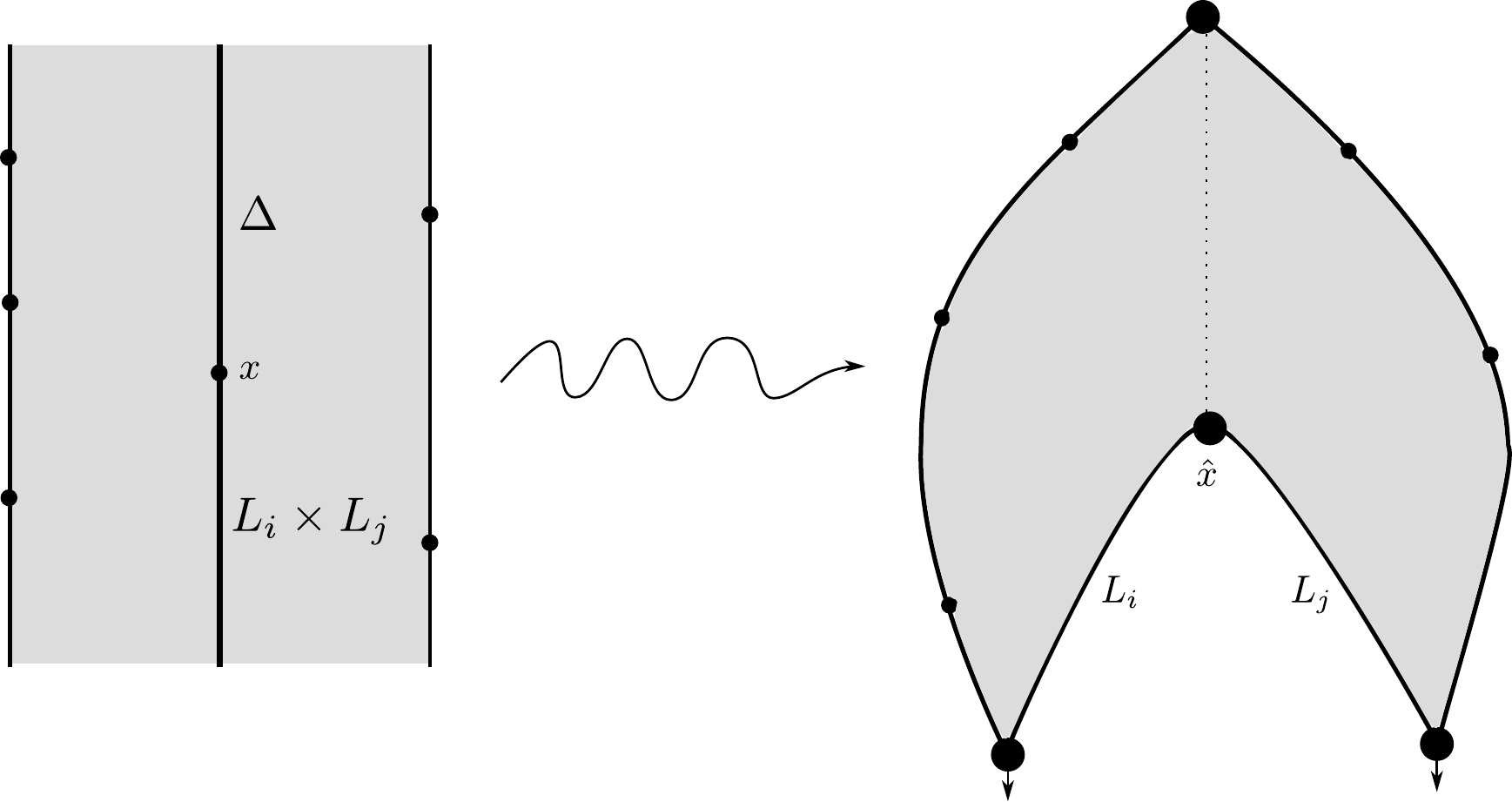}
\end{figure}

As an immediate corollary, 
\begin{cor}[The wrapped Fukaya category is Calabi-Yau]
    \label{calabiyau} Assuming non-degeneracy, 
    $\mc{CY}$ is a quasi-isomorphism.
\end{cor}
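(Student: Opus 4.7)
The plan is to reduce the statement to the first-order Yoneda-type component of $\mc{CY}$ and invoke the fullness of the quilt functor $\mathbf{M}$ established under the non-degeneracy hypothesis. Recall that a closed morphism of bimodules is a quasi-isomorphism precisely when its first-order component $\mc{F}^{0|1|0}_{A,B}\colon \mc{B}(A,B)\to \mc{B}'(A,B)$ induces an isomorphism on homology for every pair of objects $(A,B)$; indeed, the higher components $\mc{F}^{r|1|s}$ are part of the homotopy data, and the homology-level morphism of underlying chain complexes is computed entirely by $\mc{F}^{0|1|0}$. Since Proposition \ref{cycloseddegn} already provides that $\mc{CY}$ is a closed morphism of degree $n$, it suffices to show that
\[
    \mc{CY}^{0|1|0}_{A,B}\colon \w_{\Delta}(A,B)\longrightarrow \w^![n](A,B)
\]
is a quasi-isomorphism for all $A,B\in \ob\w$.

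The key identification is Proposition \ref{mcy}, which asserts that, under the natural correspondence between $\hom_{\w^2}(\Delta,A\times B)$ and $\hom_{\w}(A,B)[-n]$ of Proposition \ref{fundamentalproductidentity}, and between $\hom_{\w\!-\!\w}(\w_{\Delta},\mc{Y}^l_A\otimes \mc{Y}^r_B)$ and $\w^!(A,B)$ by the definition of the inverse dualizing bimodule, we have the chain-level equality
\[
    \mc{CY}^{0|1|0}_{A,B} \;=\; \mathbf{M}^1_{\Delta,A\times B}.
\]
Thus $\mc{CY}^{0|1|0}_{A,B}$ is a quasi-isomorphism if and only if the first-order term of the quilt functor $\mathbf{M}$ at the pair $(\Delta,A\times B)$ is a quasi-isomorphism on morphism complexes.

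The last input is Corollary \ref{fulldelta}, which states that, assuming $M$ is non-degenerate, the functor $\mathbf{M}\colon \w^2\to \w\bimod\w$ is full on all of $\w^2$. In particular, the homology-level first-order map $[\mathbf{M}^1]$ is an isomorphism on every morphism space $\hom_{\w^2}(X,Y)$, and specifically for $X=\Delta$ and $Y=A\times B$. Combining this with Proposition \ref{mcy} gives the quasi-isomorphism statement at every pair $(A,B)$, and hence $\mc{CY}$ is a quasi-isomorphism of bimodules.

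The substantive content of this argument has already been carried out in prior sections: the construction and verification of $\mathbf{M}$ as an $\ainf$ functor (Proposition \ref{quiltainffunctor}), the computation of $\mathbf{M}$ on split Lagrangians and the diagonal (Propositions \ref{quiltyonedabimodule}, \ref{quiltdelta}, \ref{quiltkunneth}), and most importantly the split-generation of $\Delta$ by product Lagrangians in $\w^2$ (Theorem \ref{splitgen}, whose proof via the shuffle map $\Gamma$ occupies Section \ref{splitgendeltasection}). The remaining step here is purely conceptual assembly; the only item requiring genuine verification is the chain-level identification of Proposition \ref{mcy}, which compares two \emph{a priori} distinct operations on the same underlying moduli space of discs with two outputs and marked inputs, and this has already been carried out by direct inspection of the unfolded quilted strip as in Figure \ref{wrapcyimg}.
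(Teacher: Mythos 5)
Your proposal is correct and follows essentially the same route as the paper: the paper's proof is precisely to combine the fullness of $\mathbf{M}$ on $\w^2$ (Corollary \ref{fulldelta}) with the chain-level identification $\mc{CY}^{0|1|0}_{A,B}=\mathbf{M}^1_{\Delta,A\times B}$ of Proposition \ref{mcy}. Your added remarks spelling out that quasi-isomorphism of a closed bimodule morphism is detected by its first-order component are consistent with the paper's definition and do not change the argument.
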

\begin{proof}
    We have shown that under the above hypothesis $\mathbf{M}$ is full
    (Corollary \ref{fulldelta}); hence it induces isomorphisms on homology.
    Thus by Proposition \ref{mcy} so does $\mc{CY}$.
\end{proof}
This completes the proof of Theorem \ref{wrapcy}.
\begin{cor}
    For any perfect bimodule $\mc{B}$, there is a natural quasi-isomorphism 
    \begin{equation}
        \r{HH}_{*-n}(\w,\mc{B}) \ra \r{HH}^*(\w,\mc{B}).
    \end{equation}
\end{cor}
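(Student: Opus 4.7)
The plan is to deduce this corollary as a direct algebraic consequence of the Calabi-Yau quasi-isomorphism of Corollary \ref{calabiyau}, combined with the fact, established under our smoothness hypothesis, that the inverse dualizing bimodule $\w^!$ represents Hochschild cohomology. Essentially all of the work has been done; the remaining task is to compose the right sequence of quasi-isomorphisms and track the degree shift.

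First, I would identify $\r{HH}_*(\w,\mc{B})$ with the homology of the two-pointed Hochschild complex
\[
{_2}\r{CC}_*(\w,\mc{B}) = \w_\Delta \otimes_{\w\!-\!\w} \mc{B},
\]
using the explicit quasi-isomorphism $\Phi$ of (\ref{Phidef}). Next, the Calabi-Yau morphism $\mc{CY}:\w_\Delta \stackrel{\sim}{\lra} \w^![n]$ of Corollary \ref{calabiyau} is a quasi-isomorphism of $\w$-bimodules, so by the functoriality of the bimodule tensor product (Proposition \ref{inducedmorphisms}) it induces a quasi-isomorphism of chain complexes
\[
    \mc{CY}_\#:\ \w_\Delta \otimes_{\w\!-\!\w} \mc{B} \stackrel{\sim}{\lra} \w^![n] \otimes_{\w\!-\!\w} \mc{B} = \bigl(\w^! \otimes_{\w\!-\!\w} \mc{B}\bigr)[n].
\]

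Finally, since $\w$ is homologically smooth by Theorem \ref{smoothness}, the diagonal bimodule $\w_\Delta$ is perfect, and so the comparison map $\bar\mu$ of (\ref{explicitquasiiso}) provides, via Corollary \ref{representhh} (equivalently Proposition \ref{homrepresent}, where either perfectness of $\w_\Delta$ or of $\mc{B}$ is sufficient), a quasi-isomorphism
\[
    \w^! \otimes_{\w\!-\!\w} \mc{B} \stackrel{\sim}{\lra} {_2}\r{CC}^*(\w,\mc{B}),
\]
whose homology is $\r{HH}^*(\w,\mc{B})$. Composing these three quasi-isomorphisms and passing to cohomology yields the advertised natural isomorphism $\r{HH}_{*-n}(\w,\mc{B}) \stackrel{\sim}{\lra} \r{HH}^*(\w,\mc{B})$.

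There is no substantial obstacle here; the only things to verify are bookkeeping items. The degree shift by $n$ enters exactly through the $[n]$ in the target of $\mc{CY}$, which is forced by the dimension formula for the moduli spaces $\overline{\mc{R}}^{k,l;s,t}_2$ controlling $\mc{CY}$ (this shift was already embedded in the statement of Theorem \ref{wrapcy}). Naturality in $\mc{B}$ is automatic, since each of the three constituent quasi-isomorphisms---$\Phi$, $\mc{CY}_\#$, and $\bar\mu$---is manifestly natural in the bimodule coefficient. The perfectness hypothesis on $\mc{B}$ is only used in the last step, in order to invoke the identification of $\w^! \otimes \mc{B}$ with $\hom_{\w\!-\!\w}(\w_\Delta,\mc{B})$.
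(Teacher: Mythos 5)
Your proposal is correct and follows essentially the same route as the paper: the paper's proof is exactly the composition $\w_{\Delta} \otimes_{\w\!-\!\w} \mc{B} \stackrel{\mc{CY}_\#}{\lra} \w^! \otimes_{\w\!-\!\w} \mc{B} \stackrel{\bar{\mu}}{\lra} \hom_{\w\!-\!\w}(\w_{\Delta},\mc{B})$, using the two-pointed complexes for Hochschild homology and cohomology and citing Corollary \ref{calabiyau} and Corollary \ref{representhh}. The only quibble is your parenthetical claim that perfectness of either $\w_\Delta$ or $\mc{B}$ alone suffices in Proposition \ref{homrepresent} — as proved there both are used — but this is immaterial here since both hypotheses hold.
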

\begin{proof}
    The isomorphism is the composition of two maps, which are both
    quasi-isomorphisms by Corollary \ref{calabiyau} and Corollary
    \ref{representhh}.  Using two-pointed complexes for Hochschild homology and
    cohomology, these maps are: 
    \begin{equation}\label{cymaps}
        \w_{\Delta} \otimes_{\w\!-\!\w} \mc{B} \stackrel{\mc{CY}_\#}{\lra} \w^!
        \otimes_{\w\!-\!\w} \mc{B} \stackrel{\bar{\mu}}{\lra}
        \hom_{\w\!-\!\w}(\w_{\Delta},\mc{B}).
    \end{equation}
\end{proof}

In the next section, we use Corollary \ref{calabiyau} to deduce that $\oc:
\r{HH}_*(\w,\w) \ra SH^*(M)$ is an isomorphism.

\section{The Cardy condition} \label{cardyconditionsection}

\subsection{A geometric bimodule quasi-isomorphism}
In Section \ref{diagonalbimodulesection}, we gave a construction of a
quasi-isomorphism of bimodules 
\begin{equation}
    \mc{F}_{\Delta, left, right}: \cc_{\Delta} \otimes_\cc \mc{B} \otimes_\cc \cc_{\Delta} \stackrel{\sim}{\lra} \mc{B}.
\end{equation}
where $\cc$ was an arbitrary $\ainf$ category, $\cc_{\Delta}$ the diagonal
bimodule, and $\mc{B}$ a $\cc\!-\!\cc$ bimodule.  The morphism involved
collapsing on the right followed by collapsing on the left by the bimodule
structure maps $\mu_{\B}$. The order of collapsing is of course immaterial; we
have just picked one.

Let us now suppose that $\cc = \w$ and $\mc{B} = \w_{\Delta}$. We would like to
give a direct geometric quasi-isomorphism
\begin{equation}
    \mu_{LR}: \w_{\Delta} \otimes_\w \w_{\Delta} \otimes_\w \w_{\Delta} \lra \w_{\Delta},
\end{equation}
homotopic to $\mc{F}_{\Delta, left, right}$, but not involving counts of
degenerate surfaces.  
\begin{defn}
    The {\bf moduli space of discs with four special points of type
    $(r,k,l,s)$} 
    \begin{equation}
        \mc{R}^{r,k,l,s}
    \end{equation}
    is the abstract moduli space of discs with $r+k+l+s+3$ positive boundary
    marked points and one negative boundary marked point labeled in counterclockwise order from the negative point as  $(z_{out}^-, z_1, \ldots, z_r, \bar{z}^1, z_1^1, \ldots, z_k^1, \bar{z}^2, z_1^2, \ldots, z_l^2, \bar{z}^3, z_1^3, \ldots, z_s^3)$, such that
    \begin{equation}
        \begin{split}
        &\textrm{after automorphism, the points }z_{out}^-,\ \bar{z}^1,\ \bar{z}^2,\ \bar{z}^3\textrm{ lie at } \\
        &-i, -1, i,\textrm{ and } 1\textrm{ respectively}.
    \end{split}
    \end{equation}
\end{defn}

\begin{figure}[h] 
    \caption{A schematic of the moduli space of discs with four special points of type $(2,3,3,2)$. All non-signed marked points are inputs.\label{muLRimg}}
    \centering
    \includegraphics[scale=0.6]{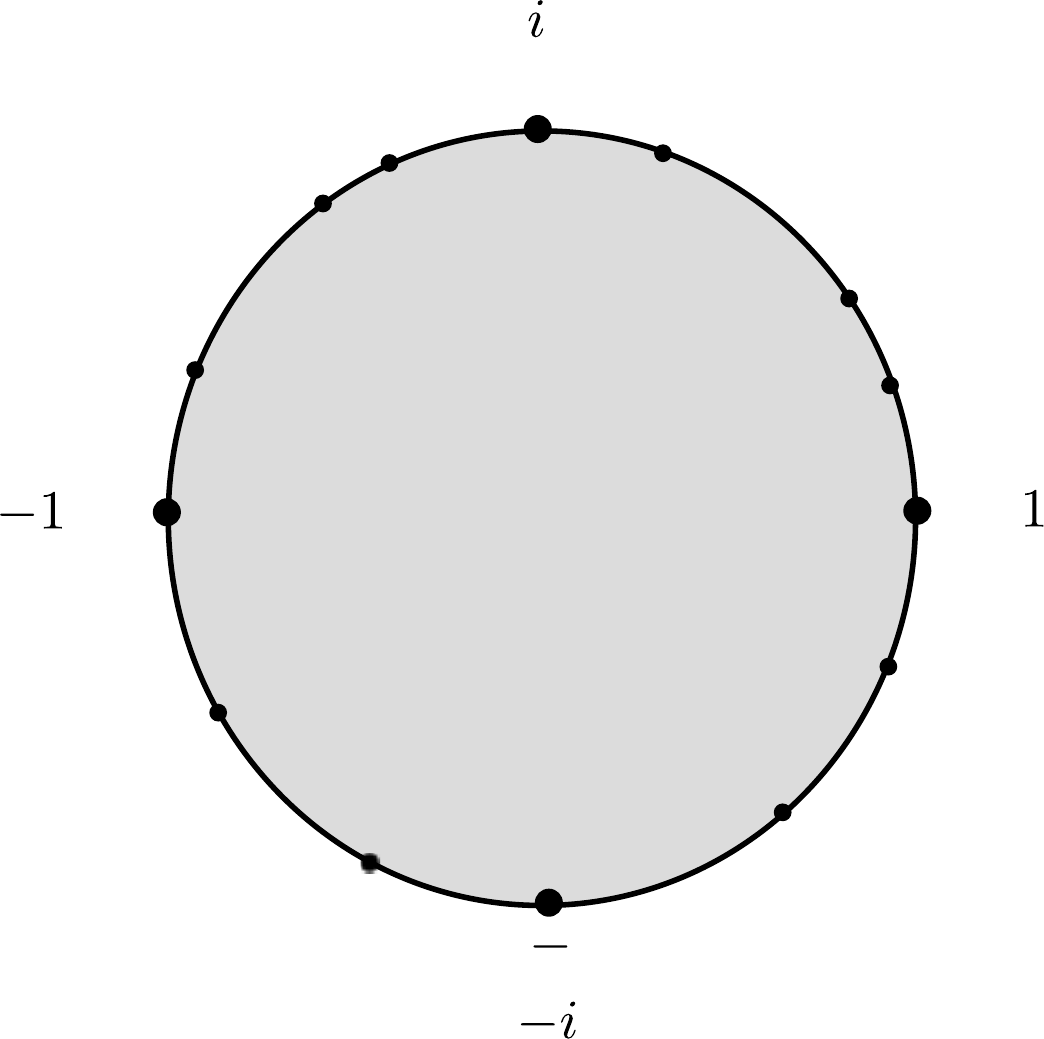}
\end{figure}

The associated Floer theoretic operation to the space
$\overline{\mc{R}}^{r,k,l,s}$ with sign twisting datum
\begin{equation}
    \begin{split}
    \vec{t}_{LR,r,k,l,s} = &(1, 2, \ldots, r, r , r + 1, \ldots, r + k, r + k,\\
    &r + k + 1, \ldots, r + k + l, r + k + l, r + k + 1 + 1, \ldots,\\
    &r + k + l + s).
\end{split}
\end{equation}
is 
\begin{equation}
    \begin{split}
        (\mu_{LR})_{r,k,l,s}:= (-1)^{\vec{t}_{LR,r,k,l,s}}&\mathbf{F}_{\mc{R}^{r,k,l,s}}: \\
        &\w^{\otimes r} \otimes \w_{\Delta} \otimes \w^{\otimes k} \otimes \w_{\Delta} \otimes \w^{\otimes l} \otimes \w_{\Delta} \otimes \w^{\otimes s} \lra \w_{\Delta}
\end{split}
\end{equation}
Then, define the morphism
\begin{equation}
    \mu_{LR}^{r|1|s} := \bigoplus_{k\geq 0, l\geq 0} (\mu_{LR})_{r,k,l,s}: \w^{\otimes r} \otimes (\w_{\Delta} \otimes T\w \otimes \w_{\Delta} \otimes T\w \otimes \w_{\Delta}) \otimes \w^{\otimes s} \lra \w_{\Delta}.
\end{equation}
One can calculate that the morphism is degree zero, as desired.
\begin{prop}
    The pre-morphism of bimodules
    \begin{equation}
    \mu_{LR} \in \hom_{\w\!-\!\w} (\w_{\Delta} \otimes_\w \w_{\Delta} \otimes_\w \otimes \w_{\Delta},\w_{\Delta})
\end{equation}
is {\bf closed}, i.e. 
\begin{equation}
    \delta \mu_{LR} = 0.
\end{equation}
\end{prop}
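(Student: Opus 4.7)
The plan is to establish $\delta \mu_{LR} = 0$ by running the standard Floer-theoretic ``boundary of a $1$-dimensional moduli space equals zero'' argument on the compactification $\overline{\mc{R}}^{r,k,l,s}$, and matching each codimension-$1$ stratum with one of the terms in
\begin{equation}
    \delta \mu_{LR} = \mu_{\w_{\Delta}} \circ \hat{\mu}_{LR} - \mu_{LR} \circ \hat{\mu}_{\w_{\Delta} \otimes_\w \w_{\Delta} \otimes_\w \w_{\Delta}}.
\end{equation}
The first observation is that because the four special points $z_{out}^-, \bar{z}^1, \bar{z}^2, \bar{z}^3$ are rigidly constrained (after automorphism) to lie at $-i, -1, i, 1$, no two of them can collide in any limit; consequently the cross-ratio is fixed and the codimension-$1$ boundary of $\overline{\mc{R}}^{r,k,l,s}$ consists entirely of disc-bubbling degenerations involving the movable positive marked points $z_j, z_j^i, \bar z^i$.

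The next step is to enumerate these boundary strata and identify the natural Floer operations on each. They fall into three classes. First, bubbles carrying no special point produce products $\overline{\mc{R}}^m \times_j \overline{\mc{R}}^{r',k',l',s'}$ (with the output of the bubble glued to some $z_j$, $z_j^i$), and by the consistency of our Floer data the associated operation is a composition of $\mu_{LR}^{r',k',l',s'}$ with an ordinary $\mu_\w^m$ inserted somewhere in the bar-portion of the input; summed over all such strata these account exactly for the $\mu_\w$-contributions to $\mu_{LR} \circ \hat{\mu}_{\w_{\Delta}\otimes_\w\w_{\Delta}\otimes_\w\w_{\Delta}}$. Second, bubbles containing exactly one of $\bar z^1, \bar z^2, \bar z^3$ together with some adjacent regular inputs produce strata whose operation is $\mu_{LR}$ composed with a bimodule structure map $\mu_{\w_{\Delta}}^{p|1|q}$ applied to the corresponding bimodule factor; summed, these give the remaining contributions to $\mu_{LR} \circ \hat{\mu}_{\w_{\Delta}\otimes_\w\w_{\Delta}\otimes_\w\w_{\Delta}}$ (namely the internal differentials and the left/right $\w$-actions on each of the three $\w_{\Delta}$-slots). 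Third, bubbles containing $z_{out}^-$ together with some adjacent regular inputs give strata where a smaller $\mu_{LR}$ is applied first and then its output is fed into a $\mu_{\w_{\Delta}}^{p|1|q}$ on the bubble, and these sum to $\mu_{\w_{\Delta}} \circ \hat{\mu}_{LR}$.

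The remaining input to the argument is a sign verification comparing the orientation of each codimension-$1$ stratum to the product orientation inherited from the gluing of the two factors, with the sign twisting datum $\vec t_{LR, r,k,l,s}$ taken into account; this is routine given the general framework of Appendix \ref{orientationsection} and is essentially identical to the sign analysis already carried out for the $\ainf$-relations in $\w$ and for the bimodule equations defining $\mu_{\w_{\Delta}}$. Putting everything together, looking at the signed count of boundary points of the $1$-dimensional components of $\overline{\mc{R}}^{r,k,l,s}(\vec x_{in};x_{out})$ (including the usual strip-breaking at inputs and output, which is absorbed into the $\mu^1$ pieces of the bimodule differentials) yields precisely the identity $\delta \mu_{LR} = 0$. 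The main obstacle is the bookkeeping in the third class of strata, since the four special points create several distinct ``adjacency regions'' (around $z_{out}^-$ versus around each $\bar z^i$) that must be matched correctly with the outer versus inner $\w$-actions appearing in the hat extensions $\hat{\mu}_{LR}$ and $\hat{\mu}_{\w_{\Delta}\otimes_\w\w_{\Delta}\otimes_\w\w_{\Delta}}$.
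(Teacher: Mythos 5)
Your proposal is correct and follows essentially the same route as the paper: the paper's own (brief) proof likewise analyzes the codimension-one boundary of the one-dimensional moduli spaces over $\overline{\mc{R}}^{r,k,l,s}$, listing exactly the strata you describe — pure $\mu_\w$ bubbles at regular inputs, bubbles carrying one of $\bar z^1,\bar z^2,\bar z^3$ giving the source bimodule structure maps, the stratum where the smaller four-point disc feeds into an ordinary disc carrying $z_{out}^-$ giving $\mu_{\w_\Delta}\circ\hat\mu_{LR}$, plus strip-breaking — and defers the sign check to the orientation appendix, just as you do.
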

\begin{proof}
    We leave this mostly as an exercise, but this follows from analyzing the
    equations arising from the boundary of the one-dimensional space of maps
    with domain $\overline{\mc{R}}^{r,k,l,s}$. The relevant codimension 1
    boundary components involve strip-breaking and the codimension 1 boundary
    strata of the abstract moduli space $\overline{\mc{R}}^{r,k,l,s}$, which is
    covered by
    \begin{align}
        \label{fourpoint1}  \overline{\mc{R}}^{r'} &\times_{n+1} \overline{\mc{R}}^{r-r'+1,k,l,s},\ \ \ 0 \leq n < r - r'+1 \\
        \label{fourpoint2}  \overline{\mc{R}}^{k'} &\times^1_{n+1} \overline{\mc{R}}^{r,k-k'+1,l,s},\ \ \ 0 \leq n < k-k'+1 \\
        \label{fourpoint3} \overline{\mc{R}}^{l'} &\times^2_{n+1} \overline{\mc{R}}^{r,k,l-l'+1,s}, \ \ \ 0 \leq n < l-l'+1 \\
        \label{fourpoint4} \overline{\mc{R}}^{s'} &\times^3_{n+1} \overline{\mc{R}}^{r,k,l,s-s'+1}, \ \ \ 0 \leq n < s - s'+1 \\
        \label{fourpoint5} \overline{\mc{R}}^{r'+1 + k'} &\times_{1+} \overline{\mc{R}}^{r-r',k-k',l,s} \\
        \label{fourpoint6} \overline{\mc{R}}^{k'+1+l'} &\times_{2+} \overline{\mc{R}}^{r,k-k',l-l',s} \\
        \label{fourpoint7} \overline{\mc{R}}^{l'+1+s'} &\times_{3+} \overline{\mc{R}}^{r,k,l-l',s-s'} \\
        \label{fourpoint8} \overline{\mc{R}}^{r-r',k,l,s-s'} &\times_{r'+1} \mc{R}^{r'+1+s'}.
    \end{align}
Above, the notation $\times^j_{k}$ means the output of the first component is
glued to the input point $z^j_k$ of the second component, and the notation
$\times_{i+}$ means the output of the first component is glued to the special
point $\bar{z}^i$. Also, in (\ref{fourpoint5}), (\ref{fourpoint6}), and
(\ref{fourpoint7}), the $r'+1$st, $k'+1$st, and $l'+1$st inputs on the first
component become the distinguished point $\bar{z}^1$, $\bar{z}^2$, and
$\bar{z}^3$ respectively after gluing.
\end{proof}

Now, we show that $\mu_{LR}$ was in fact homotopic to
$\mc{F}_{\Delta,left,right}$. We construct a geometric homotopy using
\begin{defn}
The {\bf moduli space}
    \begin{equation}
        \mc{S}^{r,k,l,s}
    \end{equation}
    is the abstract moduli space of discs with $r+k+l+s+3$ positive boundary
    marked points and one negative boundary marked point labeled in counterclockwise
    order from the negative point as  $(z_{out}^-, z_1, \ldots, z_r, \bar{z}^1,
    z_1^1, \ldots, z_k^1, \bar{z}^2, z_1^2, \ldots, z_l^2, \bar{z}^3, z_1^3,
    \ldots, z_s^3)$, such that, for any $t \in (0,1)$,
    \begin{equation}
        \begin{split}
        &\textrm{after automorphism, the points }z_{out}^-,\ \bar{z}^1,\ \bar{z}^2,\ \bar{z}^3\textrm{ lie at } \\
        &-i, -1, \exp(i\frac{\pi}{2}(1-t)),\textrm{ and } 1\textrm{ respectively}.
    \end{split}
    \end{equation}
\end{defn}
$\mc{S}^{r,k,l,s}$ fibers over the open interval $(0,1)$ given by the value of
$t$, and thus has dimension one greater than $\mc{R}^{r,k,l,s}$. Compactifying,
we see that $\overline{\mc{S}}^{r,k,l,s}$
submerses over $[0,1]$ and has codimension one boundary covered by the natural
inclusions of the following strata, the first two of which correspond to fibers
at the endpoints $0$ and
$1$, and the remainder of which lie over the entire interval:
\begin{align}
    \label{flrstratum}    \overline{\mc{R}}^{r' +k  + l' + 2} &\times_{r-r'+1} \mc{R}^{(r-r') + s + (l-l') + 1} \ \ \textrm{(}t=1\textrm{ fiber)}\\
    \label{mulrstratum}& \overline{\mc{R}}^{r,k,l,s} \ \ \ \textrm{(}t=0\textrm{ fiber)}\\
    \label{beginintervalstrata}\overline{\mc{R}}^{r'} &\times_{n+1} \overline{\mc{S}}^{r-r'+1,k,l,s}, \ \ \ 0 \leq n < r-r'+1 \\
    \label{fphom2}    \overline{\mc{R}}^{k'} &\times_{n+1}^1 \overline{\mc{S}}^{r,k-k'+1,l,s},\ \ \ 0 \leq n < k-k'+1 \\
    \label{fphom3}   \overline{\mc{R}}^{l'} &\times_{n+1}^2 \overline{\mc{S}}^{r,k,l-l'+1,s},\ \ \ 0 \leq n < l-l'+1 \\
    \label{fphom4}   \overline{\mc{R}}^{s'} &\times_{n+1}^3 \overline{\mc{S}}^{r,k,l,s-s'+1},\ \ \ 0 \leq n < s - s' + 1 \\
    \label{fphom5}  \overline{\mc{R}}^{r'+1 + k'} &\times_{1+} \overline{\mc{S}}^{r-r',k-k',l,s} \\
    \label{fphom6} \overline{\mc{R}}^{k'+1+l'} &\times_{2+} \overline{\mc{S}}^{r,k-k',l-l',s} \\
    \label{fphom7} \overline{\mc{R}}^{l'+1+s'} &\times_{3+} \overline{\mc{S}}^{r,k,l-l',s-s'} \\
    \label{endintervalstrata}\overline{\mc{S}}^{r-r',k,l,s-s'} &\times_{r'+1} \mc{R}^{r'+1+s'}.
\end{align}
Above, in (\ref{flrstratum}), the $r'+1$st and $r' + k + 2$nd inputs of the
first component and the $(r-r') + (l-l') + 2$nd input of the second component
become the three special points $\bar{z}^1$, $\bar{z}^2$, and $\bar{z}^3$
respectively after gluing. Also, the notation for strata
(\ref{beginintervalstrata})-(\ref{endintervalstrata}) exactly mirrors the
notation in (\ref{fourpoint1})-(\ref{fourpoint8}).

\begin{figure}[h] 
    \caption{The moduli space $\mc{S}^{r,k,l,s}$ and its $t\ra\{0,1\}$ degenerations. \label{muLRdegeneration}}
    \centering
    \includegraphics[scale=0.7]{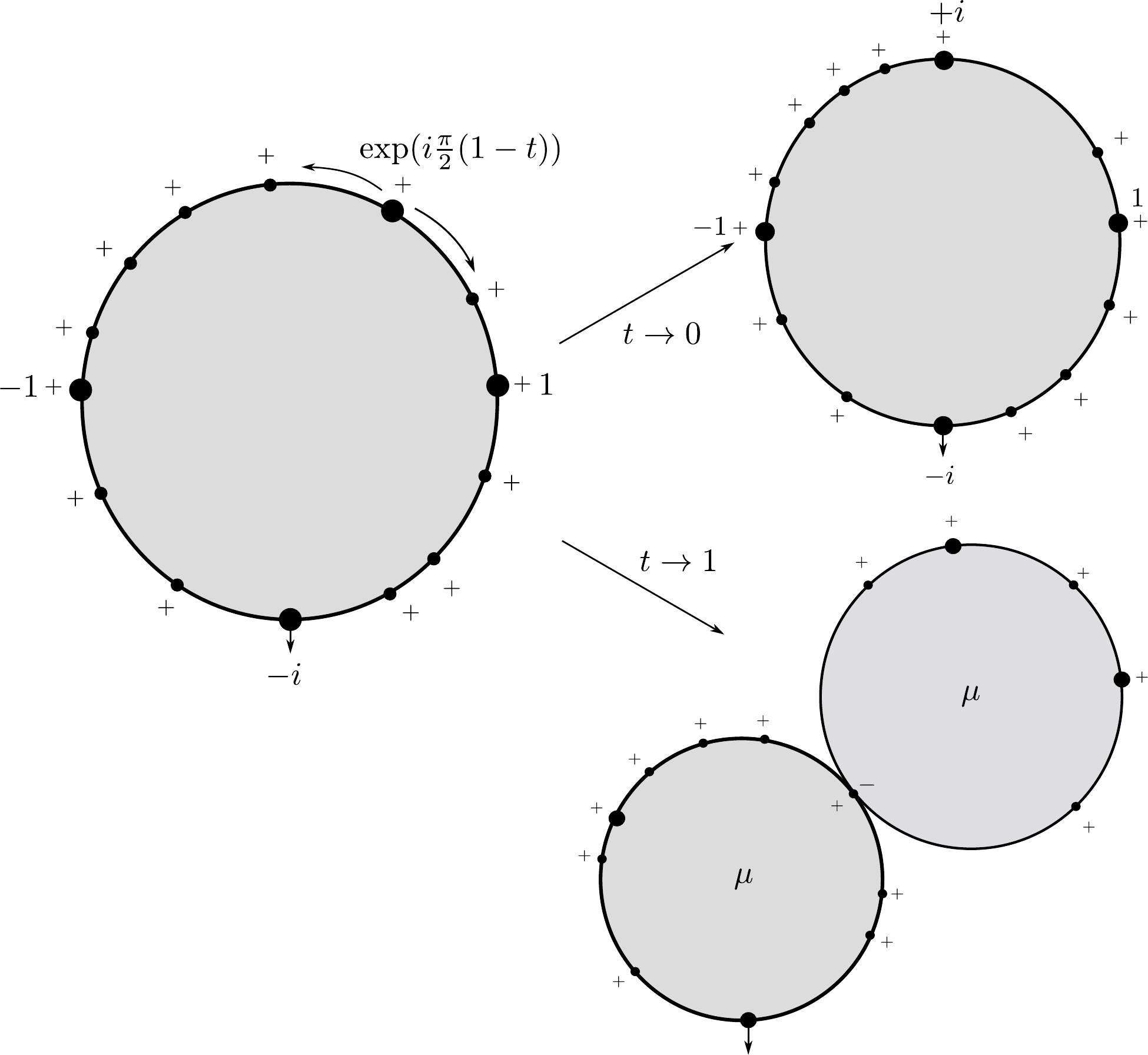}
\end{figure}

There is an associated Floer operation
\begin{equation}
    \mc{H}_{r,k,l,s} = \mathbf{F}_{\overline{S}^{r,k,l,s}},
\end{equation}
and we can thus define a morphism of bimodules, of degree -1
\begin{equation}
    \mc{H} \in \hom_{\w\!-\! \w} (\w_{\Delta} \otimes_\w \w_{\Delta} \otimes_\w \otimes \w_{\Delta},\w_{\Delta}),
\end{equation}
defined by
\begin{equation}
    \mc{H}^{r|1|s} = \bigoplus_{k,l} \mc{H}_{r,k,l,s}.
\end{equation}
An analysis of the boundaries of the one-dimensional moduli space of maps given
by $\overline{\mc{S}}_{r,k,l,s}$ reveals
\begin{prop} \label{chainhomotopylr}
    $\mc{H}$ is a chain homotopy between $\mc{F}_{\Delta,left,right}$ and $\mu_{LR}$.
\end{prop}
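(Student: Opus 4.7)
The plan is to verify that $\mc{H}$ satisfies the chain-homotopy equation
\[
\delta \mc{H} \;=\; \mu_{LR} \;-\; \mc{F}_{\Delta,\text{left},\text{right}}
\]
as pre-morphisms of bimodules $\w_\Delta \otimes_\w \w_\Delta \otimes_\w \w_\Delta \to \w_\Delta$, by running the usual codimension-$1$ boundary analysis on the one-dimensional components of the moduli space of maps with source in $\overline{\mc{S}}^{r,k,l,s}$. By the standard Gromov bordification argument (as in Lemma \ref{bordificationlem} and its use throughout the paper), the signed count of such boundary points vanishes, and I will match the three types of contributions to the three terms above.

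First, I would identify the term coming from the $t=0$ fiber, stratum \eqref{mulrstratum}. By construction of $\mu_{LR}$, the Floer operation associated to $\overline{\mc{R}}^{r,k,l,s}$ is exactly $\mu_{LR}^{r|1|s}$, so this contributes $\mu_{LR}$ (with the appropriate orientation sign from the interval's orientation, which accounts for the minus sign). Next, I would analyze the $t=1$ fiber, stratum \eqref{flrstratum}: the degeneration $\bar z^2 \to \bar z^3$ forces a bubble whose marked points carry the two diagonal inputs $\bar z^2,\bar z^3$ together with $k$ intermediate inputs and $l-l'$ further inputs, glued along a node to the remaining component carrying $\bar z^1$ and the other inputs; the consistency condition on our Floer data identifies the associated operation with an iterated composition of bimodule structure maps for $\w_\Delta$. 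Unwinding the combinatorics of which inputs go on which side of the node and summing over $r',l',k$, one recovers exactly the formula for $\mc{F}_{\Delta,\text{left},\text{right}}^{r|1|s}$ in \eqref{fleftright}. This is the main geometric identification in the proof.

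The remaining strata \eqref{beginintervalstrata}--\eqref{endintervalstrata} lie over the open interval and contribute $\delta \mc{H}$. Strata of types \eqref{beginintervalstrata} and \eqref{fphom4} correspond to $\ainf$-operations acting on the outer $r$ and $s$ inputs, producing the right-action and left-action terms $\mu_{\w_\Delta} \circ \hat{\mc{H}}$ involving $\mu^{r|1|0}_{\w_\Delta}$ and $\mu^{0|1|s}_{\w_\Delta}$. Strata of types \eqref{fphom2}, \eqref{fphom3} contribute the internal $\mu^k_\w$ terms of the tensor-product differential on $\w_\Delta \otimes_\w \w_\Delta \otimes_\w \w_\Delta$ acting on the intermediate sequences, while \eqref{fphom5}--\eqref{fphom7} together with \eqref{endintervalstrata} contribute the collapse-on-the-diagonals terms of the same differential, i.e., the bimodule structure maps $\mu^{r'|1|s'}_{\w_\Delta}$ acting on each of the three diagonal factors along with some of the adjacent $T\w$-entries. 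Summing these matches $\mc{H} \circ \hat{\mu}_{\w_\Delta \otimes_\w \w_\Delta \otimes_\w \w_\Delta}$ on the nose. A final source of boundary is standard strip-breaking at the input/output punctures, which corresponds to pre- and post-composition with the Floer differential $\mu^1$.

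The only nontrivial obstacle is the sign verification: one must check that the orientation on $\overline{\mc{S}}^{r,k,l,s}$ induced by fibering over $[0,1]$ assigns the $t=0$ and $t=1$ boundaries opposite signs (accounting for the difference between $\mu_{LR}$ and $\mc{F}_{\Delta,\text{left},\text{right}}$), and that the twisting datum used for $\mc{H}_{r,k,l,s}$ is chosen so that the bubble-off contributions carry the Koszul signs built into the definitions of $\mu_{\w_\Delta}$ and of $\mc{F}_{\Delta,\text{left},\text{right}}^{r|1|s}$. I would fix the sign twisting datum for $\mc{S}^{r,k,l,s}$ to be the evident extension of $\vec{t}_{LR,r,k,l,s}$ and carry out the sign comparison along the lines of Appendix \ref{orientationsection}, exactly as for the analogous homotopies $\mc{H}$ constructed for ${_2}\oc$ and ${_2}\co$ in Proposition \ref{twopointhomotopy1}. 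With signs matched, the boundary identity gives $\delta\mc{H} = \mu_{LR} - \mc{F}_{\Delta,\text{left},\text{right}}$ and the proposition follows.
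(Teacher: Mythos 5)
Your proposal is correct and takes essentially the same approach as the paper: identify the $t=0$ stratum (\ref{mulrstratum}) with $\mu_{LR}$, the $t=1$ stratum (\ref{flrstratum}) with $\mc{F}_{\Delta,\mathrm{left},\mathrm{right}}$, and the interior strata (\ref{beginintervalstrata})--(\ref{endintervalstrata}) together with strip-breaking with $\delta\mc{H}$. In fact you supply correctly the geometry of the $t=1$ degeneration (the bubble contains $\bar z^2$ and $\bar z^3$, matching the nested $\mu_{\mc{B}}(\ldots\mu_{\mc{B}}(\ldots)\ldots)$ structure of $\mc{F}_{\Delta,\mathrm{left},\mathrm{right}}$ in (\ref{fleftright})). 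One small attribution slip that does not affect the argument: the strata (\ref{beginintervalstrata}) and (\ref{fphom4}), where a $\mu$-disc bubbles off at an outer input, feed into $\mc{H}$ and are therefore part of $\mc{H}\circ\hat\mu_{\mc{M}}$, not $\mu_{\w_\Delta}\circ\hat{\mc{H}}$; it is the stratum (\ref{endintervalstrata}), where the $\mc{S}$-disc is the bubble and its output feeds into the root $\mu$-disc, that produces the $\mu_{\w_\Delta}\circ\hat{\mc{H}}$ terms. Since the boundary analysis only requires that the totality of interior strata account for $\delta\mc{H}$, this does not change the conclusion, but it would matter if you were carrying out the sign comparison in detail.
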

\begin{proof}
    The $t=1$ strata (\ref{flrstratum}) correspond to $\mc{F}_{\Delta,left,right}$,
    the $t=0$ strata (\ref{mulrstratum}) correspond to $\mu_{LR}$, and the
    other strata (\ref{beginintervalstrata})-(\ref{endintervalstrata})
    correspond to the chain homotopy terms $\mc{H} \circ d - d \circ \mc{H}$.
\end{proof}

\subsection{A family of annuli}
Recall that the morphism $\mc{CY}$ induces functorial maps
\begin{equation}
    \mc{CY}_\#: \w_{\Delta} \otimes_{\w\!-\w} \w_{\Delta} \lra
    \w_{\Delta} \otimes_{\w\!-\!\w} \w^!,
\end{equation}
which, composed with the map
\begin{equation}
    \bar{\mu}: \w_{\Delta} \otimes_{\w\!-\!\w} \w^! \lra \hom_{\w\!-\!\w}(\w_{\Delta},\w_{\Delta})
\end{equation}
defined in (\ref{explicitquasiiso}), gives a map from (two-pointed) Hochschild
homology to Hochschild cohomology. We now prove that this composed map is in
fact homotopic to the map from Hochschild homology to cohomology passing
through $SH^*(M)$.  
\begin{thm}[Generalized Cardy Condition]\label{cardythm1}
    There is a (homotopy)-commutative diagram
    \begin{equation}\label{cardy1}
        \xymatrix{\w_{\Delta} \otimes_{\w\!-\!\w} \w_{\Delta} \ar[r]^{\mc{CY}_\#} \ar[d]^{_2\oc} & \w_{\Delta} \otimes_{\w\!-\!\w} \w^! \ar[d]^{\bar{\mu}} \\
        SH^*(M) \ar[r]^{_2\co\ \ \ \ \ }& _2 \r{CC}^*(\w,\w)}
\end{equation}
\end{thm}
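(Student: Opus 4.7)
The plan is to construct a geometric chain homotopy for diagram (\ref{cardy1}) by analyzing a one-parameter family of Floer-theoretic operations parametrized by annuli with marked points on both boundary circles. The moduli space of interest consists of annuli carrying: on the outer boundary, two distinguished marked points (one input for each of the bimodule generators $\mathbf{a}$ and $\mathbf{b}$ appearing in the two-pointed Hochschild chain) with additional positive marked points interspersed between them (for the $a_i$'s and $b_j$'s); and on the inner boundary, one negative boundary output, one distinguished positive input (for the bimodule element $\mathbf{c}$ on which the resulting bimodule homomorphism will be evaluated), and $r+s$ additional positive marked points. Since the annulus has a single conformal modulus $\tau \in (0,\infty)$, after quotienting by automorphism the moduli space has dimension one, so the associated spaces of Floer trajectories have virtual dimension one for generically chosen Floer data. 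Before analyzing this moduli space, I would first replace the vertical map $\bar{\mu}$ on the right of (\ref{cardy1}) by the geometric collapse map $\mu_{LR}$ of this section; this replacement is justified up to the chain homotopy of Proposition~\ref{chainhomotopylr}, and has the advantage that both composites in the diagram are then counted by genuine (non-degenerate) surfaces.

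The heart of the argument is the identification of the codimension-one boundary of the compactified moduli space of annuli with the two sides of the diagram. This boundary is covered by the two modulus limits together with the standard bubbling and strip-breaking strata. The first modulus limit is the \emph{interior circle pinch}, in which the annulus degenerates to a nodal surface consisting of two discs meeting at an interior node: the outer disc carries the Hochschild inputs on its boundary and a single interior negative puncture at the node, and is thus an element of the moduli space defining ${_2}\oc$; the inner disc carries the evaluation data on its boundary and a single interior positive puncture at the node, and is an element of the moduli space defining ${_2}\co$. This stratum therefore contributes ${_2}\co \circ {_2}\oc$. The second modulus limit is the \emph{boundary arc pinch}, in which an embedded arc running between the two boundary components is collapsed to a boundary node. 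The resulting nodal surface is a disc with a boundary self-identification; after unfolding the identification as in Section~\ref{pairs} and tracking the marked points adjacent to the node, this configuration factors through the two-negative-puncture moduli spaces $\overline{\mc{R}}_2^{k,l;s,t}$ defining $\mc{CY}$ followed by the four-special-point moduli spaces $\overline{\mc{R}}^{r,k,l,s}$ defining $\mu_{LR}$, yielding the composite $\mu_{LR} \circ \mc{CY}_\#$. The remaining codimension-one boundary strata involve bubbling of positive boundary marked points and strip breaking, which together contribute precisely the terms realizing the count of one-dimensional annulus moduli spaces as the desired chain homotopy.

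The main obstacle will be the careful identification of the boundary arc pinch with $\mu_{LR} \circ \mc{CY}_\#$: one must verify that the marked points adjacent to the boundary node (including the special input on the inner boundary) distribute to match the four-distinguished-point geometry of the domain of $\mu_{LR}$, and that the portion of the nodal surface surrounding the two resulting pre-node marked points, with its two newly-separated boundary components, is exactly a disc with two negative and two distinguished-positive punctures as parametrized by $\overline{\mc{R}}_2^{k,l;s,t}$. A secondary technical issue is the sign verification, which follows the orientation-line formalism of Appendix~\ref{orientationsection} with a new contribution from the annulus modulus; this should be computationally lengthy but conceptually routine.
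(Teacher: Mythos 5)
Your proposal follows the paper's proof closely: one constructs the chain homotopy from a one-parameter family of annuli with marked points on both boundary components, identifies the two modulus degenerations with the two composites of (\ref{cardy1}) after replacing $\bar{\mu}$ by the geometric collapse $\mu_{LR}$ via Proposition~\ref{chainhomotopylr}, and lets the remaining codimension-one strata and strip breakings supply the homotopy terms. This is exactly the argument in Section~\ref{cardyconditionsection}.

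The one place your description would fail if carried out literally is the thin-annulus limit. The slice $\mc{A}_1^-$ constrains the four distinguished marked points to lie at $\pm i$ on one boundary circle and $\pm Ri$ on the other, so as $R \to 1^+$ the two radial arcs through the upper and lower pairs of points pinch \emph{simultaneously}. The resulting nodal surface therefore has two disc components joined at two boundary nodes, not a single disc with a boundary self-identification, and no ``unfolding'' in the sense of Section~\ref{pairs} enters the argument. The component carrying the Hochschild-chain inputs inherits the two boundary nodes as its two negative punctures, identifying it with an element of $\overline{\mc{R}}_2^{k,l;s,t}$; the component carrying the original output and the evaluation inputs inherits the two nodes among its four special inputs, identifying it with an element of $\overline{\mc{R}}^{r,k,l,s}$. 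Gluing these along both nodes is precisely the fiber-over-zero stratum realizing $\mu_{LR}\circ\mc{CY}_\#$, so your predicted conclusion is right even though the intermediate description of the degeneration is not.
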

We first how this result completes the proof of Theorem \ref{shhh}.
\begin{proof}[Proof of Theorem \ref{shhh}]
    By Corollary \ref{shiso} and Theorem \ref{wrapcy}, the maps $_2 \co$ and
    $\mc{CY}_\#$ are isomorphisms. Moreover, so is $\bar{\mu}$, by Proposition
    \ref{representhh} (see also (\ref{explicitquasiiso}) for the explicit for
    of $\bar{\mu}$). Thus by the diagram (\ref{cardy1}), so is $_2 \oc$. By
    Proposition \ref{twopointhomotopy1}, $_2
    \co$ and $_2 \oc$ are homotopic to the usual $\co$ and $\oc$, implying the
    result. 

    Alternatively, let us note that we didn't need to a priori know that $_2
    \co$ was an isomorphism to conclude the proof. Surjectivity of $_2 \oc$ and
    injectivity of $_2 \co$, the contents of Proposition \ref{surjectivity},
    suffice.
\end{proof}

To construct the homotopy, we introduce some auxiliary moduli spaces of annuli.

\begin{defn}
The moduli space 
\begin{equation}
    \mc{A}_1
\end{equation}
consists of annuli with two positive punctures on the inner
boundary, one positive puncture on the outer boundary, and one negative
puncture on the outer boundary. The codimension 3 subspace 
\begin{equation}
    \mc{A}_1^-
\end{equation}
consists of those annuli that are conformally equivalent to 
\begin{equation}\label{conformalannulus}
\{z | 1 \leq |z| \leq R\} \subset \C,
\end{equation}
for any (varying) $R$, with inner positive marked points at $\pm i$, outer
positive marked point at $Ri$, and outer negative marked point at $-Ri$.
\end{defn}
\begin{defn}
Define
\begin{equation}
    \mc{A}_{k,l;s,t}
\end{equation}
to be the moduli space of annuli with 
\begin{itemize}
    \item $k + l + 2$ positive marked points on
the inner boundary, labeled $a_0, a_1, \ldots, a_k$, $a_0', a_1', \ldots, a_l'$ in
counterclockwise order, 
\item one negative marked point on the outer boundary, labeled $z_{out}$, and
\item $s + t + 1$ positive marked points on the outer boundary, labeled
    counterclockwise from $z_{out}$ as $b_1, \ldots, b_s, b_{0}', b_1', \ldots, b_t'$.
\end{itemize}
\end{defn}
\noindent There is a map
\begin{equation}
    \pi: \mc{A}_{k,l;s,t} \lra \mc{A}_1
\end{equation}
given by forgetting all of the marked points except for $a_0$, $a_0'$,
$b_{0}'$, and $z_{out}$. 
\begin{defn}
Define
\begin{equation}
    \mc{A}_{k,l;s,t}^-
\end{equation}
to be the pre-image of $\mc{A}_1^-$ under $\pi$.
\end{defn}
Via the map
\begin{equation}
    \mc{A}_1^- \lra (0,1)
\end{equation}
which associates to any annulus the scaling parameter $\frac{R}{1+R}$, the
space $\mc{A}_{k,l;s,t}^-$ also fibers over $(0,1)$.  Compactifying, we see
that $\overline{\mc{A}}_{k,l;s,t}^-$ submerses over $[0,1]$, and has boundary
stratum covered by the natural images of the inclusions of the following
codimension 1 strata
\begin{align}
    \label{annulusstrata1} \mc{R}^1_{k,l} &\times \mc{R}^{1,1}_{s,t} \ \ \ \ \ \ \ \ \ \ \ \ \ \ \ \ \ \ \textrm{(fiber over 1)}\\
    \label{annulus2} \mc{R}_2^{s',t',l',k'} &\times_{(1,1+),(2,3+)} \mc{R}^{s-s',k-k',l-l',t-t'} \ \ \ \textrm{(fiber over 0)}\\
    \label{firsthomologydiff}\overline{\mc{R}}^{k'} &\times^a_{n+1} \overline{\mc{A}}_{k-k'+1,l;s,t}^-,\ \ \ 0 \leq n < k - k' + 1\\
    \overline{\mc{R}}^{l'} \times^a_{(n+1)'} &\times \overline{\mc{A}}_{k,l-l'+1;s,t}^- ,\ \ \ 0 \leq n < l-l'+1\\
    \label{thirdhomologydiff}\overline{\mc{R}}^{k'+l' + 1} &\times^a_{0} \overline{\mc{A}}_{k-k',l-l';s,t}^-\\
    \label{lasthomologydiff}\overline{\mc{R}}^{l'+k' + 1} &\times^a_{0'} \overline{\mc{A}}_{k-k',l-l';s,t}^-\\
    \label{firstcohomologydiff}\overline{\mc{R}}^{s'} &\times^b_{n+1} \overline{\mc{A}}_{k,l;s-s'+1,t}^-,\ \ \ 0 \leq n < s-s'+1 \\
    \overline{\mc{R}}^{t'} &\times^b_{(n+1)'} \overline{\mc{A}}_{k,l;s,t-t'+1}^-,\ \ \ 0 \leq n < t-t'+1\\
    \label{middlecohomologydiff}\overline{\mc{R}}^{s'+t'+1} &\times^b_{0'} \overline{\mc{A}}_{k,l;s-s',t-t'}^-\\
    \label{lastcohomologydiff}\overline{\mc{A}}_{k,l;s-s',t-t'}^- &\times_{s'+1} \overline{\mc{R}}^{t'+s'+1}.
\end{align}
Here, the notation $\times^a_j$ means the output of the first stratum is glued
to the input point $a_j$, $\times^a_{j'}$ means glue to $a_j'$, and
$\times^b_j$, $\times^b_{j'}$ mean the same for $b_j$, $b_j'$. Also, in
(\ref{annulus2}), the two special inputs of the first factor and the second
special input of the second factor become the special input points on the
annulus after gluing. Also, in (\ref{thirdhomologydiff}) and
(\ref{lasthomologydiff}), the $k'+1$st and $l'+1$st marked points of the first
component become the special point ($a_0$ or $a_0'$ respectively) after gluing.

\begin{figure}[h] 
    \caption{The space of annuli $\mc{A}_{k,l;s,t}^-$ and its degenerations associated to the endpoints $\{0,1\}$. \label{annulus1}}
    \centering
    \includegraphics[scale=0.7]{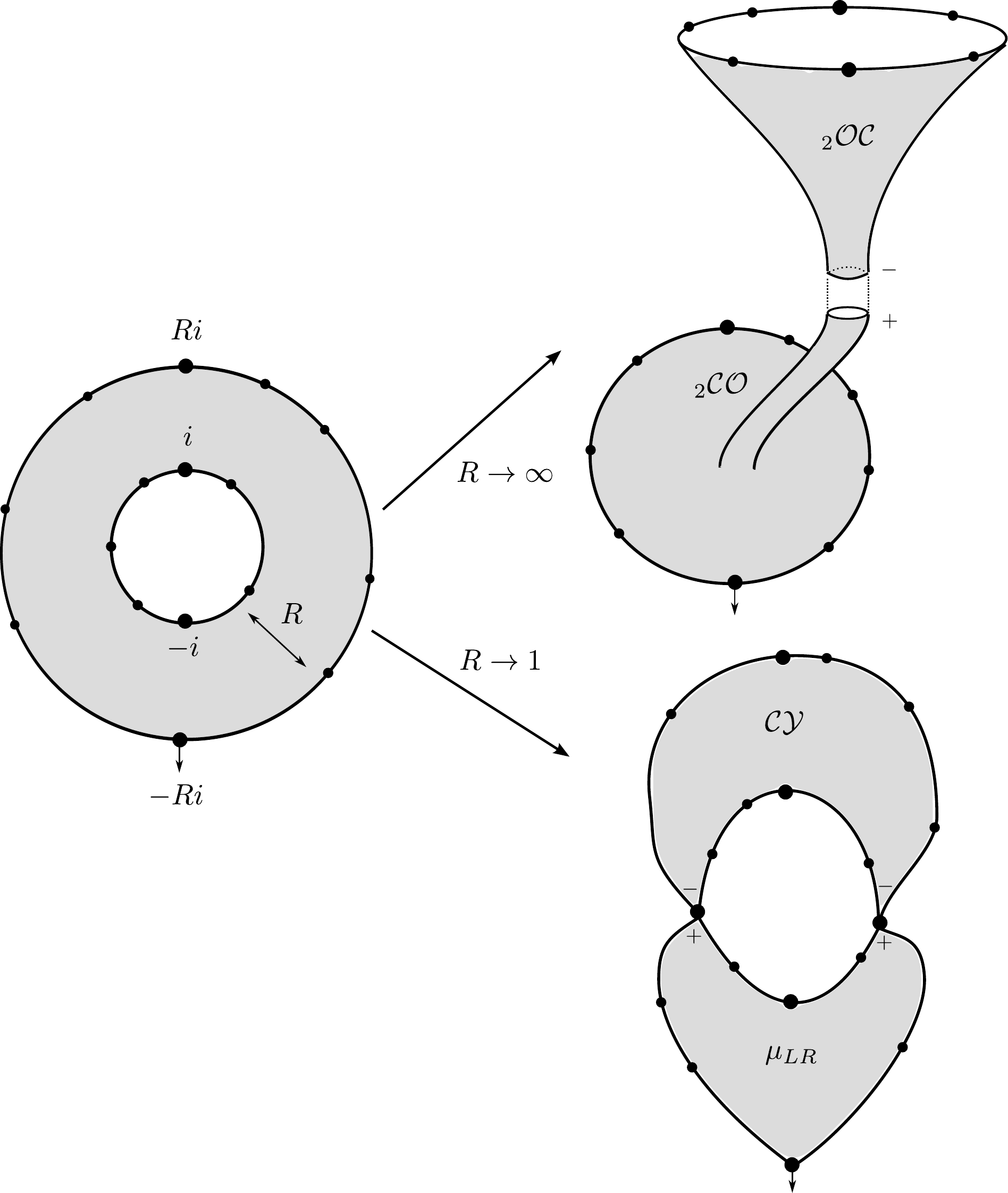}
\end{figure}

Given sign twisting datum
\begin{equation}
    \vec{t}_{\mathbf{A},k,l,s,t} = \{(1, \ldots, k, k, k +1, \ldots, k + l, k + l, 1, \ldots, s, s, s+1, \ldots, s + t, s + t\}
\end{equation}
with respect to the ordering of boundary inputs 
\begin{equation}
    a_1, \ldots, a_k, a_0', a_1', \ldots, a_l', a_0, b_1, \ldots, b_s, z_{in}, b_1', \ldots, b_t'
\end{equation}
there are associated Floer operations
\begin{equation}
    \begin{split}
    \mathbf{A}_{k,l;s,t}& := (-1)^{\vec{t}_{\mathbf{A},k,l,s,t}}\mathbf{F}_{\overline{\mc{A}}_{k,l;s,t}^-}:\\ 
    &(\w_{\Delta} \otimes \w^{\otimes l} \otimes \w_{\Delta} \otimes \w^{\otimes k})^{diag} \otimes \w^{\otimes t} \otimes \w_{\Delta} \otimes \w^{\otimes s} \lra \w_{\Delta}
\end{split}
\end{equation}
where we have indicated the inputs corresponding to the special points  $a_0$,
$a_0'$, and $b_{0}'$ by the first, second, and third $\w_{\Delta}$ input
factor, and the output  $z_{out}$ by the output $\w_{\Delta}$ factor. As usual,
the $diag$ superscript indicates that the first set
of $k+l+2$ inputs must be cyclically composable.
Using these operations, define a map
\begin{equation}
    \mathbf{A}: {_2}\r{CC}_*(\w,\w) \lra {_2}\r{CC}^*(\w,\w)
\end{equation}
by 
\begin{equation}
    \mathbf{A}:
    (\mathbf{x} \otimes x_r \otimes \cdots \otimes x_1 \otimes \mathbf{y} \otimes y_q \otimes \cdots \otimes y_1) \longmapsto \Phi
\end{equation}
where $\Phi$ is the 2-Hochschild co-chain given by
\begin{equation}
    \begin{split}
    \Phi(c_m, \ldots, c_1, \mathbf{c}, d_n, \ldots, d_1) = \mathbf{A}_{q,r;n,m}(&\mathbf{x} \otimes x_r \otimes \cdots \otimes x_1 \otimes \mathbf{y} \\
    &\otimes y_q \otimes \cdots \otimes y_q; c_m, \ldots, c_1, \mathbf{c}, d_n, \ldots, d_1).
\end{split}
\end{equation}
A dimension computation shows that the operation $\mathbf{A}$ has degree
$n-1$ as a map from Hochschild homology to Hochschild cohomology.  
An analysis of the boundary of the one-dimensional moduli spaces of maps with
source domain the various $\mc{A}_{k,l;s,t}^-$ reveals: 
\begin{prop}
    $\mathbf{A}$ gives a chain homotopy between ${_2}\co \circ {_2}\oc$
    and $\mu_{LR} \circ \mc{CY}_{\#}$.
\end{prop}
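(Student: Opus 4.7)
The plan is to derive this chain homotopy identity by examining the signed count of boundary points of the one-dimensional components of the Floer-theoretic moduli spaces $\overline{\mc{A}}^-_{k,l;s,t}(x_{\mathrm{out}};\vec{x}_{\mathrm{in}})$, then matching each codimension-1 boundary stratum of the parameter space $\overline{\mc{A}}^-_{k,l;s,t}$ with the corresponding composed Floer operation. Since the total signed count of boundary points in a compact oriented 1-manifold is zero, the resulting identity will read (up to sign) as $d\circ\mathbf{A}\pm\mathbf{A}\circ d = \mbox{}_2\co\circ\mbox{}_2\oc - \mu_{LR}\circ\mc{CY}_\#$ on the two-pointed complex $\mbox{}_2\r{CC}_*(\w,\w)$.

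First, I would identify the stratum (\ref{annulusstrata1}), living over $t=1$ (the modulus $R\to\infty$ limit, where the annulus degenerates by stretching a long neck separating the inner boundary from the outer boundary), with the composition $\mbox{}_2\co\circ\mbox{}_2\oc$. The degenerate surface is a nodal union of a disc in $\mc{R}^1_{k,l}$ (two boundary positive inputs, two special inputs, and one interior negative puncture) with a disc in $\mc{R}^{1,1}_{s,t}$ (one interior positive puncture, one boundary negative output, and the remaining inputs), glued at the two interior marked points. Consistency of the universal Floer data with gluing, together with Propositions controlling $\mbox{}_2\oc_{k,l}$ and $\mbox{}_2\co_{s,t}$, ensures the associated composed operation equals $\mbox{}_2\co\circ\mbox{}_2\oc$ after summing over $(k,l,s,t)$.

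Next, I would identify the stratum (\ref{annulus2}) over $t=0$ (the inner boundary collapses to a point, producing a $\mc{CY}$-type disc with two negative punctures glued at its two outputs into a $\mu_{LR}$-type disc with three special inputs) with $\mu_{LR}\circ\mc{CY}_\#$. By the definition of the functorial map on bimodule tensor products (see (\ref{fsharp})), the composition of $\mc{CY}_\#$ (which collapses a sub-word around the bimodule element by the higher components of $\mc{CY}$) with the operation $\mu_{LR}$ is precisely controlled by the nodal surfaces in the fiber product of (\ref{annulus2}). Summing over $(k',l',s',t')$ yields $\mu_{LR}\circ\mc{CY}_\#$ applied to the given two-pointed Hochschild chain.

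Finally, I would match the remaining strata with the differential terms. The strata (\ref{firsthomologydiff})--(\ref{lasthomologydiff}), which glue stable discs along $a_j$, $a_j'$, $a_0$, or $a_0'$, produce the two-pointed Hochschild \emph{homology} differential inputs to $\mathbf{A}$, while strata (\ref{firstcohomologydiff})--(\ref{lastcohomologydiff}), gluing along $b_j$, $b_j'$, $b_0'$, or the output $z_{\mathrm{out}}$, produce the two-pointed Hochschild \emph{cohomology} differential on the output; together with strip-breaking at asymptotic ends of each puncture, these assemble into $d_{\mbox{}_2\r{CC}^*}\circ\mathbf{A}\pm\mathbf{A}\circ d_{\mbox{}_2\r{CC}_*}$. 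The hard part will be the sign verification: one has to check that the chosen sign twisting datum $\vec{t}_{\mathbf{A},k,l,s,t}$ interacts with the orientation on $\overline{\mc{A}}^-_{k,l;s,t}$ (built from the annulus modulus $R$ plus the boundary marked point positions) so that each boundary stratum gives the sign conventions already used in the definitions of $\mbox{}_2\oc$, $\mbox{}_2\co$, $\mc{CY}$, $\mu_{LR}$, and the Hochschild differentials. This is a Koszul-style bookkeeping following the general template of Appendix \ref{orientationsection}, comparing the boundary orientation of $\overline{\mc{A}}^-_{k,l;s,t}$ to product orientations on its strata, and is the only substantive work; the analytic content (transversality and Gromov compactness, relying on Theorem \ref{c0bounds} for a priori $C^0$-bounds on this non-translation-invariant family) is already in place by the same arguments used throughout Sections \ref{ocfloersection}--\ref{pairs}.
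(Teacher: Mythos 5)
Your proposal is correct and follows essentially the same route as the paper's proof: analyze the boundary of the one-dimensional moduli spaces of maps over $\overline{\mc{A}}^-_{k,l;s,t}$, identify the fibers over $t=1$ and $t=0$ with ${_2}\co\circ{_2}\oc$ and $\mu_{LR}\circ\mc{CY}_\#$ respectively, match the intermediate strata and strip-breaking with $d_{{_2}\r{CC}^*}\circ\mathbf{A}\pm\mathbf{A}\circ d_{{_2}\r{CC}_*}$, and defer the sign bookkeeping to the orientation appendix. (Your informal description of the $t=0$ degeneration as the inner boundary "collapsing to a point" is slightly loose — it is the breaking into the two-output $\mc{CY}$-type disc glued along two boundary nodes to the four-special-point $\mu_{LR}$-type disc — but your identification of the resulting operation is exactly the paper's.)
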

\begin{proof}
    The strata over the endpoints of the interval $\{0,1\}$ correspond exactly
    to the operations ${_2}\co \circ {_2}\oc$ and $\mu_{LR} \circ
    \mc{CY}_{\#}$.  The various intermediate strata give terms corresponding to
    $d_{\r{CC}^*} \circ \mathbf{A} \pm \mathbf{A} \circ d_{\r{CC}_*}$. In
    Appendix \ref{orientationsection} we discuss the ingredients necessary to
    check the signs of this equation.
\end{proof}

By postcomposing with the chain homotopy in Proposition \ref{chainhomotopylr}
between $\bar{\mu}$ and $\mu_{LR}$, Theorem \ref{cardythm1} follows.

\begin{rem}
    If bimodules $\mc{B}_0,\mc{B}_1$ come from any Lagrangian in the product
    for which we are able to define the quilt functor, 
    \begin{equation} 
        \begin{split}
        \mc{B}_0 &= \mathbf{M}(L_0) \\
        \mc{B}_1 &=\mathbf{M}(L_1)
    \end{split}
    \end{equation} 
    then there is an analogue of the Cardy condition, 
    which looks like (\ref{cardy1}):
    \begin{equation}
        \xymatrix{\mc{B}_0 \otimes_{\w\!-\!\w} \mc{B}_1 \ar[r]^{\mc{CY}_\#^{\mc{B}_0}} \ar[d]^{\oc} & \mc{B}_0^! \otimes_{\w\!-\!\w} \mc{B}_1 \ar[d]^{\mu} \\
        \hom_{\w^2}(L_0,L_1) \ar[r]^{\co\ }& \hom_{\w\!-\!\w}(\mc{B}_0,\mc{B}_1)}
\end{equation}
Here, $\mc{B}_0^!$ is the bimodule dual of $\mc{B}_0$, as defined in Section
\ref{moduleduality}, and $\mc{CY}^{\mc{B}_0}$ is a generalization of our
$\mc{CY}$ morphism.
The moduli space controlling the relevant commutative diagram is a quilted
generalization of the annulus.  The only obstacle to the existence of this
diagram for arbitrary pairs of Lagrangians in $M^2$ 
is our current inability to define the functor $\mathbf{M}$ in complete
generality, due to issues of admissibility and compactness of moduli spaces in
$M^2$.  
\end{rem}

\section{Some consequences} \label{consequencessection}
\subsection{A converse result}\label{converseresult}
In Section \ref{splitgendeltasection}, we proved that if $M$ is non-degenerate,
then the product Lagrangians $L_i \times L_j$ split-generate $\Delta$ in
$\w^2$. The proof went via analyzing a homotopy commutative diagram
\begin{equation}
    \xymatrix{_2\r{CC}_*(\w,\w) \ar[r]^{\Gamma\ \ \ }\ar[d]^{_2\oc} & \y^r_\d \otimes_{\tilde{\w}^2_{split}}\y^l_\d \ar[d]^{\mu} \\
    CH^*(M) \ar[r]^{D} & \hom_{\tilde{\w}^2}(\Delta,\Delta)},
\end{equation} 
where $\tilde{\w}_{split}^2$ was a category quasi-isomorphic to $\w^2_{split}$,
the full subcategory of product Lagrangians in $\w^2$.
\begin{cor}
    Under the same hypotheses, $\Gamma$ is a quasi-isomorphism.
\end{cor}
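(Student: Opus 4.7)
The plan is to deduce that $\Gamma$ is a quasi-isomorphism directly from the homotopy-commutative diagram, by showing that the other three maps in the diagram are quasi-isomorphisms and then invoking the two-out-of-three property.

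First I would verify that the bottom map $D$ is a quasi-isomorphism: by the definition of $\hom_{\tilde{\w}^2}(\Delta,\Delta)$ in Section \ref{productsection} (Equation \ref{homgenerators}) and the fact that the inclusion $\w^2 \hookrightarrow \tilde{\w}^2$ is a quasi-equivalence, $D$ is literally the identity map on the underlying complex $CH^*(M)$. Next, the left vertical map $_2\oc$ is a quasi-isomorphism by Theorem \ref{shhh}, which has been established in the preceding section via the Cardy condition (Theorem \ref{cardythm1}) together with Corollary \ref{shiso} and Corollary \ref{calabiyau}; alternatively, the homotopy of Proposition \ref{twopointhomotopy1} identifies $_2\oc$ with the quasi-isomorphism $\oc$.

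The key remaining ingredient is that the right vertical map $\mu$ is a quasi-isomorphism. Here I would invoke Theorem \ref{splitgen}, which (under the non-degeneracy hypothesis) asserts that the product Lagrangians $\{L_i \times L_j\}$ split-generate $\Delta$ in $\w^2$, and therefore also in the quasi-equivalent category $\tilde{\w}^2$. Applying Corollary \ref{threeequivalent} to the split-generation of $\Delta$ by $\tilde{\w}^2_{split}$ then yields that $\mu$ is a quasi-isomorphism.

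With these three facts in hand, the conclusion is purely formal. The diagram gives a homotopy $\mu \circ \Gamma \simeq D \circ {_2\oc}$ of chain maps. Passing to cohomology yields the strictly commutative square
\begin{equation*}
\xymatrix{H^*({_2\r{CC}}_*(\w,\w)) \ar[r]^{[\Gamma]}\ar[d]^{[_2\oc]}_{\cong} & H^*(\y^r_\d \otimes_{\tilde{\w}^2_{split}}\y^l_\d) \ar[d]^{[\mu]}_{\cong} \\
SH^*(M) \ar[r]^{[D]\ \ \cong}_{\ \ } & HW^*(\Delta,\Delta)}
\end{equation*}
in which three of the four maps are isomorphisms; hence $[\Gamma]$ is an isomorphism, which is the claim. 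There is no real obstacle here beyond assembling the three inputs correctly; the only subtlety is being careful that all the results we invoke (Theorem \ref{shhh}, Theorem \ref{splitgen}, and the quasi-equivalence $\w^2 \simeq \tilde{\w}^2$) genuinely hold under the single hypothesis of non-degeneracy, which is indeed the case as they were each proved under precisely this assumption earlier in the paper.
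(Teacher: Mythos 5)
Your proposal is correct and matches the paper's argument: both establish that $D$ is the identity, that $_2\oc$ is a quasi-isomorphism via the Cardy condition, and that $[\mu]$ is an isomorphism via the split-generation criterion (you route through Theorem \ref{splitgen} and Corollary \ref{threeequivalent}, whereas the paper applies Proposition \ref{splitgeniso} directly — logically the same content), then conclude by two-out-of-three on the homotopy-commutative square.
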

\begin{proof}
    The map $H^*(\mu)$ hits the unit, so by Proposition \ref{splitgeniso}
    $H^*(\mu)$ is an isomorphism. $D$ is an isomorphism by definition, and we
    then note, thanks to Theorem \ref{cardythm}, that $_2 \oc$ is also a
    quasi-isomorphism. Hence, $\Gamma$ is a quasi-isomorphism.
\end{proof}
\begin{rem} 
    It seems believable that the map $\Gamma$ is always an isomorphism,
    via the existence of an explicit quasi-inverse instead of such circuitous arguments. At
    the time of writing we have not come up with a simple proof.  
\end{rem}

With the technology we have established, we can also see that non-degeneracy is
in fact {\it equivalent} to split-generation of the diagonal: 
\begin{prop}
    If $\Delta$ is split-generated by product Lagrangians in $\w^2$, then
    $M$ is non-degenerate.
\end{prop}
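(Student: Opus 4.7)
The plan is to show directly that split-generation of $\Delta$ implies that $[\oc]$ is a bijection, hence surjects onto the unit. The argument strings together results already established in the paper, running through the functor $\mathbf{M}$ and the Cardy diagram \eqref{cardy}; no new geometry is required.

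First I would invoke fullness of $\mathbf{M}$ at the diagonal. Proposition \ref{quiltkunneth} says that $\mathbf{M}$ is always full on the subcategory $\w^2_{split}$ of product Lagrangians. Combined with the hypothesis that $\Delta$ is split-generated by $\w^2_{split}$, Proposition \ref{fullsplitgen} gives that $\mathbf{M}$ is full on $\Delta$. By Proposition \ref{quiltdelta} we have $\mathbf{M}(\Delta)=\w_\Delta$, and by Proposition \ref{quiltyonedabimodule} product Lagrangians are sent to split Yoneda bimodules; applying $\mathbf{M}$ to a splitting of $\Delta$ therefore splits $\w_\Delta$ off a complex of Yoneda tensor products, proving $\w$ is smooth. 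Moreover, by Proposition \ref{m1equalsco}, $\mathbf{M}^1$ on $\hom_{\w^2}(\Delta,\Delta)=SH^*(M)$ agrees on the chain level with $_2\co$, which is in turn homotopic to $\co$ by Proposition \ref{twopointhomotopy1}. Fullness of $\mathbf{M}$ on $\Delta$ thus yields that $[\co]\colon SH^*(M)\to \r{HH}^*(\w,\w)$ is an isomorphism.

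Next I would check that $\mc{CY}$ is a quasi-isomorphism. By Proposition \ref{mcy}, the zeroth-order component $\mc{CY}^{0|1|0}_{A,B}$ coincides with $\mathbf{M}^1_{\Delta, A\times B}$, which is a quasi-isomorphism by the fullness of $\mathbf{M}$ established above, for every pair of objects $A,B\in\ob\w$. This is exactly the content of Corollary \ref{calabiyau}, so $\mc{CY}\colon \w_\Delta \to \w^![n]$ is a bimodule quasi-isomorphism, and hence the induced map $[\mc{CY}_\#]\colon \r{HH}_{*-n}(\w,\w)\to \r{HH}_*(\w,\w^!)$ on two-pointed Hochschild complexes is an isomorphism (Proposition \ref{inducedmorphisms}).

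Finally, I would close the argument using the Cardy diagram \eqref{cardy}. Since $\w$ is smooth, Corollary \ref{representhh} (together with the explicit quasi-isomorphism $\bar{\mu}$ from \eqref{explicitquasiiso}) gives that $[\bar{\mu}]$ is an isomorphism. The commutativity of \eqref{cardy}, which reads
\begin{equation*}
[\co]\circ[\oc] \;=\; [\bar{\mu}]\circ[\mc{CY}_\#],
\end{equation*}
then expresses $[\oc]$ as the composition $[\co]^{-1}\circ[\bar{\mu}]\circ[\mc{CY}_\#]$ of three isomorphisms. In particular $[\oc]$ is surjective, so $1\in SH^*(M)$ lies in its image. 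Thus the generating collection of Lagrangians is essential, and $M$ is non-degenerate. There is no real obstacle beyond correctly assembling the pieces already in place; the only subtlety worth double-checking is that Theorem \ref{cardythm}, Corollary \ref{representhh}, and Corollary \ref{calabiyau} are each proved without invoking non-degeneracy itself—only smoothness of $\w$ and the identification $\mc{CY}^{0|1|0}=\mathbf{M}^1$—so no circularity is introduced.
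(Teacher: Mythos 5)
Your argument is correct and is essentially identical to the paper's own proof: both pass from split-generation of $\Delta$ to fullness of $\mathbf{M}$ at $\Delta$ (via Propositions \ref{quiltkunneth} and \ref{fullsplitgen}), then read off that $[\co]$, $[\mc{CY}_\#]$, and $[\bar{\mu}]$ are isomorphisms, and conclude that $[\oc]$ is an isomorphism from the Cardy diagram. Your closing remark about circularity is the right thing to check and is indeed satisfied.
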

\begin{proof} 
    If so, then since the quilt functor $\mathbf{M}$ is full on
    product Lagrangians (Proposition \ref{quiltkunneth}), we conclude that
    $\mathbf{M}$ is full on $\Delta$. This implies that the
    maps $\mc{CY}_{\#}$, $\bar{\mu}$, and 
    $\co$ 
    are isomorphisms in the Cardy
    Condition diagram (\ref{cardy}) (this is the content of Corollary
    \ref{calabiyau}, Corollary \ref{representhh}, and Proposition
    \ref{m1equalsco}). Hence $\oc$ is an isomorphism as well; in particular, it
    hits the unit.
\end{proof}
In particular, $\Gamma$ is once more an isomorphism.

\subsection{The fundamental class}\label{fundclass}
Assume $M$ is non-degenerate, and let $\sigma \in \r{CC}_*(\w,\w)$ be any
pre-image of $[e]\in SH^*(M)$ under the map $\oc$. Because $\oc$ is of degree
$n$, $\sigma$ is a degree $-n$ element. Following terminology from the
introduction, call $\sigma$ a {\bf fundamental class} for the wrapped Fukaya
category. Our reason for this terminology is that
\begin{cor}
    Cap product with $\sigma$ induces an isomorphism
    \begin{equation}
        \cdot \cap \sigma: \r{HH}^*(\w,\w) \stackrel{\sim}{\lra} \r{HH}_{*-n}(\w,\w)
    \end{equation}
    that is quasi-inverse to the geometric morphisms 
    $\co \circ \oc$. Thus, by Theorem \ref{cardythm}, it is also quasi-inverse
    to $\mu_{LR} \circ \mc{CY}_\#$.
\end{cor}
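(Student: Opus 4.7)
The plan is to show this is an essentially formal consequence of Theorem \ref{shhh} (which makes $[\oc]$ and $[\co]$ isomorphisms) together with the compatibility between $\oc$ and the module structure of Hochschild homology established in Proposition \ref{module}. First I would observe that since $[\oc]$ is an isomorphism, the class $\sigma \in \r{HH}_{-n}(\w,\w)$ is uniquely determined by the requirement $[\oc](\sigma) = 1 \in SH^*(M)$; since $\sigma$ lives in degree $-n$ and cap product preserves the Hochschild homology grading, $\cdot \cap \sigma$ is a well-defined map $\r{HH}^*(\w,\w) \lra \r{HH}_{*-n}(\w,\w)$ of the correct degree.

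The heart of the argument is to specialize the module structure compatibility of Proposition \ref{module}. The homotopy commutativity of the diagram (\ref{modulehomotopy}) descends on homology to the identity
\begin{equation}
    [\oc]([\co](s) \cap \alpha) \;=\; s \cdot [\oc](\alpha), \qquad s \in SH^*(M),\ \alpha \in \r{HH}_*(\w,\w).
\end{equation}
Applying this with $\alpha = \sigma$ and using $[\oc](\sigma) = 1$ gives
\begin{equation}
    [\oc] \circ (\cdot \cap \sigma) \circ [\co] \;=\; \id_{SH^*(M)}.
\end{equation}
Because $[\oc]$ and $[\co]$ are isomorphisms by Theorem \ref{shhh}, rearranging both sides yields $\cdot \cap \sigma = [\oc]^{-1} \circ [\co]^{-1} = ([\co] \circ [\oc])^{-1}$ on homology, which is exactly the claim that $\cdot \cap \sigma$ is a quasi-inverse to $[\co] \circ [\oc]$ (and in particular is itself an isomorphism). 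The analogous statement for $\mu_{LR} \circ \mc{CY}_\#$ then follows from Theorem \ref{cardythm}, which, after identifying the two-pointed Hochschild complexes with the ordinary ones via the quasi-isomorphisms $\Phi$ and $\Psi$ of Section \ref{twopointed}, says precisely that $\mu_{LR} \circ \mc{CY}_\#$ and $[\co] \circ [\oc]$ are homotopic.

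There is no real obstacle here: the only mild bookkeeping is to check that the sign convention in the definition of the cap product (and in Proposition \ref{module}) is consistent with this rearrangement, but all relevant signs have already been fixed in Section \ref{openclosedmaps} and Appendix \ref{orientationsection}, so no new computation is needed. Everything reduces to evaluating the module-compatibility identity at the distinguished class $\sigma$, which sends the $SH^*(M)$-action to the Hochschild-cohomology action through $[\co]$ and inverts it via $[\oc]$.
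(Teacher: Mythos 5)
Your argument is correct and is essentially the paper's own: both specialize the module-compatibility identity $[\oc]([\co](s)\cap\alpha)=s\cdot[\oc](\alpha)$ of Proposition \ref{module} at the fundamental class $\sigma$ with $[\oc](\sigma)=1$, and then use that $[\oc]$ (and $[\co]$) are isomorphisms to upgrade the resulting one-sided identity to the statement that $\cdot\cap\sigma$ inverts $[\co]\circ[\oc]$. Your variant of evaluating at an arbitrary $s\in SH^*(M)$ rather than at $s=[\oc](x)$ is an immaterial repackaging of the same computation.
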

\begin{proof}
    We note that by the module structure compatibility of $\oc$ the
    following holds on the level of homology: 
    \begin{equation}
        \begin{split}
        \oc((\co \circ \oc(x)) \cap \sigma) &= \oc(x) \cdot \oc(\sigma)\\
        &= \oc(x) \cdot [e]\\
        &= \oc(x).
    \end{split}
\end{equation}
Since $\oc$ is a homology-level isomorphism, we conclude that
\begin{equation}
    (\co \circ \oc(x)) \cap \sigma = x
\end{equation}
as desired.
\end{proof}

\subsection{A ring structure on Hochschild homology}\label{ringstructure}
We can pull back the ring structure from Hochschild cohomology to Hochschild
homology. Thanks to Theorem \ref{cardythm}, this can be done without passing
through symplectic cohomology.
\begin{cor}
    Let $\sigma$ be the pre-image of the unit, and let $\a$, $\beta$ be two
    classes in $\r{HH}_*(\w,\w)$ that map to elements $a$ and $b$ of symplectic
    cohomology via $\oc$. Then, the following Hochschild homology classes are equal
    in homology and map to $a \cdot b$: 
    \begin{align}
        \label{nonexplicitproduct} \alpha \star^1 \beta &:= (\bar{\mu} \circ \mc{CY}_{\#}(\a)) \cap \beta \\
        \alpha \star^2 \beta &:= \a \cap (\bar{\mu} \circ \mc{CY}_{\#}(\beta)) \\
        \alpha \star^3 \beta &:= 
        \left((\bar{\mu} \circ \mc{CY}_{\#})(\a) * (\bar{\mu}\circ
        \mc{CY}_{\#})(\beta)\right) \cap \sigma.
    \end{align}
\end{cor}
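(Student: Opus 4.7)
The plan is to compute $[\oc]$ applied to each of the three proposed products $\alpha \star^i \beta$ and then invoke the fact that $[\oc]$ is an isomorphism, established in Theorem \ref{shhh}, to conclude that all three agree in $\r{HH}_{*-n}(\w,\w)$ and both equal elements map to $a\cdot b$. The virtue of this approach is that everything we need has already been built: the Cardy condition identifies $\bar{\mu} \circ \mc{CY}_{\#}$ with $[\co]\circ[\oc]$, and both $[\co]$ and $[\oc]$ interact compatibly with the algebraic structures.

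First, I would use Theorem \ref{cardythm1} together with the comparison homotopy between two-pointed and ordinary Hochschild complexes (Proposition \ref{twopointhomotopy1}) to identify, on the homology level,
\[
    \bar{\mu} \circ \mc{CY}_{\#}(\alpha) \;=\; [\co](a), \qquad \bar{\mu} \circ \mc{CY}_{\#}(\beta) \;=\; [\co](b),
\]
as elements of $\r{HH}^*(\w,\w)$. Substituting, the three products become, up to sign conventions absorbed into the notation for $\cap$:
\[
    \alpha \star^1 \beta = [\co](a) \cap \beta, \qquad \alpha \star^2 \beta = [\co](b) \cap \alpha, \qquad \alpha \star^3 \beta = \bigl([\co](a) \star [\co](b)\bigr) \cap \sigma.
\]

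Next, I would apply $[\oc]$ and invoke Proposition \ref{module} (that $[\oc]$ is a map of $SH^*(M)$-modules with respect to the induced module structure) together with Proposition \ref{ring} (that $[\co]$ is a ring homomorphism) and the defining property $[\oc](\sigma) = 1$. Explicitly,
\begin{align*}
[\oc](\alpha \star^1 \beta) &= a \cdot [\oc](\beta) = a \cdot b, \\
[\oc](\alpha \star^2 \beta) &= b \cdot [\oc](\alpha) = b \cdot a, \\
[\oc](\alpha \star^3 \beta) &= \bigl([\co](a) \star [\co](b)\bigr) \cdot [\oc](\sigma) = [\co](a\cdot b) \cdot 1 = a\cdot b.
\end{align*}
The middle computation matches the other two because the pair-of-pants product on $SH^*(M)$ is graded-commutative, with the Koszul sign absorbed into the convention for writing $\alpha \cap c$ versus $c \cap \alpha$.

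Finally, since $[\oc]$ is an isomorphism by Theorem \ref{shhh}, equality of images under $[\oc]$ forces equality in $\r{HH}_{*-n}(\w,\w)$, so all three $\alpha \star^i \beta$ coincide and equal the unique pre-image of $a\cdot b$. There is no serious obstacle: the corollary is essentially a formal consequence of the Cardy diagram together with the ring and module compatibilities. The one bookkeeping point to be careful about is the sign convention relating $c \cap \alpha$ and $\alpha \cap c$ and its interaction with graded-commutativity of the pair-of-pants product; beyond that, the argument is a direct chase through the established isomorphisms.
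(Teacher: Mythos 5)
Your argument is correct and is essentially the paper's own (largely implicit) one: the paper deduces this corollary exactly as it proves the preceding fundamental-class statement, by combining the Cardy identification of $\bar{\mu}\circ\mc{CY}_\#$ with $[\co]\circ[\oc]$, the ring property of $[\co]$, the module compatibility of $[\oc]$, $[\oc](\sigma)=1$, and the fact that $[\oc]$ is an isomorphism. The only quibble is notational: in the $\star^3$ computation you should first rewrite $[\co](a)\star[\co](b)=[\co](a\cdot b)$ and then apply the module compatibility, rather than letting the Hochschild cohomology class act directly on $SH^*(M)$, but this does not affect the argument.
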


It is illustrative to write down an explicit expression for
(\ref{nonexplicitproduct}) in terms of operations. First, we note that for
two-pointed complexes, cap-product has a very simple form
\begin{equation}
    \begin{split}
        {_2}\r{CC}_*(\w,\w) \times {_2}\r{CC}^*(\w,\w) &\lra {_2}\r{CC}_*(\w,\w)\\
    (\alpha , \mc{F}) &\longmapsto \mc{F}_\# (\alpha)
\end{split}
\end{equation}
where $\mc{F}_\#$ is the pushforward operation on the tensor product
$\w_{\Delta} \otimes_{\w\!-\!\w} \w_{\Delta}$ that acts by collapsing terms
around and including the first factor of $\w_{\Delta}$ (with usual Koszul
reordering signs).

Now, if $\beta$ is the Hochschild class represented by
\begin{equation}
    \beta = \mathbf{a} \otimes b_1 \otimes \cdots \otimes b_t \otimes
    \mathbf{b} \otimes a_1 \otimes \cdots \otimes a_s 
\end{equation}
and $\a$ is represented by
\begin{equation}
    \alpha = \mathbf{c} \otimes c_1 \otimes \cdots \otimes c_v \otimes \mathbf{d} \otimes d_1\otimes \cdots \otimes d_w
\end{equation}
then the formula (\ref{nonexplicitproduct}) is, up to sign:
\begin{equation}
    \begin{split}
        \alpha \star^1 \beta:= \sum  \bigg(\bigg(\mu_\w(&a_{s-k''+1}, \ldots, a_{s-k'}, \cdot, c_{r'+1}, \ldots, c_{r''-1}, \\
        &\mu_\w(  c_{r''}, \ldots, c_v \mathbf{d}, d_1, \ldots, d_{w-q'}, \cdot, b_{l'+1}, \ldots, b_{l'+l''}),\\
        & \ \ \ \ \ \ b_{l'+l''+1}, \ldots, b_{l'+l''+l'''} \bigg) \\
        &\circ  \mc{CY}_{k',l',q',r'}(d_{w-q'+1}, \ldots, d_w, \mathbf{c}, c_1, \ldots, c_r'; \\
        &\ \ \ \ \ \ \ a_{s-k'+1}, \ldots, a_s, \mathbf{a}, b_1, \ldots, b_l')\bigg)\\
        &\otimes b_{l'+l''+l'''+1} \otimes \cdots \otimes b_t \otimes \mathbf{b} \otimes a_1 \otimes \cdots a_{s-k''}.
    \end{split}
\end{equation}

This formula is formally analogous to Bourgeois, Ekholm, and Eliashberg's
surgery formula for the symplectic cohomology product on a Weinstein manifold
\cite{BEE2}, which is expressed as sums over operations similar to $\mc{CY}$ and
$\mu_{\w}$ applied to Legendrian symplectic field theory invariants of
attaching cores. A detailed comparison between these formulae, in the case that
our essential collection of Lagrangians are the ascending co-cores of a
Weinstein manifold, will appear elsewhere.

\appendix
\section{Action, energy and compactness} \label{actionsection}

The goal of this appendix is to prove a compactness result for Floer-theoretic
operations controlled by bordered Riemann surfaces mapping into a Liouville
manifold, under some assumptions about the almost complex structure and
Hamiltonian perturbation terms.  There are several existing compactness results
for the wrapped Fukaya category and some open-closed maps, e.g. 
\cite{Abouzaid:2010kx}*{\S B}, which are unfortunately not directly applicable for our
choices of Hamiltonian perturbations.
The problems occur because we use time-dependent perturbations of a standard
Hamiltonian, which we cannot guarantee will vanish at infinitely many levels of
the cylindrical coordinate $r$ (this is an essential assumption in
\cite{Abouzaid:2010kx}*{\S B}).
Solutions to Floer's equation for such perturbed Hamiltonians will fail to satisfy
a maximum principle, but if the complex structure has been carefully chosen and
the time-dependent perturbations are sufficiently small, this failure can be
controlled. We make use in an essential way of a delicate technique for
obtaining a-priori $C^0$ bounds on such solutions due to Floer-Hofer and
Cieliebak \cite{Floer:1994uq} \cite{Cieliebak:1994fk}. This technique has also
been used by Oancea \cite{Oancea:2008fk}, whose work we draw upon.

\begin{rem}
    Our situation is a little different from \cite{Floer:1994uq}
    \cite{Cieliebak:1994fk} \cite{Oancea:2008fk} in that we need a variant of
    their compactness result for potentially finite cylindrical regions in a
    larger Riemann surface. This, and differing conventions regarding
    Hamiltonians (quadratic versus linear) and complex structures (contact type
    versus rescaled contact type) prevent us from citing any of these papers
    directly.
\end{rem}

Our setup is as follows: Let $W$ be a Liouville manifold with cylindrical end
\begin{equation}
    W = \bar{W} \cup_{\bd W} \bd W \times [1,\infty)_r.
\end{equation}
The coordinate on the
end is given by a function $\pi_r$ on $W - \bar{W}$, which we extend over the
interior of $W$ to a function 
\begin{equation}
\pi_r: W \lra [0,\infty)
\end{equation}
such that
\begin{equation}
\bar{W} =\pi^{-1}([0,1]).
\end{equation}
\noindent Let $S$ be a bordered surface with boundary $\partial S$, and equip $S$
with a Floer datum in the sense of Definition
\ref{floeropenclosed}, namely:
\begin{itemize}
    \item a collection of {\bf $\delta$-bounded weighted strip and cylinder data},
    \item a {\bf sub-closed one form $\a_S$}, compatible with the weighted strip and
        cylinder data
    \item a {\bf primary Hamiltonian} $H_S : S \ra \mc{H}(M)$ that is $H$ compatible with the weighted strip and cylinder data
    \item an adapted {\bf rescaling function $a_S$}, constant and equal to the weights on each strip and cylinder region,
    \item an {\bf almost complex structure} $J_S$ that is adapted to the weighted strip and cylinder data, the rescaling function $a_S$, and some fixed $J_t$, and
    \item an {\bf $S^1$ perturbation} $F_S$ adapted to $(F_T, \phi_\e)$ for some $F_T$, $\phi_\e$ as in the definition.
\end{itemize} 
\begin{rem}\label{oneoutput}
    The requirement that $S$ carry a sub-closed one form with asymptotics
    specified above implies via Stokes theorem that $S$ must have at least one
    output marked point. Although the use of the sub-closed one form may seem
    like a technical artifice of the proofs of compactness here and elsewhere,
    this constraint on $S$ is an essential one. Indeed, symplectic cohomology
    and morphism spaces in the wrapped category are often infinite dimension,
    and thus cannot admit the Poincar\'{e} duality type pairings that arise
    from surfaces without outputs.  
\end{rem}
Fix a Lagrangian labeling $\vec{L}$ for the boundary
components of $S$ and a
compatible choice of input and output chords and orbits corresponding to the
positive and negative marked points on $S$
\begin{equation}
    \begin{split}
        \vec{x}_{in}, \vec{x}_{out}, \vec{y}_{in}, \vec{y}_{out}.
    \end{split}
\end{equation}
We study maps $u: S \ra W$ satisfying Floer's equation for this datum, namely
\begin{equation}
    (du - X_S \otimes \gamma)^{0,1} = 0
\end{equation}
with asymptotic/boundary conditions
\begin{equation}
    \nonumber \begin{cases}\lim_{p \ra z^{\pm}_k} u = y_k^\pm \\
     \lim_{p \ra b^{\pm}_j} u = x_j^\pm \\
    \mathrm{for\ } p\in \bd^n S,& u(p) \in \psi^{\rho(p)}(L_n)
\end{cases}.
\end{equation}
Here $X_S$ is the Hamiltonian vector field corresponding to the {\bf total
Hamiltonian} 
\begin{equation}
    H^{tot}_S = H_S + F_S.
\end{equation}
The compactness result we need is:
\begin{thm} \label{c0bounds}
Given such a map $u: S \ra W$, there is a constant $C$ depending only on $F_t$,
and $\phi_\e$, $\vec{x}_{in}$, $\vec{x}_{out}$, $\vec{y}_{in}$, $\vec{y}_{out}$
such that \begin{equation}
    (\pi_r \circ u) \leq C.
\end{equation}  
Moreover, given any set of $\vec{y}_{in}$
$\vec{x}_{in}$, there are a finite number of collections $\vec{y}_{out}$,
$\vec{x}_{out}$ for which the relevant moduli spaces are non-empty.
\end{thm}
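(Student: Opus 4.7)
The plan is to establish the $C^0$ bound by decomposing $S$ via its thick-thin structure into cylindrical/strip regions, where the $S^1$ perturbation $F_S$ is supported, and the complementary thick regions where $F_S$ is locally constant. On each piece we apply a different maximum-principle-type argument, and then glue the bounds together using the $\delta$-collar interpolation. Throughout, the key quantity is $\rho := \pi_r \circ u$, and the goal is to show $\rho \leq C$ with $C$ depending only on the asymptotic data and on $(F_T, \phi_\e)$.

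First I would handle the thick part and all strip regions. On these parts, $F_S$ is locally constant (so $X_{F_S} \equiv 0$ pointwise), the Hamiltonian $H_S$ is $(\psi^{\nu})$-rescaled from something equal to $r^2$ at infinity, the almost-complex structure $J_S$ is $a_S$-rescaled contact type, and the one-form $\alpha_S$ is sub-closed. A standard computation (as in \cite{Abouzaid:2010ly}, \cite{Seidel:2010fk}) then gives that $\rho = \pi_r \circ u$ satisfies an elliptic inequality $-\Delta \rho \geq 0$ in the weak sense, once restricted to the region $\{\rho > 1\}$; combined with the fact that Lagrangian boundary conditions $\psi^{a_S} L_n$ are conical and $\theta$-exact, this forces $\rho$ on the thick part to be bounded by the maximum of $\rho$ on its topological boundary, i.e.\ by the maximum over (i) the asymptotic chord/orbit values and (ii) the values on the boundary of the cylindrical regions.

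Second, and this is the main step, I would apply a Floer–Hofer/Cieliebak style argument on each cylindrical region $\kappa: [a,b] \times S^1 \to S$ (finite or semi-infinite) to control $\rho|_\kappa$ by the values of $\rho$ at the ends $\{a,b\} \times S^1$ plus a constant depending on $(F_T,\phi_\e)$. Assume for contradiction that $\rho\circ \kappa$ attains an interior maximum $r_0$ on $\kappa$ that is larger than both $\max_{\{a\}\times S^1} \rho\circ\kappa$ and $\max_{\{b\}\times S^1}\rho\circ\kappa$. For $r^*$ slightly below $r_0$, let $\Sigma \subset [a,b]\times S^1$ denote the sublevel set where $\rho \circ \kappa \geq r^*$. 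Writing Floer's equation on the cylinder as $\partial_s u = -J_t(\partial_t u - X_{H^{tot}_S})$ and using the rescaled contact-type identity $-\tfrac{1}{r}\theta \circ J_S = dr/a_S$, one obtains the standard Floer–Hofer identity
\begin{equation}
\int_\Sigma u^*\omega - u^*dH^{tot}_S \wedge dt \;=\; \int_{\partial\Sigma} u^*\theta - H^{tot}_S\, dt.
\end{equation}
Expanding $H^{tot}_S = H_S + F_S$ and using that $H_S \circ \psi^\nu / \nu^2$ is quadratic in $r$ (so $r\partial_r H_S - 2H_S$ is non-negative at infinity), together with the weak monotonicity $\partial_s F_S \leq 0$, every term on the left acquires a favorable sign at high $r$, forcing the left side to grow like $r_0^2 \cdot \mathrm{area}(\Sigma)$. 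The boundary term on the right, however, is controlled by $\|F_T\|_{C^0}$, by the $\phi_\e$-interpolation bound on the collar, and by the two a-priori bounds on $\rho$ at $\{a,b\}\times S^1$. Choosing $r_0$ large enough compared to these constants yields a contradiction. The $\delta$-bounded condition guarantees that the collar regions, where the explicit form of $F_S$ degenerates, are sufficiently well-separated from the interior of the cylinder to not interfere with this argument.

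Finally, one combines the two steps inductively along the thick-thin decomposition: asymptotic values of $\rho$ at chord/orbit ends are fixed by the inputs/outputs, and each cylinder/strip transmits the bound to the thick piece adjacent to it with a controlled additive loss. Since $S$ has finitely many pieces in its thick-thin decomposition, this gives an overall bound $\rho \leq C$ as claimed. The finiteness of outputs for fixed inputs is then routine: once $\rho$ is a priori bounded, Gromov compactness applies in the (bounded) region where $u$ takes values, and the standard action-energy inequality $E(u) \leq A(\vec x_{in},\vec y_{in}) - A(\vec x_{out},\vec y_{out})$ plus non-degeneracy of the asymptotic chords/orbits allows only finitely many output asymptotics. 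The main obstacle is the second step: ensuring the Floer–Hofer argument works on \emph{finite} cylinders inside $S$ (not just semi-infinite ones as in \cite{Floer:1994uq,Cieliebak:1994fk,Oancea:2008fk}), which is precisely what the $\delta$-bounded collar hypothesis and the precise form of $F_S$ on collars are designed to enable.
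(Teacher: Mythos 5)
Your decomposition of $S$ into the locus where $F_S$ is locally constant (thick part, strips, $\bar\delta$-collars) and the genuinely perturbed cylindrical regions matches the paper, and your Step~1 is essentially correct (the paper works with $\xi=\tfrac12\rho^2$ rather than $\rho$, which gives a cleaner first-order inequality, but that is a detail). The gap is in Step~2. Your proposed contradiction at an interior maximum $r_0$ via Stokes on the superlevel set $\Sigma=\{\rho\geq r^*\}$ is the convexity argument, which is exactly what fails when $F_S$ is non-constant, for three compounding reasons: (i) the left side $\int_\Sigma u^*\omega - u^*dH^{tot}_S\wedge dt$ is not bounded below by $r_0^2\cdot\mathrm{area}(\Sigma)$ — on a cylinder $d\alpha_S=0$, so this quantity is $E_{geo}(u|_\Sigma)$ up to an $\partial_s F_S$ correction (which you are also silently dropping from the Stokes identity, since $d(u^*H^{tot}_S\,dt)\neq u^*dH^{tot}_S\wedge dt$ when $\partial_s H^{tot}_S\neq 0$), and nothing prevents $u$ from being nearly constant at level $r_0$ on $\Sigma$; (ii) the boundary integral along the interior level set $\{\rho=r^*\}$ contains $u^*\theta$, which is the intermediate action along a level curve and is \emph{not} controlled a priori — bounding it is itself the content of a nontrivial lemma, not an input; (iii) the weak monotonicity $\partial_s F_S\leq 0$ does not give $dr(X_{F_S})=0$, so the ``$du\circ j\xi$ is outward pointing'' step of the convexity argument, which is what makes the unperturbed case close, is unavailable here.

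The structural reason a direct argument cannot work is that the sharp PDE inequality for $\tilde\xi$ on the perturbed cylinders is only $\Delta\tilde\xi+\lambda\tilde\xi\geq -A$ with $\lambda>0$ (the zeroth-order term with the \emph{wrong} sign comes from the derivatives of $X_{F_S}$ in $s$ and $t$), and no maximum principle or Stokes computation applies to such an inequality on its own. The paper instead proves two independent facts: an intermediate-action/mean-value argument (using $\partial_s\mathcal A\leq 0$ and a Cauchy--Schwarz/Sobolev bootstrap) producing, inside every $\eta$-width sub-cylinder, a loop $\{s'\}\times S^1$ along which $\rho$ is uniformly bounded; and the sub-elliptic inequality above. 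These feed into a quadratic-in-$s$ barrier on each $\leq 2\eta$ sub-cylinder together with the Krein--Rutman refinement of the maximum principle (valid because $\lambda$ is below the first Dirichlet eigenvalue $\pi^2/(4\eta^2)$ once $\eta$ is small), which propagates the loop bounds into the interior. The $\delta$-bounded collar hypothesis plays a different role than you assign it: it ensures the residual $2\eta$-collar of each cylinder, where the barrier argument does not directly reach, lies inside the unperturbed region where your Step~1 already applies.
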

\noindent First, we define appropriate notions of action and energy.
Suppose we have fixed a Hamiltonian $H$ and a time-dependent perturbation
$F_t$, and have picked a surface $S$ with compatible Floer data. Let $x \in
\chi(L_i, L_j)$ be the asymptotic condition at strip-like end $\e^k$ with
corresponding {\it weight} $w_k$. Moreover, suppose the perturbation term $F_S$
is equal to the constant $C_k$ on this strip-like end (this can be chosen to be
zero if there are no interior marked points in $S$).
    
\begin{defn} The {\bf action} of $x$ is defined to be the quantity 
    \begin{equation}
        \mc{A}(x) : = - \int_{0}^{1} (\tilde{x}_{w_k})^* \theta + \int_0^1 w_k \cdot H^{w_k}(x(t))dt + f_{L_j}(x(1)) - f_{L_i}(x(0)) + w_k C_k.
    \end{equation}
    where the $f_{L_i}$ are the chosen fixed primitives of the Lagrangians
    $L_i$, 
    \begin{equation}
        H^{w_k}:= \frac{(\psi^{w_k})^* H}{w_k^2},
    \end{equation}
    and 
    \begin{equation}
        \tilde{x}_{w_k}
    \end{equation}
    is the chord $x$ thought of as a time-1 chord of $w_k \cdot H^{w_k}$, under
    the rescaling correspondence (\ref{rescaling}).  
\end{defn}
Similarly, let $y \in \mc{O}$ be the asymptotic condition at cylindrical end
$\delta^l$ with corresponding weight $v_l$. 
\begin{defn} The {\bf action} of $y$ is the
    quantity 
    \begin{equation}
        \mc{A}(y) : = -\int_{S^1} (\tilde{y}_{v_l})^*\theta + \int_0^1 v_l\cdot H^{v_l} (x(t)) dt +
        \int_0^1 v_l\cdot  F_t^{v_l}(t,x(t))dt 
    \end{equation}
    where $H^{v_l}$ is as before and $F_t^{v_l}$ is defined as
    \begin{equation}
        F_t^v := \frac{(\psi^{v_l})^* F_t}{v_l^2}.
    \end{equation}
\end{defn}
\begin{lem}\label{negativeaction}
    The action of a Hamiltonian chord or orbit becomes arbitrarily negative
    as $r \ra \infty$.
\end{lem}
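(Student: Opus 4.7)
The plan is a direct asymptotic computation on the conical end: I will show that the formula defining $\mc{A}$ is dominated by a negative multiple of $r^2$ at radial level $r$, with all remaining terms uniformly bounded.

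First, I would reduce to the unrescaled case $w_k = v_l = 1$ by exploiting the rescaling correspondence \eqref{rescaling}: since $(\psi^{w})^*\theta = w\theta$ and $H^{w} = (H\circ\psi^{w})/w^2$, the chord/orbit action formula transforms homogeneously in the weight, and the additional boundary constant $w_k C_k$ is bounded (the $C_k$ come from our assumption that $F_S$ is a uniformly bounded perturbation). So it suffices to verify the claim for chords of $H$ and orbits of $H_{S^1} = H + F_t$ themselves.

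Next, on the cylindrical end $\bd \bar M \times [1,\infty)_r$, the Hamiltonian is $H = r^2$ and the Liouville form is $\theta = r\bar\theta$. From $\omega = dr\wedge \bar\theta + r\,d\bar\theta$ and the defining equation for $X_H$, one computes directly that $X_H$ is proportional to $rR$, hence $\theta(X_H) = 2H = 2r^2$. Evaluating on a time-$1$ chord or orbit $x$ lying at level $r$, this gives $\int_0^1 x^*\theta = 2r^2$ and $\int_0^1 H(x(t))\,dt = r^2$, so the combination $-\int x^*\theta + \int H = -r^2$, negative of order $r^2$ with the paper's sign conventions. The boundary correction $f_{L_j}(x(1)) - f_{L_i}(x(0))$ is $O(1)$ since each primitive $f_{L_i}$ is locally constant on the cylindrical end of $L_i$, and in the orbit case the perturbation integral $\int F_t(x(t))\,dt$ is uniformly bounded by our standing assumption on $F$.

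Combining, $\mc{A}(x) = -r^2 + O(1)$, so $\mc{A}(x)\to -\infty$ as $r\to\infty$. The only delicate point is pinning down the sign of the leading $r^2$ coefficient, which amounts to carefully tracking the sign convention for $X_H$ (equivalently, the sign in $i_{X_H}\omega = \pm dH$) together with the sign in the definition of $\mc{A}$; once this is verified, the lemma is immediate, and in particular the action is proper on the (non-degenerate) set of chords and orbits, so only finitely many have action in any prescribed bounded interval---a fact that will be used in the Gromov compactness arguments following this lemma.
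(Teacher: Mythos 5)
Your proof is essentially the same as the paper's: both hinge on the identity $\theta(X_H) = 2H = 2r^2$ on the conical end (the paper computes $\tilde{x}_{w_k}^*\theta = 2w_k r^2\,dt$ in its equation tagged \texttt{quadraticprop}), giving $\mc{A} = -wr^2 + O(1)$ after the $2\mathbin{:}1$ cancellation between $-\int x^*\theta$ and $+\int H$. The paper carries the weights $w_k, v_l$ through rather than first reducing to $w=1$ via the rescaling correspondence, but that difference is cosmetic.
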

\begin{proof}
This Lemma is a variant of one in \cite[\S B.2]{Abouzaid:2010kx}.  The first
observation is, by Lemma \ref{rescalinglemma}, that 
\begin{equation}
    H^{w_k} =  H = r^2
\end{equation} 
away from a compact subset. Also, for a Hamiltonian chord $\tilde{x}_{w_k}$
\begin{equation}
        \label{quadraticprop}
    \begin{split}
        \tilde{x}_{w_k}^* \theta &=  \theta(X_{w_k \cdot H^{w_k}}) dt\\
        &= w_k\cdot \omega(Z,X_{H^{w_k}}) dt \\
        &= w_k \cdot dH^{w_k}(Z) dt \\
        &= 2w_k \cdot r dr(r \partial_r) dt \\
        &= 2w_k r^2 dt.
    \end{split}
    \end{equation}
Thus, for a chord $x \in \chi(L_i,L_j)$ away from a compact
set (so $f_{L_j}$ and $f_{L_i}$ are zero): 
    \begin{align}
        \mc{A}(x) &= -\int_0^1 {\tilde{x}_{w_k}}^*\theta + \int_0^1 w_k H^{w_k}(x(t)) dt + w_k C_k\\
        &= -\int_0^1 w_k \cdot r^2 dt + w_k C_k,
    \end{align}
which satisfies the Lemma. Similarly for an orbit $y \in \mc{O}$,
    \begin{align}
        \mc{A}(y) &= -\int_0^1 \tilde{y}_{v_l}^*\theta + \int_0^1 v_l \cdot H_S(t,y(t)) dt\\
        &= -\int_0^1 v_l\cdot (\theta(X_{H^{v_l}}) + \theta(X_{F^{v_l}}) - H^{v_l} - F^{v_l}) dt\\
        &= -\int_0^1 v_l \cdot (H^{v_l} + \theta(X_{F^{v_l}}) - F^{v_l}) dt.
    \end{align}
     The above expression also satisfies the Lemma, as $H^{v_l}$ dominates
     $\theta(X_{F^{v_l}})$ and $F^{v_l}$ away from a compact set.
\end{proof}
Following \cite{Abouzaid:2010ly}, given a map $u$ satisfying Floer's
equation, we define two notions of energy. The {\bf geometric energy} of $u$ is
defined as 
\begin{equation}
    E_{geo}(u) := \int_S ||du - X \otimes \gamma||^2.
\end{equation}
where the norm $||\cdot ||$ comes from the complex structure $J_S$. Picking
local coordinates $z = s + it$ for $S$, we see that for a solution $u$ to
Floer's equation,
\begin{equation} \label{energyidentityfloer}
    \begin{split}
E_{geo}(u)     &= \int \omega( (du - X \otimes \gamma) (\partial_s ), J_S (du - X \otimes \gamma) (\partial_s)ds dt\\
        &= \int \omega (du(\partial_s) - X\otimes \gamma(\partial_s), (du - X \otimes \gamma) \circ j (\partial_s)) ds dt \\
        &= \int_S \left(\omega(\partial_s u, \partial_t u) -  \omega(X \otimes \gamma(\partial_s), \partial_t u) - \omega(\partial_s u,X \otimes \gamma(\partial_t)) \right)dsdt\\
        &= \int_S \left( u^*\omega - (dH(\partial_s u) \gamma(\partial_t) - dH(\partial_t u) \gamma (\partial_s)) dsdt \right) \\
        &= \int_S u^*\omega - d(u^*H) \gamma,
    \end{split}
\end{equation}
a version of the energy identity for $J$-holomorphic curves. The {\bf
topological energy} of $u$ is defined as \begin{equation}
    E_{top}(u) := \int_S u^*\omega - \int_S d(u^*(H_S) \cdot \gamma). \label{topologicalenergy}
\end{equation}
Since $\gamma$ is sub-closed and $H_S$ is positive,
\begin{equation}\label{energyinequality}
    0 \leq E_{geo}(u) \leq E_{top}(u).
\end{equation}
Noting that
\begin{equation}
    \e_k^*(u^*H_S \gamma) = w_k \cdot (H^{w_k} + C_k) dt
\end{equation}
on strip-like ends and
\begin{equation}
    \delta_l^*(u^*H_S \gamma) = (w_k \cdot H^{v_l} + F_T^{v_l}) dt
\end{equation}
on cylindrical ends, we apply Stokes' theorem to (\ref{topologicalenergy}) to conclude:
\begin{equation} \label{stokesenergy}
        E_{top}(u) = (\sum_{y \in \vec{y}_{out}}
        \mc{A}(y) - \sum_{y \in \vec{y}_{in}} \mc{A}(y)) + (\sum_{x \in
        \vec{x}_{out}} \mc{A}(x) - \sum_{x \in \vec{x}_{in}} \mc{A}(x)).
\end{equation}
\begin{prop}\label{finitecorollary}
    For any $x_{in}$, $y_{in}$, there are finitely many choices of $x_{out}$,
    $y_{out}$ such that there is a solution $u$ to the relevant Floer's
    equation.  
\end{prop}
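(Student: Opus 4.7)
The plan is to extract this proposition from the energy identity (\ref{stokesenergy}) together with the non-negativity (\ref{energyinequality}) and Lemma \ref{negativeaction}, without invoking the full $C^0$-bound of Theorem \ref{c0bounds}. Since the inputs $\vec{x}_{in}, \vec{y}_{in}$ are fixed, the quantity $A_{in} := \sum_{x \in \vec{x}_{in}} \mc{A}(x) + \sum_{y \in \vec{y}_{in}} \mc{A}(y)$ is a fixed real number. Combining $E_{top}(u) \geq E_{geo}(u) \geq 0$ with (\ref{stokesenergy}), any solution $u$ with output asymptotics $(\vec{x}_{out}, \vec{y}_{out})$ must satisfy
\[
    \sum_{x \in \vec{x}_{out}} \mc{A}(x) \; + \; \sum_{y \in \vec{y}_{out}} \mc{A}(y) \; \geq \; A_{in}.
\]

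Next, I would establish a uniform upper bound on individual output actions. The weights $w_k^-$ and $v_l^-$ attached to output strip-like and cylindrical ends of $S$ are fixed by the chosen Floer datum, and the perturbation constants $C_k$ appearing in $F_S$ on each end are similarly fixed. Inspecting the explicit expressions in Lemma \ref{negativeaction}, at each output puncture $\mc{A}(\cdot) \to -\infty$ as $\pi_r \to \infty$; by continuity on any compact region of $W$, the action of any chord/orbit of the given weight is bounded above by some constant $C_0$ depending only on $H, F_t, \phi_\epsilon$ and the finite set of output weights prescribed by $S$. Let $N := |\vec{x}_{out}| + |\vec{y}_{out}|$, which is fixed by $S$. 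Then the lower bound on the total output action forces
\[
    \mc{A}(o_i) \; \geq \; A_{in} - (N-1)\, C_0
\]
for each individual output asymptotic $o_i$.

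Finally, I would use Lemma \ref{negativeaction} in a quantitative form: the set of Hamiltonian chords (resp. orbits) of the relevant perturbed Hamiltonians with action $\geq B$ is contained in the compact region $\{\pi_r \leq R_B\}$ for some $R_B < \infty$. Since we have arranged at the outset that $\bar\theta$ is generic so that all Reeb chords and orbits are non-degenerate, and since non-degenerate critical points of the action functional are isolated, there are only finitely many chords in $\chi(L_i, L_j)$ (resp. orbits in $\mc{O}$) lying in any compact subset of $W$. Combining this with the previous step yields a finite list of candidates at each output puncture, and hence finitely many admissible tuples $(\vec{x}_{out}, \vec{y}_{out})$.

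The argument is essentially assembly-line given the preceding lemmas; the only step requiring care is the uniform upper bound $C_0$ on output actions, where one must verify that the finite collection of output weights and the boundedness hypothesis on $F_t$ (imposed just before Definition \ref{adaptedperturbation}) together ensure the constant really is independent of $u$. No use of the delicate Floer--Hofer--Cieliebak maximum principle is needed here; that is reserved for the $C^0$-bound portion of Theorem \ref{c0bounds}, which is logically a separate (and considerably harder) matter.
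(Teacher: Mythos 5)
Your proposal is correct and follows essentially the same route as the paper: the paper's proof is a one-line invocation of Lemma \ref{negativeaction}, the identity (\ref{stokesenergy}), and the positivity $E_{geo}(u)\geq 0$, concluding that all but finitely many output collections would force negative geometric energy. Your write-up simply fills in the implicit details (uniform upper bound on output actions, and finiteness of chords/orbits with action bounded below via non-degeneracy and confinement to a compact region), so it is a faithful elaboration rather than a different argument.
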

\begin{proof}
    By Lemma \ref{negativeaction} and (\ref{stokesenergy}),
    for fixed inputs $\vec{x}_{in}$, $\vec{y}_{in}$, for all but a finite selection
    of $\vec{x}_{out}$, $\vec{y}_{out}$, any $u$ satisfying Floer's equation
    with asymptotic conditions
    $(\vec{x}_{in},\vec{y}_{in},\vec{x}_{out},\vec{y}_{out})$ has negative
    $E_{geo}(u)$, which is impossible.
\end{proof}

Given a map 
\begin{equation}
    u: S \lra W
\end{equation}
as above, there is a notion of {\it intermediate action} of $u$ along a loop
$S^1 \hookrightarrow S$.  
\begin{defn}
    Define the {\bf intermediate action} of an embedded oriented loop $\mc{L} :
    S^1 \ra S$ to be: 
\begin{equation}
    \mc{A}(\mc{L}) : = -\int_{S^1} \mc{L}^*u^*\theta + \int_{S^1} \mc{L}^* (u^* (H_S\cdot \gamma))
\end{equation}
\end{defn}
\noindent A useful fact about intermediate action is:
\begin{lem} \label{intermediateactionbound}
There are constants $c_1$, $c_2$ depending on the input and output chords and
orbits such that 
\begin{equation}
    \mc{A}(\mc{L}) \in [c_1,c_2]
\end{equation}
for any embedded oriented loop $\mc{L}$.
\end{lem}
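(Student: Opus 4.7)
The plan is to exploit the fact that $S$ has genus zero, so the embedded oriented loop $\mc{L}$ separates $S$ into two subsurfaces $S_+$ and $S_-$ with common oriented boundary $\mc{L}$ (together with some subset of the boundary components and asymptotic ends of $S$). I will apply Stokes' theorem to the 2-form $u^*\omega - d(u^*H_S^{tot} \cdot \a_S)$ on $S_+$, and again (reversing orientation on $\mc{L}$) on $S_-$, to convert the intermediate action into the sum of a non-negative ``local energy'' term and a collection of boundary terms evaluable from the asymptotic data.

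First, I would re-derive the energy identity of (\ref{energyidentityfloer}) on a bordered subsurface rather than on all of $S$: namely, for $u$ solving Floer's equation,
\begin{equation*}
\int_{S_+} \bigl( u^*\omega - d(u^*H_S^{tot}) \wedge \a_S\bigr) = E_{geo}(u|_{S_+}) \geq 0,
\end{equation*}
and then observe that sub-closedness of $\a_S$ together with positivity of $H_S^{tot}$ implies that $-u^*H_S^{tot} \cdot d\a_S \geq 0$ pointwise, so
\begin{equation*}
\int_{S_+} \bigl( u^*\omega - d(u^*H_S^{tot}\cdot \a_S)\bigr) \geq E_{geo}(u|_{S_+}) \geq 0.
\end{equation*}
Applying Stokes' theorem to the left-hand side produces an integral of $u^*\theta - u^*H_S^{tot} \cdot \a_S$ over $\partial S_+$. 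This boundary decomposes into three types of pieces: (i) the loop $\mc{L}$ itself, contributing $-\mc{A}(\mc{L})$; (ii) arcs of $\partial S$ lying in $S_+$, on which $\a_S$ vanishes and $u^*\theta = df_{L}$ integrates to differences of the chosen primitives $f_L$ between consecutive boundary marked points; and (iii) the strip-like and cylindrical ends of $S$ lying in $S_+$, each contributing $\pm \mc{A}(x)$ or $\pm\mc{A}(y)$ by the computation done just before equation (\ref{stokesenergy}). Rearranging yields
\begin{equation*}
\mc{A}(\mc{L}) \leq \sum_{\text{ends in } S_+} (\pm)\mc{A}(\cdot) + \sum_{\partial S \cap S_+} \Delta f_L =: c_2,
\end{equation*}
a constant determined entirely by the fixed asymptotic data.

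Next, running exactly the same Stokes argument on $S_-$, now with $\mc{L}$ inheriting the opposite orientation as part of $\partial S_-$, produces an upper bound on $-\mc{A}(\mc{L})$, i.e.\ a lower bound $\mc{A}(\mc{L}) \geq c_1$, again depending only on the asymptotic data. Combining gives the stated two-sided estimate $\mc{A}(\mc{L}) \in [c_1, c_2]$.

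I expect the main technical nuisance, rather than a genuine obstacle, to be bookkeeping the contribution of the $S^1$-perturbation term $F_S$: by Definition \ref{adaptedperturbation}, $F_S$ is only $S^1$-symmetric on the complement of the $\delta$-collars, jumps by constants across collars, and is only weakly monotone. However, all of these features enter purely through $d(u^*H_S^{tot}\cdot\a_S)$ and get absorbed into boundary integrals, so they affect the constants $c_1, c_2$ but not the structure of the argument. A second point to be careful about is when $\mc{L}$ is null-homotopic or circles a single puncture, in which case one of $S_\pm$ contains no asymptotic ends; here the bound becomes trivial on one side but nontrivial on the other, and the two-sided bound still follows.
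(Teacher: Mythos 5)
Your proposal is correct and is essentially the paper's own argument: the paper likewise uses that a loop in the genus-zero surface $S$ separates it into two pieces, that the topological energy of $u$ restricted to each piece is non-negative, and that Stokes' theorem expresses each such energy as $\pm\mc{A}(\mc{L})$ plus actions of the asymptotic ends (and Lagrangian-boundary/primitive contributions), yielding the two-sided bound. The only cosmetic difference is that the paper also records the additivity bound $E_{top}(u|_{S_{in/out}})\leq E_{top}(u)$, which, as your version shows, is not strictly needed once non-negativity is used on both pieces.
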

\begin{proof}
    As $S$ is genus 0, any embedded loop $\mc{L}$ is separates $S$ into two
    regions, one $S_{in}$ that has outgoing boundary on $\mc{L}$ and one
    $S_{out}$ that has incoming boundary on $\mc{L}$. We note that topological
    energy $E_{top}(u)$ is non-negative on any sub-region of $S$ and additive,
    i.e.
    \begin{equation}
        E_{top}(u) = E_{top}(u|_{S_{in}}) + E_{top}(u|_{S_{out}}),\ E_{top}(u|_{S_{in}}), E_{top}(u|_{S_{out}}) \geq 0. 
    \end{equation}
    Hence
    \begin{equation}
        \begin{split}
            E_{top}(u|_{S_{out}}) &\leq E_{top}(u)\\
            E_{top}(u|_{S_{in}}) &\leq E_{top}(u).
        \end{split}
    \end{equation}
    Now each of $E_{top}(u|_{S_{out}})$, $E_{top}(u|_{S_{in}})$ can be
    expressed via Stokes theorem as the positive or negative action of $\mc{L}$
    respectively plus/minus actions of inputs and outputs on each of
    $S_{out}$/$S_{in}$, so we obtain upper and lower bounds for
    $\mc{A}(\mc{L})$ in terms of actions of inputs and outputs.
\end{proof}

Now, given a fixed Floer datum on $S$, we view $S$ as the union of two regions
with different Hamiltonian behavior. On the first region, there is a non-zero
time-dependent perturbation: 
\begin{defn}
    Define the {\bf cylindrical perturbed regions} $S^c$ of $S$ to be the union of the images of the 
    \begin{itemize}
        \item cylindrical ends 
            \[
            \delta_{\pm}^j: A_{\pm} \lra S,
            \] and
        \item finite cylinders
            \[
            \delta^r: [a_r,b_r] \times S^1 \lra S\]
    \end{itemize}
    in the strip and cylinder data chosen for $S$.
\end{defn}

Recall that the Floer datum on $S$ consists of $\delta$-bounded cylinder data,
and an $S^1$ perturbation adapted to $(F_T,\phi_\e)$, for some chosen $\delta
\ll 1$ and $\e \ll 1$, the sense of Definitions \ref{stripcylinderdata} and
\ref{adaptedperturbation}. This implies that on the $\bar{\delta}$-collar (see
Definition \ref{deltacollar}) 
\begin{equation}
    S^{\bar{\delta}}
\end{equation}
of $S$, with
\begin{equation}
    \bar{\delta} = \delta \cdot \epsilon,
\end{equation}
the perturbation $F_T$ is locally constant. 

\begin{defn}
    Define the {\bf unperturbed region} $S^u$ of $S$ to be the union of the
    complement of the cylindrical perturbed region $S \backslash S^c$ with the
    $\bar{\delta}$-collar $S^{\bar{\delta}}$. This is the region where the
    perturbation term $F_T$ is locally constant.
\end{defn}
\noindent The intersection of the two regions $S^u$ and $S^c$ is exactly the
$\bar{\delta}$ collar $S^{\bar{\delta}}$.

We now examine the function
\begin{equation}
    \rho = \pi_r \circ u
\end{equation}
on each of the two regions of $S$, $S^u$ and $S^c$. In the next two
sections, we prove the following claims:
\begin{prop}\label{unperturbedprop}
    A maximum principle holds for $\rho$ on the unperturbed region $S^u$.
\end{prop}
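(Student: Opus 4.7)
The plan is to run a variant of the maximum principle argument of Abouzaid--Seidel and Ritter, adapted from contact-type to rescaled contact-type almost complex structures and quadratic Hamiltonians. Restrict attention to a relatively compact subregion $\Omega \subset S^u$ on which $\rho := \pi_r \circ u \geq R$ for some $R \gg 1$; such a region lies entirely in the conical end, where $\psi^{a_S}$ acts by genuine translation and $H$ is exactly $r^2$. By definition of $S^u$, the perturbation $F_S$ is locally constant on $\Omega$, so $X_{H_S^{\mathrm{tot}}} = X_{H_S}$, and $H_S$ agrees with a conformal rescaling of $r^2$ (cf.\ Lemma~\ref{rescalinglemma}).

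The key computation is to evaluate $-d(d\rho\circ j)$ on $\Omega$. Using the rescaled contact type condition $-(a_S/r)\,\theta\circ J_S = dr$, one can re-express $d^c\rho := d\rho\circ j$ as a pullback expression in $\theta$. Substituting Floer's equation
\[
\partial_s u + J_S\partial_t u = X_{H_S}\otimes\alpha_S(\partial_s) + J_S X_{H_S}\otimes\alpha_S(\partial_t)
\]
and exploiting the identity $\theta(X_{H_S}) - H_S = H_S = r^2$ coming from the quadratic form of $H$ at infinity (compare \eqref{quadraticprop}), the cross terms involving $X_{H_S}$ combine into a non-negative contribution proportional to $H_S\cdot d\alpha_S$ plus a positive pointwise term of the form $(a_S/\rho)\cdot\|du - X\otimes\alpha_S\|^2$ from the energy density. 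The remaining differentials of $a_S$ and of the cutoff pieces in the rescaling can be bounded, giving a pointwise inequality of the form
\[
-d(d^c\rho) \;\geq\; -C\,d\rho\wedge(\text{bounded 1-form}) \cdot d\mathrm{vol}_S,
\]
after using $d\alpha_S\leq 0$ (sub-closedness) to ensure the Hamiltonian contribution has the correct sign. Rewritten in local conformal coordinates, this is a linear elliptic inequality $-\Delta_S\rho + b(z)\cdot\nabla\rho \leq 0$ with bounded coefficients on $\Omega$, so E.\ Hopf's strong maximum principle shows $\rho$ cannot attain a strict interior maximum in $\Omega$. Taking $R\to\infty$ yields the claim on all of $S^u$.

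The main obstacle is the careful bookkeeping of the lower-order terms arising from the variation of $a_S$ and from $\alpha_S$ only being sub-closed rather than closed: one must verify that these contributions are genuinely lower order relative to the positive terms coming from the $u^*\omega$ piece and the quadratic Hamiltonian, so that the resulting differential inequality is truly of maximum-principle type. This mirrors the delicate sign analysis of Floer--Hofer \cite{Floer:1994uq}, Cieliebak \cite{Cieliebak:1994fk} and Oancea \cite{Oancea:2008fk}; the only essential change from those references is the factor of $a_S/\rho$ rather than $1/\rho$ introduced by our rescaled contact-type convention, which is readily absorbed since $a_S$ is constant on strip and cylinder regions and bounded on any compact subsurface.
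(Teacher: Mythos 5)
Your interior argument follows essentially the same route as the paper: apply the (rescaled) contact-type condition $d\mc{S}\circ J = a_S\theta$ (the paper works with $\xi = \tfrac{1}{2}\rho^2$ rather than $\rho$ itself, which is cosmetic but streamlines the algebra), substitute Floer's equation, use $\theta(X) = 2H$ for the quadratic Hamiltonian on the end, exploit sub-closedness of $\a_S$ to get the Hamiltonian terms with the right sign, and land on a differential inequality of the form $\Delta\xi + v\,\partial_s\xi + w\,\partial_t\xi \geq 0$ with bounded coefficients. That part is sound and matches the paper's computation.

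However, you have not addressed the Lagrangian boundary of $S^u$, and this is a genuine gap. The region $S^u$ contains portions of $\partial S$ mapping (via $u$) to the cylindrical ends of the Lagrangians $L_j$, and the elliptic inequality alone only forbids strict \emph{interior} maxima of $\rho$; it says nothing a priori about a maximum sitting on the Lagrangian boundary, which is precisely where one cannot appeal to Hopf's theorem without further input. One must check that at such a boundary point the normal derivative of $\xi$ vanishes, i.e.\ that $d\xi = 0$ along $\partial S|_{S^u}\cap \rho^{-1}([R,\infty))$, so that a boundary maximum is still excluded by the (Neumann-type) maximum principle. This is where the exactness of the Lagrangians and the boundary condition $\a_S|_{\partial S} = 0$ enter in an essential way: writing $\partial_t\xi = d^c\xi(\partial_s) = -a_S\,\theta\circ(du - X\otimes\a_S)(\partial_s)$ in local coordinates with $\{t=0\} = \partial S$, the term $X\otimes\a_S(\partial_s)$ vanishes because $\a_S$ vanishes on $\partial S$, and $\theta(\partial_s u) = 0$ because $\partial_s u \in T(\psi^{a_S}L)$ and the primitives $f_L$ are constant (hence $\theta|_{L} = 0$) on the cylindrical ends. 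Without this verification the statement "$\rho$ cannot attain a strict maximum in $\Omega$" does not deliver the claimed bound, because $\Omega$ has boundary components lying on the Lagrangians rather than only on the $\bar\delta$-collar interface where Proposition~\ref{perturbedprop} takes over.
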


\begin{prop}\label{perturbedprop}
    On the portion of the cylindrical perturbed region $S^c$ outside the
    $\bar{\delta}$-collar
    \begin{equation}
        S^c \backslash S^{\bar{\delta}}
    \end{equation}
     there is an upper bound on $\rho$ in terms of the Floer data and
     asymptotic conditions.
\end{prop}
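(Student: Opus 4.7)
The plan is to adapt the classical ``no-escape'' maximum principle of Floer-Hofer \cite{Floer:1994uq} and Cieliebak \cite{Cieliebak:1994fk}, in the form used by Oancea \cite{Oancea:2008fk}, to our setting where cylindrical regions carrying bounded $S^1$-dependent Hamiltonian perturbations sit inside a larger Riemann surface $S$.

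First I will fix a cylindrical component $\kappa: I \times S^1 \to S$ of $S^c$, with associated weight $v_\kappa$. By Definition \ref{adaptedperturbation}, on the preimage of $S^c \setminus S^{\bar\delta}$ the Floer datum has the distinguished form $\kappa^*\alpha_S = v_\kappa\,dt$, $\kappa^*(H_S+F_S) = v_\kappa^{-2}(H + m_\kappa F_T)\circ\psi^{v_\kappa} + \mathrm{const}$, and $\kappa^* J_S = (\psi^{v_\kappa})^* J_t$; after Liouville rescaling (\ref{rescaling}) we may take $v_\kappa = 1$. The pulled-back Floer equation on $I \times S^1$ is then $s$-independent, and $F_T$ together with all its derivatives is uniformly bounded.

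Next, for each $s \in I$ consider the loop $\mc{L}_s(t) := u \circ \kappa(s,t)$ and its intermediate action $\mc{A}(\mc{L}_s)$. By Lemma \ref{intermediateactionbound}, $\mc{A}(\mc{L}_s) \in [c_1,c_2]$ uniformly in $s$, with $c_1,c_2$ depending only on asymptotic data. A fibrewise application of the energy identity (\ref{energyidentityfloer}), combined with the $s$-independence of the datum, gives
\begin{equation*}
\frac{d}{ds}\mc{A}(\mc{L}_s) \;=\; -\int_{S^1}\bigl|\partial_s u\bigr|^2_{J_S}\,dt \;\leq\; 0,
\end{equation*}
so $\mc{A}(\mc{L}_s)$ is monotone non-increasing.

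Finally, suppose $p = \kappa(s_0,t_0)$ realises $\rho(p) = R_0$ with $R_0$ large. Because the rescaled contact-type condition $-\tfrac{1}{r}\theta \circ J = dr$ controls $|\partial_t u|$ by $|d\rho|$ at large $\rho$, and because $H$ grows quadratically at infinity, the dominant contribution to $\mc{A}(\mc{L}_{s_0})$ is $\int_{S^1} H(u(s_0,t))\,dt$, which is bounded below by a multiple of $R_0^2$ times the length of the portion of $\mc{L}_{s_0}$ that remains near level $R_0$; the $-\int u^*\theta$ and $\int F_T\,dt$ terms are subdominant by the computation in the proof of Lemma \ref{negativeaction}. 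Combined with $\mc{A}(\mc{L}_{s_0}) \leq c_2$, this yields an a priori bound $R_0 \leq C$ with $C$ depending only on $c_2$, $\|F_T\|_{C^0}$, and the finite set of weights appearing in our Floer datum. The main obstacle is making this last step quantitative: one must verify that in rescaled contact-type coordinates the subleading $\theta$-term is absorbed into the quadratic Hamiltonian term with a constant independent of the cylinder, and one must treat finite cylinders (where both endpoints of $I$ contribute to the monotonicity estimate) uniformly with the semi-infinite ends so that the bound does not deteriorate with cylinder length.
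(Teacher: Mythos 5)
There is a genuine gap, and it sits in your final paragraph. A uniform bound $\mc{A}(\mc{L}_{s_0}) \le c_2$ on the intermediate action does \emph{not} force $\rho$ to be bounded along the loop, because the term $-\int_{S^1}\mc{L}_{s_0}^*u^*\theta$ is not subdominant for a general loop: the computation you cite from Lemma \ref{negativeaction} applies only to genuine Hamiltonian chords/orbits, where $u^*\theta = \theta(X)\,dt = 2r^2\,dt$, and there it shows precisely that the $\theta$-term \emph{beats} the Hamiltonian term, so that $\mc{A} \approx -r^2$. Consequently a loop that approximately follows the Hamiltonian flow at a very large level $R_0$ has intermediate action close to $-R_0^2$, which is perfectly compatible with $\mc{A}(\mc{L}_{s_0}) \le c_2$; your claimed lower bound of the action by a multiple of $R_0^2$ fails, and with it the a priori bound on $R_0$. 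Moreover, even a correct bound on one loop per window would not prove the proposition: you still need a mechanism to propagate control from isolated good loops to all of $S^c \setminus S^{\bar\delta}$.

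The argument the paper runs fills both holes. First, the action derivative identity $\partial_s \mc{A}(s) = -\lVert \partial_t u - X\rVert^2_{L^2} + \int_{S^1}\partial_s H\,dt$ together with monotonicity of the perturbation and the mean value theorem produces, in every $\eta$-window, a loop $s'$ with $\lVert \partial_t u(s',\cdot) - X\rVert^2_{L^2} \le K/\eta$; combining this with the action lower bound of Lemma \ref{intermediateactionbound}, rewritten via $\int x^*\theta = \int \langle JZ,\partial_t x - X\rangle_J + \int dH(Z)$ and $dH(Z)-H = r^2 + O(1)$, gives $\lVert \pi_r\circ u(s',\cdot)\rVert_{L^2}^2 \le N + C_0 \lVert \pi_r\circ u(s',\cdot)\rVert_{L^2}$, hence an $H^1$ and (by Sobolev embedding) a $C^0$ bound on those special loops only. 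Second, one shows that $\tilde\xi$ (a truncation of $\tfrac12\rho^2$) satisfies a differential inequality $\Delta\tilde\xi + \lambda\tilde\xi \ge -A$, derived from Floer's equation and the rescaled contact-type condition on $J_S$, and then the Krein--Rutman refinement of the maximum principle (Proposition \ref{dirichlet}, packaged as Proposition \ref{floerhoferprop8}) propagates the bound from the good loops to the entire cylinder minus a $2\eta$-collar, which for $\eta < \bar\delta/2$ gives the statement. Your monotonicity observation is consistent with the first step but is not leveraged into the needed $L^2$ control of $\partial_t u - X$, and the elliptic propagation step is absent entirely; without both, the proof does not close.
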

\begin{rem}
Note that we are only able to directly establish direct upper bounds for $\rho$
outside of a collar of $S^c$, and are forced to rely on the maximum principle
to deduce bounds for $\rho$ on the collar. This has to do with the technique
used to establish such bounds.  
\end{rem}

These results, along with Proposition \ref{finitecorollary} imply Theorem
\ref{c0bounds}.

\subsection{The unperturbed region}
In a portion of the unperturbed region $S^u$ mapping to the conical end of $W$,
the total Hamiltonian is given by $H_S$, a quadratic function, plus a locally
constant function $F_S$. Thus, the Hamiltonian vector field $X_S$ is equal to
\begin{equation}
    X_S = 2r \cdot R,
\end{equation} 
where $R$ is the Reeb flow on $\partial W$.  
In particular, 
\begin{equation}
    dr(X) = 0.
\end{equation}
Recall that on the conical end, our (surface-dependent) almost complex
structure $J_S$ satisfies 
\begin{equation}
    dr \circ J = \frac{a_S}{r} \cdot \theta
\end{equation}
for some positive rescaling function $a_S: S \ra [1,\infty)$. Namely, setting 
\begin{equation}
    \mc{S} := \frac{1}{2} r^2
\end{equation}
we have that
\begin{equation}
    d\mc{S} \circ J =  r dr \circ J = a_s \cdot \theta.
\end{equation}
Now, consider Floer's equation on the conical end
\begin{equation}
    J \circ (du - X \otimes \gamma) = (du - X\otimes \gamma) \circ j
\end{equation}
\noindent and apply $d\mc{S}$ to both sides, with $\xi = \mc{S} \circ u = \frac{1}{2}\rho^2$ to obtain
\begin{equation}
    \label{dcrhogamma}
    d^c \xi = -a_S\cdot ( \theta \circ (du - X \otimes \gamma))
\end{equation}
\noindent Differentiating once more, 
\begin{equation}
    \label{laprhogamma}
    \begin{split}
    d d^c \xi 
    =-a_S (u^* \omega - d (\theta(X) \cdot \gamma))  
    -da_S \wedge (\theta \circ (du - X \otimes \gamma)).
  \end{split}
\end{equation}
\noindent Substituting (\ref{dcrhogamma}) into (\ref{laprhogamma}), we finally
obtain the following second-order differential equation for $\xi$:
\begin{equation}
    d d^c \xi = -a_S (u^* \omega - d(\theta(X) \cdot \gamma)) +
    \frac{da_S}{a_S} \wedge d^c \xi
\end{equation}
On the cylindrical end, we have that up to a locally constant function
\begin{equation}
\theta(X)= 2r^2 = 2 H;
\end{equation}
hence
\begin{equation}
    \begin{split}
    u^*\omega - d(\theta(X) \cdot \gamma) &= u^* \omega - d(\theta(X)) \wedge \gamma - \theta(X) d\gamma \\
    &= u^* \omega - 2 d(u^*H)\wedge \gamma - \theta(X) d\gamma \\
    &= (u^* \omega - d(u^*H)\wedge \gamma) +  \theta(X) d\gamma - d(u^*H)\wedge \gamma \\
    &= (u^* \omega - d(u^*H) \wedge \gamma) - \theta(X) d\gamma - d (2\xi) \wedge \gamma.
\end{split}
\end{equation}
Thus, $\xi$ satisfies
\begin{equation}
    d d^c \xi + 2 a_S d \xi \wedge \gamma - \frac{da_S}{a_S}\wedge d^c \xi = -a_S ( u ^* \omega - d(u^*H) \wedge \gamma) + a_S \theta(X) d\gamma. 
\end{equation}
Note that by the energy identity (\ref{energyidentityfloer}) and the fact that
$\gamma$ is sub-closed,
\begin{equation}
    -a_S (u^* \omega - d(u^*H) \wedge \gamma) + a_S \theta(X) d\gamma \leq 0.
\end{equation}
Thus, $\xi$ satisfies an equation of the form
\begin{equation}
d d^c \xi + 2 a_S d \xi \wedge \gamma - \frac{da_S}{a_S}\wedge d^c \xi  \leq 0
\end{equation}
which in local coordinates $z = s+it$ looks like
\begin{equation}
    \Delta \xi + v(s,t) \partial_s \xi + w(s,t) \partial_t \xi \geq 0.
\end{equation}
for some functions $v$, $w$.  Such equations are known to satisfy the maximum
principle; see e.g.
\cite{Evans:2010fk}. 
To finally establish Proposition \ref{unperturbedprop}, we
must show that maxima of $\xi$ achieved along portions of the Lagrangian
boundary $\partial S|_{S^{u}}$ mapping to the cylindrical end also satisfy
\begin{equation}
    d \xi = 0
\end{equation}
hence are subject to the usual maximum principle. Pick local coordinates $z =
s+ it$ near a boundary point $p$ with boundary locally modeled by $\{t = 0\}$. 
For a boundary maximum,
\begin{equation}
    \partial_s \xi = 0.
\end{equation}
Using (\ref{dcrhogamma}) to calculate $\partial_t \xi$, we see that
\begin{equation}
    \begin{split}
     \partial_t \xi &= d^c \xi (\partial_s) \\
    &= -a_S \theta \circ (du - X \otimes \gamma) (\partial_s).
\end{split}
\end{equation}
Since $\gamma$ was chosen to equal zero on the boundary of $S$, we have that
$X \otimes \gamma(\partial_s) = 0$. Similarly, at our point $p$, $\partial_s u$
lies in the tangent space of an exact Lagrangian $L$ with chosen primitive $f_L$
vanishing on the cylindrical end. Thus $\theta(\partial_s u) = 0$. Putting
these together, 
\begin{equation}
    \partial_t \xi = 0
\end{equation}
as desired.

\subsection{A convexity argument for the unperturbed region}
Below we present an alternate convexity argument for the unperturbed region.
This section is not strictly necessary, but we have included it for its
potential usefulness. 

Let $C$ be the overall bound for $\rho = \pi_r \circ u$ on the perturbed region
and consider 
\begin{equation}
    \tilde{S} := \rho^{-1}([C,\infty)) \subset S^{u}
\end{equation}
$\tilde{S}$ splits as a disjoint union of surfaces $\bar{S}$ on which the total
Hamiltonian is equal to a quadratic Hamiltonian $r^2$ plus a constant term
$K$ (different surfaces have different constant term). On any such
region $\bar{S}$, note that there is a refinement of the basic
geometric/topological energy inequality as follows:
\begin{equation}\label{refinement}
    \begin{split}
        E_{top}(u) &\geq E_{geo}(u) +  \int_{\bar{S}} u^* H (-d\gamma) \\
        &\geq E_{geo}(u) + (C^2 + K) \int_{\bar{S}} (-d\gamma) \\
    &\geq (C^2 + K) \int_{\bar{S}} (-d\gamma),
\end{split}
\end{equation}
with equality if and only if $E_{geo}(u) = 0$.
The boundary of $\bar{S}$ splits as $\partial^n \bar{S}$, the portion mapping
via $u$ to $\partial W \times \{C\}$, $\partial^l \bar{S}$, the portion with
Lagrangian boundary, and $\partial^p \bar{S}$, the punctures. Suppose there are
boundary punctures $\{x_i\}$ in
$\partial^l \bar{S}$.
We calculate, 
\begin{equation}
        E_{top}(u|_{\bar{S}}) = \int_{\partial^n \bar{S}} (u^*\theta - u^*H \gamma) + \int_{\partial^l \bar{S}} (u^*\theta - u^*H \gamma) + \sum_i \mc{A}(x_i).
\end{equation}
Note first that $\theta$ restricted to the cylindrical end of any Lagrangian is
zero, and similarly for $\gamma$; implying the second term above vanishes.
Moreover, 
\begin{equation}
    \theta(X) = 2r^2 = 2(u^*H - K)
\end{equation}
and 
\begin{equation}
    u^*H|_{\partial^n \bar{S}} = C^2 + K;
\end{equation}
hence
\begin{equation}
    \begin{split}
    E_{top}(u|_{\bar{S}}) &= \int_{\partial^n \bar{S}} \theta \circ (du - X \otimes \gamma) + \int_{\partial^n \bar{S}} (u^*H - 2K) \gamma + \sum_i \mc{A}(x_i)\\
    &= \int_{\partial^n \bar{S}} \theta \circ (du - X \otimes \gamma) + \int_{\partial^n \bar{S}} (C^2 - K) \gamma + \sum_i \mc{A}(x_i).
\end{split}
\end{equation}
Following action arguments in \cite{Abouzaid:2010kx}*{Appendix B} and
\cite{Abouzaid:2010ly}*{Lemma 7.2}, we rewrite
\begin{equation}
    \begin{split}
        \int_{\partial^n \bar{S}} \theta \circ (du - X \otimes \gamma) &= \int_{\partial_n \bar{S}} \theta \circ (-J) (du - X \otimes \gamma) \circ j \\
        &= \int_{\partial^n \bar{S}} \frac{r}{a_S} dr \circ (du - X \otimes \gamma) \circ j \\
        &= \int_{\partial^n \bar{S}} -\frac{r}{a_S} dr \circ (du) \circ j.
    \end{split}
\end{equation} 
as $dr \circ X = dr \circ (2r \cdot R) = 0$ on $\bar{S}$. As in
\cite{Abouzaid:2010ly}*{Lemma 7.2}, note now that for a vector $\xi$ tangent to
$\partial^n \bar{S}$ with the positive boundary orientation, $j \xi$ points
inward. Apply $du$ and note that in order for $j \xi$ to point inward, $du
\circ (j \xi)$ must not decrease the $r$-coordinate. Namely $dr \circ du \circ
j(\xi) \geq 0$, and
\begin{equation}
    \int_{\partial^n \bar{S}} \theta \circ (du - X \otimes \gamma) \leq 0.
\end{equation}
Thus,
\begin{equation}
    E_{top}(u|_{\bar{S}}) \leq \int_{\partial^n \bar{S}} (C^2 - K) \gamma + \sum_i \mc{A}(x_i).
\end{equation}
Outside a sufficiently large compact set, actions are negative, so (increasing
$C$ if necessary)
\begin{equation} 
    \begin{split}
    E_{top}(u|_{\bar{S}}) &\leq \int_{\partial^n \bar{S}} (C^2 - K) \gamma \\
    & \leq (C^2-K)(\int_{\bar{S}} d \gamma - \int_{\partial\bar{S}^p} \gamma)\\
    & \leq (C^2-K) \int_{\bar{S}} (d \gamma- \sum_i w_i)\\
    &\leq (C^2 +K) \int_{\bar{S}} (-d \gamma).
\end{split}
\end{equation}
Along with the opposite inequality (\ref{refinement}), this implies that
$E_{geo}(u|_{\bar{S}}) = 0$. So $du$ must be a constant multiple of the Reeb
flow, which is possible only if the image of $u|_{\bar{S}}$ is contained in a
single level $\partial W \times \{C\}$.

\subsection{The perturbed cylindrical regions}

The starting point for this case is the following classical refinement of the
maximum principle for uniformly elliptic second order linear differential
operators associated with Dirichlet problems in bounded domains:
\begin{prop}[Compare \cite{Cieliebak:1994fk}*{Prop. 5.1}] \label{dirichlet}
    Let $L$ be a strongly positive second-order elliptic differential operator
    on a domain $\Omega$ (such as $-\Delta$ on a finite cylinder $[a,b]\times
    S^1$). Let $\overline{\lambda}$ be the
    smallest eigenvalue of $L$.  Then $\overline{\lambda}$ is positive.
    Moreover, for any positive $\lambda < \overline{\lambda}$, if $f: \Omega
    \ra \R$ is smooth and satisfies the following properties:
    \begin{equation}
        \begin{cases}
            Lf \geq \lambda f &\mathrm{in\ }\Omega\\
            f = 0  &\mathrm{on\ }\bd \Omega
        \end{cases}
    \end{equation}
    then $f \geq 0$ on $\Omega$.
\end{prop}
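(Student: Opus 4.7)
The plan is to combine the variational characterization of $\bar{\lambda}$ with a Krein--Rutman style positivity argument for the resolvent of $L-\lambda$.

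First I would establish that $\bar{\lambda} > 0$ and that it is attained by a strictly positive first eigenfunction $\phi \in C^\infty(\Omega) \cap C(\bar{\Omega})$ with $\phi|_{\bd \Omega} = 0$ and $\partial_\nu \phi < 0$ along $\bd \Omega$. Positivity of $\bar{\lambda}$ follows from strong positivity of $L$: the associated bilinear form $Q(u,u) = \int_\Omega u \cdot Lu$ is coercive on $H^1_0(\Omega)$ via integration by parts and the Poincar\'{e} inequality (using boundedness of $\Omega$), so the Rayleigh quotient $\inf_{u \in H^1_0(\Omega) \setminus \{0\}} Q(u,u)/\|u\|_{L^2}^2$ is strictly positive, and this infimum is exactly $\bar{\lambda}$. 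Strict positivity of $\phi$ then follows either from the observation that $|u|$ is again a minimizer of the Rayleigh quotient and from Hopf's lemma upgrading non-negativity to positivity, or more abstractly from the Krein--Rutman theorem applied to the compact positive operator $L^{-1}: L^2(\Omega) \to L^2(\Omega)$ furnished by Fredholm theory for elliptic boundary value problems.

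The key step is to prove that for any $0 < \lambda < \bar{\lambda}$, the operator $L - \lambda$ is invertible on $H^1_0(\Omega)$ with a pointwise positive Green's function $G_\lambda(x,y) > 0$. In the self-adjoint case this follows by expanding the resolvent as $(L - \lambda)^{-1} = \sum_{k \geq 0} \lambda^k L^{-(k+1)}$ and noting that each iterated inverse $L^{-k}$ has a positive kernel by induction; in general, it follows from the fact that the principal eigenvalue of $L - \lambda$ is $\bar{\lambda} - \lambda > 0$, placing us below the spectrum and inside the regime where the Perron--Frobenius / Krein--Rutman structure theorem guarantees positivity of the resolvent.

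Given this positivity, the conclusion is almost immediate: if $f$ satisfies $Lf \geq \lambda f$ in $\Omega$ with $f|_{\bd \Omega} = 0$, then $g := (L - \lambda)f \geq 0$ pointwise, and
\[
f = (L - \lambda)^{-1} g = \int_\Omega G_\lambda(x,y) \, g(y) \, dy \geq 0.
\]
The main obstacle is the positivity of $G_\lambda$ itself; this is precisely the refinement of the maximum principle cited from \cite{Cieliebak:1994fk}. An alternative route that avoids constructing $G_\lambda$ directly is to substitute $v := f/\phi$, derive an elliptic differential inequality for $v$ whose zeroth order term has coefficient $\lambda - \bar{\lambda} < 0$, apply the classical weak maximum principle to $-v$ on compactly contained subdomains, and handle the boundary behaviour (where $\phi$ vanishes) by replacing $f$ with $f + \epsilon \phi$ for $\epsilon > 0$ and letting $\epsilon \to 0$ after obtaining the estimate $f \geq -\epsilon \phi$ uniformly.
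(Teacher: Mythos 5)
Your argument is correct and follows essentially the same route as the paper, which does not prove the proposition directly but cites Theorems 4.3 and 4.4 of \cite{Amann:1976fk}, whose proof is exactly the combination you describe: Krein--Rutman applied to the compact positive resolvent to produce the principal eigenvalue/eigenfunction, together with the (refined) maximum principle giving positivity of $(L-\lambda)^{-1}$ for $0<\lambda<\bar{\lambda}$. Your self-adjoint Neumann-series observation is a clean way to see resolvent positivity in the case the paper actually needs ($-\Delta$ on a finite cylinder), so no further changes are required.
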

\begin{proof}
    The proof combines a theorem of Krein-Rutman with the maximum principle,
    and can be found in Theorems 4.3 and 4.4 of \cite{Amann:1976fk}.
\end{proof}
\noindent Using this Proposition we prove a variant of a result of
Floer-Hofer \cite{Floer:1994uq}*{Prop. 8}: 
\begin{prop}\label{floerhoferprop8}
Let $Z$ be a cylinder of the form $[c,d]_s \times S^1_t$, with $c\in
[-\infty,\infty)$, $d\in (\infty,\infty]$.
Let \begin{equation}
    g: Z \lra \R
\end{equation}
be a function satisfying the following two properties: 
\begin{enumerate}
    \item 
        For any $\eta \ll 1$, there exists a $c_\eta$ with the following
        property: On any $\eta$-width sub-cylinder 
        \begin{equation}(s,s+\eta) \times S^1 \subseteq Z \end{equation}
            there is a loop $\{s'\} \times S^1$ satisfying
\begin{equation} \label{actionloopbound}
            \sup_t [g(s',t)] < c_{\eta}.
        \end{equation}

    \item For some $\lambda > 0$ and (possibly negative) constant $A$, $g$ satisfies the following equation
        \begin{equation}
            \Delta g + \lambda g \geq A.
        \end{equation}
\end{enumerate}
Then for sufficiently small $\eta$, there is a constant $C(\lambda, A,
\eta)$ such that 
\begin{equation} \label{c0eqn}
    g(s,t) < C 
\end{equation}
everywhere except possibly outside a $2\eta$-collar of $Z$.
\end{prop}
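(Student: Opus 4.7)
The plan is to derive the bound by a finite-subcylinder comparison argument based on Proposition \ref{dirichlet}. First, I would fix $\eta$ small enough that any subcylinder of the form $[a,b] \times S^1$ with $b-a \leq 4\eta$ has smallest Dirichlet eigenvalue of $-\Delta$ strictly greater than $\lambda$. Since this spectrum is $\{(k\pi/(b-a))^2 + n^2 : k \geq 1,\ n \in \Z\}$, the bottom eigenvalue equals $(\pi/(b-a))^2 \geq (\pi/(4\eta))^2$, so taking $\eta < \pi/(4\sqrt{\lambda})$ suffices.

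Next, for any point $p = (s_0, t_0) \in Z$ with $s_0 - c \geq 2\eta$ and $d - s_0 \geq 2\eta$, I apply hypothesis (1) to the two disjoint $\eta$-width subcylinders $(s_0-2\eta, s_0-\eta) \times S^1$ and $(s_0+\eta, s_0+2\eta) \times S^1$. This produces loops $\{s_-\} \times S^1$ and $\{s_+\} \times S^1$ with $s_\pm$ lying in these respective intervals and with $g(s_\pm, t) < c_\eta$ for all $t$. Setting $\Omega := [s_-, s_+] \times S^1$, we have $p \in \Omega$, the width satisfies $2\eta < s_+ - s_- < 4\eta$, and $g \leq c_\eta$ on $\partial \Omega$. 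By the choice of $\eta$, the smallest Dirichlet eigenvalue $\bar{\lambda}(\Omega)$ exceeds $\lambda$.

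The differential inequality rewrites as $(-\Delta - \lambda) g \leq -A$ on $\Omega$. Because $\lambda < \bar{\lambda}(\Omega)$, the operator $-\Delta - \lambda$ with Dirichlet boundary conditions is invertible and its inverse is bounded by a constant depending only on $\lambda$ and $\Omega$ (hence only on $\lambda$ and $\eta$). Let $v \in C^\infty(\bar{\Omega})$ be the unique solution to
\begin{equation*}
    (-\Delta - \lambda) v \;=\; \max(-A, 0) \quad \text{on } \Omega, \qquad v|_{\partial\Omega} \;=\; c_\eta,
\end{equation*}
whose $L^\infty$ norm is then bounded by a constant $C(\lambda, A, \eta)$. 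The function $f := v - g$ satisfies $(-\Delta - \lambda)f \geq 0$ in $\Omega$ and $f \geq 0$ on $\partial\Omega$, so an inhomogeneous-boundary variant of Proposition \ref{dirichlet} yields $f \geq 0$ in $\Omega$, and in particular $g(p) \leq v(p) \leq C(\lambda, A, \eta)$.

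The main obstacle is justifying the inhomogeneous-boundary version of Proposition \ref{dirichlet} used in the last step: the stated proposition gives $f \geq 0$ only for zero Dirichlet data. This is standard, though, and follows by splitting $f = f_h + f_0$ where $f_h$ is the $(-\Delta - \lambda)$-harmonic lift of the boundary values of $f$ (nonnegative by the ordinary weak maximum principle, which is valid since $\lambda < \bar{\lambda}(\Omega)$) and $f_0$ has vanishing boundary values (nonnegative by Proposition \ref{dirichlet} proper). The collar restriction in the conclusion is intrinsic to this method: the comparison argument requires good boundary loops on both sides of $p$, which hypothesis (1) delivers only when $p$ is at least $2\eta$ away from each end of $Z$.
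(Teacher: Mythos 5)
Your argument is essentially the paper's: the same decomposition of $Z$ (away from a $2\eta$-collar) into subcylinders of width between $2\eta$ and $4\eta$ bounded by the ``good'' loops furnished by hypothesis (1), the same eigenvalue bound $\bar\lambda \geq (\pi/\mathrm{width})^2 > \lambda$ for $\eta$ small, and the same appeal to Proposition \ref{dirichlet}. The only real divergence is the comparison function: the paper writes down an explicit quadratic barrier $h(s) = (\lambda c_\eta + |A|)\bigl(\e_k^2 - (s-s_k'-\e_k)^2\bigr) + c_\eta$ and applies the refined maximum principle to $h - g$, which yields the bound $(\lambda c_\eta + |A|)\eta^2 + c_\eta$ directly, whereas you invoke abstract solvability of $(-\Delta-\lambda)v = \max(-A,0)$ with boundary value $c_\eta$ and assert a sup bound $\|v\|_{L^\infty} \leq C(\lambda,A,\eta)$ uniform over the family of domains $\Omega$. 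That assertion is the one under-justified step: invertibility below the principal eigenvalue gives an $L^2$ bound of size $(\bar\lambda-\lambda)^{-1}$, not a sup bound, and the uniformity in $\Omega$ needs an argument. It is easily repaired---by uniqueness $v$ is $t$-independent, hence an explicit solution of the ODE $-v''-\lambda v = \max(-A,0)$ with $v(s_\pm)=c_\eta$, bounded once $\eta \leq \pi/(8\sqrt\lambda)$ say; or simply use the paper's $h$ as a supersolution, at which point you have reproduced the paper's proof. One point in your favor: you explicitly handle the fact that Proposition \ref{dirichlet} is stated for zero boundary data, whereas the paper applies it to $-g+h$ which is only $\geq 0$ on the boundary; your splitting $f = f_h + f_0$ is a correct fix, with the caveat that the nonnegativity of $f_h$ is the generalized maximum principle valid below the principal Dirichlet eigenvalue (provable by the Rayleigh-quotient argument on the negativity set), not the ``ordinary'' weak maximum principle, since the zeroth-order coefficient $-\lambda$ has the wrong sign.
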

\begin{proof}
Starting from one end, partition $Z$ into a maximal collection of adjacent
$\eta$-sized cylinders $[s,s+\eta] \times S^1$---this covers all of $Z$
except a at most a portion of $Z$'s collar of width at most $\eta$ (when $Z$ is finite).
To each such sub-cylinder $Z_k = [s_k,s_{k}+\eta] \times S^1$ of $Z$
associate a number
\begin{equation}
    s_k' \in (s_k,s_k+\eta)
\end{equation}
satisfying (\ref{actionloopbound}). An adjacent pair of such $s_k'$, $s_{k+1}'$
satisfies 
\begin{equation}
    s_{k+1}' - s_k' < 2 \eta
\end{equation}
and
\begin{equation}
    \begin{split}
        g(s_k',t) &< c_{\eta} \\
        g(s_{k+1}',t) &< c_{\eta}.
    \end{split}
\end{equation}
We will now examine the new regions 
\begin{equation}
    Z_k' = [s_k',s_{k+1}'] \times S^1,
\end{equation}
which cover all of $Z$ except a portion of $Z$'s collar of width at most $2
\eta$. Let
\begin{equation}
    \e_k = \frac{1}{2}(s_{k+1}' - s_k')
\end{equation}
and consider the function
\begin{equation}
    h(s,t) := (\lambda c_{\eta} + |A|) (\e_k^2 - (s - s_k' - \e_k)^2) + c_{\eta}.
\end{equation}
$h(s,t)$ satisfies the following properties on $[s_k',s_{k+1}']$:
\begin{align}
    h(s,t) &\geq c_{\eta}.\\
    h(s,t) &\leq (\lambda c_{\eta} + |A|)\e_k^2 + c_{\eta} \leq (\lambda c_{\eta} + |A|)\eta^2 + c_{\eta}. 
\end{align}
Moreover, for $\eta$ chosen sufficiently small $( < \sqrt{\frac{1}{\lambda}})$,
\begin{equation}
    \begin{split}
    (-\Delta - \lambda) h(s,t) &= 2 (\lambda c_{\eta} + |A|) - \lambda h(s,t)\\
    &\geq (2 (\lambda c_{\eta} + |A|)) - \lambda [(\lambda c_{\eta} + |A|)\eta^2 + c_{\eta}] \\
    & \geq |A| (2 - \lambda \eta^2) + \lambda c_{\eta} (2 - \lambda \eta^2 - 1)\\
    & \geq |A|.
\end{split}
\end{equation}
Therefore, the function 
\begin{equation}
    -g(s,t) + h(s,t)
\end{equation}
satisfies the following two properties:  
\begin{align}
    -g(s,t) + h(s,t) &\geq 0 \textrm{ on } \partial C_k', \\
    (-\Delta - \lambda) (-g(s,t) + h(s,t)) & \geq 0 \textrm{ on } C_k'.
\end{align}
Now, the smallest eigenvalue of $-\Delta$ on $Z_k'$ subject to the boundary
condition of 0 on $\partial Z_k'$ can be explicitly calculated via Fourier
series to be 
\begin{equation}
    \bar{\lambda} = \frac{\pi^2}{4\e_k^2} \geq \frac{\pi^2}{4 \eta^2}.
\end{equation}
so for $\eta$ sufficiently small, $\lambda$ is smaller than $\bar{\lambda}$.
Thus Proposition \ref{dirichlet} applies, and we conclude that on all of
$Z_k'$:
\begin{equation}
    -g(s,t) + h(s,t) \geq 0;
\end{equation}
namely on $Z_k'$
\begin{equation}
    g(s,t) \leq h(s,t) \leq (\lambda c_{\eta} + |A|)\eta^2 + c_{\eta}.
\end{equation}
This final bound holds on every new cylinder $Z_k'$ and is independent of $k$. Since the cylinders $Z_k'$ cover all but a $2\eta$ collar of $Z$, we conclude the result.
\end{proof}

Returning to our main argument, let us recall that pulled back to a particular
cylinder $[c,d] \times S^1$ in the cylindrical region $S^{c}$ with associated
{\bf weight} $v$, our chosen Floer datum has the following form:
\begin{itemize}
    \item The {\bf sub-closed one-form} $\gamma$ is actually closed and equal
        to $v dt$.  
    \item The {\bf main Hamiltonian} $H_S$ is equal to $\frac{H
        \circ
        \psi^{v}}{v^2}$ 
    \item The {\bf rescaling function} $a_S$ is equal to the constant $v$.
    \item The {\bf almost-complex structure} $J_S = (\psi^{v})^* J_t$ is
        {\bf $v$-rescaled contact type}, e.g. on the conical end, 
        \begin{equation}
            dr \circ J = \frac{v}{r} \cdot \theta
        \end{equation}
    \item The {\bf perturbation term} $F_T$ is {\bf monotonic} in $s$, i.e.
        \begin{equation}
            \partial_s F_T \leq 0,
        \end{equation}
        and has norm and all derivatives bounded by constants independent of
        the particular cylinder.
\end{itemize}
In particular, we note that on any such region, the {\bf total Hamiltonian}
\begin{equation}
    H^{tot}_S: = H_S+F_S
\end{equation} is monotonic in $s$. Also, $u$ pulled back to any such region satisfies the
usual form of Floer's equation:
\begin{equation}\label{floercoordinates}
    \begin{split}
        (du - X \otimes (v \cdot dt))^{0,1} &= 0,\textrm{ i.e.}\\
        \partial_s u + J_t (\partial_t u -  X_{v\cdot (H^{tot}_S)}) &= 0.
    \end{split}
\end{equation}
Below, we will frequently suppress the weight $v$, building it into the total
Hamiltonian $\tilde{H}^{tot}_S = v\cdot H^{tot}_S$.
On a cylindrical region of $S$, we examine the function
\begin{equation}
    \xi := \frac{1}{2} \rho^2 = \frac{1}{2} (\pi_r \circ u)^2 : [c,d] \times S^1 \lra \R.
\end{equation}
Proposition \ref{perturbedprop} can now be refined as follows:
\begin{prop}
$\xi$ (almost) satisfies Conditions 1 and 2 from Proposition
\ref{floerhoferprop8}. Namely, there is a replacement $\tilde{\xi}$ satisfying
Conditions 1 and 2, with
\begin{equation} \label{xitilde}
    \begin{split}
    \xi &< \tilde{\xi} + C\\
    \tilde{\xi} &< \xi + C'.
\end{split}
\end{equation}
Choosing $\eta$ smaller than $\frac{\bar{\delta}}{2}$, we see that on all but a
$\bar{\delta}$ collar of $S$, $\tilde{\xi}$ and hence $\xi$ and $\rho$ are
absolutely bounded in terms of the Floer data and asymptotic conditions.
\end{prop}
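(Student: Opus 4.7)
The plan is to check Conditions 1 and 2 of Proposition~\ref{floerhoferprop8} separately, using the special form of the Floer datum on a cylindrical region (where $\gamma = v\,dt$, $a_S \equiv v$, and $J_t$ is $v$-rescaled contact type), and to absorb all the obstructions coming from $F_S$ into the passage from $\xi$ to $\tilde\xi$.

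First, for Condition 2, I would run a direct local computation of $\Delta \xi$ using Floer's equation~(\ref{floercoordinates}). Because $\gamma$ is closed and $a_S$ is constant on the cylinder, the general identity derived in the unperturbed argument collapses to
\begin{equation*}
  dd^c\xi \;=\; -v\bigl(u^*\omega - d(u^*\tilde H^{tot}_S)\wedge \gamma\bigr) \;+\; v\,\theta(X_S)\,d\gamma \;-\; (\text{corrections from }F_S).
\end{equation*}
Because $H_S$ is quadratic in $r$ (on the conical end $\theta(X_{H_S})=2H_S$), the $H_S$-part reproduces a term of the form $-2\lambda\xi$ for $\lambda = v^2$ up to bounded error, while $F_T$ and its derivatives are absolutely bounded by hypothesis and thus give a bounded term $A$. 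Thus in coordinates $\xi$ obeys
\begin{equation*}
  \Delta\xi + \lambda\xi \;\geq\; A \;+\; E(s,t),
\end{equation*}
where $E(s,t)$ is a bounded error generated by $F_S$ (and by $\partial_s F_S \leq 0$, which has a favorable sign). Setting $\tilde\xi := \xi + G$ for an explicit bounded correction $G$ chosen to absorb $E$ (using monotonicity of $F_S$ in $s$ exactly as in Cieliebak--Floer--Hofer and Oancea), one obtains the clean inequality $\Delta\tilde\xi + \lambda\tilde\xi \geq A'$, and $|\xi - \tilde\xi|$ is globally bounded by a constant depending only on the Floer datum, giving~(\ref{xitilde}).

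For Condition 1, the input is the intermediate action bound in Lemma~\ref{intermediateactionbound}: on any loop $\mc{L}=\{s\}\times S^1$ in the cylindrical region,
\begin{equation*}
  \mc{A}(\{s\}\times S^1) \;=\; \int_0^1\!\Bigl(-\theta(\partial_t u) + v\,\tilde H^{tot}_S(u)\Bigr)\,dt \;\in\; [c_1,c_2].
\end{equation*}
Using Floer's equation to rewrite $\partial_t u = -J_t \partial_s u + X_{\tilde H^{tot}_S}$ and the rescaled contact-type identity $\theta\circ J = -\tfrac{r}{v}\,dr$, the first integrand becomes $\tfrac{r}{v}\partial_s\rho - \theta(X_{\tilde H^{tot}_S})$, and the second is controlled by $2v\,\xi$ modulo a bounded $F_S$-contribution. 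Hence on every loop,
\begin{equation*}
  \int_0^1 \xi(s,t)\,dt \;\leq\; C_0 + \tfrac{1}{2v^2}\!\int_0^1 r\,\partial_s\rho\,dt,
\end{equation*}
with $C_0$ depending only on $[c_1,c_2]$ and $\|F_T\|$. On a window of width $\eta$, integrating this bound in $s$ and using the fundamental theorem of calculus in $s$ on the $r\,\partial_s\rho$ term (which produces a telescoping boundary contribution bounded by asymptotic data) shows that the $s$-average of $\int_0^1\tilde\xi(s,t)\,dt$ on the window is bounded by some $\tilde c_\eta$. By the mean-value theorem in $s$, some $s'$ in the window satisfies $\int_0^1\tilde\xi(s',t)\,dt \leq \tilde c_\eta$, and standard elliptic estimates for $\tilde\xi$ (which is subharmonic up to a bounded term, by Condition 2) then upgrade this $L^1_t$-bound to a pointwise bound $\sup_t\tilde\xi(s',t)<c_\eta$.

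The main obstacle is this final step, namely extracting a pointwise bound on a whole loop from the integral action bound together with a second-order inequality; the monotonicity $\partial_s F_T \leq 0$ and the bounded-derivative hypothesis on $F_T$ are essential for the elliptic regularity estimate and for the sign of the $F_S$-correction defining $\tilde\xi$. Once both conditions are established, applying Proposition~\ref{floerhoferprop8} with $\eta < \bar\delta/2$ gives an absolute bound on $\tilde\xi$ outside a $2\eta$-, hence a $\bar\delta$-, collar of each cylindrical region, and~(\ref{xitilde}) transports the bound back to $\xi$ and therefore to $\rho = \sqrt{2\xi}$, completing the proof.
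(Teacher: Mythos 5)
Your Condition 2 sketch is broadly in the right spirit (a pointwise computation of $\Delta\xi$ from Floer's equation on the conical end), but your Condition 1 argument has a genuine gap, and your reason for introducing $\tilde{\xi}$ misses its actual purpose. On Condition 1: after rewriting the action integrand, the term you propose to handle by the fundamental theorem of calculus is $r\,\partial_s\rho = \partial_s\xi$, so integrating over the window $[s,s+\eta]\times S^1$ produces boundary terms $\int_0^1 \xi(s+\eta,t)\,dt - \int_0^1\xi(s,t)\,dt$. For an interior window these are \emph{not} controlled by asymptotic data -- they are exactly the quantity you are trying to bound, so the telescoping step is circular. Moreover, even granting an averaged bound, your final upgrade from an $L^1_t$ bound on a single circle to a sup bound via ``standard elliptic estimates'' does not work as stated: mean-value/local-boundedness estimates for subsolutions require control over two-dimensional neighborhoods, not over a one-dimensional slice (and if you did use the 2D window you would essentially be re-proving Proposition \ref{floerhoferprop8} rather than verifying its hypothesis). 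The paper's mechanism is different: apply the mean value theorem not to the action but to its $s$-derivative, using $E_{top}(u|_{\bar Z})=\mc{A}(s_0)-\mc{A}(s_0+\eta)\le K$ together with the identity $\partial_s\mc{A}(s) = -\|\partial_t u(s,\cdot)-X\|^2 + \int_{S^1}\partial_s H^{tot}\,dt$ and the monotonicity $\partial_s H^{tot}\le 0$ (this is where the monotonicity of $F_S$ actually enters). This yields a loop $x_{s'}$ with $\|\partial_t x_{s'}-X\|_{L^2}^2\le K/\eta$; combined with the intermediate action lower bound one gets $\|\pi_r\circ x_{s'}\|_{L^2}^2 \le N + C_0\|\pi_r\circ x_{s'}\|_{L^2}$, hence an $H^1$ bound on the loop, and the pointwise bound then follows from the compact Sobolev embedding $H^1(S^1,W)\subset C^0(S^1,W)$ -- a one-dimensional embedding, not an elliptic estimate.

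On Condition 2 and the replacement: the bounded perturbation $F_T$ (with bounded derivatives) is absorbed directly into the constants in the estimates $|X|\le C(1+\sqrt{\mc S})$, $|\partial_s X|\le C(1+\sqrt{\mc S})$, $|\nabla_Y X|\le C|Y|$; no additive correction $\tilde\xi=\xi+G$ is needed for that, and your claim that the quadratic Hamiltonian produces a clean $-2\lambda\xi$ term is not how the inequality arises (it comes from completing squares against cross terms of size $(1+\sqrt\xi)|du|$ and $(1+\sqrt\xi)^2$). The real reason a replacement is needed is that the whole computation uses the conical-end structure ($\theta(X)=2r^2$, rescaled contact type $J$), so the inequality $\Delta\xi+\lambda\xi\ge -A$ is only derived where $u$ maps into $\partial W\times[1,\infty)$; the paper sets $\tilde\xi=\varphi\circ\xi$ with $\varphi\equiv 0$ on $[0,1]$, $\varphi'\equiv 1$ for large argument and $\varphi''\ge 0$, so that the inequality extends trivially over the compact region $\bar W$ while $|\xi-\tilde\xi|$ stays uniformly bounded. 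Your proposal does not address the compact region at all, so as written Condition 2 is only verified on part of each cylinder, which is insufficient for applying Proposition \ref{floerhoferprop8}.
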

In order to better examine intermediate actions along loops $\mc{L}: S^1
\hookrightarrow S^c$ in the cylindrical region, we will define define relevant
function spaces for maps $x: S^1 \ra W$, following \cite{Oancea:2008fk}*{\S 4}.
First, define the following continuous projection to the compact region: 
\begin{align}
    \pi_{in}: W &\lra \bar{W}\\
    p &\longmapsto 
    \begin{cases} 
        p & p \in \bar{W} \\ 
        \bar{p} & p = (\bar{p},r) \in \bd \bar{W} \times [1,\infty).
    \end{cases}
\end{align}
Given $x: S^1 \ra W$, denote by $\bar{x}$ the composition 
\begin{equation}
    \bar{x}:= \pi_{in} \circ x.
\end{equation}
\begin{defn}
Define the following function spaces:
\begin{align}
    L^2(S^1,W) &:= \{x: S^1 \ra W\mathrm{\ measurable:\ } \pi_r \circ x \in L^2(S^1,\R)\}\\
    \label{h1norm} H^1(S^1,W) &:= \{x \in L^2(S^1,W)\mathrm{\ :\ } \dot{\bar{x}} \in
    L^2(x^*T\bar{W}), (\pi_r \circ x)' \in L^2(S^1,\R)\}.
\end{align}
Here, measurability is with regards to some metric $g = \omega (\cdot, J
\cdot)$, and independent of choices.  Also, for a smooth map $x$,
$\dot{\bar{x}}$ is well-defined as a distribution given an embedding of
$\bar{W}$ into Euclidean space. The requirement that it be $L^2(x^*T\bar{W})$ is
independent of embedding, though we declare that for some fixed embedding that
$||\dot{\bar{x}}||^2_{L^2}$ be the restriction of the usual Euclidean $L^2$
norm. We define the norms associated to these spaces as follows:
\begin{align}
    ||x||_{L^2}^2 &:= ||\pi_r \circ x||^2_{L^2} \\
    ||x||_{H^1}^2 &:= ||\pi_r \circ x||^2_{H^1} + ||\dot{\bar{x}}||^2_{L^2}.
\end{align}
\end{defn}
\begin{defn}
Define the normed space
\begin{align}
    C^0(S^1,W) 
\end{align}
of continuous functions from $S^1$ to $W$ as follows: The norm of a continuous
map $x$ is given by by choosing an embedding of $\bar{W}$ into Euclidean space
and restricting the standard Euclidean sup norm on $\bar{x}$, plus the sup norm
of $\pi_r \circ x$. This space is independent of embedding of $\bar{W}$.  
\end{defn}
\begin{lem}[Sobolev embedding] There is a compact embedding 
    \begin{equation} \label{sobolevembedding}
    H^1(S^1,W) \subset C^0(S^1,W).
    \end{equation}
\end{lem}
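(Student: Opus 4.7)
The plan is to reduce the compactness of the embedding $H^1(S^1,W) \hookrightarrow C^0(S^1,W)$ to the classical Sobolev embedding $H^1(S^1,\mathbb{R}^N) \subset\subset C^0(S^1,\mathbb{R}^N)$ by splitting any $x \in H^1(S^1,W)$ into its cylindrical-radial component $\pi_r \circ x$ and its projected component $\bar{x} = \pi_{in} \circ x$, which takes values in the compact region $\bar{W}$. Both norms featured in (\ref{h1norm}) are already of this split form, so this reduction is built into the definition.

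Concretely, suppose $\{x_n\}$ is a bounded sequence in $H^1(S^1,W)$. First, $\{\pi_r \circ x_n\}$ is bounded in $H^1(S^1,\mathbb{R})$, so by the classical one-dimensional Sobolev embedding (which follows from the Hölder estimate $|f(s)-f(t)| \leq |s-t|^{1/2}\, \|f'\|_{L^2}$ together with Arzelà--Ascoli) we can pass to a subsequence along which $\pi_r \circ x_n$ converges in $C^0(S^1,\mathbb{R})$. Next, fix a smooth embedding $\iota: \bar{W} \hookrightarrow \mathbb{R}^N$ used in defining the norms. Since $\bar{W}$ is compact, $\{\iota \circ \bar{x}_n\}$ is uniformly bounded in $L^\infty(S^1,\mathbb{R}^N)$, and by hypothesis the distributional derivatives are bounded in $L^2$; hence the sequence is bounded in $H^1(S^1,\mathbb{R}^N)$. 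Applying classical compactness again, after passing to a further subsequence, $\iota \circ \bar{x}_n$ converges in $C^0(S^1,\mathbb{R}^N)$, and since $\iota(\bar{W})$ is closed in $\mathbb{R}^N$ and $\iota$ is an embedding the limit descends to a continuous map into $\bar{W}$. Combining the two subsequences yields a subsequence of $\{x_n\}$ that converges in the sum norm on $C^0(S^1,W)$.

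Two small points deserve attention but should not be serious obstacles. First, one must check that the norms on $H^1(S^1,W)$ and $C^0(S^1,W)$ are in fact independent of the choice of Euclidean embedding $\iota$, up to equivalence; this follows from compactness of $\bar{W}$, which makes any two smooth embeddings bi-Lipschitz equivalent on $\bar{W}$, and from the fact that the $\pi_r$ contribution is intrinsic. Second, one should verify that the limiting maps are genuinely in the stated spaces, i.e.\ continuous as maps into $W$ (not just $\bar{W}$ and $[0,\infty)$ separately); this follows immediately because a pair $(\bar{x}(t), r(t))$ with $r$ continuous and $\bar{x}$ continuous gives a continuous $W$-valued map by the product structure on the end $\partial\bar{W} \times [1,\infty)$ glued to the interior.

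The main (mild) obstacle is simply keeping the splitting $x = (\bar{x}, \pi_r \circ x)$ compatible with the non-linear target: one is not a priori free to add or linearly interpolate between maps into $W$, so compactness statements must be formulated in terms of the auxiliary Euclidean embedding and then pulled back. Once the domain is recognized as $S^1$ (where $H^1$ functions are automatically Hölder continuous) and the target is recognized as having a compact ``base'' $\bar{W}$ and a one-dimensional ``fiber'' $[1,\infty)$, the argument is entirely standard and parallels the setup in \cite{Oancea:2008fk}*{\S 4}.
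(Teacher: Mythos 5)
Your argument is correct and is essentially the paper's own proof, which simply cites \cite{Oancea:2008fk}*{Lemma 4.7} and sketches the same reduction: the $H^1$ bound built into the norm controls $\pi_r\circ x$ and $\bar{x}$ separately, so one applies the classical one-dimensional Sobolev/Arzel\`a--Ascoli compactness to the radial component and, via a Euclidean embedding of the compact piece $\bar{W}$, to the projected component. The only loose phrase is your claim that any continuous pair $(\bar{x},r)$ defines a map into $W$ --- one also needs the compatibility that $r=\pi_r(\bar{x})$ where $r\le 1$ and $\bar{x}\in\partial\bar{W}$ where $r\ge 1$ --- but since this holds for each $x_n$ and is a closed condition under uniform convergence, the limit is indeed a genuine element of $C^0(S^1,W)$ and your proof goes through.
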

\begin{proof} See \cite{Oancea:2008fk}*{Lemma 4.7}. The main point is to
    leverage the known compact embedding $H^1(S^1,\R) \subset C^0(S^1,\R)$ to
    ensure that any sequence $f_k$ bounded in $H^1(S^1,W)$ takes values in a
    compact set. Thus, one can apply Sobolev embedding for maps from $S^1$ to a
    compact target manifold.
\end{proof}
\noindent By definition, we have that
\begin{equation} \label{c0normbound}
    \textrm{if } f \in C^0(S^1,W)\textrm{ then } \pi_r \circ f \textrm{ is bounded.}
\end{equation}
Given an almost complex structure $J$, we recall the associated metric
\begin{equation}
\langle X,Y \rangle_J = \omega(X,JY).
\end{equation}
For a smooth map $x: S^1 \lra W$ and a $S^1$ dependent complex structure $J_t$
we use this metric to define the following $L^2$ norm
\begin{equation}
    ||\dot{x}||^2_{L^2} := \int_{S^1} \omega(\dot{x}(t),J_t \dot{x}(t)) dt.
\end{equation}
The relation to the function spaces defined earlier is as follows:
\begin{claim} \label{l2relations}
    For $J_t$ of rescaled contact type, the $L^2$ norm $||\dot{x}||^2_{L^2}$
    bounds $||\dot{\bar{x}}||^2_{L^2}$ and $||(\pi_r \circ x)'||^2_{L^2}$.
\end{claim}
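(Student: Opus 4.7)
The claim is a pair of pointwise (integrated over $S^1$) estimates: one saying that the time-derivative of $\bar{x}=\pi_{in}\circ x$, thought of as a curve in the compact manifold $\bar{W}$ embedded in Euclidean space, has $L^2$-norm dominated by $\|\dot{x}\|_{L^2}^2 = \int_{S^1}\omega(\dot{x},J_t\dot{x})\,dt$, and the other saying the same for the scalar function $\pi_r\circ x$. Since both sides are integrals of pointwise expressions in $\dot{x}(t)$ evaluated in the (time-dependent) Riemannian metric $g_{J_t}=\omega(\cdot,J_t\cdot)$, my plan is to reduce to a pointwise inequality
\[
|d\pi_{in}(v)|_{\mathrm{Eucl}}^{2}+|dr(v)|^2 \;\leq\; C\,\omega(v,J_t v)
\]
valid uniformly in $t$ and in the basepoint of $v\in T_pW$, with a constant $C$ depending only on the fixed $J_t\in \mathcal{J}_1(M)$ and the embedding $\bar W\hookrightarrow\mathbb R^N$.

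First I would handle the compact region $\{p\in W: \pi_r(p)\leq R_0\}$ for some fixed $R_0$. Since $J_t$ varies in a compact family of almost complex structures there, $g_{J_t}$ is uniformly equivalent to the pullback of the Euclidean metric on $\mathbb R^N$ via the embedding of $\bar{W}$ (and to the standard metric on $[0,R_0]$ via $\pi_r$), so the pointwise inequality follows immediately.

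The substantive step is the estimate on the cylindrical end $\partial\bar{W}\times[1,\infty)$, where neither the metric $g_{J_t}$ nor the function $\pi_r$ is bounded. Here I decompose any tangent vector as $v=a\,\partial_r+bR+\xi'$ with $\xi'\in\ker\bar\theta$, and I exploit the rescaled contact-type condition: if $J_t\in\mathcal{J}_1(M)$ then $-\tfrac{1}{r}\theta\circ J_t = dr$. Applying this to $\dot{x}$ gives
\[
(\pi_r\circ x)' \;=\; dr(\dot{x}) \;=\; -\tfrac{1}{r}\theta(J_t\dot{x}) \;=\; -\omega(\partial_r,J_t\dot{x}) \;=\; -\langle \partial_r,\dot{x}\rangle_{J_t},
\]
so Cauchy--Schwarz in the $J_t$-metric yields
\[
\bigl|(\pi_r\circ x)'\bigr|^2 \;\leq\; |\partial_r|_{J_t}^{2}\cdot|\dot{x}|_{J_t}^{2}.
\]
It then suffices to observe that the contact-type condition, together with $J_t^2=-\mathrm{id}$ and compatibility with $\omega=d(r\bar\theta)$, forces $J_t\partial_r=R+w$ for some $w\in\xi$, and a direct computation of $\omega(\partial_r,J_t\partial_r)=d(r\bar\theta)(\partial_r,R+w)=\bar\theta(R)=1$ gives $|\partial_r|_{J_t}^{2}=1$ uniformly. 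This yields the pointwise bound for the $r$-derivative.

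For the bound on $\dot{\bar{x}}=d\pi_{in}(\dot{x})$, I note that on the cylindrical end $\pi_{in}$ sends $(r,y)\mapsto(1,y)$, so $d\pi_{in}(\dot{x})$ is the projection onto $T\partial\bar W$ of the $\partial\bar W$-component of $\dot{x}$, namely $bR+\xi'$. Since the Euclidean norm on $\bar W\subset\mathbb R^N$ is equivalent, via the embedding, to the fixed contact metric on $\partial\bar W$, it suffices to bound $|bR+\xi'|_{g_{\bar\theta}}^{2}$ by $\omega(\dot{x},J_t\dot{x})$. Expanding $\omega(\dot{x},J_t\dot{x})$ in the decomposition $\dot{x}=a\partial_r+bR+\xi'$ and using the explicit form of $J_t$ on each summand produced above, the cross-terms involving $a$ combine into a nonnegative quadratic form in $(a,b,\xi')$ whose restriction to $\{a=0\}$ is uniformly equivalent to the fixed contact metric. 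The required pointwise inequality then follows from elementary linear algebra (completing the square in $a$).  The technical nuisance, which I expect to be the main obstacle, is bookkeeping the uniformity in $t$ and along $[1,\infty)$: one must check that the coefficients of this quadratic form, determined by the $t$-dependent $J_t$ and by the rescaling $\psi^r$, remain bounded as $r\to\infty$. This is built into the definition of $\mathcal{J}_1(M)$ together with the requirement that the family $J_t$ is defined on the contact boundary $\partial\bar W$ and transported outward by the Liouville flow.
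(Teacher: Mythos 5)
Your plan is essentially the paper's argument: split the tangent space on the cylindrical end, use the rescaled contact-type identity for the $dr$-component, and dispose of the compact region by compactness; the treatment of $\|(\pi_r\circ x)'\|_{L^2}$ via $dr(\dot x)=\pm\tfrac{1}{r}\theta(J_t\dot x)=\pm\langle\partial_r,\dot x\rangle_{J_t}$, Cauchy--Schwarz, and the level-independence of $|\partial_r|_{J_t}$ is correct and matches the paper in content. Two points on the $\dot{\bar x}$ step, though. First, the assertion that the quadratic form restricted to $\{a=0\}$ is \emph{uniformly equivalent} to the fixed contact metric is false: on the contact hyperplane $\xi$ the metric $\omega(\cdot,J_t\cdot)$ at level $r$ scales like $r$ times a level-one quantity, so there is no upper bound uniform in $r$. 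What is true, and all you need, is the one-sided bound: since $r\geq 1$, the level-$r$ metric on $T(\partial\bar W\times\{r\})$ dominates the level-one metric (it is constant in $r$ on the Reeb direction and grows linearly in $r$ on $\xi$), which is exactly how the paper argues that $|\dot{\bar x}|_{\bar x(t)}\leq|\dot{\bar x}|_{x(t)}\leq \mathrm{const}\cdot|\dot x|$. Second, the completing-the-square in $a$ is unnecessary: the rescaled contact-type condition already forces $\partial_r$ to be $\langle\cdot,\cdot\rangle_{J_t}$-orthogonal to the level sets, since for $w$ tangent to $\partial\bar W\times\{r\}$ one has $\langle\partial_r,w\rangle_{J_t}=\omega(\partial_r,J_tw)=\bar\theta(J_tw)=\pm\,r\,dr(w)=0$, so there are no cross terms between $a$ and $(b,\xi')$ to eliminate. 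With "equivalent" weakened to "bounded below by a fixed multiple of," your outline goes through and coincides with the paper's proof.
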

\begin{proof} 
    Suppose first that $x$ maps entirely to the cylindrical end of $W$. Then
    $\bar{x}$ is smooth and the norm of $\dot{\bar{x}}$ given by a choice of
    embedding into Euclidean space is equivalent to the one coming from
    $\omega(\cdot, J\cdot)$ on $\bar{x}$, for any $J$. Now, for any $J$ of
    rescaled contact type, $\partial_r$ and $T\partial W$ are $\langle\cdot,
    \cdot\rangle_J$ orthogonal, implying that
\begin{equation}
    \begin{split}
        |\dot{x}|^2 &= |\left(\dot{\bar{x}},(\pi_r \circ x)'\right)|^2 \\
        &= |\dot{\bar{x}}|_{x(t)}^2 + |(\pi_r \circ x)'|^2.
\end{split}
\end{equation} 
Above, the notation $|\dot{\bar{x}}|_{x(t)}^2$ refers to the fact that we are
taking the norm of $\dot{\bar{x}}$ with respect to the metric at level $\pi_r
\circ x$. The norm $\langle \cdot,\cdot \rangle_J$ behaves in the following manner with
regards to level on the cylindrical portion $\partial W \times [1,\infty)$: For
$R$ and $\partial_r$, 
\begin{equation}
    \begin{split}
        \langle R, R \rangle_J &= v\\
        \langle \partial_r, \partial_r \rangle_J &= \frac{1}{v},
\end{split}
\end{equation}
independent of level $r$ (here $v$ is the rescaling constant). For vectors in
the orthogonal complement of $R$, $\partial_r$, the norm grows linearly in $r$.
In particular,
\begin{equation}
    |\dot{\bar{x}}|_{\bar{x}(t)}^2 \leq |\dot{\bar{x}}|_{x(t)}^2;
\end{equation}
thus both $|\dot{\bar{x}}|$ and $|(\pi_r \circ x)'|$ are bounded by a constant
multiple of $|\dot{x}|$.  Now extend this bound to arbitrary $x$ as follows:
suppose that $\pi_r \circ x \leq 1$, e.g. $x \subset \bar{W}$. Then, since
$d\pi_r$ is an operator with bounded norm on $\bar{W}$, 
\begin{equation}
    (\pi_r \circ x)^2 = (d\pi_r \circ \dot{x})^2 \leq (\mathrm{const})\cdot |\dot{x}|^2.
\end{equation}
Similarly, when $x \subset \bar{W}$, $\dot{\bar{x}} = \dot{x}$, so
$|\dot{\bar{x}}|$ is trivially bounded by $|\dot{x}|$.
\end{proof}

\begin{lem}[Condition 1]
For any $\eta > 0$ there is a $c_{\eta}$ such that
on any sub-cylinder $[s,s+\eta] \times S^1$ of the cylindrical region $S^{c}$,
\begin{equation}
    \sup_t [\pi_r \circ u(s',t)] \leq c_{\eta}
\end{equation}
for some $s' \in [s,s+\eta]$.
\end{lem}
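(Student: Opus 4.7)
The plan is to produce the desired slice $s'$ by a pigeonhole argument on geometric energy and then convert the intermediate action bound from Lemma \ref{intermediateactionbound} into an $H^1(S^1,W)$ bound on the loop $\mathcal{L}_{s'}(t):=u(s',t)$. The $C^0$ bound on $\pi_r \circ u(s',\cdot)$ will then follow from the compact Sobolev embedding (\ref{sobolevembedding}).

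First, since $E_{top}(u)$ is bounded a priori by (\ref{stokesenergy}) in terms of actions of the asymptotic data, the same bound applies to $E_{geo}(u)=\int_S|\partial_s u|_J^2$ by (\ref{energyinequality}). Averaging over $[s,s+\eta]$ produces some $s'\in[s,s+\eta]$ with
\[
\int_{S^1}|\partial_s u(s',t)|_J^2\,dt \;\le\; E_{top}(u)/\eta.
\]
Fix this $s'$; the goal of the remaining two paragraphs is to show that the specific loop $\mathcal{L}_{s'}$ is bounded in $H^1(S^1,W)$, uniformly in the Floer data, the asymptotics, and $\eta$.

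The next step is to use the intermediate action bound $\mathcal{A}(\mathcal{L}_{s'})\in[c_1,c_2]$ from Lemma \ref{intermediateactionbound} to obtain an $L^2$ bound on $\rho(s',\cdot):=\pi_r\circ u(s',\cdot)$. On the cylindrical end, the Floer equation gives $\partial_t u = X_{\tilde H_S^{tot}} + J_t \partial_s u$, where $\tilde H_S^{tot}=v\rho^2 + vF_T$ up to terms uniformly controlled by the $C^0$-bounds on $F_T$, and where $|X_{H_S}|_J^2 = 4v\rho$ and $|\theta|_J = \sqrt{\rho/v}$ by direct computation from the rescaled contact-type condition. Plugging these into the action formula $\mathcal{A}(\mathcal{L}_{s'})=\int_{S^1}[\tilde H_S^{tot}-\theta(\partial_t u)]\,dt$ and applying Cauchy--Schwarz followed by Young's inequality $ab\le \varepsilon a^2 + C_\varepsilon b^2$, the cross term $|\int u^*\theta|$ can be absorbed against the dominant quadratic piece $v\int \rho^2\,dt$; choosing $\varepsilon$ smaller than $v/2$ yields
\[
\int_{S^1}\rho(s',t)^2\,dt \;\le\; C\bigl(c_2,\,E_{top}(u),\,\eta,\,F_T\bigr).
\]

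Finally, plug this $L^2$ bound back into the Floer equation. One obtains $\int_{S^1}|X_{\tilde H_S^{tot}}(u(s',t))|_J^2\,dt \le C(1 + (\int\rho^2)^{1/2})$, and combining with the step-one bound on $\int|\partial_s u|_J^2\,dt$ yields a uniform bound on $\|\dot{\mathcal{L}}_{s'}\|^2_{L^2,J}$. Claim \ref{l2relations} then converts this into $L^2$ bounds on $\dot{\bar{\mathcal{L}}}_{s'}$ and on $(\pi_r\circ\mathcal{L}_{s'})'$; together with the $L^2$ bound on $\rho(s',\cdot)$ already established, this gives a uniform $H^1(S^1,W)$-bound on $\mathcal{L}_{s'}$. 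The Sobolev embedding (\ref{sobolevembedding}) then produces the desired bound $\sup_t \pi_r\circ u(s',t)\le c_\eta$, with $c_\eta$ depending only on the Floer data, $F_T$, $\phi_\varepsilon$, and the asymptotic conditions.

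The main obstacle is closing the estimate in the third paragraph: the quantities $\int\rho^2\,dt$ and $\int|\partial_t u|_J^2\,dt$ are coupled through the action formula and through the fact that $|X|_J$ grows with $\rho$ on the conical end. The bootstrap works because, after the $J$-metric computations have been carried out, the dominant term $v\rho^2$ in $\tilde H_S^{tot}$ is genuinely quadratic in $\rho$, whereas the competing contributions from $|X_{H_S}|_J^2\sim 4v\rho$ and $|\theta|_J\sim\sqrt{\rho/v}$ are strictly subquadratic, so Young's inequality suffices to absorb them.
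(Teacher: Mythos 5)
Your overall strategy — pigeonhole to find a good slice $s'$, use the intermediate action bound to convert it into an $H^1$ bound on the loop $u(s',\cdot)$, then invoke the compact Sobolev embedding (\ref{sobolevembedding}) — matches the paper's. Your first step is in fact a slightly cleaner alternative to what the paper does: averaging the geometric energy $\int|\partial_s u|^2_J$ directly gives $\|\partial_t u(s',\cdot)-X\|^2_{L^2}\le E_{top}(u)/\eta$ (using $|\partial_s u|=|\partial_t u - X|$ from Floer's equation), whereas the paper applies the mean value theorem to $\partial_s\mathcal{A}_{\bar Z}(s)$ and needs the monotonicity $\int\partial_s H\le 0$ to conclude the same bound.

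The genuine gap is in your middle estimate. Your metric computations on the conical end are wrong by a factor of $\rho$ in the exponent: for a $v$-rescaled contact-type $J$ one has $\langle R,R\rangle_J = v$ and $\langle\partial_r,\partial_r\rangle_J = 1/v$ (this is Claim \ref{l2relations} in the paper), so $|X_{H_S}|_J^2 = |2\rho R|_J^2 = 4v\rho^2$, not $4v\rho$, and $|\theta|_J = |Z|_J = \rho/\sqrt v$, not $\sqrt{\rho/v}$. As a consequence, $\int\theta(\partial_t u)\,dt$ is \emph{not} a subquadratic cross term that can be absorbed by Young's inequality. It contains the piece $\int\theta(X)\,dt = \int dH(Z)\,dt \approx 2v\int\rho^2$, which is genuinely quadratic and \emph{larger} than the $v\int\rho^2$ you want to dominate with. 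A naive Cauchy--Schwarz bound $|\int\theta(\partial_t u)|\le\|Z\|_{L^2}\|\partial_t u\|_{L^2}$ produces a term of order $\|\rho\|_{L^2}^2$ with a constant that is not small relative to $v$, and choosing $\varepsilon < v/2$ in Young's inequality does nothing to fix a coefficient of $2$.

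What actually closes the argument is an exact algebraic cancellation, not an absorption. One must split $\theta(\partial_t u) = \theta(\partial_t u - X) + \theta(X)$ and use the identity $\theta(X_H) = dH(Z)$ (valid for any Hamiltonian $H$). The combination that then appears in the action formula is $\int\bigl(dH(Z)-H\bigr)\,dt$, which for $H = vH^{tot}_S = v\rho^2 + vF_S$ equals $v\int\rho^2\,dt$ plus lower-order terms, with the \emph{correct sign}. The remaining term $\int\theta(\partial_t u - X)\,dt$ is the genuine cross term: Cauchy--Schwarz bounds it by $\|Z\|_{L^2}\cdot\|\partial_t u - X\|_{L^2}$, which is linear in $\|\rho\|_{L^2}$ precisely because your first step has already bounded $\|\partial_t u(s',\cdot)-X\|_{L^2}$. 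This yields $v\|\rho\|_{L^2}^2 \le M + C\|\rho\|_{L^2}$ and hence the required $L^2$ bound. Your proposal never performs this splitting, and the (incorrect) sub-quadratic growth rates are the only thing making the unsplit absorption argument appear to close; with the correct growth rates it cannot.
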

\begin{proof}
Let
\begin{equation}
    \bar{Z} = (s_0,s_0+\eta) \times S^1
\end{equation}
be a given sub-cylinder of the cylindrical region $S^c$. Let
\begin{equation}
    \mc{A}_{\bar{Z}}(s)
\end{equation}
denote the intermediate action of the loop $\{s\} \times S^1 \subset \bar{Z}$,
and let $u(s,t)$ denote the restriction of $u$ to $\bar{Z}$.
By the positivity of topological energy,
\begin{equation}
    \mc{A}_{\bar{Z}}(s_0)-\mc{A}_{\bar{Z}}(s_0+\eta) = E_{top}(u|_{\bar{Z}})
    \leq K, 
\end{equation}
where $K$ is the topological energy of $u$.  
The mean-value theorem therefore implies the existence of
\begin{equation}
    s' \in (s_0,s_0+\eta)
\end{equation}
satisfying
\begin{equation} \label{derivactionbound}
|\partial_s (\mc{A}_{\bar{Z}}(s))||_{s=s'} \leq K/\eta.
\end{equation}
Moreover, we know from Lemma \ref{intermediateactionbound} that
\begin{equation} \label{abound}
    (-\mc{A}_{\bar{Z}}(s')) \leq M
\end{equation}
for some constant $M$ depending only on the asymptotics of $S$. 
We claim that the equations (\ref{derivactionbound}) and (\ref{abound}) give a
bound for the loop $u(s',\cdot)$ in the $H^1$ norm (\ref{h1norm}), establishing
the Lemma for $s'$ (using the Sobolev embedding (\ref{sobolevembedding})).

Recalling the special form of our Floer data on $\bar{Z}$, and abbreviating
$H:= H^{tot}_S$, $J := J_S$, $X = X_{H^{tot}_S}$ the derivative $\partial_s
\mc{A}_{\bar{Z}}(s)$ can be expressed as:
\begin{equation} \label{phibound}
    \begin{split}
        \partial_s \mc{A}_{\bar{Z}}(s) &= -\int_{\{s\} \times S^1} \omega(\partial_s u, \partial_t u)dt + \int_{\{s\} \times S^1} \partial_s (u^* (H)) dt \\
        &= -\int_{s \times S^1} \omega (\partial_t u, J (\partial_t u - X))dt + \int_{s \times S^1} (dH \circ \partial_s u + \partial_s H) dt \\
        &= -\int_{s \times S^1} \omega (\partial_t u, J (\partial_t u - X))dt + \int_{s \times S^1} \omega(X,\partial_s u)dt + \int_{s\times S^1} \partial_s H dt \\
        &= -\int_{s \times S^1} \omega(\partial_t u, J (\partial_t u - X))dt - \int_{s \times S^1} \omega(X,J_S (\partial_t u - X))dt + \int_{s \times S^1} \partial_s H dt \\
        &= -\int_{s \times S^1} \omega(\partial_t u - X, J(\partial_t u - X))dt + \int_{s \times S^1} \partial_s H dt \\
        &= -||\partial_t u(s,\cdot) - X||^2 + \int_{S^1} \partial_s H(s,\cdot) dt,
    \end{split}
\end{equation}
where above we have twice used that $u$ satisfies Floer's equation. Abbreviate
\begin{equation}
    x_s(t) := u(s,t),
\end{equation}
and note that since $H^{tot}_S$ is monotonically decreasing, (\ref{phibound})
and (\ref{derivactionbound}) imply that 
\begin{equation} \label{phibound2}
    ||\partial_t x_{s'} - X||^2 \leq \frac{K}{\eta}.
\end{equation}

For a rescaled-standard complex structure $J$, and a Hamiltonian vector field
$X_S$ equal to  $2r \cdot R$ plus a bounded term,
\begin{equation}
    |X_S|^2 \leq  C(\omega(2r\cdot R, J(2r \cdot R)) + 1) = C (r^2 + 1),
\end{equation}
i.e.
\begin{equation}
    |X_S| \leq \tilde{C}(|r| + 1).
\end{equation}
for some constants $C, \tilde{C}$.  Thus by (\ref{phibound2}), 
\begin{equation}
    \label{rbound}
    ||\partial_t x_{s'}|| \leq || \partial_t x_s - X|| + ||X|| \leq \tilde{C}(1 + ||\pi_r \circ x_{s'}||_{L^2}).
\end{equation}
By Claim \ref{l2relations}, a bound on $||\partial_t x_{s'}||$ is as good as a
bound on $||(\pi_r \circ x_{s'})'||$ and $||\dot{\bar{x}}_{s'}||$. 
The equation (\ref{rbound}) implies that a bound for $||\pi_r \circ
x_{s'}||_{L^2}$ suffices to establish the desired $H^1$ bound on $x_{s'}$.

We know by hypothesis that for any $s$ the action of the loop $x_s$ is bounded below:
\begin{equation}
    \label{actionbound}
    -\mc{A}(x_s) = \int_{S^1} x_s^*\theta - \int x_s^*H \leq M
\end{equation}
for some $M$. We rewrite the first term of (\ref{actionbound}) as
\begin{equation} \begin{split}
    \int_{S^1} x_{s'}^*\theta &= \int \omega (Z,\dot{x}_{s'}) dt \\
    &= \int \omega(Z,(\partial_t x_{s'} - X(x_{s'})) + X(x_{s'})) \\
    &= \int \langle JZ, \partial_t x_{s'} - X\rangle_J + \int_{x_{s'}(S^1)} dH(Z) dt.
\end{split} \end{equation}
Thus by Cauchy-Schwarz
\begin{equation}
    \int_{S^1} x_{s'}^*\theta \geq -||Z||\cdot ||\partial_t x_{s'} - X|| + \int_{x_{s'}(S^1)}  dH(Z)
\end{equation}
Substituting into (\ref{actionbound}) we see that
\begin{equation}
    M \geq \int_{x_{s'}(S^1)} ( dH(Z) - H) - ||Z|| \cdot ||\partial_t x_{s'} - X||,
\end{equation} 
e.g. 
\begin{equation}
    \int_{x_{s'}(S^1)} (dH(Z) - H) \leq M + ||Z|| \cdot ||\partial_t
    x_{s'} - X||.
\end{equation}
By (\ref{phibound2}), $||\partial_t x_{s'} - X||$ is bounded.
Moreover, $Z = r \cdot \partial_r$ has norm equal to $r$ times a constant. Thus,
\begin{equation}
    \int_{x_{s'}(S^1)} (dH(Z) - H) \leq \tilde{M} + C_0 ||\pi_r \circ x_{s'}||
\end{equation}
for some new constant $\tilde{M}$.  For $x_{s'}$ mapping entirely to the
cylindrical end, we see that
\begin{equation}
    \begin{split}
    dH(Z) - H &= r\cdot dH(\partial_r)- H \\
    &=  r^2 + dF_S(Z) - F_S.
\end{split}
\end{equation}
The last two terms are totally bounded by assumption, so 
\begin{equation}
   \int_{x_{s'}(S^1)} r^2 =  ||\pi_r \circ x_{s'}||^2  \leq N + C_0 || \pi_r \circ x_{s'}||,
\end{equation}
for constants $N, C_0$, implying a bound for $||\pi_r \circ x_{s'}||$.  We
extend to the general case by noting that whenever $x_{s'}$ maps to $\bar{W}$,
$|\pi_r \circ x_{s'}|$ is bounded by 1.
\end{proof}
We have just proven that Condition 1 holds for 
\begin{equation}
    \rho = \pi_r \circ u
\end{equation}
This implies that it holds for 
\begin{equation}
    \xi := \frac{1}{2} \rho^2
\end{equation}
as well as any $\tilde{\xi}$ satisfying (\ref{xitilde}).

\begin{lem}[Condition 2]
    On any cylindrical part of $S$ there exists a $\tilde{\xi}$, satisfying 
    \begin{equation}
        \begin{split}
        \xi &< \tilde{\xi} + C\\
        \tilde{\xi} &< \xi + C'\\
    \end{split}
\end{equation}
such that 
\begin{equation} \label{laplaceeqn}
    \Delta \tilde{\xi} + \lambda \tilde{\xi} \geq -A
\end{equation}
for constants $C$, $C'$ depending only on the asymptotic conditions of $S$ and the Floer
data.  
\end{lem}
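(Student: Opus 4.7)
The strategy is to refine the differential identity derived in the proof of Proposition \ref{unperturbedprop} to the cylindrical region $S^c$, where the Floer datum simplifies substantially---$\gamma = v\,dt$ is closed, $a_S = v$ is constant, $H_S$ equals $r^2$ on the conical end of $W$, and the monotonicity condition $\partial_s F_S \leq 0$ is available. Running the same chain of manipulations as in the unperturbed case, but now retaining the $F_S$-dependence, I expect to obtain a pointwise identity of the schematic form
\begin{equation}
\Delta\xi \;=\; -v\,\|du - X\otimes\gamma\|_{J_S}^2 \;+\; v^2\,\partial_s\,u^*\bigl(r^2 - F_S + \theta(X_{F_S})\bigr),
\end{equation}
in which the first summand is nonpositive (the geometric energy density) and the second splits into a principal contribution $2v^2\rho\,\partial_s\rho$ together with perturbation corrections coming from $F_S$ and $\theta(X_{F_S})$.

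The main analytic estimate will then follow in two steps. First, Cauchy--Schwarz gives $2v^2\rho\,\partial_s\rho \geq -\tfrac{1}{2}\lambda\rho^2 - c_\lambda(\partial_s\rho)^2$ for any fixed $\lambda > 0$; the offending $(\partial_s\rho)^2$ term is absorbed by the nonpositive geometric-energy reservoir. Second, the monotonicity $\partial_s F_S \leq 0$ enforced in Definition \ref{adaptedperturbation} produces the $F_S$-derivative contribution with the favorable sign. Setting $\tilde\xi := \xi - v^2\,u^*\theta(X_{F_S})$, which differs from $\xi$ by a uniformly bounded quantity, absorbs the residual non-differential-in-$\xi$ terms into the constant $A$ and yields $\Delta\tilde\xi + \lambda\tilde\xi \geq -A$. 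Applying Proposition \ref{floerhoferprop8} with $\eta$ chosen small enough that $2\eta < \bar\delta$ and $\eta < \sqrt{1/\lambda}$, together with Condition~1 from the preceding lemma, then completes the proof of Proposition \ref{perturbedprop}.

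The main obstacle will be verifying, using the a priori bounds on $F_T$ and $\phi_\e$ together with the $v$-rescaled contact type condition on $J_S$, that the term $\theta(X_{F_S})$ and its $s$-derivative are bounded in absolute value \emph{independently} of the level $r$ at which they are evaluated: a priori $Z = r\,\partial_r$ scales unboundedly in $r$ while $\theta(X_{F_S}) = -Z(F_S)$. Resolving this amounts to interpreting the hypothesis ``all derivatives absolutely bounded'' in the natural scale-invariant sense compatible with the $J_S$-metric, in which $\partial_r$ and $R$ have constant norm along the cylindrical end and hence $C^1$-boundedness is preserved under Liouville rescaling $\psi^v$. The collar region, where $F_S$ interpolates between its constant values via $\phi_\e$, is then handled identically using the built-in a priori bound on $\phi_\e'$.
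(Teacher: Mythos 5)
Your high-level strategy—derive a pointwise differential inequality for $\xi$ on the cylinder, use the boundedness of $F_T$ and the $v$-rescaled contact condition to control perturbation terms, and then feed the result into Proposition \ref{floerhoferprop8}---is the right framework, and you correctly identify the subtle scale-invariance issue that makes $\theta(X_{F_S})$ bounded in $r$. But there are three concrete gaps in the execution that I don't think can be repaired without essentially switching to the paper's argument.

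First, your proposed identity $\Delta\xi = -v\|du - X\otimes\gamma\|^2 + v^2\partial_s u^*(\cdots)$ has the energy-density term with the wrong sign. Carrying out the computation from $d^c\xi = d\mathcal{S}\circ(du\circ j)$ and Floer's equation, with the convention $dd^c\xi = -\Delta\xi\,ds\wedge dt$ used in the unperturbed case, the $\|du - X\otimes\gamma\|^2$ term enters with a \emph{positive} coefficient, which is exactly what makes a subharmonicity estimate possible (the square is the reservoir that absorbs the cross terms). With a negative sign your inequality runs in the wrong direction, and the remainder $v^2\partial_s u^*(\cdots)$ has no hope of compensating since it is first-order in $u$ and uncontrolled in magnitude. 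Moreover, your schematic identity drops the contribution $\partial_t(d\mathcal{S}(X))$, which is nonzero precisely because the perturbed $X$ is not tangent to $\partial W$; the paper's (\ref{laprho}) retains it and it is responsible for several of the estimated terms in (\ref{expandedlaprho}).

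Second, the absorption via $\tilde\xi := \xi - v^2 u^*\theta(X_{F_S})$ does not produce the needed inequality: $\Delta\tilde\xi$ contains $\Delta(u^*\theta(X_{F_S}))$, whose second-derivative contribution $d^2(\theta(X_{F_S}))(du,du)$ together with $d(\theta(X_{F_S}))(\Delta u)$ involves second derivatives of $u$ that are not controlled by anything in the argument. This is a genuine obstruction, not a bookkeeping detail; the paper never modifies $\xi$ in a way that changes its Laplacian by uncontrolled second-order terms. Instead it expands $\Delta\xi$ directly into $\tfrac{v}{2}(|\partial_s u|^2 + |\partial_t u|^2)$ plus lower-order cross terms, estimates each cross term by $C(1+\sqrt{\xi})|du|$ or $C(1+\sqrt{\xi})^2$ using the inequalities (\ref{fundinequalities}) on $|X|$, $|\nabla X|$, $|\partial_s X|$, and absorbs the linear-in-$|du|$ pieces into the quadratic reservoir via Young's inequality. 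All perturbation contributions then land in the constant $A$ or the coefficient $\lambda$ of $\xi$, with no modification of $\xi$ at all at this stage.

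Third, you miss the actual purpose of the replacement $\tilde\xi$ in the paper, which is orthogonal to the perturbation terms: the differential inequality for $\xi$ is only established on the region where $u$ lands in the cylindrical end $r\geq 1$, since the identities $d\mathcal{S}\circ J = -v\theta$, $\theta(X_H) = 2H$, etc.\ only hold there. The paper's $\tilde\xi = \varphi\circ\xi$ with $\varphi$ vanishing for $r\leq 1$ is a cutoff that extends the validity of $\Delta\tilde\xi + \lambda\tilde\xi \geq -A$ to all of $W$, which is necessary before Proposition \ref{floerhoferprop8} can be applied on the whole cylindrical region (where $u$ may dip into $\bar W$). Your proposal does not address this step, and without it Condition 2 is established only on an $r$-dependent open subset of the domain, which is not the hypothesis Proposition \ref{floerhoferprop8} needs.
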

\begin{proof} 
    Actually, we will prove that $\xi$ itself satisfies (\ref{laplaceeqn}) on
    the cylindrical end of $W$; we will then perform the replacement
    $\tilde{\xi}$ to extend the validity of (\ref{laplaceeqn}) to the compact
    region $\bar{W}$. 

    Letting \begin{equation}
        \mc{S} = \frac{1}{2} r^2,\end{equation} 
and $\xi = \mc{S} \circ u$ we calculate the
Laplacian $\Delta\xi$ on the cylindrical end of $W$.  Begin with Floer's equation
\begin{equation}
J \circ (du - X\otimes dt) = (du - X\otimes dt) \circ j, \end{equation}
apply $d\mc{S}$ to both sides. Since $J$ is $v$-rescaled-contact type
\begin{equation}
    d\mc{S} \circ J = r dr \circ J = -vr \bar{\theta} = -v \theta,
\end{equation} we have that
\begin{equation}
    d^c\xi = d\xi \circ j = v(-u^* \theta + \theta(X) dt) + d\mc{S}(X) (dt \circ j)
\end{equation}
Differentiating once more, we see that
\begin{equation}
    d d^c \xi = v (-u^*\omega + (\partial_s \theta(X))\mbox{ } ds \wedge dt) - \partial_t (d\mc{S}(X))\mbox{ }ds\wedge dt.
\end{equation}
Since $dd^c \xi = -\Delta \xi \mbox{ } ds \wedge dt$, we see that
\begin{equation}\label{laprho}
    \Delta \xi = v (\omega (\partial_s u, \partial_t u) - \partial_s (\theta(X))) + \partial_t (d\mc{S}(X)).
\end{equation}
Now, as in \cite{Cieliebak:1995fk} and \cite{Oancea:2008fk}, rewrite
$\omega(\partial_s u, \partial_t u)$ as: 
\begin{equation}
    \begin{split}
    \omega(\partial_s u, \partial_t u) &= \frac{1}{2} \omega (\partial_s u, \partial_t u) + \frac{1}{2} \omega (\partial_s u, \partial_t u)\\
    &= \frac{1}{2} \left( \omega(\partial_s u, J \partial_s u + X) + \omega (-J \partial_t u + JX, \partial_t u) \right)\\
    &= \frac{1}{2}\left( |\p_s u|^2 + |\p_t u|^2 + \omega(\partial_s u, X) - \omega (\partial_t u, JX) \right)
\end{split}
\end{equation} 
The other terms in (\ref{laprho}) can be expanded as follows, where
$\bar{\theta}$ is the contact form on $\partial W$ (as in (\ref{contactform})):
\begin{equation}\partial_t
    (d\mc{S}(X)) = d\mc{S} \circ \partial_t X + d\mc{S}
    \circ \nabla_{u_t} X,
\end{equation} and 
\begin{equation}\begin{split}
    \partial_s (\theta(X)) &= \partial_s (r \bar{\theta}(X)) \\
    &= (d r \circ u_s) \bar{\theta}(X) + r  \bar{\theta} (\partial_s X) + r \bar{\theta} (\nabla_{u_s} X)
\end{split} \end{equation}
Putting these together, we have that
\begin{equation}\begin{split}\label{expandedlaprho}
    \Delta \xi =& \frac{v}{2} |\p_s u|^2 + \frac{v}{2} |\p_t u|^2 + \frac{v}{2}\omega(\p_s u,X) + \frac{v}{2} \omega(\partial_t u, JX) + d\mc{S} \circ \p_t X \\
    & + d\mc{S} \circ \nabla_{u_t} X + v(dr \circ u_s) \bar{\theta}(X) + vr\bar{\theta} (\p_s X) + vr\bar{\theta} (\nabla_{u_s} X).
\end{split}
\end{equation}
When $J$ is of $v$-rescaled contact type, we recall that as linear operators
\begin{align}
    &\bar{\theta}\textrm{ has constant norm; and}\\
    &dr = d(\sqrt{2\mc{S}}) = \frac{d\mc{S}}{\sqrt{2\mc{S}}}\textrm{ has constant norm, hence }\\
    &d\mc{S}\textrm{ has norm } O(\sqrt{\mc{S}}).
\end{align}
Moreover, we have the following inequalities:
\begin{equation}\label{fundinequalities} 
\begin{split} 
    |X| &\leq C(1 + \sqrt{\mc{S}}), \\
     |\nabla_Y X| &\leq C |Y|, \mathrm{\ for\ any\ vector\ field\ }Y,\\
     |\partial_s X| &\leq C (1 + \sqrt{\mc{S}})
\end{split}
\end{equation}
for some (possibly different) constants $C$ depending on the rescaling constant
$v$ and the time-dependent perturbation term of our total Hamiltonian.  We use
these inequalities to estimate the terms in
(\ref{expandedlaprho}):
\begin{align}
    |\omega(\p_s u,X)| \leq |\p_s u| |X| &\leq C (1 + \sqrt{\xi})|\p_s u|\\
    |\omega (\p_t u, JX)| \leq |\p_t u| |X| &\leq C (1 + \sqrt{\xi})|\p_t u|\\
    |d\mc{S} \circ \p_t X| &\leq C (1 + \sqrt{\xi}) ^2\\
    |d\mc{S} \circ \nabla_{u_t} X| &\leq C(1 + \sqrt{\xi})|\p_t u|\\
    |(dr \circ u_s) \bar{\theta}(X)| &\leq C (1 + \sqrt{\xi})|\p_s u|\\
    |r \bar{\theta}(\p_s X)| \leq |r||\p_s X| &\leq C (1 + \sqrt{\xi})^2\\
    |r\bar{\theta} \nabla_{u_s} X| \leq |r| |\p_s u| &\leq C (1 + \sqrt{\xi})|\p_s u|
\end{align}
again for potentially different constants $C$ depending on those in
(\ref{fundinequalities}).  Putting these
together, there exists constants $c_1$, $c_2$ and $c_3$ such that 
$\xi$ satisfies an equation
of the following form: \begin{equation}
    \begin{split}
    \Delta \xi \geq & \frac{1}{2}|\p_s u|^2 + \frac{1}{2} |\p_t u|^2 - c_1 (1 + \sqrt{\xi}) |\p_s u| \\
    &- c_2 (1 + \sqrt{\xi})|\p_t u| - c_3 (1 + \sqrt{\xi})^2.
\end{split}
\end{equation}
which implies that 
\begin{equation}
    \Delta \xi + \lambda \xi \geq -A
\end{equation} 
for obvious constants $\lambda, A$ depending on $c_1, c_2,  c_3$.  This holds
on the cylindrical end $r \geq 1$, where the estimates above apply.

We extend as follows, directly following an argument in
\cite{Oancea:2008fk}*{Thm. 4.6}. Let 
\begin{equation}
    \varphi: \R_+ \lra \R+
\end{equation}
be a smooth function such that $\varphi(r) = 0$ for $r \leq 1$, $\varphi'(r) =
1$ for $r \geq 2$, and $\varphi''(r) > 0$ for $1 \leq r \leq 2$.
Clearly
\begin{equation}
    \begin{split}
    r &< \varphi(r) + C\\
    \varphi(r) &< r + C'
\end{split}
\end{equation}
Thus, the modification 
\begin{equation}
    \tilde{\xi} := \varphi \circ \xi
\end{equation}
satisfies (\ref{xitilde}) as required. Moreover, note that
\begin{equation}
    \begin{split}
        \Delta \tilde{\xi} &= \partial_s (\varphi'(\xi(s,t)) \cdot \partial_s \xi) + \partial_t (\varphi'(\xi(s,t)) \cdot \partial_t \xi)\\
        &= \varphi''(\xi(s,t)) (|\partial_s \xi|^2 + |\partial_t \xi|^2) + \varphi'(\xi(s,t)) \cdot (\Delta \xi) \\
        & \geq \varphi'(\xi(s,t)) ( - A - \lambda \xi) \geq (- A - \lambda \xi) \geq (-A - \lambda \tilde{\xi})
    \end{split}
\end{equation}
as desired. 
\end{proof}

\section{Orientations and signs} \label{orientationsection}
In this appendix, we recall the ingredients necessary to orient various moduli
spaces of maps, thereby obtaining operations defined over the integers (or over
a field of arbitrary characteristic).  The relevant theory was first developed
in \cite{Floer:1993fk} and adapted to the Lagrangian case in
\cite{Fukaya:2009qf}*{\S 8}. We will proceed as follows:

\begin{itemize}
    \item In Section \ref{orientationlines}, we associate, to every time-1
        chord $x \in \chi(L_i,L_j)$ or orbit $y\in \mc{O}$, real
        one-dimensional vector spaces called {\bf orientation lines}
        \begin{equation}
            o_x,o_y,
        \end{equation}
        coming from the linearization of Floer's equation on any strip-like or
        cylindrical end.
   Then we recall from general theory how orientation lines, orientations of
   our Lagrangians and orientations of abstract moduli spaces determine
   canonical orientations of moduli spaces of maps. In the semi-stable case,
   this orientation is canonical up to a choice of trivialization of the
   natural $\R$ action. 

   \item In Section \ref{comparingsigns}, we give a recipe for computing the
       sign of terms in the expression arising from the codimension 1
       boundary components of a moduli space of maps.

    \item In Section \ref{abstractorientations}, we choose orientations for the
        top strata of various abstract moduli spaces of open-closed
        strings/glued pairs of discs. 

    \item Finally, in Section \ref{verification}, we will use all of the
        ingredients discussed to carefully verify the signs arising in a single
        case.

\end{itemize}

We will draw heavily from the discussion in \cite{Seidel:2008zr}*{(11l)} (which
discusses surfaces without interior punctures), along with the extension to
general open-closed strings in \cite{Abouzaid:2010kx}*{\S C}.  Our notation
will primarily follow \cite{Abouzaid:2010kx}*{\S C}. 

\subsection{Orientation Lines and Moduli Spaces of Maps}\label{orientationlines} 
Given $y \in \mc{O}$, there is a unique homotopy class of trivializations of
the pullback of $TM$ to $S^1$  that is compatible with our chosen
trivialization of $\Lambda_\C^n TM$. The linearization of Floer's equation
(\ref{floersequation2}) on a cylindrical end $[1,\infty) \times S^1$ with
respect to such a trivialization exponentially converges to an operator of the
form
\begin{equation} \label{linearizedfloerorbit}
    Y \longmapsto \partial_s Y - J_t \partial_t Y - A(+\infty,t)Y,
\end{equation}
where $J_t - A(+\infty,t)$ is a self-adjoint operator (see \cite{Floer:1993fk}
for more details).  Thus, to define an orientation line, we once and for all
fix a local operator
\begin{equation} \label{linearizedorbitoperator}
    \mathrm{D}_y: H^1(\C,\C^n) \lra L^2(\C,\C^n)
\end{equation}
extending the asymptotics (\ref{linearizedfloerorbit}) in the following
fashion.  Endow $\C$ with a negative strip-like end around $\infty$ of the form
\begin{equation}
    \begin{split}
    \e: (-\infty,0] \times \R/\Z &\lra \C\\
    s,t &\longmapsto \exp(-2\pi(s+it))
\end{split}
\end{equation}
and consider extensions of $J_t$ and $A(-\infty,t)$ to families $J_\C$ of
complex structures and $A_\C$ of endomorphisms of $\C^n$. Using these families,
we define the operator $D_y$ to be as in
(\ref{linearizedfloerorbit}) using the extended families $J_\C$, $A_\C$.
\begin{defn}
    The {\bf orientation line} $o_y$ is the determinant line $\det(\mathrm{D}_y)$.
\end{defn}

In a similar fashion, given $x \in \chi(L_i,L_j)$, after applying a canonical
up to homotopy trivialization of the pullback of $TM$ to $[0,1]$, $x$ can be
thought of as a path between two Lagrangian subspaces $\Lambda_i$ and
$\Lambda_j$ of $\C^n$, and the linearized operator corresponding to Floer's
equation (\ref{floersequation}) asymptotically takes the same form as
(\ref{linearizedfloerorbit}).  Now, choose a negative striplike end around
$\infty$ in the upper half plane
\begin{equation}
    \begin{split}
    \e: (-\infty,0] \times [0,1] &\lra \H\\
    s,t &\longmapsto \exp(\pi i -\pi(s+it)).
\end{split}
\end{equation} 
Also choose some family of Lagrangian subspaces $F_z$, $z \in \R \subset \H$
such that $F_{ \e(s \times \{0\}) } = \Lambda_i$, $F_{\e(s \times \{1\})} =
\Lambda_j$, 
and choose extensions of $A$ and $J$ to all of $\H$ as before. 
One thus obtains an operator
\begin{equation}
    \mathrm{D}_x: H^1( {\mathbb H}, \C^n, F) \lra L^2( {\mathbb H}, \C^n)
\end{equation}
\begin{defn}
    The {\bf orientation line} $o_x$ is the determinant line $\det(\mathrm{D}_x)$.
\end{defn}

\begin{rem} 
    We have omitted from discussion the grading structure, but we should remark
    that in reality, when working with graded Lagrangians, trivialization gives
    us {\bf graded Lagrangian spaces} $\Lambda_i^\#$, $\Lambda_j^\#$ of $\C^n$
    (thought of as living in the universal cover of the Lagrangian Grassmannian
    of $\C^n$). Instead of giving such a discussion now, we simply note that
    the family $F$ of Lagrangian subspaces chosen above must lift to a
    family of graded Lagrangian subspaces interpolating between the lifts
    $\Lambda_i^\#$ and $\Lambda_j^\#$. 
\end{rem}
The reader is referred to \cite{Seidel:2008zr}*{(11g)} for a more explicit
spectral flow description of these determinant lines and indices.

By definition, orientation lines are naturally graded by the indices of the
operators we have constructed above, meaning that
\begin{equation}
    o_{x_1} \otimes o_{x_2} = (-1)^{|x_1|\cdot |x_2|} o_{x_2} \otimes o_{x_1}
\end{equation}
where $|x|$ is the degree of the chord (or orbit) $x$. Also, there are natural
pairings
\begin{equation} \label{orientationpairing}
    o_{x}^{\vee} \otimes o_{x} \lra \R.
\end{equation}
Given a vector $\vec{x}$ of chords or orbits, abbreviate the tensor product of
orientation lines in $\vec{x}$ as
\begin{equation}
    o_{\vec{x}} := \bigotimes_{x \in \vec{x}} o_{x}
\end{equation}
The application of orientation lines to our setup is this: Standard gluing
theory tells us that given a regular point $u$ of some moduli space of maps
with asymptotic conditions, orientation lines for the asymptotic conditions and
an orientation for the abstract domain moduli space canonically determine an
orientation of the tangent space at $u$. 

To elaborate, let $\mc{M}$ be some abstract moduli space of Riemann surfaces
$\Sigma$ with boundary $S$. Denote by $(\bar{\Sigma},\bar{S})$ the surface
obtained by compactifying, i.e. filling in the boundary and interior punctures.
Given a collection of asymptotic conditions $(\vec{x}_{out}, \vec{y}_{out},
\vec{x}_{in}, \vec{y}_{in})$ one can form the moduli space of maps
\begin{equation}
\mc{M}(\vec{x}_{out}, \vec{y}_{out}; \vec{x}_{in}, \vec{y}_{in})
\end{equation}
as described, for example, in Section \ref{ocfloersection}. Suppose we have
chosen an orientation of $\mc{M}$ and orientation lines $o_x, o_y$. Then:
\begin{lem} \label{orientationlem}
    Let $\bar{C}_1, \ldots, \bar{C}_k$ be the components of the
    boundary of $\bar{S}$, and $e_j$ denote the number of negative ends of
    $\bar{C}_j$. If we fix a marked point $z_j \in C_j$ mapping to a Lagrangian
    $L_j$ then, assuming the moduli space $\mc{M}(\vec{x}_{out}, \vec{y}_{out};
    \vec{x}_{in}, \vec{y}_{in})$ is regular at a point $u$, we have a canonical
    isomorphism
    \begin{equation}
        \lambda(\mc{M}(\vec{x}_{out}, \vec{y}_{out}; \vec{x}_{in}, \vec{y}_{in}))   \cong \lambda (\mc{M}) \otimes 
        \bigotimes_j  
        \lambda(T|_{u(z_j)} L_j)^{\otimes 1 - e_j} 
        \otimes o_{\vec{x}_{in}}^\vee  
        \otimes o_{\vec{y}_{in}}^\vee 
        \otimes o_{\vec{x}_{out}} \otimes o_{\vec{y}_{out}},
    \end{equation}
    where $\lambda$ denotes top exterior power.
\end{lem}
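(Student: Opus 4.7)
The plan is to deduce the isomorphism from a standard index-gluing/excision procedure for determinant lines of Cauchy–Riemann type operators, applied to the linearization $D_u$ of the $\bar{\partial}$-operator (with its Hamiltonian perturbation) at the regular point $u$. Since regularity implies $\lambda(\mathcal{M}(\vec{x}_{out},\vec{y}_{out};\vec{x}_{in},\vec{y}_{in}))$ at $u$ is canonically $\lambda(\operatorname{coker} D_u)^{\vee}\otimes \lambda(\ker D_u) = \det(D_u) \otimes \lambda(\mc{M})$, with the $\lambda(\mc{M})$ factor accounting for variations of the domain's conformal structure, everything reduces to producing a canonical isomorphism of determinant lines
\[
\det(D_u) \;\cong\; \bigotimes_j \lambda(T|_{u(z_j)}L_j)^{\otimes 1-e_j} \otimes o_{\vec{x}_{in}}^{\vee}\otimes o_{\vec{y}_{in}}^{\vee}\otimes o_{\vec{x}_{out}}\otimes o_{\vec{y}_{out}}.
\]

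First I would choose, for each end of $S$, the gluing data so that $D_u$ restricted to that end agrees to exponential decay with the asymptotic operator used to define the orientation line ($D_x$ in $\mathbb{H}$, $D_y$ in $\mathbb{C}$). Using Floer–Hofer's excision/gluing theorem for determinant lines (as formulated in \cite{Seidel:2008zr}*{(11g)} and applied in \cite{Abouzaid:2010kx}*{\S C}), I decompose $\det(D_u)$ as the determinant of a compact-type operator $D_u^{cpct}$ on the truncated surface with capped-off ends, tensored with the determinant lines of the model operators on the ends. At boundary outputs the pairing (\ref{orientationpairing}) produces $o_{x}^{\vee}$; at interior outputs similarly $o_y^{\vee}$; at inputs one gets $o_x$ or $o_y$. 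This explains all the orientation-line tensor factors.

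Next I would trivialize $D_u^{cpct}$ component by component. Each component $\bar{C}_j$ of $\bar{\partial S}$ is contractible, so after capping the ends with model half-planes, the resulting operator is a $\bar{\partial}$-operator on a disc with totally real boundary conditions converging to constant Lagrangians $\Lambda_i^{\#}$. By a standard contraction argument (Floer), the determinant of such an operator is canonically isomorphic to $\lambda(T|_{u(z_j)}L_j)$, using the marked point $z_j$ to fix the trivialization. The exponent $1 - e_j$ appears because each negative end on $\bar{C}_j$ uses a capping whose own orientation-line contribution already absorbs one factor of $\lambda(T L_j)$; summing over components yields $\bigotimes_j \lambda(T|_{u(z_j)}L_j)^{\otimes 1-e_j}$, where positive ends and the marked point combine to give the net power $1-e_j$. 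The Koszul signs produced by reordering the tensor factors into the form stated in the lemma are absorbed into the canonical isomorphisms throughout.

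The main obstacle is bookkeeping: ensuring that the individual gluing isomorphisms, the ordering of strip-like/cylindrical ends, and the choice of Lagrangian framings all assemble consistently into a \emph{canonical} (not just some) isomorphism, so that the resulting orientation of $\mc{M}(\vec{x}_{out},\vec{y}_{out};\vec{x}_{in},\vec{y}_{in})$ is independent of the auxiliary choices (capping operators, paths of Lagrangian subspaces, trivializations of $TM$ along chords/orbits). This is handled by checking that any two choices differ by a path, and that the path of determinant lines it induces is canonically trivial — the same mechanism used in \cite{Seidel:2008zr}*{(11l)} and \cite{Abouzaid:2010kx}*{Appendix C}, which we follow verbatim, adapted to the open-closed setting with multiple boundary components and interior punctures. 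The remaining verifications (independence of the path of complex structures, compatibility with gluing of two regular maps) are then routine consequences of the functoriality of the determinant line construction.
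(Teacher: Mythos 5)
Your proposal follows the same route the paper takes: the paper's own proof of this lemma is a citation to \cite{Seidel:2008zr}*{Prop.\ 11.13} (no interior punctures) and \cite{Abouzaid:2010kx}*{Lem.\ C.4} (interior punctures added), i.e.\ the Floer--Hofer excision/gluing scheme for determinant lines that you sketch — cap each end with the model operator $D_x$ or $D_y$, contract the remaining compact operator to one determined by the Lagrangian boundary loop, and keep track of Koszul signs. So as an approach this is exactly what is intended.

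One caveat: the explanation of the exponent $1-e_j$ — ``each negative end $\dots$ already absorbs one factor of $\lambda(TL_j)$ $\dots$ positive ends and the marked point combine to give the net power $1-e_j$'' — is a gesture, not an argument, and it is precisely the content-bearing part of the statement. The actual mechanism is the asymmetry between gluing the negative-end model $D_x$ to a positive end of $S$ (which costs only $o_x^{\vee}$) versus to a negative end of $S$ (where one must pass to a conjugate cap, and comparing $\det(D_x)$ with $\det(\bar D_x)$ along the boundary circle produces the extra $\lambda(T|_{u(z_j)}L_j)^{\vee}$ factor; the marked point $z_j$ is used to base the trivialization, giving the $+1$ in the exponent). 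If you intend the proposal to stand on its own rather than as a pointer to Seidel \S(11l), that computation needs to be carried out; as written you have punted on it under ``the main obstacle is bookkeeping.'' Given the paper simply cites the references for this, this is an acceptable level of detail — but be aware that it is not yet a proof.
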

\begin{proof} 
    The version of this Lemma in the absence of interior punctures can be found
    in \cite{Seidel:2008zr}*{Prop.  11.13}. The minor generalization of
    including interior punctures is discussed in \cite{Abouzaid:2010kx}*{Lem.
    C.4}.
\end{proof}
In particular, given fixed orientation of $\mc{M}$, when the moduli space
$\mc{M}(\vec{x}_{out}, \vec{y}_{out}; \vec{x}_{in}, \vec{y}_{in})$ is rigid,
we obtain, at any regular point $u \in \mc{M}(\vec{x}_{out},\vec{y}_{out};
\vec{x}_{in}, \vec{y}_{in})$, an isomorphism
\begin{equation} \label{orientisos}
    \mc{M}_u: 
o_{\vec{x}_{in}} \otimes o_{\vec{y}_{in}}
    \lra 
    \bigotimes_j \bigg( \lambda(T|_{u(z_j)}
    L_j)^{\otimes 1 - e_j} \bigg) \otimes 
    o_{\vec{x}_{out}} \otimes o_{\vec{y}_{out}}.
\end{equation}
If we fix orientations for the $L_j$, then we obtain an isomorphism of the form
(\ref{orientisos}) without the $L_j$ factors. But the $L_j$ factors will
continue to have relevance in sign comparison arguments.

\begin{rem}[The semistable case] \label{trivializingR}
The moduli spaces $\mc{M}(y_0; y_1)$, $\mc{R}(x_0; x_1)$ arise as a further
quotient of the non-rigid elements of $\tilde{\mc{M}}(y_0; y_1)$ and
$\tilde{R}^1(x_0; x_1)$ by the natural $\R$ actions. 
Thus, at rigid points $u \in \mc{M}(y_0; y_1)$, $v \in \mc{R}(x_0; x_1)$
one obtains trivializations of $\lambda (\tilde{\mc{M}}(y_0; y_1))$,
$\lambda(\tilde{\mc{R}}^1(x_0; x_1))$
and hence isomorphisms 
\begin{equation}
    \begin{split}
        o_{y_1} &\lra o_{y_0}\\
        o_{x_1} &\lra o_{x_0}
\end{split}
\end{equation} 
by choosing a trivialization of the $\R$ actions. In both cases, following the conventions in  
\cite{Seidel:2008zr}*{(12f)} and \cite{Abouzaid:2010kx}*{\S C.6},
choose
$\partial_s$ to be the vector field inducing the
trivialization.  
\end{rem}

\subsection{Comparing Signs}\label{comparingsigns}

Let $\overline{\mc{Q}}$ be some abstract compact moduli space, and suppose its
codimension one boundary has a component covered by a product of lower
dimensional moduli spaces (either of which may also decompose as a product).
\begin{equation}
    \overline{\mc{A}} \times_{\vec{v}, \vec{w}} \overline{\mc{B}}.
\end{equation}
We should first elaborate upon the notation $\times_{\vec{v},\vec{w}}$. We
suppose first that we have fixed separate orderings of the input and output
boundary and interior marked points for $\mc{A}$, $\mc{B}$, $\mc{Q}$; such
orderings will be specified case by case.
\begin{defn}
    Given vectors of the form 
    \begin{equation}
        \begin{split}
            \vec{v} &= \{(v_1^{-},v_1^+),\ldots, (v_k^-,v_k^+)\}\\
            \vec{w} &= \{(w_1^-,w_2^+), \ldots, (w_l^-, w_l^+)\},
        \end{split}
    \end{equation}
    the notation 
    \begin{equation}
        \overline{\mc{A}} \times_{\vec{v},\vec{w}} \overline{\mc{B}}
    \end{equation}
    refers to the product of abstract moduli spaces $\overline{\mc{A}}$ with
    $\overline{\mc{B}}$ in which
    \begin{itemize}
        \item the $v_i^-$th boundary output of $\overline{\mc{A}}$ is (nodally)
            glued to the $v_i^+$th boundary input of $\overline{\mc{B}}$, for
            $1 \leq i \leq k$, and 
        \item the $w_j^-$th interior output of $\overline{\mc{A}}$ is (nodally)
            glued to the $w_j^+$th interior input of $\overline{\mc{B}}$, for
            $1 \leq j \leq l$.
    \end{itemize}
    We refer to such $(\vec{v},\vec{w})$ as a {\bf nodal gluing datum}.
\end{defn}
Now, suppose we had fixed orientations for $\mc{A}$, $\mc{B}$, and $\mc{Q}$. 
The associated space of maps 
\begin{equation} \label{spaceofmaps1}
    \overline{\mc{Q}}(\vec{x}_{in}, \vec{y}_{in}; \vec{x}_{out}, \vec{y}_{out})
\end{equation}
inherits an orientation from Lemma \ref{orientationlem} and has as a
codimension-1 boundary component the product of moduli spaces
\begin{equation} \label{productspaceofmaps}
    \mc{A}(\vec{x}_{in}^1, \vec{y}_{in}^1; \vec{x}_{out}^1, \vec{y}_{out}^1) \times_{\vec{v},\vec{w}} \mc{B}(\vec{x}_{in}^2, \vec{y}_{in}^2; \vec{x}_{out}^2, \vec{y}_{out}^2)
\end{equation}
for suitable input and output vectors $\vec{x}_{in}^i,\vec{y}_{in}^i;
\vec{x}_{out}^i, \vec{y}_{out}^i.$ Thus, the product (\ref{productspaceofmaps})
inherits a boundary orientation from (\ref{spaceofmaps1}). However, Lemma \ref{orientationlem} and our chosen orientations for $\mc{A}$ and $\mc{B}$ also give   (\ref{productspaceofmaps}) a canonical {\it product orientation}
The question of relevance to us is
\begin{equation}\label{signdifferenceques}
    \begin{split}
        &\textrm{What is the sign difference between the product orientation and }\\
    &\textrm{boundary orientation of (\ref{productspaceofmaps})?}
\end{split}
\end{equation}
Actually, we will also equip $\mc{A}$, $\mc{B}$, and $\mc{Q}$ with sign
twisting data $\vec{t}_{\mc{A}}$, $\vec{t}_{\mc{B}}$ and $\vec{t}_{\mc{Q}}$ and
calculate the sign difference with these twistings incorporated. But we can
just add them at the end.

Abbreviate
\begin{equation}
    o_{\vec{x}_{in},\vec{y}_{in}; \vec{x}_{out}, \vec{y}_{out}} := o_{\vec{x}_{out}} \otimes o_{\vec{y}_{out}} \otimes o_{\vec{x}_{in}}^\vee \otimes o_{\vec{y}_{in}}^\vee
\end{equation}
and
\begin{equation}
    \vec{xy}^i: = (\vec{x}_{in}^i, \vec{y}_{in}^i; \vec{x}_{out}^i, \vec{y}_{out}^i).
\end{equation}
Then, (\ref{signdifferenceques}) can be rephrased as: what is the sign
difference in the failure of commutativity of the following diagram?
\begin{equation}
    \xymatrix{\lambda (\mc{Q}(\vec{xy}))
    \ar[r]^{ } \ar[d]^{} & 
    \lambda(\mc{Q}) \otimes \mc{L}_{\mc{Q}} 
    \otimes 
o_{\vec{xy}} 
    \ar[d]^{} \\
    \lambda(\mc{B}(\vec{xy}^2)) \otimes \lambda(\mc{A}(\vec{xy}^1)) 
    \ar[r]^{ }& 
    \lambda(\mc{B}) \otimes \mc{L}_{\mc{B}} \otimes 
o_{\vec{xy}^2} 
    \otimes 
    \lambda(\mc{A}) \otimes \mc{L}_{\mc{A}} \otimes 
o_{\vec{xy}^1} 
    }
\end{equation}
Here $\mc{L}_{\mc{Q}}$, $\mc{L}_{\mc{A}}$, and $\mc{L}_{\mc{B}}$ are the powers
of orientation of a fixed boundary Lagrangians, one for each boundary component
of representatives of the moduli spaces, appearing in Lemma
\ref{orientationlem}; these satisfy $\mc{L}_{\mc{B}} \otimes
\mc{L}_{\mc{A}} = \mc{L}_{\mc{Q}}$ (up to an even power of the top exterior
power of Lagrangians, which is trivial). The top and bottom horizontal arrows
are the ones given by Lemma \ref{orientationlem}. The reversal of $\mc{B}$ and
$\mc{A}$ above comes from the fact that we originally listed the boundary
strata of $\mc{A}$, $\mc{B}$ in the reverse order of composition.
\begin{prop}[Sign Comparison] \label{signcompprop}
The sign difference between the product and boundary orientations is the sum of
four contributions: 
\begin{itemize}
    \item {\bf Koszul signs} from {\bf reordering} $\lambda(A)$  past $o_{\vec{xy}^2}$ and $\mc{L}_{\mc{B}}$.
    \item {\bf Koszul signs} from {\bf reordering} $\mc{L}_\mc{A}$ past $o_{\vec{xy}^2}$.
    \item {\bf Koszul signs} from {\bf reordering} $o_{\vec{xy}^2} \otimes o_{\vec{xy}^1}$ to become $o_{\vec{xy}}$ (using the natural pairings (\ref{orientationpairing}) on elements coming from the gluing $(\vec{v},\vec{w})$).
    \item Comparing the {\bf product versus boundary orientation} on {\bf
        abstract moduli spaces} $\mc{A} \times_{\vec{v},\vec{w}} \mc{B}$.
\end{itemize}
\end{prop}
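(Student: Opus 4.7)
The plan is to prove this proposition by a careful diagram-chase tracking the effect of the gluing isomorphism on determinant lines. The fundamental input is the standard gluing theorem for Cauchy-Riemann operators: given linearized operators $D_{\mathcal{A}}$, $D_{\mathcal{B}}$ at points $u_A \in \mathcal{A}(\vec{xy}^1)$, $u_B \in \mathcal{B}(\vec{xy}^2)$ whose asymptotics match along the nodal gluing datum $(\vec{v},\vec{w})$, a neck-stretching/pregluing construction produces a one-parameter family of operators converging to a glued operator $D_{\mathcal{Q}}$ at a point in a collar of $\partial \overline{\mathcal{Q}}(\vec{xy})$, together with a canonical isomorphism
\begin{equation}
\det(D_{\mathcal{Q}}) \;\cong\; \det(D_{\mathcal{B}}) \otimes \det(D_{\mathcal{A}})
\end{equation}
obtained by contracting each pair of glued asymptotic orientation lines via the natural pairing $o_x^\vee \otimes o_x \to \mathbb{R}$ of (\ref{orientationpairing}). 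This is exactly the content of the third bullet point.

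Next, I would apply Lemma \ref{orientationlem} independently to $\mathcal{Q}$ (at the glued configuration) and to the product $\mathcal{A} \times \mathcal{B}$, writing out the target orientation lines explicitly in a prescribed order. On the $\mathcal{Q}$-side the factors are $\lambda(\mathcal{Q}) \otimes \mathcal{L}_{\mathcal{Q}} \otimes o_{\vec{xy}}$; on the product side, Lemma \ref{orientationlem} applied separately yields $\lambda(\mathcal{B})\otimes\mathcal{L}_{\mathcal{B}}\otimes o_{\vec{xy}^2}\otimes \lambda(\mathcal{A})\otimes\mathcal{L}_{\mathcal{A}}\otimes o_{\vec{xy}^1}$. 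The two results must be reconciled by permuting graded factors until they can be compared termwise: moving $\lambda(\mathcal{A})$ leftward past $o_{\vec{xy}^2}$ and $\mathcal{L}_{\mathcal{B}}$ produces the first Koszul contribution, moving $\mathcal{L}_{\mathcal{A}}$ leftward past $o_{\vec{xy}^2}$ produces the second, and the third is the contraction sign already identified. The compatibility $\mathcal{L}_{\mathcal{A}} \otimes \mathcal{L}_{\mathcal{B}} \cong \mathcal{L}_{\mathcal{Q}}$ holds up to an even power of Lagrangian orientations and so contributes trivially.

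Finally, the fourth contribution is purely combinatorial at the level of abstract moduli: the gluing chart $[0,\epsilon) \times (\mathcal{A} \times \mathcal{B}) \hookrightarrow \overline{\mathcal{Q}}$ identifies the normal direction with the (positive or negative) gluing parameter $\rho$. Whether the product orientation on $\mathcal{A} \times \mathcal{B}$ agrees with the boundary orientation induced from $\mathcal{Q}$ depends on where in the ordering of factors the $\rho$-coordinate sits, and is exactly the abstract sign recorded by the fourth bullet. Incorporating the chosen sign twisting data $\vec{t}_{\mathcal{A}}, \vec{t}_{\mathcal{B}}, \vec{t}_{\mathcal{Q}}$ simply multiplies the resulting sign by $(-1)^{\vec{t}_{\mathcal{Q}} \cdot \vec{\deg} - \vec{t}_{\mathcal{A}}\cdot\vec{\deg}^1 - \vec{t}_{\mathcal{B}}\cdot\vec{\deg}^2}$, which one adds at the end as stated in the remark preceding the proposition.

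The main obstacle will be bookkeeping: one must fix once and for all a convention for the order in which asymptotic orientation lines, tangent-space factors $\lambda(\cdot)$, and Lagrangian-orientation factors $\mathcal{L}_{(\cdot)}$ appear, and then carry out the comparison with respect to this convention so that the four listed contributions appear exactly once each with no stray signs. Given the framework of \cite{Seidel:2008zr}*{(12f)} and \cite{Abouzaid:2010kx}*{\S C}, once the conventions are pinned down the argument reduces to a direct comparison and there is no further analytic content to supply.
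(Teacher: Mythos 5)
Your proposal is correct and follows essentially the same route the paper takes: the proposition is exactly the bookkeeping of the failure of commutativity of the displayed square, with the gluing theorem (Lemma \ref{orientationlem} and its references) supplying the determinant-line identification and contraction pairings, Koszul reorderings accounting for the first two bullets, and the abstract gluing chart giving the fourth, with the sign twisting data added afterwards as in Corollary \ref{signtwistcor}. No gaps; your write-up simply makes explicit what the paper leaves as a diagram-chase.
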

Since all of the operations we construct involve {\it sign twisting data}
(Definition \ref{signtwistingdatum}), we add back in said data to obtain the
right signs.
\begin{cor}\label{signtwistcor}
    The sign of the composed term $(-1)^{\vec{t}_1} \mathbf{F}_{\mc{B}}
    \circ_{\vec{v},\vec{w}} (-1)^{\vec{t}_2} \mathbf{F}_{\mc{A}}$ in the
    expression arising from the codimension 1 boundary principle (Lemma
    \ref{bordificationlem}) applied to $\mc{Q}$ 
    is the sum of: 
\begin{itemize}
    \item all terms from Proposition \ref{signcompprop}; and 
    \item contributions from the sign twisting data $\vec{t}_1$ and
        $\vec{t}_2$, in the sense of
        (\ref{twistoperation1}).
\end{itemize}
\end{cor}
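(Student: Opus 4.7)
The plan is to unpack the definition of the twisted operations and observe that the twisting scalars factor out of the composition, so that the twisting contribution and the purely orientation-theoretic contribution from Proposition~\ref{signcompprop} combine additively (as exponents of $-1$) with no cross-terms. The statement is essentially a bookkeeping consequence of Proposition~\ref{signcompprop}.

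First I would recall from (\ref{twistoperation1}) that for a sign twisting datum $\vec{t}$ and a vector $\vec{x},\vec{y}$ of asymptotic inputs with degree vector $\vec{\deg}(\vec{x},\vec{y})$, the twisted operation is just the untwisted operation multiplied by the scalar $(-1)^{\vec{t}\cdot\vec{\deg}(\vec{x},\vec{y})}$. Writing out the composition, one has
\begin{equation*}
(-1)^{\vec{t}_1}\mathbf{F}_{\mc{B}}\circ_{\vec{v},\vec{w}}(-1)^{\vec{t}_2}\mathbf{F}_{\mc{A}}
=(-1)^{\vec{t}_1\cdot\vec{\deg}(\vec{xy}^2)+\vec{t}_2\cdot\vec{\deg}(\vec{xy}^1)}\,\mathbf{F}_{\mc{B}}\circ_{\vec{v},\vec{w}}\mathbf{F}_{\mc{A}},
\end{equation*}
where the inputs of $\mc{B}$ consist of the inputs in $\vec{xy}^2$ (including the images of $\vec{v},\vec{w}$, whose degrees are prescribed by the rigidity condition on $\mc{A}$). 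Because each twisting acts as a global scalar on its own operation, no additional Koszul rearrangement is introduced by the twistings themselves.

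Next I would read off the sign of the untwisted composed term $\mathbf{F}_{\mc{B}}\circ_{\vec{v},\vec{w}}\mathbf{F}_{\mc{A}}$ as it appears in the codimension-1 boundary expansion of $\mathbf{F}_{\mc{Q}}$ from Lemma~\ref{bordificationlem}. By Proposition~\ref{signcompprop}, this sign is the sum of the four orientation-theoretic contributions listed there: the two Koszul reorderings involving $\lambda(\mc{A})$ and $\mc{L}_{\mc{A}}$ past $o_{\vec{xy}^2}$ and $\mc{L}_{\mc{B}}$, the Koszul sign from pairing $o_{\vec{xy}^2}\otimes o_{\vec{xy}^1}$ into $o_{\vec{xy}}$ via (\ref{orientationpairing}) along the gluing $(\vec{v},\vec{w})$, and the comparison of product versus boundary orientations on the abstract moduli space $\mc{A}\times_{\vec{v},\vec{w}}\mc{B}$.

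Combining the previous two steps gives the asserted formula: the total sign of the composed term is precisely the sum of the four terms from Proposition~\ref{signcompprop} together with the twisting exponents $\vec{t}_1\cdot\vec{\deg}(\vec{xy}^2)$ and $\vec{t}_2\cdot\vec{\deg}(\vec{xy}^1)$. The only potential obstacle is the implicit choice of how to interpret degrees of inputs of $\mc{B}$ arising from outputs of $\mc{A}$, but since these degrees are unambiguously determined by the rigidity condition, the twisting terms are well-defined and the proof reduces to the scalar factorization described above.
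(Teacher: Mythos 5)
Your proposal is correct and matches the paper's treatment: the paper gives no separate argument for this corollary beyond remarking that one "adds back in" the twisting data, since the twisted operations are by (\ref{twistoperation1}) just the untwisted ones multiplied by the scalar $(-1)^{\vec{t}\cdot\vec{\deg}}$, so the twisting exponents simply add to the orientation-theoretic sign from Proposition \ref{signcompprop}. Your observation that the degree of the intermediate asymptotic condition is well-defined and feeds into the twisting of $\mathbf{F}_{\mc{B}}$ is consistent with how the paper applies the corollary in its sample verification (e.g.\ the $(m+1)|\tilde{y}|$ term in the proof of Proposition \ref{2ocsigns}).
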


\subsection{Abstract Moduli Spaces and their orientations} \label{abstractorientations}
By Lemma \ref{orientationlem}, we must choose orientations of the various
abstract moduli spaces we consider in order to orient the associated
operations. In this section, we do precisely that. We will also, in a
sample case, compute explicitly the sign difference between the induced and
chosen orientation on boundary strata.

\subsubsection{$\mc{R}^d$}\label{ainforientation}
Fix a slice of $\mc{R}^d$ in which the first three boundary marked points
$z_0^-$, $z_1^+$, and $z_2^+$ are fixed, and consider the positions of the
remaining points $(z_3, \ldots, z_d)$ with respect to the counterclockwise
boundary orientation as a local chart. With respect to this chart, orient
$\mc{R}^d$ by the top form
\begin{equation}
    dz_3 \wedge \cdots \wedge dz_d.
\end{equation}
This agrees with the conventions in \cite{Seidel:2008zr} and
\cite{Abouzaid:2010kx}, so we will not discuss this case or its signs further.

\subsubsection{$\mc{R}_d^1$}\label{oc1orientation}
Take a slice of $\mc{R}_d^1$ in which the interior point $y_{out}$ is fixed, as
is the distinguished marked point $z_d$. With respect to the induced
coordinates $(z_1, \ldots, z_d)$ induced by the positions of the remaining
marked points, pick orientation form 
\begin{equation}
- dz_1 \wedge \cdots \wedge dz_{d-1}.
\end{equation}
This agrees with the choice made in \cite{Abouzaid:2010kx}*{C.3}.

\subsubsection{$\mc{R}_d^{1,1}$} \label{co1orientation}
Take a slice of $\mc{R}_{d}^{1,1}$ in which the interior point $y_{in}$ and
outgoing boundary point $z_0^-$, are fixed, and using the positions of the
remaining coordinates $(z_1, \ldots, z_d)$ as the local chart, again pick
orientation form
\begin{equation}
    -dz_1 \wedge \cdots \wedge dz_{d}.
\end{equation}

\subsubsection{$\mc{R}_{d_1,d_2}^1$}
Fix a slice of $\mc{R}_{d_1,d_2}^1$ (see Definition \ref{twopointedocmod}) in
which $z_0$, $z_0'$, and $y_{out}$ are fixed at $-i$, $i$, and $0$
respectively, and consider the positions of the remaining points in the slice
$(z_1, \ldots, z_{d_1}, z_1', \ldots, z_{d_2}')$ as the local chart. With
respect to these coordinates, pick orientation form
\begin{equation}
    -dz_1 \wedge \cdots \wedge dz_{d_1} \wedge dz'_{1} \wedge \cdots \wedge dz'_{d_2}.
\end{equation}

\begin{prop} \label{2ocabstractboundary}
    With respect to the strata listed in
    (\ref{tocspecialpoint0})-(\ref{tocspecialpoint3}), two of the differences
    in sign between chosen and induced orientations on boundary
    strata are as follows:\\
\begin{center}
    \begin{tabular}{|c|c|}
        \hline{\bf stratum} & {\bf sign difference}\\
        \hline
        (\ref{tocspecialpoint0}) & $1 + n + k' + k'(l+k - n)$ \\
        (\ref{tocspecialpoint3}) & $1 + l(l'+1) + k(l-l')$\\ 
        \hline
    \end{tabular}
    \end{center}
\end{prop}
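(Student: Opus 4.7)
The plan is to carry out, for each of the two strata, an explicit gluing chart computation that expresses the chosen orientation on $\mc{R}^1_{d_1,d_2}$ near the boundary stratum in terms of the product of chosen orientations on the factors, plus a collar coordinate. The sign discrepancy will be a pure Koszul sign arising from reordering differential forms, with a single overall $-1$ coming from the outward-normal boundary convention.

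The first step is to choose, near each boundary stratum, a smooth gluing chart of the form $\rho \in [0,\epsilon) \times \mc{A} \times \mc{B}$, where $\rho$ is a gluing parameter with $\rho = 0$ corresponding to the nodal stratum, $\mc{A}$ denotes the bubble (the first factor, whose output is attached), and $\mc{B}$ denotes the hub (the second factor, which contains the position of the node as one of its marked points). For stratum~\eqref{tocspecialpoint0}, the bubble carries coordinates $(w_3,\ldots,w_{k'})$ from the standard slice of $\mc{R}^{k'}$ and glues at the $(n+1)$st input of the hub, so in the standard slice of $\mc{R}^1_{k-k'+1,l}$ one of the hub coordinates $z_{n+1}$ records the position of the node. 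For stratum~\eqref{tocspecialpoint3}, the bubble has $l'+k'+1$ inputs, its $(l'+1)$st input becomes the new special point $z_0'$ (it gets fixed after gluing, i.e.\ eats an aut parameter), and the hub has no coordinate recording the node position since the node replaces a special point.

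Next, I would write the chosen orientation $\omega_{\mc{Q}} = -dz_1 \wedge \cdots \wedge dz_k \wedge dz_1' \wedge \cdots \wedge dz_l'$ on $\mc{R}^1_{d_1,d_2}$ in the gluing chart. After absorbing the standard $\rho^{k'-2}$ Jacobian into a smoothed gluing coordinate (in the style of Seidel, \cite{Seidel:2008zr}*{(9g)}), the variables $dz_{n+1}\wedge\cdots\wedge dz_{n+k'}$ that pass through the bubble can be rewritten as $dz_{\mathrm{node}} \wedge d\rho \wedge d\tilde w_3 \wedge \cdots \wedge d\tilde w_{k'}$, where $dz_{\mathrm{node}}$ is the hub coordinate recording the node position and the $\tilde w_i$ are renormalized bubble coordinates. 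Applying the outward-normal convention $\omega_{\partial} = -\iota_{\partial_\rho}\omega_{\mc{Q}}$, I would pull $d\rho$ to the front (picking up the Koszul sign from passing it by whichever $z_i$'s stand to its left), and then compare the resulting form to the product orientation $\omega_{\mc{B}} \wedge \omega_{\mc{A}}$, where the bubble block $d\tilde w_3 \wedge \cdots \wedge d\tilde w_{k'}$ must be moved past the trailing block of hub coordinates. For~\eqref{tocspecialpoint0} the $-1$ comes from the outward normal, the $n+k'$ piece comes from moving $d\rho$ past $dz_1,\ldots,dz_n$ and from $k'\equiv k'^2$, and the $k'(l+k-n)$ piece is the Koszul sign of the block swap, matching the claimed $1+n+k'+k'(l+k-n)$.

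For~\eqref{tocspecialpoint3} the bookkeeping is the main obstacle. Since the $(l'+1)$st input of the bubble is promoted to the special point $z_0'$ after gluing, it contributes \emph{no} coordinate to the slice of $\mc{R}^{l'+k'+1}$---but in the glued disc the corresponding $dz_j'$ or neighboring $dz_k$ still appears in $\omega_{\mc{Q}}$. Thus the gluing chart identifies one of the old hub coordinates (the one that gets ``eaten'' by fixing $z_0'$) with the gluing direction $d\rho$, rather than producing $dz_{\mathrm{node}}$ on the hub. Concretely, the bubble absorbs the marked points $z_{k-k'+1},\ldots,z_k,z_1',\ldots,z_{l'}'$ together with the special point $z_0'$, and the clockwise versus counterclockwise convention on the bubble (relative to that on the glued disc) introduces an extra reordering of the $z_j'$ block past the $z_i$ block inside the bubble; this is the source of the $l(l'+1)+k(l-l')$ contribution. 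I would verify this piece by writing both $\omega_{\mc{Q}}$ and $\omega_{\mc{A}}\wedge\omega_{\mc{B}}$ in a common ordering, tracking the three sources of sign (outward normal, relocation of $d\rho$, and block permutation), and checking that the result equals the asserted $1+l(l'+1)+k(l-l')$. The remaining two strata~\eqref{tocspecialpoint1} and~\eqref{tocspecialpoint2} are symmetric to the two computed here and will be handled by the analogous argument; the hardest part is ensuring that the clockwise/counterclockwise convention on the bubble around $z_0'$ is consistent with the convention fixing the chosen orientation form, which I would pin down via the explicit slice $\{z_0=-i,\,z_0'=i,\,y_{\mathrm{out}}=0\}$.
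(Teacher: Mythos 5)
Your proposal follows essentially the same route as the paper's proof: fix the explicit slice coordinates, write down the (approximate) gluing map near each nodal stratum, pull back the chosen top form, and compare it with the outward-normal-first product orientation by Koszul reordering — your bookkeeping for (\ref{tocspecialpoint0}) matches the paper's, and the "clockwise versus counterclockwise on the bubble" subtlety you flag for (\ref{tocspecialpoint3}) is exactly the reflected coordinates $i - t z_{k-k'+3}, \ldots, i - t z_k$ appearing in the paper's explicit gluing map, which produce the $(-1)^{k'-2}$ factor. One caution for the verification you defer: the paper's own computation for (\ref{tocspecialpoint3}) lands on $1 + l(l'+1) + k'(l-l')$ (with $k'$, not $k$, in the last term, in contrast with the table in the statement), so do not be surprised if your careful block-permutation count produces $k'$ there.
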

\begin{proof}
We have not listed the sign differences for the strata (\ref{tocspecialpoint1})
and (\ref{tocspecialpoint2}), which follow from identical calculations.  For
(\ref{tocspecialpoint0}), with respect to the local charts
    $(z_{n+3}, \ldots, z_{n+k'})$ on $\mc{R}^{k'}$ and 
    $(z_1, \ldots, z_n, \tilde{z}, z_{n+k'+1}, \ldots, z_{k}, z_1', \ldots,
    z_l')$ on $\mc{R}^1_{k-k'+1,l}$, the gluing map 
    \begin{equation}
        \rho: [0,1) \times \mc{R}^{k'} \times \mc{R}^1_{k-k'+1,l} \lra \mc{R}^1_{k,l}
    \end{equation}
    has the approximate form
    \begin{equation}
        \begin{split}
            t, (z_{n+3}, \ldots, z_{n+k'}), &(z_1, \ldots, z_n, \tilde{z}, z_{n+k'+1}, \ldots, z_{k}, \cdots) \longmapsto \\
            & (z_1, \ldots, z_n, \tilde{z}, \tilde{z} + t, \tilde{z} + tz_{n+3}, \ldots, \tilde{z} + tz_{n+k'}, z_{n+k'+1}, \ldots).
    \end{split}
\end{equation}
Thus, the pullback under $\rho$ of the top form
\begin{equation} \label{topform2oc}
    -dz_1 \wedge \cdots \wedge dz_{k} \wedge dz'_{1} \wedge \cdots \wedge dz'_{l}.
\end{equation}
is, modulo positive rescaling,
\begin{equation}
    dz_1 \wedge \cdots \wedge dz_n \wedge d\tilde{z} \wedge dt \wedge dz_{n+3} \wedge \cdots \wedge dz_{n+k'} \wedge dz_{n+k'+1} \wedge \cdots \wedge dz_k \wedge dz_1' \wedge \cdots dz_l'
\end{equation}
which visibly differs from the product orientation (using the outward pointing
vector $-dt$)
\begin{equation}
    (-dt) \wedge (-dz_1 \wedge dz_n \wedge d\tilde{z} \wedge dz_{n+k'+1} \wedge \cdots) \wedge (dz_{n+3} \wedge \cdots dz_{n+k'})
\end{equation}
by a sign of parity
\begin{equation}
    n+1 + (k'-2)\cdot (l + k - n - k') = 1 + n + k' + k'(l+k-n).
\end{equation}
For (\ref{tocspecialpoint3}), the gluing map 
\begin{equation}
    \tilde{\rho}: [0,1) \times \mc{R}^{l'+ k' + 1} \times \mc{R}^1_{k-k', l-l'} \lra \mc{R}^1_{k,l}
\end{equation}
takes the following approximate form:
\begin{equation}
    \begin{split}
    t, (z_{k-k'+3}, &\ldots, z_k, z_0', z_1', \ldots, z_{l'}'), (z_1, \ldots, z_{k-k'}, z_{l'+1}', \ldots, z_l') \longmapsto\\
    (z_1, &\ldots, z_{k-k'}, i - t(z_0'-a), i- t(z_0'-b), i - t z_{k-k'+3}, \ldots, i - tz_{k}, i + tz_1', \\
    &\ldots, i + tz_{l'}' , z_{l'+1}', \ldots, z_{l}')
\end{split}
\end{equation}
for some constants $b > a > 0$.  Thus, the pull back of the top form
(\ref{topform2oc}) is, up to positive rescaling
\begin{equation}
    (-1)^{k'-2} dz_1 \wedge \cdots dz_{k-k'} \wedge dt \wedge dz_0' \wedge dz_{k-k'+3} \wedge \cdots \wedge dz_k \wedge dz_1' \wedge \cdots dz_{l'}' \wedge \cdots \wedge dz_l',
\end{equation}
which differs from the product orientation
\begin{equation}
    \begin{split}
    (-dt) \wedge (dz_1 \wedge &\cdots \wedge dz_{k-k'} \wedge dz_{l'+1}' \wedge \cdots \wedge dz_{l'}) \\
    &\wedge (dz_{k-k'+3} \wedge \cdots dz_k \wedge dz_0' \wedge \cdots \wedge dz_{l'}')
\end{split}
\end{equation}
by a sign of parity
\begin{equation}
    1 + (k'-2) + (k-k') + (k'-2) + (k' + l' -1) \cdot (l -l') = 1 + l (l'+1) + k' (l - l') \ \ (\mathrm{mod}\ 2).
\end{equation}

\end{proof}

\subsubsection{$Q(3,\mathbf{r})$}
Strictly speaking, we embed the open locus of quilted strips into glued discs
with sequential point identifications, but since this embedding is an
isomorphism on the open locus, it will suffice to write down a top form on the
level of $Q(3,\mathbf{r})$. There are three cases:
\begin{itemize}
    \item If $r_2 > 0$, then picking a slice of the $\R$ action for which the
        highest marked point on the middle strip $z^{r_2}_{2}$ is fixed, we
        obtain coordinates
        \begin{equation}
            (z^{1}_3, \ldots, z^{r_3}_3, z^{r_1}_1, \ldots, z^{1}_1, z^{r_2-1}_2, \ldots, z^1_2).
        \end{equation}
    \item If $r_2 =0$ but $r_3 >0$, pick a slice for which $z^{1}_3$ is fixed
        to obtain the chart
        \begin{equation}
            (z^2_3, \ldots, z^{r_3}_3, z^{r_1}_1, \ldots, z^1_1).
        \end{equation}
    \item Lastly, if $r_2 = r_3 = 0$, picking a slice for which $z^{r_1}_1$ is fixed, we obtain the chart
        \begin{equation}
            (z^{r_1-1}_1, \ldots, z^1_1)
        \end{equation}
\end{itemize}
In all three cases, pick orientation form the top exterior power of these
coordinates of the chart in the orders specified above.
\subsubsection{$\mc{R}_2^{k,l,s,t}$} Fixing a slice of the action for which
$z_-^1$, $z_-^2$, $z_+^1$, $z_+^2$ are fixed at $i$, $-i$, $1$ and $-1$ and
using the positions of the remaining coordinates 
\begin{equation}(z_1, \ldots,
z_k, z_1^1, \ldots, z^1_l, z^2_1, \ldots, z^2_s, z^3_1, \ldots,
z^3_t)\end{equation} 
as a chart, pick orientation form 
\begin{equation}
    \begin{split} -dz_1 \wedge \cdots \wedge dz_k &\wedge dz_1^1 \wedge
        \cdots dz_l^1 \wedge \\ & dz^2_1 \wedge \cdots \wedge  dz^2_s
        \wedge dz^3_1 \wedge \cdots \wedge dz^3_t.  \end{split}
\end{equation}

\subsubsection{$\mc{A}_{k,l;s,t}$}
In a similar fashion, fix a slice of the action in which $a_0$, $a_0'$,
$b_{0}'$, and $z_{out}$ are at $\pm i$, $Ri$ and $-Ri$ respectively. The
remaining coordinates include the positions of the remaining boundary points
$a_1, \ldots, a_k, a_1', \ldots, a_l'$, $b_1, \ldots, b_s$, $b_1', \ldots,
b_t'$, and the radial parameter $r = \frac{R}{R+1}$. With respect to these
coordinates, choose orientation form
\begin{equation}
    \begin{split}
    -dr \wedge da_1 \wedge \cdots \wedge da_k &\wedge da_1' \wedge \cdots da_l' \wedge \\
    & db_1 \wedge \cdots \wedge  db_s \wedge db_1' \wedge \cdots \wedge db_t'.
\end{split}
\end{equation}

\subsection{Sign Verification}\label{verification}
In this section, we use all of the ingredients above to verify the signs of
equations in one case. 
Namely, we will (partly) show that
\begin{prop}[Corollary \ref{2occhain} with signs] \label{2ocsigns}
    ${_2}\oc$ is a chain map (with the right signs).
\end{prop}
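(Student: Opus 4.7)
The plan is to verify the identity
\begin{equation}
d_{SH} \circ {_2}\oc + {_2}\oc \circ d_{{_2}\r{CC}_*} = 0
\end{equation}
by analyzing the codimension-1 boundary of the one-dimensional components of the moduli spaces $\overline{\mc{R}}^1_{k,l}(y;\vec{x},\mathbf{b},\vec{x}')$ as $k,l$ vary. By Lemma \ref{bordificationlem} and the consistency condition on Floer data, this boundary is covered by (i) cylinder breaking at the interior output $y$, contributing $d_{SH} \circ {_2}\oc$; (ii) strip breakings at the various boundary inputs, contributing the $\mu^1$ pieces of ${_2}\oc \circ d_{_2\r{CC}_*}$; and (iii) the four families of abstract codimension-1 strata (\ref{tocspecialpoint0})--(\ref{tocspecialpoint3}), which should account for the remaining $\mu^{\geq 2}$ pieces of the Hochschild differential. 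Summing all these boundary contributions with their induced orientations must give zero, and matching this against the prescribed chain-level formula for $d_{_2 \r{CC}_*}$ is what needs to be checked.

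For each stratum the sign is the sum of the four contributions listed in Corollary \ref{signtwistcor}: (a) Koszul reorderings among the tensor products of orientation lines; (b) the discrepancy between the chosen product orientation on $\mc{A} \times_{\vec{v},\vec{w}}\mc{B}$ and the boundary orientation inherited from $\mc{Q}$; (c) reordering signs involving the Lagrangian powers $\mc{L}_{\mc{A}},\mc{L}_{\mc{B}}$; and (d) the sign twisting data $\vec{t}_{_2 \oc,k,l} = (1,\ldots,k+1,k+3,\ldots,k+l+2)$ on $\overline{\mc{R}}^1_{k,l}$ and $\vec{t}_d = (1,\ldots,d)$ on $\overline{\mc{R}}^d$. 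For the strata (\ref{tocspecialpoint0}) and (\ref{tocspecialpoint3}), the discrepancies in (b) are $1+n+k'+k'(l+k-n)$ and $1+l(l'+1)+k(l-l')$ respectively, as computed in Proposition \ref{2ocabstractboundary}; for (\ref{tocspecialpoint1}) and (\ref{tocspecialpoint2}) the analogous computations proceed by identical arguments, using the symmetry that swaps the two halves of the boundary.

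My plan is then to process the four stratum-families in turn: for each one I will compute the composite sign by combining (a)--(d) and match the resulting term with the corresponding summand of $d_{_2 \r{CC}_*}$ (as written out explicitly in the definition of the two-pointed Hochschild differential $d_{\A_{\Delta}\otimes_{\A\!-\!\A}\B}$ in Section \ref{twopointed}). Strata (\ref{tocspecialpoint0}) and (\ref{tocspecialpoint1}) produce the ``interior'' $\mu^{\geq 2}$ terms applied away from the two special inputs; strata (\ref{tocspecialpoint2}) and (\ref{tocspecialpoint3}) produce the two ``wrap-around'' terms that engulf one of the special inputs $z_0,z_0'$. Finally, the cylinder-breaking stratum gives $d_{SH}\circ {_2}\oc$ with the same sign convention as in \cite{Seidel:2008zr}*{\S 12f}, using the $\partial_s$ trivialization of the $\R$ action as in Remark \ref{trivializingR}.

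The main obstacle will be the bookkeeping in strata (\ref{tocspecialpoint2}) and (\ref{tocspecialpoint3}), where a special input is glued into a $\mu$-bubble and then one of the bubble's original inputs becomes the new special input after gluing; this re-indexing produces extra Koszul signs depending on the reduced degrees of all the inputs that cross the distinguished position, and it is precisely the ``skipped'' weight at position $k+2$ in $\vec{t}_{_2\oc,k,l}$ (the jump from $k+1$ to $k+3$) that is designed to absorb these signs. Once this single compensation is verified in one of the two wrap-around cases, the remaining cases follow by the same mechanism, and the universal sign appearing in the totality of boundary contributions is fixed by comparison with the known chain-map property of the ordinary $\oc$ via the homotopy $\mc{H}$ of Proposition \ref{twopointhomotopy1}.
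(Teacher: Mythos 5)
Your proposal follows essentially the same route as the paper: analyze the codimension-1 boundary of the one-dimensional moduli spaces (strip/cylinder breaking plus the strata (\ref{tocspecialpoint0})--(\ref{tocspecialpoint3})) and compute each stratum's sign via Proposition \ref{signcompprop}/Corollary \ref{signtwistcor}, the abstract orientation discrepancies of Proposition \ref{2ocabstractboundary}, the chosen sign twisting data, and Remark \ref{trivializingR} for the semistable case. The only difference is one of thoroughness -- the paper carries out the full bookkeeping only for the representative stratum (\ref{tocspecialpoint0}) and declares the remaining cases analogous, while you propose to treat all four families (and to pin the overall sign via the homotopy of Proposition \ref{twopointhomotopy1}, which the paper does not need).
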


\begin{proof}[Proof of Prop. \ref{2ocsigns}]
    We need to establish that the boundary strata
    (\ref{tocspecialpoint0})-(\ref{tocspecialpoint3}) along with strip-breaking
    and our chosen sign twisting data, determine the equation
    \begin{equation}
       {_2}\oc \circ d_{ {_2}\r{CC}_*} - d_{CH} \circ {_2}\oc  = 0
    \end{equation}
    up to an overall sign. As all the cases are analogous, we will simply show
    that some of the terms in 
    \begin{equation}
        {_2}\oc \circ d_{ {_2}\r{CC}_*}
    \end{equation}
    appear with the correct sign (up to the overall sign); in particular we
    focus upon all of the terms in the strata (\ref{tocspecialpoint0}), which
    should contribute to the terms
    \begin{equation}
        \sum_{m,k} (-1)^{\maltese_1^n}{_2}\oc (\mathbf{a}, x_l, 
        \ldots, x_1, \mathbf{b}, y_s, \ldots, y_{m+k+1}, 
        \mu^{k}(y_{m+k}, \ldots, y_{m+1}), y_m, \ldots, y_1),
    \end{equation}
    where $\maltese_1^m$ is the sign
    \begin{equation}
        \sum_{j=1}^m ||y_i||.
    \end{equation}
    So, fix a set of asymptotic inputs $(y_1, \ldots, y_s, \mathbf{b}, x_1,
    \ldots, x_l, \mathbf{a})$.
    The strata (\ref{tocspecialpoint0}) are, for $k < s$ and $0 \leq m < s-k'+1$,
    \begin{equation}
        \mc{R}^k \times_{m+1} \mc{R}^1_{s-k+1,l}.
    \end{equation}
    Abbreviating 
    \begin{equation}
        \begin{split}
            x_{i\ra j} &:= \{x_i, x_{i+1}, \ldots, x_j\}\\
            y_{i \ra j} &:= \{y_i, y_{i+1}, \ldots, y_j\}
        \end{split}
    \end{equation}
    the corresponding moduli spaces are 
    \begin{equation}
        \mc{R}^k(\tilde{y}, y_{m+1\ra m+k}) \times \mc{R}^1_{s-k'+1,1}(z; y_{1\ra m}, \tilde{y}, y_{m+k+1 \ra s}, \mathbf{b}, x_{1\ra l}, \mathbf{a})
    \end{equation}
    in reverse order of composition, where $z$ is an output orbit, and
    $\tilde{y}$ ranges over all possible admissible asymptotic conditions.
Abbreviating $\lambda_{\mu}:= \lambda (\mc{R}^k)$, and $\lambda_{2\oc} :=
\lambda (\mc{R}^1_{s-k+1,l})$, Lemma \ref{orientationlem} tells us that the
natural product orientation form is isomorphic to 
\begin{equation}
        \begin{split}
            \big(\lambda_{2\oc} \otimes \lambda(L_0) \otimes o_z \otimes o_{y_{1\ra m}}^\vee &\otimes o_{\tilde{y}}^\vee \otimes o_{y_{m+k+1\ra s}}^\vee \otimes o_{\mathbf{b}}^\vee \otimes o_{x_{1 \ra l}}^\vee \otimes o_{\mathbf{a}}^\vee \big)\\
        &\otimes \big(\lambda_{\mu} \otimes o_{\tilde{y}} \otimes o_{y_{m+1\ra m+k}}^\vee\big).
    \end{split}
    \end{equation}
    where, as before, we've abbreviated $o_{y_{1\ra k}}^{\vee} = o_{y_1}^\vee
    \otimes \cdots o_{y_k}^\vee$ and so on, and we've abbreviated
    $\lambda(L_0):= \lambda (T|_{u(z_i)}L_0)$ for one of the Lagrangian
    boundary conditions $L_0$ of the moduli space $\mc{R}^1_{s-k'+1,1}(z; y_{1\ra m}, \tilde{y}, y_{m+k+1 \ra s}, \mathbf{b}, x_{1\ra l}, \mathbf{a})$ .  From the above
    description, we can immediately calculate some of the sign
    contributions in Proposition \ref{signcompprop}:
    \begin{itemize}
        \item $\mc{R}^k$ has dimension $(k-2)$, so the sign for reordering
            $\lambda_\mu$ to be next to $\lambda_{2\oc}$ has parity
            \begin{equation}\label{2ockoszulsign1}
              \star_1 := (k-2)(n - l - s + k + 1 + n) =  k(l+s) \ (\mathrm{mod}\ 2).
            \end{equation}
        \item there are no Lagrangian terms $\lambda(T|_{u(z_j)} L_j)$ in
            orientation form of the moduli space associated to $\mc{R}^k$, so
            the associated signs of this sort are zero,
        \item the sign for reordering the orientation lines $o_{\tilde{y}}
            \otimes o_{y_{m+1 \ra m+k}}^\vee$ to be immediately to the right of
            $o_{\tilde{y}}^\vee$ (allowing one also to pair and cancel the
            $o_{\tilde{y}}^\vee$, $o_{\tilde{y}}$) has parity
            \begin{equation}
                \begin{split}
                \star_2 &:= (2-k)(|\mathbf{a}|+ \sum_{i=m+k+1}^s |y_i| + |\mathbf{b}| + \sum_{i=1}^l |x_i|).\\
                &= k(|\mathbf{a}|+ \sum_{i=m+k+1}^s |y_i| + |\mathbf{b}| + \sum_{i=1}^l |x_i|)\ \ (\mathrm{mod}\ 2).
            \end{split}
            \end{equation}
        \item the sign difference between boundary and product orientations on
            the moduli space $\mc{R}^k \times \mc{R}^1_{s-k+1,l}$ was computed
            in Proposition \ref{2ocabstractboundary} to have parity
            \begin{equation}
                \star_3 := 1 + m + k + k(l + s - m).
            \end{equation}
    \end{itemize}

    Finally, we can add in the sign twist contributions mentioned in Corollary
    \ref{signtwistcor}, corresponding to the operations ${_2}\oc_{l,s-k+1}$,
    and $\mu^k$:
    \begin{itemize}
        \item The sign twist contribution from $\mu^k$ has parity
        \begin{equation}
           \S_2:= (1, \ldots, k) \cdot (|y_{m+1}|, \ldots, |y_{m+k}|) = \sum_{i=m+1}^{m+k} (i-m) |y_i|.
        \end{equation}

    \item The sign twist contribution from ${_2}\oc_{l,s-k+1}$ has parity
        \begin{equation}
            \begin{split}
            \S_1:= \sum_{i=1}^m i |y_i| + (m+1) |\tilde{y}| &+ \sum_{i=m+k+1}^s (i-k+1) |y_i| + (s-k+1) |\mathbf{b}| \\
            &+ \sum_{i=1}^l (s-k+1 + i) |x_i| + (s-k+1+l)|\mathbf{a}|
        \end{split}
        \end{equation}
        where 
        \begin{equation}
            |\tilde{y}| = 2 - k + \sum_{i=m+1}^{m+k} |y_i|.
        \end{equation}

    \end{itemize}

    Combining all of these signs, we compute that 
    \begin{equation}
        \S_1 + \S_2 + \star_1 + \star_2 + \star_3 = \sum_{i=1}^m |y_i| + m + \bigstar_{l,s} \ \ (\mathrm{mod}\ 2),
    \end{equation}
    where 
    \begin{equation}
        \bigstar_{l,s} = \sum_{i=1}^s (i+1) |y_i| + (s+1) |\mathbf{b}| + \sum_{i=1}^l (s + 1 + i )|x_i| + (s + 1 + l) |\mathbf{a}|
    \end{equation}
    is independent of $k, m$, and 
    \begin{equation}
        \sum_{i=1}^m |y_i| + m = \maltese_{1}^m \ \ (\mathrm{mod}\ 2)
    \end{equation}
    as desired. This calculation extends formally to the semi-stable case $k=1$
    as well. The only extra ingredient, following Remark \ref{trivializingR},
    is an extra sign of parity $0$ or $1$ coming from determining whether the
    vector $\partial_s$ after gluing is inward pointing ($1$) or outward
    pointing ($0$). In this case, the vector is outward pointing so there is no
    additional sign contribution. Note that when the second component is
    semi-stable instead, the vector will be inward pointing, contributing to
    e.g.,  the $-1$ coefficient in $-d_{CH} \circ {_2} \oc$. See
    \cite{Seidel:2008zr}*{(12f)} for more details.
\end{proof}


\bibliography{/Users/sheelganatra/Dropbox/References/math_bib}

\end{document}